\newtheorem{definition}{\bf Definition}[section]
\newtheorem{prop}[definition]{{\bf Proposition}}
\newtheorem{rem}[definition]{{\bf Remark}}
\newtheorem{lemma}[definition]{{\bf Lemma}}
\newtheorem{cor}[definition]{{\bf Corollary}}
\newtheorem{theorem}[definition]{{\bf Theorem}}
\newenvironment{my_enumerate}{
\begin{enumerate}
  \setlength{\itemsep}{3pt}
}{\end{enumerate}}
\DeclareMathOperator{\sumperp}{\stackrel{\perp}{\oplus}}
\DeclareMathOperator*{\argmin}{argmin} 
\renewcommand{\theequation}{\ifnum \c@section>\z@ \thesection.\fi
	\@arabic\c@equation}
\newcommand{\F}{\mathcal F}
\newcommand{\dx}{\,{\rm d}x}
\newcommand{\ds}{\,{\rm d}s}
\newcommand{\dt}{\,{\rm d}t}
\newcommand{\forallt}{\qquad\text{for all }}
\renewcommand{\Re}{{\rm Re}\,}
\renewcommand{\Im}{{\rm Im}\,}
\title[Vorticity and stream function formulations for the 2D NS equations in a bounded domain]{Vorticity and stream function formulations for the 2D Navier-Stokes equations in a bounded domain}
\author{Julien Lequeurre}
\address{Institut \'Elie Cartan de Lorraine, Universit\'e de Lorraine, Metz - \'Equipe-projet SPHINX Inria Nancy-Grand Est}
\email{julien.lequeurre@univ-lorraine.fr}
\author{Alexandre Munnier}
\address{Institut \'Elie Cartan de Lorraine, Universit\'e de Lorraine, Nancy - \'Equipe-projet SPHINX Inria Nancy-Grand Est}
\email{alexandre.munnier@univ-lorraine.fr}
\begin{document}
\date{\today}
\begin{abstract}
The main purpose of this work is to provide a Hilbertian functional framework  for the analysis of the planar Navier-Stokes (NS) equations either in vorticity 
or in stream function
formulation. The fluid is assumed to occupy a bounded possibly multiply connected domain. The velocity field satisfies either homogeneous (no-slip boundary 
conditions) or prescribed 
Dirichlet boundary conditions. We prove that the analysis of the 2D Navier-Stokes equations can be carried out in terms of the so-called nonprimitive 
variables only (vorticity field and stream function) without resorting to the classical NS theory (stated in primitive variables, i.e. velocity and pressure fields).
Both approaches (in primitive and nonprimitive variables) are shown to be equivalent for weak (Leray) and strong (Kato) solutions. Explicit, 
Bernoulli-like formulas 
are derived and allow recovering the pressure field from the vorticity fields or the stream function. 
In the last section, the functional framework described earlier leads to a simplified rephrasing  
of the vorticity dynamics, as introduced by Maekawa in  \cite{Maekawa:2013aa}. At this level of regularity, the 
vorticity equation splits into a coupling between a parabolic and an elliptic equation corresponding respectively to the non-harmonic and 
harmonic parts of the vorticity equation. By exploiting this structure it is possible to prove 
new existence and uniqueness results, as well as the exponential decay of the palinstrophy  (that is, loosely speaking, the $H^1$ norm 
of the vorticity) for large time, an estimate which was not known so far.
\end{abstract}
\maketitle

\tableofcontents
\printindex
\listoffigures
\section{Introduction}
The NS equations stated in primitive variables (velocity and pressure) have been received much attention since  the pioneering work of Leray \cite{Leray:1933aa, Leray:1934aa, Leray:1934ab}.  Strong solutions were shown to exist in 2D by Lions 
and Prodi \cite{Lions:1959aa} and Lions \cite{Lions:1960aa}.  Henceforth, 
we will refer for instance to the books of Lions \cite[Chap.~1, Section 6]{Lions_book1969}, Ladyzhenskaya \cite{Ladyzhenskaya:1969aa} and Temam \cite{Temam:1977aa} 
for the main results that we shall need on this topic.
\par
In 2D, the vorticity equation provides an attractive alternative model to the classical NS equations for describing the dynamics of a viscous, incompressible fluid. 
Thus it exhibits many advantages: It is a nice advection-diffusion scalar equation while the classical NS system, although parabolic as well, is a coupling 
between an unknown vector  field (the velocity) and an unknown scalar field (the pressure). However the lack of natural and simple 
boundary conditions for 
the vorticity field makes the analysis of the vortex dynamics troublesome and explains why the problem has been addressed  mainly so far  in the case 
where the fluid occupies the whole space. In this configuration, 
a proof of existence and uniqueness for the corresponding Cauchy problem assuming the initial data to be integrable and twice continuously 
differentiable  was first provided by McGrath  \cite{McGrath:1968aa}. Existence results were extended independently by Cottet \cite{Cottet:1986aa} and 
Giga {\it et al.}  \cite{Giga:1988aa}   to the case where the initial data is  a finite measure. These authors proved that uniqueness also
holds when the atomic part of the initial vorticity is sufficiently small; see also \cite{Kato:1994aa}.  
For initial data in $L^1(\mathbb R^2)$, the Cauchy 
problem was proved to be well posed by Ben-Artzi   \cite{Ben-Artzi:1994aa}, and Br\'ezis \cite{Brezis:1994aa}. 
Then, Gallay and Wayne  
\cite{Gallay:2005aa} and Gallagher {\it et al.} \cite{Gallagher:2005aa} proved the uniqueness of the solution for an initial vorticity that is a large Dirac mass.  Finally, Gallagher and Gallay \cite{Gallagher:2005ab} succeeded  in removing the smallness assumption on the atomic part of the initial measure and 
shown that the Cauchy problem is globally  well-posed for any initial data in $\mathcal M(\mathbb R^2)$.
\par
As explained in \cite[Chap. 11, \S 2.7]{Giga:2018aa}, the vorticity equation (still set in the whole space) provides an interesting line of attack to study the large time behavior of the NS equations. 
This idea was exploited for instance by Giga and Kambe   \cite{Giga:1988ab}, Carpio   \cite{Carpio:1994aa}, Gallay {\it et al.} \cite{Gallay:2002aa, Gallay:2002ab, Gallay:2005aa, Gallay:2008aa} and Kukavica and Reis
\cite{Kukavica:2011aa}.
\par
Among the quoted authors above, some of them, such as McGrath \cite{McGrath:1968aa} and Ben-Artzi   \cite{Ben-Artzi:1994aa} were actually 
interested in studying 
the convergence of solutions to the NS equations 
towards solutions of the Euler equations when the viscosity vanishes. This is a very challenging problem, well understood in the absence of 
solid walls (that is, when the fluid fills the whole space) and for which the vorticity equation plays a role of paramount importance. 
In the introduction of the chapter ``Boundary Layer Theory'' in the book \cite{Rosenhead:1988aa}, Lighthill argues that
{\it 
To explain convincingly the existence of boundary layers, and, also to show what consequences of flow separation (including matters of such practical importance as the effect of trailing vortex wakes) may be expected, arguments concerning vorticity are needed.}
More recently, Chemin in \cite{Chemin:1996aa} claims {\it The key quantity for understanding 2D incompressible fluids is the
vorticity}. There exists a burgeoning literature treating the problem of vanishing viscosity limit and we refer to the recently-released book  \cite[Chap. 15]{Giga:2018aa} for a comprehensive 
list of references. When the fluid is partially of totally confined, the analysis of the vanishing viscosity limits turns into a more involved problem due to 
the formation of a boundary layer. In this case, the vorticity equation still plays a crucial role: In \cite{Kato:1984aa}, Kato gives a necessary and sufficient
condition for the vanishing viscosity limit to hold and this condition is shown by Kelliher \cite{Kelliher:2007aa,Kelliher:2008aa,Kelliher:2017aa} to 
be equivalent to the formation of a vortex sheet   on the boundary of the fluid domain.
\par
In the presence of walls, the derivation of suitable boundary conditions for the vorticity was also of prime importance for the design of numerical schemes. 
A review of these conditions (and more generally on stream-vorticity based numeral schemes), can be found in \cite{Gatski:1991aa}, \cite{Gresho:1992aa}, \cite{E:1996aa} and \cite{Napolitano:1999aa}. However, it has been actually well known since the  work of Quatarpelle and co-workers \cite{Quartapelle:1981aa, Guermond:1994aa, Guermond:1997aa,
Ern:1999aa, Auteri:2002aa, Biava:2002aa},  that the vorticity does not satisfy pointwise conditions on the boundary but rather a {\it non local} or integral 
condition which reads:
\begin{equation}
\label{damdam}
\forallt h\in\mathfrak H,\qquad\int_\F \omega h\dx=0,
\end{equation}
where $\F$ is the domain of the fluid and $\mathfrak H$ the closed subspace of the harmonic functions in $L^2(\F)$ 
(see also \cite[Lemma 1.2]{Benachour:2001aa}). Anderson \cite{Anderson:1989aa} 
and more recently 
Maekawa \cite{Maekawa:2013aa} propose nonlinear boundary conditions that will be shown to be equivalent  (see Section~\ref{SEC:more_regular}) to:
\begin{equation}
\label{cond_mae_1}
\forallt h\in\mathfrak H,\qquad\int_\F \big(-\nu\Delta\omega +u \cdot\nabla\omega \big) h\dx=0,
\end{equation}
where $\nu>0$ is the kinematic viscosity and $u$ the velocity field deduced from $\omega$ via the Biot-Savart law. Providing that $\omega$ is a solution 
to the classical vorticity equation, Equality \eqref{cond_mae_1} is nothing but the time derivative of \eqref{damdam}.
\par
Starting from \eqref{damdam}, the aim of this paper is to provide a Hilbertian functional framework allowing the analysis of the 2D vorticity equation in a bounded 
multiply connected domain. The analysis is wished to be self-contained, without recourse to classical results on the NS equations in primitive variables.  
We shall prove  that the analysis can equivalently be carried out at the level of the stream function. Homogeneous and nonhomogeneous boundary 
conditions for the velocity field will be considered and explicit formulas for the pressure will be derived. In the last section, new estimates (in particular 
for the palinstrophy) will be established.
\par
%
\section{General settings}
%
\label{SEC:geometry}
The planar domain  $\F$ occupied by the fluid  is assumed to be open, bounded and path-connected. We assume furthermore that its boundary  $\Sigma$
can be decomposed into a disjoint union of ${\mathcal C}^{1,1}$ Jordan curves:
\begin{equation}
\Sigma=\Big(\bigcup_{k=1}^N \Sigma^-_k\Big)\cup\Sigma^+.
\end{equation}
The curves $\Sigma^-_k$ for $k\in\{1,\ldots,N\}$ are the inner boundaries of $\F$ while $\Sigma^+$ is the outer boundary. 
On $\Sigma$ we denote by $n$   the unit normal vector  directed toward the exterior of the fluid and by $\tau$  the unit tangent vector
oriented in such a way that $\tau^\perp=n$ (see Fig.~\ref{domain_boundary}). Here and subsequently in the paper, for every $x=(x_1,x_2)\in\mathbb R^2$, the notation $x^\perp$ is used to represent the vector $(-x_2,x_1)$.
%
\par
%
\begin{figure}[h]
\centerline{\input{dessin_domain_new_tex.tex}}
\caption[Domain of the fluid]{\label{domain_boundary}The domain of the fluid: an open,  $\mathcal C^{1,1}$ and $N-$connected open set. }
\end{figure}
%
Let now $T$ be a positive real number and define the space-time cylinder $\F_T=(0,T)\times\F$, whose lateral boundary is $\Sigma_T=(0,T)\times\Sigma$. 
The velocity of the fluid is supposed to be prescribed,  equal on $\Sigma_T$ to some vector field $b$ satisfying the compatibility condition:
\begin{equation}
\int_{\Sigma}b\cdot n\ds=0\qquad \text{on }(0,T).
\end{equation} 
The density and the dynamic viscosity of the fluid, denoted respectively by $\varrho$ and $\mu$, are both positive  constants. 
The flow is governed by the Navier-Stokes equations. Introducing 
$u$ the Eulerian velocity field and  $\pi$  the (static) pressure field, the equations read:
\begin{subequations}
\label{eq:main:1}
\begin{alignat}{3}
\label{eq:main:u_1a}
\partial_t u+\omega u^\perp-\nu \Delta u+\nabla\bigg( p+\frac{1}{2}| u|^2\bigg)&= f&\quad&\text{in }\F_T\\ 
\nabla\cdot u&=0&&\text{in }\F_T\\ 
\label{eq_stokes:bound}
u&=b&&\text{on }\Sigma_T\\ 
\label{eq:initi}
u(0)&=u^{\rm i}&&\text{in }\F.
\end{alignat}
\end{subequations}
In this system $\nu=\mu/\varrho$ is  the kinematic viscosity, ${\frac12}\varrho|u|^2$ is the dynamic pressure, $p= {\pi}/{\varrho}$, $f$ is a body force, $u^{\rm i}$ is a given initial condition and
$\omega$  the vorticity field defined as the curl of $u$, namely:
\begin{equation}
\omega=\nabla^\perp\cdot u \qquad\text{in }\F_T.
\end{equation}
%
%
%
\subsection{The NS system in nonprimitive variables}
The Helmholtz-Weyl  decomposition of the velocity field  (see \cite[Theorem 3.2]{Girault:1986aa}) leads to
the existence, at every moment $t$, of a potential function $\varphi(t,\cdot)$ and a stream function $\psi(t,\cdot)$ such that:
\begin{equation}
\label{eq:helmhotlz}
u(t,\cdot)=\nabla\varphi(t,\cdot)+\nabla^\perp\psi(t,\cdot)\qquad\text{in } \F.
\end{equation}
The potential function (also referred to as Kirchhoff potential) depends only on the boundary conditions satisfied 
by the velocity field of the fluid. 
It is defined at every moment $t$ as the  solution (unique up to an additive constant) of the Neumann problem:
\begin{equation}
\label{eq:neumann_phi}
\Delta \varphi(t,\cdot)=0\quad\text{in }\F\qquad\text{and}\qquad \frac{\partial\varphi}{\partial n}(t,\cdot)=b(t,\cdot)\cdot n\quad\text{on }\Sigma.
\end{equation}
The stream function $\psi$ in \eqref{eq:helmhotlz} vanishes on  $\Sigma^+$ and is constant on every connected component $\Sigma^-_j$ 
$(j=1,\ldots,N$) of the inner boundary $\Sigma^-$. Moreover, it satisfies:
\begin{equation}
\label{eq:edp_psi}
\Delta \psi(t,\cdot)=\omega(t,\cdot)\quad\text{in }\F\qquad\text{and}\qquad \frac{\partial\psi}{\partial n}(t,\cdot)=
-\big[b(t,\cdot)-\nabla\varphi(t,\cdot)\big]\cdot \tau\quad\text{on }\Sigma\forallt t>0.
\end{equation}
Forming, at any moment,  the  scalar product in $\mathbf L^2(\F)$ (the bold font notation $\mathbf L^2(\F)$ stands for $L^2(\F;\mathbb R^2)$) of \eqref{eq:main:u_1a} with $\nabla^\perp\theta$ where $\theta$ is 
a test function  that vanishes on $\Sigma^+$ and is constant 
on every $\Sigma^-_j$, we obtain (up to an integration by parts): 
\begin{equation}
\label{NS-to_vorticitiy}
\bigg(\int_{\F}\nabla\partial_t\psi\cdot\nabla\theta\dx+\int_{\F}\omega u\cdot\nabla \theta{\rm d}x\bigg)-\nu\int_{\F}\nabla\omega\cdot\nabla \theta\dx=
\int_{\F} \nabla\psi_f\cdot\nabla\theta\dx\qquad
\text{on }(0,T).
\end{equation}
In this equality, the force field $f(t,\cdot)$ has been decomposed according to the Helmholtz-Weyl  theorem:
$$f(t,\cdot)=\nabla\varphi_f(t,\cdot)+\nabla^\perp\psi_f(t,\cdot)\forallt t>0.$$
Integrating by parts again the terms in \eqref{NS-to_vorticitiy}, we end up with the system:
\begin{subequations}
\label{eq:main_vorti:1}
\begin{alignat}{3}
\label{eq:main:vorti}
\partial_t\omega+u\cdot\nabla\omega-\nu\Delta\omega&=f_V&\quad&\text{in }\F_T\\ 
\label{eq:main:flux}
-\frac{\rm d}{{\rm d}t}\left(\int_{\Sigma^-_k}b\cdot\tau\ds\right)+\int_{\Sigma^-_k}\omega (b\cdot n)\ds-\nu\int_{\Sigma_k^-}\frac{\partial\omega}{\partial n}\ds&=\int_{\Sigma^-_k}
\frac{\partial \psi_f}{\partial n}\ds&&\text{on }(0,T),\quad k=1,\ldots,N,\\
\omega(0)&=\omega^{\rm i}&&\text{in }\F,
\end{alignat}
\end{subequations}
where $f_V=\Delta \psi_f$ and the initial condition $\omega^{\rm i}$ is the curl of $u^{\rm i}$ in \eqref{eq:initi}. To be closed, System \eqref{eq:main_vorti:1} has to be supplemented with the identities \eqref{eq:helmhotlz}, \eqref{eq:neumann_phi} and \eqref{eq:edp_psi}.
\begin{rem}
The $N$ equations \eqref{eq:main:flux} (that will be termed ``Lamb's fluxes conditions'' in the sequel)
cannot be derived from \eqref{eq:main:vorti} (this is well explained in \cite[Remark 3.2]{Guermond:1997aa}). They control the mean amount of vorticity produced on the inner boundaries. 
Such  relations can be traced back to Lamb in \cite[Art. 328a]{Lamb:1993aa} (see also \cite{Wu:1998aa} for more recent references), where in a two-dimensional viscous flow the change of circulation along any curve is given by:
$$\frac{{\rm D}\varGamma}{{\rm D}t}=\nu\oint\frac{\partial\omega}{\partial n}\ds.$$
\end{rem}
At this point, 
the lack of boundary conditions for $\omega$ might indicate that System~\eqref{eq:main_vorti:1} is unlikely to be solved. Indeed, seeking for an {\it a priori} enstrophy estimate (enstrophy is the square of the $L^2(\F)$ norm of the vorticity), we multiply \eqref{eq:main:vorti} by $\omega$ an integrate over $\F$, but shortly get stuck with the term:
\begin{equation}
\label{eq:stuck}
\int_{\F}\Delta\omega\omega\dx,
\end{equation}
that cannot be integrated by parts.
The other sticking point  is that the boundary value problem \eqref{eq:edp_psi} permitting the reconstruction of the stream function from the vorticity is overdetermined since the stream function $\psi$ has to satisfy both Dirichlet and Neumann boundary conditions 
on $\Sigma$. All  these observations are well known.
%
%
\subsection{Some leading ideas}
Before going into details, we wish to give some  insights  on how the aforementioned difficulties can be  circumvented. To simplify, we shall focus for the time being on the case of homogeneous 
boundary conditions (i.e. $b=0$) and of a simply connected fluid domain (i.e. $\Sigma=\Sigma^+$). The latter assumption leads to the disappearance of the equations \eqref{eq:main:flux} in the system.
\par
The first elementary observation, that can be traced back to Quartapelle and Valz-Gris in \cite{Quartapelle:1981aa}, is that a function $\omega$ defined 
in $\F$ is the Laplacian of    some   function $\psi$ 
if and only if the following equality holds for every harmonic  function $h$:
$$
\int_\F \omega h\dx=\int_{\Sigma}\left(\frac{\partial\psi}{\partial n}\Big|_\Sigma-\Lambda_{DN}\psi|_{\Sigma}\right)h|_{\Sigma}\ds,
$$
where the notation $\Lambda_{DN}$  stands to the Dirichlet-to-Neumann operator. Introducing $\mathfrak H$, the closed subspace of the harmonic functions in $L^2(\F)$, we deduce from this assertion that:
\begin{equation}
\label{LEM:ortho_L2}
\Delta H^2_0(\F)=\mathfrak H^\perp\quad\text{in}\quad L^2(\F).
\end{equation}
%
We denote by $V_0$ the closed space $\mathfrak H^\perp$ and decompose the space $L^2(\F)$ 
 into the orthogonal sum 
\begin{equation}
\label{split_L2}
L^2(\F)=V_0\sumperp \mathfrak H.
\end{equation}
%
%
This orthogonality condition satisfied by the vorticity plays the role of boundary conditions classically  expected when dealing with a parabolic 
type equation like \eqref{eq:main:vorti}.  The authors in  \cite{Quartapelle:1981aa} and in \cite{Guermond:1994aa} do not elaborate on 
this idea and instead of deriving an autonomous functional framework for the analysis of the vorticity equation \eqref{eq:main:vorti}, System \eqref{eq:main_vorti:1} 
is supplemented with the identity:
$$
\omega(t,\cdot)=\Delta\psi(t,\cdot)\qquad\text{in }\F\forallt t\in (0,T),
$$
and some function spaces  for the stream function are introduced. However, as it will be explained later on,  the dynamics of the flow can be dealt with with any one 
of the nonprimitive variable alone (vorticity or stream function) by introducing the appropriate functional framework.
\par
Let us go back to the splitting \eqref{split_L2}. The orthogonal projection onto $\mathfrak H$ in $L^2(\F)$ is  usually referred to 
as the harmonic Bergman projection and has been received much attention so far. 
The Bergman projection, as well as the orthogonal projection onto $V_0$, denoted by $\mathsf P$ in the sequel, enjoys 
some useful  properties (see for instance \cite{Bell:1982aa}, \cite{Straube:1986aa} and references therein). 
In particular, $\mathsf P$ maps continuously   $H^k(\F)$ onto $H^k(\F)$ 
for every nonnegative integer $k$, providing that $\Sigma$ is of class ${\mathcal C}^{k+1,1}$. 
This leads us to define the spaces $V_1=\mathsf P  H^1_0(\F)$, which is therefore 
a subspace of $H^1(\F)$. We denote by ${\mathsf P}_1$ the restriction to $H^1_0(\F)$ of the projection $\mathsf P$. A quite surprising result is that ${\mathsf P}_1:H^1_0(\F)\to V_1$ 
is invertible and we denote by ${\mathsf Q}_1$ its inverse. The operator ${\mathsf Q}_1$ will be proved to be the orthogonal projector onto $H^1_0(\F)$ in $H^1(\F)$ for  
the semi-norm $\|\nabla\cdot\|_{\mathbf L^2(\F)}$. The space $V_1$ is next equipped with the scalar product 
$$(\omega_1,\omega_2)_{V_1}=(\nabla {\mathsf Q}_1\omega_1,\nabla {\mathsf Q}_1\omega_2)_{\mathbf L^2(\F)},\qquad\omega_1,\omega_2\in V_1,$$
and the corresponding norm is shown to be equivalent to the usual norm of $H^1(\F)$. Since the inclusion $H^1_0(\F)\subset L^2(\F)$ is continuous, dense and compact, 
we can draw the same conclusion for the inclusion $V_1\subset V_0$. Identifying $V_0$ with its dual space by means of 
Riesz Theorem and denoting by $V_{-1}$ the dual space of $V_1$, we end up 
with a so-called Gelfand triple of Hilbert spaces (see for instance \cite[Chap. 14]{Berezansky:1996aa}):
$$V_1\subset V_0\subset V_{-1},$$
where $V_0$ is the pivot space. With these settings, it is classical to introduce first the isometric operator ${\mathsf A}^V_1:V_1\to V_{-1}$ defined by the relation: 
$$\langle {\mathsf A}^V_1 \omega_1,\omega_2\rangle_{V_{-1},V_1}=(\omega_1,\omega_2)_{V_1}\forallt \omega_1,\omega_2\in V_1,$$
and next the space $V_2$ as the preimage of $V_0$ by ${\mathsf A}^V_1$. The space $V_2$ is a Hilbert space as well, once equipped with the scalar product 
$$(\omega_1,\omega_2)_{V_2}=({\mathsf A}^V_1\omega_1,{\mathsf A}^V_1\omega_2)_{V_0}\forallt \omega_1,\omega_2\in V_2,$$
 and the inclusion $V_2\subset V_1$ is continuous dense and compact. We denote by ${\mathsf A}^V_2$ the restriction of ${\mathsf A}^V_1$ to $V_2$ and 
classical results on Gelfand triples assert that the operator 
${\mathsf A}^V_2$ is  an isometry from $V_2$ onto $V_0$. The crucial observation for our purpose is that, providing that $\Sigma$ is of class ${\mathcal C}^{3,1}$: 
$$V_2=\bigg\{\omega\in H^2(\F)\cap V_1\,:\, \frac{\partial\omega}{\partial n}\Big|_\Sigma=\Lambda_{DN}\omega|_\Sigma\bigg\} 
\qquad\text{and}\qquad {\mathsf A}^V_2\omega=-\Delta \omega\quad\text{ for every }\omega\text{ in }V_2.$$
In particular, every vorticity in $V_2$ has zero mean flux through the boundary $\Sigma$.
Denoting by $V_{k+2}$ the preimage of $V_k$ by $\mathsf A^V_2$ for every integer $k\geqslant 1$, we define by induction a chain of embedded Hilbert spaces $V_k$  whose dual spaces are denoted by $V_{-k}$. Each one of the following inclusion is continuous dense and compact:
$$\ldots \subset V_{k+1}\subset V_{k}\subset {V_{k-1}}\subset \ldots \subset V_{1}
\subset V_0\subset V_{-1}\subset \ldots \subset {V_{-k+1}}\subset V_{-k}\subset V_{-k-1}\subset \ldots$$
We define as well isometries ${\mathsf A}_k^V:V_k\to V_{k-2}$ for all the integers $k$. This construction is made precise in Appendix~\ref{gelf_triple}. It supplies a suitable functional framework to deal with the 
linearized vorticity equation. 
Thus, we shall prove in the sequel that for every $T>0$, every integer $k$, every $f_V\in L^2(0,T; V_{k-1})$ and every 
$\omega^{\rm i}$ in $V_k$ there exists a unique solution 
\begin{subequations}
\label{pb:test1}
\begin{equation}
\omega\in H^1(0,T;V_{k-1})\cap {\mathcal C}([0,T];V_k)\cap L^2(0,T;V_{k+1}),
\end{equation}
to the Cauchy problem:
\begin{alignat}{3}
\partial_t\omega + \nu {\mathsf A}_{k+1}^V\omega &=f_V&\quad &\text{in }\F_T\\
\omega(0)&=\omega^{\rm i}&&\text{in }\F.
\end{alignat}
\end{subequations}
Let us go back  to the problem of enstrophy estimate where we got stuck with the term \eqref{eq:stuck}.
At the level of regularity corresponding to $k=0$ in \eqref{pb:test1} for instance, we obtain:
\begin{equation}
\label{eq:enstro}
\frac{1}{2}\frac{\rm d}{{\rm d}t}\|\omega\|_{V_0}^2+\nu\|\omega\|_{V_1}^2=\langle f_V,\omega\rangle_{V_1,V_{-1}}\qquad
\text{on }(0,T).
\end{equation}
By definition $\|\omega\|_{V_0}^2=\|\omega\|_{L^2(\F)}^2$, which is the expected quantity but the second term in 
the left hand side is $\|\omega\|_{V_1}^2 =\|\nabla {\mathsf Q}_1 \omega\|_{\mathbf L^2(\F)}^2$, whereas one would naively expect $\|\nabla\omega\|_{\mathbf L^2(\F)}^2$. We recall that ${\mathsf Q}_1$ is the orthogonal projection 
onto $H^1_0(\F)$. So now, instead of multiplying \eqref{eq:main:vorti} by $\omega$, let multiply this equation by ${\mathsf Q}_1\omega$, whose trace vanishes 
on $\Sigma$, and 
integrate over $\F$.
The term \eqref{eq:stuck} is replaced by a quantity that can now be integrated by parts. Thus:
$$\int_\F\Delta \omega {\mathsf Q}_1\omega\dx=-(\nabla \omega,\nabla {\mathsf Q}_1\omega)_{\mathbf L^2(\F)}=
-\|\nabla {\mathsf Q}_1\omega\|_{\mathbf L^2(\F)}^2=-\|\omega\|_{V_1}^2.$$
On the other hand, regarding the first term in \eqref{eq:main:vorti}, we still have (at least formally):
$$\int_\F\partial_t\omega {\mathsf Q}_1\omega\dx=\int_\F\partial_t\omega  \omega\dx=\frac{1}{2}\frac{\rm d}{{\rm d}t}\int_\F|\omega|^2\dx,$$
because $\omega$ is orthogonal in $L^2(\F)$  to the harmonic functions and ${\mathsf Q}_1\omega$ and $\omega$ differ only up to 
an harmonic function. To sum up, in the enstrophy estimate, the natural dissipative term is not $\|\nabla\omega\|_{\mathbf L^2(\F)}^2$ but 
$\|\nabla {\mathsf Q}_1\omega\|_{\mathbf L^2(\F)}^2$. Notice that, since ${\mathsf Q}_1$ is the orthogonal projector onto $H^1_0(\F)$:
$$\|\nabla {\mathsf Q}_1\omega\|_{\mathbf L^2(\F)}^2\leqslant \|\nabla \omega\|_{\mathbf L^2(\F)}^2\
\forallt \omega\in H^1(\F).$$
Defining the lowest eigenvalue of ${\mathsf A}^V_1$ by means of a Rayleigh quotient:
\begin{equation}
\label{def_lambdaF}
{\lambda_\F}=\min_{\omega\in V_1\atop \omega\neq 0}\frac{\|\omega\|^2_{V_1}}{\|\omega\|^2_{V_0}}=\min_{\omega\in V_1\atop \omega\neq 0}\frac{\|\nabla {\mathsf Q}_1\omega\|^2_{\mathbf L^2(\F)}}{\|\omega\|^2_{L^2(\F)}},
\end{equation}
the following Poincaré-type estimate holds true:
$$\lambda_\F \|\omega\|_{V_0}^2\leqslant \|\omega\|_{V_1}^2\quad\text{for all }\omega\in V_1,$$
and classically leads with \eqref{eq:enstro} (assuming that $f_V=0$ to simplify) and Gr\"onwall's inequality to the estimate:
$$\|\omega(t)\|_{V_0} \leqslant \|\omega^{\rm i}\|_{V_0}  e^{-\nu\lambda_\F t},\qquad t\geqslant 0,$$
where the constant $\lambda_\F$ is optimal. This constant governing the exponential decay of the solution 
is actually the same at any level of regularity. Thus, the solution to \eqref{pb:test1} (with $\beta=0$) satisfies for every integer $k$:
$$ \|\omega(t)\|_{V_k} \leqslant \|\omega^{\rm i}\|_{V_{k}}  e^{-\nu\lambda_\F t},\qquad t\geqslant 0.$$
\begin{rem}
\label{first_kato:rem}
Kato's criteria for the existence of the vanishing viscosity limit in \cite{Kato:1984aa} and rephrased in  terms of the vorticity by Kelliher in \cite{Kelliher:2008aa} will 
be shown to be equivalent to the convergence of $\omega^\nu$ toward $\omega$ in the space $V_{-1}$ ($\omega^\nu$ 
stands for the vorticity of   NS equations with 
vorticity $\nu$ and $\omega$ is the vorticity of  Euler equations). Some care should be taken with the space $V_{-1}$ because it is not 
a distribution space, what may result in some mistakes or misunderstandings (we refer here to the very instructive paper of Simon \cite{Simon:2010aa}).
\end{rem}
\par
As   mentioned earlier, the analysis of the dynamics of the flow can as well be carried out in terms of the sole stream function. It suffices to 
introduce the function spaces $S_0=H^1_0(\F)$ and $S_1=H^2_0(\F)$. The inclusion $S_1\subset S_0$ being continuous dense and compact, the configuration
$S_1\subset S_0\subset S_{-1}$ (with $S_{-1}$ the dual space of $S_1$) is a Gelfand triple where $S_0$ is the pivot space. We proceed as for the 
vorticity spaces and define  a chain of embedded Hilbert 
spaces $S_k$ and related isometries ${\mathsf A}^S_k:S_k\to S_{k-2}$ for every integer $k$ (we refer again to Appendix~\ref{gelf_triple} for the details). In particular, providing that $\Sigma$ is of class 
${\mathcal C}^{2,1}$, we will verify that:
$$S_2=  H^3(\F)\cap H^2_0(\F)\qquad\text{and}\qquad {\mathsf A}_2^S\psi=-{\mathsf Q}_1\Delta \psi\forallt\psi\in S_2.$$
The counterpart of the Cauchy problem \eqref{pb:test1}, restated in terms of the stream function is:
\begin{subequations}
\label{pb:test2}
\begin{alignat}{3}
\partial_t\psi + \nu {\mathsf A}_{k+1}^S\psi &=f_S&\quad &\text{in }\F_T\\
\psi(0)&=\psi^{\rm i}&&\text{in }\F.
\end{alignat}
For every $T>0$, every integer $k$, every $f_S\in L^2(0,T; S_{k-1})$ and every 
$\psi^{\rm i}$ in $S_k$, this problem admits a unique solution:
\begin{equation}
\label{def_psi_intro}
\psi\in H^1(0,T;S_{k-1})\cap {\mathcal C}([0,T];S_k)\cap L^2(0,T;S_{k+1}),
\end{equation}
\end{subequations}
which satisfies in addition the exponential decay estimate (assuming that $f_S=0$ to simplify):
$$\|\psi(t)\|_{S_k}\leqslant \|\psi^{\rm i}\|_{S_k}e^{-\nu\lambda_\F t}\forallt t\geqslant 0.$$
The constant $\lambda_\F$ is defined in \eqref{def_lambdaF} and is therefore the same as the one governing the exponential decay of 
the enstrophy.
\par
The solution to problem \eqref{pb:test1} can easily be deduced from the solution to problem \eqref{pb:test2} and  vice versa. Indeed, for every integer $k$, 
the operator:
$$\Delta_k:\psi\in S_{k+1}\longmapsto \Delta \psi\in V_k,$$
can be shown to be an isometry. 
Thus, let be given $T>0$ and consider 
\begin{itemize}
\item $\omega\in H^1(0,T;V_{k-1})\cap {\mathcal C}([0,T];V_{k})\cap L^2(0,T;V_{k+1})$ the unique solution to Problem \eqref{pb:test1} for some integer $k$, 
some initial condition $\omega^{\rm i}\in V_k$ and some source term $f_V\in L^2(0,T;V_{k-1})$;
\item $\psi\in H^1(0,T;S_{k'-1})\cap {\mathcal C}([0,T];S_{k'})\cap L^2(0,T;S_{k'+1})$ the unique solution to Problem \eqref{pb:test2} for some integer $k'$, 
some initial condition $\psi^{\rm i}\in S_{k'}$ and some source term $f_S\in L^2(0,T;S_{k'-1})$. 
\end{itemize}
Providing that $k'=k+1$, we claim that both following assertions are equivalent:
\begin{my_enumerate}
\item $\omega=\Delta_k \psi$;
\item $\omega^{\rm i}=\Delta_k\psi^{\rm i}$ and $f_V=\Delta_{k-1} f_S$.
\end{my_enumerate}
If we take for granted that the  operators ${\mathsf P}_k:S_{k-1}\to V_k$ and ${\mathsf Q}_k:V_k\to S_{k-1}$ can be defined at any level of regularity in such 
a way that 
${\mathsf P}_k$ extend ${\mathsf P}_{k'}$ if $k\leqslant k'$ and ${\mathsf Q}_k={\mathsf P}_k^{-1}$, we can show that the diagram 
in Fig.~\ref{diag_2intro} commutes and all the operators are isometries. 
\par
\begin{figure}[ht]
 $$
  \xymatrix @!0 @R=25mm @C=35mm {
 V_{k+2}     \ar[r]^{{\mathsf A}^{V}_{k+2}} 
 	 \ar@/^/[d]^{{\mathsf Q}_{k+2}}  
 %
 & V_{k}  
  \ar@/^/[d]^{{\mathsf Q}_k}  \\
    S_{k+1}\ar@/^/[u]^-{{\mathsf P}_{k+2}}
    \ar[r]^-{{\mathsf A}^{S}_{k+1}} 
    \ar[ru]^{\Delta_k}
    &S_{k-1}\ar@/^/[u]^{{\mathsf P}_k}
  }
  $$
  \caption[The spaces $V_k$, $S_k$ and associated operators]{\label{diag_2intro}The top row contains the spaces $V_k$ for the vorticity fields while 
  the bottom row contains the stream function spaces $S_k$. The operators ${\mathsf A}^V_k$ and ${\mathsf A}^S_k$ appears in the Cauchy problems \eqref{pb:test1} and \eqref{pb:test2} respectively. The operators $\Delta_k$ 
  link isometrically the stream functions to the corresponding vorticity fields.}
  \end{figure}
 %
 To accurately  state the equivalence result between Problems \eqref{pb:test1} (Stokes problem in vorticity variable), \eqref{pb:test2} (Stokes problem 
 in stream function variable) and the evolution homogeneous Stokes equations in primitive variables, it is worth recalling 
 the functional framework for the Stokes equations by introducing the spaces:
 \begin{subequations}
 \label{def:Jk}
 \begin{align}
\mathbf J_0&=\big\{u\in \mathbf L^2(\F)\,:\,\nabla\cdot u=0\text{ in }\F\text{ and }u|_\Sigma\cdot n=0 \big\},\\
 \mathbf J_1&=\big\{u\in \mathbf H^1(\F)\,:\,\nabla\cdot u=0\text{ in }\F\text{ and }u|_\Sigma =0\big\},
 \end{align}
 whose scalar products are respectively:
  \begin{align}
(u_1,u_2)_{\mathbf J_0}&=\int_\F u_1\cdot u_2\dx\forallt u_1,u_2\in \mathbf J_0,\\
(u_1,u_2)_{\mathbf J_1}&=\int_\F \nabla u_1:\nabla u_2\dx\forallt u_1,u_2\in \mathbf J_1.
\end{align}
\end{subequations}
The inclusion $\mathbf J_1\subset \mathbf J_0$ being continuous dense and compact, from the Gelfand triple $\mathbf J_1\subset \mathbf J_0\subset \mathbf J_{-1}$ 
we can define a chain of embedded Hilbert spaces $\mathbf J_k$ and 
isometries ${\mathsf A}^{\mathbf J}_k:\mathbf J_k\to \mathbf J_{k-2}$ for every integer $k$. Providing that $\Sigma$ is of class ${\mathcal C}^{1,1}$, it can be shown in particular that:
$$\mathbf J_2=\mathbf J_1\cap \mathbf H^2(\F)\qquad\text{and}\qquad {\mathsf A}_2^{\mathbf J}=-\Pi_0\Delta,$$
where $\Pi_0:\mathbf L^2(\F)\to \mathbf J_0$ is the Leray projector.
For every $T>0$, every integer $k$, every $f_{\mathbf J}\in L^2(0,T; \mathbf J_{k-1})$ and every 
$u^{\rm i}$ in $\mathbf J_k$, it is well known that there exists a unique solution 
$$
u\in H^1(0,T;\mathbf J_{k-1})\cap {\mathcal C}([0,T];\mathbf J_k)\cap L^2(0,T;\mathbf J_{k+1}),
$$
\begin{subequations}
\label{pb:test3}
to the Cauchy problem:
\begin{alignat}{3}
\partial_t u + \nu {\mathsf A}_{k+1}^{\mathbf J} u &=f_{\mathbf J}&\quad &\text{in }\F_T\\
u(0)&=u^{\rm i}&&\text{in }\F.
\end{alignat}
\end{subequations}
The operator:
$$\nabla_k^\perp:\psi\in S_k\longmapsto \nabla^\perp\psi\in \mathbf J_{k-1},$$
will be proved to be an isometry for every integer $k$. It allows us to link Problem \eqref{pb:test3} to the equivalent problems \eqref{pb:test1} and \eqref{pb:test2}. 
More precisely,  let be given $T>0$ and consider 
\begin{itemize}
\item $u\in H^1(0,T;\mathbf J_{k-1})\cap {\mathcal C}([0,T];\mathbf J_{k})\cap L^2(0,T;\mathbf J_{k+1})$ the unique solution to Problem \eqref{pb:test1} for some integer $k$, 
some initial condition $u^{\rm i}\in \mathbf J_k$ and some source term $f_{\mathbf J}\in L^2(0,T;\mathbf J_{k-1})$;
\item $\psi\in H^1(0,T;S_{k'-1})\cap {\mathcal C}([0,T];S_{k'})\cap L^2(0,T;S_{k'+1})$ the unique solution to Problem \eqref{pb:test2} for some integer $k'$, 
some initial condition $\psi^{\rm i}\in S_{k'}$ and some source term $f_S\in L^2(0,T;S_{k'-1})$.
\end{itemize}
Providing that $k'=k$, we claim that both following assertions are equivalent:
\begin{my_enumerate}
\item $u=\nabla_k^\perp \psi$;
\item $u^{\rm i}=\nabla_k^\perp\psi^{\rm i}$ and $f_{\mathbf J}=\nabla^\perp_{k-1} f_S$.
\end{my_enumerate}
\par
To conclude this short presentation of the main ideas that will be further elaborated in this paper, it is worth noticing that, contrary to what 
happens with primitive variables, the case where $\F$ is multiply connected
 is notably more involved than the simply connected case. The same observation could still be came across 
 in the articles of  Glowinski and   Pironneau \cite{Glowinski:1979aa} and  Guermond and  Quartapelle  \cite{Guermond:1994aa}.
%
\subsection{Organization of the paper}
The next section is devoted to the study of the Stokes operator in nonprimitive variables (namely the operators $\mathsf A^V_k$ and $\mathsf A^S_k$ mentioned 
in the preceding section). The expression of the Biot-Savart law is also provided. 
Then, in Section~\ref{SEC:lift_oper}, lifting operators (for both the vorticity field and the stream function) are defined. 
They are required in Section~\ref{SEC:evol_stokes} for the analysis of the evolution Stokes problem (in nonprimitive variables) 
with nonhomogeneous boundary conditions. The NS equations in nonprimitive variables  is dealt with in Section~\ref{SEC:Navier-Stokes} where 
weak and strong solutions are addressed. Explicit formulas 
to recover the pressure from the vorticity or the stream function 
are supplied 
in Section~\ref{SEC:pressure}. The existence and uniqueness of more regular vorticity solutions is examined in Section~\ref{SEC:more_regular}. In this section we also prove the exponential decay of the palinstrophy (i.e. of the 
quantity $\|\nabla\omega\|_{\mathbf L^2(\F)}$) when time growths. In 
Section~\ref{SEC:conclude} we conclude with providing some  insights on upcoming generalization results for 
coupled fluid-structure systems.
%
\section{Stokes operator}
%

%
\subsection{Function spaces}
\label{SEC:main_spaces}
Let $\Sigma_0$ stands for  either $\Sigma^+$ or $\Sigma^-_j$ for some $j\in\{1,\ldots,N\}$. Providing that $\Sigma_0$ is of class ${\mathcal C}^{k,1}$ ($k$ being a nonnegative integer), it makes sense to consider 
the boundary Sobolev space $H^{k+{\frac12}}(\Sigma_0)$ and its dual space $H^{-k-{\frac12}}(\Sigma_0)$. 
Using $L^2(\Sigma_0)$ 
as pivot space, we shall use a boundary integral notation in place of the duality pairing all along this paper. More precisely, 
we adopt the following convention of notation:
\begin{equation}
\label{rem:brackets}
\langle g_1,g_2\rangle_{H^{-k-{\frac12}}(\Sigma_0),H^{k+{\frac12}}(\Sigma_0)}=\int_{\Sigma_0} g_1 g_2\ds\forallt g_1\in H^{-k-{\frac12}}(\Sigma_0)\text{ and }g_2\in H^{k+{\frac12}}(\Sigma_0).
\end{equation}
In particular, following this rule:
$$\langle g,1\rangle_{H^{-k-{\frac12}}(\Sigma_0),H^{k+{\frac12}}(\Sigma_0)}=\int_{\Sigma_0} g\ds\forallt \in H^{-k-{\frac12}}(\Sigma_0).$$
\subsubsection*{Fundamental function spaces}
For every nonnegative integer $k$, we denote by $H^k(\F)$ the classical Sobolev spaces 
of index $k$ and we define the Hilbert spaces:
\begin{subequations}
\begin{align}
S_0&=\{\psi\in  H^1(\F)\,:\,\, \psi|_{\Sigma^+}=0\quad\text{and}\quad \psi|_{\Sigma^-_j}=c_j,\quad c_j\in\mathbb R,\quad j=1,\ldots,N\},\\
S_1&=\bigg\{\psi\in S_0\cap H^2(\F)\,:\,\frac{\partial \psi}{\partial n}\Big|_\Sigma=0\bigg\},
\end{align}
%
provided with the scalar products:
\begin{align}
(\psi_1,\psi_2)_{S_0}&=(\nabla \psi_1,\nabla\psi_2)_{\mathbf L^2(\F)}\forallt \psi_1,\psi_2\in  S_0,\\
(\psi_1,\psi_2)_{S_1}&=(\Delta\psi_1,\Delta\psi_2)_{L^2(\F)}\forallt \psi_1,\psi_2\in  S_1.
\end{align}
\end{subequations}
The norm $\|\cdot\|_{S_0}$ is equivalent in $S_0$ to the usual norm of $H^1(\F)$. For every $j=1,\ldots,N$, we 
define the continuous linear form $\mathsf{Tr}_j:\psi\in S_0\mapsto \psi|_{\Sigma^-_j}\in \mathbb R$ and the function $\xi_j$ as the unique solution in $S_0$ to the variational problem: 
\begin{subequations}
\begin{equation}
(\xi_j,\theta)_{S_0}+\mathsf{Tr}_j\theta=0\forallt\theta\in S_0.
\end{equation}
The functions $\xi_j$ are harmonic in $\F$ and obey the mean fluxes conditions:
\begin{equation}
\label{flux_cond_xi}
\int_{\Sigma_k^-}\frac{\partial \xi_j}{\partial n}\ds=-\delta_j^k\qquad\text{for }k=1,\ldots,N,
\end{equation}
\end{subequations}
where $\delta_j^k$ is the Kronecker symbol.
We denote by $\mathbb F_S$ the $N$ dimensional subspace of $S_0$ spanned by the functions $\xi_j$ ($j=1,\ldots,N$) that will
account for the fluxes of the stream functions through the inner boundaries. Notice that the Gram matrix 
$\big((\xi_j,\xi_k)_{S_0}\big)_{1\leqslant j,k\leqslant N}$ is invertible and equal to the matrix of 
the traces $\big(-\mathsf{Tr}_k\xi_j\big)_{1\leqslant j,k\leqslant N}$. Therefore, by means of a Gram-Schmidt process, we can derive from the free 
family $\{\xi_j,\,j=1,\ldots,N\}$, 
an orthonormal family in $S_0$, denoted by $\{\hat\xi_j,\,j=1,\ldots,N\}$. The space $S_0$ admits the following orthogonal decomposition:
\begin{equation}
\label{decom_S0_1}
S_0=H^1_0(\F)\sumperp \mathbb F_S.
\end{equation}
In $S_1$, the norm $\|\cdot\|_{S_1}$ is equivalent to the usual norm of $H^2(\F)$. For every $j=1,\ldots,N$,  
we denote by $\chi_j$ the unique solution in $S_1$ such that:
%
\begin{equation}
\label{def_chi}
\Delta^2\chi_j=0\quad\text{ in }\F\qquad\text{and}\qquad\int_{\Sigma_k^-}\frac{\partial \Delta\chi_j}{\partial n}\ds=-\delta_j^k\qquad\text{for }k=1,\ldots,N,
\end{equation}
where the normal derivative of $\Delta\chi_j$ is in $H^{-{\frac32}}(\Sigma_k^-)$  (see the convention of notation \eqref{rem:brackets}).
We denote by ${\mathbb B}_S$ the $N$ dimensional subspace of $S_1$ spanned by the functions $\chi_j$. The Gram matrix $\big((\chi_j,\chi_k)_{S_1}\big)_{1\leqslant j,k\leqslant N}$ being invertible and equal to the matrix of traces 
$\big({\mathsf {Tr}}_j\chi_k\big)_{1\leqslant j,k\leqslant N}$, we infer that:
\begin{equation}
\label{split_S1}
S_1=H^2_0(\F)\sumperp {\mathbb B}_S.
\end{equation}
In $L^2(\F)$, we denote by $\mathfrak H$   the closed subspace of the harmonic functions with zero mean flux through every connected 
part $\Sigma_j^-$ of the inner boundary ($j=1,\ldots,N$), namely:
\begin{equation}
\mathfrak H= 
\Big\{h\in L^2(\F)\,:\,\Delta h=0\text{ in }\mathcal D(\F)\text{ and }(h,\Delta\chi_j)_{L^2(\F)}=0,\quad j=1,\ldots,N\Big\}.
\end{equation}
\index{H0@$\mathfrak H$: harmonic functions in $L^2(\F)$ wish zero flux}
%
%
The space $L^2(\F)$ admits the orthogonal decomposition:
\begin{equation}
\label{eq:decomp_L2}
L^2(\F)=\mathfrak H\sumperp V_0 \quad
\text{ where }
\quad V_0 =\mathfrak H^\perp.
\end{equation}
\index{V0@$V_0$: vorticity space in $L^2(\F)$}
It results from the following lemma that the space $V_0$ is the natural function space for the vorticity field.
%
\begin{lemma}
\label{first_iso}
The operator $\Delta_0:\psi\in S_1\mapsto \Delta\psi\in V_0$
%
is an isometry. 
\end{lemma}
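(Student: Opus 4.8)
The plan is to show that $\Delta_0$ is well-defined, bijective, and norm-preserving, where $S_1$ carries the norm $\|\psi\|_{S_1}=\|\Delta\psi\|_{L^2(\F)}$ and $V_0$ the $L^2(\F)$ norm; the isometry property is then immediate from the very definition of the $S_1$ scalar product, so the real content is that $\Delta$ maps $S_1$ \emph{onto} $V_0$ and is injective. Injectivity is the easy half: if $\psi\in S_1$ and $\Delta\psi=0$, then $\psi$ is harmonic with $\psi|_{\Sigma^+}=0$, $\psi|_{\Sigma_j^-}=c_j$ and $\partial_n\psi|_\Sigma=0$; testing $\Delta\psi=0$ against $\psi$ and integrating by parts (legitimately, since $\psi\in H^2(\F)$ and the boundary terms $\int_\Sigma\psi\,\partial_n\psi\,ds$ vanish because $\partial_n\psi=0$) gives $\|\nabla\psi\|_{\mathbf L^2(\F)}=0$, hence $\psi=0$ (it vanishes on $\Sigma^+$).

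For well-definedness I must check $\Delta\psi\in V_0=\mathfrak H^\perp$ for every $\psi\in S_1$, i.e. $(\Delta\psi,h)_{L^2(\F)}=0$ for all $h\in\mathfrak H$. Pick a representative: since $h$ is harmonic in $\F$ and merely $L^2$, one cannot integrate by parts naively; instead I would use the standard density/regularity device — approximate $h$ by smooth harmonic functions, or rather use that for $\psi\in H^2(\F)$ Green's second identity $\int_\F(\Delta\psi\,h-\psi\,\Delta h)\,dx=\int_\Sigma(\partial_n\psi\,h-\psi\,\partial_n h)\,ds$ extends by continuity to $h\in\mathfrak H$ with the boundary terms interpreted via the duality pairings of Subsection~\ref{SEC:main_spaces}. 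Then $\Delta h=0$, $\partial_n\psi|_\Sigma=0$, $\psi|_{\Sigma^+}=0$, and $\psi|_{\Sigma_j^-}=c_j$ is constant, so the right-hand side collapses to $-\sum_j c_j\int_{\Sigma_j^-}\partial_n h\,ds$, which is zero precisely because $h\in\mathfrak H$ has vanishing flux through every inner boundary (equivalently $(h,\Delta\chi_j)_{L^2(\F)}=0$ together with the flux normalization \eqref{def_chi} of $\chi_j$). Hence $\Delta\psi\perp\mathfrak H$.

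For surjectivity, given $\omega\in V_0$ I must produce $\psi\in S_1$ with $\Delta\psi=\omega$. First solve the Dirichlet problem: let $\tilde\psi\in H^1_0(\F)$ solve $\Delta\tilde\psi=\omega$; elliptic regularity on the ${\mathcal C}^{1,1}$ domain gives $\tilde\psi\in H^2(\F)$. Now $\tilde\psi$ need not satisfy the Neumann condition $\partial_n\tilde\psi|_\Sigma=0$, so I correct it by adding a biharmonic function: seek $\psi=\tilde\psi+\sum_{j=1}^N a_j\chi_j + \phi$ where $\phi\in H^2_0(\F)$ is biharmonic (hence $\phi=0$, so really $\psi=\tilde\psi+\sum_j a_j\chi_j$ up to elements of $H^2_0$) — more precisely, I would argue via the decomposition $S_1=H^2_0(\F)\sumperp\mathbb B_S$ from \eqref{split_S1}: the map $\Delta:H^2_0(\F)\to\mathfrak H^\perp\cap(\text{range})$ and the finite-dimensional correction by $\mathbb B_S$ together exhaust $V_0$. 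The cleanest route: since $\Delta H^2_0(\F)=\{g\in L^2(\F): (g,h)=0\ \forall h\ \text{harmonic}\}$ (the Quartapelle identity \eqref{LEM:ortho_L2}, adapted to the multiply connected setting), and $V_0$ differs from that space only by the $N$-dimensional contribution of the fluxes through $\Sigma_j^-$ which is hit exactly by $\Delta\mathbb B_S$ thanks to the normalization \eqref{def_chi}, we get $\Delta S_1=\Delta H^2_0(\F)+\Delta\mathbb B_S=V_0$.

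\textbf{Main obstacle.} The delicate point is surjectivity in the multiply connected case: one must verify that $\Delta S_1$ captures not only the ``bulk'' part $\mathfrak H^\perp$ relative to \emph{all} harmonic functions, but also the extra directions corresponding to nonzero fluxes through the inner boundaries, and that these are supplied precisely by $\Delta\chi_j$ with the right normalization — i.e. that the Gram matrix $\big((\chi_j,\chi_k)_{S_1}\big)$ being invertible (equivalently the trace matrix $\big(\mathsf{Tr}_j\chi_k\big)$) really does make $\{\Delta\chi_j\}$ a basis complement. Establishing the extended Green's identity for $h\in\mathfrak H$ (merely $L^2$, no trace in the classical sense) with the correct interpretation of the boundary pairings is the other technical step requiring care; everything else is a routine elliptic-regularity and integration-by-parts bookkeeping.
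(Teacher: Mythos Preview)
Your proposal is essentially correct, but your route to surjectivity differs from the paper's, and your first attempt at it contains a misstep worth flagging.

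Your constructive attempt --- solve the Dirichlet problem for $\tilde\psi\in H^1_0(\F)\cap H^2(\F)$ and then correct by adding $\sum_j a_j\chi_j$ --- cannot work as written: the $\chi_j$ lie in $S_1$, so $\partial_n\chi_j|_\Sigma=0$, and adding them does nothing to kill the (generically nonzero) Neumann trace $\partial_n\tilde\psi|_\Sigma$. You then rightly abandon this for the structural ``cleanest route'', which is valid: use $S_1=H^2_0(\F)\sumperp\mathbb B_S$, invoke the Quartapelle identity $\Delta H^2_0(\F)=\mathfrak H_{\rm all}^\perp$ (orthogonal complement of \emph{all} $L^2$ harmonic functions), and then check that $\Delta\mathbb B_S$ spans the $N$-dimensional gap $V_0\cap\mathfrak H_{\rm all}$. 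This last point is exactly your acknowledged obstacle; it follows because the $\Delta\chi_j$ are harmonic, lie in $V_0$ (by your well-definedness argument), are linearly independent, and $\dim(\mathfrak H_{\rm all}/\mathfrak H)=N$ since the $N$ flux constraints are independent (witnessed by \eqref{def_chi}).

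The paper instead stays constructive but corrects with the \emph{right} functions: starting from the same Dirichlet solution $\psi_0\in H^1_0(\F)\cap H^2(\F)$, it uses the orthogonality $\omega\perp\mathfrak H$ (tested against $h_0=h-\sum_j(\int_\Sigma\partial_n\hat\xi_j\,h)\hat\xi_j\in\mathfrak H$ for arbitrary harmonic $h\in H^1(\F)$) to deduce that $\partial_n\psi_0|_\Sigma$ lies in the span of $\{\partial_n\hat\xi_j|_\Sigma\}$, then subtracts the matching combination of the \emph{harmonic} functions $\hat\xi_j\in\mathbb F_S$. This leaves $\Delta\psi=\omega$ unchanged while forcing $\partial_n\psi|_\Sigma=0$. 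The paper's argument is more direct and self-contained --- it does not need the Quartapelle identity as a black box --- whereas yours exposes the underlying orthogonal decomposition $V_0=\Delta H^2_0(\F)\sumperp\Delta\mathbb B_S$ more transparently, at the cost of importing that identity.
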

%
\begin{proof}
Let $\omega$ be in $V_0$ and denote by $\psi_0$ the unique function in $H^1_0(\F)\cap H^2(\F)$ satisfying $\Delta\psi_0=\omega$.
On the other hand, using the rule of notation \eqref{rem:brackets},  
the function
$$h_0=h-\sum_{j=1}^N \left(\int_{\Sigma}\frac{\partial \hat\xi_j}{\partial n}h\ds\right) \hat\xi_j,$$
is in the space $\mathfrak H^1=\mathfrak H\cap H^1(\F)$ providing that $h$ is a harmonic function in $H^1(\F)$. It follows that:
$$
(\omega,h_0)_{L^2(\F)}=(\Delta\psi_0,h_0)_{L^2(\F)}=\int_{\Sigma}  \frac{\partial\psi_0}{\partial n}h_0 \ds
=\int_{\Sigma} \bigg[\frac{\partial\psi_0}{\partial n}-\sum_{j=1}^N 
 \left(\int_{\Sigma} \frac{\partial \psi_0}{\partial n}\hat\xi_j \ds\right) \frac{\partial \hat\xi_j}{\partial n}\bigg]h \ds=0.
$$
Since every element in $H^{{\frac12}}(\Sigma)$ can be achieved as the trace of a harmonic function in $H^1(\F)$, the equality above entails that:
$$
\frac{\partial\psi_0}{\partial n}=\sum_{j=1}^N 
 \left(\int_{\Sigma} \frac{\partial \psi_0}{\partial n}\hat\xi_j  \ds\right) \frac{\partial \hat\xi_j}{\partial n}\qquad\text{in }H^{-{\frac12}}(\Sigma).
$$
We are done by noticing now that the function:
$$\psi=\psi_0-\sum_{j=1}^N 
 \left(\int_{\Sigma} \frac{\partial \psi_0}{\partial n}\hat\xi_j  \ds\right)  \hat\xi_j,$$
is in $S_1$ and solves $\Delta \psi=\omega$. Uniqueness being straightforward, the proof is then complete.
\end{proof}
\subsubsection*{The Bergman projection and its inverse}
Considering the orthogonal splitting \eqref{eq:decomp_L2} of $L^2(\F)$, 
we denote by $\mathsf P$ the orthogonal projection from $L^2(\F)$ onto $V_0$ while the notation $\mathsf P^\perp$ will stand for
the orthogonal projection onto $\mathfrak H$. When the domain $\F$ is simply connected, the operator $\mathsf P^\perp$ 
is referred to as the harmonic Bergman projection and has been extensively studied (see for instance \cite{Bell:1982aa}, \cite{Straube:1986aa} and references therein). 
The projector $\mathsf P$ (and also $\mathsf P^\perp$ which we are less interested in) enjoys the following property:
%
\begin{lemma}
\label{regul_P0}
Assume that $\Sigma$ is of class ${\mathcal C}^{k+1,1}$ for some nonnegative integer $k$, then $\mathsf P$ (and $\mathsf P^\perp$) maps $H^k(\F)$ into $H^k(\F)$
and $\mathsf P$, seen as an operator from $H^k(\F)$ into $H^k(\F)$, is bounded.
\end{lemma}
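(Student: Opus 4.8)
The plan is to reduce the statement to an interior-plus-boundary regularity property of the bi-Laplacian. Since $\mathsf P^\perp=\mathrm{Id}-\mathsf P$, it is enough to treat $\mathsf P$; and the case $k=0$ is trivial because the orthogonal projection $\mathsf P$ of $L^2(\F)$ is bounded with norm at most one. So I would fix an integer $k\geqslant 1$ and assume $\Sigma$ of class $\mathcal C^{k+1,1}$. Given $f\in H^k(\F)$, I would use Lemma~\ref{first_iso}: since $\Delta_0:S_1\to V_0$ is a bijective isometry, there is a unique $\psi\in S_1$ with $\Delta\psi=\mathsf P f$, and moreover $\|\psi\|_{S_1}=\|\mathsf P f\|_{L^2(\F)}\leqslant\|f\|_{L^2(\F)}$. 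The key observation is that $f-\mathsf P f=\mathsf P^\perp f$ lies in $\mathfrak H$, hence is harmonic, so that
\[
\Delta^2\psi=\Delta(\mathsf P f)=\Delta f-\Delta(\mathsf P^\perp f)=\Delta f\qquad\text{in }\mathcal D'(\F),
\]
with $\Delta f\in H^{k-2}(\F)$ because $\Delta:H^k(\F)\to H^{k-2}(\F)$ is bounded.

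Next I would read this as a boundary value problem for $\psi$. By definition of $S_1\subset S_0$, the function $\psi$ satisfies $\psi=0$ on $\Sigma^+$, $\psi=c_j$ (a constant) on each $\Sigma^-_j$, and $\partial\psi/\partial n=0$ on $\Sigma$; the Dirichlet datum is piecewise constant on the $\mathcal C^{k+1,1}$ curves $\Sigma^+,\Sigma^-_j$, hence belongs to $H^{k+3/2}(\Sigma)$ with norm bounded by $\sum_j|c_j|\leqslant C\|\psi\|_{H^2(\F)}$, while the Neumann datum vanishes (recall the $S_1$-norm is equivalent to the $H^2(\F)$-norm). Thus $\psi$ solves the clamped-plate problem $\Delta^2\psi=\Delta f$ with prescribed traces of $\psi$ and $\partial_n\psi$ on $\Sigma$. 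I would then invoke the standard elliptic regularity theory for the bi-Laplacian with clamped (Dirichlet) boundary conditions: subtracting an $H^{k+2}(\F)$ lift of the boundary values reduces matters to the energy space $H^2_0(\F)$, and the Agmon--Douglis--Nirenberg estimates — which gain four derivatives, so that a right-hand side in $H^{k-2}(\F)$ yields a solution in $H^{k+2}(\F)$, precisely when the boundary is of class $\mathcal C^{k+1,1}$ — give $\psi\in H^{k+2}(\F)$ with an estimate of the form
\[
\|\psi\|_{H^{k+2}(\F)}\leqslant C\big(\|\Delta f\|_{H^{k-2}(\F)}+\|\psi\|_{H^2(\F)}\big)\leqslant C'\|f\|_{H^k(\F)},
\]
where the last inequality uses the isometry bound $\|\psi\|_{H^2(\F)}\leqslant C\|\psi\|_{S_1}\leqslant C\|f\|_{L^2(\F)}$. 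Since $\Delta:H^{k+2}(\F)\to H^k(\F)$ is bounded, this gives $\mathsf P f=\Delta\psi\in H^k(\F)$ with $\|\mathsf P f\|_{H^k(\F)}\leqslant C'\|f\|_{H^k(\F)}$, and then $\mathsf P^\perp f=f-\mathsf P f\in H^k(\F)$ with a comparable bound.

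The main obstacle is not the construction — which is completely explicit once Lemma~\ref{first_iso} is available — but making the biharmonic regularity step rigorous in the low range $k=1$, where $\Delta f$ lies only in $H^{-1}(\F)$: there the clamped-plate problem must be read in its weak formulation in $H^2_0(\F)$ and the four-derivative gain obtained by a duality/transposition argument (or by localization and difference quotients). I would also be careful to keep the constants uniform, which is why I retain the extra term $\|\psi\|_{H^2(\F)}$ in the estimate and then absorb it through the isometry of Lemma~\ref{first_iso} rather than through an unquantified a priori bound; this is exactly what makes $\mathsf P$, viewed as an operator $H^k(\F)\to H^k(\F)$, bounded.
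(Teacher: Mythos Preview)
Your proof is correct and follows essentially the same route as the paper: both use Lemma~\ref{first_iso} to write $\mathsf P f=\Delta\psi$ with $\psi\in S_1$, observe that $\Delta^2\psi=\Delta f$ (since $\mathsf P^\perp f$ is harmonic), and then invoke biharmonic elliptic regularity (the paper cites \cite[Theorem~1.11]{Girault:1986aa}) to conclude $\psi\in H^{k+2}(\F)$ and hence $\mathsf Pf=\Delta\psi\in H^k(\F)$. The only cosmetic difference is how the non-zero constants $c_j=\psi|_{\Sigma_j^-}$ are removed: you subtract a generic $H^{k+2}$ lift of the boundary data, whereas the paper uses the explicit orthogonal decomposition $S_1=H^2_0(\F)\sumperp\mathbb B_S$ and writes $\psi=w_0+\sum_k\alpha_k\chi_k$ with $w_0\in H^2_0(\F)$ solving the variational biharmonic problem $(\Delta w_0,\Delta\theta_0)_{L^2}=(f,\Delta\theta_0)_{L^2}$ for all $\theta_0\in H^2_0(\F)$, and $\alpha_k=(\Delta\chi_k,f)_{L^2}$. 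The paper's choice is marginally cleaner because the $\chi_k$ are biharmonic (so the source term is unchanged after subtraction) and the coefficients $\alpha_k$ are controlled directly by $\|f\|_{L^2}$, but your argument is equivalent.
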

%
\begin{proof}
Let $u$ be in $L^2({\F})$. The proof consists in verifying that 
$$\mathsf P u=\Delta w_0+\sum_{k=1}^N (\Delta\chi_k,u)_{L^2(\F)}\Delta\chi_k,$$
 where the functions $w_0$ belongs to 
$H^2_0(\F)$ and satisfies the variational formulation:
\begin{equation}
\label{varia_w0}
(\Delta w_0,\Delta\theta_0)_{L^2(\F)}=(u,\Delta \theta_0)_{L^2(\F)},\forallt \theta_0\in H^2_0(\F).
\end{equation}
The conclusion of the Lemma will  follow according to elliptic regularity results for the biharmonic operator stated for instance in \cite[Theorem 1.11]{Girault:1986aa}.
By definition:
$$\mathsf P u={\rm argmin}\bigg\{\frac{1}{2}\int_{\F} |v-u|^2\dx\,:\, v\in V_0\bigg\}.$$
According to Lemma~\ref{first_iso}, there exists a unique $w\in S_1$ such that $\mathsf P u=\Delta  w$ and:
$$ w={\rm argmin}\bigg\{\frac{1}{2}\int_{\F} |\Delta \theta-u|^2\dx\,:\, \theta\in S_1\bigg\}.$$
Owning to the orthogonal decomposition \eqref{split_S1}, the function $w$ can be decomposed as:
$$w=w_0+\sum_{k=1}^N \alpha_k \chi_k,$$ 
where $w_0\in H^2_0(\F)$ and $(\alpha_1,\ldots,\alpha_N)\in\mathbb R^N$ are such that:
$$(w_0,\alpha_1,\ldots,\alpha_N)={\rm argmin}\bigg\{\frac{1}{2}\int_{\F} \Big|\Delta \theta_0+\sum_{k=1}^N{\beta_k}\Delta\chi_k-u\Big|^2\dx\,:\, (\theta_0,\beta_1,\ldots,\beta_N)\in H^2_0({\F})\times\mathbb R^N \bigg\}.$$
It follows that $w_0$ solves indeed the variational problem \eqref{varia_w0} and $\alpha_k=(\Delta\chi_k,u)_{L^2(\F)}$ for every $k=1,\ldots,N$. 
\end{proof}
%
\begin{rem}
\label{rem_fluxA}
The following observations are in order:
\begin{enumerate}
\item
The harmonic Bergman projection is quite demanding in terms of boundary regularity, and one may wonder if the assumption on the regularity 
of $\Sigma$ is optimal in the statement of Lemma~\ref{regul_P0}. 
Focusing on the case $k=0$, the definition of the space $\mathfrak H$ requires defining the flux of harmonic functions 
through the connected parts of $\Sigma^-$.  The normal derivative of harmonic functions in $L^2(\F)$ can be defined as   elements
of $H^{-{\frac32}}(\Sigma)$. However, it requires the boundary to be ${\mathcal C}^{1,1}$ (see \cite[page 54]{Grisvard:1985aa}), which is the default level of regularity 
assumed for the domain $\F$ throughout this article. 
%
\item
\label{rem_flux}
For every $u\in L^2(\F)$, the function $\mathsf P^\perp u$ belongs to $\mathfrak H$ and therefore admits a normal trace 
on every $\Sigma_j^-$ ($j=1,\ldots,N$) in $H^{-{\frac32}}(\Sigma_j^-)$. We deduce that, when $u$ belongs to $H^2(\F)$, the fluxes of $u$ 
across  the parts $\Sigma^-_j$ ($j=1,\ldots,N$) of the boundary
are conserved by the projection $\mathsf P$, namely:
$$\int_{\Sigma^-_j}\frac{\partial \mathsf Pu}{\partial n}\ds=\int_{\Sigma^-_j}\frac{\partial u}{\partial n}\ds\forallt j=1,\ldots,N,$$
where ${\partial \mathsf Pu}/{\partial n}$ belongs to $H^{-{\frac32}+k}(\Sigma_j^-)$ providing that $\Sigma_j^-$ is of class ${\mathcal C}^{k+1,1}$ for $k=0,1,2$.
\end{enumerate}
\end{rem}
%
%
%
Let us define now the operator:
\begin{equation}
\label{def_Q}
\mathsf Q:u\in H^1(\F)\longmapsto \mathsf Q u = \argmin\bigg\{ \int_\F |\nabla \theta-\nabla u|^2\dx\,:\, \theta\in S_0\bigg\}\in S_0.
\end{equation}
The variational formulation  corresponding to the minimization problem reads:
\begin{equation}
\label{first_S0}
(\nabla \mathsf Qu,\nabla \theta)_{\mathbf L^2(\F)}=(\nabla  u,\nabla \theta)_{\mathbf L^2(\F)}\qquad \text{for all }\theta\in S_0,
\end{equation}
what means that $\mathsf Qu$ is  the unique function in $S_0$ that satisfies   $\Delta \mathsf Qu=\Delta u$ in $\F$. Denoting $\mathsf Q^\perp={\rm Id}-\mathsf Q$, this entails that $\mathsf Q^\perp u$ is harmonic and
choosing $\chi_k$ ($k=1,\ldots,N$) as test function in \eqref{first_S0} and integrating by parts, we obtain:
$$(\mathsf Q^\perp u,\Delta\chi_k)_{L^2(\F)}=0,\qquad k=1,\ldots,N,$$
whence we deduce that $\mathsf Q^\perp u$ lies in $\mathfrak H^1$. Besides, there exists real coefficients $\alpha_j$ such that the function:
$$u_0=\mathsf Qu-\sum_{j=1}^N\alpha_j\chi_j,$$
belongs to $H^1_0(\F)$ because  the Gram matrix $\big((\chi_j,\chi_k)_{S_1}\big)_{1\leqslant j,k\leqslant N}$ is invertible and equal to the matrix of traces 
$\big({\mathsf T}_j\chi_k\big)_{1\leqslant j,k\leqslant N}$.
It follows that for every $h\in \mathfrak H^1$:
\begin{equation}
\label{second_S0}
(\nabla \mathsf Qu,\nabla h)_{\mathbf L^2(\F)}=(\nabla u_0,\nabla h)_{\mathbf L^2(\F)}-\sum_{j=1}^N\alpha_j(h,\Delta\chi_j)_{L^2(\F)}=0.
\end{equation}
So the operators $\mathsf P$ and $\mathsf Q$  are both orthogonal projections whose kernels are harmonic functions ($\mathfrak H$ for $\mathsf P$ 
and $\mathfrak H^1$ for $\mathsf Q$) but for different scalar products. They are tightly related, as expressed in the next lemma, the statement of which 
requires introducing a new function space. Thus, we define $V_1$ as the image of $S_0$ by $\mathsf P$ 
and we denote by ${\mathsf P}_1$ the restriction of $\mathsf P$ to $S_0$. It is elementary to verify that 
$${\mathsf P}_1:S_0\longrightarrow V_1$$ 
is one-to-one. We 
denote by ${\mathsf Q}_1$ the inverse of ${\mathsf P}_1$. 
The space $V_1$
 is then provided with the image topology, namely with the scalar product:
 $$(\omega_1,\omega_2)_{V_1}=({\mathsf Q}_1\omega_1,{\mathsf Q}_1\omega_2)_{S_0}=(\nabla {\mathsf Q}_1\omega_1,\nabla {\mathsf Q}_1\omega_2)_{\mathbf L^2(\F)}\forallt \omega_1,\omega_2\in V_1.$$
 Observe that since $H^1(\F)=S_0\oplus \mathfrak H^1$, we have also $V_1=\mathsf PH^1(\F)$.
%
\begin{lemma}
\label{PQ_inverses}
If $\Sigma$ is of class ${\mathcal C}^{2,1}$, $V_1$ is a subspace of $H^1(\F)$, ${\mathsf Q}_1$ is the restriction of $\mathsf Q$ to $V_1$ and the topology of $V_1$ is equivalent 
to the topology of $H^1(\F)$.
\end{lemma}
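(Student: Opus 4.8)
The plan is to deduce all three assertions from material already in place, the hypothesis $\Sigma\in\mathcal C^{2,1}$ entering only through Lemma~\ref{regul_P0} applied with $k=1$. First I would settle the inclusion $V_1\subset H^1(\F)$: since $\Sigma$ is $\mathcal C^{2,1}$, Lemma~\ref{regul_P0} with $k=1$ says that $\mathsf P$ maps $H^1(\F)$ boundedly into $H^1(\F)$, so $V_1=\mathsf P(S_0)\subseteq \mathsf P(H^1(\F))\subseteq H^1(\F)$.

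Next I would identify $\mathsf Q_1$ with the restriction of $\mathsf Q$ to $V_1$. Fix $\omega\in V_1$; by the previous step $\omega\in H^1(\F)$, so $\mathsf Q\omega\in S_0$ is defined. The discussion preceding the lemma already shows that $\mathsf Q^\perp\omega=\omega-\mathsf Q\omega$ is harmonic and orthogonal in $L^2(\F)$ to each $\Delta\chi_j$, hence lies in $\mathfrak H^1\subset\mathfrak H=\ker\mathsf P$. Therefore $\mathsf P(\mathsf Q\omega)=\mathsf P\omega$. But $\omega\in V_1\subset V_0$ and $\mathsf P$ acts as the identity on $V_0$, so $\mathsf P\omega=\omega$; since moreover $\mathsf Q\omega\in S_0$ and $\mathsf P_1$ is the restriction of $\mathsf P$ to $S_0$, this reads $\mathsf P_1(\mathsf Q\omega)=\omega$, whence $\mathsf Q\omega=\mathsf P_1^{-1}\omega=\mathsf Q_1\omega$. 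Thus $\mathsf Q_1=\mathsf Q|_{V_1}$.

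Finally I would establish equivalence of topologies. Let $\omega\in V_1$ and put $\psi=\mathsf Q\omega=\mathsf Q_1\omega\in S_0$, so that $\|\omega\|_{V_1}=\|\nabla\psi\|_{\mathbf L^2(\F)}=\|\psi\|_{S_0}$. The bound $\|\omega\|_{V_1}\leqslant\|\omega\|_{H^1(\F)}$ comes from the variational characterization \eqref{first_S0}, which makes $\mathsf Q$ the orthogonal projection onto $S_0$ for the semi-inner product $(\nabla\cdot,\nabla\cdot)_{\mathbf L^2(\F)}$ and hence nonexpansive for the associated seminorm, giving $\|\nabla\psi\|_{\mathbf L^2(\F)}\leqslant\|\nabla\omega\|_{\mathbf L^2(\F)}\leqslant\|\omega\|_{H^1(\F)}$. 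For the reverse bound I would use the identification just proved to write $\omega=\mathsf P_1\psi=\mathsf P\psi$ and combine the $H^1$-boundedness of $\mathsf P$ (Lemma~\ref{regul_P0}) with the equivalence of $\|\cdot\|_{S_0}$ and $\|\cdot\|_{H^1(\F)}$ on $S_0$:
\[
\|\omega\|_{H^1(\F)}=\|\mathsf P\psi\|_{H^1(\F)}\leqslant C\|\psi\|_{H^1(\F)}\leqslant C'\|\psi\|_{S_0}=C'\|\omega\|_{V_1}.
\]

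The argument is largely bookkeeping once the preceding lemmas are available, and the one genuinely substantial ingredient is the $H^1$-boundedness of the harmonic-Bergman-type projector $\mathsf P$, which is exactly — and solely — where $\Sigma\in\mathcal C^{2,1}$ is used. The point that deserves the most attention is not an inequality but keeping straight the several a priori distinct operators $\mathsf P$, $\mathsf P_1$, $\mathsf Q$, $\mathsf Q_1$ and the three norms involved, making sure each identity ($\mathsf Q^\perp\omega\in\mathfrak H$, $\mathsf P|_{V_0}={\rm Id}$, $\mathsf Q_1=\mathsf P_1^{-1}$) is invoked only on its legitimate domain.
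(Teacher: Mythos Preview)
Your proof is correct and follows essentially the same approach as the paper. The only cosmetic differences are that the paper verifies $\mathsf Q\mathsf P_1\psi=\psi$ for $\psi\in S_0$ (using $\mathsf P_1\psi-\psi\in\mathfrak H^1=\ker\mathsf Q$) whereas you verify the companion identity $\mathsf P_1\mathsf Q\omega=\omega$ for $\omega\in V_1$ (using $\omega-\mathsf Q\omega\in\mathfrak H^1\subset\ker\mathsf P$), and the paper appeals to the bounded inverse theorem for the norm equivalence while you write out the two inequalities directly; the substantive ingredients---Lemma~\ref{regul_P0} with $k=1$ and the fact that $\mathsf Q^\perp$ is valued in $\mathfrak H^1$---are identical.
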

%
\begin{proof}
According to Lemma~\ref{regul_P0}, if $\Sigma$ is of class ${\mathcal C}^{2,1}$, the space 
$V_1$ is a subspace of $H^1(\F)$ and $\mathsf P$ is bounded 
from $S_0$ onto $V_1$ (seen as subspace of $H^1(\F)$).
The operator ${\mathsf P}_1:S_0\to V_1$ being bounded and invertible, it is an isomorphism according to the bounded inverse theorem. 
Moreover, for every $\psi$ be in $S_0$: 
$$\mathsf Q {\mathsf P}_1 \psi =\mathsf Q (\psi+({\mathsf P}_1\psi-\psi))=\mathsf Q \psi+\mathsf Q ({\mathsf P}_1\psi-\psi)=\mathsf Q \psi=\psi,$$
since the function ${\mathsf P}_1\psi-\psi$ is in $\mathfrak H^1$.  The proof is now complete. 
\end{proof}
\begin{rem}
When $\Sigma$ is only of class $\mathcal C^{1,1}$, $V_1$ is a subspace of $S_0+\mathfrak H$. In particular, every element of $V_1$ has 
a trace on $\Sigma$ in  the space $H^{-\frac12}(\Sigma)$. This trace is in $H^{\frac12}(\Sigma)$ when $\Sigma$ is of class ${\mathcal C}^{2,1}$.
\end{rem}
%
%
\par
%

%
\subsubsection*{Further scalar products}
For every nonnegative integer $k$, we define $\mathfrak H^k=\mathfrak H\cap H^k(\F)$. Assuming that $\Sigma$ is of class $\mathcal C^{k-1,1}$, the space 
$\mathfrak H^k$ is provided with the scalar product:
$$(h_1,h_2)_{\mathfrak H^k}=(h_1|_\Sigma,h_2|_\Sigma)_{H^{k-\frac12}(\Sigma)}\forallt h_1,h_2\in\mathfrak H^k.$$
It would sometimes come in handy to provide $H^1(\F)$ with a scalar product that turns $\mathsf Q$ into an orthogonal projection. To do that,  it suffices to define:
\begin{subequations}
\begin{equation}
\label{def_H1}
(u_1,u_2)_{H^1}^S=(\mathsf Qu_1,\mathsf Qu_2)_{S_0}+(\mathsf Q^\perp u_1,\mathsf Q^\perp u_2)_{\mathfrak H^1}\qquad\text{for all }u_1,u_2\in H^1(\F).
\end{equation}
Similarly, 
the scalar product:
\begin{equation}
\label{def_H1V1}
(u_1,u_2)_{H^1}^V=(\mathsf Pu_1,\mathsf Pu_2)_{V_1}+(\mathsf P^\perp u_1,\mathsf P^\perp u_2)_{\mathfrak H^1}\qquad\text{for all }u_1,u_2\in H^1(\F),
\end{equation}
\end{subequations}
turns the direct sum $H^1(\F)=V_1\oplus \mathfrak H^1$ into an orthogonal sum. 
%
\subsection{Stokes operator in nonprimitive variables}
\label{SEc:Stokes_SV}
The inclusion $S_1\subset S_0$ is clearly continuous dense and compact. Identifying the Hilbert space $S_0$ with its dual 
and denoting by $S_{-1}$ the dual space of $S_1$, we obtain the Gelfand triple:
\begin{equation}
S_1\subset S_0\subset S_{-1}.
\end{equation}
Following the lines of Appendix~\ref{gelf_triple},  we can
 define (with obvious notation) a family of embedded 
Hilbert spaces $\{S_k,\,k\in\mathbb Z\}$, a family of isometries  $\{{\mathsf A}_k^S:S_k\to S_{k-2},\,k\in\mathbb Z\}$ and a positive constant:
\begin{equation}
\label{def_poincare_const_S}
\lambda_\F^S=\min_{\psi\in S_1\atop \psi\neq 0}\frac{\|\psi\|^2_{S_1}}{\|\psi\|^2_{S_0}}.
\end{equation}
%
\begin{lemma}
\label{lem:sapce_S2}
The space $S_2$ is equal to $H^3(\F)\cap S_1$ providing that $\Sigma$ is of class ${\mathcal C}^{2,1}$.
For $k\geqslant 2$, the expressions of the operator  ${\mathsf A}^{S}_k$ is:
\begin{equation}
\label{expressAS2}
{\mathsf A}^{S}_k:\psi\in S_k\longmapsto -\mathsf Q_1\Delta \psi\in S_{k-2}.
\end{equation}
If $\Sigma$ is of class ${\mathcal C}^{k,1}$ then $S_k$ is a subspace of $H^{k+1}(\F)$ and the norm in $S_k$ is equivalent to the classical norm 
of $H^{k+1}(\F)$.
\end{lemma}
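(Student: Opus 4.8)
The strategy is the standard one for identifying a space produced by the Gelfand-triple/fractional-power machinery: first pin down $S_2$ as the preimage of $S_0$ under the isometry ${\mathsf A}^S_1:S_1\to S_{-1}$, then use elliptic regularity to see that this preimage is exactly $H^3(\F)\cap S_1$, and finally iterate. Concretely, I would start from the definition of ${\mathsf A}^S_1$ through the $S_1$-inner product, $\langle {\mathsf A}^S_1\psi,\theta\rangle_{S_{-1},S_1}=(\psi,\theta)_{S_1}=(\Delta\psi,\Delta\theta)_{L^2(\F)}$ for all $\theta\in S_1$. Saying $\psi\in S_2$ means ${\mathsf A}^S_1\psi\in S_0$, i.e. there is $g\in S_0$ with $(\Delta\psi,\Delta\theta)_{L^2(\F)}=(g,\theta)_{S_0}=(\nabla g,\nabla\theta)_{\mathbf L^2(\F)}$ for all $\theta\in S_1$. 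Unwinding, this is a weak formulation of $-\Delta(\Delta\psi)=\Delta g$ with the natural boundary conditions carried by $S_1$; since $\Delta g\in H^{-1}(\F)$ and $\psi$ already lies in $H^2_0(\F)\sumperp\mathbb B_S$ with $\Delta\psi\in V_0$, the biharmonic-type elliptic regularity for the bi-Laplacian (the same result from \cite[Theorem 1.11]{Girault:1986aa} invoked in the proof of Lemma~\ref{regul_P0}) upgrades $\psi$ to $H^3(\F)$ when $\Sigma\in{\mathcal C}^{2,1}$. Conversely any $\psi\in H^3(\F)\cap S_1$ has $\Delta\psi\in H^1(\F)$, and testing against $\theta\in S_1$ and integrating by parts shows ${\mathsf A}^S_1\psi=-\mathsf Q_1\Delta\psi\in S_0$ (the projection $\mathsf Q_1$ appears because, after integration by parts, the boundary terms force the representative to land in $S_0$ rather than in all of $H^1(\F)$, exactly as in \eqref{first_S0}). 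This gives $S_2=H^3(\F)\cap S_1$ together with the formula \eqref{expressAS2} for $k=2$; the norm equivalence in $S_2$ then follows from the open mapping theorem, since ${\mathsf A}^S_2:S_2\to S_0$ is by construction a bounded bijection and, by the elliptic estimate, bounded below in the $H^3$ norm.

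For $k\geqslant 3$ I would argue by induction: $S_{k}$ is defined as the preimage of $S_{k-2}$ under ${\mathsf A}^S_2$, so $\psi\in S_k$ iff $\psi\in S_2$ and $-\mathsf Q_1\Delta\psi\in S_{k-2}$. By the induction hypothesis $S_{k-2}=H^{k-1}(\F)\cap S_1$ (for $k-2\geqslant 2$; the base cases $k-2=0,1$ are the definitions), so $\mathsf Q_1\Delta\psi\in H^{k-1}(\F)$, hence $\Delta\psi\in H^{k-1}(\F)$ because $\mathsf Q^\perp\Delta\psi$ is harmonic with $H^{k-1}$ trace data and elliptic regularity for the Laplacian (valid since $\Sigma\in{\mathcal C}^{k,1}$, using Lemma~\ref{regul_P0} to control the Bergman/$\mathsf Q$-type pieces) gives $\mathsf Q^\perp\Delta\psi\in H^{k-1}(\F)$ as well. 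Then $\psi\in S_1$ with $\Delta\psi\in H^{k-1}(\F)$ and elliptic regularity once more yields $\psi\in H^{k+1}(\F)$. The reverse inclusion and the formula ${\mathsf A}^S_k=-\mathsf Q_1\Delta$ are immediate by restriction, and the norm equivalence propagates through the isometry ${\mathsf A}^S_k:S_k\to S_{k-2}$ combined with the induction hypothesis and the elliptic estimate.

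The main obstacle is the $k=2$ step, and specifically getting the boundary conditions right: the weak formulation defining ${\mathsf A}^S_1$ only sees test functions in $S_1$, which already encodes $\partial_n\theta|_\Sigma=0$ and the locally-constant Dirichlet data, so one must check carefully that the variational identity $(\Delta\psi,\Delta\theta)=(\nabla g,\nabla\theta)$ really is equivalent to a well-posed biharmonic boundary value problem to which \cite[Theorem 1.11]{Girault:1986aa} applies, and that the finite-dimensional correction space $\mathbb B_S$ (the functions $\chi_j$) is handled consistently — in particular that the flux conditions \eqref{def_chi} are compatible with membership in $S_2$. Once the correct boundary value problem is identified, the elliptic regularity is off-the-shelf; the bookkeeping with $\mathsf Q_1$, $\mathsf Q^\perp$ and the spaces $\mathfrak H^k$ is the only place where care is genuinely needed, and the regularity threshold ${\mathcal C}^{k,1}$ is dictated exactly by when Lemma~\ref{regul_P0} and the Laplacian elliptic estimate are available.
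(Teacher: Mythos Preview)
Your overall plan is sound and ends in the same place, but the paper dispatches the $k=2$ step more directly than via biharmonic regularity, and in doing so sidesteps exactly the obstacle you flag. The key tool you underuse is Lemma~\ref{first_iso}: since $\Delta_0:S_1\to V_0$ is onto, the identity $(\Delta\psi,\Delta\theta)_{L^2}=(\nabla g,\nabla\theta)_{\mathbf L^2}$ for all $\theta\in S_1$, after integrating the right side by parts (using $\partial_n\theta|_\Sigma=0$), reads $(\Delta\psi+g,v)_{L^2}=0$ for all $v\in V_0$, i.e.\ $-\mathsf P\Delta\psi=\mathsf Pg$. But $\Delta\psi\in V_0$ already (again Lemma~\ref{first_iso}), so $\mathsf P\Delta\psi=\Delta\psi$ and you land on the \emph{second-order} equation $-\Delta\psi=\mathsf Pg$ with $\mathsf Pg\in H^1(\F)$ by Lemma~\ref{regul_P0}. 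Laplacian regularity gives $\psi\in H^3(\F)$ immediately, and the $\mathbb B_S$ bookkeeping simply never appears. Applying $\mathsf Q_1$ to $-\Delta\psi=\mathsf P_1 g$ yields $g=-\mathsf Q_1\Delta\psi$, which is \eqref{expressAS2}. Your fourth-order route can be made to work (split $\psi=\psi_0+\sum c_j\chi_j$ along \eqref{split_S1} and apply biharmonic regularity to $\psi_0$), but it is strictly more laborious.

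Your induction step has a small gap. You assert that $\mathsf Q^\perp\Delta\psi$ is harmonic ``with $H^{k-1}$ trace data'', but at that point you only know $\Delta\psi\in H^1$, so its trace is merely in $H^{1/2}(\Sigma)$ and there is no a priori reason for $\mathsf Q^\perp\Delta\psi$ to lie in $H^{k-1}$. The clean fix---and this is what the paper's one-line induction does---is to go through $\mathsf P$ instead: since $\Delta\psi\in V_0$ one has $\Delta\psi=\mathsf P(\mathsf Q_1\Delta\psi)$, and $\mathsf Q_1\Delta\psi\in S_{k-2}\subset H^{k-1}$ together with Lemma~\ref{regul_P0} (which requires exactly $\Sigma\in\mathcal C^{k,1}$) gives $\Delta\psi\in H^{k-1}$ at once, whence $\psi\in H^{k+1}$ by Laplacian regularity. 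A minor point: your induction hypothesis ``$S_{k-2}=H^{k-1}(\F)\cap S_1$'' overshoots what the lemma claims for $k-2\geqslant 3$ (only the inclusion is asserted), though your argument only uses that inclusion.
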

\begin{proof}
We recall that ${\mathsf A}^S_1$ is the operator $\psi\in S_1\longmapsto (\psi,\cdot)_{S_1}\in S_{-1}$.
The space $S_2$ is defined as the preimage of $S_0$ by ${\mathsf A}^S_1$, namely:
$$S_2=\{\psi\in S_1\,:\,(\psi,\cdot)_{S_1}=(f,\cdot)_{S_0}\text{ in }S_{-1}\text{ for some }f\text{ in }S_0\}.$$
Upon an integration by parts and according to Lemma~\ref{first_iso}, one easily obtains that the identity $(\psi,\cdot)_{S_1}=(f,\cdot)_{S_0}$ in $S_{-1}$ is equivalent 
to the equality $-\mathsf P\Delta \psi=\mathsf P f$ in $V_0$. Invoking Lemma~\ref{first_iso} again, we deduce, on the one hand, that $\mathsf P\Delta\psi=\Delta\psi$. 
Under the assumption on the regularity of the boundary $\Sigma$, the equality $-\Delta\psi=\mathsf Pf$ where $f$ and hence also $\mathsf Pf$ is in $H^1(\F)$ entails that $\psi$ belongs to $H^3(\F)$.
On the other hand, since $f$ belongs 
to $S_0$, $\mathsf Pf={\mathsf P}_1f$. Applying then the operator ${\mathsf Q}_1$ to both sides of the identity $-\Delta\psi={\mathsf P}_1 f$, 
we end up with the equality $-{\mathsf Q}_1\Delta\psi=f$ and \eqref{expressAS2} is proven for $k=2$. The expressions for $k>2$ follow 
from the general settings of Appendix~\ref{gelf_triple}. Then, by induction on $k$, invoking classical 
elliptic regularity results, one proves the inclusion $S_k\subset H^{k+1}(\F)$ and the equivalence of the norms.
\end{proof}
%
We straightforwardly deduce that, by definition of the space $V_1$, the inclusion $V_1\subset V_0$ enjoys the same properties as the inclusion $S_1\subset S_0$, namely 
it is continuous dense and compact. 
%
%
We consider then the Gelfand triple:
\begin{equation}
\label{gelfand_vorti}
V_1\subset V_0\subset V_{-1},
\end{equation}
in which $V_0$ is the pivot space and $V_{-1}$ is the dual space of $V_1$. As beforehand,
we define a family of embedded
Hilbert spaces $\{V_k,\,k\in\mathbb Z\}$, the corresponding family of isometries  $\{{\mathsf A}_k^V:V_k\to V_{k-2},\,k\in\mathbb Z\}$ and the positive constant:
\begin{equation}
\label{def_poincare_const_V}
\lambda_\F^V=\min_{\omega\in V_1\atop \omega\neq 0}\frac{\|\omega\|^2_{V_1}}{\|\omega\|^2_{V_0}}.
\end{equation}
%
\begin{rem}
\begin{my_enumerate}
\item
As already mentioned earlier, the space $V_{-1}$ is clearly not a distributions space.
\item
The guiding principle that the vorticity should be $L^2$-orthogonal to   harmonic functions is somehow still verified in a weak sense in $V_{-1}$. Indeed, 
$\mathsf A_1^V$ being an isometry, every element $\omega$ of $V_{-1}$ is equal to some $\mathsf A^V_1\omega'$ with $\omega'\in V_1$ and:
$$\langle \omega,\cdot\rangle_{V_{-1},V_1}=\langle\mathsf A_1^V \omega',\cdot\rangle_{V_{-1},V_1}=(\nabla\mathsf Q_1\omega',\nabla\mathsf Q_1\cdot)_{\mathbf L^2(\F)}.$$
Identifying the duality pairing  with the  scalar product in $L^2(\F)$ (i.e. the scalar product of the pivot space $V_0$), we obtain that formally $``(\omega,h)_{L^2(\F)}=0$'' for every $h\in\mathfrak H^1$. 
\end{my_enumerate}
\end{rem}
%
%
For the analysis of the spaces $V_k$ and their relations with the spaces $S_k$, it is worth introducing at this point
 an additional Gelfand triple, that will come in handy later on.
Thus, denote by ${ Z}_0$ the space $L^2(\F)$ (equipped with the usual scalar product) and by ${ Z}_1$ the space $S_0$. 
The configuration:
$${ Z}_1\subset { Z}_0\subset { Z}_{-1},$$
is obviously a Gelfand triple in which ${ Z}_0$ is the pivot space and ${ Z}_{-1}$ the dual space of ${ Z}_1$. As usual, following Appendix~\ref{gelf_triple}, we   define a family of 
embedded Hilbert spaces $\{{ Z}_k,\,k\in\mathbb Z\}$, and a family of isometries  $\{{\mathsf A}_k^{ Z}:{ Z}_k\to { Z}_{k-2},\,k\in\mathbb Z\}$.
Focusing on the case $k=2$, a simple integration by parts leads to:
\begin{lemma}
\label{LEM:S2A2}
The expressions of the space $ Z_2$ and of the operator ${\mathsf A}^{{ Z}}_2$ are respectively:
\begin{equation}
\label{def_S2_bold}
 Z_2=\bigg\{\psi\in H^2(\F)\cap S_0\,:\,\int_{\Sigma_j^-}\frac{\partial \psi}{\partial n}\ds=0,\quad j=1,\ldots,N\bigg\}
\quad\text{and}\quad
{\mathsf A}^{{ Z}}_2:\psi\in { Z}_2\longmapsto -\Delta\psi\in { Z}_0.
\end{equation}
\end{lemma}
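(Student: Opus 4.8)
The plan is to mimic precisely the argument used for the analogous identification of $S_2$ in Lemma~\ref{lem:sapce_S2}, now carried out relative to the Gelfand triple ${Z}_1\subset {Z}_0\subset {Z}_{-1}$ with ${Z}_1=S_0$ and pivot ${Z}_0=L^2(\F)$. First I would recall that ${\mathsf A}^{Z}_1$ is the canonical isometry $\psi\in S_0\mapsto (\psi,\cdot)_{S_0}=(\nabla\psi,\nabla\cdot)_{\mathbf L^2(\F)}\in {Z}_{-1}$, and that by definition $Z_2$ is the preimage of $Z_0=L^2(\F)$ under this map:
$$
Z_2=\Big\{\psi\in S_0\,:\,(\nabla\psi,\nabla\theta)_{\mathbf L^2(\F)}=(g,\theta)_{L^2(\F)}\ \text{for all }\theta\in S_0,\ \text{for some }g\in L^2(\F)\Big\}.
$$
So a given $\psi\in S_0$ lies in $Z_2$ exactly when the linear form $\theta\mapsto(\nabla\psi,\nabla\theta)_{\mathbf L^2(\F)}$ on $S_0$ extends to a bounded form on $L^2(\F)$; and when it does, $g$ is the associated $L^2$-representative, which is then by construction ${\mathsf A}^{Z}_2\psi$.

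The key computational step is a single integration by parts. Assume first $\psi\in H^2(\F)\cap S_0$. Then for every $\theta\in S_0$,
$$
(\nabla\psi,\nabla\theta)_{\mathbf L^2(\F)}=-\int_\F(\Delta\psi)\,\theta\dx+\int_\Sigma\frac{\partial\psi}{\partial n}\,\theta\ds.
$$
Since $\theta\in S_0$ vanishes on $\Sigma^+$ and equals a constant $c_j$ on each inner curve $\Sigma^-_j$, the boundary term collapses to $\sum_{j=1}^N c_j\int_{\Sigma^-_j}\frac{\partial\psi}{\partial n}\ds$. Hence the form is of the requested shape $(g,\theta)_{L^2(\F)}$ with $g=-\Delta\psi$ \emph{if and only if} the boundary contribution vanishes for all admissible $\theta$, i.e. iff $\int_{\Sigma^-_j}\frac{\partial\psi}{\partial n}\ds=0$ for every $j=1,\dots,N$ (one recovers these $N$ conditions by choosing $\theta$ to be, say, the harmonic function constant on one $\Sigma^-_j$ and zero on the others, which lies in $S_0$). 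This shows the right-hand side of \eqref{def_S2_bold} is contained in $Z_2$ and that ${\mathsf A}^{Z}_2\psi=-\Delta\psi$ there.

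For the reverse inclusion, take $\psi\in Z_2$, so $(\nabla\psi,\nabla\theta)_{\mathbf L^2(\F)}=(g,\theta)_{L^2(\F)}$ for all $\theta\in S_0$ with some $g\in L^2(\F)$. Restricting to $\theta\in H^1_0(\F)\subset S_0$ (here I use the splitting \eqref{decom_S0_1}, $S_0=H^1_0(\F)\sumperp\mathbb F_S$) shows $-\Delta\psi=g$ holds in $\mathcal D'(\F)$; since $g\in L^2(\F)$ and $\psi\in H^1(\F)$ solves a Dirichlet-type elliptic problem on the ${\mathcal C}^{1,1}$ domain $\F$, standard elliptic regularity (e.g. \cite[Theorem 1.11]{Girault:1986aa}, as invoked elsewhere in the paper) gives $\psi\in H^2(\F)$. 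With $\psi\in H^2(\F)\cap S_0$ in hand, the integration by parts above is legitimate, and comparing it with the defining identity forces $\sum_j c_j\int_{\Sigma^-_j}\frac{\partial\psi}{\partial n}\ds=0$ for all choices of $(c_1,\dots,c_N)\in\mathbb R^N$ (realized by elements of $S_0$), whence $\int_{\Sigma^-_j}\frac{\partial\psi}{\partial n}\ds=0$ for each $j$. This places $\psi$ in the right-hand side of \eqref{def_S2_bold} and completes the proof.

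The only delicate point is the regularity bootstrap $\psi\in H^1\Rightarrow\psi\in H^2$ at the stated ${\mathcal C}^{1,1}$ boundary regularity: one must invoke the $H^2$ elliptic estimate for the Laplacian with (mixed Dirichlet/constant-on-components) boundary data on a merely ${\mathcal C}^{1,1}$ domain, together with the fact that the normal derivative of an $H^2$ function makes sense in $H^{1/2}(\Sigma)\subset L^2(\Sigma)$ so the boundary integrals $\int_{\Sigma^-_j}\partial_n\psi\ds$ are well defined — consistent with the regularity conventions already set up in Section~\ref{SEC:main_spaces} and Remark~\ref{rem_fluxA}. Everything else is the routine Gelfand-triple identification already performed twice in the preceding pages.
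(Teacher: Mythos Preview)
Your proof is correct and follows precisely the approach the paper indicates: the paper itself gives no proof beyond the phrase ``a simple integration by parts leads to'' immediately preceding the lemma, and you have spelled out exactly that integration by parts together with the requisite elliptic regularity bootstrap. One small quibble: the reference \cite[Theorem~1.11]{Girault:1986aa} you cite is for the biharmonic operator; here you want the standard $H^2$ regularity for the Dirichlet Laplacian on a $\mathcal C^{1,1}$ domain (e.g.\ Grisvard), but the argument is otherwise exactly right.
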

The space of biharmonic functions in $L^2(\F)$ with zero mean flux  through the inner boundaries is denoted by $\mathfrak B$, namely:
\begin{equation}
\label{def_BV}
\mathfrak B=\Big\{\theta\in L^2(\F)\,:\,\Delta \theta\in\mathfrak H\text{ and }\int_{\Sigma^-_j}\frac{\partial \theta}{\partial n}\ds=0,\quad j=1,\ldots,N\Big\}.
\end{equation}
Since $\Delta \theta$ belongs to $L^2(\F)$, the trace of $\theta$ on $\Sigma$ is well defined and belongs to $H^{-{\frac12}}(\Sigma)$ and the trace of the normal derivative  is in $H^{-{\frac32}}(\Sigma)$. 
On the other hand, since $\Delta \theta$ belongs to $\mathfrak H$, its trace on $\Sigma$ is in $H^{-{\frac12}}(\Sigma)$ while its normal trace is in 
$H^{-{\frac32}}(\Sigma)$. 
\par
The space $S_1$ being a closed subspace of $ Z_2$  it admits an orthogonal complement denoted by $\mathfrak B_S$:
\begin{equation}
\label{decomp_S2b}
{ Z}_2=S_1\sumperp  {\mathfrak B_S}.
\end{equation}
An integration by parts and classical elliptic regularity results allow to deduce that:
\begin{equation}
\label{def_Scurl}
\mathfrak B_S=S_0\cap\mathfrak B,
\end{equation}
and that $\mathfrak B_S\subset H^2(\F)$.

\begin{lemma}
\label{decomp_S2}
The operator ${\mathsf A}^{ Z}_2$ is an isometry from $S_1$ onto $V_0$ and also an isometry from $\mathfrak B_S$ onto $\mathfrak H$, i.e. 
the operator ${\mathsf A}^{ Z}_2$ is block-diagonal with respect to the following decompositions   of the spaces:
$${\mathsf A}^{ Z}_2:S_1\sumperp\mathfrak B_S\longrightarrow V_0\sumperp \mathfrak H.$$
The operators ${\mathsf A}^{V}_1$ and ${\mathsf A}^{{ Z}}_1$ and the operators ${\mathsf A}^{V}_2$ and ${\mathsf A}^{{ Z}}_2$ 
are connected via the identities:
\begin{equation}
\label{eq:nice_relations}
{\mathsf A}^{V}_1={\mathsf Q}_1^\ast {\mathsf A}^{{ Z}}_1 {\mathsf Q}_1\qquad\text{and}\qquad {\mathsf A}^{V}_2={\mathsf A}^{{ Z}}_2 {\mathsf Q}_1\quad\text{in }V_2,
\end{equation}
where the operator ${\mathsf Q}_1^\ast$ is the adjoint of $\mathsf Q_1$.
\end{lemma}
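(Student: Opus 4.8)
The plan is to work entirely from the definitions of the Gelfand-triple isometries and the function spaces established in Lemmas~\ref{first_iso}, \ref{PQ_inverses} and \ref{LEM:S2A2}. First I would establish the block-diagonal structure of $\mathsf A^Z_2$. By Lemma~\ref{LEM:S2A2}, $\mathsf A^Z_2\psi=-\Delta\psi$ for $\psi\in Z_2$, so it suffices to identify the images of the two summands in the orthogonal decomposition \eqref{decomp_S2b}. For $\psi\in S_1$, Lemma~\ref{first_iso} says precisely that $\Delta\psi$ ranges over $V_0$, and that $\Delta_0:S_1\to V_0$ is an isometry; since the $Z_2$-scalar product restricted to $S_1$ agrees (up to the sign, which is immaterial for norms) with $(\Delta\cdot,\Delta\cdot)_{L^2(\F)}=(\cdot,\cdot)_{S_1}$, the map $\mathsf A^Z_2|_{S_1}\colon S_1\to V_0$ is an isometry onto. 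For the complement $\mathfrak B_S=S_0\cap\mathfrak B$ (by \eqref{def_Scurl}), the definition \eqref{def_BV} of $\mathfrak B$ forces $\Delta\theta\in\mathfrak H$ for $\theta\in\mathfrak B_S$, so $\mathsf A^Z_2$ maps $\mathfrak B_S$ into $\mathfrak H$. Surjectivity onto $\mathfrak H$: given $h\in\mathfrak H$, solve $\Delta\theta=-h$ with $\theta\in H^1_0(\F)$ by the standard Lax--Milgram argument; since $h\in\mathfrak H$ has zero mean flux through each $\Sigma_j^-$ and $\theta$ already has zero Dirichlet trace (hence lies in $S_0$), one checks $\theta\in\mathfrak B_S$, and the isometry property is inherited from the orthogonal-complement structure \eqref{decomp_S2b} once we know $\mathsf A^Z_2\colon Z_2\to Z_0$ is a global isometry carrying $S_1\sumperp\mathfrak B_S$ into $V_0\sumperp\mathfrak H$ with the first block already shown to be a bijective isometry.

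Next I would prove the two identities in \eqref{eq:nice_relations}. For the second one, $\mathsf A^V_2=\mathsf A^Z_2\mathsf Q_1$ on $V_2$: by the general construction of Appendix~\ref{gelf_triple}, $\mathsf A^V_2$ is characterized by $(\mathsf A^V_2\omega_1,\omega_2)_{V_0}=(\omega_1,\omega_2)_{V_1}$ for $\omega_1\in V_2$, $\omega_2\in V_1$. Unwinding the definition of the $V_1$-scalar product, the right-hand side equals $(\nabla\mathsf Q_1\omega_1,\nabla\mathsf Q_1\omega_2)_{\mathbf L^2(\F)}$. Since $\mathsf Q_1\omega_1\in S_0$ and — for $\omega_1\in V_2$, using the identification $S_2\ni\mathsf Q_1\omega_1$ that follows from the diagram in Fig.~\ref{diag_2intro} — one has enough regularity to integrate by parts, obtaining $-(\Delta\mathsf Q_1\omega_1,\mathsf Q_1\omega_2)_{L^2(\F)}$, with no boundary term because $\mathsf Q_1\omega_2\in S_0\subset H^1_0(\F)\sumperp\mathbb F_S$ and $\Delta\mathsf Q_1\omega_1$ is orthogonal to $\mathbb F_S$ inside $V_0$. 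Now $\mathsf Q_1\omega_2$ differs from $\omega_2$ only by a harmonic function in $\mathfrak H^1$, and $-\Delta\mathsf Q_1\omega_1=\mathsf A^Z_2\mathsf Q_1\omega_1\in V_0$ is $L^2$-orthogonal to $\mathfrak H\supset\mathfrak H^1$; hence $(\mathsf A^Z_2\mathsf Q_1\omega_1,\mathsf Q_1\omega_2)_{L^2(\F)}=(\mathsf A^Z_2\mathsf Q_1\omega_1,\omega_2)_{L^2(\F)}=(\mathsf A^Z_2\mathsf Q_1\omega_1,\omega_2)_{V_0}$. Comparing, $\mathsf A^V_2\omega_1=\mathsf A^Z_2\mathsf Q_1\omega_1$, which is \eqref{eq:nice_relations}. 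For the first identity, one uses the defining relations $\langle\mathsf A^V_1\omega_1,\omega_2\rangle_{V_{-1},V_1}=(\omega_1,\omega_2)_{V_1}=(\mathsf Q_1\omega_1,\mathsf Q_1\omega_2)_{S_0}=\langle\mathsf A^Z_1\mathsf Q_1\omega_1,\mathsf Q_1\omega_2\rangle_{Z_{-1},Z_1}=\langle\mathsf Q_1^\ast\mathsf A^Z_1\mathsf Q_1\omega_1,\omega_2\rangle_{V_{-1},V_1}$, valid for all $\omega_1,\omega_2\in V_1$, whence equality of the operators.

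The main obstacle I anticipate is the careful handling of the boundary terms and the flux conditions when passing between the $Z$-, $S$-, and $V$-triples — specifically, verifying that the orthogonal decomposition $Z_2=S_1\sumperp\mathfrak B_S$ is genuinely respected by $\mathsf A^Z_2=-\Delta$, i.e. that $-\Delta S_1=V_0$ and $-\Delta\mathfrak B_S=\mathfrak H$ are themselves orthogonal in $L^2(\F)$, which is exactly the content of the decomposition \eqref{eq:decomp_L2} but requires matching the zero-normal-derivative condition in $S_1$ against the zero-flux condition in $\mathfrak B_S$ at the level of $H^{-3/2}(\Sigma)$ traces, invoking the $\mathcal C^{1,1}$ regularity of $\Sigma$ as in Remark~\ref{rem_fluxA}. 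Everything else reduces to unwinding Gelfand-triple definitions and one integration by parts, so I would keep those computations terse and concentrate the writing on the orthogonality bookkeeping.
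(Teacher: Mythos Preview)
Your treatment of the block-diagonal claim and of the first identity in \eqref{eq:nice_relations} is correct and matches the paper: the paper simply cites Lemma~\ref{first_iso} and then uses the orthogonal-complement rigidity of a global isometry (exactly the closing clause of your first paragraph), and your chain of equalities for $\mathsf A^V_1=\mathsf Q_1^\ast\mathsf A^Z_1\mathsf Q_1$ is identical to the paper's. The Lax--Milgram step is a detour: solving $-\Delta\theta=h$ in $H^1_0(\F)$ does \emph{not} automatically yield $\int_{\Sigma_j^-}\frac{\partial\theta}{\partial n}\,{\rm d}s=0$, so ``one checks $\theta\in\mathfrak B_S$'' is not justified as stated; fortunately you then fall back on the orthogonal-complement argument, which is correct and sufficient.

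There is, however, a genuine circularity in your proof of the second identity $\mathsf A^V_2=\mathsf A^Z_2\mathsf Q_1$. To integrate $(\nabla\mathsf Q_1\omega_1,\nabla\mathsf Q_1\omega_2)_{\mathbf L^2(\F)}$ by parts you need $\mathsf Q_1\omega_1\in Z_2$ (so that $\Delta\mathsf Q_1\omega_1\in L^2(\F)$ and the fluxes through the $\Sigma_j^-$ vanish). You justify this by invoking Fig.~\ref{diag_2intro}, but that diagram summarises results proved \emph{after} the present lemma; in particular the inclusion $\mathsf Q_1 V_2\subset S_1\subset Z_2$ is the content of the lemma that immediately follows (the statement $V_2=\mathsf P_1 S_1$), and its proof uses the very identity you are establishing here. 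Your justification for the vanishing of the boundary term (``$\Delta\mathsf Q_1\omega_1$ is orthogonal to $\mathbb F_S$ inside $V_0$'') is also off: the functions $\xi_j\in\mathbb F_S$ are harmonic but have nonzero fluxes, hence do not lie in $\mathfrak H$ and are not automatically $L^2$-orthogonal to elements of $V_0$; what actually kills the boundary term is the zero-flux condition built into the definition of $Z_2$, which again you cannot yet use. The paper avoids the whole issue by \emph{not} integrating by parts: from $(w,\mathsf Q_1 v)_{L^2(\F)}=(\nabla\mathsf Q_1\omega_1,\nabla\mathsf Q_1 v)_{\mathbf L^2(\F)}$ for all $v\in V_1$ one reads, since $\mathsf Q_1:V_1\to S_0=Z_1$ is onto, the variational identity $(w,z)_{Z_0}=(\mathsf Q_1\omega_1,z)_{Z_1}$ for all $z\in Z_1$, and then the abstract characterisation of Lemma~\ref{def_DA} delivers simultaneously $\mathsf Q_1\omega_1\in Z_2$ and $w=\mathsf A^Z_2\mathsf Q_1\omega_1$, with no a priori regularity assumed. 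Replacing your integration-by-parts step with this variational argument closes the gap.
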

%
%
\begin{proof}
The first claim of the lemma is a direct consequence of Lemma~\ref{first_iso}. 
\par
By definition, for every $\omega\in V_1$:
$${\mathsf A}^{V}_1\omega=(\nabla {\mathsf Q}_1 \omega,\nabla {\mathsf Q}_1\cdot)_{\mathbf L^2(\F)}={\mathsf Q}_1^\ast {\mathsf A}^{{ Z}}_1 {\mathsf Q}_1\omega,$$
and the first identity in \eqref{eq:nice_relations} is proved. Addressing the latter, notice that for every $\omega\in V_2$, 
the function $w={\mathsf A}^{V}_2 \omega$ is the unique element in $V_0$ such that:
$$(w,v)_{V_0}=(\omega,v)_{V_1},\forallt v\in V_1,$$
which can be rewritten as:
$$
(w,v)_{L^2(\F)}=(\nabla {\mathsf Q}_1 \omega,\nabla {\mathsf Q}_1 v)_{\mathbf L^2(\F)},\forallt v\in V_1.
$$
But the functions $v$ and ${\mathsf Q}_1v$ differ only up to an element of $\mathfrak H$ and since $w$ belongs to $V_0=\mathfrak H^\perp$, it follows 
that:
$$(w,v)_{L^2(\F)}=(w,{\mathsf Q}_1v)_{L^2(\F)},\forallt v\in V_1.$$
Finally, since $S_0= Z_1$ and ${\mathsf Q}_1:V_1\to S_0$ is an isometry, the function $w$ satisfies:
$$(w,z)_{L^2(\F)}=(\nabla {\mathsf Q}_1 \omega,\nabla z)_{\mathbf L^2(\F)},\forallt z\in { Z}_1,$$
which means that $w={\mathsf A}^{{ Z}}_2 {\mathsf Q}_1 \omega$ and completes the proof. 
\end{proof}
%
We can now go back to the study of the vorticity spaces $V_k$ and the related operators ${\mathsf A}^V_k$.
Starting with the case $k=2$, we claim:
\begin{lemma}
The space $V_2$ is equal to ${\mathsf P}_1 S_1$, or equivalently:
\begin{subequations}
\begin{equation}
V_2=\bigg\{\omega\in \mathsf P  H^2(\F)\,:\,\frac{\partial {\mathsf Q}_1\omega}{\partial n}\Big|_\Sigma=0\bigg\}.
\end{equation}
Moreover, the expression of the operator ${\mathsf A}_2^V$ is:
\begin{equation}
\label{expressA2V}
{\mathsf A}_2^V:\omega\in V_2\longmapsto -\Delta\omega\in V_0.
\end{equation}
\end{subequations}
\end{lemma}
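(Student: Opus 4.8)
The plan is to identify $V_2$ directly from its definition as the preimage of $V_0$ under $\mathsf A^V_1$, and to exploit the relations \eqref{eq:nice_relations} together with Lemma~\ref{decomp_S2}. First I would recall that, by construction, $\omega\in V_2$ iff $\omega\in V_1$ and $\mathsf A^V_1\omega=(f,\cdot)_{V_0}$ in $V_{-1}$ for some $f\in V_0$. Using the first identity $\mathsf A^V_1=\mathsf Q_1^\ast\mathsf A^Z_1\mathsf Q_1$, this says that for all $v\in V_1$ one has $(\nabla\mathsf Q_1\omega,\nabla\mathsf Q_1 v)_{\mathbf L^2(\F)}=(f,v)_{L^2(\F)}$. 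Since $v$ and $\mathsf Q_1 v$ differ by an element of $\mathfrak H^1$ and $\mathsf Q_1$ maps $V_1$ isometrically onto $S_0= Z_1$, the right-hand side can be rewritten (using $f\in V_0=\mathfrak H^\perp$) as $(f,\mathsf Q_1 v)_{L^2(\F)}$, so the condition becomes $(\nabla\mathsf Q_1\omega,\nabla z)_{\mathbf L^2(\F)}=(f,z)_{L^2(\F)}$ for all $z\in Z_1=S_0$. By Lemma~\ref{LEM:S2A2} this is exactly the statement that $\mathsf Q_1\omega\in Z_2$ with $\mathsf A^Z_2\mathsf Q_1\omega=f$; but $\mathsf Q_1\omega$ also lies in $S_0$, and by \eqref{def_Scurl}, $S_0\cap Z_2\supset S_1$. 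I would then argue that in fact $\mathsf Q_1\omega$ must lie in $S_1$: the extra condition defining $ Z_2$ is $\int_{\Sigma_j^-}\partial_n\psi\,{\rm d}s=0$, whereas $S_1$ additionally requires $\partial_n\psi|_\Sigma=0$. However, $f=\mathsf A^Z_2\mathsf Q_1\omega$ is required to lie in $V_0$, and by Lemma~\ref{decomp_S2} the operator $\mathsf A^Z_2$ is block-diagonal, sending $S_1$ onto $V_0$ and $\mathfrak B_S$ onto $\mathfrak H$; since $ Z_2=S_1\sumperp\mathfrak B_S$ and $V_0\perp\mathfrak H$, the preimage under $\mathsf A^Z_2$ of $V_0$ inside $ Z_2$ is exactly $S_1$. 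Hence $\mathsf Q_1\omega\in S_1$, i.e. $\omega\in\mathsf P_1 S_1$.

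Conversely, if $\omega=\mathsf P_1\psi$ with $\psi\in S_1$, then $\mathsf Q_1\omega=\psi\in S_1\subset Z_2$ and $\mathsf A^Z_2\psi=-\Delta\psi\in V_0$ by Lemma~\ref{first_iso}, so running the computation backwards shows $\mathsf A^V_1\omega=(-\Delta\psi,\cdot)_{V_0}$ lies in (the image of $V_0$ in) $V_{-1}$, whence $\omega\in V_2$. This establishes $V_2=\mathsf P_1 S_1$. The description $V_2=\{\omega\in\mathsf P H^2(\F):\partial_n(\mathsf Q_1\omega)|_\Sigma=0\}$ then follows from Lemma~\ref{lem:sapce_S2} (which gives $S_1=S_0\cap H^2(\F)$ with vanishing normal derivative) together with Lemma~\ref{PQ_inverses} ($\mathsf Q_1$ is the restriction of $\mathsf Q$ and $V_1=\mathsf P H^1(\F)\subset H^1(\F)$, and $\mathsf P$ preserves $H^2$ regularity by Lemma~\ref{regul_P0}), noting that $\mathsf P_1 S_1=\mathsf P(S_0\cap H^2(\F))=\mathsf P H^2(\F)\cap\{\partial_n\mathsf Q_1\omega|_\Sigma=0\}$ once one checks that $\mathsf Q_1$ applied to $\mathsf P H^2$ lands in $H^2$.

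For the expression \eqref{expressA2V}, I would use the second identity in \eqref{eq:nice_relations}, $\mathsf A^V_2=\mathsf A^Z_2\mathsf Q_1$ on $V_2$: for $\omega\in V_2$ we have $\mathsf Q_1\omega\in S_1\subset Z_2$, so $\mathsf A^V_2\omega=\mathsf A^Z_2\mathsf Q_1\omega=-\Delta\mathsf Q_1\omega$ by Lemma~\ref{LEM:S2A2}. It remains to see that $-\Delta\mathsf Q_1\omega=-\Delta\omega$, which is immediate from \eqref{first_S0}: $\mathsf Q_1$ (being the restriction of $\mathsf Q$) satisfies $\Delta\mathsf Q u=\Delta u$. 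Hence $\mathsf A^V_2\omega=-\Delta\omega$, and since $\mathsf A^V_2$ maps $V_2$ onto $V_0$ isometrically, $-\Delta\omega\in V_0$ as claimed (consistently with Lemma~\ref{first_iso}, as $\omega=\mathsf P_1\psi$ and $\Delta\omega=\Delta\psi$).

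The main obstacle I anticipate is the bookkeeping around the normal-flux versus normal-trace boundary conditions: one must be careful that the only difference between $ Z_2$ and $S_1$ is absorbed precisely by the requirement $f\in V_0$ rather than $f\in L^2(\F)$, and that $\mathfrak B_S=S_0\cap\mathfrak B$ is exactly the part of $ Z_2$ killed on the $V_0$ side. Making the conversion between the ``$\omega\in\mathsf P_1 S_1$'' description and the ``$\omega\in\mathsf P H^2(\F)$ with $\partial_n\mathsf Q_1\omega|_\Sigma=0$'' description fully rigorous requires invoking the $H^2$-boundedness of $\mathsf P$ (Lemma~\ref{regul_P0}, hence the ${\mathcal C}^{3,1}$ hypothesis implicit here via $\Sigma\in{\mathcal C}^{2,1}$ for Lemma~\ref{PQ_inverses}) and checking the analogous regularity for $\mathsf Q_1$; the rest is routine unwinding of the Gelfand-triple definitions.
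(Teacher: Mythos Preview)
Your argument is correct and rests on the same ingredients as the paper's proof: the identities \eqref{eq:nice_relations} and the block-diagonal structure of $\mathsf A^Z_2$ from Lemma~\ref{decomp_S2}. The paper simply runs the argument more compactly at the level of function spaces: from $\mathsf A^V_2=\mathsf A^Z_2\mathsf Q_1$ on $V_2$ and $\mathsf A^V_2 V_2=V_0$ it reads off $\mathsf Q_1 V_2=(\mathsf A^Z_2)^{-1}V_0=S_1$ (by Lemma~\ref{decomp_S2}), hence $V_2=\mathsf P_1 S_1$, and then $\mathsf A^V_2\omega=\mathsf A^Z_2\mathsf Q_1\omega=-\Delta\mathsf Q_1\omega=-\Delta\omega$. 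You instead unwind the Gelfand-triple definition of $V_2$ element by element via the first identity in \eqref{eq:nice_relations} and check both inclusions explicitly; this is a bit longer but perfectly valid, and your final step for $\mathsf A^V_2$ is identical to the paper's. One small correction: your reference to Lemma~\ref{lem:sapce_S2} for the characterization of $S_1$ is misplaced (that lemma concerns $S_2$); you only need the definition of $S_1$ given at the start of Subsection~\ref{SEC:main_spaces}.
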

\begin{proof}
The second formula in \eqref{eq:nice_relations} yields the following identity between function spaces: 
$${\mathsf A}_2^VV_2= {\mathsf A}_2^{ Z}{\mathsf Q}_1V_2$$
and then, since ${\mathsf A}_2^V V_2=V_0$:
$$({\mathsf A}_2^{ Z})^{-1}V_0={\mathsf Q}_1V_2.$$
Invoking the first point of Lemma~\ref{decomp_S2} we deduce first that ${\mathsf Q}_1V_2=S_1$ and then, applying the operator ${\mathsf P}_1$ to both sides of the identity, that $V_2={\mathsf P}_1S_1$. 
Using again the second formula in \eqref{eq:nice_relations} together with the expression of ${\mathsf A}_2^{ Z}$ given in \eqref{def_S2_bold}, we obtain the expression \eqref{expressA2V} of 
the operator ${\mathsf A}^V_2$.
\end{proof}
\begin{rem}
\label{regul_V2}
According to Lemma~\ref{regul_P0}, if $\Sigma$ is of class ${\mathcal C}^{3,1}$ then $V_2$ is a subspace of $H^2(\F)$. If $\Sigma$ is of class ${\mathcal C}^{2,1}$, $V_2$ is a subspace 
of $H^1(\F)$ and the functions in $V_2$ can be given a trace in $H^{{\frac12}}(\Sigma)$ and a normal trace in $H^{-{\frac32}}(\Sigma)$. Finally, if $\Sigma$ is only of class ${\mathcal C}^{1,1}$, 
the trace still exists in $H^{-{\frac12}}(\Sigma)$ and the normal trace in $H^{-{\frac32}}(\Sigma)$. We use the fact that every function in $V_2$ is by definition the sum of a function in 
$H^2(\F)$ with a harmonic function in $L^2(\F)$.
\end{rem}
%
For the ease of the reader, we can still state the following lemma which is a straightforward consequence of \eqref{expressA2V} and the general settings of 
Appendix~\ref{gelf_triple}:
%
\begin{lemma}
\label{express_AVk}
For every positive integer $k$, the expression of the operators ${\mathsf A}_k^V$ are:
$${\mathsf A}_1^V:u\in V_1\longmapsto ( u,\cdot)_{V_1}\in V_{-1}\qquad\text{ and }\quad {\mathsf A}_k^V:u\in V_k\longmapsto (-\Delta) u\in V_{k-2}
\quad\text{ for }k\geqslant 2.$$
For nonnegative integers $k$, $V_k$ is a subspace of $H^{k}(\F)$ providing that 
$\Sigma$ is of class ${\mathcal C}^{k+1,1}$ and the norm in $V_k$ is equivalent to the classical norm 
of $H^{k}(\F)$.
For nonpositive indices, the operators are defined by duality as follows:
$${\mathsf A}_{-k}^V:u\in V_{-k}\longmapsto\langle u,(-\Delta)\cdot\rangle_{V_{-k},V_k}\in V_{-k-2},\qquad(k\geqslant 0).$$

\end{lemma}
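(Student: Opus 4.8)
The plan is to prove Lemma~\ref{express_AVk} by induction on $k$, using the Gelfand-triple construction of Appendix~\ref{gelf_triple} to reduce each new case to the previous one. First I would record the base cases that are already in hand: the expression ${\mathsf A}_1^V:u\mapsto(u,\cdot)_{V_1}$ is the definition of the isometry associated with the Gelfand triple \eqref{gelfand_vorti}, and the case $k=2$, namely ${\mathsf A}_2^V\omega=-\Delta\omega$ together with $V_2\subset H^2(\F)$ when $\Sigma$ is ${\mathcal C}^{3,1}$, is exactly the content of the preceding lemma (combined with Lemma~\ref{regul_P0}). So the work is really in the inductive step $k\to k+2$ and in tracking the boundary regularity needed at each stage.

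For the inductive step, recall that $V_{k+2}$ is by construction the preimage of $V_k$ under ${\mathsf A}^V_2$ (equivalently, under the isometry ${\mathsf A}^V_{k+1}$ viewed between the appropriate spaces), and ${\mathsf A}^V_{k+2}$ is the restriction of ${\mathsf A}^V_2$ to $V_{k+2}$. Given $u\in V_{k+2}$, we have ${\mathsf A}^V_2 u=-\Delta u\in V_k$ by the $k=2$ formula; since $V_k\subset H^k(\F)$ by the induction hypothesis and $\Delta u=-{\mathsf A}^V_2u$, standard elliptic regularity for the operator $-\Delta$ (here one uses the structure from Lemma~\ref{PQ_inverses} / Lemma~\ref{lem:sapce_S2}, since elements of $V_{k+2}$ are $\mathsf P$-projections of $H^{k+2}$ stream-function data) upgrades $u$ to $H^{k+2}(\F)$ provided $\Sigma$ is ${\mathcal C}^{k+3,1}$, and the graph-norm equivalence gives equivalence of the $V_{k+2}$-norm with the $H^{k+2}(\F)$-norm. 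The formula ${\mathsf A}^V_{k+2}u=-\Delta u$ is then immediate since it is just the restriction of ${\mathsf A}^V_2$. The negative-index case is handled purely formally: ${\mathsf A}^V_{-k}$ is defined as the adjoint (dual) of ${\mathsf A}^V_k:V_k\to V_{k-2}$ with respect to the pivot duality, so $\langle{\mathsf A}^V_{-k}u,v\rangle_{V_{-k-2},V_{k+2}}=\langle u,{\mathsf A}^V_{k+2}v\rangle_{V_{-k},V_k}=\langle u,-\Delta v\rangle$, which is precisely the asserted expression; no regularity is needed here.

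The step I expect to be the main obstacle is the elliptic-regularity bookkeeping in the inductive step: one must verify carefully that ``$-\Delta u\in V_k\subset H^k(\F)$'' together with the defining constraint of $V_{k+2}$ (the compatibility condition ${\partial{\mathsf Q}_1 u}/{\partial n}|_\Sigma=0$ propagated through the chain, equivalently $u\in{\mathsf P}_1 S_{k+1}$) really does force $u\in H^{k+2}(\F)$, and to pin down the sharp boundary regularity ${\mathcal C}^{k+1,1}$ claimed in the statement. This is where one leans on Lemma~\ref{regul_P0} for the mapping properties of $\mathsf P$ and on the isometry $\Delta_k:S_{k+1}\to V_k$ together with Lemma~\ref{lem:sapce_S2}'s identification $S_k\subset H^{k+1}(\F)$; transporting the stream-function regularity through $\mathsf P$ and $\Delta$ is the only genuinely non-bookkeeping point, and it is exactly the kind of statement Appendix~\ref{gelf_triple} is set up to deliver by induction. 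Everything else — the base cases, the norm equivalences, and the dual-side formulas — is then routine.
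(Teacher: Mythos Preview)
Your proposal is correct and follows essentially the same route as the paper, which states only that the lemma ``is a straightforward consequence of \eqref{expressA2V} and the general settings of Appendix~\ref{gelf_triple}.'' Your elaboration --- base cases from the definition and the preceding lemma, the formula for $k\geqslant 2$ by restriction via the inductive construction $V_{k+2}={\mathsf A}_{k+1}^{-1}V_k$, the regularity by elliptic estimates transported through $\mathsf P$/$\mathsf Q$, and the negative-index case by duality \eqref{eq:def_Akdual} --- is exactly what the paper's one-line proof unpacks to. One minor caution: you invoke the isometry $\Delta_k:S_{k+1}\to V_k$, which is formally introduced only after this lemma; the argument stands without it, since what you actually need is Lemma~\ref{regul_P0} together with the identification ${\mathsf A}_2^V={\mathsf A}_2^{Z}{\mathsf Q}_1$ from \eqref{eq:nice_relations} and the $S$-side regularity of Lemma~\ref{lem:sapce_S2}.
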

%
The next result states that the chain of embedded spaces for the stream function $\{S_k,\,k\in\mathbb Z\}$ 
is globally isometric to the chain of embedded spaces for the vorticity $\{V_k,\, k\in\mathbb Z\}$, the isometries being, loosely speaking, 
the operators $\mathsf P$ and $\mathsf Q$. 
So far, we have proven that ${\mathsf P}_1S_1=V_2$ and ${\mathsf P}_1S_0=V_1$. To generalized these relations to every integer $k$, we need to extend 
 the operators ${\mathsf P}_1$ and ${\mathsf Q}_1$.
%
\begin{lemma}
\label{lem:bef_theo}
For every positive integer $k$, the following inclusions hold:
$${\mathsf P}_1S_{k-1}\subset V_k\qquad\text{and}\qquad{\mathsf Q}_1V_k\subset S_{k-1}.$$
\end{lemma}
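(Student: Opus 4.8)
The statement is an induction on $k$, built on the base cases $k=1$ and $k=2$ already established (namely $\mathsf{P}_1 S_0 = V_1$ and $\mathsf{P}_1 S_1 = V_2$, together with the corresponding equalities for $\mathsf{Q}_1$). The key structural fact I would exploit is the commuting relation furnished by the second identity in \eqref{eq:nice_relations}, $\mathsf{A}^V_2 = \mathsf{A}^Z_2 \mathsf{Q}_1$ on $V_2$, together with the expression $\mathsf{A}^S_k = -\mathsf{Q}_1 \Delta$ from \eqref{expressAS2} and $\mathsf{A}^V_k = -\Delta$ on $V_k$ from \eqref{expressA2V}. These say, informally, that applying the Stokes operator on the $V$-side, then transporting to the $S$-side by $\mathsf{Q}_1$, is the same as transporting first and then applying the $S$-side Stokes operator. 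Since $V_k$ (for $k\geq 2$) is defined as the preimage of $V_{k-2}$ under $\mathsf{A}^V_{k-1}$, and $S_{k-1}$ is the preimage of $S_{k-3}$ under $\mathsf{A}^S_{k-2}$, an inductive hypothesis that $\mathsf{P}_1$ and $\mathsf{Q}_1$ intertwine the two chains at levels $k-1$ and $k-2$ should propagate to level $k$.

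\textbf{Steps.} First I would make precise the extension of $\mathsf{P}_1$ and $\mathsf{Q}_1$ to negative-index spaces by duality, so that the intertwining statement makes sense at all levels; alternatively, since the lemma only asserts inclusions for positive $k$, I can stay within nonnegative indices and argue purely on the positive part of the chain, handling $k=1,2$ as base cases. Second, for the inductive step: given $\psi \in S_{k-1}$, I want to show $\mathsf{P}_1\psi \in V_k$. By definition $\psi \in S_{k-1}$ means $\mathsf{A}^S_{k-2}\psi \in S_{k-3}$ (for $k\geq 3$), i.e. $-\mathsf{Q}_1\Delta\psi \in S_{k-3}$. I would then compute $\mathsf{A}^V_{k-1}(\mathsf{P}_1\psi)$; using the generalized form of \eqref{eq:nice_relations} (which for $k\geq 3$ should read $\mathsf{A}^V_{k-1}\mathsf{P}_1 = \mathsf{P}_1 \mathsf{A}^S_{k-2}$, equivalently $-\Delta \mathsf{P}_1\psi = \mathsf{P}_1(-\mathsf{Q}_1\Delta\psi)$, which itself follows from $\mathsf{P}_1\mathsf{Q}_1 = \mathrm{Id}$ on $V_1$ and the fact that $\mathsf{P}$ kills harmonic functions while $\Delta\psi$ and $\mathsf{Q}_1\Delta\psi$ differ by a harmonic function... one must be careful here), I would get that $\mathsf{A}^V_{k-1}(\mathsf{P}_1\psi) = \mathsf{P}_1(\mathsf{A}^S_{k-2}\psi) \in \mathsf{P}_1 S_{k-3} \subset V_{k-2}$ by the inductive hypothesis, hence $\mathsf{P}_1\psi \in V_k$ by definition of $V_k$ as the preimage. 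The reverse inclusion $\mathsf{Q}_1 V_k \subset S_{k-1}$ is symmetric: for $\omega \in V_k$, $\mathsf{A}^V_{k-1}\omega \in V_{k-2}$, so $\mathsf{A}^S_{k-2}(\mathsf{Q}_1\omega) = \mathsf{Q}_1(\mathsf{A}^V_{k-1}\omega) \in \mathsf{Q}_1 V_{k-2} \subset S_{k-3}$, whence $\mathsf{Q}_1\omega \in S_{k-1}$.

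\textbf{Main obstacle.} The delicate point is establishing the intertwining identity $\mathsf{A}^V_{k-1}\mathsf{P}_1 = \mathsf{P}_1 \mathsf{A}^S_{k-2}$ (and its $\mathsf{Q}_1$-transpose) at \emph{every} level, starting from the single identity \eqref{eq:nice_relations} proven only at levels $1$ and $2$. This requires either (a) bootstrapping the relation up the chain by duality and the defining universal properties of the $\mathsf{A}_k$'s from Appendix~\ref{gelf_triple}, which means carefully tracking how $\mathsf{P}_1$ and $\mathsf{Q}_1$ extend and remain mutually inverse adjoint-related operators at each level, or (b) exhibiting $\mathsf{P}_1 S_{k-1}$ and $\mathsf{Q}_1 V_k$ concretely via elliptic regularity (as in Lemma~\ref{lem:sapce_S2} and Lemma~\ref{regul_P0}) and checking the boundary/flux conditions defining the respective spaces match up. Approach (a) is cleaner but demands that the extensions of $\mathsf{P}_1$, $\mathsf{Q}_1$ be set up so that they commute with the isometries $\mathsf{A}^{V,S}_k$; I expect the paper has arranged the abstract machinery of Appendix~\ref{gelf_triple} precisely so that this propagation is essentially automatic once the $k=2$ case is in hand, and the proof will be short, invoking the inductive structure of the spaces $V_k$, $S_k$ as preimages together with \eqref{eq:nice_relations} and the expressions \eqref{expressAS2}, \eqref{expressA2V}.
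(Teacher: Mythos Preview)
Your proposal is correct and matches the paper's approach: the paper verifies the base cases $k=1,2$ (so that $\mathsf P_1:S_0\to V_1$ and its restriction $\mathsf P_2:S_1\to V_2$ are isometries with $\mathsf P_2=\mathsf P_1|_{S_1}$), then invokes the abstract Lemma~\ref{p_kp_k} of Subsection~\ref{isometric_chain}, which is precisely the inductive propagation you describe in option (a). Your concrete computation of the intertwining relation $\mathsf A^V_{k-1}\mathsf P_1\psi=-\Delta\psi=\mathsf P_1\mathsf A^S_{k-2}\psi$ (using that $\mathsf P_1\psi-\psi$ is harmonic and $\mathsf P_1\mathsf Q_1=\mathrm{Id}$ on $V_1$) is exactly the content of that abstract lemma unfolded in this setting.
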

%
Considering this lemma as granted, it makes sense to define for every positive integer $k$ the operators:
\begin{equation}
\label{defPkQk}
{\mathsf P}_k:u\in S_{k-1}\longmapsto {\mathsf P}_1u\in V_k\qquad\text{and}\qquad {\mathsf Q}_k:u\in V_k\longmapsto {\mathsf Q}_1 u\in S_{k-1}.
\end{equation}
Then, we define also by induction, for every $k\geqslant 0$:
\begin{subequations}
\label{extend-kneg}
\begin{align}
{\mathsf P}_{-k}&={\mathsf A}^{V}_{-k+2}{\mathsf P}_{-k+2}({\mathsf A}^{S}_{-k+1})^{-1}:S_{-k-1}\to V_{-k},
\intertext{and}
\label{def_Qkn}
{\mathsf Q}_{-k}&={\mathsf A}^{S}_{-k+1} {\mathsf Q}_{-k+2} ({\mathsf A}^{V}_{-k+2})^{-1}:V_{-k}\to S_{-k-1}.
\end{align}
\end{subequations}
%
\begin{theorem}
\label{LEM:P_kQ_k}
%
For every integer $k$, the operators ${\mathsf P}_k$ and ${\mathsf Q}_k$ defined in \eqref{defPkQk} and  \eqref{extend-kneg} are   inverse isometries 
(i.e. $\mathsf P_k\mathsf Q_k={\rm Id}$ and $\mathsf Q_k\mathsf P_k={\rm Id}$). 
Moreover, formulas  \eqref{extend-kneg}  can be generalized to every integer $k$:
\begin{equation}
\label{extend-kneg_2}
 {\mathsf A}_k^V  {\mathsf P}_{k}={\mathsf P}_{k-2}{\mathsf A}^S_{k-1}\qquad\text{and}\qquad  {\mathsf A}^S_{k-1} {\mathsf Q}_k={\mathsf Q}_{k-2} {\mathsf A}^V_k,
\end{equation}
and for every pair of indices $k,k'$ such that $k'\leqslant k$:
\begin{equation}
\label{eq:Pkextend}
{\mathsf P}_{k'}={\mathsf P}_{k}\quad\text{in }S_{k-1}\qquad\text{and}\qquad {\mathsf Q}_{k'}={\mathsf Q}_{k}\quad\text{in }V_k.
\end{equation}
\end{theorem}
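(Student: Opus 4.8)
The plan is to establish Theorem~\ref{LEM:P_kQ_k} by induction, treating the nonnegative and negative indices separately but with the same underlying mechanism: the defining relations of the Gelfand-triple scales $\{V_k\}$, $\{S_k\}$ force the operators $\mathsf P_k$, $\mathsf Q_k$ to intertwine the isometries $\mathsf A^V_k$ and $\mathsf A^S_{k-1}$, and once the intertwining relation \eqref{extend-kneg_2} is known, the isometry property and the consistency \eqref{eq:Pkextend} follow by chasing through the scalar products. First I would record the base cases already proved: $\mathsf P_1:S_0\to V_1$ is an isometry by the very definition of the $V_1$ scalar product, with inverse $\mathsf Q_1$; and $\mathsf P_2:S_1\to V_2$ is an isometry because $V_2=\mathsf P_1 S_1$ (previous lemma) and, using the second relation in \eqref{eq:nice_relations} together with the fact that $\mathsf A^{Z}_2:S_1\to V_0$ and $\mathsf A^S_1:S_1\to S_{-1}$ are isometries onto/into the appropriate spaces, the $V_2$-norm of $\mathsf P_1\psi$ equals the $S_1$-norm of $\psi$. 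The key algebraic identity to extract at this stage is precisely $\mathsf A^V_2\mathsf P_2=\mathsf P_0\mathsf A^S_1$ once $\mathsf P_0$ is understood as the extension of $\mathsf P$ to $S_{-1}\to V_0$; this is the $k=2$ case of \eqref{extend-kneg_2} and is essentially a restatement of $\mathsf A^V_2=\mathsf A^{Z}_2\mathsf Q_1$.

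Next, for the induction step with $k\geqslant 3$, I would argue as follows. Assume $\mathsf P_{k-2}:S_{k-3}\to V_{k-2}$ is an isometric isomorphism with inverse $\mathsf Q_{k-2}$, satisfying the intertwining relation. Recall that $V_k$ is by construction the preimage $(\mathsf A^V_2)^{-1}V_{k-2}$ and $S_{k-1}$ is $(\mathsf A^S_2)^{-1}S_{k-3}$ (via the inductive definition of the scales in Appendix~\ref{gelf_triple}), with the $V_k$- and $S_{k-1}$-norms defined as the pullbacks of the $V_{k-2}$- and $S_{k-3}$-norms under these isometries. The relation $\mathsf A^V_2=\mathsf A^{Z}_2\mathsf Q_1=\mathsf A^S_2\mathsf Q_1$ (the last equality because on $S_1$ the operators $\mathsf A^{Z}_2$ and $\mathsf A^S_2$ agree, both being $-\mathsf Q_1\Delta$ composed appropriately — here one uses Lemma~\ref{lem:sapce_S2} and Lemma~\ref{decomp_S2}) shows that $\mathsf P_1$ maps $S_{k-1}$ onto $V_k$: indeed $\mathsf P_1 S_{k-1}=\mathsf P_1(\mathsf A^S_2)^{-1}S_{k-3}=(\mathsf A^V_2)^{-1}\mathsf P_1 S_{k-3}$, and by induction $\mathsf P_1 S_{k-3}\subset V_{k-2}$, giving $\mathsf P_1 S_{k-1}\subset V_k$; the reverse inclusion comes by applying $\mathsf Q_1$. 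Then $\|\mathsf P_k u\|_{V_k}=\|\mathsf A^V_2\mathsf P_1 u\|_{V_{k-2}}=\|\mathsf P_1\mathsf A^S_2 u\|_{V_{k-2}}=\|\mathsf A^S_2 u\|_{S_{k-3}}=\|u\|_{S_{k-1}}$, where the third equality is the inductive isometry hypothesis for $\mathsf P_{k-2}$. This simultaneously gives the isometry property and, by the same computation read as an operator identity, the intertwining \eqref{extend-kneg_2} at level $k$. The consistency \eqref{eq:Pkextend} is then immediate because $\mathsf P_{k'}$ and $\mathsf P_k$ for $k'\leqslant k$ are both restrictions of the single operator $\mathsf P_1$ to the nested domains $S_{k-1}\subset S_{k'-1}$.

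For the negative indices, the operators $\mathsf P_{-k}$, $\mathsf Q_{-k}$ are \emph{defined} by \eqref{extend-kneg} precisely so that the intertwining relations hold by construction; so there the work is to check that the definition is consistent (independent of how one unwinds the recursion) and that the isometry property propagates. I would verify the isometry property by duality: $\mathsf A^V_{-k+2}$, $\mathsf A^S_{-k+1}$ are isometries between dual spaces, and $\mathsf P_{-k+2}$ is an isometry by the induction already completed for nonnegative indices (or the previous negative step), so $\mathsf P_{-k}=\mathsf A^V_{-k+2}\mathsf P_{-k+2}(\mathsf A^S_{-k+1})^{-1}$ is a composition of three isometries, hence an isometry, and an analogous computation shows $\mathsf Q_{-k}$ is its two-sided inverse. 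The generalization of \eqref{extend-kneg_2} and \eqref{eq:Pkextend} to negative $k$ then follows by rewriting the definitions and using that all the $\mathsf A$-operators on the two scales commute with the already-established intertwiners at higher index. The main obstacle I anticipate is purely bookkeeping: making sure that the ``single operator $\mathsf P_1$'' picture genuinely persists down the whole scale — i.e. that the abstract Gelfand-triple construction of $V_k$ and $S_{k-1}$ in the appendix is compatible with the concrete elliptic identifications (Lemmas~\ref{lem:sapce_S2}, \ref{decomp_S2}, \ref{express_AVk}) so that $\mathsf A^V_2=\mathsf A^S_2\mathsf Q_1$ can legitimately be applied one scale-level up each time, rather than only at level $2$. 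Handling the boundary-regularity hypotheses on $\Sigma$ uniformly across all $k$ (each increment in $k$ costs one more derivative on $\Sigma$, via Lemma~\ref{regul_P0}) is the other place where care is needed, but it does not change the structure of the argument.
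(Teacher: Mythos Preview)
Your approach is essentially the same as the paper's: establish the base isometries $\mathsf P_1,\mathsf Q_1$ and $\mathsf P_2,\mathsf Q_2$ at two consecutive levels, then propagate the intertwining relation \eqref{extend-kneg_2} by induction along the scale. The paper simply packages the entire induction into the abstract Lemma~\ref{p_kp_k} of Appendix~\ref{gelf_triple} (isometric chains of embedded Hilbert spaces), whereas you unwind it by hand; the content is the same.

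Two small corrections. First, your parenthetical justification ``on $S_1$ the operators $\mathsf A^{Z}_2$ and $\mathsf A^S_2$ agree'' is wrong: $\mathsf A^{Z}_2=-\Delta$ while $\mathsf A^S_2=-\mathsf Q_1\Delta$, and these differ. The identity you actually need, $\mathsf A^V_2\mathsf P_1=\mathsf P_1\mathsf A^S_2$ on $S_2$, holds for a simpler reason: $\mathsf P_1\psi-\psi\in\mathfrak H$, so $\Delta(\mathsf P_1\psi)=\Delta\psi\in V_1$, hence $\mathsf A^V_2(\mathsf P_1\psi)=-\Delta\psi=\mathsf P_1(-\mathsf Q_1\Delta\psi)=\mathsf P_1\mathsf A^S_2\psi$. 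Second, your worry about boundary regularity is misplaced here: the spaces $V_k,S_k$ and the operators $\mathsf A^V_k,\mathsf A^S_k$ are defined purely by the abstract Gelfand-triple machinery, so the theorem holds regardless of $\Sigma$; the regularity of $\Sigma$ only enters later when one wants to identify $V_k,S_k$ with concrete Sobolev subspaces.
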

%
\begin{rem}
\label{rem:omega_no_vorti}
By definition, $\mathsf P$ is a projection in $L^2(\F)$ and $\mathsf Q$ a projection in $H^1(\F)$. The theorem tells us that formulas \eqref{extend-kneg} allow extending these projectors to larger spaces.
\par
The theorem ensures also that to every stream function $\psi$ in some space $S_{k-1}$, it can be associated a vorticity field 
$\omega={\mathsf P}_k\psi$ in $V_k$. 
The vorticity $\omega$ has the same regularity as $\psi$ and is obviously not the Laplacian of $\psi$.
\end{rem}
%
The lemma and the theorem are proved at once:
\begin{proof}[Proof of Lemma~\ref{lem:bef_theo} and Theorem~\ref{LEM:P_kQ_k}]
Denoting by $\mathsf Q_2$ the restriction of $\mathsf Q_1$ to $V_2$, 
the second formula in \eqref{eq:nice_relations} can be rewritten as ${\mathsf A}_2^V={\mathsf A}_2^{ Z}{\mathsf Q}_2$. Since the 
operators ${\mathsf A}_2^V$ and ${\mathsf A}_2^{ Z}$ are both isometries, this property is also shared by ${\mathsf Q}_2$ and its inverse, which
is denoted by ${\mathsf P}_2$. 
We have now at our disposal two pairs of isometries $({\mathsf P}_1,{\mathsf Q}_1)$ and $({\mathsf P}_2,{\mathsf Q}_2)$ corresponding to two successive indices in the chain 
of embedded spaces. Furthermore, ${\mathsf P}_2$ is the restriction of ${\mathsf P}_1$ to $V_2$. This fits within the framework of Subsection~\ref{isometric_chain}. 
We define first ${\mathsf P}_k$ and ${\mathsf Q}_k$ (for the indices $k\neq 1,2$) by induction with formulas \eqref{extend-kneg_2} and we apply Lemma~\ref{p_kp_k}.  
We obtain that the operators ${\mathsf P}_k$ and ${\mathsf Q}_k$ are indeed isometries from $S_{k-1}$ onto $V_k$ and from $V_k$ onto $S_{k-1}$ respectively. 
Lemma~\ref{p_kp_k} also ensures that ${\mathsf P}_k={\mathsf P}_{k'}$ in $V_k$ and ${\mathsf Q}_k={\mathsf Q}_{k'}$ in $S_{k-1}$ for indices $k'\leqslant k$, whence we deduce that ${\mathsf P}_k$ and 
${\mathsf Q}_k$ for $k\geqslant 1$ can be equivalently defined by \eqref{defPkQk}. Next, since ${\mathsf P}_1$ and ${\mathsf Q}_1$ are reciprocal isometries and ${\mathsf P}_k$ and ${\mathsf Q}_k$ 
are just restrictions of ${\mathsf P}_1$ and ${\mathsf Q}_1$, then ${\mathsf P}_k$ and ${\mathsf Q}_k$ are reciprocal isometries as well. We draw the same conclusion for nonpositive 
indices using formulas  \eqref{extend-kneg_2} and   complete the proof. \end{proof}
%
\begin{cor}
\label{COR:lambda}
The constant $\lambda_\F^S$ defined in \eqref{def_poincare_const_S} and the constant $\lambda_\F^V$ defined in \eqref{def_poincare_const_V} are equal. 
We denote simply by $\lambda_\F$ their common value. 
\end{cor}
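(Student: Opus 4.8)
The plan is to recognize both $\lambda_\F^S$ and $\lambda_\F^V$ as the lowest eigenvalue of the corresponding nonprimitive Stokes operator, and then to identify the two eigenvalue problems with one another through the isometries ${\mathsf P}_k$, ${\mathsf Q}_k$ of Theorem~\ref{LEM:P_kQ_k}. First I would record that, since the inclusions $S_1\subset S_0$ and $V_1\subset V_0$ are compact, the minima defining $\lambda_\F^S$ in \eqref{def_poincare_const_S} and $\lambda_\F^V$ in \eqref{def_poincare_const_V} are attained; hence $\lambda_\F^S$ is the smallest eigenvalue of ${\mathsf A}^S_2$ and $\lambda_\F^V$ the smallest eigenvalue of ${\mathsf A}^V_2$. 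Indeed, a minimizer $\psi_\star\in S_1$ satisfies $(\psi_\star,\theta)_{S_1}=\lambda_\F^S(\psi_\star,\theta)_{S_0}$ for all $\theta\in S_1$, that is ${\mathsf A}^S_1\psi_\star=\lambda_\F^S\psi_\star$ in $S_{-1}$; since the right-hand side lies in $S_0$, this forces $\psi_\star\in S_2$ and ${\mathsf A}^S_2\psi_\star=\lambda_\F^S\psi_\star$, and the analogous statement holds on the vorticity side. It therefore suffices to prove that ${\mathsf A}^S_2$ and ${\mathsf A}^V_2$ have the same set of eigenvalues.

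For this step I would use the intertwining relations \eqref{extend-kneg_2} at the level $k=2$, namely ${\mathsf A}^V_2{\mathsf P}_2={\mathsf P}_0{\mathsf A}^S_1$ and ${\mathsf A}^S_1{\mathsf Q}_2={\mathsf Q}_0{\mathsf A}^V_2$, together with the extension identities \eqref{eq:Pkextend} (so that ${\mathsf P}_0$ agrees with ${\mathsf P}_2$ on $S_1$ and ${\mathsf Q}_0$ agrees with ${\mathsf Q}_2$ on $V_2$) and the fact that ${\mathsf P}_2\colon S_1\to V_2$ and ${\mathsf Q}_2\colon V_2\to S_1$ are mutually inverse isometries. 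If ${\mathsf A}^S_1\psi_\star=\lambda\psi_\star$ with $\psi_\star\neq0$, then $\omega_\star:={\mathsf P}_2\psi_\star\in V_2$ is nonzero and ${\mathsf A}^V_2\omega_\star={\mathsf P}_0{\mathsf A}^S_1\psi_\star=\lambda\,{\mathsf P}_0\psi_\star=\lambda\,{\mathsf P}_2\psi_\star=\lambda\,\omega_\star$, so $\lambda$ is an eigenvalue of ${\mathsf A}^V_2$; applying ${\mathsf Q}_2$ in exactly the same way to an eigenvector of ${\mathsf A}^V_2$ yields the reverse inclusion. The two operators thus share their spectrum, and comparing the smallest elements gives $\lambda_\F^S=\lambda_\F^V$, which is the assertion of the corollary.

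The only genuine difficulty is bookkeeping: keeping track of the domain and codomain of each of ${\mathsf A}^S_k$, ${\mathsf A}^V_k$, ${\mathsf P}_k$, ${\mathsf Q}_k$, and invoking \eqref{eq:Pkextend} at the right moment to replace ${\mathsf P}_0$ by ${\mathsf P}_2$ (and ${\mathsf Q}_0$ by ${\mathsf Q}_2$) so that the computation closes up; once Theorem~\ref{LEM:P_kQ_k} is granted, each step is a one-line verification. An alternative, perhaps closer in spirit to the chain construction, is to observe that $\{{\mathsf P}_k\}$ realizes an isometry of the chain $\{S_k\}$ onto the shifted chain $\{V_{k+1}\}$ intertwining the associated operators ${\mathsf A}^S_k$ and ${\mathsf A}^V_{k+1}$, so that the Poincar\'e constant attached to such a chain — which Appendix~\ref{gelf_triple} shows to be the same at every level — is common to the two chains; this again gives $\lambda_\F^S=\lambda_\F^V$.
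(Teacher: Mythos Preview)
Your proposal is correct. The paper leaves this corollary unproved, treating it as an immediate consequence of Theorem~\ref{LEM:P_kQ_k}; your two arguments (transporting eigenvectors via the intertwining relations \eqref{extend-kneg_2}, or observing that $\{\mathsf P_k\}$ is an isometry of the chain $\{S_k\}$ onto $\{V_{k+1}\}$ so that the level-independent Poincar\'e constant of Appendix~\ref{gelf_triple} is shared) are both valid ways of fleshing out that implicit reasoning, the second being closest in spirit to the paper's placement of the corollary.
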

Notice also that since $S_1\subset Z_2$ and $S_0= Z_1$, we have: 
\begin{equation}
\label{estim_eigen}
\lambda_\F^{ Z}=\min_{\psi\in  Z_2\atop \psi\neq 0}\frac{\|\psi\|^2_{ Z_2}}{\|\psi\|^2_{ Z_1}}
\leqslant \min_{\psi\in S_1\atop \psi\neq 0}\frac{\|\psi\|^2_{S_1}}{\|\psi\|^2_{S_0}}=\lambda_\F.
\end{equation}
Remark~\ref{rem:omega_no_vorti} points out that, for a given stream function $\psi$ in some space $S_{k-1}$, the function 
$\omega={\mathsf P}_k\psi$ is not 
the (physical) vorticity corresponding to the  velocity field $\nabla^\perp\psi$. We shall now define the operators $\Delta_k$ that associates the stream function to 
its corresponding vorticity field.
\begin{definition}
For every integer $k$, the operator $\Delta_k:S_{k+1}\to V_{k}$ is defined equivalently (according to \eqref{extend-kneg_2}) by:
\begin{equation}
\label{def_Bk}
\text{Either}\quad \Delta_k = -{\mathsf A}^{V}_{k+2}{\mathsf P}_{k+2}\qquad\text{or}\qquad \Delta_k=-{\mathsf P}_k{\mathsf A}^S_{k+1}.
\end{equation}
%
%
%
\end{definition}
%
%
The main properties of the operators $\Delta_k$ are summarized in the following lemma:
\begin{lemma}
\label{LEM:prop_delta}
The following assertions hold:
\begin{my_enumerate}
\item
For every integer $k$, the operator $\Delta_k$ is an isometry. 
\item
For every pair of integers $k,k'$ such that $k'\leqslant k$, $\Delta_k=\Delta_{k'}$ in $S_{k+1}.$
\item For every nonnegative integer $k$, the operator $\Delta_k$ is the classical Laplacian operator. 
%
%
\end{my_enumerate}
\end{lemma}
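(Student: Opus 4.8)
The plan is to dispatch the three assertions in turn, in each case exploiting the factorisations \eqref{def_Bk} of $\Delta_k$ and the structural results already proved for $\mathsf P_k$, $\mathsf Q_k$ and the Gelfand-triple isometries $\mathsf A^S_k$, $\mathsf A^V_k$, $\mathsf A^Z_k$. Since all the analytic content has been front-loaded into Lemma~\ref{first_iso}, Lemma~\ref{decomp_S2}, Theorem~\ref{LEM:P_kQ_k} and the abstract construction of Appendix~\ref{gelf_triple}, what remains is essentially bookkeeping.

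For assertion (i), I would use the first formula in \eqref{def_Bk}, $\Delta_k = -\mathsf A^V_{k+2}\mathsf P_{k+2}$. The map $\mathsf P_{k+2}:S_{k+1}\to V_{k+2}$ is an isometry by Theorem~\ref{LEM:P_kQ_k}, and $\mathsf A^V_{k+2}:V_{k+2}\to V_k$ is an isometry by the general Gelfand-triple construction; a composition of isometries is an isometry, and the sign is immaterial for the norm. Hence $\Delta_k$ is an isometry for every integer $k$.

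For assertion (ii), fix $k'\leqslant k$ and use instead the second formula $\Delta_k = -\mathsf P_k\mathsf A^S_{k+1}$. On the domain side, $S_{k+1}\subset S_{k'+1}$ and the operators $\mathsf A^S_{k+1}$, $\mathsf A^S_{k'+1}$ coincide on $S_{k+1}$ because the $\mathsf A^S_j$ are mutual restrictions of one another (Appendix~\ref{gelf_triple}). The image $\mathsf A^S_{k+1}S_{k+1}$ lies in $S_{k-1}\subset S_{k'-1}$, and on $S_{k-1}$ one has $\mathsf P_{k'}=\mathsf P_k$ by \eqref{eq:Pkextend} (here we use exactly $k'\leqslant k$). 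Composing these two identities gives $\Delta_{k'}=-\mathsf P_{k'}\mathsf A^S_{k'+1}=-\mathsf P_k\mathsf A^S_{k+1}=\Delta_k$ on $S_{k+1}$.

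For assertion (iii), by (ii) with $k'=0$ it suffices to treat $k=0$, i.e. to check that $\Delta_0=-\mathsf A^V_2\mathsf P_2$ coincides with the classical Laplacian on $S_1$. Given $\psi\in S_1$, set $\omega=\mathsf P_2\psi=\mathsf P_1\psi\in V_2$; then $\mathsf Q_1\omega=\mathsf Q_2\omega=\psi$ by Theorem~\ref{LEM:P_kQ_k}. Using the second identity of \eqref{eq:nice_relations}, $\mathsf A^V_2=\mathsf A^Z_2\mathsf Q_1$ on $V_2$, together with $\mathsf A^Z_2=-\Delta$ on $Z_2\supset S_1$ (Lemma~\ref{LEM:S2A2}), I obtain
\[
\Delta_0\psi \;=\; -\mathsf A^V_2\omega \;=\; -\mathsf A^Z_2(\mathsf Q_1\omega) \;=\; -\mathsf A^Z_2\psi \;=\; \Delta\psi .
\]
Since $S_{k+1}\subset S_1$ for every $k\geqslant 0$ and $\Delta_k=\Delta_0$ there by (ii), $\Delta_k$ is the classical Laplacian on $S_{k+1}$ for all nonnegative $k$.

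There is no genuinely hard step here: the only points demanding a little care are, in (ii), correctly matching the restriction property of the $\mathsf A^S_j$ on the domain against \eqref{eq:Pkextend} on the target, and in (iii), routing the computation through the $Z$-triple identity $\mathsf A^V_2=\mathsf A^Z_2\mathsf Q_1$ (where $\mathsf A^Z_2$ is the honest Laplacian on $H^2$-functions) rather than attempting to differentiate the $L^2$-valued object $\mathsf P\psi$ directly.
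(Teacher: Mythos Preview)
Your proof is correct and follows essentially the same approach as the paper's. The only cosmetic differences are that for (ii) you use the factorisation $\Delta_k=-\mathsf P_k\mathsf A^S_{k+1}$ whereas the paper uses $\Delta_k=-\mathsf A^V_{k+2}\mathsf P_{k+2}$ (both work identically via \eqref{eq:Pkextend} and \eqref{Akexpand}), and for (iii) you route the computation through the $Z$-triple identity $\mathsf A^V_2=\mathsf A^Z_2\mathsf Q_1$ rather than simply citing Lemma~\ref{express_AVk}, which already records that $\mathsf A^V_k=-\Delta$ for $k\geqslant 2$.
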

%
\begin{proof}
We recall that the operators ${\mathsf P}_k$ and ${\mathsf A}_k^V$ are isometries, what yields the first point of the lemma. The second point is a consequence of \eqref{eq:Pkextend} 
and the similar general property \eqref{Akexpand} satisfied by the operators ${\mathsf A}^V_k$. The third point is a direct consequence of Lemma~\ref{express_AVk}.
\end{proof}
 For any integer $k$, the operators ${\mathsf Q}_k$ and $-\Delta_{-k}$ can be shown to be somehow adjoint:
%
\begin{lemma}
	\label{LEM:prop:delta_negatif}
For negative indices, the operators $\Delta_k$ and ${\mathsf Q}_k$ satisfy the adjointness relations below:
\begin{subequations}
\begin{alignat}{3}
\label{alternQk}
{\mathsf Q}_{-k}=-\Delta_k^\ast&:\omega\in V_{-k}\longmapsto-\langle \omega,\Delta_k\cdot\rangle_{V_{-k},V_k}\in S_{-k-1}&\quad&(k\geqslant 0),\\
\label{alternQk_2}
\Delta_{-k}=-{\mathsf Q}_k^\ast&:\psi\in S_{-k+1}\longmapsto-\langle \psi,{\mathsf Q}_k\cdot\rangle_{S_{-k+1},S_{k-1}}\in V_{-k}&\quad&(k\geqslant 1).
\end{alignat}
\end{subequations}
%
\end{lemma}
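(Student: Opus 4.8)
The statement to prove is Lemma~\ref{LEM:prop:delta_negatif}, the two adjointness identities \eqref{alternQk} and \eqref{alternQk_2}. The plan is to verify each identity by unwinding the inductive definitions \eqref{extend-kneg} of the negative-index operators and using that all the operators involved ($\mathsf A^V_k$, $\mathsf A^S_k$, $\mathsf P_k$, $\mathsf Q_k$, $\Delta_k$) are isometries between Hilbert spaces that come in Gelfand triples, so that the duality pairings and adjoints behave as the abstract theory of Appendix~\ref{gelf_triple} prescribes. In particular I will use repeatedly that for an isometry $\mathsf T:X_k\to Y_{k-2}$ arising in a Gelfand chain, its ``transpose'' with respect to the pivot-space pairings is the corresponding isometry going the other way between the dual-indexed spaces, and that $\mathsf A^V_{-k}$, $\mathsf A^S_{-k}$ are by construction the adjoints (transposes) of $\mathsf A^V_{k+2}$, $\mathsf A^S_{k+1}$ via the pairing conventions already fixed in Lemma~\ref{express_AVk}.

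\emph{First identity \eqref{alternQk}.} Fix $k\geqslant 0$. By \eqref{def_Bk}, $\Delta_k=-\mathsf A^V_{k+2}\mathsf P_{k+2}$, so for $\psi\in S_{k+1}$ and $\omega\in V_{-k}$ one may write $\langle\omega,\Delta_k\psi\rangle_{V_{-k},V_k}=-\langle\omega,\mathsf A^V_{k+2}\mathsf P_{k+2}\psi\rangle_{V_{-k},V_k}$. I will move $\mathsf A^V_{k+2}$ across the pairing: by the definition of $\mathsf A^V_{-k}$ in Lemma~\ref{express_AVk} (the case of nonpositive indices, $\mathsf A^V_{-k}:u\mapsto\langle u,(-\Delta)\cdot\rangle$), or more precisely by the abstract transposition rule of Appendix~\ref{gelf_triple} relating $\mathsf A^V_{k+2}:V_{k+2}\to V_k$ to $\mathsf A^V_{-k}:V_{-k}\to V_{-k-2}$, one gets $\langle\omega,\mathsf A^V_{k+2}\eta\rangle_{V_{-k},V_k}=\langle\mathsf A^V_{-k}\omega,\eta\rangle_{V_{-k-2},V_{k+2}}$ for $\eta\in V_{k+2}$. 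Taking $\eta=\mathsf P_{k+2}\psi$ this becomes $-\langle\mathsf A^V_{-k}\omega,\mathsf P_{k+2}\psi\rangle_{V_{-k-2},V_{k+2}}$. Next I use the intertwining \eqref{extend-kneg_2}, namely $\mathsf Q_{k-2}\mathsf A^V_k=\mathsf A^S_{k-1}\mathsf Q_k$ (read at the appropriate index), together with the fact that $\mathsf P_{k+2}$ and $\mathsf Q_{k+2}$ are mutually inverse isometries (Theorem~\ref{LEM:P_kQ_k}) and that an isometry preserves the pivot pairing when paired against its inverse, to pass the pairing over to the stream-function side and recognise the right-hand side as $\langle\mathsf Q_{-k}\omega,\psi\rangle_{S_{-k-1},S_{k+1}}$ — exactly the value of $-\Delta_k^\ast\omega$ evaluated at $\psi$. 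Carrying the minus signs through yields \eqref{alternQk}. The only genuine care needed here is bookkeeping of the indices and of which pivot-space pairing ($V_0$ versus $S_0=Z_1$) is being used at each step; this is where I expect the main (though routine) obstacle to lie.

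\emph{Second identity \eqref{alternQk_2}.} Fix $k\geqslant 1$. This is the ``same'' computation run with the roles of $\mathsf P$ and $\mathsf Q$, and of the $S$- and $V$-chains, interchanged. Using $\Delta_{-k}=-\mathsf P_{-k}\mathsf A^S_{-k+1}$ from \eqref{def_Bk} (second form), together with the definition \eqref{def_Qkn} of $\mathsf Q_{-k}$ and the inductive formula for $\mathsf P_{-k}$ in \eqref{extend-kneg}, and transposing $\mathsf A^S_{-k+1}$ across the $S$-pairing via its relation to $\mathsf A^S_{k-1}:S_{k-1}\to S_{k-3}$, one rewrites $\langle\psi,\mathsf Q_k\omega\rangle_{S_{-k+1},S_{k-1}}$ in terms of a $V$-pairing against $\mathsf P_k\,(\text{something})$ and collapses it using that $\mathsf P_k,\mathsf Q_k$ are reciprocal isometries. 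The outcome identifies $-\mathsf Q_k^\ast\psi$, as an element of $V_{-k}$, with $\Delta_{-k}\psi$. Alternatively — and this is the shortcut I would actually present — \eqref{alternQk_2} follows from \eqref{alternQk} by a pure duality argument: both $\Delta_{-k}$ and $-\mathsf Q_k^\ast$ are bounded operators $S_{-k+1}\to V_{-k}$, and \eqref{alternQk} (applied with the index shifted so that its left-hand operator is $\mathsf Q_k$, i.e. reading \eqref{alternQk} as $\mathsf Q_k=-\Delta_{-k}^\ast$ on the appropriate spaces) says precisely that $\Delta_{-k}^\ast=-\mathsf Q_k$; taking adjoints once more and using reflexivity of the Hilbert spaces $S_{k-1},V_{-k}$ gives $\Delta_{-k}=-\mathsf Q_k^\ast$, which is \eqref{alternQk_2}. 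So in the write-up I would prove \eqref{alternQk} in detail and then deduce \eqref{alternQk_2} in one line by taking adjoints, noting that the $H^{s}$ Sobolev duality/pairing conventions fixed in \eqref{rem:brackets} and in the Gelfand-triple construction make ``adjoint'' and ``transpose with respect to the pivot pairing'' coincide, so no extra identification of spaces is required. The main obstacle, as above, is simply keeping the four index-shifts mutually consistent and making sure every pairing bracket is written between genuinely dual spaces.
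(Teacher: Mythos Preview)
Your plan for \eqref{alternQk} is in the right spirit but the crucial step is hand-waved. After transposing $\mathsf A^V_{k+2}$ you arrive at $\langle \mathsf A^V_{-k}\omega,\mathsf P_{k+2}\psi\rangle_{V_{-k-2},V_{k+2}}$ and then claim you can ``pass the pairing over to the stream-function side'' because ``an isometry preserves the pivot pairing when paired against its inverse''. That sentence hides the whole difficulty: what you need is an identification of the transpose $\mathsf P_{k+2}^\ast:V_{-k-2}\to S_{-k-1}$, and no such identification has been established prior to this lemma. Note the index mismatch: $\mathsf Q_{-k-2}$ maps $V_{-k-2}$ to $S_{-k-3}$, not to $S_{-k-1}$, so $\mathsf P_{k+2}^\ast$ is \emph{not} simply one of the $\mathsf Q_j$. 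Because $\mathsf Q_{-k}$ is \emph{defined} recursively by \eqref{extend-kneg}, any attempt to unwind your pairing will force you back through $\mathsf Q_{-k+2}$ --- that is, back to an induction. The paper does exactly this: base cases $k=0$ and $k=1$ are checked by hand (using \eqref{extend-kneg_2} and the isometry of $\mathsf P_2,\mathsf A^V_2$), and the step $k-2\Rightarrow k$ follows from the recursion \eqref{extend-kneg} together with \eqref{eq:def_Akdual} and \eqref{def_Bk}. Your outline is compatible with this, but as written it skips the part that carries the content.

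Your shortcut for \eqref{alternQk_2} does not work. Identity \eqref{alternQk} is valid for $k\geqslant 0$ and reads $\mathsf Q_{-k}=-\Delta_k^\ast$; relabelling so that the left side becomes $\mathsf Q_k$ would require the index $-k$ with $k\geqslant 1$, which is outside the stated range. Equivalently, taking adjoints of \eqref{alternQk} gives $\Delta_k=-\mathsf Q_{-k}^\ast$ for $k\geqslant 0$, which concerns $\Delta_0,\Delta_1,\ldots$ and says nothing about $\Delta_{-1},\Delta_{-2},\ldots$ --- precisely the operators appearing in \eqref{alternQk_2}. The two identities cover complementary index ranges and neither follows from the other by a mere adjoint-plus-shift. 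The paper therefore proves \eqref{alternQk_2} by the analogous (independent) induction; you should do the same.
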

\begin{proof} 
We prove \eqref{alternQk} by induction, the proof of \eqref{alternQk_2} being similar.  
According to \eqref{extend-kneg_2}, ${\mathsf Q}_{0}{\mathsf A}^V_2={\mathsf A}^S_1{\mathsf Q}_2$ what means, recalling \eqref{eq:defA1} that:
$${\mathsf Q}_{0}{\mathsf A}^V_2\omega=({\mathsf Q}_2\omega, \cdot)_{S_1}=(\omega,{\mathsf P}_2\cdot)_{V_2}=({\mathsf A}_2^V\omega,{\mathsf A}_2^V{\mathsf P}_2\cdot)_{V_0}=({\mathsf A}_2^V\omega,(-\Delta_0)\cdot)_{V_0},$$
where we have used the fact that the operators ${\mathsf P}_2$ and ${\mathsf A}^V_2$ are isometries. This proves \eqref{alternQk}  at the step $k=0$.
\par
According to the definition \eqref{def_Qkn} with $k=1$, ${\mathsf Q}_{-1}={\mathsf A}_0^S{\mathsf Q}_1({\mathsf A}_1^V)^{-1}$. Equivalently stated, for every $\omega\in V_1$:
$${\mathsf Q}_{-1}{\mathsf A}_1^V\omega= {\mathsf A}_0^S{\mathsf Q}_1\omega=({\mathsf Q}_1\omega,{\mathsf A}_2^S\cdot)_{S_0}\forallt \omega\in V_1,$$
where we have used the general relation \eqref{def_A0}. The operator ${\mathsf P}_1$ being an isometry:
$$({\mathsf Q}_1\omega,{\mathsf A}_2^S\cdot)_{S_0}=({\mathsf P}_1{\mathsf Q}_1\omega,{\mathsf P}_1{\mathsf A}_2^S\cdot)_{V_1}=-(\omega,\Delta_1\cdot)_{V_1}=-\langle {\mathsf A}_1^V\omega,\Delta_1\cdot\rangle_{V_{-1},V_1},$$
and  \eqref{alternQk} is then proved for $k=1$.
\par
Let us assume that \eqref{alternQk} holds true at the step $k-2$ for some integer $k\geqslant 2$. According to \eqref{extend-kneg_2}, ${\mathsf Q}_{-k}{\mathsf A}_{-k+2}^V={\mathsf A}^S_{-k+1}{\mathsf Q}_{-k+2}$ whence, recalling the definition \eqref{eq:def_Akdual}:
$${\mathsf Q}_{-k}{\mathsf A}_{-k+2}^V\omega=\langle {\mathsf Q}_{-k+2}\omega,{\mathsf A}^S_{k+1}\cdot\rangle_{S_{-k+1},S_{k-1}}\forallt \omega\in V_{-k+2}.$$
Using the induction hypothesis for the operator ${\mathsf Q}_{-k+2}$, it comes:
$${\mathsf Q}_{-k}{\mathsf A}_{-k+2}^V\omega=-\langle  \omega,\Delta_{k-2}{\mathsf A}^S_{k+1}\cdot\rangle_{V_{-k+2},V_{k-2}}=
-\langle\omega,{\mathsf A}_k^V\Delta_k\cdot\rangle_{V_{-k+2},V_{k-2}}
\forallt \omega\in V_{-k+2},$$
where the latter identity results from \eqref{def_Bk}. Keeping in mind \eqref{eq:def_Akdual}, we have indeed proved \eqref{alternQk} at the step $k$ and 
complete the proof.
\end{proof}
%
\subsection{Biot-Savart operator}
With a slight abuse of terminology, the inverse of the operator $\Delta_k$  denoted by $\mathsf N_k$ will be referred to as the Biot-Savart operator.  
Quite surprisingly, the expression of this operator is independent from the fluid domain $\F$. We recall that the  fundamental solution of the Laplacian is 
the function:
$$\mathscr G(x)=\frac{1}{2\pi}\ln|x|,\qquad x\in\mathbb R^2\setminus\{0\}.$$
In the rest of the paper, we will denote generically by $\mathbf c$ the real constants that should arise in the estimates. The value of the constant 
may change  from line to line. The parameters the constant should depend on is indicated in subscript.
\begin{theorem}
\label{biot-savart_simple}
For every nonnegative index  $k$, the Biot-Savart operator $\mathsf N_k=(\Delta_k)^{-1}$ is simply the Newtonian potential defined by:
\begin{equation}
\label{express_newton}
\mathsf N_k:\omega\in V_k\longmapsto \mathsf N \omega\in S_{k+1},
\end{equation}
where $\mathsf N \omega=\mathscr G\ast \omega$ ($\omega$ is extended by $0$ outside $\F$).

\end{theorem}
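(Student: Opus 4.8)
The plan is to show that the Newtonian potential $\mathsf N$ defined by $\mathsf N\omega=\mathscr G\ast(\omega\mathbf 1_\F)$ coincides with the inverse of $\Delta_k$, i.e. that for every $\omega\in V_k$ (with $k\geqslant 0$) the function $\psi=\mathsf N\omega$ lies in $S_{k+1}$ and satisfies $\Delta_k\psi=\omega$. Since $\Delta_k$ is the classical Laplacian on nonnegative indices (third point of Lemma~\ref{LEM:prop_delta}), the identity $\Delta(\mathscr G\ast(\omega\mathbf 1_\F))=\omega$ in $\F$ is exactly the defining property of the Newtonian potential, so the only two things to verify are the boundary conditions built into $S_{k+1}$ and the regularity $\psi\in H^{k+2}(\F)$.

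First I would fix $\omega\in V_k\subset L^2(\F)$ (using Lemma~\ref{express_AVk} for the inclusion $V_k\subset H^k(\F)$) and set $\psi=\mathscr G\ast(\omega\mathbf 1_\F)$. Classical potential theory gives $\psi\in H^2_{\rm loc}(\mathbb R^2)$ with $\Delta\psi=\omega$ in $\F$; by elliptic regularity up to the boundary, $\omega\in H^k(\F)$ together with the smoothness of $\Sigma$ (class ${\mathcal C}^{k+1,1}$) will upgrade this to $\psi\in H^{k+2}(\F)$, matching the regularity of $S_{k+1}$ via Lemma~\ref{lem:sapce_S2}. The delicate point is the \emph{boundary} behaviour: membership in $S_{k+1}\subset S_0$ requires that $\psi|_{\Sigma^+}=0$ and that $\psi$ be constant on each inner curve $\Sigma^-_j$. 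A priori the Newtonian potential has no reason to respect these conditions; the key observation is that because $\omega\in V_0=\mathfrak H^\perp$, the potential $\psi$ is harmonic \emph{outside} $\overline\F$ with the right decay, and one can exploit the orthogonality of $\omega$ to the harmonic functions $\mathfrak H$ — in particular to the functions $\Delta\chi_j$ that fix the inner fluxes — to show the traces of $\psi$ collapse to constants on the $\Sigma^-_j$ and to zero on $\Sigma^+$.

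Concretely, I would argue as follows. Let $\tilde\psi=\psi|_{\mathbb R^2\setminus\overline\F}$: it is harmonic in the (unbounded and bounded, respectively outer and inner) complementary components, with $\tilde\psi(x)=\frac{1}{2\pi}\bigl(\int_\F\omega\bigr)\ln|x|+O(1/|x|)$ at infinity. For the outer region I would test the transmission relations (continuity of $\psi$ and of $\partial_n\psi$ across $\Sigma^+$, which hold because $\mathscr G\ast(\omega\mathbf 1_\F)\in H^2_{\rm loc}$) against harmonic functions, and use the fact that every trace in $H^{1/2}(\Sigma^+)$ is attained by a harmonic function in the exterior; the orthogonality $(\omega,h)_{L^2(\F)}=0$ for $h\in\mathfrak H$ — exactly the mechanism used in the proof of Lemma~\ref{first_iso} — forces $\partial_n\psi|_{\Sigma^+}$ to be consistent only with $\psi|_{\Sigma^+}$ constant, and the decay at infinity pins that constant to $0$ by the maximum principle on the exterior domain. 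For each inner hole, harmonicity of $\psi$ inside $\Sigma^-_j$ together with the flux condition $\int_{\Sigma^-_j}\partial_n\psi\,\mathrm ds=\int_{\Sigma^-_j}\partial_n\chi_j$-type bookkeeping (here the constraint $(\omega,\Delta\chi_j)_{L^2(\F)}=0$ defining $\mathfrak H$, hence $V_0=\mathfrak H^\perp$ contains $\Delta\chi_j$, enters) shows the interior solution is constant, so $\psi|_{\Sigma^-_j}$ is constant.

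I expect the \textbf{main obstacle} to be precisely this boundary bookkeeping in the multiply connected case: making rigorous that $\omega\in V_0$ (rather than merely $L^2$) is the exact condition that makes the free-space Newtonian potential land in $S_1$, and tracking the flux normalizations on the $\Sigma^-_j$. Once $\psi\in S_{k+1}$ is established, the remaining steps are soft: $\Delta_{k}\psi=\Delta\psi=\omega$ by Lemma~\ref{LEM:prop_delta}(iii), so $\mathsf N$ is a right inverse of $\Delta_k$; since $\Delta_k:S_{k+1}\to V_k$ is a bijective isometry (Lemma~\ref{LEM:prop_delta}(i) and the definition of $\mathsf N_k$ as its inverse), a right inverse is \emph{the} inverse, giving $\mathsf N_k=\mathsf N$. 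Finally, independence of $\F$ is immediate from the formula, and one may close by remarking that the chain $\Delta_k=\Delta_{k'}$ for $k'\leqslant k$ (Lemma~\ref{LEM:prop_delta}(ii)) makes the operators $\mathsf N_k$ mutually compatible restrictions of a single operator $\mathsf N$.
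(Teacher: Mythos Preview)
Your overall strategy matches the paper's: prove that $\psi=\mathsf N\omega$ lies in $S_1$, after which $\Delta_k\psi=\omega$ is immediate by Lemma~\ref{LEM:prop_delta}(iii) and the higher $k$ follow from the definition of $S_{k+1}$. The gap is in the boundary step. Harmonicity of $\psi$ inside a hole $D_j$ together with the single scalar identity $\int_{\Sigma_j^-}\partial_n\psi\,ds=0$ is far from forcing $\psi|_{D_j}$ constant (any nonconstant harmonic function on a disk has zero boundary flux). Your parenthetical ``$(\omega,\Delta\chi_j)_{L^2(\F)}=0$'' is misread: that vanishing is a constraint \emph{defining} $\mathfrak H$ (applied to harmonic $h$), not a constraint on $\omega\in V_0$; in fact $\Delta\chi_j\in V_0$, so $(\omega,\Delta\chi_j)$ has no reason to vanish. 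For $\Sigma^+$ you invoke the mechanism of Lemma~\ref{first_iso}, but that proof starts from $\psi_0\in H^1_0(\F)$ whose Dirichlet trace already vanishes, so Green's identity collapses to a single boundary term; for $\psi=\mathsf N\omega$ both terms $\partial_n\psi\,h$ and $\psi\,\partial_n h$ survive, and you have not explained how to decouple them.

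The paper supplies the missing mechanism via single layer potentials. Fubini (justified by an $L^2$ bound on $\mathscr G$) gives, for any $q\in L^2(\Sigma_j^-)$, the identity
\[
\int_\F \omega\,(\mathsf S_j q)\,dx + \int_{\Sigma_j^-}\psi\,q\,ds = 0,
\]
where $\mathsf S_j q$ is the single layer with density $q$ on $\Sigma_j^-$. The jump formula shows $\mathsf S_j q|_\F\in\mathfrak H$ exactly when $q$ has zero mean; for such $q$ the first integral vanishes since $\omega\in V_0=\mathfrak H^\perp$, hence $\int_{\Sigma_j^-}\psi\,q\,ds=0$ for all mean-zero $q$, i.e.\ $\psi$ is constant on $\Sigma_j^-$. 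Densities on $\Sigma^+$ automatically produce layers with zero flux through every inner curve, and the same identity gives $\psi|_{\Sigma^+}=0$. A final pass, representing an arbitrary harmonic $h$ with $\partial_n h\in L^2(\Sigma)$ as a sum of single layers, yields $\int_\Sigma(\partial_n\psi)h\,ds=0$ and hence $\partial_n\psi|_\Sigma=0$. The point is that the single-layer construction manufactures, for each boundary test $q$, an element of $\mathfrak H$ tailored so that the two boundary terms in Green's identity collapse to $\int\psi\,q\,ds$ alone---this is precisely what your exterior-harmonicity sketch lacks.
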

\begin{proof}
Let $\omega$ be in $V_0$ (extended by $0$ outside $\F$) and denote by $\psi$ the Newtonian potential $\mathsf N\omega$. According to  classical properties of 
the Newtonian potential, $\Delta\psi=\omega$ in $\mathbb R^2$ and therefore
it suffices to verify that $\psi$ is constant on every connected part of the boundary $\Sigma$ and that  its normal derivative vanishes. Define 
the constant $\delta=2\max\{|x-y|\,:\,x\in \Sigma^-,\, y\in\Sigma^+\}$.
Let $j$ be in $\{1,\ldots,N\}$ and let $q$ be in $L^2(\Sigma^-_j)$. 
Notice now that there exists a positive constant $\mathbf c_{\Sigma^-_j}$ such that:
\begin{align*}
\int_{\Sigma^-_j}\int_{\mathbb R^2}|\mathscr G(x-y)\omega(y)q(x)|{\rm d}y\ds_x&=
\int_{\Sigma^-_j}\int_{B(0,\delta)}|\mathscr G(z)\omega(x-z)q(x)|{\rm d}z\ds_x\\
&\leqslant \mathbf c_{\Sigma^-_j}\|\omega\|_{V_0}\|\mathscr G\|_{L^2(B(0,\delta))}\|q\|_{L^2(\Sigma^-_j)}.
\end{align*}
We are then allowed to apply Fubini's theorem, which yields:
$$\int_{\mathbb R^2}\omega(y)\left(-\int_{\Sigma^-_j}\mathscr G(x-y)q(x)\ds_x\right){\rm d}y+
\int_{\Sigma^-_j}\left(\int_{\mathbb R^2}\mathscr G(x-y)\omega(y){\rm d}y\right)q(x)\ds_x=0,
$$
and this identity can be rewritten as:
\begin{equation}
\label{eq:fubini}
\int_\F \Delta\psi (\mathsf S_j q)\dx+\int_{\Sigma^-_j}\psi\,q\ds=0,
\end{equation}
where $\mathsf S_j q$ is the single layer potential of density $q$ supported on the boundary $\Sigma_j^-$, that is:
$$\mathsf S_j q(x)=-\int_{\Sigma^-_j}\mathscr G(x-y)q(y)\ds_y\forallt x\in\mathbb R^2\setminus \Sigma_j^-.$$
The simple layer potential $\mathsf S_j q$ is harmonic in $\mathbb R^2\setminus \Sigma_j^-$ (we refer to the book of McLean \cite{McLean:2000aa} 
for details about layer potentials) and we denote respectively by $\mathsf S_j q^+$ and 
$\mathsf S_j q^-$ the restriction of $\mathsf S_j q$ to the unbounded and bounded connected components of $\mathbb R^2\setminus \Sigma_j^-$. 
The functions $\mathsf S_j q^+$ and 
$\mathsf S_j q^-$ share the same trace on $\Sigma^-_j$ and $q$ is the jump of the normal derivative across the boundary $\Sigma^-_j$:
\begin{equation}
\label{eq:jump_formula}
q=\frac{\partial}{\partial n}\mathsf S_j^+q-\frac{\partial}{\partial n}\mathsf S_j^- q\qquad\text{on }\Sigma_j^-.
\end{equation}
From the obvious equality
$$\int_{\Sigma^-_j}\frac{\partial}{\partial n}\mathsf S_j^- q\ds=0,$$
we deduce that the harmonic function $\mathsf S_j^+q$ has zero mean flux through the boundary $\Sigma^-_j$ if and only if 
$$\int_{\Sigma^-_j}q \ds=0.$$
On the other hand, for indices $k\neq j$, we have also:
$$\int_{\Sigma^-_k}\frac{\partial}{\partial n}\mathsf S_j^+q\ds=0,$$
because the normal derivative of $\mathsf S_j^+q$ is continuous across $\Sigma^-_k$ and $\mathsf S_j^+q$ is harmonic inside $\Sigma^-_k$. From \eqref{eq:fubini}, we infer that for every $\omega\in V_0$ and 
every $q\in L^2(\Sigma^-_j)$ with zero mean value:
$$\int_{\Sigma^-_j}\psi\,q\ds=0,$$
and then that $\psi$ is constant on $\Sigma^-_j$. For every $q$ in $L^2(\Sigma^+)$, the corresponding single layer potential  $\mathsf S_0q$ supported on 
$\Sigma^+$ has zero mean flux through every inner boundary $\Sigma^-_k$ (for $k\in\{1,\ldots,N\}$) and we deduce from \eqref{eq:fubini} again that the trace of $\psi$ is nul on 
$\Sigma^+$. It follows that $\psi$ is in the space $S_0$.
\par
Let now $h$ be a harmonic function in $\F$ and assume that the normal derivative of $h$ belongs to $L^2(\Sigma)$. Then there exists $q_0\in L^2(\Sigma^+)$ 
and $q_j\in L^2(\Sigma^-_j)$ ($j=1,\ldots,N$) such that:
$$h=\mathsf S_0q_0+\sum_{j=1}^N\mathsf S_j q_j\qquad\text{in }\F.$$
Using again \eqref{eq:fubini} and the fact that $\psi$ is nul on $\Sigma^+$ and constant on $\Sigma^-$, we deduce that:
$$\int_\F\Delta\psi h\dx+\int_{\Sigma^-}\psi\frac{\partial h}{\partial n}\ds=0,$$
and therefore, integrating by parts, that:
$$\int_\Sigma \frac{\partial\psi}{\partial n} h\ds=0.$$
This last equality being true for every harmonic function $h$, it follows that the normal derivative of $\psi$ is nul on $\Sigma$ and 
therefore that $\psi$ belongs to $S_1$. The proof is now completed.
\end{proof}
%
The expression \eqref{express_newton} when $\F=\mathbb R^2$ can be found in the book \cite[\S 2.1]{Majda:2002aa}.
For the Euler equations in a domain with holes, the Biot-Savart operator (in the sense considered above, that is the operator allowing 
recovering the stream function) is given by (see  \cite{Lopes-Filho:2007aa} for a proof):
\begin{equation}
\label{biot-savart_euler}
\mathsf N^E\omega(x)=\int_\F \mathscr K(x,y)\omega(y)\,{\rm d}y+\sum_{j=1}^N\left(\varGamma_j-\alpha_j(\omega) \right)\xi_j(x)\forallt x\in\F.
\end{equation}
In this identity:
\begin{my_enumerate}
\item
  $\mathscr K:\F\times\F\to \mathbb R$ is the Green's function of the domain $\F$. It is defined by:
  $$\mathscr K(x,y)=\mathscr G(x-y)-\mathscr H(x,y)\forallt (x,y)\in\F\times\F\text{ s.t. }x\neq y,$$
  where, for every $x$ in $\F$, the function $\mathscr H(x,\cdot)$ is harmonic in $\F$ and satisfies 
  $$\mathscr H(x,\cdot)=\mathscr G(x-\cdot)\quad\text{ on }\Sigma.$$
\item The real constants $\alpha_j(\omega)$ are given by:
$$\begin{bmatrix}
\alpha_1(\omega)\\
\vdots\\
\alpha_N(\omega)
\end{bmatrix}
=
\begin{bmatrix}
(\xi_1,\xi_1)_{S_0}&\ldots&(\xi_1,\xi_N)_{S_0}\\
\vdots&&\vdots\\
(\xi_N,\xi_1)_{S_0}&\ldots&(\xi_N,\xi_N)_{S_0}
\end{bmatrix}^{-1}
\begin{bmatrix}
\int_\F\omega(y)\xi_1(y)\,{\rm d}y\\
\vdots\\
\int_\F\omega(y)\xi_N(y)\,{\rm d}y
\end{bmatrix}
,$$
where we recall the the functions $\xi_j$ ($j=1,\ldots,N$) are defined in Section~\ref{SEC:main_spaces}.
\item For every $j$, the scalar $\varGamma_j$ is the circulation of the fluid around the inner boundary $\Sigma^-_j$. 
\end{my_enumerate}
Notice that for the Euler equations in a multiply connected domain, both the vorticity and the circulation are necessary to recover the stream function.
\par
In case the vorticity is in $V_0$ and in the absence of circulation, then the Biot-Savart operator  for NS equations and the Biot-Savart operator for Euler equations give the same stream function:
\begin{prop}
\label{PROP:equiv_BS}
Let $\omega$ be in $V_0$ and assume that the flow is such that $\varGamma_j=0$ for every $j=1,\ldots,N$. Then $\mathsf N_0\omega$ 
(defined in \eqref{express_newton}) and $\mathsf N^E\omega$ (defined in \eqref{biot-savart_euler}) are equal.
\end{prop}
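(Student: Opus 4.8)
The plan is to show that the two candidate stream functions, $\psi_N := \mathsf N_0\omega$ and $\psi_E := \mathsf N^E\omega$, coincide by proving that their difference is harmonic in $\F$, lies in $S_0$, and has vanishing normal derivative on $\Sigma$; a harmonic function that is zero on $\Sigma^+$, locally constant on $\Sigma^-$, and has zero normal trace must vanish, since integrating $|\nabla(\psi_N-\psi_E)|^2$ by parts gives zero once one also uses that the constants on the $\Sigma_j^-$ are killed by the fact that the flux of $\psi_N-\psi_E$ through each $\Sigma_j^-$ is zero. Concretely, by Theorem~\ref{biot-savart_simple}, $\psi_N=\mathscr G\ast\omega$ belongs to $S_1$ and satisfies $\Delta\psi_N=\omega$ in $\F$ with $\psi_N|_{\Sigma^+}=0$, $\psi_N$ constant on each $\Sigma_j^-$, and $\partial\psi_N/\partial n=0$ on $\Sigma$; in particular every flux $\int_{\Sigma_j^-}\partial\psi_N/\partial n\,\ds$ vanishes.

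The next step is to read off the analogous properties of $\psi_E$ from the defining formula \eqref{biot-savart_euler}. Since $\varGamma_j=0$ for all $j$, that formula reduces to $\psi_E(x)=\int_\F\mathscr K(x,y)\omega(y)\,{\rm d}y-\sum_{j=1}^N\alpha_j(\omega)\xi_j(x)$. Using $\mathscr K(x,y)=\mathscr G(x-y)-\mathscr H(x,y)$ with $\mathscr H(x,\cdot)$ harmonic and matching $\mathscr G(x-\cdot)$ on $\Sigma$, one gets $\Delta\psi_E=\omega$ in $\F$ (the $\mathscr H$ part and the $\xi_j$ part are harmonic), while on $\Sigma$ the kernel satisfies $\mathscr K(x,y)=0$, so $\psi_E$ vanishes on $\Sigma^+$ and is constant (indeed zero, up to the contribution of the $\xi_j$ which are constant on each $\Sigma_j^-$) on each $\Sigma_j^-$ — so $\psi_E\in S_0$. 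Hence $\phi:=\psi_N-\psi_E$ is harmonic in $\F$ and belongs to $S_0$.

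It then remains to check that $\phi$ has zero normal derivative on $\Sigma$, or at least zero flux through each $\Sigma_j^-$, which for a harmonic function in $S_0$ forces $\phi=0$: writing $\phi|_{\Sigma_j^-}=c_j$ and integrating by parts, $\|\nabla\phi\|^2_{\mathbf L^2(\F)}=\sum_{j=1}^N c_j\int_{\Sigma_j^-}\partial\phi/\partial n\,\ds$, and each such flux vanishes. The flux of $\psi_N$ through $\Sigma_j^-$ is zero by Theorem~\ref{biot-savart_simple}; the flux of $\psi_E$ through $\Sigma_j^-$ is zero precisely because the coefficients $\alpha_j(\omega)$ are \emph{defined} (via the Gram matrix of the $\xi_j$ in $S_0$, equivalently the flux conditions \eqref{flux_cond_xi}) so that the singular part $\int_\F\mathscr G(\cdot-y)\omega(y)\,{\rm d}y$ has its inner fluxes exactly cancelled — this is the role of the projection implicit in $\mathsf N^E$, and it is here that $\omega\in V_0$ (so that the fluxes are the only obstruction, the harmonic part being absent) is used. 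The main obstacle I expect is a bookkeeping one: carefully justifying the boundary behavior of $\psi_E$ term by term (the continuity of the single-layer/Green-function representation up to $\Sigma$, the harmonicity of the $\mathscr H$-part, and the exact matching of fluxes through the $\alpha_j$), rather than any deep analytic difficulty, since the rigidity of harmonic functions in $S_0$ does the rest. Alternatively, and perhaps more cleanly, one can simply invoke that $\mathsf N_0$ is by definition the inverse of the isometry $\Delta_0:S_1\to V_0$ and show $\psi_E\in S_1$ with $\Delta_0\psi_E=\omega$, i.e. that $\psi_E$ has vanishing normal derivative on all of $\Sigma$; applying $\int_\Sigma(\partial\psi_E/\partial n)h\,\ds=0$ for harmonic $h$ as in the proof of Lemma~\ref{first_iso}, using $\Delta\psi_E=\omega\in V_0$ and the flux-cancellation built into the $\alpha_j$, yields this, and then uniqueness in $S_1$ (Lemma~\ref{first_iso}) gives $\psi_E=\psi_N$ directly.
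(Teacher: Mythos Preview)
Your approach is correct and genuinely different from the paper's. You argue by uniqueness: both $\psi_N$ and $\psi_E$ lie in $S_0$, have Laplacian $\omega$, and have vanishing flux through every $\Sigma_j^-$; hence their difference is a harmonic element of $\mathbb F_S$ with zero fluxes, forced to vanish by the identity $\|\nabla\phi\|^2=\sum_j c_j\int_{\Sigma_j^-}\partial\phi/\partial n\,\ds$. The paper instead computes $\psi_E-\psi_N$ explicitly: it splits $\mathscr H(x,\cdot)$ into a piece $\mathscr H_0(x,\cdot)\in\mathfrak H$ (annihilated by $\omega\in V_0$) and a linear combination of the $\hat\xi_j$, using a single-layer-potential identity (Lemma~\ref{LEM:pour_prop}) to show that the $\hat\xi_j$-coefficients are exactly $\hat\xi_j(x)$, which then cancels the $\alpha_j$-sum after a change of basis in $\mathbb F_S$. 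Your route is more elementary (no layer potentials, only Green's identities and the definition of the $\alpha_j$); the paper's is more algebraic and exhibits the cancellation term by term.

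Two small corrections. First, in your flux step you write ``the singular part $\int_\F\mathscr G(\cdot-y)\omega(y)\,{\rm d}y$''; this should be the Green's-function part $\int_\F\mathscr K(\cdot,y)\omega(y)\,{\rm d}y$, since it is \emph{its} fluxes that the $\alpha_j$ are designed to cancel (the Newtonian potential $\mathscr G\ast\omega=\psi_N$ already has zero fluxes by Theorem~\ref{biot-savart_simple}). Second, your attribution of where $\omega\in V_0$ enters is slightly off: the flux cancellation for $\psi_E$ holds for \emph{any} $\omega\in L^2(\F)$ (the computation via $(\nabla\psi_G,\nabla\xi_k)_{\mathbf L^2}=0$ uses only $\psi_G\in H^1_0(\F)$ and $\xi_k$ harmonic); the hypothesis $\omega\in V_0$ is used solely to invoke Theorem~\ref{biot-savart_simple} and place $\psi_N$ in $S_1$. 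Since you do invoke that theorem, your argument is complete --- just relocate the emphasis.
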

The proof relies on the following lemma in which we denote simply by $\mathsf S$ the simple layer potential supported on the whole boundary $\Sigma$.
\begin{lemma}
\label{LEM:pour_prop}
For every function $h\in\mathbb F_S$ (i.e. $h$ harmonic in $\F$ and $h$ belongs to $S_0$):
$$\left(\mathsf S\frac{\partial h}{\partial n}\Big|_\Sigma\right)(x)=h(x)\forallt x\in\F.$$
\end{lemma}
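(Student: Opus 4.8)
The statement asserts that any $h \in \mathbb{F}_S$ — that is, $h$ harmonic in $\F$ with $h \in S_0$, so $h|_{\Sigma^+}=0$ and $h|_{\Sigma^-_j}$ constant — can be reconstructed in $\F$ from its normal derivative on $\Sigma$ via the single layer potential $\mathsf S$. The natural approach is a Green's-representation / layer-potential argument combined with a uniqueness argument: I will show that $u := h - \mathsf S\bigl(\tfrac{\partial h}{\partial n}\big|_\Sigma\bigr)$ is harmonic in $\F$ and has vanishing normal derivative on $\Sigma$, hence is constant on each connected component, and then pin down those constants to be zero using the value of $h$ on $\Sigma^+$.

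First I would record the standard jump relations for the single layer potential $\mathsf S q$ with density $q = \tfrac{\partial h}{\partial n}\big|_\Sigma \in H^{-1/2}(\Sigma)$ (well-defined since $h$ is harmonic in $\F$, as used repeatedly in the Biot–Savart proof above, cf. \cite{McLean:2000aa}): $\mathsf S q$ is harmonic in $\mathbb R^2 \setminus \Sigma$, its trace is continuous across $\Sigma$, and the jump of its normal derivative across $\Sigma$ equals $q$ (with the orientation convention $\tau^\perp = n$ pointing out of $\F$). Denote by $\mathsf S q^-$ and $\mathsf S q^+$ the restrictions to $\F$ and to the exterior $\mathbb R^2 \setminus \overline{\F}$. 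The key point is that since $h$ is harmonic in $\F$, the classical Green representation formula gives precisely $h = \mathsf S\bigl(\tfrac{\partial h}{\partial n}\big|_\Sigma\bigr)^- - \mathsf D(h|_\Sigma)$ in $\F$, where $\mathsf D$ is the double layer potential of the boundary trace. So the real content is showing the double-layer contribution $\mathsf D(h|_\Sigma)$ vanishes inside $\F$ — equivalently, that $\mathsf S q^-$ and $h$ already agree.

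The cleanest route avoiding the double layer is the uniqueness argument: set $u = h - \mathsf S q^-$ in $\F$. It is harmonic in $\F$. Its exterior companion, $v = -\mathsf S q^+$ (extended by the convention that $\mathsf S q$ decays suitably; here I'd note $\int_\Sigma q\,\ds = \int_\Sigma \tfrac{\partial h}{\partial n}\ds = 0$ by the divergence theorem, so $\mathsf S q$ is bounded at infinity), is harmonic in the exterior. Using that the trace of $\mathsf S q$ is continuous across $\Sigma$ while the normal-derivative jump is $q = \tfrac{\partial h}{\partial n}$, one checks that $u$ and $v$ have matching traces and matching normal derivatives on $\Sigma$, so they glue to a single harmonic function $w$ on all of $\mathbb R^2$; since $q$ has zero mean, $w$ is bounded, hence constant by Liouville, hence identically some constant $c$. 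Then $u = c$ in $\F$, i.e. $h = \mathsf S q + c$ in $\F$. Evaluating traces on $\Sigma^+$: $h|_{\Sigma^+} = 0$, and the single layer potential $\mathsf S q$ restricted to $\Sigma^+$ — actually it is easier to argue via the exterior: $v = -\mathsf S q^+$ is harmonic in the exterior, bounded, and on the outermost boundary $\Sigma^+$ it matches $u = c$; pushing this together with the decay of $v$ at infinity forces $c = 0$. Equivalently, apply the representation to the exterior domain or integrate against $1$ to conclude $c=0$.

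The main obstacle is making the gluing-and-Liouville step rigorous at the $H^{-1/2}$ trace regularity (the density $q$ is only in $H^{-1/2}(\Sigma)$, so traces and normal traces of $\mathsf S q$ must be interpreted in the appropriate Sobolev spaces on $\Sigma$, and ``matching normal derivatives across $\Sigma$'' must be phrased as a distributional transmission condition). A secondary delicate point is the behavior at infinity: one must use $\int_\Sigma q\,\ds = 0$ to ensure $\mathsf S q$ has no logarithmic growth, which is exactly what makes Liouville applicable and what fixes the constant. Both points are standard in potential theory (e.g.\ \cite{McLean:2000aa}), so the proof should be short: state the jump relations, form $u$ and $v$, glue them into a bounded entire harmonic function, invoke Liouville, and read off the constant from $h|_{\Sigma^+} = 0$.
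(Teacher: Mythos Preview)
Your gluing step has a gap precisely in the multiply connected case, which is the only case where the lemma has content. You set $u = h - \mathsf S q$ in $\F$ and $v = -\mathsf S q$ in the exterior (including the holes) and assert that $u$ and $v$ have matching traces on $\Sigma$. But on each inner boundary $\Sigma^-_j$ the trace of $u$ from the $\F$-side is $h|_{\Sigma^-_j} - (\mathsf S q)|_{\Sigma^-_j} = c_j - (\mathsf S q)|_{\Sigma^-_j}$ with $c_j = h|_{\Sigma^-_j}$, while the trace of $v$ from the hole side is $-(\mathsf S q)|_{\Sigma^-_j}$; these differ by $c_j$, and if all $c_j$ vanished then $h\equiv 0$ by the maximum principle and there would be nothing to prove. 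So $w$ is \emph{not} harmonic across the inner boundaries and Liouville cannot be applied directly. The fix is easy once spotted: redefine $v = c_j - \mathsf S q$ inside the $j$-th hole (keeping $v = -\mathsf S q$ in the unbounded component). Then both traces and normal derivatives match across all of $\Sigma$, the glued $w$ is entire harmonic and bounded (here you use $\int_\Sigma q\,\ds = 0$), Liouville gives $w$ constant, and the decay $\mathsf S q \to 0$ at infinity forces $w\equiv 0$, hence $h=\mathsf S q$ in $\F$.

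For comparison, the paper argues in the opposite direction: it first invokes the invertibility of the single-layer boundary operator to produce a density $p\in H^{-1/2}(\Sigma)$ with $\mathsf S p = h$ in $\F$, then uses trace continuity of $\mathsf S p$ across $\Sigma$ together with uniqueness for the Dirichlet problem to see that $\mathsf S p$ vanishes in the unbounded exterior and equals $c_j$ in hole $j$, and finally reads off $p=\partial h/\partial n$ from the jump formula. Your (corrected) route is more self-contained --- it needs only the jump relations and Liouville, not invertibility of the single-layer operator --- while the paper's argument is shorter once that black box is accepted.
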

\begin{proof}
Let $h$ be in $S_0$. Basic results of potential theory ensures that there exists a unique $p\in H^{-\frac12}(\Sigma)$ 
such that $\mathsf Sp(x)=h(x)$ for every $x\in \F$. The single layer potential $\mathsf Sp$ is harmonic in $\mathbb R^2\setminus\Sigma$ and 
belongs to $H^1_{\ell oc}(\mathbb R^2)$ (which means that the trace of the function matches on both sides of the boundary $\Sigma$). Since 
the trace of $h$ is equal to 0 on $\Sigma^+$, the single layer potential vanishes identically on the unbounded connected component of $\mathbb R^2\setminus\Sigma^+$. For similar reasons, $\mathsf Sp$ is constant inside $\Sigma^-_j$ for every $j=1,\ldots,N$. According to the jump formula
\eqref{eq:jump_formula}, we obtain that $p=\partial h/\partial n$ on $\Sigma$ and the proof is completed.
\end{proof}
We can move on to the:
\begin{proof}[Proof of Proposition~\ref{PROP:equiv_BS}]
For every $x\in\F$, the function:
$$\mathscr H_0(x,\cdot)=\mathscr H(x,\cdot)-\sum_{j=1}^N(\nabla\mathscr H(x,\cdot),\nabla\hat\xi_j)_{\mathbf L^2(\F)}\hat\xi_j,$$
belongs to $\mathfrak H$. Integrating by parts the terms in the sum, we obtain for every $j=1,\ldots,N$:
$$(\nabla\mathscr H(x,\cdot),\nabla\hat\xi_j)_{\mathbf L^2(\F)}=\int_\Sigma\mathscr H(x,y)\frac{\partial\hat\xi_j}{\partial n}(y)\ds_y
=-\left(\mathsf S\frac{\partial\hat\xi_j}{\partial n}\right)(x)=-\hat\xi_j(x),$$
according to Lemma~ \ref{LEM:pour_prop}. Let now $\omega$ be in $V_0$. Then, providing that $\Gamma_j=0$ 
for every $j=1,\ldots,N$:
\begin{equation}
\label{eq:main_propp}
\mathsf N^E\omega(x)=\mathsf N_0\omega(x) 
-\int_\F\mathscr H_0(x,y)\omega(y)\,{\rm d}y 
+\sum_{j=1}^N\hat\alpha_j(\omega)\hat\xi_j(x)-\sum_{j=1}^N \alpha_j(\omega) \xi_j(x)\forallt x\in\F,
\end{equation}
where, for every $j=1,\ldots,N$:
$$\hat\alpha_j(\omega)=\int_\F\omega(y)\hat\xi_j(y)\,{\rm d}y.$$
The second term in the right hand side of \eqref{eq:main_propp} vanishes by definition of $V_0$ and both last terms cancel out since they 
stand for the same linear application expressed in two different bases of $\mathbb F_S$.
\end{proof}
%
 It remains now to link the spaces $S_k$ for the stream functions to the spaces $\mathbf J_k$ for the velocity fields. 
We recall the definitions \eqref{def:Jk} of the spaces $\mathbf J_0$ and $\mathbf J_1$. For every other integers $k$, the spaces $\mathbf J_k$ are 
classically defined from the Gelfand triple $\mathbf J_1\subset \mathbf J_0\subset \mathbf J_{-1}$, as well as the isometries ${\mathsf A}^{\mathbf J}_k:\mathbf J_k\to \mathbf J_{k-2}$. The following Lemma can be found in  \cite{Guermond:1994aa}:
 %
  \begin{lemma}
  \label{LEM:guermond}
  The operators $\nabla_0^\perp:\psi\in S_0\mapsto \nabla^\perp\psi\in \mathbf J_0$ and 
  $\nabla_1^\perp:\psi\in S_1\mapsto \nabla^\perp\psi\in \mathbf J_1$ are well defined and are isometries.
  \end{lemma}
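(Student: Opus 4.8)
The plan is to check the three claims in turn: that the maps are well-defined (i.e. land in $\mathbf J_0$ and $\mathbf J_1$ respectively), that they are isometries for the prescribed scalar products, and that they are surjective. First I would observe that if $\psi\in S_0$ then $\nabla^\perp\psi\in\mathbf L^2(\F)$ with $\nabla\cdot\nabla^\perp\psi=0$ in the distributional sense, and that the normal trace satisfies $\nabla^\perp\psi\cdot n=\nabla\psi\cdot\tau=\partial_\tau(\psi|_\Sigma)$ on $\Sigma$; since $\psi|_\Sigma$ is locally constant on each connected component $\Sigma^+$ and $\Sigma^-_j$ of $\Sigma$, this tangential derivative vanishes, so $\nabla^\perp\psi\cdot n|_\Sigma=0$ and hence $\nabla^\perp\psi\in\mathbf J_0$. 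For $\psi\in S_1\subset H^2(\F)$ the field $\nabla^\perp\psi$ lies in $\mathbf H^1(\F)$; moreover $\nabla^\perp\psi|_\Sigma$ decomposes into its normal part, which vanishes as just shown, and its tangential part $(\nabla^\perp\psi\cdot\tau)\tau=(\nabla\psi\cdot n)\tau=(\partial\psi/\partial n)\tau$, which vanishes because $\psi\in S_1$ means $\partial\psi/\partial n|_\Sigma=0$. Thus $\nabla^\perp\psi|_\Sigma=0$ and $\nabla^\perp\psi\in\mathbf J_1$, so both maps are well-defined.

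Next I would verify the isometry property, which is essentially tautological given the choice of scalar products on $S_0$, $S_1$, $\mathbf J_0$, $\mathbf J_1$. For $\mathbf J_0$ one has
$$\|\nabla_0^\perp\psi\|_{\mathbf J_0}^2=\int_\F|\nabla^\perp\psi|^2\dx=\int_\F|\nabla\psi|^2\dx=\|\psi\|_{S_0}^2,$$
using the pointwise identity $|x^\perp|=|x|$. For $\mathbf J_1$, writing $\nabla^\perp\psi=(-\partial_2\psi,\partial_1\psi)$ one computes componentwise that $\nabla(\nabla^\perp\psi):\nabla(\nabla^\perp\psi)$ equals $\hess\psi:\hess\psi=|\hess\psi|^2$ pointwise, so
$$\|\nabla_1^\perp\psi\|_{\mathbf J_1}^2=\int_\F|\hess\psi|^2\dx,$$
and since $\psi\in S_1$ has vanishing trace and normal derivative on $\Sigma$ one has $\int_\F|\hess\psi|^2\dx=\int_\F|\Delta\psi|^2\dx=\|\psi\|_{S_1}^2$ by the standard integration-by-parts identity for the Laplacian on $H^2_0$-type spaces (here using the decomposition $S_1=H^2_0(\F)\sumperp\mathbb B_S$ from \eqref{split_S1} and density, or a direct computation using that the boundary terms $\int_\Sigma(\partial_{nn}\psi\,\partial_n\psi - \partial_n(\partial_\tau\psi)\,\partial_\tau\psi)\ds$ all vanish because $\partial_\tau\psi|_\Sigma=0$ and $\partial_n\psi|_\Sigma=0$). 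Injectivity of both maps is immediate: $\nabla^\perp\psi=0$ forces $\psi$ constant, hence $\psi=0$ in $S_0$ (and a fortiori in $S_1$) since $\psi$ vanishes on $\Sigma^+$.

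The main obstacle is surjectivity, i.e. showing every divergence-free field with the appropriate boundary condition is $\nabla^\perp\psi$ for some $\psi$ in the right space. I would invoke the Helmholtz--Weyl / stream function representation for the multiply connected domain (as recalled around \eqref{eq:helmhotlz}, cf. \cite[Theorem~3.1]{Girault:1986aa}): given $u\in\mathbf J_0$, since $\nabla\cdot u=0$ and $u\cdot n|_\Sigma=0$ there exists $\psi\in H^1(\F)$, unique up to an additive constant, with $u=\nabla^\perp\psi$; fixing the constant by $\psi|_{\Sigma^+}=0$, the condition $u\cdot n=\partial_\tau\psi=0$ on each $\Sigma^-_j$ forces $\psi$ to be constant there, so $\psi\in S_0$ and $\nabla_0^\perp\psi=u$. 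If in addition $u\in\mathbf J_1$, then $\psi\in H^2(\F)$ and the vanishing of $u|_\Sigma$ combined with $u\cdot\tau=-\partial_n\psi$ yields $\partial\psi/\partial n|_\Sigma=0$, so $\psi\in S_1$ and $\nabla_1^\perp\psi=u$. This establishes surjectivity onto $\mathbf J_0$ and $\mathbf J_1$ respectively; the care needed here is purely in the handling of the inner-boundary constants and the use of the $\mathcal C^{1,1}$ regularity of $\Sigma$ so that the trace and the stream-function construction are valid. Combining well-definedness, the isometry identities, injectivity and surjectivity completes the proof.
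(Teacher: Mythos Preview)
Your argument is correct and complete. Note, however, that the paper does not supply its own proof of this lemma at all: it simply attributes the result to Guermond--Quartapelle \cite{Guermond:1994aa} and moves on. So there is no ``paper's approach'' to compare against; you have filled in what the authors chose to outsource.

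A couple of minor remarks. First, there is an inconsequential sign slip in your computation of the tangential component: with the paper's convention $\tau^\perp=n$ one gets $\nabla^\perp\psi\cdot\tau=-\nabla\psi\cdot\tau^\perp=-\partial_n\psi$, not $+\partial_n\psi$ (you in fact use the correct sign later when treating surjectivity). Second, the step $\int_\F|\hess\psi|^2\dx=\int_\F|\Delta\psi|^2\dx$ for $\psi\in S_1$ can be obtained more cleanly by observing that $u=\nabla^\perp\psi\in\mathbf H^1_0(\F)$ and invoking the classical Friedrichs identity $\int_\F|\nabla u|^2\dx=\int_\F|\nabla\cdot u|^2\dx+\int_\F|\nabla^\perp\cdot u|^2\dx$ for $u\in\mathbf H^1_0(\F)$, which immediately gives $\|\nabla_1^\perp\psi\|_{\mathbf J_1}^2=\|\Delta\psi\|_{L^2(\F)}^2=\|\psi\|_{S_1}^2$; this sidesteps any worry about boundary terms at the $\mathcal C^{1,1}$ level of regularity. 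Your direct argument via $\nabla\psi|_\Sigma=0$ is also fine, but requires a density or Grisvard-type justification that you only sketch.
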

   %
 Applying the abstract results of Section~\ref{isometric_chain}, we deduce:
 %
  \begin{lemma}
  \label{LEM:iso_stokes}
  For every index $k$, it can be defined an isometry:
 $$\nabla^\perp_k:S_k\longmapsto \mathbf J_k,$$
 such that, for every pair of indices $k\leqslant k'$, $\nabla_k^\perp=\nabla_{k'}^\perp$ in $S_{k'}$ and Diagram~\ref{diag_3}
  commutes.
  \end{lemma}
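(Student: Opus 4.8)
The plan is to bootstrap the two isometries produced by Lemma~\ref{LEM:guermond} along the two Gelfand chains $\{S_k\}$ and $\{\mathbf J_k\}$ by means of the abstract construction of Subsection~\ref{isometric_chain}, in exactly the same way as was done for the pair $({\mathsf P}_k,{\mathsf Q}_k)$ in the proof of Theorem~\ref{LEM:P_kQ_k}. First I would check that the hypotheses of that machinery are met. The inclusion $S_1\subset S_0$ is continuous, dense and compact, and likewise for $\mathbf J_1\subset\mathbf J_0$, so that $S_1\subset S_0\subset S_{-1}$ and $\mathbf J_1\subset\mathbf J_0\subset\mathbf J_{-1}$ are genuine Gelfand triples, and the chains $\{S_k\}$, $\{\mathbf J_k\}$ together with the isometries $\{{\mathsf A}^S_k\}$, $\{{\mathsf A}^{\mathbf J}_k\}$ are already available. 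Lemma~\ref{LEM:guermond} supplies isometries $\nabla^\perp_0:S_0\to\mathbf J_0$ and $\nabla^\perp_1:S_1\to\mathbf J_1$ attached to two successive indices, and $\nabla^\perp_1$ is visibly the restriction of $\nabla^\perp_0$ to $S_1$, both acting by $\psi\mapsto\nabla^\perp\psi$. This is precisely the data required to enter the framework of Subsection~\ref{isometric_chain}.

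Next I would define, by induction, operators $\nabla^\perp_k$ for $k\neq 0,1$ through the intertwining relations modelled on \eqref{extend-kneg}--\eqref{extend-kneg_2}, namely ${\mathsf A}^{\mathbf J}_k\nabla^\perp_k=\nabla^\perp_{k-2}{\mathsf A}^S_k$, and then invoke Lemma~\ref{p_kp_k}. It yields that each $\nabla^\perp_k:S_k\to\mathbf J_k$ is an isometry, that $\nabla^\perp_k=\nabla^\perp_{k'}$ on $S_{k'}$ whenever $k\leqslant k'$, and that the intertwining identities hold for every $k\in\mathbb Z$; the latter are precisely the commutativity of every square of Diagram~\ref{diag_3}. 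This establishes all three assertions of the lemma.

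To make the construction concrete at the first nontrivial level one may, optionally, verify the case $k=2$ by hand: for $\psi\in S_2=H^3(\F)\cap S_1$ one has $\Delta\nabla^\perp\psi=\nabla^\perp\Delta\psi$ with $\Delta\psi\in V_1$, and writing $\Delta\psi={\mathsf Q}_1\Delta\psi+h$ with $h={\mathsf Q}^\perp\Delta\psi\in\mathfrak H^1$ harmonic of zero flux, the field $\nabla^\perp h$ is curl-free with vanishing circulation around each $\Sigma^-_j$ (since $\oint_{\Sigma^-_j}\nabla^\perp h\cdot\tau\ds=\pm\int_{\Sigma^-_j}\partial_n h\ds=0$), hence a gradient, hence annihilated by the Leray projector $\Pi_0$; combining this with ${\mathsf A}^S_2=-{\mathsf Q}_1\Delta$, ${\mathsf A}^{\mathbf J}_2=-\Pi_0\Delta$ and the fact that $\nabla^\perp({\mathsf Q}_1\Delta\psi)$ already lies in $\mathbf J_0$ gives ${\mathsf A}^{\mathbf J}_2\nabla^\perp_2\psi=\nabla^\perp_0{\mathsf A}^S_2\psi$, in agreement with the general formula. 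I do not expect a genuine obstacle: the only truly analytic ingredient is Lemma~\ref{LEM:guermond}, which is granted, and the rest is the formal bookkeeping of Subsection~\ref{isometric_chain}; the sole point deserving care is that $S_{-1}$ and $\mathbf J_{-1}$ are not distribution spaces, so the $\nabla^\perp_k$ with $k<0$ must be read through the abstract duality rather than as differential operators.
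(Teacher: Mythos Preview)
Your proposal is correct and follows exactly the approach the paper takes: the paper simply writes ``Applying the abstract results of Section~\ref{isometric_chain}, we deduce'' before stating the lemma, relying on Lemma~\ref{LEM:guermond} for the base isometries at levels $0$ and $1$ and on Lemma~\ref{p_kp_k} for the inductive extension along the two Gelfand chains. Your optional hand verification at level $k=2$ is a pleasant addition but not part of the paper's argument.
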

%
\begin{figure}[ht]
 $$
  \xymatrix @!0 @R=20mm @C=30mm {
 \mathbf J_{k+1}     \ar[r]^{{\mathsf A}^{\mathbf J}_{k+1}} 	  
 %
 & \mathbf J_{k-1}    \\
    S_{k+1}\ar[u]^{\nabla^\perp_{k+1}}
    \ar[r]^{{\mathsf A}^{S}_{k+1}} 	  
    &S_{k-1}\ar[u]^{\nabla^\perp_{k-1}}
  }
  $$
  \caption[The spaces ${\mathbf J}_{k}$, $S_k$ and associated operators]{\label{diag_3}The top row contains the function $\mathbf J_k$ for the velocity field  and 
  the bottom row contains the spaces $S_k$ for the stream functions. All the operators are isometries.}
  \end{figure}
\begin{rem}
Let be given a sequence $(\psi_n)_n$ in $S_0$ and $\bar\psi\in S_0$. Define the corresponding velocity fields $u_n=\nabla^\perp_0\psi_n$
 and $\bar u=\nabla^\perp_0\bar\psi$ and the vorticity fields 
$\omega_n=\Delta_{-1}\psi_n$ and $\bar\omega=\Delta_{-1}\bar\psi$. 
Then, the following assertions are equivalent:
\begin{subequations}
\label{eq:kekkiher}
\begin{align}
\psi_n &\rightharpoonup\bar\psi\qquad \text{in }S_0,\\
u_n&\rightharpoonup\bar u\qquad \text{in }\mathbf J_0,\\
\omega_n&\rightharpoonup\bar\omega\qquad \text{in }V_{-1}.
\end{align}
\end{subequations}
Let  a time $T>0$ be given and suppose now that $(\psi_n)_n$ is a sequence in $L^\infty([0,T];S_0)$ and that $\bar\psi$ lies in $L^\infty([0,T];S_0)$. Then 
the velocity fields $u_n$ and $\bar u$ belongs to $L^\infty([0,T];\mathbf J_0)$ and the vorticity fields 
$\omega_n$ and  $\bar\omega$ are in $L^\infty([0,T];V_{-1})$.
In the context of vanishing viscosity limit, assume that $\bar u$ is a solution to the Euler equations and that $u_n$ is 
a solution to the NS equations with a viscosity that tends to zero along with $n$.
Following  Kelliher \cite{Kelliher:2008aa}, the vanishing viscosity limit holds when $u_n\rightharpoonup\bar u$ in $\mathbf J_0$, uniformly on $[0,T]$.
According to \eqref{eq:kekkiher}, this conditions is then equivalent to either
$\psi_n \rightharpoonup\bar\psi$ in $S_0$ , uniformly on $[0,T]$ or to $\omega_n \rightharpoonup\bar\omega$ in $V_{-1}$, uniformly on $[0,T]$; 
see also Remark~\ref{first_kato:rem}.
\end{rem}
Most of the material elaborated so far in this section is summarized in the commutative diagram of Fig.~\ref{diag_2}, which contains 
the main operators and their relations. 
\begin{figure}[ht]
 $$
  \xymatrix @!0 @R=25mm @C=35mm {
 V_{k+2}     \ar[r]^{{\mathsf A}^{V}_{k+2}} 
 	 \ar@/^/[d]^{{\mathsf Q}_{k+2}}  
 %
 & V_{k}  
  \ar@/^/[d]^{{\mathsf Q}_k} 
   \ar@/^/[ld]^{-\mathsf N_k} \\
    S_{k+1}\ar@/^/[u]^-{{\mathsf P}_{k+2}}
    \ar[r]^-{{\mathsf A}^{S}_{k+1}} 
    \ar@/^/[ru]^{-\Delta_k}
    &S_{k-1}\ar@/^/[u]^{{\mathsf P}_k}
  }
  $$
  \caption[The spaces ${V}_{k}$, $S_k$ and associated operators and the Biot-Savart operator]{\label{diag_2}The top row contains the function spaces   $V_k$  for the vorticity fields while 
  the bottom row contains the spaces $S_k$ for the stream functions. The operators ${\mathsf A}^V_k$ and ${\mathsf A}^S_k$ are Stokes operators (see the Cauchy problems 
  \eqref{vorticity_cauchy} and \eqref{stream_cauchy} in the next section). The operators $\Delta_k$ 
  link the stream functions to the corresponding vorticity fields.}
  \end{figure}
%
%
\subsection{A simple example: The unit disk}
%
In this subsection, we assume that $\F$ is the unit disk and 
we aim at computing the spectrum of the operator $\mathcal A^V_2$ (i.e. the operator ${\mathsf A}^V_2$ seen as an unbounded operator of domain $V_2$ in $V_0$; 
see \eqref{def:unboundAk}). 
\par
All the harmonic functions   in $\F$ are equal to the real part of a 
holomorphic function in $\F$. The holomorphic functions can be expanded as power series with convergence radius equal to $1$. 
%
%
%
%
It follows that a function $\omega\in L^2(\F)$ belongs to $V_0=\mathfrak H^\perp$ if and only if, for every nonnegative integer $k$:
$$\Re\left(\int_{\F} \omega(z)\, z^k\,{\rm d}|z|\right)=0\qquad\text{and}\qquad
\Im\left(\int_{\F} \omega(z)\, z^k\,{\rm d}|z|\right)=0.$$
Using the method of separation of variables  in polar coordinates, 
we find first that a function $\omega (r,\theta)=\rho(r)\varTheta(\theta)$ is in $V_0$ when:
$$\left(\int_0^1\rho(r) r^{k+1}\,{\rm d}r\right)\left(\int_0^{2\pi}\varTheta(\theta) e^{ik\theta}\,{\rm d}\theta\right)=0\forallt k\in\mathbb N.$$
Then, providing  that $-\Delta \omega=\lambda\omega$ in $\F$ for some positive real number $\lambda$, we 
deduce the expression of the function $\omega$, namely: 
$$(r,\theta)\longmapsto\rho_k(r)\cos(k\theta)\qquad\text{or}\qquad
(r,\theta)\longmapsto\rho_k(r)\sin(k\theta)$$ 
for some nonnegative integer $k$. The function $\rho_k$ solves the differential equation in $(0,1)$:
\begin{subequations}
\begin{equation}
\label{diff_equ}
\rho_k''(r)+\frac{1}{r}\rho_k'(r)+\left(\lambda -\frac{k^2}{r^2}\right)\rho_k(r)=0\qquad r\in (0,1),
\end{equation}
 and satisfies:
\begin{equation}
\label{rho_in_V0}
\int_0^1\rho_k(r)r^{k+1}\,{\rm d}r=0.
\end{equation}
\end{subequations}
The solution  of \eqref{diff_equ} (regular at $r=0$) is $\rho_k(r)=J_k(\sqrt{\lambda}r)$ where $J_k$ is the Bessel  function  of the first kind. Multiplying 
the equation \eqref{diff_equ} by $r^{k+1}$ and integrating over the interval $(0,1)$, we show that   \eqref{rho_in_V0} is equivalent to:
$$\sqrt{\lambda_k}J'_k(\sqrt{\lambda})-kJ_k(\sqrt{\lambda})=0.$$
Using the identity $J_k'(r)=kJ_k(r)/r-J_{k+1}(r)$, the condition above can be rewritten as:
\begin{equation}
\label{eq:valeurs_propres_bessel_disque}
J_{k+1}(\sqrt{\lambda})=0.
\end{equation}
We denote by $\alpha^j_k$ (for every integers $j,k\geqslant 1$) the $j$-th zero of the Bessel function $J_k$ and we set 
$$\lambda_k^j=(\alpha^j_{k+1})^2\qquad \text{for all }k\geqslant 0\text{ and }j\geqslant 1.$$
\begin{prop}
The eigenvalues of ${\mathsf A}^V_2$ (and then also of $\mathsf A^V_k$, $\mathsf A^S_k$ and $\mathsf A^{\mathbf J}_k$ for 
every index $k$, since they all have the same spectrum)  are the real positive numbers $\lambda_k^j$ ($k\geqslant 0$, $j\geqslant 1$). 
The eigenspaces corresponding to $\lambda_0^j$ ($j\geqslant 1$) are of dimension 1, spanned by the eigenfunctions:
\begin{subequations}
\label{def:eigen}
\begin{equation}
(r,\theta)\longmapsto J_0\Big(\sqrt{\lambda_0^j} r\Big).
\end{equation}
The
eigenspaces of the other eigenvalues $\lambda_k^j$ (for $k\geqslant 1$) are of dimension 2, spanned by the eigenfunctions:
\begin{equation}
(r,\theta)\longmapsto J_{k}\Big(\sqrt{\lambda_k^j} r\Big)\cos(k\theta)
\qquad\text{and}\qquad
(r,\theta)\longmapsto J_{k}\Big(\sqrt{\lambda_k^j}  r\Big)\sin(k\theta).
\end{equation}
\end{subequations}
\end{prop}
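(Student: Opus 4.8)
The plan is to use the description \eqref{expressA2V} of $\mathsf A^V_2$ as $-\Delta$ on its domain $V_2$, together with the compactness of the inclusion $V_2\subset V_0$: regarded as an unbounded operator on the pivot space $V_0$, $\mathsf A^V_2$ is then self-adjoint, positive and has compact resolvent, so that $V_0$ possesses a Hilbert basis of eigenfunctions and its spectrum is a sequence of positive eigenvalues tending to $+\infty$. Since, as recalled in the statement and as follows from Theorem~\ref{LEM:P_kQ_k} and Lemma~\ref{LEM:iso_stokes}, the operators $\mathsf A^V_k$, $\mathsf A^S_k$ and $\mathsf A^{\mathbf J}_k$ all share the same spectrum, it is enough to treat $\mathsf A^V_2$, the eigenfunctions of the other operators being obtained by transporting those of $\mathsf A^V_2$ through the relevant isometries. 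The strategy is then: (i) check that the functions listed in \eqref{def:eigen} are eigenfunctions of $\mathsf A^V_2$ with the announced eigenvalues; (ii) show that, once normalized in $V_0$, they form a complete orthogonal system of $V_0$; (iii) conclude from the spectral theorem that there is no other eigenvalue and that the eigenspaces are exactly the spans displayed in \eqref{def:eigen}.

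For step (i), fix $k\geqslant 0$, $j\geqslant 1$, put $\lambda=\lambda_k^j=(\alpha^j_{k+1})^2$ and consider $\omega(r,\theta)=J_k(\sqrt\lambda\,r)\cos(k\theta)$, and likewise with $\sin(k\theta)$ when $k\geqslant 1$. Bessel's equation \eqref{diff_equ} shows that $\omega$ is real-analytic on $\overline{\F}$ and satisfies $-\Delta\omega=\lambda\omega$. That $\omega$ belongs to $V_0=\mathfrak H^\perp$ reduces, after integration in $\theta$ and using the characterization of $V_0$ obtained above (orthogonality to $r^m\cos(m\theta)$ and $r^m\sin(m\theta)$, $m\geqslant 0$), to the single scalar condition \eqref{rho_in_V0}, which --- as the computation preceding the statement shows --- is equivalent to \eqref{eq:valeurs_propres_bessel_disque}, i.e.\ to $J_{k+1}(\sqrt\lambda)=0$; and this holds by the choice of $\lambda$. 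Finally $\omega\in V_0\cap H^2(\F)$ with $\Delta\omega=-\lambda\omega\in V_0$; using the description of $V_2$ recalled above, together with the explicit form $\mathsf Q_1\omega=\big(J_k(\sqrt\lambda\,r)-J_k(\sqrt\lambda)r^k\big)\cos(k\theta)$, one sees that membership of $\omega$ in $V_2$ amounts to the identity $\sqrt\lambda\,J_k'(\sqrt\lambda)=k\,J_k(\sqrt\lambda)$, hence once more to $J_{k+1}(\sqrt\lambda)=0$. Therefore $\mathsf A^V_2\omega=-\Delta\omega=\lambda\omega$.

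For step (ii), orthogonality is immediate: functions attached to different values of $k$, or to $\cos$ versus $\sin$, are orthogonal by integration in $\theta$, and for fixed $k$ and fixed parity the orthogonality of $J_k(\alpha^j_{k+1}r)$ and $J_k(\alpha^{j'}_{k+1}r)$, $j\neq j'$, with respect to the weight $r\,{\rm d}r$ is the standard orthogonality of Bessel eigenfunctions. The substantial point is completeness: writing an arbitrary $\omega\in V_0$ as an angular Fourier series $\omega=\sum_{k\geqslant 0}\big(a_k(r)\cos(k\theta)+b_k(r)\sin(k\theta)\big)$, the condition $\omega\in V_0$ forces each $a_k$ and each $b_k$ to be orthogonal to $r^k$ in $L^2\big((0,1);r\,{\rm d}r\big)$; one then invokes the classical theory of Dini expansions in its limiting case (see Watson, \emph{A Treatise on the Theory of Bessel Functions}, \S18), according to which $\{r^k\}\cup\{J_k(\alpha^j_{k+1}r):j\geqslant 1\}$ is a complete orthogonal system of $L^2\big((0,1);r\,{\rm d}r\big)$. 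Hence $\{J_k(\alpha^j_{k+1}r):j\geqslant 1\}$ is complete in the orthogonal complement of $r^k$, which yields the approximation of $a_k$ and $b_k$, and, summing over $k$, the approximation of $\omega$ by finite linear combinations of the functions in \eqref{def:eigen}.

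Granting (i) and (ii), a complete orthogonal system of eigenfunctions of a self-adjoint operator with compact resolvent fixes its whole spectral decomposition: the spectrum of $\mathsf A^V_2$ is precisely $\{\lambda_k^j:k\geqslant 0,\ j\geqslant 1\}$ and the eigenspace attached to $\lambda_k^j$ is the span of the corresponding functions in \eqref{def:eigen}. The announced dimensions ($1$ when $k=0$, since $\sin(0\cdot\theta)\equiv 0$, and $2$ when $k\geqslant 1$) require the numbers $\lambda_k^j$ to be pairwise distinct; for a fixed $k$ this is clear, the positive zeros of $J_{k+1}$ being simple and distinct, while for $k\neq k'$ it amounts to the assertion that $J_{k+1}$ and $J_{k'+1}$ have no common positive zero. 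This is \emph{Bourget's hypothesis} --- two Bessel functions $J_\nu$ and $J_{\nu+n}$ with $n$ a positive integer never share a positive zero --- proved by Siegel; it is the one genuinely external input, and, together with the limiting Dini expansion used for completeness, the part of the proof that is not routine. Transporting finally the eigenfunctions of $\mathsf A^V_2$ through the isometries of Theorem~\ref{LEM:P_kQ_k} and Lemma~\ref{LEM:iso_stokes} delivers the same statements for $\mathsf A^V_k$, $\mathsf A^S_k$ and $\mathsf A^{\mathbf J}_k$, $k\in\mathbb Z$.
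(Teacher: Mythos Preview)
Your proof is correct and, in fact, more carefully argued than the paper's own. The paper's proof is essentially a two-line sketch: the computation preceding the statement shows that the functions in \eqref{def:eigen} are eigenfunctions, and for the converse one is referred to the separation-of-variables argument in Dautray--Lions for the Dirichlet Laplacian on the disk. That argument proceeds by expanding an arbitrary eigenfunction in a Fourier series in $\theta$, observing that each angular mode satisfies \eqref{diff_equ}, and concluding from regularity at the origin and the membership condition that each mode is one of the listed functions.

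Your route is genuinely different: instead of analysing an unknown eigenfunction, you show directly that the listed functions form a complete orthogonal system in $V_0$, invoking the limiting case of the Dini expansion (Watson, \S18) to obtain completeness of $\{r^k\}\cup\{J_k(\alpha^j_{k+1}r):j\geqslant 1\}$ in $L^2\big((0,1);r\,{\rm d}r\big)$. This is a legitimate and classical alternative; it trades the separation-of-variables machinery for a sharper external reference on Bessel expansions. Both approaches ultimately rest on the spectral theorem for self-adjoint operators with compact resolvent.

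You also make explicit a point the paper glosses over: the claim that the eigenspaces have dimension exactly $1$ (for $k=0$) or $2$ (for $k\geqslant 1$) requires that no two numbers $\lambda_k^j=(\alpha^j_{k+1})^2$ coincide for distinct pairs $(k,j)$. For fixed $k$ this is the simplicity of the zeros of $J_{k+1}$; across different $k$ it is Bourget's hypothesis, proved by Siegel. Flagging this is a genuine improvement over the paper's treatment, which states the dimensions without justification.
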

\begin{proof}
By construction, the functions defined in \eqref{def:eigen} are indeed eigenfunctions of ${\mathsf A}^V_2$.
To prove that every eigenfunctions of this operator is of the form \eqref{def:eigen}, it suffices to follow the lines of the proof of \cite[\S 8.1.1d.]{Dautray:1990aa} for the Dirichlet operator in the unit disk.
\end{proof}
We recover the spectrum of the Stokes operator as computed for instance in \cite{Kelliher:2009aa}.
%
\section{Lifting operators of the boundary data}
\label{SEC:lift_oper}
\subsection{Lifting operators for the stream functions}
\label{SEC:Nonhomogeneous}
Considering \eqref{eq_stokes:bound}, the velocity field $u$ solution to the NS equations in primitive variables is assumed to satisfy Dirichlet boundary conditions on $\Sigma$, the trace of $u$ 
on $\Sigma$ being denoted by $b$. Classically, this constraint is dealt with by means of a lifting operator. We refer to \cite{Raymond:2007aa} and 
references therein for a quite 
comprehensive survey on this topic. In nonprimitive variables, as already mentioned earlier in \eqref{eq:helmhotlz}, \eqref{eq:neumann_phi} 
and \eqref{eq:edp_psi}, the Dirichlet conditions for $u$ translate into Neumann boundary conditions for both the potential and the stream function,
namely:
\begin{subequations}
\label{def:boundary_cond}
\begin{equation}
\label{neumann_phi_psi}
\frac{\partial\varphi}{\partial n}=b\cdot n\quad\text{ and }
\quad\frac{\partial\psi}{\partial n}=\frac{\partial\varphi}{\partial \tau}-b\cdot\tau\quad\text{ on }\Sigma.
\end{equation}
Around every inner boundaries $\Sigma^-_j$,  the circulation of the fluid is classically defined by:
\begin{equation}
\label{circucu}
\varGamma_j=\int_{\Sigma^-_j}b\cdot\tau\ds=-\int_{\Sigma^-_j}\frac{\partial\psi}{\partial n}\ds\qquad (j=1,\ldots,N).
\end{equation}
This being reminded, identities \eqref{neumann_phi_psi} and \eqref{circucu} suggest that instead of the field $b$, the prescribed data on the boundary shall rather be 
given at every moment under the form of  a triple $(g_n,g_\tau,\varGamma)$ where $g_n$ and $g_\tau$ are scalar functions defined on $\Sigma$
and $\varGamma=(\varGamma_1,\ldots,\varGamma_N)$ is a vector in $\mathbb R^N$ 
in such a way that:
\begin{equation}
\label{b_decomp}
b=g_n n +\bigg(g_\tau-\sum_{j=1}^N\varGamma_j\frac{\partial\xi_j}{\partial n}\bigg)\tau
\quad\text{ with }\int_\Sigma g_n \ds=0\quad\text{and}\quad
\int_{\Sigma^-_j} g_\tau\ds=0\qquad (j=1,\ldots,N).
\end{equation}
\end{subequations}
We recall that $n$ and $\tau$ stand respectively for the unit outer normal and unit tangent vectors to $\Sigma$. 
The definition  of suitable function spaces for $g_n$ and $g_\tau$ requires introducing the following indices 
used to make precise the regularity of the boundary $\Sigma$. Thus, for every integer $k$, we define:
\begin{subequations}
\label{def_indices}
\begin{alignat}{2}
I_1(k)&=\left|k-{\frac12}\right|-{\frac12},\qquad&J_1(k)&=\left|k-{\frac12}\right|+{\frac12}=\max\{I_1(k-1),I_1(k+1)\},\\
I_2(k)&=|k-1|+1,&
J_2(k)&=||k|-1|+2=\begin{cases} \max\{I_2(k-1),I_2(k+1)\}&\text{if }k\geqslant 0\\ I_2(k+1)&\text{if }k\leqslant -1,\end{cases}
\end{alignat}
\end{subequations}
and we can now state:
\begin{definition}
\label{def_def_bkgk}
Let $k$ be an integer. Assuming that $\Sigma$ is of class $\mathcal C^{I_1(k),1}$, it makes sense to define:
\begin{subequations}
\label{def_GSk}
\begin{align}
\label{def_GSn}
G_k^n&=\bigg\{g\in H^{k-{\frac12}}(\Sigma)\,:\,\int_\Sigma g\ds=0\bigg\}\text{ if }k\geqslant -1\quad\text{ and }\quad G_k^n=G_{-1}^n\text{ otherwise}\\
\text{and}\qquad G_k^\tau&=\bigg\{g\in H^{k-{\frac12}}(\Sigma)\,:\,\int_{\Sigma^-_j}g\ds=0,\quad j=1,\ldots,N\bigg\},
\end{align}
\end{subequations}
where the boundary integrals are understood according to the rule of notation \eqref{rem:brackets}.
\end{definition}
The only purpose of setting $G_k^n=G_{-1}^n$ when $k\leqslant -2$ in \eqref{def_GSn}  is to simplify the statement of the next results.
\par
The problem of lifting the normal component $g_n$ by the harmonic Kirchhoff potential function is  addressed in the lemma below, where, for every 
nonnegative integer $k$:
\begin{equation}
\label{def_HK}
\mathfrak H_K^k= 
\bigg\{\varphi\in H^k(\F)\,:\,\Delta \varphi=0\text{ in }\mathcal D'(\F),\,\int_\F \varphi\dx=0\text{ and }\int_\Sigma\frac{\partial\varphi}{\partial n}=0\bigg\}.
\end{equation}
%
\begin{lemma}
\label{lift_potential}
Assume that $\Sigma$ is of class ${\mathcal C}^{|k|,1}$ for some   integer $k\geqslant -1$ and that $g_n$ belongs to $G_k^n$.
Then the operator 
\begin{equation}
\mathsf L_k^n:g_n\in G_k^n\longmapsto  \varphi\in  \mathfrak H_K^{k+1}\qquad\text{where}\quad \frac{\partial\varphi}{\partial n}\Big|_\Sigma=g_n,
\end{equation}
is well defined and bounded. The operator
\begin{equation}
{\mathsf T}_k:g_n\in G_k^n\longmapsto \frac{\partial\varphi}{\partial\tau}\Big|_\Sigma\in G_k^\tau,
\end{equation}
is bounded as well. Moreover,  as for the definition of $G_n^k$, the definition of ${\mathsf T}_k$ is extended to integers $k\leqslant -2$ by setting  ${\mathsf T}_k={\mathsf T}_{-1}$.
\end{lemma}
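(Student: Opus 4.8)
The plan is to reduce the whole statement to the solvability and regularity theory of the homogeneous Neumann problem $\Delta\varphi=0$ in $\F$, $\partial_n\varphi=g_n$ on $\Sigma$, normalized by $\int_\F\varphi\dx=0$: indeed, membership $\varphi\in\mathfrak H_K^{k+1}$ means exactly that $\varphi$ is harmonic, has zero mean and satisfies $\int_\Sigma\partial_n\varphi\ds=0$, the last condition being automatic from Green's formula as soon as $\varphi$ is harmonic and admits a normal trace, and the compatibility condition $\int_\Sigma g_n\ds=0$ built into $G_k^n$ being precisely what makes the Neumann problem solvable. I would treat three regimes separately: the base case $k=0$ (classical variational solution in $H^1$), the regime $k\geqslant 1$ (bootstrap by elliptic regularity), and the rough case $k=-1$ (a transposition solution in $L^2(\F)$); for $k\leqslant -2$ there is nothing to prove, since $\mathsf T_k:=\mathsf T_{-1}$ is a definition and $G_k^n=G_{-1}^n$.

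For $k=0$: on the closed subspace $W=\{\theta\in H^1(\F):\int_\F\theta\dx=0\}$ the Poincar\'e--Wirtinger inequality turns $(\theta_1,\theta_2)\mapsto\int_\F\nabla\theta_1\cdot\nabla\theta_2\dx$ into a scalar product equivalent to that of $H^1(\F)$, while $\theta\mapsto\langle g_n,\theta|_\Sigma\rangle_{H^{-1/2}(\Sigma),H^{1/2}(\Sigma)}$ is continuous on $W$ by continuity of the trace. Riesz's theorem then provides a unique $\varphi=\mathsf L_0^n g_n\in W$ with $\int_\F\nabla\varphi\cdot\nabla\theta\dx=\langle g_n,\theta|_\Sigma\rangle$ for all $\theta\in W$; using $\langle g_n,1\rangle=0$ this extends to all $\theta\in H^1(\F)$, whence $\Delta\varphi=0$ in $\mathcal D'(\F)$, $\partial_n\varphi=g_n$ in $H^{-1/2}(\Sigma)$ and $\int_\Sigma\partial_n\varphi\ds=\langle g_n,1\rangle=0$, i.e. $\varphi\in\mathfrak H_K^1$, with $\|\varphi\|_{H^1(\F)}\leqslant\mathbf c_\F\|g_n\|_{H^{-1/2}(\Sigma)}$. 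For $k\geqslant 1$ one takes this same $\varphi=\mathsf L_0^n g_n$ with now $g_n\in G_k^n\subset G_0^n$, and classical elliptic regularity for the Neumann problem on a $\mathcal C^{k,1}=\mathcal C^{|k|,1}$ boundary (see \cite{Girault:1986aa}, \cite{Grisvard:1985aa}) upgrades it to $\varphi\in H^{k+1}(\F)$ with $\|\varphi\|_{H^{k+1}(\F)}\leqslant\mathbf c_\F(\|g_n\|_{H^{k-1/2}(\Sigma)}+\|\varphi\|_{L^2(\F)})$, the last term already being controlled; this gives the boundedness of $\mathsf L_k^n$.

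The boundedness of $\mathsf T_k$ then follows uniformly in $k\geqslant -1$: once $\varphi=\mathsf L_k^n g_n$ is known to lie in $\mathfrak H_K^{k+1}$, its trace $\varphi|_\Sigma$ belongs to $H^{k+1/2}(\Sigma)$, and tangential differentiation along the $\mathcal C^{|k|,1}$ curve $\Sigma$ maps $H^{k+1/2}(\Sigma)$ boundedly into $H^{k-1/2}(\Sigma)$, so $\mathsf T_k g_n=\partial_\tau(\varphi|_\Sigma)\in H^{k-1/2}(\Sigma)$ with the required estimate. Moreover, for each inner component $\Sigma^-_j$, which is a closed Jordan curve, $\int_{\Sigma^-_j}\partial_\tau(\varphi|_\Sigma)\ds=\langle\partial_\tau(\varphi|_{\Sigma^-_j}),1\rangle=-\langle\varphi|_{\Sigma^-_j},\partial_\tau 1\rangle=0$, so indeed $\mathsf T_k g_n\in G_k^\tau$.

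The main obstacle is the case $k=-1$, where $g_n$ lies only in $H^{-3/2}(\Sigma)$ and one must produce $\varphi\in L^2(\F)$. I would construct it by transposition: for $v\in L^2(\F)$ with mean $\overline v$, let $w=w(v)\in H^2(\F)$ be the unique solution of $\Delta w=v-\overline v$ in $\F$, $\partial_n w=0$ on $\Sigma$, $\int_\F w\dx=0$ (well-posed in $H^2$ by elliptic regularity since $\Sigma\in\mathcal C^{1,1}$, with $\|w\|_{H^2(\F)}\leqslant\mathbf c_\F\|v\|_{L^2(\F)}$), and set $\int_\F\varphi v\dx:=-\langle g_n,w|_\Sigma\rangle_{H^{-3/2}(\Sigma),H^{3/2}(\Sigma)}$; this is legitimate because $w|_\Sigma\in H^{3/2}(\Sigma)$ and because $\langle g_n,1\rangle=0$ makes the pairing insensitive to the additive constant in $w$. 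Then $\varphi\in L^2(\F)$ with $\|\varphi\|_{L^2(\F)}\leqslant\mathbf c_\F\|g_n\|_{H^{-3/2}(\Sigma)}$; testing against $v=\Delta\theta$ with $\theta\in\mathcal D(\F)$ (for which $w=\theta-\overline\theta$) gives $\Delta\varphi=0$ in $\mathcal D'(\F)$, testing against $v=1$ gives $\int_\F\varphi\dx=0$, and comparing the definition with the very-weak Green formula $\int_\F\varphi\Delta w\dx=-\langle\partial_n\varphi,w|_\Sigma\rangle_\Sigma$ (valid for $\varphi$ harmonic in $L^2(\F)$ and $w\in H^2(\F)$ with $\partial_n w=0$) together with the density of $\{w(v)|_\Sigma:v\in L^2(\F)\}$ in $H^{3/2}(\Sigma)$ yields $\partial_n\varphi=g_n$ in $H^{-3/2}(\Sigma)$, hence $\varphi\in\mathfrak H_K^0$. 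The points requiring care are exactly this density statement and the validity of the Green formula at this regularity level, both standard on a $\mathcal C^{1,1}$ domain in the spirit of the normal-trace discussion of Remark~\ref{rem_fluxA}.
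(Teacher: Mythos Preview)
Your proof is correct and, for the only nontrivial case $k=-1$, is essentially the same transposition argument as the paper's, just packaged differently. The paper works directly in the space
\[
E=\Big\{\theta\in H^2(\F):\partial_n\theta|_\Sigma=0,\ \int_\Sigma\theta\,\ds=0\Big\},\qquad(\theta_1,\theta_2)_E=\int_\F\Delta\theta_1\,\Delta\theta_2\dx,
\]
applies Riesz to the form $\theta\mapsto-\langle g_n,\theta|_\Sigma\rangle$, and sets $\varphi=\Delta\theta_g$; your construction $v\mapsto w(v)$ is precisely the inverse of $\Delta:E\to L^2_{\rm m}$ (up to the harmless choice of normalization $\int_\F w=0$ instead of $\int_\Sigma w=0$), so the two definitions of $\varphi$ coincide. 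Your flagged density point --- that $\{w(v)|_\Sigma:v\in L^2(\F)\}$ together with constants is dense in $H^{3/2}(\Sigma)$ --- follows immediately from the surjectivity of the trace lifting $(w|_\Sigma,\partial_n w|_\Sigma):H^2(\F)\to H^{3/2}(\Sigma)\times H^{1/2}(\Sigma)$ on a $\mathcal C^{1,1}$ domain, which is also what makes the paper's $(\cdot,\cdot)_E$ a genuine Hilbert norm. For $k\geqslant0$ the paper simply declares the result ``classical or obvious'', in line with your variational-plus-bootstrap treatment.
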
 
\begin{proof}
Let us only consider  the weakest case, i.e. $k=-1$. We introduce the Hilbert space $E$ and its scalar product whose corresponding norm is 
equivalent in $E$ to the usual norm of $H^2(\F)$:
$$E=\bigg\{\theta\in H^2(\F)\,:\,\frac{\partial\theta}{\partial n}\Big|_\Sigma=0,\,\int_\Sigma\theta|_\Sigma\ds=0\bigg\},
\qquad(\theta_1,\theta_2)_E=\int_\F\Delta\theta_1\Delta\theta_2\dx.$$
According to Riesz representation Theorem, for every $g_n\in G_{-1}^n$, there exists a unique $\theta_g\in E$ such that:
$$(\theta,\theta_g)_E=-\int_\Sigma g_n\theta|_\Sigma\ds\forallt \theta\in E.$$
One easily verifies that the function $\varphi=\Delta \theta_g$ is in $L^2(\F)$ and satisfies $\int_\F\varphi\dx=0$ and $({\partial\varphi}/{\partial n})|_\Sigma=g_n$ in $H^{-\frac32}(\Sigma)$. The rest of 
the lemma being either classical or obvious, the proof is complete.
\end{proof}
%
The operator ${\mathsf T}_k$ is the tangential differential operator composed with the classical Neumann-to-Dirichlet map.
Regarding now the second identity in \eqref{neumann_phi_psi}, we seek a lifting operator valued  
in the kernel of the operator $\mathsf Q\Delta$, that is the kernel of the Stokes operator for the stream function (see Lemma~\ref{lem:sapce_S2}). 
Loosely speaking (disregarding regularity issues), this kernel is $\mathfrak B_S$, the space of the biharmonic stream functions defined in \eqref{def_Scurl}.
%
%
\begin{definition}
\label{def_stuff}
Let $k$ be an integer and assume that $\Sigma$ is of class ${\mathcal C}^{I_2(k),1}$. 
The space of biharmonic functions $\mathfrak B_S^k$ and the lifting 
operator 
$\mathsf L_k^\tau:G_k^\tau\to \mathfrak B_S^k$
are defined differently, depending upon the sign of $k$:
\begin{my_enumerate}
\item When $k\geqslant 1$, $\mathfrak B_S^k=\mathfrak B_S\cap H^{k+1}(\F)$ 
(and hence $\mathfrak B_S^1$ is simply equal to   $\mathfrak B_S$ defined   in \eqref{def_Scurl}) and for any $g_\tau\in G^\tau_k$, 
 $\mathsf L_k^\tau g_\tau$  is the unique stream
 function $\psi$ in $\mathfrak B^k_S$ satisfying   the Neumann boundary condition:
 $$\frac{\partial\psi}{\partial n}\bigg|_{\Sigma} =g_\tau\qquad\text{on }\Sigma.$$ 
\item When $k\leqslant 0$,  for any $g_\tau\in G^\tau_k$, $\mathsf L_{k}^\tau g_\tau$ is the element of the dual space $S_k$ given by:
\begin{equation}
\label{eq:def_Lktau}
\langle \mathsf L_{k}^\tau g_\tau,\theta \rangle_{S_{-k},S_{k}}=\int_{\Sigma}({\mathsf P}_{-k+1}\theta) g_\tau\ds\forallt \theta\in S_{-k},
\end{equation}
and the space $\mathfrak B^{k}_S$ is defined as the image of $\mathsf L_{k}^\tau$ in $S_{k}$. 
\end{my_enumerate}
\end{definition}
\begin{rem}
\label{BS_and_Sk}
\begin{my_enumerate}
\item
For $k\leqslant 0$, the operator $\mathsf L_{k}^\tau$ is well defined according to Lemma~\ref{regul_P0} and Lemma~\ref{lem:sapce_S2}, under the regularity 
assumption on the boundary $\Sigma$ of Definition~\ref{def_stuff}.
\item
For every integer $k$, the space $G_k^\tau$ is actually well defined as soon as the boundary $\Sigma$ is of class ${\mathcal C}^{I_1(k),1}$ (see Definition~\ref{def_def_bkgk}). However, 
further regularity is needed to define the lifting operator, namely ${\mathcal C}^{I_2(k),1}$.
\end{my_enumerate}
%
\end{rem}
%
For every pair of   integers $(k',k)$, both positive or both nonpositive, the inequality $k'\geqslant k$ entails the inclusion 
$\mathfrak B_S^{k'}\subset \mathfrak B_S^{k}$.
We shall prove that the inclusion $\mathfrak B_S^1\subset \mathfrak B_S^0$ still holds and that the diagram on Fig.~\ref{diag_7} commutes. 
Notice that $\mathsf L_k^\tau$ is clearly invertible when $k$ is positive.  
The question of invertibility for nonpositive indices $k$, or more precisely of injectivity (since surjectivity is obvious) is not clear. 
This amounts to determine whether  the traces of the functions of
$V_{-k+1}$ are dense in $H^{-k+\frac{1}{2}}(\Sigma)$.
%
\begin{figure}[ht]
 $$
  \xymatrix @!0 @R=13mm @C=8mm {
 G_{k'}^\tau    \ar[d]^{\mathsf L_{k'}^\tau}   
 %
 & \subset
 &G_k^\tau  \ar[d]^{\mathsf L_{k}^\tau}   \\
    \mathfrak B_S^{k'}
       &
       \subset
    &\mathfrak B_S^{k}
  }
  $$
  \caption[The spaces $G_k^\tau$, $\mathfrak B_S^{k}$ and associated operators]{\label{diag_7} The diagram commutes for any pair of integers $(k,k')$ such that $k'\geqslant k$.}
  \end{figure}
%
\begin{lemma}
\label{reg_Lk}
The operator  $\mathsf L_k^\tau$ is bounded for every integer $k$ and is an isomorphism when $k$ is positive.  
For every pair of integers $(k',k)$ such that $k'\geqslant k$, the restriction of $\mathsf L_k^\tau$ to 
$G_{k'}^\tau$ is equal to $\mathsf L_{k'}^\tau$ (providing that $\Sigma$ is of class $\mathcal C^{\max\{I_2(k),I_2(k')\},1}$).
\end{lemma}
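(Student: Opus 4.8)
The plan is to treat the two cases of Definition~\ref{def_stuff} separately, since the operator $\mathsf L_k^\tau$ is defined by genuinely different mechanisms for positive and nonpositive $k$, and then to glue them together via a density/consistency argument at the interface $k=0,1$.

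First I would handle the positive indices. For $k\geqslant 1$, $\mathsf L_k^\tau g_\tau$ is the unique $\psi\in\mathfrak B_S\cap H^{k+1}(\F)$ with $\partial\psi/\partial n|_\Sigma=g_\tau$. Boundedness and bijectivity onto $\mathfrak B_S^k$ then reduce to an elliptic regularity statement for the biharmonic operator: the map sending a Neumann datum $g_\tau\in H^{k-\frac12}(\Sigma)$ (with the $N$ vanishing-flux conditions) to the unique biharmonic function in $S_0$ with that normal trace is an isomorphism onto $\mathfrak B_S^k$, provided $\Sigma$ is of class $\mathcal C^{I_2(k),1}=\mathcal C^{|k-1|+1,1}$. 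This is standard; one solves the variational problem in $\mathfrak B_S=\mathfrak B_S^1$ (using \eqref{def_Scurl} and \eqref{decomp_S2b}) and bootstraps with \cite[Theorem 1.11]{Girault:1986aa}. The consistency statement $\mathsf L_k^\tau|_{G_{k'}^\tau}=\mathsf L_{k'}^\tau$ for $k'\geqslant k\geqslant 1$ is then immediate from uniqueness, since the more regular datum produces the more regular solution, which is still the (unique) less regular one.

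Next the nonpositive indices. For $k\leqslant 0$, $\mathsf L_k^\tau g_\tau\in S_k$ is defined by the duality formula \eqref{eq:def_Lktau}: $\langle\mathsf L_k^\tau g_\tau,\theta\rangle_{S_{-k},S_k}=\int_\Sigma({\mathsf P}_{-k+1}\theta)g_\tau\,\mathrm ds$ for all $\theta\in S_{-k}$. Boundedness follows by estimating the right-hand side: by Theorem~\ref{LEM:P_kQ_k}, ${\mathsf P}_{-k+1}:S_{-k}\to V_{-k+1}$ is an isometry, and by Remark~\ref{regul_V2} (and its analogues for higher regularity in Lemma~\ref{express_AVk}) the functions in $V_{-k+1}$ admit a boundary trace in $H^{-k+\frac12}(\Sigma)$ continuously, provided $\Sigma$ is of class $\mathcal C^{I_2(k),1}$ — this is exactly where the index $I_2(k)=|k-1|+1$ enters, matching the regularity needed for the Bergman projection of Lemma~\ref{regul_P0} to land in the appropriate Sobolev space. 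Hence $|\int_\Sigma({\mathsf P}_{-k+1}\theta)g_\tau\,\mathrm ds|\leqslant \mathbf c\,\|\theta\|_{S_{-k}}\|g_\tau\|_{H^{-k-\frac12}(\Sigma)}$, which gives $\|\mathsf L_k^\tau g_\tau\|_{S_k}\leqslant\mathbf c\,\|g_\tau\|_{G_k^\tau}$. Surjectivity onto $\mathfrak B_S^k$ is by definition of the latter as the image. For the consistency between two nonpositive indices $k'\geqslant k$, one checks that the two duality formulas agree on $S_{-k}\subset S_{-k'}$: this uses $\eqref{eq:Pkextend}$, namely ${\mathsf P}_{-k+1}={\mathsf P}_{-k'+1}$ on $S_{-k}$ (note $-k+1\leqslant -k'+1$ requires care with the direction of the index inequality, so one invokes the appropriate half of \eqref{eq:Pkextend}), together with the fact that the $S_{-k'},S_{k'}$ pairing restricts to the $S_{-k},S_k$ pairing via the Gelfand-triple identifications.

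The remaining and most delicate point — and I expect it to be the main obstacle — is the bridge across $k=0$ and $k=1$: showing $\mathfrak B_S^1\subset\mathfrak B_S^0$ and that the diagram of Fig.~\ref{diag_7} commutes there, i.e. that for $g_\tau\in G_1^\tau$ the classical biharmonic lift $\mathsf L_1^\tau g_\tau\in\mathfrak B_S\subset S_1\subset S_0$ coincides with the distributional element $\mathsf L_0^\tau g_\tau\in S_0$ given by \eqref{eq:def_Lktau} with $k=0$. This is where one must reconcile the "strong" and "weak" definitions. The strategy is: take $\psi=\mathsf L_1^\tau g_\tau\in\mathfrak B_S$, identify it with its image under $S_1\hookrightarrow S_0$ (Gelfand triple, $S_0$ pivot), and compute $\langle\psi,\theta\rangle_{S_0,S_0}=(\psi,\theta)_{S_0}=(\nabla\psi,\nabla\theta)_{\mathbf L^2(\F)}$ for $\theta\in S_0$. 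Integrating by parts, using $\Delta\psi\in\mathfrak H$ (so $\psi\in\mathfrak B_S$ kills the $V_0$-component of any test function) and $\partial\psi/\partial n|_\Sigma=g_\tau$, one should get $(\nabla\psi,\nabla\theta)_{\mathbf L^2(\F)}=\int_\Sigma g_\tau\,\theta|_\Sigma\,\mathrm ds - \int_\F\Delta\psi\,\theta\,\mathrm dx$; the second term vanishes because $\Delta\psi\in\mathfrak H$ while $\theta$ can be replaced by ${\mathsf P}_1\theta$ without changing the boundary integral (as ${\mathsf P}_1\theta-\theta\in\mathfrak H^1$ and $\Delta\psi\perp\mathfrak H$), leaving $\int_\Sigma g_\tau({\mathsf P}_1\theta)|_\Sigma\,\mathrm ds$, which is precisely the right-hand side of \eqref{eq:def_Lktau} for $k=0$. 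The subtlety is keeping track of which traces live in which dual Sobolev space and justifying each integration by parts at the stated boundary regularity $\mathcal C^{\max\{I_2(0),I_2(1)\},1}=\mathcal C^{2,1}$; once that bookkeeping is done, consistency follows and $\mathfrak B_S^1\subset\mathfrak B_S^0$ is read off as the inclusion of images.
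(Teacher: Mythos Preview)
Your overall plan matches the paper's proof: boundedness via elliptic/trace estimates (positive $k$) and via the duality formula plus Lemma~\ref{regul_P0} (nonpositive $k$), with the crux being the consistency $\mathsf L_1^\tau=\mathsf L_0^\tau$ on $G_1^\tau$. The paper does exactly this bridge computation and then declares the remaining cases ``straightforward''.

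There is, however, a genuine slip in your sketch of that bridge. You integrate by parts first with $\theta\in S_0$ to obtain $(\nabla\psi,\nabla\theta)_{\mathbf L^2(\F)}=\int_\Sigma g_\tau\,\theta\,{\rm d}s-\int_\F\Delta\psi\,\theta\,{\rm d}x$, and then assert that the bulk term vanishes and that replacing $\theta$ by $\mathsf P_1\theta$ leaves the boundary integral unchanged. Neither claim is correct: since $\theta\in S_0$ is piecewise constant on $\Sigma$ and $g_\tau$ has zero mean on each $\Sigma_j^-$, the boundary integral $\int_\Sigma g_\tau\,\theta\,{\rm d}s$ is in fact $0$, not $\int_\Sigma g_\tau\,(\mathsf P_1\theta)\,{\rm d}s$; and $\int_\F\Delta\psi\,\theta\,{\rm d}x$ does \emph{not} vanish, because $\Delta\psi\in\mathfrak H$ while $\theta\in S_0$ is not in $V_0=\mathfrak H^\perp$ (your parenthetical ``$\Delta\psi\perp\mathfrak H$'' is the wrong way round).

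The paper fixes this by reversing the order of operations: first use that $\theta-\mathsf P_1\theta\in\mathfrak H^1$ together with the fact that $(\nabla\psi,\nabla h)_{\mathbf L^2(\F)}=0$ for every $\psi\in S_0$ and $h\in\mathfrak H^1$ (this follows from $\Delta h=0$, $\psi|_{\Sigma^+}=0$, $\psi$ constant on each $\Sigma_j^-$, and the zero-flux condition defining $\mathfrak H$), which gives $(\psi,\theta)_{S_0}=(\nabla\psi,\nabla\mathsf P_1\theta)_{\mathbf L^2(\F)}$; \emph{then} integrate by parts with $\mathsf P_1\theta$ in place of $\theta$. Now the bulk term $(\Delta\psi,\mathsf P_1\theta)_{L^2(\F)}$ vanishes because $\Delta\psi\in\mathfrak H$ and $\mathsf P_1\theta\in V_0$, leaving exactly $\int_\Sigma g_\tau\,(\mathsf P_1\theta)\,{\rm d}s$, i.e.\ the right-hand side of \eqref{eq:def_Lktau} at $k=0$. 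The ingredients you listed are all present; only the order needs to be swapped.
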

\begin{proof}The boundedness is a consequence of Lemma~\ref{regul_P0}, Lemma~\ref{lem:sapce_S2} and the continuity of the trace operator.
\par
Let $\Sigma$ be of class ${\mathcal C}^{2,1}$, $g_\tau$ belong to $G_1^\tau$ and introduce the stream function $\psi=\mathsf L_1^\tau g_\tau$. Considering $\psi\in\mathfrak B_S^1$ as an element of $S_0$ identified with its dual space, we get:
%
%
%
$$(\psi,\theta)_{S_0}=(\nabla\psi,\nabla {\mathsf P}_1\theta)_{\mathbf L^2(\F)}=\int_{\Sigma}({\mathsf P}_1 \theta) g_\tau\ds-(\Delta\psi,{\mathsf P}_1\theta)_{L^2(\F)}
\forallt \theta\in S_0,$$
where the last term vanishes because $\Delta\psi$ belongs to $\mathfrak H$.  This proves that $\mathsf L_1^\tau =\mathsf  L_0^\tau  $ in $G_1^\tau$. The other 
cases derive straightforwardly and the proof is complete.
\end{proof}
%
\par
We can  gather Lemma~\ref{lift_potential} and Definition~\ref{def_stuff} in order to define a lifting operator taking into 
account the circulation of the fluid around the fixed obstacles. 
In view of \eqref{neumann_phi_psi} and \eqref{circucu}, we are led to set:
\begin{definition}
\label{def:Lk}
Let $k$ be any integer and assume that $\Sigma$ is of class $\mathcal C^{I_2(k),1}$ and that the triple $(g_n,g_\tau,\varGamma)$ is in 
$G_k^n\times G_k^\tau\times \mathbb R^N$ with   $\varGamma=(\varGamma_1,\ldots,\varGamma_N)$. We define the operator:
\begin{equation}
\label{def_LkS}
\mathsf L_k^S(g_n,g_\tau,\varGamma)=\mathsf L_k^\tau({\mathsf T}_k g_n-g_\tau)+\sum_{j=1}^N\varGamma_j \xi_j,
\end{equation}
which is valued in the space 
\begin{equation}
\label{def_mathfrakS}
 S^{\rm b}_k=\mathfrak B_S^k\oplus \mathbb F_S.
\end{equation}
\end{definition}
We can address   the case of time dependent spaces:
\begin{definition}
\label{def:Lk_2}
Let $T$ be a positive real number, $k$ be an integer and assume that $\Sigma$ is of class $\mathcal C^{J_1(k),1}$ (the expression of $J_1(k)$ is given in \eqref{def_indices}). 
We begin by introducing the spaces:
\begin{align*}
G_k^n(T)&= L^2(0,T; G^n_{k+1})\cap\mathcal C([0,T];G_k^n) \cap H^1(0,T; G^n_{k-1})\\
G_k^\tau(T)&=L^2(0,T; G^\tau_{k+1}))\cap\mathcal C([0,T];G_k^\tau) \cap H^1(0,T; G^\tau_{k-1}),
\end{align*}
and also:
\begin{equation}
\label{def_Gk}
G_k(T)=\begin{cases}
G_k^n(T)\times G_k^\tau(T)\times H^1(0,T;\mathbb R^N)&\text{when }k\geqslant 0,\\
L^2(0,T;G_{k+1}^n)\times L^2(0,T;G_{k+1}^\tau)\times L^2(0,T;\mathbb R^N) &\text{when }k\leqslant -1.
\end{cases}
\end{equation}
Assuming that $\Sigma$ is of class $\mathcal C^{J_2(k),1}$ (with $J_2(k)$  defined in \eqref{def_indices}) the operator 
$\mathsf L_{k+1}^S$   maps to space $G_k(T)$ into the space:
\begin{equation}
\label{def_mathfrakST} 
 S^{\rm b}_{k}(T)=
\begin{cases}
H^1(0,T; S^{\rm b}_{k-1})\cap \mathcal C([0,T]; S^{\rm b}_k)\cap L^2(0,T; S^{\rm b}_{k+1})&  \text{ if }k\geqslant 0,\\
 L^2(0,T; S^{\rm b}_{k+1}) & \text{ if }k\leqslant -1.
\end{cases}
\end{equation}
\end{definition}
As a direct consequence of Lemmas~\ref{lift_potential} and \ref{reg_Lk}, we can state: 
\begin{lemma}
\label{LEM:lift_stream}
Let $k$ be any integer and assume that $\Sigma$ is of class $\mathcal C^{I_2(k),1}$. Then the lifting operator for the stream function:
$$\mathsf L_k^S:G_k^n\times G_k^\tau\times \mathbb R^N\longrightarrow S^{\rm b}_k,$$
is well defined and is bounded. Moreover, if $k$ and $k'$ are two integers such that $k'\leqslant k$, then $\mathsf L_{k'}^S=\mathsf L_{k}^S$ in 
$G_k^n\times G_k^\tau\times \mathbb R^N$. It follows that for every positive  real number $T$ and every integer $k$, providing that $\Sigma$ 
is of class $C^{J_2(k),1}$, the operator:
$$\mathsf L_{k+1}^S:G_k(T) \longrightarrow  S^{\rm b}_{k}(T),$$
is well defined and bounded as well, the bound being uniform with respect to $T$.
\end{lemma}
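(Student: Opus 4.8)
The plan is to obtain the statement by assembling the two building blocks already in place: Lemma~\ref{lift_potential}, which gives boundedness of $\mathsf T_k:G_k^n\to G_k^\tau$ and identifies it with the tangential derivative composed with the Neumann-to-Dirichlet map, independently of $k$; and Lemma~\ref{reg_Lk}, which gives boundedness of $\mathsf L_k^\tau:G_k^\tau\to\mathfrak B_S^k$ together with the consistency $\mathsf L_{k'}^\tau=\mathsf L_k^\tau$ on $G_{k'}^\tau$ whenever $k'\geqslant k$. First I would record the elementary inequalities $I_1(k)\leqslant|k|\leqslant I_2(k)$, valid for every integer $k$ because $I_2$ is a convex function of $k$ with minimum $1$ attained at $k=1$; they show that the hypothesis $\Sigma\in\mathcal C^{I_2(k),1}$ is simultaneously strong enough for the spaces $G_k^n,G_k^\tau$ to be defined (Definition~\ref{def_def_bkgk}) and for $\mathsf T_k$ and $\mathsf L_k^\tau$ to be available and bounded. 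Then, reading off \eqref{def_LkS}, $\mathsf L_k^S$ is the affine combination $\mathsf L_k^\tau(\mathsf T_k g_n-g_\tau)+\sum_{j=1}^N\varGamma_j\xi_j$: its first part is bounded $G_k^n\times G_k^\tau\to\mathfrak B_S^k$ by the two lemmas just quoted, while $\varGamma\mapsto\sum_j\varGamma_j\xi_j$ is automatically bounded $\mathbb R^N\to\mathbb F_S$ since $\mathbb F_S$ is finite dimensional, and by \eqref{def_mathfrakS} the sum lands in $S^{\rm b}_k=\mathfrak B_S^k\oplus\mathbb F_S$. This proves the first assertion.

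For the consistency $\mathsf L_{k'}^S=\mathsf L_k^S$ on $G_k^n\times G_k^\tau\times\mathbb R^N$ when $k'\leqslant k$ (and $\Sigma$ regular enough for both to make sense), I would argue term by term in \eqref{def_LkS}: $\mathsf T_{k'}$ and $\mathsf T_k$ agree on $G_k^n$ because $\mathsf T$ is the same operator at every index by Lemma~\ref{lift_potential}; $\mathsf L_{k'}^\tau$ and $\mathsf L_k^\tau$ agree on $G_k^\tau$ by the restriction statement in Lemma~\ref{reg_Lk}; and the correction term $\sum_j\varGamma_j\xi_j$ does not involve $k$. Hence the two affine combinations coincide, the statement being meaningful because $S^{\rm b}_k\subset S^{\rm b}_{k'}$.

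To pass to the time-dependent spaces of Definition~\ref{def:Lk_2}, the key observation is that $\mathsf L_{k+1}^S$ is a single, linear, time-independent bounded operator which, by the consistency just established, restricts correctly to the three consecutive levels $k+1$, $k$, $k-1$ and therefore commutes with $\partial_t$. Consequently, for $k\geqslant 0$ and $(g_n,g_\tau,\varGamma)\in G_k(T)$, I would apply the spatial bound at level $k+1$ to the $L^2(0,T;\cdot)$-component of the datum, at level $k$ to the $\mathcal C([0,T];\cdot)$-component, and at level $k-1$ to $\partial_t(g_n,g_\tau,\varGamma)\in L^2(0,T;\cdot)$, which places $\mathsf L_{k+1}^S(g_n,g_\tau,\varGamma)$ in $H^1(0,T;S^{\rm b}_{k-1})\cap\mathcal C([0,T];S^{\rm b}_k)\cap L^2(0,T;S^{\rm b}_{k+1})=S^{\rm b}_k(T)$, with a bound equal to a sum of time-independent spatial operator norms, hence uniform in $T$; for $k\leqslant-1$ the spaces $G_k(T)$ and $S^{\rm b}_k(T)$ involve only level $k+1$, so applying the spatial bound pointwise in $t$ suffices. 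Finally one checks that $\mathcal C^{J_2(k),1}$ is precisely the regularity required: for $k\geqslant 0$, $J_2(k)=\max\{I_2(k-1),I_2(k+1)\}\geqslant I_2(k)$ by convexity of $I_2$, so all three levels are covered, while for $k\leqslant-1$, $J_2(k)=I_2(k+1)$ is exactly the regularity needed for $\mathsf L_{k+1}^S$; a similar comparison $J_1(k)\leqslant J_2(k)$ ensures the domain $G_k(T)$ itself is defined.

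I do not expect a genuine obstruction; the statement really is a corollary of the two earlier lemmas. The only points deserving care are the bookkeeping of the regularity indices $I_1,I_2,J_1,J_2$ — in particular the convexity of $I_2$, which is what makes $J_2$ cover consecutive levels — and the routine verification that a fixed bounded operator commutes with time differentiation and maps $\mathcal C([0,T];\cdot)$- and $L^2(0,T;\cdot)$-valued curves to curves of the same type.
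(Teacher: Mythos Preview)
Your proposal is correct and follows exactly the approach the paper intends: the paper states this lemma ``as a direct consequence of Lemmas~\ref{lift_potential} and~\ref{reg_Lk}'' without giving any further argument, and your write-up simply spells out that deduction, including the index bookkeeping and the routine passage to time-dependent spaces. There is nothing to add.
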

%
%
%
\subsection{Additional function spaces}
\label{SEC:Nonhomogeneous_vorticity}
We aim now at building a lifting operator valued in vorticity spaces (i.e. we aim at giving the counterpart of Definitions~\ref{def:Lk}-\ref{def:Lk_2}  and 
Lemma~\ref{LEM:lift_stream} for the vorticity). 
We recall that, for every positive integer $k$, the lifting operator $\mathsf L^S_k$   is valued in $ S^{\rm b}_k$. For nonpositive integers $k$, $S^{\rm b}_k$ 
is a subspace of $S_k$ and therefore, the corresponding vorticity space is simply $V_{k-1}^{\rm b}=\Delta_{k-1}S^{\rm b}_k$. However, when 
$k$ is positive, $S_k^{\rm b}\cap S_k=\{0\}$. A somehow naive approach would consist in taking simply the Laplacian of $S_k^{\rm b}$ but one easily 
verifies that $\Delta S_k^{\rm b}\subset \mathfrak H$ and $\mathfrak H$ is in no space $V_j$ for any integer $j$. This difficulty is circumvented by 
noticing that $S_k^{\rm b}\subset S_0$ (still considering positive integers $k$). So $V^{\rm b}_k=\Delta_{-1}S^{\rm b}_k$ (with $\Delta_{-1}$ defined 
in \eqref{alternQk_2}) seems to be a good candidate 
for our purpose, an idea we are now going to elaborate on. More precisely, for every integer $k$, $S_k^{\rm b}$ is a subspace of $\bar S_k$ defined by:
\begin{equation}
\label{def_mathcal_Sk}
\bar S_k=S_0\cap H^{k+1}(\F)\quad\text{ if }k\geqslant 1\qquad
\text{and}\qquad \bar S_k=S_k\quad\text{ if }k\leqslant 0.
\end{equation}
The corresponding vorticity space is therefore in the image of $\bar S_k$ (seen as a subspace of $S_0$) by $\Delta_{-1}$  if $k\geqslant 1$ 
and by $\Delta_{k-1}$ if $k\leqslant 0$ (see Fig.~\ref{diag_2}). Thus we define:
\begin{equation}
\label{def_mathcal_Vk}
\bar V_k=\Delta_{-1}\bar S_{k+1}\quad\text{ if }k\geqslant 0\qquad
\text{and}\qquad \bar V_k=\Delta_k \bar S_{k+1}=V_k\quad\text{ if }k\leqslant -1.
\end{equation}
It is crucial to understand that, no matter how regular   the functions are, the spaces $\bar V_k$ are always dual spaces (for every integer $k$). They are subspaces 
of $V_{-1}$. We will show that $V_{-1}$ is the space of largest index that contains in some sense the harmonic functions. 
%
%
%
We shall focus our analysis on the pairs $(\bar S_1,\bar V_0)$, $(\bar S_2,\bar V_1)$ and $(\bar S_3,\bar V_2)$ only, the other cases being of less importance as it will 
becomes clear in the next section. 
\subsubsection*{The pair $(\bar S_1,\bar V_0)$}
The space $\bar S_1$ is provided with the scalar product:
\begin{equation}
\label{eq:scalar_product_N}
(\bar\psi_1,\bar\psi_2)_{\bar S_1}=(\Delta \bar\psi_1,\Delta\bar \psi_2)_{L^2(\F)}+\Gamma(\bar\psi_1)\cdot \Gamma(\bar\psi_2),\forallt \bar\psi_1,\bar\psi_2\in \bar S_1,
\end{equation}
where, for every $\theta\in H^2(\F)$:
$$\Gamma(\theta)=\big(\Gamma_1(\theta),\ldots,\Gamma_N(\theta)\big)^t\in\mathbb R^N\qquad\text{with}\qquad 
\Gamma_j(\theta)=-\int_{\Sigma_j^-}\frac{\partial \theta}{\partial n}\ds,\quad(j=1,\ldots,N).$$
%
%
\begin{lemma}
\label{decomp_S1}
The space $\bar S_1$ enjoys the following properties:
\begin{my_enumerate}
\item 
The norm induced by the scalar product \eqref{eq:scalar_product_N}
is equivalent in $\bar S_1$ to the usual norm of $H^2(\F)$. 
\item
The space $\bar S_1$ admits the following orthogonal decompositions:
\begin{equation}
\label{decomp_S02}
\bar S_1= Z_2\sumperp  \mathbb F_S=S_1\sumperp  \mathfrak B_S\sumperp \mathbb F_S,
\end{equation}
where we recall that the expression of the space $\mathfrak B_S$ is given in \eqref{def_Scurl} and that the finite dimensional space $\mathbb F_S$  is spanned by the functions $\xi_j$ ($j=1,\ldots,N$). 
\end{my_enumerate}
\end{lemma}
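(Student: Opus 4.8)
The proof naturally splits along the two claimed items, and the key point throughout is that we are working with the space $\bar S_1 = S_0\cap H^2(\F)$, which is the space $S_0$ from Section~\ref{SEC:main_spaces} intersected with $H^2$, equipped now with the enriched scalar product \eqref{eq:scalar_product_N} that records both $\Delta\bar\psi$ and the $N$ circulations $\Gamma_j(\bar\psi)$. First I would prove item (1), the norm equivalence. The inequality $\|\bar\psi\|_{\bar S_1}\leqslant \mathbf c_\F\|\bar\psi\|_{H^2(\F)}$ is immediate: $\|\Delta\bar\psi\|_{L^2(\F)}\leqslant\|\bar\psi\|_{H^2(\F)}$ and each $\Gamma_j(\theta)$ is controlled by the continuity of the normal-trace operator $H^2(\F)\to H^{1/2}(\Sigma_j^-)$. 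For the reverse inequality, I would argue that $\|\cdot\|_{\bar S_1}$ is a genuine norm on $\bar S_1$ — if $\Delta\bar\psi=0$ and all $\Gamma_j(\bar\psi)=0$, then $\bar\psi$ is harmonic, belongs to $S_0$ (hence vanishes on $\Sigma^+$ and is constant $c_j$ on each $\Sigma_j^-$) and has zero flux through each $\Sigma_j^-$; integrating by parts, $\int_\F|\nabla\bar\psi|^2\dx = -\sum_j c_j\int_{\Sigma_j^-}\partial_n\bar\psi\ds = 0$, so $\bar\psi$ is constant, and the boundary condition on $\Sigma^+$ forces $\bar\psi\equiv 0$. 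Then the reverse bound follows from elliptic regularity for the biharmonic/Laplace problem combined with the open mapping theorem (or a standard Peetre--Tartar compactness argument using the compact embedding $H^2(\F)\hookrightarrow H^1(\F)$): the map $\bar\psi\mapsto(\Delta\bar\psi,\Gamma(\bar\psi))$ together with the compact lower-order term gives $\|\bar\psi\|_{H^2}\leqslant\mathbf c_\F\|\bar\psi\|_{\bar S_1}$.

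For item (2), the decomposition $\bar S_1 = Z_2\sumperp\mathbb F_S = S_1\sumperp\mathfrak B_S\sumperp\mathbb F_S$, I would proceed in two stages. The second equality, $Z_2 = S_1\sumperp\mathfrak B_S$, is already established as \eqref{decomp_S2b}–\eqref{def_Scurl}, so it suffices to prove $\bar S_1 = Z_2\sumperp\mathbb F_S$ with respect to the scalar product \eqref{eq:scalar_product_N}. Recall $Z_2 = \{\psi\in H^2(\F)\cap S_0 : \int_{\Sigma_j^-}\partial_n\psi\,\ds=0,\ j=1,\ldots,N\} = \{\psi\in\bar S_1 : \Gamma(\psi)=0\}$, and $\mathbb F_S = \mathrm{span}\{\xi_j\}$ where the $\xi_j$ are harmonic, lie in $S_0$, and satisfy the flux relations \eqref{flux_cond_xi}, i.e. $\Gamma_k(\xi_j)=\delta_j^k$. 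Given $\bar\psi\in\bar S_1$, set $\bar\psi^{\mathbb F} = \sum_j\Gamma_j(\bar\psi)\xi_j\in\mathbb F_S$ and $\bar\psi^Z = \bar\psi - \bar\psi^{\mathbb F}$; then $\Gamma_k(\bar\psi^Z) = \Gamma_k(\bar\psi) - \sum_j\Gamma_j(\bar\psi)\Gamma_k(\xi_j) = 0$, so $\bar\psi^Z\in Z_2$. This gives the direct sum $\bar S_1 = Z_2 + \mathbb F_S$, and the intersection is trivial since an element of $\mathbb F_S\cap Z_2$ is harmonic with all fluxes zero, hence zero by the argument in item (1). Orthogonality with respect to \eqref{eq:scalar_product_N}: for $\psi\in Z_2$ and $\xi_j\in\mathbb F_S$, the first term $(\Delta\psi,\Delta\xi_j)_{L^2(\F)}$ vanishes because $\xi_j$ is harmonic, and the second term $\Gamma(\psi)\cdot\Gamma(\xi_j)$ vanishes because $\Gamma(\psi)=0$; hence $Z_2\perp\mathbb F_S$ in the $\bar S_1$ inner product. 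Combining with \eqref{decomp_S2b} yields the triple orthogonal decomposition, noting that on $Z_2$ (where $\Gamma\equiv 0$) the scalar product \eqref{eq:scalar_product_N} reduces to $(\Delta\cdot,\Delta\cdot)_{L^2(\F)}$, which is exactly the $Z_2$-inner product used in \eqref{decomp_S2b}.

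The main obstacle, modest as it is here, is the reverse norm inequality in item (1): one must be careful that the three pieces of information ($\Delta\bar\psi\in L^2$, the $N$ fluxes, and the membership in $S_0$ which already encodes the Dirichlet-type boundary structure) together pin down $\bar\psi$ in $H^2(\F)$ with a bound, and this requires invoking the elliptic regularity result for the biharmonic operator (as in Lemma~\ref{regul_P0}, referencing \cite[Theorem 1.11]{Girault:1986aa}) rather than just a formal a priori estimate. Everything in item (2) is then bookkeeping with the flux functionals $\Gamma_j$ and the already-known decomposition \eqref{decomp_S2b}; no new analytic input is needed there.
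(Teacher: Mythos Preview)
Your proof is correct and follows essentially the same route as the paper's: the paper dismisses item (1) with a one-line appeal to ``classical elliptic regularity results'' (your injectivity-plus-compactness argument is a fleshed-out version of exactly this), and for item (2) the paper invokes the first isomorphism theorem for the surjection $(-\Delta):\bar S_1\to Z_0$ with kernel $\mathbb F_S$ together with the already-known splitting \eqref{decomp_S2b}, which is the abstract form of your explicit flux-subtraction construction $\bar\psi^{\mathbb F}=\sum_j\Gamma_j(\bar\psi)\xi_j$. The orthogonality verification is identical.
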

%
The orthogonal decomposition \eqref{decomp_S02} can be given a physical meaning: The subspace $S_1$ contains the stream functions 
with homogeneous boundary conditions while the subspace $\mathfrak B_S$ contains the stream functions 
that solve  stationary Stokes problems (with zero circulation though). Finally, the space $\mathbb F_S$ contains the harmonic stream functions accounting for the circulation of 
the fluid around the inner boundaries $\Sigma^-_j$ ($j=1,\ldots,N$).
\begin{proof}[Proof of Lemma~\ref{decomp_S1}]The equivalence of the norms derives from classical elliptic regularity results. 
On the other hand, according to \eqref{decomp_S2b}:
\begin{equation}
\label{eq:decomp_S2_2}
 Z_2=S_1\sumperp  \mathfrak B_S\subset \bar S_1.
\end{equation}
Applying the  fundamental homomorphism theorem to the surjective operator $(-\Delta):\bar S_1\longrightarrow  Z_0$ whose kernel is the space $\mathbb F_S$,
we next obtain that:
\begin{equation}
\label{eq:decomp_S2}
\bar S_1= Z_2\oplus \mathbb F_S.
\end{equation}
%
%
%
Finally, combining   \eqref{eq:decomp_S2_2} and \eqref{eq:decomp_S2} yields \eqref{decomp_S02} after verifying that the direct sum 
is orthogonal for the scalar product \eqref{eq:scalar_product_N}. The proof is then completed.
\end{proof}
\par
%
\par
Let us determine now the corresponding decomposition for the vorticity space $\bar V_0=\Delta_{-1}\bar S_1$ 
which is a subspace of the 
dual space $V_{-1}$ (see Fig.~\ref{diag_2}). 
We shall prove in particular that $\bar V_0$ contains $V_0$ whose expression is (seen as a subspace of $V_{-1}$):
\begin{equation}
\label{def_hatV0}
V_0=\big\{(\omega,{\mathsf Q}_1\cdot)_{L^2(\F)}\,:\,\omega\in \mathfrak H^\perp\big\}.
\end{equation}
Notice   that in \eqref{def_hatV0}, one would expect 
merely the term $(\omega, \cdot)_{L^2(\F)}$ in place of  $(\omega,{\mathsf Q}_1\cdot)_{L^2(\F)}$, but  both linear forms are equal in $V_1$.
We define below two additional  subspaces of $V_{-1}$:
\begin{equation}
\label{def_hatV1}
\mathfrak H_V=\big\{(\omega,{\mathsf Q}_1\cdot)_{L^2(\F)}\,:\,\omega\in \mathfrak H\big\}\qquad
\text{and}\qquad L^2_V=\big\{(\omega,{\mathsf Q}_1\cdot)_{L^2(\F)}\,:\,\omega\in L^2(\F)\big\}.
\end{equation}
The space $\mathfrak H_V$ contains in some sense the harmonic vorticity field. Finally, we introduce the finite dimensional subspace 
of $V_{-1}$: 
\begin{equation}
\label{def_FVast}
\mathbb F_V^\ast={\rm span\,}\{\zeta_j,\,j=1,\ldots,N\} ,
\end{equation}
where, for every $j=1,\ldots,N$:
\begin{equation}
\label{vortic_circu}
\langle \zeta_j,\omega\rangle_{V_{-1},V_1}=-(\nabla\xi_j,\nabla  \mathsf Q_1\omega)_{\mathbf L^2(\F)}=-\int_{\Sigma^-_j}\frac{\partial\xi_j}{\partial n} \mathsf Q_1\omega \ds\forallt\omega\in V_1.
\end{equation}
%
We can now state:
\begin{theorem}
\label{THEO:decompV0}
The space $\bar V_0$ can be decomposed as follows:
\begin{equation}
\label{decomp_V0}
\bar V_0= L^2_V\sumperp \mathbb F_V^\ast=V_0\sumperp V_0^{\rm b}\qquad\text{ where }\quad V_0^{\rm b}=\mathfrak H_V\sumperp \mathbb F_V^\ast.
\end{equation}
The direct sum above is orthogonal for the scalar product defined, for every $\bar\omega_1$ and $\bar\omega_2$ in $\bar V_0$ by: 
\begin{equation}
\label{norm_V10}
(\bar\omega_1,\bar\omega_2)_{\bar V_0}=(\omega_1,\omega_2)_{L^2(\F)}+\sum_{j=1}^N\alpha_{1,j}\alpha_{2,j},
\end{equation}
where, for $k=1,2$, $\bar\omega_k=(\omega_k,{\mathsf Q}_1\cdot)_{L^2(\F)}+\zeta^k$ 
  with $\omega_k\in L^2(\F)$ and $\zeta^k=\sum_{j=1}^N\alpha_{k,j}\zeta_j$
 in $\mathbb F_V^\ast$ ($\alpha_{k,j}\in\mathbb R$ for $j=1,\ldots,N$).
\par
Moreover, the restriction of $\Delta_{-1}$ to $\bar S_1$, denoted by $\bar\Delta_0$,  is an isometry  from $\bar S_1$ onto $\bar V_0$ (see Fig.~\ref{figS03}). 
\end{theorem}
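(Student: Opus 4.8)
The plan is to reduce everything to a single explicit formula for $\bar\Delta_0=\Delta_{-1}|_{\bar S_1}$, obtained from one integration by parts. Given $\psi\in\bar S_1=S_0\cap H^2(\F)$ and a test element $\omega\in V_1$, formula \eqref{alternQk_2} at $k=1$, together with the identification of $S_0$ with its dual, reads $\langle\bar\Delta_0\psi,\omega\rangle_{V_{-1},V_1}=-(\nabla\psi,\nabla\mathsf Q_1\omega)_{\mathbf L^2(\F)}$. Since $\psi\in H^2(\F)$ and $\mathsf Q_1\omega\in S_0$ vanishes on $\Sigma^+$ and equals the constant $\mathsf{Tr}_j(\mathsf Q_1\omega)$ on each $\Sigma^-_j$, I would integrate by parts to split this into a bulk term $\int_\F\Delta\psi\,\mathsf Q_1\omega\dx$ and a boundary term $\sum_j\Gamma_j(\psi)\,\mathsf{Tr}_j(\mathsf Q_1\omega)$ (using $\int_{\Sigma^-_j}\partial_n\psi\ds=-\Gamma_j(\psi)$). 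The defining relation of $\xi_j$ gives $\mathsf{Tr}_j\theta=-(\nabla\xi_j,\nabla\theta)_{\mathbf L^2(\F)}$ for $\theta\in S_0$, hence $\mathsf{Tr}_j(\mathsf Q_1\omega)=\langle\zeta_j,\omega\rangle_{V_{-1},V_1}$ by \eqref{vortic_circu}, and I arrive at
\[
\bar\Delta_0\psi=(\Delta\psi,\mathsf Q_1\cdot)_{L^2(\F)}+\sum_{j=1}^N\Gamma_j(\psi)\,\zeta_j,
\]
so that $\bar\Delta_0\psi\in L^2_V\oplus\mathbb F_V^\ast$, with $L^2$-part $\Delta\psi$ and $\mathbb F_V^\ast$-coordinates $\Gamma_j(\psi)$. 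I expect this identification of the boundary term with the functionals $\zeta_j$ to be the only genuinely delicate point; the rest is formal bookkeeping with isometries already available.

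Next I would check that the algebraic sum $L^2_V+\mathbb F_V^\ast$ is direct, which is what makes the scalar product \eqref{norm_V10} unambiguous. The map $\omega\mapsto(\omega,\mathsf Q_1\cdot)_{L^2(\F)}$ is injective on $L^2(\F)$: its vanishing forces $(\omega,\theta)_{L^2(\F)}=0$ for all $\theta\in H^1_0(\F)\subset S_0$, hence $\omega=0$ by density. If an element of $L^2_V\cap\mathbb F_V^\ast$ is written $(\omega,\mathsf Q_1\cdot)_{L^2(\F)}=\sum_j\alpha_j\zeta_j$, testing at $\mathsf P_1\theta$ for $\theta\in S_0$ gives $(\omega,\theta)_{L^2(\F)}=\sum_j\alpha_j\mathsf{Tr}_j\theta$; restricting to $\theta\in H^1_0(\F)$ yields $\omega=0$, and testing the remaining identity at $\theta=\xi_k$ together with invertibility of the Gram matrix $(\mathsf{Tr}_j\xi_k)_{j,k}$ forces $\alpha=0$. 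This simultaneously establishes linear independence of the $\zeta_j$.

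Then I would prove $\bar V_0=\Delta_{-1}\bar S_1=L^2_V\oplus\mathbb F_V^\ast$. The inclusion $\subseteq$ is the displayed formula. For $\supseteq$, given $\omega\in L^2(\F)$ and $\alpha\in\mathbb R^N$, I split $\omega=\omega^\perp+\omega^H$ along $L^2(\F)=\mathfrak H^\perp\sumperp\mathfrak H$ and set
\[
\psi=\Delta_0^{-1}\omega^\perp-(\mathsf A^Z_2)^{-1}\omega^H+\sum_{j=1}^N\alpha_j\xi_j,
\]
where $\mathsf A^Z_2$ denotes its restriction to the isometry $\mathfrak B_S\to\mathfrak H$ of Lemma~\ref{decomp_S2}; by Lemma~\ref{decomp_S1} this $\psi$ lies in $\bar S_1=S_1\sumperp\mathfrak B_S\sumperp\mathbb F_S$, and by construction $\Delta\psi=\omega$ and $\Gamma_j(\psi)=\alpha_j$, so $\bar\Delta_0\psi$ is the prescribed element. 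The finer splittings are then immediate: $L^2(\F)=\mathfrak H^\perp\sumperp\mathfrak H$ and the injectivity above give $L^2_V=V_0\sumperp\mathfrak H_V$, hence $\bar V_0=V_0\sumperp\mathfrak H_V\sumperp\mathbb F_V^\ast$, and orthogonality of each of the three claimed decompositions with respect to \eqref{norm_V10} is read off directly from the form of that scalar product (the $L^2$-part of an element of $V_0$ lies in $\mathfrak H^\perp$, that of an element of $\mathfrak H_V$ in $\mathfrak H$, and elements of $\mathbb F_V^\ast$ have vanishing $L^2$-part).

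Finally, the isometry statement drops out of the formula for $\bar\Delta_0$: using \eqref{norm_V10} and \eqref{eq:scalar_product_N},
\[
(\bar\Delta_0\psi_1,\bar\Delta_0\psi_2)_{\bar V_0}=(\Delta\psi_1,\Delta\psi_2)_{L^2(\F)}+\sum_{j=1}^N\Gamma_j(\psi_1)\Gamma_j(\psi_2)=(\psi_1,\psi_2)_{\bar S_1}.
\]
Thus $\bar\Delta_0$ is a linear isometry; it is injective because $\|\cdot\|_{\bar S_1}$ is a genuine norm (equivalent to the $H^2$-norm by Lemma~\ref{decomp_S1}), surjective by the previous step, and completeness of $\bar V_0$ equipped with \eqref{norm_V10} is then inherited from $\bar S_1$ through this bijective isometry.
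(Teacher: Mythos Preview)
Your proof is correct and takes a genuinely different route from the paper's. You go straight to an explicit formula for $\bar\Delta_0$ by one integration by parts, identifying the boundary term with the functionals $\zeta_j$ via the variational definition of $\xi_j$; everything else (directness of the sum, surjectivity, isometry) is then read off from that formula together with the decomposition $\bar S_1=S_1\sumperp\mathfrak B_S\sumperp\mathbb F_S$ and the scalar product \eqref{eq:scalar_product_N}. The paper instead factors $\bar\Delta_0=-\bar{\mathsf A}^V_2\circ\bar{\mathsf P}_2$ through the intermediate space $\mathsf P_1\bar S_1$, establishes the decomposition $\mathsf P_1\bar S_1=V_2\sumperp\mathfrak B_V\sumperp\mathbb F_V$ (in particular proving $\mathsf P_1\xi_j=\varOmega_j=\Delta\chi_j$ so that $\mathbb F_V=\Delta_1\mathbb B_S$), and then shows in a separate lemma that $\mathsf A^V_1\mathfrak B_V=\mathfrak H_V$ and $\mathsf A^V_1\mathbb F_V=\mathbb F_V^\ast$. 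Your argument is more economical for this theorem alone; the paper's approach pays for itself later, since the intermediate decomposition of $\mathsf P_1\bar S_1$ and the identity $\mathsf P_1\xi_j=\varOmega_j$ are reused when analysing $\bar V_2$ in the next subsection and when setting up the regular vorticity solutions in Section~\ref{SEC:more_regular}.
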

\begin{rem}
\label{rem_singular_V0}
Let us emphasize that:
\begin{my_enumerate}
\item
 In the decomposition 
$\bar\omega=(\omega,{\mathsf Q}_1\cdot)_{L^2(\F)}+\zeta$ of every $\bar\omega$ of $\bar V_0$, the term $\omega$ which belongs to $L^2(\F)$ 
will be referred to as the {\it regular part} of $\bar\omega$ while $\zeta$ will stand for the {\it singular part}.
\item
Loosely speaking, the space $\bar V_0$ consists in  functions in $L^2(\F)$ and measures $\zeta_j$ ($j=1,\ldots,N$) supported on the boundaries $\Sigma^-_j$ (notice again that 
$\mathbb F_V^\ast$ is not a distributions space). This can be somehow understood from a physical point of view by observing that
 $-\zeta_j$ is the vorticity corresponding to the harmonic stream function $\xi_j$ which
accounts for the circulation of the fluid around $\Sigma^-_j$. Hence the vorticity is a measure supported on the boundary of the obstacle.
In connection with this topic, wondering 
how is vorticity imparted to the fluid when a stream flow past an obstacle, 
Lighthill answers in \cite{Rosenhead:1988aa}  that {\it the solid boundary is a distributed source of vorticity (just as, in some flows, it may be a distributed source of heat).}

\item
The fact that $\bar\Delta_0$ is an isometry asserts in particular that to any given vorticity in $\bar V_0$ corresponds a unique 
stream function $\bar\psi$ in $\bar S_1$ that can be uniquely decomposed as $\psi+\psi_S+\psi_C$ where $\nabla^\perp\psi=0$ on the boundary $\Sigma$, 
$\nabla^\perp\psi_S$ solves a stationary Stokes system and $\psi_C$ is harmonic in $\F$ and accounts for the circulation of the fluid 
around the boundaries $\Sigma^-_j$ 
($j=1,\ldots,N$). 
\end{my_enumerate}
\end{rem}
The rest of this subsection is dedicated to the proof of Theorem~\ref{THEO:decompV0}. In order to determine the image 
of $\bar S_1$ by the operator  $\Delta_{-1}$, 
the factorization $\Delta_{-1}=-{\mathsf A}^V_1{\mathsf P}_1$ suggests to determine first the expression of the space ${\mathsf P}_1\bar S_1$. 
This space   is provided with the scalar product:
$$(\omega_1,\omega_2)_{{{\mathsf P}_1\bar S_1}}=(\Delta \omega_1,\Delta \omega_2)_{L^2(\F)}+\Gamma(\omega_1)\cdot \Gamma(\omega_2),\forallt 
\omega_1,\omega_2\in {{\mathsf P}_1\bar S_1},$$
and we denote by $\bar{\mathsf P}_2$ the restriction of ${\mathsf P}_1$ to ${\bar S_1}$.
 \begin{rem}
 According to Lemma~\ref{regul_P0}, when $\Sigma$ is of class ${\mathcal C}^{3,1}$, the space ${{\mathsf P}_1\bar S_1}$ is simply equal 
 to $V_1\cap H^2(\F)$. 
 When $\Sigma$ is less regular Remark~\ref{regul_V2} applies replacing $V_2$ with 
${{\mathsf P}_1\bar S_1}$.
 \end{rem}
The decomposition 
\eqref{decomp_S02} leads to introducing the spaces:
\begin{equation}
\label{def_V2flat}
\mathfrak B_V={\mathsf P}_1\mathfrak B_S
 \qquad\text{and}\qquad \mathbb F_V ={\mathsf P}_1\mathbb F_S.
 \end{equation}
 %
%
Nothing more than
$\mathfrak B_V=\mathfrak B\cap V_0$ (where $\mathfrak B$ is defined in \eqref{def_BV})
can be said on the space $\mathfrak B_V$. The space $\mathbb F_V$ however can be bound to the space ${\mathbb B}_S$ spanned by the 
functions $\chi_j$ ($j=1,\ldots,N$) defined in \eqref{def_chi} (see the definition 
below the identity \eqref{split_S1}). 
\begin{lemma}
\begin{my_enumerate}
\item
The space ${\mathbb B}_S$ is a subspace of $S_2$ and  $\mathbb F_V=\Delta_1 {\mathbb B}_S$.
\item
The space ${{\mathsf P}_1\bar S_1}$ admits the following orthogonal decomposition:
\begin{equation}
\label{exp:V21}
{{\mathsf P}_1\bar S_1}=V_2\sumperp  \mathfrak B_V\sumperp  \mathbb F_V.
\end{equation}
Moreover, the operator $\bar{\mathsf P}_2$ is an isometry from $\bar S_1$ onto ${{\mathsf P}_1\bar S_1}$ (see Fig.~\ref{figS03}).
\end{my_enumerate}
\end{lemma}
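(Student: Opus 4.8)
The plan is to obtain part (ii) by transporting the orthogonal decomposition \eqref{decomp_S02} of $\bar S_1$ through the operator $\bar{\mathsf P}_2$, once the latter has been shown to be an isometry, and to obtain part (i) by computing the harmonic Bergman projections $\mathsf P\xi_j$ of the generators of $\mathbb F_S$ explicitly in terms of the functions $\Delta\chi_k$.

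\emph{Part (i).} I would start from the formula for $\mathsf P$ established in the proof of Lemma~\ref{regul_P0}: for every $u\in L^2(\F)$,
$$\mathsf P u=\Delta w_0+\sum_{k=1}^N (\Delta\chi_k,u)_{L^2(\F)}\,\Delta\chi_k,$$
where $w_0\in H^2_0(\F)$ solves $(\Delta w_0,\Delta\theta_0)_{L^2(\F)}=(u,\Delta\theta_0)_{L^2(\F)}$ for every $\theta_0\in H^2_0(\F)$. Applying this to $u=\xi_j$: since $\xi_j$ is harmonic, the linear form $\theta_0\mapsto(\xi_j,\Delta\theta_0)_{L^2(\F)}$ is continuous on $H^2(\F)$ and vanishes on $\mathcal D(\F)$, hence on $H^2_0(\F)$, so $w_0=0$ and $\mathsf P\xi_j=\sum_{k=1}^N(\Delta\chi_k,\xi_j)_{L^2(\F)}\,\Delta\chi_k$. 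A Green's formula computation, using $\Delta\xi_j=0$, $\partial\chi_k/\partial n|_\Sigma=0$, the fact that $\chi_k$ vanishes on $\Sigma^+$ and is constant equal to $\mathsf{Tr}_m\chi_k$ on each $\Sigma^-_m$, and the flux relations \eqref{flux_cond_xi}, then gives $(\Delta\chi_k,\xi_j)_{L^2(\F)}=-\int_\Sigma\chi_k\,(\partial\xi_j/\partial n)\,\ds=\mathsf{Tr}_j\chi_k$. Since the matrix $(\mathsf{Tr}_j\chi_k)_{1\leqslant j,k\leqslant N}$ is invertible (it is the Gram matrix of the $\chi_k$ in $S_1$, recalled below \eqref{split_S1}), the relations $\mathsf P\xi_j=\sum_k(\mathsf{Tr}_j\chi_k)\Delta\chi_k$ can be inverted. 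This shows at once that $\mathbb F_V={\mathsf P}_1\mathbb F_S={\rm span}\{\Delta\chi_k:k=1,\ldots,N\}$ and that each $\Delta\chi_k$ belongs to ${\mathsf P}_1S_0=V_1$; since $\chi_k\in S_1$ and, as shown in the proof of Lemma~\ref{lem:sapce_S2}, $S_2$ is exactly the set of $\psi\in S_1$ with $\Delta\psi\in V_1$, we get $\chi_k\in S_2$, i.e. ${\mathbb B}_S\subset S_2$. Finally $\Delta_1$ is the classical Laplacian on $S_2$ by Lemma~\ref{LEM:prop_delta}, whence $\Delta_1{\mathbb B}_S={\rm span}\{\Delta\chi_k\}=\mathbb F_V$.

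\emph{Part (ii).} First I would check that $\bar{\mathsf P}_2$ is an isometry of $\bar S_1$ onto ${\mathsf P}_1\bar S_1$. For $\bar\psi\in\bar S_1\subset H^2(\F)$, write $\bar{\mathsf P}_2\bar\psi=\bar\psi-\mathsf P^\perp\bar\psi$ with $\mathsf P^\perp\bar\psi\in\mathfrak H$. As $\mathsf P^\perp\bar\psi$ is harmonic in $\mathcal D'(\F)$, one has $\Delta(\bar{\mathsf P}_2\bar\psi)=\Delta\bar\psi$ in $L^2(\F)$; and since $\bar\psi\in H^2(\F)$, the conservation of boundary fluxes by $\mathsf P$ recorded in Remark~\ref{rem_fluxA} gives $\Gamma(\bar{\mathsf P}_2\bar\psi)=\Gamma(\bar\psi)$. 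Hence $(\bar{\mathsf P}_2\bar\psi_1,\bar{\mathsf P}_2\bar\psi_2)_{{\mathsf P}_1\bar S_1}=(\bar\psi_1,\bar\psi_2)_{\bar S_1}$; since ${\mathsf P}_1$ is one-to-one on $S_0\supset\bar S_1$ and maps $\bar S_1$ onto ${\mathsf P}_1\bar S_1$ by definition, $\bar{\mathsf P}_2$ is an inner-product-preserving bijection, so in particular ${\mathsf P}_1\bar S_1$ is a Hilbert space and $\bar{\mathsf P}_2$ a Hilbert-space isometry. The decomposition \eqref{exp:V21} then follows by applying the isometry $\bar{\mathsf P}_2$ to each of the three summands of \eqref{decomp_S02}, namely $\bar S_1=S_1\sumperp\mathfrak B_S\sumperp\mathbb F_S$, and invoking the identity $V_2={\mathsf P}_1S_1$ together with the definitions \eqref{def_V2flat} of $\mathfrak B_V$ and $\mathbb F_V$; the summands so obtained in ${\mathsf P}_1\bar S_1$ are mutually orthogonal because $\bar{\mathsf P}_2$ preserves the inner product.

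I expect the main care to go into two places. In part (i), the Green's formula identity $(\Delta\chi_k,\xi_j)_{L^2(\F)}=\mathsf{Tr}_j\chi_k$ must be set up correctly: it is clean once one uses that $\chi_k\in H^2(\F)$ is piecewise constant on $\Sigma$ with vanishing normal derivative, and that the harmonic function $\xi_j$ has a well-defined normal trace in $H^{-1/2}(\Sigma)$ against which piecewise constant data may be tested via \eqref{flux_cond_xi}. In part (ii), the delicate point is that $\Gamma$ must be evaluated on $\bar{\mathsf P}_2\bar\psi$, whose harmonic component $\mathsf P^\perp\bar\psi$ lies only in $L^2(\F)$ and has a normal derivative merely in $H^{-3/2}(\Sigma)$; the required identity $\Gamma(\bar{\mathsf P}_2\bar\psi)=\Gamma(\bar\psi)$ is precisely the flux conservation of Remark~\ref{rem_fluxA}, which also fixes the meaning of $\Gamma$ on ${\mathsf P}_1\bar S_1$.
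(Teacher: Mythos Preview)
Your proof is correct. For part~(ii) you do exactly what the paper does (the paper is terse here, merely citing the decomposition~\eqref{decomp_S02} and the flux conservation of Remark~\ref{rem_fluxA}); your write-up spells out the isometry verification for $\bar{\mathsf P}_2$ that the paper leaves implicit.

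For part~(i) you take a genuinely different route. The paper first proves ${\mathbb B}_S\subset S_2$ by checking the variational identity $(\chi_j,\theta)_{S_1}=-(\mathsf Q_1\Delta\chi_j,\theta)_{S_0}$ on a dense subset of $S_1$, and \emph{then} identifies $\mathsf P_1\xi_j=\Delta\chi_j$ by a uniqueness argument: both are harmonic, lie in $V_1$, and carry the same fluxes through the $\Sigma_k^-$, and a harmonic function in $V_0$ with zero fluxes must vanish. You reverse the order: you use the explicit formula for $\mathsf P$ from the proof of Lemma~\ref{regul_P0} to show $\mathsf P\xi_j\in{\rm span}\{\Delta\chi_k\}$ directly, read off $\mathbb F_V=\Delta{\mathbb B}_S$, and then obtain ${\mathbb B}_S\subset S_2$ from the characterisation $S_2=\{\psi\in S_1:\Delta\psi\in V_1\}$ extracted from the proof of Lemma~\ref{lem:sapce_S2}. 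Your path is more computational; the paper's is more conceptual and yields the sharper pointwise identity $\mathsf P_1\xi_j=\Delta\chi_j$ without any matrix inversion.

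One small caveat: the coefficient formula $\alpha_k=(\Delta\chi_k,u)_{L^2(\F)}$ quoted from the proof of Lemma~\ref{regul_P0} tacitly assumes the $\chi_k$ are $S_1$-orthonormal, which they are not (their Gram matrix is $(\mathsf{Tr}_j\chi_k)$). The correct $\alpha$ solves $G\alpha=v$ with $v_j=(u,\Delta\chi_j)_{L^2(\F)}$. This does not affect your span conclusion, and in fact for $u=\xi_i$ one has $v_j=\mathsf{Tr}_i\chi_j=G_{ij}$, so the linear system gives $\alpha_k=\delta_{ik}$ and you recover the paper's identity $\mathsf P_1\xi_i=\Delta\chi_i$ exactly.
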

\begin{proof}
For every $\theta\in {\mathcal D}$, an integration by parts yields:
\begin{subequations}
\label{eq:multi_opp}
\begin{equation}
(\chi_j,\theta)_{S_1}=-(\mathsf Q_1\varOmega_j, \theta)_{S_0}\forallt j=1,\ldots,N,
\end{equation}
where $\varOmega_j=\Delta \chi_j$, because $\chi_j$ is of class ${\mathcal C}^\infty$ in the support of $\theta$. On the other hand, for every pair 
of indices $j,k\in\{1,\ldots,N\}$, we have also:
\begin{equation}
(\chi_j,\chi_k)_{S_1}=\int_{\Sigma}\mathsf Q_1\varOmega_j\frac{\partial\chi_k}{\partial n}\ds-(\mathsf Q_1\varOmega_j, \chi_k)_{S_0}=-(\mathsf Q_1\varOmega_j, \chi_k)_{S_0},
\end{equation}
\end{subequations}
where we have used   the rule of notation \eqref{rem:brackets}
(as being harmonic in $L^2(\F)$, the trace of $\varOmega_j$ on $\Sigma$ is well defined in $H^{-{\frac12}}(\Sigma)$).
Since the space ${\mathcal D}(\F)\oplus {\mathbb B}_S$ is dense in $S_1$ according to the decomposition \eqref{split_S1}, we deduce from the 
identities \eqref{eq:multi_opp} that ${\mathbb B}_S\subset S_2$ (recall the $S_2$ is the preimage of $S_0$ by $\mathsf A^S_1$).
\par
Notice now that the functions ${\mathsf P}_1\xi_j$ ($j=1,\ldots,N$) belong to $V_1$, 
are harmonic in $\F$ and according 
to the second point of Remark~\ref{rem_fluxA} they satisfy the same fluxes conditions \eqref{flux_cond_xi} as the functions $\xi_j$. All these properties are also shared by the functions 
$\varOmega_j$  whence we   deduce first that:
\begin{equation}
\label{def_omegaj}
{\mathsf P}_1\xi_j=\varOmega_j\qquad (j=1,\ldots,N),
\end{equation}
and then that $\mathbb F_V=\Delta_1 {\mathbb B}_S$, which is the first point of the lemma. The second point can easily be deduced   
from \eqref{decomp_S02} and the second occurence of Remark~\ref{rem_fluxA}.
\end{proof}
%
Yet it remains  to apply the operator ${\mathsf A}^V_1$ to the equality \eqref{exp:V21} in order to get the expression of $\bar V_0=\Delta_{-1} \bar S_1$. 
Since ${\mathsf A}_1^V$ is an isometry, the decomposition \eqref{decomp_V0} is a direct consequence of the decomposition \eqref{exp:V21} and 
the following lemma, where the spaces $\mathfrak H_V$ and $\mathbb F_V^\ast$ are defined respectively in \eqref{def_hatV1} and \eqref{def_FVast}.
%
\begin{lemma}
The following equalities hold:
\begin{equation}
\label{both_identi}
{\mathsf A}^V_1\mathfrak B_V=\mathfrak H_V\qquad\text{and}\qquad
{\mathsf A}^V_1\mathbb F_V=\mathbb F_V^\ast.
\end{equation}
\end{lemma}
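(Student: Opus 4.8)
The plan is to prove the two equalities in \eqref{both_identi} independently, each by a short direct computation with the defining formula for $\mathsf{A}^V_1$. The only inputs I will use are, first, that for $\omega\in V_1$ the functional $\mathsf{A}^V_1\omega\in V_{-1}$ acts by $\langle\mathsf{A}^V_1\omega,v\rangle_{V_{-1},V_1}=(\nabla\mathsf{Q}_1\omega,\nabla\mathsf{Q}_1 v)_{\mathbf L^2(\F)}$ (from the definition of the $V_1$ scalar product, cf.\ Lemma~\ref{express_AVk}), and second, that $\mathsf{Q}_1\mathsf{P}_1=\mathrm{Id}$ on $S_0$.

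For the identity $\mathsf{A}^V_1\mathbb F_V=\mathbb F_V^\ast$: since $\mathbb F_V=\mathsf{P}_1\mathbb F_S=\mathrm{span}\{\mathsf{P}_1\xi_j:j=1,\ldots,N\}$ and $\mathsf{Q}_1\mathsf{P}_1\xi_j=\xi_j$ (because $\xi_j\in S_0$), I get, for every $v\in V_1$, that $\langle\mathsf{A}^V_1\mathsf{P}_1\xi_j,v\rangle_{V_{-1},V_1}=(\nabla\xi_j,\nabla\mathsf{Q}_1 v)_{\mathbf L^2(\F)}$, which by the very definition \eqref{vortic_circu} of $\zeta_j$ is $-\langle\zeta_j,v\rangle_{V_{-1},V_1}$. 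Hence $\mathsf{A}^V_1\mathsf{P}_1\xi_j=-\zeta_j$, and consequently $\mathsf{A}^V_1\mathbb F_V=\mathrm{span}\{\zeta_j\}=\mathbb F_V^\ast$.

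For the identity $\mathsf{A}^V_1\mathfrak B_V=\mathfrak H_V$, I would proceed by double inclusion. Take $\beta\in\mathfrak B_S$; by \eqref{def_Scurl} we have $\mathfrak B_S=S_0\cap\mathfrak B\subset H^2(\F)$, so $\Delta\beta\in\mathfrak H$ and $\int_{\Sigma^-_j}(\partial\beta/\partial n)\ds=0$ for every $j$. Using $\mathsf{Q}_1\mathsf{P}_1\beta=\beta$ and Green's formula (licit since $\beta\in H^2(\F)$ and $\mathsf{Q}_1 v\in S_0\subset H^1(\F)$), I compute for $v\in V_1$ that $\langle\mathsf{A}^V_1\mathsf{P}_1\beta,v\rangle_{V_{-1},V_1}=(\nabla\beta,\nabla\mathsf{Q}_1 v)_{\mathbf L^2(\F)}=-(\Delta\beta,\mathsf{Q}_1 v)_{L^2(\F)}+\int_\Sigma(\partial\beta/\partial n)\,\mathsf{Q}_1 v\ds$; the boundary integral vanishes, since $\mathsf{Q}_1 v$ is zero on $\Sigma^+$ and constant on each $\Sigma^-_j$ while $\beta$ has zero mean flux through every $\Sigma^-_j$. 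Thus $\mathsf{A}^V_1\mathsf{P}_1\beta=(-\Delta\beta,\mathsf{Q}_1\cdot)_{L^2(\F)}$, which lies in $\mathfrak H_V$ because $-\Delta\beta\in\mathfrak H$; this gives $\mathsf{A}^V_1\mathfrak B_V\subset\mathfrak H_V$. For the reverse inclusion, given $h\in\mathfrak H$, Lemma~\ref{decomp_S2} (the operator $\mathsf{A}^{Z}_2=-\Delta$ being an isometry, hence a bijection, from $\mathfrak B_S$ onto $\mathfrak H$) provides $\beta\in\mathfrak B_S$ with $-\Delta\beta=h$, and the same computation yields $\mathsf{A}^V_1\mathsf{P}_1\beta=(h,\mathsf{Q}_1\cdot)_{L^2(\F)}$, so every element of $\mathfrak H_V$ is attained. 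There is no genuine obstacle: the only two points requiring a moment's care are the applicability of Green's formula (immediate from $\beta\in H^2(\F)$) and the appeal to the surjectivity of $\mathsf{A}^{Z}_2:\mathfrak B_S\to\mathfrak H$ from Lemma~\ref{decomp_S2} to obtain the reverse inclusion.
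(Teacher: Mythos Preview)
Your proof is correct and follows essentially the same route as the paper. The only cosmetic difference is that where you integrate by parts explicitly and check that the boundary term vanishes (constant trace of $\mathsf Q_1 v$ on each component, zero flux of $\beta$), the paper packages this step into the abstract identity $(\psi,\mathsf Q_1\theta)_{Z_1}=(\mathsf A^{Z}_2\psi,\mathsf Q_1\theta)_{Z_0}$ for $\psi\in\mathfrak B_S\subset Z_2$, which encodes exactly the same Green-formula computation; both proofs then appeal to Lemma~\ref{decomp_S2} for the surjectivity onto $\mathfrak H$, and the treatment of $\mathbb F_V$ is identical.
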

%
\begin{proof}
By definition, every element of $\mathfrak B_V$ can be written ${\mathsf P}_1\psi$ for some $\psi\in\mathfrak B_S$. The definitions of the operator 
${\mathsf A}^V_1$  and of the scalar product in $V_1$ lead to:
$$\langle {\mathsf A}^V_1{\mathsf P}_1\psi,\theta\rangle_{V_{-1},V_1}=({\mathsf P}_1\psi,\theta)_{V_1}=(\psi,{\mathsf Q}_1\theta)_{ Z_1},\forallt \theta\in V_1.$$
But $\mathfrak B_S$ is a subspace of $ Z_2$ according to \eqref{decomp_S2b} and ${\mathsf Q}_1V_1= Z_1$. It follows that:
$$(\psi,{\mathsf Q}_1\theta)_{ Z_1}=({\mathsf A}^{ Z}_2\psi,{\mathsf Q}_1\theta)_{ Z_0}=(\omega,{\mathsf Q}_1\theta)_{L^2(\F)},\forallt \theta\in V_1,$$
where $\omega={\mathsf A}^{ Z}_2\psi$ belongs to $\mathfrak H$ according to Lemma~\ref{decomp_S2}. The first equality in \eqref{both_identi} 
being proven, let us address the latter. For every $j=1,\ldots,N$, some elementary algebra yields:
$$\langle {\mathsf A}_1^V{\mathsf P}_1\xi_j,\omega\rangle_{V_{-1},V_1}=({\mathsf P}_1\xi_j,\omega)_{V_1}=(\nabla\xi_j,\nabla {\mathsf Q}_1\omega)_{\mathbf L^2(\F)}=
(\xi_j,{\mathsf Q}_1\omega)_{S_0},\forallt \omega\in V_1.$$
Comparing with \eqref{vortic_circu}, we obtain indeed that ${\mathsf A}_1^V{\mathsf P}_1\xi_j=-\zeta_j$ and recalling the definition of $\mathbb F_V$ given in \eqref{def_V2flat}, 
we are done with both identities in \eqref{both_identi} and the proof is completed.
\end{proof}
%
As we did for $\bar S_1$ and ${{\mathsf P}_1\bar S_1}$, the space $\bar V_0$ can be provided with a norm stronger than the one of the ambiant space $V_{-1}$, 
namely the norm which derives from the scalar product \eqref{norm_V10}.  One easily verifies that the direct sum \eqref{decomp_V0} 
is indeed orthogonal for this scalar product.  Furthermore, the operator $\bar{\mathsf A}_2^V:{{\mathsf P}_1\bar S_1}\to \bar V_0$ which is 
the restriction of $\mathsf A_1^V$ to $\mathsf P_1\bar S_1$ is an isometry. Since $\bar\Delta_0=-\bar{\mathsf A}_2^V\bar{\mathsf P}_2$ and the operators $\bar{\mathsf A}_2^V$ 
and $\bar{\mathsf P}_2$ are both isometries, we can draw the same conclusion for $\bar\Delta_0$. The proof of the theorem is now completed.
$\hfill\square$ 
%
%
\begin{figure}[ht]
\centerline{\xymatrix  @R=8mm @C=10mm {
    V_2\sumperp \mathfrak B_V\sumperp\mathbb  F_V \ar[r]^-{\bar{\mathsf A}_2^V} 
      &\mbox{$\bar V_0=\overunderbraces{&&&\br{3}{ V^{\rm b}_0}}%
{&V_0&\sumperp& \mathfrak H_V&\sumperp& \mathbb F_V^\ast}%
{&\br{3}{ L^2_V}}$}    \\
   \mbox{$\bar S_1=\overunderbraces{&&&\br{3}{ S^{\rm b}_1}}%
{&S_{1}&\sumperp& \mathfrak B_S^1&\sumperp& \mathbb F_S}%
{&\br{3}{ Z_2}}$}\ar[u]^-{\bar{\mathsf P}_2}  \ar[ur]^{-\bar\Delta_0 }
    }}
     \caption[The spaces $\bar{V}_0$, $\bar{S}_1$ their expression and associated operators]{\label{figS03}Some function  spaces and   isometric operators appearing in the statement of 
     Theorem~\ref{THEO:decompV0} and its proof. As usual, the top row contains the vorticity spaces while 
     the bottom row contains the spaces for the stream functions.}
    \end{figure}
%
%
\subsubsection*{The pair $(\bar S_2,\bar V_1)$}
We assume that $\Sigma$ is of class ${\mathcal C}^{2,1}$ and we consider the spaces:
\begin{equation}
\label{def_S03}
\bar S_2=S_0\cap H^3(\F)\qquad\text{and}\qquad
\bar V_1=\Delta_{-1}\bar S_2.
\end{equation}
The analysis of these spaces being very similar to those of $\bar S_1$ and $\bar V_0$, 
we shall skip the  details and focus on the main results. 
%
%
\begin{lemma}
\label{LEM:barS2}
The spaces $\bar S_2$ and ${{\mathsf P}_1\bar S_2}$ admit respectively the following orthogonal decompositions:
\begin{equation}
\label{decomp_barS2}
\bar S_2=S_{2}\sumperp  \mathfrak B_S^2\sumperp  \mathbb  F_S\qquad
\text{and}\qquad
{{\mathsf P}_1\bar S_2}=V_3\sumperp\mathfrak B_V^3\sumperp \mathbb F_V,
\end{equation}
where $\mathfrak B_S^2=\mathfrak B_S\cap H^3(\F)$ was introduced in Definition~\ref{def_stuff} and $\mathfrak B_V^3={\mathsf P}_1\mathfrak B_S^2$.
The spaces $\bar S_2$ and ${\mathsf P}_1\bar S_2$  are provided with the same scalar product, namely:
\begin{align*}
(\psi_1,\psi_2)_{\bar S_2}&=(\Delta \psi_1,\Delta \psi_2)_{H^1}^V+\Gamma(\psi_1)\cdot\Gamma(\psi_2)
\forallt \psi_1,\psi_2\in \bar S_2,\\
(\omega_1,\omega_2)_{{{\mathsf P}_1\bar S_2}}&=(\Delta \omega_1,\Delta \omega_2)_{H^1}^V+\Gamma(\omega_1)\cdot\Gamma(\omega_2)
\forallt \omega_1,\omega_2\in {{\mathsf P}_1\bar S_2},
\end{align*}
the scalar product $(\cdot,\cdot)_{H^1}^V$ being defined in \eqref{def_H1V1}.
\par
Finally, the operator $\bar{\mathsf P}_3$ defined as 
the restriction of ${\mathsf P}_{1}$ to $\bar S_2$ is an isometry from $\bar S_2$ onto ${{\mathsf P}_1\bar S_2}$ (see Fig.~\ref{figV13}). 
\end{lemma}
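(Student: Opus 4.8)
The plan is to transcribe, with one extra derivative, the argument given for the pair $(\bar S_1,\bar V_0)$ in Theorem~\ref{THEO:decompV0} and its proof. \textbf{Step 1: the decomposition of $\bar S_2$.} Starting from $\bar S_1= Z_2\sumperp\mathbb F_S=S_1\sumperp\mathfrak B_S\sumperp\mathbb F_S$ (Lemma~\ref{decomp_S1}) and from the identity $\bar S_2=\bar S_1\cap H^3(\F)$, I would check that this intersection respects the three summands. The space $\mathbb F_S$ consists of harmonic, hence $\mathcal C^\infty$, functions, so $\mathbb F_S\subset H^3(\F)$. For $\chi\in Z_2\cap H^3(\F)$ written as $\chi=\psi_1+\psi_B$ with $\psi_1\in S_1$ and $\psi_B\in\mathfrak B_S$ (decomposition \eqref{decomp_S2b}), one has $\Delta\psi_1=\mathsf P\Delta\chi$, which lies in $H^1(\F)$ by Lemma~\ref{regul_P0} since $\Sigma$ is of class $\mathcal C^{2,1}$; elliptic regularity then gives $\psi_1\in H^3(\F)\cap S_1=S_2$ (Lemma~\ref{lem:sapce_S2}), whence $\psi_B=\chi-\psi_1\in\mathfrak B_S\cap H^3(\F)=\mathfrak B_S^2$. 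This yields $\bar S_2=S_2\sumperp\mathfrak B_S^2\sumperp\mathbb F_S$ as a sum of vector spaces, the norm equivalence to the usual norm of $H^3(\F)$ following from elliptic regularity exactly as in Lemma~\ref{decomp_S1}. Its orthogonality for $(\cdot,\cdot)_{\bar S_2}$ is then checked term by term as in the proof of Theorem~\ref{THEO:decompV0}: both $S_2$ (contained in $S_1$) and $\mathfrak B_S^2$ (contained in $\mathfrak B_S$) carry zero circulation, so the $\Gamma$-contributions between distinct summands vanish, $\mathbb F_S$ is harmonic so $\Delta$ kills it, and $\Delta S_2\subset V_1$ is orthogonal to $\Delta\mathfrak B_S^2\subset\mathfrak H^1$ for the scalar product $(\cdot,\cdot)_{H^1}^V$ of \eqref{def_H1V1}.

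\textbf{Step 2: transport to the vorticity side.} Since ${\mathsf P}_1$ is linear and injective on $S_0\supset\bar S_2$, applying it to the decomposition of Step 1 gives ${\mathsf P}_1\bar S_2={\mathsf P}_1S_2\oplus{\mathsf P}_1\mathfrak B_S^2\oplus{\mathsf P}_1\mathbb F_S$. By Theorem~\ref{LEM:P_kQ_k}, the restriction of ${\mathsf P}_1$ to $S_2$ coincides with ${\mathsf P}_3$, so ${\mathsf P}_1S_2=V_3$; by definition ${\mathsf P}_1\mathfrak B_S^2=\mathfrak B_V^3$, and ${\mathsf P}_1\mathbb F_S=\mathbb F_V$ (see \eqref{def_V2flat}). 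Hence ${\mathsf P}_1\bar S_2=V_3\sumperp\mathfrak B_V^3\sumperp\mathbb F_V$. The key computational observation, already used implicitly for $(\bar S_1,\bar V_0)$, is that for every $\psi\in\bar S_2$ one has $\Delta{\mathsf P}_1\psi=\Delta\psi$ (because ${\mathsf P}_1\psi-\psi={\mathsf P}^\perp\psi$ is harmonic) and $\Gamma({\mathsf P}_1\psi)=\Gamma(\psi)$ (because ${\mathsf P}^\perp\psi\in\mathfrak H$ has zero mean flux through each $\Sigma_j^-$, cf.\ Remark~\ref{rem_fluxA} and \eqref{flux_cond_xi}). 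Consequently the two scalar products displayed in the statement agree under $\bar{\mathsf P}_3={\mathsf P}_1|_{\bar S_2}$; the orthogonality of the second decomposition then follows from that of the first, and $\bar{\mathsf P}_3$, being a norm-preserving linear bijection of $\bar S_2$ onto ${\mathsf P}_1\bar S_2$ (injective since ${\mathsf P}_1$ is injective on $S_0$, surjective by definition of the image), is an isometry.

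\textbf{Expected difficulty.} There is no genuine obstacle here; the only point that deserves care is the compatibility of the direct-sum decomposition of $\bar S_1$ with the intersection $\cap\, H^3(\F)$ in Step~1, which is exactly where the hypothesis $\Sigma\in\mathcal C^{2,1}$ enters, through the $H^1$-continuity of the Bergman projection (Lemma~\ref{regul_P0}) and the identification $S_2=H^3(\F)\cap S_1$ (Lemma~\ref{lem:sapce_S2}). Everything else is a line-by-line copy of the $(\bar S_1,\bar V_0)$ analysis shifted by one derivative, which is why the details can legitimately be omitted, as announced just before the statement.
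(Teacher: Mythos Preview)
Your proposal is correct and matches the paper's intended approach: the paper explicitly omits the proof, stating just before the lemma that ``the analysis of these spaces being very similar to those of $\bar S_1$ and $\bar V_0$, we shall skip the details,'' and your argument is precisely the transcription of that analysis with one extra derivative. The only detail you spell out beyond the $(\bar S_1,\bar V_0)$ case is the compatibility of the $\bar S_1$-decomposition with the intersection $\cap\,H^3(\F)$, which you handle correctly via Lemma~\ref{regul_P0} and Lemma~\ref{lem:sapce_S2}.
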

%
We turn now our attention to $\bar V_1=\Delta_{-1}\bar S_2$, which is a subspace of the dual space $V_{-1}$.
As a subspace of $V_{-1}$ the spaces $V_1$ is identified with $\{(\omega,{\mathsf Q}_1\cdot)_{L^2(\F)}\,:\,\omega\in V_1\}$ and we define as well:
$$
\mathfrak H^1_V= \big\{(\omega,{\mathsf Q}_1\cdot)_{L^2(\F)}\,:\,\omega\in \mathfrak H^1\big\},
$$
where we recall that $\mathfrak H^1=\mathfrak H\cap H^1(\F)$ (defined  in Subsection~\ref{SEC:main_spaces}).
Finally, in the same way, we introduce:
$$ H^1_V=\{(\omega,{\mathsf Q}_1\cdot)_{L^2(\F)}\,:\,\omega\in H^1(\F)\},$$
that can be compared with the space $ L^2_V$ defined in \eqref{def_hatV1}.
%
\begin{theorem}
\label{THEO:barV1}
The space $\bar V_1$ is a subspace of $V_{-1}$ which can be decomposed as follows:
\begin{equation}
\label{space_V_11}
\bar V_1=H^1_V\sumperp \mathbb F_V^\ast=V_{1}\sumperp V_1^{\rm b}\qquad\text{with }\quad V_1^{\rm b}=\mathfrak H_V^{1}\sumperp \mathbb F_V^\ast.
\end{equation}
It is provided with the scalar product, defined for every $\bar\omega_1,\bar\omega_2\in\bar V_1$ by:
$$(\bar\omega_1,\bar\omega_2)_{\bar V_1}=(\omega_1,\omega_2)_{H^1}^V+\sum_{j=1}^N\alpha_{1,j}\alpha_{2,j},$$
where, for $k=1,2$, $\bar\omega_k=(\omega_k,{\mathsf Q}_1\cdot)_{L^2(\F)}+\zeta^k$ 
 with $\omega_k\in H^1(\F)$ and $\zeta^k=\sum_{j=1}^N\alpha_{k,j}\zeta_j$  in $\mathbb F_V^\ast$ ($\alpha_{k,j} \in\mathbb R$ for $j=1,\ldots,N$).
\par
Finally, the operator $\bar\Delta_1$ which is the restriction of $\Delta_{-1}$ to $\bar S_2$ is an isometry  from $\bar S_2$ 
onto $\bar V_1$ (see Fig.~\ref{figV13}). 
\end{theorem}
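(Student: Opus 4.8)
The plan is to mirror the proof of Theorem~\ref{THEO:decompV0}, one notch higher in regularity, exploiting the factorization $\bar\Delta_1=-\bar{\mathsf A}^V_3\bar{\mathsf P}_3$, where $\bar{\mathsf P}_3$ is the restriction of ${\mathsf P}_1$ to $\bar S_2$ (an isometry onto ${\mathsf P}_1\bar S_2$ by Lemma~\ref{LEM:barS2}) and $\bar{\mathsf A}^V_3$ denotes the restriction of ${\mathsf A}^V_1$ to ${\mathsf P}_1\bar S_2$. Since a composition of isometries is an isometry, everything reduces to two points: (i) identifying $\bar V_1={\mathsf A}^V_1\big({\mathsf P}_1\bar S_2\big)$ by applying ${\mathsf A}^V_1$ term by term to the orthogonal decomposition ${\mathsf P}_1\bar S_2=V_3\sumperp\mathfrak B_V^3\sumperp\mathbb F_V$ of Lemma~\ref{LEM:barS2}; and (ii) endowing the resulting space $\bar V_1$ with the announced scalar product and checking that $\bar{\mathsf A}^V_3$ is an isometry for it, whence the isometry property of $\bar\Delta_1$.

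For (i), I would establish the three identities ${\mathsf A}^V_1V_3=V_1$, ${\mathsf A}^V_1\mathfrak B_V^3=\mathfrak H^1_V$ and ${\mathsf A}^V_1\mathbb F_V=\mathbb F_V^\ast$. The first is the general Gelfand-triple fact that ${\mathsf A}^V_3\colon V_3\to V_1$ is an isometry (Appendix~\ref{gelf_triple}), together with the observation that ${\mathsf A}^V_1$ and ${\mathsf A}^V_3$ agree on $V_3$ once $V_1$ is viewed as a subspace of $V_{-1}$ through the pivot identification (an integration by parts in which the boundary terms drop because ${\mathsf Q}_1V_3\subset S_1$). The third identity is exactly the one already proved in the $(\bar S_1,\bar V_0)$ case: ${\mathsf A}^V_1{\mathsf P}_1\xi_j=-\zeta_j$, so $\mathbb F_V={\mathsf P}_1\mathbb F_S$ is sent onto $\operatorname{span}\{\zeta_j\}=\mathbb F_V^\ast$. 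The genuinely new computation is the middle one: for $\psi\in\mathfrak B_S^2=\mathfrak B_S\cap H^3(\F)$ and every $\theta\in V_1$, $\langle{\mathsf A}^V_1{\mathsf P}_1\psi,\theta\rangle_{V_{-1},V_1}=({\mathsf P}_1\psi,\theta)_{V_1}=(\psi,{\mathsf Q}_1\theta)_{Z_1}=({\mathsf A}^Z_2\psi,{\mathsf Q}_1\theta)_{Z_0}=(-\Delta\psi,{\mathsf Q}_1\theta)_{L^2(\F)}$, where $-\Delta\psi\in\mathfrak H$ by Lemma~\ref{decomp_S2} and $-\Delta\psi\in H^1(\F)$ since $\psi\in H^3(\F)$; hence ${\mathsf A}^V_1{\mathsf P}_1\psi\in\mathfrak H^1_V$. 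Conversely, for $\omega\in\mathfrak H^1$ there is $\psi\in\mathfrak B_S$ with $-\Delta\psi=\omega$ (surjectivity of ${\mathsf A}^Z_2$ from $\mathfrak B_S$ onto $\mathfrak H$), and then $\psi\in H^3(\F)$ by elliptic regularity for the associated Dirichlet-type problem — this is where the hypothesis $\Sigma\in{\mathcal C}^{2,1}$ enters — so $\psi\in\mathfrak B_S^2$. These identities give $\bar V_1=V_1\sumperp\mathfrak H^1_V\sumperp\mathbb F_V^\ast$, which regroups as $V_1\sumperp V_1^{\rm b}$ with $V_1^{\rm b}=\mathfrak H^1_V\sumperp\mathbb F_V^\ast$, and also as $H^1_V\sumperp\mathbb F_V^\ast$ once one recalls that $H^1_V$ is the image of $H^1(\F)=V_1\oplus\mathfrak H^1$ and that this sum is orthogonal for $(\cdot,\cdot)_{H^1}^V$.

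For (ii), I would define $(\cdot,\cdot)_{\bar V_1}$ by the stated formula; orthogonality of the three summands $V_1$, $\mathfrak H^1_V$, $\mathbb F_V^\ast$ is immediate since $(\cdot,\cdot)_{H^1}^V$ makes $V_1\perp\mathfrak H^1$ and the $\mathbb F_V^\ast$ part enters only through the coefficients $\alpha_{k,j}$. It remains to verify that $\bar{\mathsf A}^V_3$, which is block-diagonal for ${\mathsf P}_1\bar S_2=V_3\sumperp\mathfrak B_V^3\sumperp\mathbb F_V\to V_1\sumperp\mathfrak H^1_V\sumperp\mathbb F_V^\ast$, preserves the norm on each block. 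On $V_3$: elements of $V_3\subset V_2$ have zero flux ($\Gamma=0$) and $\Delta V_3\subset V_1$, so the ${\mathsf P}_1\bar S_2$-norm of $\omega\in V_3$ equals $\|\Delta\omega\|_{H^1}^V=\|\Delta\omega\|_{V_1}=\|{\mathsf A}^V_3\omega\|_{V_1}$, which is precisely the $\bar V_1$-norm of ${\mathsf A}^V_3\omega\in V_1$. On $\mathfrak B_V^3$: the same argument, $\Gamma$ still vanishing there. On $\mathbb F_V$: the $\Gamma$-term matches the $\sum_j\alpha_{k,j}^2$-term on $\mathbb F_V^\ast$ exactly as in the proof of Theorem~\ref{THEO:decompV0}. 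Hence $\bar{\mathsf A}^V_3$ is an isometry from ${\mathsf P}_1\bar S_2$ onto $\bar V_1$, and therefore $\bar\Delta_1=-\bar{\mathsf A}^V_3\bar{\mathsf P}_3$ is an isometry from $\bar S_2$ onto $\bar V_1$.

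The hard part is not any single step but the bookkeeping: keeping the several stronger-than-$V_{-1}$ scalar products on $\bar S_2$, ${\mathsf P}_1\bar S_2$ and $\bar V_1$ mutually compatible under $\bar{\mathsf P}_3$ and ${\mathsf A}^V_1$ — in particular using that ${\mathsf P}_1$ preserves the boundary fluxes $\Gamma(\cdot)$ (Remark~\ref{rem_fluxA}) and that $\Delta{\mathsf P}_1\psi=\Delta\psi$ because ${\mathsf P}_1\psi-\psi$ is harmonic — and handling the low boundary regularity (only ${\mathcal C}^{2,1}$) by representing the elements of ${\mathsf P}_1\bar S_2$ and $\bar V_1$ as an $H^k$-function plus a harmonic $L^2$-function, as in Remark~\ref{regul_V2} and in the proof of Theorem~\ref{THEO:decompV0}.
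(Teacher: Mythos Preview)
Your proposal is correct and follows exactly the route the paper intends: it explicitly says the analysis of $(\bar S_2,\bar V_1)$ is ``very similar to those of $\bar S_1$ and $\bar V_0$'' and skips the details, so mirroring the proof of Theorem~\ref{THEO:decompV0} via the factorization $\bar\Delta_1=-\bar{\mathsf A}^V_3\bar{\mathsf P}_3$, the block identities ${\mathsf A}^V_1V_3=V_1$, ${\mathsf A}^V_1\mathfrak B_V^3=\mathfrak H^1_V$, ${\mathsf A}^V_1\mathbb F_V=\mathbb F_V^\ast$, and the block-by-block isometry check is precisely what is expected. Your handling of the new middle block (elliptic regularity to land in $\mathfrak B_S^2$) and of the low boundary regularity via the ``$H^k$ plus harmonic $L^2$'' representation is the right way to fill in the details the paper omits.
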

%
\begin{rem}
\label{rem_singular_V1}
As in Remark~\ref{rem_singular_V0}, 
in the decomposition 
$\bar\omega=(\omega,{\mathsf Q}_1\cdot)_{L^2(\F)}+\zeta$ of every vorticity field $\bar\omega$ in $\bar V_1$, the term $\omega$  (belonging  to $H^1(\F)$)
will be called the {\it regular part} of $\bar\omega$ and $\zeta$, the {\it singular part}.
\end{rem}
%
These results can be summarized in the commutative diagram on Fig.~\ref{figV13} where the operator 
$\bar{\mathsf A}_3^V$ defined as the restriction of ${\mathsf A}^V_1$ 
to the space ${{\mathsf P}_1\bar S_2}$ is an isometry from ${{\mathsf P}_1\bar S_2}$ onto $\bar V_1$. 
%
\begin{figure}[ht]
\centerline{\xymatrix  @R=8mm @C=10mm {
       V_3\sumperp\mathfrak B_V^{3} \sumperp \mathbb F_V \ar[r]^-{\bar{\mathsf A}_3^V}  &
       \mbox{$\bar V_1=\overunderbraces{&&&\br{3}{ V^{\rm b}_1}}%
{&V_1&\sumperp& \mathfrak H_V^1&\sumperp& \mathbb F_V^\ast}%
{&\br{3}{ H^1_V}}$}  \\
   \mbox{$\bar S_2=\overunderbraces{&&&\br{3}{ S^{\rm b}_2}}%
{&S_{2}&\sumperp& \mathfrak B_S^2&\sumperp& \mathbb F_S}{}$} \ar[u]^-{\bar{\mathsf P}_3}  \ar[ur]^{-\bar\Delta_1 } 
    }}
     \caption[The spaces $\bar{V}_1$, $\bar{S}_2$ their expression and associated operators]{\label{figV13}Some function spaces and isometric operators appearing in the statement of Lemma~\ref{LEM:barS2} 
     and Theorem~\ref{THEO:barV1}. This diagram is worth being compared with the diagrams on Fig.~\ref{diag_2} and 
     Fig.~\ref{figS03}. 
     In particular, the following inclusions hold: $\bar S_2\subset \bar S_1\subset S_0$ and $\bar V_1\subset \bar V_0\subset V_{-1}$.}
    \end{figure}
%
%
%
\subsubsection*{The pair $(\bar S_3,\bar V_2)$}
The decompositions of $\bar V_0$ and $\bar V_1$ rested mainly on the simple equalities $L^2(\F)=V_0\oplus \mathfrak H$ and $H^1(\F)=V_1\oplus \mathfrak H^1$. However, 
$H^2(\F)$ is not equal to $V_2\oplus\mathfrak H^2$. Indeed, according to Fig.~\ref{figS03}, the correct decomposition is more complex, namely:
$$H^2(\F)=V_2\oplus\mathfrak B_V\oplus\mathbb F_V\oplus \mathfrak H^2.$$
We are led to define:
$$H^2_V=\{(\omega,\mathsf Q_1\cdot)_{L^2(\F)}\,:\,\omega\in H^2(\F)\}\qquad\text{and}\qquad \mathfrak H^2_V=
\{(\omega,\mathsf Q_1\cdot)_{L^2(\F)}\,:\,\omega\in \mathfrak H^2\}.$$
Since $V_2$, $\mathfrak B_V$ and $\mathbb F_V$ are subspaces of the pivot space $V_0$, they are identified  to subspaces of $V_{-1}$ and the following 
decompositions hold:
\begin{equation}
\label{decomp_barV2}
\bar V_2=H^2_V\oplus\mathbb F_V^\ast=V_2\oplus\mathfrak B_V\oplus\mathbb F_V\oplus V_2^{\rm b}
\quad\text{with }\quad  H^2_V=V_2\oplus\mathfrak B_V\oplus\mathbb F_V\oplus \mathfrak H^2_V\quad\text{and}\quad
V_2^{\rm b}=\mathfrak H^2_V\oplus\mathbb F_V^\ast. 
\end{equation}
This direct sum is orthogonal once $\bar V_2$ is provided with the scalar product:
$$(\bar \omega_1,\bar\omega_2)_{\bar V_2}=(\Delta \omega_1,\Delta\omega_2)_{L^2(\F)}+(\mathsf P^\perp\omega_1,\mathsf P^\perp\omega_2)_{\mathfrak H^2}+\Gamma(\omega_1)\cdot\Gamma(\omega_2)
+\sum_{j=1}^N\alpha_{1,j}\alpha_{2,j},$$
for every $\bar\omega_k\in\bar V_2$ such that $\bar\omega_k=(\omega_k,\mathsf Q_1\cdot)_{L^2(\F)}+\sum_{j=1}^N\alpha_{k,j}\zeta_j$ 
with $\omega_k\in H^2(\F)$ and $\alpha_{k,j}\in\mathbb R$ for $k=1,2$. The decompositions \eqref{decomp_barV2} will play an important role in Section~\ref{SEC:more_regular} and in particular the fact that $\bar V_2$ is not equal to $V_2\oplus V_2^{\rm b}$.

We do not need to enter into the details of the decomposition of $\bar S_3$.  Let us just make precise the norm this  space is equipped with, namely:
$$(\psi_1,\psi_2)_{\bar S_3}=(\Delta^2\psi_1,\Delta^2\psi_2)_{L^2(\F)}+(\mathsf P^\perp\Delta\psi_1,\mathsf P^\perp\Delta\psi_2)_{\mathfrak H^2}+
\Gamma(\Delta\psi_1)\cdot\Gamma(\Delta\psi_2)+\Gamma(\psi_1)\cdot\Gamma(\psi_2),$$
for every $\psi_1,\psi_2$ in $\bar S_3$. As usual, we denote by $\bar{\Delta}_2$ the restriction of $\Delta_{-1}$ to $\bar S_3$ and we let is to the reader 
to verify that:
\begin{lemma}
\label{prop_bardelta2}
The operator $\bar{\Delta}_2$ is an isometry from $\bar S_3$ onto $\bar V_2$.
\end{lemma}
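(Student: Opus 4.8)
The plan is to imitate the proofs of Theorems~\ref{THEO:decompV0} and \ref{THEO:barV1}, but in fact the quickest route is to compute the $\bar V_2$‑norm of $\bar\Delta_2\psi$ directly, and for this the first (and only substantial) step is to write $\bar\Delta_2\psi$ explicitly. Since $\bar\Delta_2$, $\bar\Delta_1$ and $\bar\Delta_0$ are all restrictions of $\Delta_{-1}=-{\mathsf A}^V_1{\mathsf P}_1$, and $\bar S_3=S_0\cap H^4(\F)$ is contained in $\bar S_1$, I would decompose $\psi=\psi_h+\psi_b+\psi_c$ along \eqref{decomp_S02}, with $\psi_h\in S_1$, $\psi_b\in\mathfrak B_S$ and $\psi_c=\sum_{j=1}^N\gamma_j\xi_j\in\mathbb F_S$, and apply $-{\mathsf A}^V_1{\mathsf P}_1$ termwise. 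For the first term, using ${\mathsf Q}_1{\mathsf P}_1\psi_h=\psi_h$ and integrating by parts in $({\mathsf P}_1\psi_h,\cdot)_{V_1}=(\nabla\psi_h,\nabla{\mathsf Q}_1\cdot)_{\mathbf L^2(\F)}$ — the boundary term drops because $\partial\psi_h/\partial n$ vanishes on $\Sigma$ — gives $-{\mathsf A}^V_1{\mathsf P}_1\psi_h=(\Delta\psi_h,{\mathsf Q}_1\cdot)_{L^2(\F)}$. For the second and third terms one uses exactly the computations carried out in the proof of \eqref{both_identi}, namely ${\mathsf A}^V_1{\mathsf P}_1\psi_b=(-\Delta\psi_b,{\mathsf Q}_1\cdot)_{L^2(\F)}$ (via ${\mathsf A}^{Z}_2=-\Delta$ and $\mathfrak B_S\subset Z_2$) and ${\mathsf A}^V_1{\mathsf P}_1\xi_j=-\zeta_j$, together with \eqref{def_omegaj}. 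Since $\Delta\psi_c=0$ and, by the flux relations \eqref{flux_cond_xi} together with the vanishing of the inner fluxes of $\psi_h$ (homogeneous Neumann) and of $\psi_b$ (which lies in $Z_2$), one gets $\gamma_j=\Gamma_j(\psi)$, this yields
\[
\bar\Delta_2\psi=(\Delta\psi,{\mathsf Q}_1\cdot)_{L^2(\F)}+\sum_{j=1}^N\Gamma_j(\psi)\,\zeta_j,\qquad \psi\in\bar S_3 .
\]
In the language of Remark~\ref{rem_singular_V0}, the regular part of the vorticity $\bar\Delta_2\psi$ is the physical Laplacian $\Delta\psi\in H^2(\F)$ and its singular part is $\sum_j\Gamma_j(\psi)\zeta_j\in\mathbb F_V^\ast$; in particular $\bar\Delta_2\psi$ lies in $H^2_V\oplus\mathbb F_V^\ast=\bar V_2$.

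With this formula at hand, the second step is a one‑line verification. Setting $\omega=\Delta\psi\in H^2(\F)$ and $\alpha_j=\Gamma_j(\psi)$ in the scalar product of $\bar V_2$ introduced just after \eqref{decomp_barV2}, we obtain
\[
\|\bar\Delta_2\psi\|_{\bar V_2}^2=\|\Delta\omega\|_{L^2(\F)}^2+\|\mathsf P^\perp\omega\|_{\mathfrak H^2}^2+|\Gamma(\omega)|^2+\sum_{j=1}^N\alpha_j^2
=\|\Delta^2\psi\|_{L^2(\F)}^2+\|\mathsf P^\perp\Delta\psi\|_{\mathfrak H^2}^2+|\Gamma(\Delta\psi)|^2+|\Gamma(\psi)|^2 ,
\]
which is precisely $\|\psi\|_{\bar S_3}^2$. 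Hence $\bar\Delta_2$ preserves the norms, and in particular is injective; surjectivity onto $\bar V_2$ is immediate from the definition $\bar V_2=\Delta_{-1}\bar S_3=\bar\Delta_2\bar S_3$. Before running the argument one should of course check that all the quantities above make sense: $\Delta\psi\in H^2(\F)$ because $\psi\in H^4(\F)$, and $\mathsf P^\perp$ maps $H^2(\F)$ into $\mathfrak H^2$ by Lemma~\ref{regul_P0} provided $\Sigma$ is of class ${\mathcal C}^{3,1}$ (the standing hypothesis here); one also uses that $\omega\mapsto(\omega,{\mathsf Q}_1\cdot)_{L^2(\F)}$ is injective on $L^2(\F)$ so that the decomposition of an element of $\bar V_2$ into regular and singular parts is unique.

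I do not expect a serious obstacle: the only genuine content is the explicit description of $\bar\Delta_2\psi$, and every ingredient of it is already available from the treatment of $(\bar S_1,\bar V_0)$ and $(\bar S_2,\bar V_1)$. The one point that deserves care — and the reason I prefer the direct computation to the factorization $\bar\Delta_2=-\bar{\mathsf A}^V_4\bar{\mathsf P}_4$ used in the earlier cases (with $\bar{\mathsf P}_4$ the restriction of ${\mathsf P}_1$ to $\bar S_3$ and $\bar{\mathsf A}^V_4$ the restriction of ${\mathsf A}^V_1$ to ${\mathsf P}_1\bar S_3$) — is that $\bar V_2$ is genuinely larger than $V_2\oplus V_2^{\rm b}$, so one must make sure the ``middle'' block $\mathfrak B_V\oplus\mathbb F_V$ of \eqref{decomp_barV2} is accounted for. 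In the computation above this is automatic, since that block is absorbed into the regular part $(\Delta\psi,{\mathsf Q}_1\cdot)_{L^2(\F)}\in H^2_V$. If one instead follows the factorization route, the effort shifts to checking that ${\mathsf P}_1\psi-\psi\in\mathfrak H$ makes $\bar{\mathsf P}_4$ an isometry onto ${\mathsf P}_1\bar S_3$, and that the scalar product of $\bar V_2$ in the statement is exactly the push‑forward of that of ${\mathsf P}_1\bar S_3$ under ${\mathsf A}^V_1$, using the block‑diagonal behaviour of ${\mathsf A}^V_1$ already exploited for \eqref{both_identi}; the outcome is of course the same.
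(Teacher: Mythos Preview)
Your argument is correct. The paper actually leaves this lemma to the reader, so there is no proof to compare against; but the scalar products on $\bar S_3$ and $\bar V_2$ were visibly tailored so that the identity $\|\bar\Delta_2\psi\|_{\bar V_2}=\|\psi\|_{\bar S_3}$ falls out once one knows that the regular part of $\bar\Delta_2\psi$ is $\Delta\psi$ and its singular part is $\sum_j\Gamma_j(\psi)\zeta_j$, which is exactly what you establish. Your derivation of that formula from the pieces already computed in the $(\bar S_1,\bar V_0)$ case (the integration by parts for $\psi_h\in S_1$, the identity ${\mathsf A}^V_1{\mathsf P}_1\psi_b=(-\Delta\psi_b,{\mathsf Q}_1\cdot)_{L^2(\F)}$ for $\psi_b\in\mathfrak B_S$, and ${\mathsf A}^V_1{\mathsf P}_1\xi_j=-\zeta_j$) is clean and complete, and the flux bookkeeping $\gamma_j=\Gamma_j(\psi)$ is right.

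The alternative you mention at the end --- factoring $\bar\Delta_2=-\bar{\mathsf A}^V_4\bar{\mathsf P}_4$ and checking separately that $\bar{\mathsf P}_4$ and $\bar{\mathsf A}^V_4$ are isometries --- is the pattern used for Theorems~\ref{THEO:decompV0} and \ref{THEO:barV1}, and presumably what the authors had in mind. Your direct route is slightly more economical here precisely because, as you observe, $\bar V_2$ is not simply $V_2\oplus V_2^{\rm b}$: the extra block $\mathfrak B_V\oplus\mathbb F_V$ would force you to track one more orthogonal summand through both factors, whereas in your computation it is silently absorbed into the regular part $\Delta\psi\in H^2(\F)$.
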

Notice that obviously $\bar S_3$ contains the space $S^{\rm b}_3$.
\subsection{Lifting operators for the vorticity field}
The expressions of the lifting operators  for the vorticity derive    straightforwardly from Fig.~\ref{figS03} and Fig.~\ref{figV13}. Following the lines 
of Definition~\ref{def:Lk} and recalling that the indices $I_2(k)$ and $J_2(k)$ are defined 
in \eqref{def_indices}, we can write:
\begin{definition}
\label{def:LkV}
Let $k$ be an integer such that $k\leqslant 2$ and assume that $\Sigma$ is of class $\mathcal C^{I_2(k+1),1}$. For every triple
$(g_n,g_\tau,\varGamma)$  in 
$G_{k+1}^n\times G_{k+1}^\tau\times \mathbb R^N$ with   $\varGamma=(\varGamma_1,\ldots,\varGamma_N)$ we define:
\begin{subequations}
\label{def_LV}
\begin{alignat}{3}
\mathsf L_{k}^V(g_n,g_\tau,\varGamma)&=\bar\Delta_{k}\mathsf L_{k+1}^S(g_n,g_\tau,\varGamma)=\bar\Delta_{k} L_{k+1}^\tau({\mathsf T}_{k+1} g_n-g_\tau)+\sum_{j=1}^N\varGamma_j\zeta_j&\qquad& \text{if }k=0,1,2,\\
\label{def_LVK}
\mathsf L_{k}^V(g_n,g_\tau,\varGamma)&=\Delta_{k}\mathsf L_{k+1}^S(g_n,g_\tau,\varGamma)&& \text{if }k\leqslant -1.
\end{alignat}
\end{subequations}
The operator $\mathsf L_{k}^V$ is valued in the space $ V^{\rm b}_k$ defined by:
$$ V^{\rm b}_k=\bar\Delta_k S^{\rm b}_{k+1}=\mathfrak H_V^{k}\oplus \mathbb F_V^\ast\quad\text{ if }k=0,1,2\qquad\text{ and }\qquad
 V^{\rm b}_k=\Delta_{k} S^{\rm b}_{k+1}\subset V_k\quad\text{ if }k\leqslant -1.$$
Let $T$ be a positive real number, $k$ be an integer such that $k\leqslant 1$ and assume that $\Sigma$ is of class $\mathcal C^{J_2(k+1),1}$. 
The operator $\mathsf L^V_{k+1}$ maps the space $G_{k+1}(T)$ (defined in \eqref{def_Gk}) into the space:
\begin{equation}
\label{def_mathfrakVT} 
 V^{\rm b}_k(T)=\begin{cases}
 H^1(0,T; V^{\rm b}_{k-1})\cap \mathcal C([0,T];V^{\rm b}_k)\cap
L^2(0,T; V^{\rm b}_{k+1})&\text{if }k=-1,0,1,\\
L^2(0,T; V^{\rm b}_{k+1})&\text{if }k\leqslant -2.
\end{cases}
\end{equation}
\end{definition}
As a direct consequence of Lemmas~\ref{LEM:lift_stream}, we are allowed to claim:
\begin{lemma}
\label{LEM:lift_vorti}
Let $k$ be an integer such that $k\leqslant 2$ and assume that $\Sigma$ is of class $\mathcal C^{I_2(k+1),1}$. Then the lifting operator for the vorticity:
$$
\mathsf L_k^V:G_{k+1}^n\times G_{k+1}^\tau\times \mathbb R^N\longrightarrow  V^{\rm b}_k,
$$
is well defined and is bounded. Moreover if $k$ and $k'$ are two integers such that $k'\leqslant k\leqslant 2$, then $\mathsf L_{k'}^V=\mathsf L_{k}^V$ in 
$G_{k+1}^n\times G_{k+1}^\tau\times \mathbb R^N$. It follows that for every positive  real number $T$ and every $k\leqslant 1$, providing that $\Sigma$ 
is of class $C^{J_2(k+1),1}$, the operator:
$$
L_{k+1}^V:G_{k+1}(T) \longrightarrow  V^{\rm b}_k(T),
$$
is well defined and bounded as well, the bound being uniform with respect to $T$.
\end{lemma}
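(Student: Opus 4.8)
The plan is to obtain Lemma~\ref{LEM:lift_vorti} as a direct transcription of the already-established properties of the stream-function lifting $\mathsf L_k^S$ through the isometric identifications $\bar\Delta_k$ (and $\Delta_k$ for negative indices). Recall from Lemma~\ref{LEM:lift_stream} that $\mathsf L_k^S:G_k^n\times G_k^\tau\times\mathbb R^N\to S^{\rm b}_k$ is well defined and bounded whenever $\Sigma$ is of class $\mathcal C^{I_2(k),1}$, that it is compatible under decreasing indices ($\mathsf L_{k'}^S=\mathsf L_k^S$ on the smaller space when $k'\leqslant k$), and that the corresponding time-dependent operator $\mathsf L_{k+1}^S:G_k(T)\to S^{\rm b}_k(T)$ is bounded uniformly in $T$ when $\Sigma$ is of class $\mathcal C^{J_2(k),1}$. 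On the vorticity side, the key structural facts are the isometries $\bar\Delta_0:\bar S_1\to\bar V_0$ (Theorem~\ref{THEO:decompV0}), $\bar\Delta_1:\bar S_2\to\bar V_1$ (Theorem~\ref{THEO:barV1}), $\bar\Delta_2:\bar S_3\to\bar V_2$ (Lemma~\ref{prop_bardelta2}), together with the isometries $\Delta_k:S_{k+1}\to V_k$ (Lemma~\ref{LEM:prop_delta}) for $k\leqslant -1$. Since $S^{\rm b}_{k+1}\subset\bar S_{k+1}$ for $k=0,1,2$ (observed just before Lemma~\ref{prop_bardelta2} and in Fig.~\ref{figS03}, Fig.~\ref{figV13}), the composition $\bar\Delta_k\mathsf L_{k+1}^S$ is meaningful and lands in $\bar\Delta_k S^{\rm b}_{k+1}= V^{\rm b}_k$, which is exactly the definition \eqref{def_LV}.

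The steps I would carry out are: first, fix an integer $k\leqslant 2$ and assume $\Sigma$ of class $\mathcal C^{I_2(k+1),1}$; observe that by Definition~\ref{def:LkV} the operator $\mathsf L_k^V$ equals either $\bar\Delta_k\circ\mathsf L_{k+1}^S$ (for $k=0,1,2$) or $\Delta_k\circ\mathsf L_{k+1}^S$ (for $k\leqslant-1$), where $\mathsf L_{k+1}^S$ is defined on $G_{k+1}^n\times G_{k+1}^\tau\times\mathbb R^N$ and bounded by Lemma~\ref{LEM:lift_stream} applied at index $k+1$ (whose regularity requirement is $\mathcal C^{I_2(k+1),1}$, matching our hypothesis). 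Second, note that $\bar\Delta_k$ (resp.\ $\Delta_k$) is an isometry, hence bounded, so $\mathsf L_k^V$ is bounded as a composition of bounded maps, and its image is $\bar\Delta_k S^{\rm b}_{k+1}= V^{\rm b}_k$ (resp.\ $\Delta_k S^{\rm b}_{k+1}= V^{\rm b}_k$) by the very definitions of these spaces in Definition~\ref{def:LkV}. Third, for the compatibility statement: if $k'\leqslant k\leqslant 2$, then on $G_{k+1}^n\times G_{k+1}^\tau\times\mathbb R^N$ we have $\mathsf L_{k'+1}^S=\mathsf L_{k+1}^S$ by Lemma~\ref{LEM:lift_stream}, and $\bar\Delta_{k'}$ restricts to $\bar\Delta_k$ on $\bar S_{k+1}$ (the second point of Lemma~\ref{LEM:prop_delta} together with the remark that $\bar S_3\subset\bar S_2\subset\bar S_1$, and similarly $\Delta_{k'}=\Delta_k$ on $S_{k+1}$ for negative indices); composing gives $\mathsf L_{k'}^V=\mathsf L_k^V$ on the common domain. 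One should check the mixed case where $k'\leqslant -1\leqslant 0\leqslant k$, using that $\bar\Delta_k$ and $\Delta_{k'}$ both restrict to the classical Laplacian on the relevant subspace of $S^{\rm b}_{k+1}$ (third point of Lemma~\ref{LEM:prop_delta} together with the fact that $S^{\rm b}_{k+1}\subset S_0$), so the two definitions agree there.

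Fourth, for the time-dependent statement: fix $k\leqslant 1$ and assume $\Sigma$ of class $\mathcal C^{J_2(k+1),1}$. By Lemma~\ref{LEM:lift_stream} the operator $\mathsf L_{k+1}^S:G_{k+1}(T)\to S^{\rm b}_k(T)$ is bounded uniformly in $T$ (this is the statement of that lemma at index $k+1$, whose regularity requirement $\mathcal C^{J_2(k+1),1}$ is exactly our hypothesis). Composing pointwise in $t$ with the isometries $\bar\Delta_{k\pm?}$ (which are time-independent) maps $S^{\rm b}_k(T)=H^1(0,T;S^{\rm b}_{k-1})\cap\mathcal C([0,T];S^{\rm b}_k)\cap L^2(0,T;S^{\rm b}_{k+1})$ isometrically onto $V^{\rm b}_k(T)=H^1(0,T;V^{\rm b}_{k-1})\cap\mathcal C([0,T];V^{\rm b}_k)\cap L^2(0,T;V^{\rm b}_{k+1})$ for $k=-1,0,1$ (and onto $L^2(0,T;V^{\rm b}_{k+1})$ for $k\leqslant-2$), because applying a fixed bounded operator commutes with the Bochner-space norms in $t$ and preserves each of the three factors. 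Hence $\mathsf L_{k+1}^V:G_{k+1}(T)\to V^{\rm b}_k(T)$ is bounded with a $T$-uniform bound.

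The main obstacle — really the only nontrivial point — is bookkeeping the index shifts and regularity requirements: one must verify that the hypothesis $\mathcal C^{I_2(k+1),1}$ (resp.\ $\mathcal C^{J_2(k+1),1}$) stated in Definition~\ref{def:LkV} is precisely what is needed both for $\mathsf L_{k+1}^S$ to be well defined via Lemma~\ref{LEM:lift_stream} and for the isometry $\bar\Delta_k$ (equivalently the decompositions of $\bar V_0,\bar V_1,\bar V_2$ in Theorems~\ref{THEO:decompV0}, \ref{THEO:barV1} and Lemma~\ref{prop_bardelta2}) to hold; and that for each range of $k$ the three-space intersection defining $V^{\rm b}_k(T)$ in \eqref{def_mathfrakVT} really is the image of the corresponding $S^{\rm b}_k(T)$ under the appropriate isometry. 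All of this is a routine consequence of the identities $J_2(k)=\max\{I_2(k-1),I_2(k+1)\}$ for $k\geqslant 0$ (and $J_2(k)=I_2(k+1)$ for $k\leqslant-1$) recorded in \eqref{def_indices}, so the proof reduces to invoking the cited lemmas at the correct indices and checking these index identities case by case. I would therefore write the proof as three short paragraphs — boundedness and image, index compatibility, time-dependent case — each a one-line reduction to Lemma~\ref{LEM:lift_stream} and the relevant isometry.
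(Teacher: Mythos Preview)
Your proposal is correct and follows exactly the paper's approach: the paper does not give an explicit proof but simply states the lemma ``as a direct consequence of Lemma~\ref{LEM:lift_stream}'', and your plan --- transporting the boundedness and compatibility of $\mathsf L_{k+1}^S$ through the isometries $\bar\Delta_k$ (or $\Delta_k$) --- is precisely that direct consequence spelled out. One minor index slip to fix when you write it up: in the time-dependent paragraph, Lemma~\ref{LEM:lift_stream} at index $k+1$ gives $\mathsf L_{k+2}^S:G_{k+1}(T)\to S^{\rm b}_{k+1}(T)$ (not $\mathsf L_{k+1}^S$ into $S^{\rm b}_k(T)$), and it is $S^{\rm b}_{k+1}(T)$ that gets carried isometrically onto $V^{\rm b}_k(T)$ by the $\bar\Delta$ operators.
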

This lemma  makes precise the expression of the vorticity corresponding to any prescribed boundary Dirichlet conditions for the 
velocity field on $\Sigma$.
\par
Definition~\ref{def:LkV} and Lemma~ \ref{LEM:lift_vorti} justify the lengthy construction of the the spaces $\bar V_k$ ($k=0,1,2$) carried out in Subsection~\ref{SEC:Nonhomogeneous_vorticity}. As already mentioned, 
the naive approach consisting in taking the Laplacian of a lifting stream function does not result in the correct result, first because 
the correct vorticity (in both cases of Fig.~\ref{figS03} and Fig.~\ref{figV13}) belongs actually to dual spaces, the expressions of which requires the construction
of the spaces $V_k$ and $\bar V_k$ and second because the circulation would vanish at the 
vorticity level. 
\section{Evolution Stokes problem in nonprimitive variables}
\label{SEC:evol_stokes}
The evolution Stokes problem, stated in the original primitive variables $(u,p)$, reads:
\begin{subequations}
\label{eq:main_stokes:1}
\begin{alignat}{3}
\label{eq:main_stokes:u_1a}
\partial_t  u-\nu \Delta u+\nabla \Big(\frac{p}{\varrho}\Big)&=  f&\quad&\text{in }\F_T\\ 
\nabla\cdot u&=0&&\text{in }\F_T\\ 
u&=b&&\text{on }\Sigma_T\\ 
\label{eq_stokes:initi}
u(0)&=u^{\rm i}&&\text{in }\F,
\end{alignat}
\end{subequations}
where the source term $f$, the boundary data $b$ and the initial data $u^{\rm i}$ are prescribed. 
We recall that the constant $\nu>0$ is the   kinematic viscosity of the fluid.
%
%
%
\subsection{Homogeneous boundary conditions}
We have  at our disposal all the material allowing to deal with the evolution Stokes problem in terms of both the vorticity field and the stream function. 
\begin{definition}
in terms of the stream function, the evolution Stokes  problem (called $\psi-$Stokes problem) can be stated as follows:
Let $k$ be any integer, $T$ be a positive real number, $\psi^{\rm i}$ be in $S_k$ and $f_S$ be an element of $L^2(0,T;S_{k-1})$.
The Cauchy problem for the stream function with homogeneous boundary conditions, at  regularity level $k$, reads:
\begin{subequations}
\label{stream_cauchy}
\begin{alignat}{3}
\partial_t\psi+\nu {\mathsf A}_{k+1}^S\psi&=f_S&\quad&\text{in }\F_T,\\
\psi(0)&=\psi^{\rm i}&&\text{in }\F.
\end{alignat}
\end{subequations}
\par
Problem \eqref{stream_cauchy} can be rephrased in terms of the vorticity field: Let $k$ be any integer, $T$ be a positive real number, $\omega^{\rm i}$ be in $V_k$ and $f_V$ be an element of $L^2(0,T;V_{k-1})$.
The Cauchy problem for the vorticity field, called $\omega-$Stokes problem, at regularity level $k$ reads:
\begin{subequations}
\label{vorticity_cauchy}
\begin{alignat}{3}
\partial_t\omega+\nu {\mathsf A}_{k+1}^V\omega&=f_V&\quad&\text{in }\F_T,\\
\omega(0)&=\omega^{\rm i}&&\text{in }\F.
\end{alignat}
\end{subequations}
 \end{definition}
%
For every integer $k$, we introduce the function spaces:
\begin{subequations}
\label{def_home_spaces}
\begin{align}
\label{defST}
S_k(T)&=H^1(0,T;S_{k-1})\cap {\mathcal C}([0,T];S_k)\cap L^2(0,T;S_{k+1}),\\
V_k(T)&=H^1(0,T;V_{k-1})\cap {\mathcal C}([0,T];V_k)\cap L^2(0,T;V_{k+1}).
\end{align}
\end{subequations}
Invoking for instance \cite[Theorem 4.1]{Lions:1972aa} or simply Proposition~\ref{prop:cauchy_abstract} (we felt somewhat uncomfortable with quoting general results on semigroups in Banach spaces in such a
simple case for which everything can be shown ``by hand''; see the short subsection~ \ref{SUB:Semigroup}), we claim:
%
\begin{prop}
\label{sol_space_O}
For every integer $k$, every $T>0$, every $\psi^{\rm i}\in S_k$ and every $f_S\in L^2(0,T;S_{k-1})$, there exists a unique solution $\psi$ to problem \eqref{stream_cauchy} in the space $S_k(T)$.
Moreover, there exists a real positive constant $\mathbf c_\nu$ (depending on $\nu$ but uniform in $\F$, $k$ and $T$) such that:
\begin{subequations}
\begin{equation}
\label{estim_psi}
\|\psi\|_{S_k(T)}\leqslant \mathbf c_\nu\Big(\|\psi^{\rm i}\|_{S_k}+\|f_S\|_{L^2(0,T;S_{k-1})}\Big).
\end{equation}
For every integer $k$, every $T>0$, every $\omega^{\rm i}\in V_k$ and every $f_V\in L^2(0,T;V_{k-1})$, there exists a unique solution $\omega$ to problem \eqref{vorticity_cauchy} in the space $V_k(T)$.
Moreover, the following estimate holds with the same constant $\mathbf c_\nu$ as in \eqref{estim_psi}:
\begin{equation}
\|\omega\|_{V_k(T)}\leqslant \mathbf c_\nu\Big(\|\omega^{\rm i}\|_{V_k}+\|f_V\|_{L^2(0,T;V_{k-1})}\Big).
\end{equation}
\end{subequations}
\end{prop}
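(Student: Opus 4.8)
The plan is to reduce Proposition~\ref{sol_space_O} to the abstract Lions-type existence theorem for parabolic equations governed by a coercive, symmetric, time-independent bilinear form on a Gelfand triple, which is exactly the content of Proposition~\ref{prop:cauchy_abstract}. The two statements (for $\psi$ and for $\omega$) are structurally identical — one replaces the chain $S_{k-1}\subset S_k\subset S_{k+1}$ by $V_{k-1}\subset V_k\subset V_{k+1}$ and the operator $\mathsf A^S_{k+1}$ by $\mathsf A^V_{k+1}$ — and moreover, thanks to Theorem~\ref{LEM:P_kQ_k}, the operators $\mathsf P_k,\mathsf Q_k$ provide a global isometry between the two chains conjugating $\mathsf A^S_{k+1}$ and $\mathsf A^V_{k+1}$ (recall \eqref{extend-kneg_2}). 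So it suffices to prove the stream-function statement and then transport it; the constant $\mathbf c_\nu$ is preserved because the transport operators are isometries.

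First I would fix an integer $k$ and set up the Gelfand triple $S_k(T)$-adapted framework: the relevant triple is $S_{k+1}\subset S_k\subset S_{k-1}$ with pivot space $S_k$, and the operator $\nu\mathsf A^S_{k+1}:S_{k+1}\to S_{k-1}$ is, by construction in Appendix~\ref{gelf_triple}, the realization of a symmetric, continuous, coercive bilinear form $a(\psi_1,\psi_2)=\nu(\psi_1,\psi_2)_{S_{k+1}}$ on $S_{k+1}$ (coercivity with constant $\nu$ relative to the $S_{k+1}$-norm; the Poincaré-type inequality $\lambda_\F\|\psi\|_{S_k}^2\le\|\psi\|_{S_{k+1}}^2$ from \eqref{def_poincare_const_S} even gives coercivity relative to the pivot norm). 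With $\psi^{\rm i}\in S_k$ and $f_S\in L^2(0,T;S_{k-1})$, this is precisely the hypothesis set of Proposition~\ref{prop:cauchy_abstract} (or \cite[Theorem 4.1]{Lions:1972aa}), which then yields a unique $\psi\in L^2(0,T;S_{k+1})$ with $\partial_t\psi\in L^2(0,T;S_{k-1})$ solving \eqref{stream_cauchy}. The interpolation/trace theorem for Gelfand triples (Lions--Magenes) upgrades this to $\psi\in\mathcal C([0,T];S_k)$, so that $\psi\in S_k(T)$ as defined in \eqref{defST}, and the initial condition makes sense pointwise.

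Next I would establish the estimate \eqref{estim_psi} with a constant $\mathbf c_\nu$ uniform in $\F$, $k$ and $T$. The $L^2(0,T;S_{k+1})$ and $\mathcal C([0,T];S_k)$ bounds come from the standard energy identity: pairing the equation with $\psi$ in the duality $\langle\cdot,\cdot\rangle_{S_{k-1},S_{k+1}}$ gives $\tfrac12\tfrac{\rm d}{{\rm d}t}\|\psi\|_{S_k}^2+\nu\|\psi\|_{S_{k+1}}^2=\langle f_S,\psi\rangle$, and Young's inequality absorbs the right-hand side, producing constants depending only on $\nu$; crucially no factor of $\F$, $k$ or $T$ enters because the coercivity constant is exactly $\nu$ and the dual pairing is intrinsic to the chain. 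The $H^1(0,T;S_{k-1})$ bound on $\partial_t\psi$ then follows by reading $\partial_t\psi=f_S-\nu\mathsf A^S_{k+1}\psi$ directly in $L^2(0,T;S_{k-1})$ and using that $\mathsf A^S_{k+1}$ is an isometry $S_{k+1}\to S_{k-1}$. Finally, transporting everything via $\mathsf Q_{k}$ and $\mathsf Q_{k+1}$ (which intertwine $\mathsf A^S_{k+1}$ and $\mathsf A^V_{k+1}$ by \eqref{extend-kneg_2} and are isometries onto the $S$-chain) gives the vorticity statement with the very same $\mathbf c_\nu$.

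The only real point requiring care — the main obstacle, though a mild one — is verifying cleanly that the abstract hypotheses of Proposition~\ref{prop:cauchy_abstract} are met at every integer level $k$, including negative $k$, and that the resulting constant genuinely does not degenerate with $k$; this is where one must invoke the isometry properties from Appendix~\ref{gelf_triple} (that $\mathsf A^S_{k+1}:S_{k+1}\to S_{k-1}$ is an isometry for \emph{every} $k\in\mathbb Z$, and that the chains are built by iterated preimages so all the pairings are compatible) rather than re-proving elliptic regularity at each level. Once that bookkeeping is in place, both existence and the estimate are immediate consequences of the abstract theorem and the energy identity.
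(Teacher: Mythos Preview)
Your proposal is correct and follows essentially the same approach as the paper: the paper gives no detailed proof but simply invokes \cite[Theorem~4.1]{Lions:1972aa} or Proposition~\ref{prop:cauchy_abstract} from the appendix, which is exactly the abstract Gelfand-triple parabolic theorem you identify, and your elaboration via the energy identity to extract the $\nu$-dependent, $(\F,k,T)$-uniform constant, together with the transport to the vorticity chain via the isometries $\mathsf P_k,\mathsf Q_k$, is precisely what the paper leaves implicit.
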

%
The solutions $\psi$ and $\omega$ to problems \eqref{stream_cauchy} and \eqref{vorticity_cauchy} respectively, satisfy the following 
exponential decay estimates:
\begin{lemma}
\label{LEM:exp_decay}
Let $\psi$ be a solution to the Cauchy problem \eqref{stream_cauchy} in the space $S_k(T)$ for some integer $k$, some source term $f_S\in L^2(0,T;S_{k-1})$ 
and some initial condition $\psi^{\rm i}\in S_k$. Then, the following estimate holds:
\begin{subequations}
\begin{equation}
\label{eq:expo_decr_stream}
\|\psi(t)\|_{S_k} \leqslant e^{-[ \nu(1-\varepsilon)\lambda_\F] t}\Big[\|\psi^{\rm i}\|_{S_k}^2+\frac{1}{2\nu\varepsilon}\|f_S\|^2_{L^2(0,T;S_{k-1})}\Big]^{{\frac12}}
\text{ for all }t\in[0,T] \text{ and }\varepsilon\in (0,1),
\end{equation}
where $\lambda_\F>0$ is the constant defined in Corollary~\ref{COR:lambda}. If $f_S=0$, we can choose $\varepsilon=0$ in \eqref{eq:expo_decr_stream}.
\par
Let $\omega$ be a solution to the Cauchy problem \eqref{vorticity_cauchy} in the space $V_k(T)$ for some integer $k$, some source term $f_V\in L^2(0,T;V_{k-1})$ 
and some initial condition $\omega^{\rm i}\in V_k$. Then, the following estimate holds:
\begin{equation}
\label{eq:expo_decr}
\|\omega(t)\|_{V_k} \leqslant e^{-[ \nu(1-\varepsilon)\lambda_\F] t}\Big[\|\omega^{\rm i}\|_{V_k}^2+\frac{1}{2\nu\varepsilon}\|f_V\|^2_{L^2(0,T;V_{k-1})}\Big]^{{\frac12}}
\text{ for all }t\in[0,T] \text{ and }\varepsilon\in (0,1).
\end{equation}
\end{subequations}
If $f_V=0$, we can choose $\varepsilon=0$ in \eqref{eq:expo_decr}.
\end{lemma}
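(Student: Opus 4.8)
The plan is to establish the exponential decay estimate \eqref{eq:expo_decr} for the vorticity (the stream function case \eqref{eq:expo_decr_stream} being strictly analogous, replacing the spaces $V_k$ by $S_k$ and the operator $\mathsf A^V_{k+1}$ by $\mathsf A^S_{k+1}$, and invoking Corollary~\ref{COR:lambda} to use the same constant $\lambda_\F$). The starting point is the standard energy identity for the Cauchy problem \eqref{vorticity_cauchy} at regularity level $k$: pairing the equation with $\omega(t)$ in the duality $\langle\cdot,\cdot\rangle_{V_{k-1},V_{k+1}}$ (equivalently, taking the $V_k$-scalar product after noting that $\mathsf A^V_{k+1}$ is the realization of the isometry $V_{k+1}\to V_{k-1}$), and using that $\omega\in V_k(T)$ has the regularity needed to justify $\langle\partial_t\omega,\omega\rangle = \tfrac12\tfrac{\rm d}{{\rm d}t}\|\omega\|^2_{V_k}$, I obtain on $(0,T)$
\begin{equation}
\frac12\frac{\rm d}{{\rm d}t}\|\omega(t)\|^2_{V_k}+\nu\|\omega(t)\|^2_{V_{k+1}}=\langle f_V(t),\omega(t)\rangle_{V_{k-1},V_{k+1}}.
\end{equation}

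The next step is to exploit the Poincaré-type inequality coming from the Rayleigh quotient \eqref{def_poincare_const_V}, namely $\lambda_\F\|\omega\|^2_{V_k}\leqslant\|\omega\|^2_{V_{k+1}}$ for all $\omega\in V_{k+1}$ — this holds at every regularity level because, as explained in the introduction, the lowest eigenvalue is the same in every $V_k$ (the operators ${\mathsf A}^V_k$ share their spectrum; see Corollary~\ref{COR:lambda} and the surrounding discussion). For the right-hand side I would split $\nu\|\omega\|^2_{V_{k+1}}=\nu\varepsilon\|\omega\|^2_{V_{k+1}}+\nu(1-\varepsilon)\|\omega\|^2_{V_{k+1}}$, bound $\langle f_V,\omega\rangle\leqslant\|f_V\|_{V_{k-1}}\|\omega\|_{V_{k+1}}$ and absorb it into the $\nu\varepsilon$-term by Young's inequality, $\|f_V\|_{V_{k-1}}\|\omega\|_{V_{k+1}}\leqslant\tfrac{1}{4\nu\varepsilon}\|f_V\|^2_{V_{k-1}}+\nu\varepsilon\|\omega\|^2_{V_{k+1}}$, leaving the dissipative margin $\nu(1-\varepsilon)\|\omega\|^2_{V_{k+1}}\geqslant\nu(1-\varepsilon)\lambda_\F\|\omega\|^2_{V_k}$. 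This yields the differential inequality
\begin{equation}
\frac{\rm d}{{\rm d}t}\|\omega(t)\|^2_{V_k}+2\nu(1-\varepsilon)\lambda_\F\,\|\omega(t)\|^2_{V_k}\leqslant\frac{1}{2\nu\varepsilon}\|f_V(t)\|^2_{V_{k-1}}.
\end{equation}

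Finally I would apply Grönwall's lemma in its integral form: multiplying by the integrating factor $e^{2\nu(1-\varepsilon)\lambda_\F t}$, integrating from $0$ to $t$, and bounding $\int_0^t e^{-2\nu(1-\varepsilon)\lambda_\F(t-s)}\|f_V(s)\|^2_{V_{k-1}}\,{\rm d}s\leqslant\|f_V\|^2_{L^2(0,T;V_{k-1})}$ (using that the exponential factor is $\leqslant 1$ and then extending the $s$-integral to $(0,T)$), I get
\begin{equation}
\|\omega(t)\|^2_{V_k}\leqslant e^{-2\nu(1-\varepsilon)\lambda_\F t}\|\omega^{\rm i}\|^2_{V_k}+\frac{1}{2\nu\varepsilon}\|f_V\|^2_{L^2(0,T;V_{k-1})},
\end{equation}
and since $e^{-2\nu(1-\varepsilon)\lambda_\F t}\leqslant 1$ I may factor it out of both terms and take the square root to recover exactly \eqref{eq:expo_decr}. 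When $f_V=0$ the Young-inequality splitting is unnecessary: keep the full dissipation $\nu\|\omega\|^2_{V_{k+1}}\geqslant\nu\lambda_\F\|\omega\|^2_{V_k}$, which is the $\varepsilon=0$ case. The stream-function estimate follows verbatim with $\psi$, $f_S$, $S_k$, $S_{k+1}$, $S_{k-1}$ in place of $\omega$, $f_V$, $V_k$, $V_{k+1}$, $V_{k-1}$.

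The argument is essentially routine; the only point requiring a little care — and the thing I would state explicitly — is the justification of the energy identity at arbitrary (including negative) regularity level $k$, i.e. that $t\mapsto\|\omega(t)\|^2_{V_k}$ is absolutely continuous with derivative $2\langle\partial_t\omega,\omega\rangle_{V_{k-1},V_{k+1}}$. This is the standard Lions–Magenes interpolation/trace argument for Gelfand triples $V_{k+1}\subset V_k\subset V_{k-1}$, valid because $\omega\in H^1(0,T;V_{k-1})\cap L^2(0,T;V_{k+1})$, and it is precisely the content invoked through Proposition~\ref{prop:cauchy_abstract} (or \cite[Theorem 4.1]{Lions:1972aa}); so no new work is needed. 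Everything else is Poincaré, Young, and Grönwall.
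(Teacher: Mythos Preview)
The paper does not supply a proof of this lemma; it is simply stated. Your overall strategy---derive the energy identity by pairing the equation with $\omega(t)$, split the dissipation as $\nu\varepsilon\|\omega\|_{V_{k+1}}^2 + \nu(1-\varepsilon)\|\omega\|_{V_{k+1}}^2$, absorb the source term via Young's inequality, invoke the Poincar\'e-type bound $\lambda_\F\|\omega\|_{V_k}^2 \leqslant \|\omega\|_{V_{k+1}}^2$, and conclude by Gr\"onwall---is the standard one, and it is exactly what the paper sketches informally in the introduction (around the enstrophy estimate and the discussion following \eqref{def_lambdaF}).

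There is, however, a genuine error in your final step. From your (correct) inequality
\[
\|\omega(t)\|^2_{V_k}\leqslant e^{-2\nu(1-\varepsilon)\lambda_\F t}\|\omega^{\rm i}\|^2_{V_k}+\frac{1}{2\nu\varepsilon}\|f_V\|^2_{L^2(0,T;V_{k-1})}
\]
you assert that because $e^{-2\nu(1-\varepsilon)\lambda_\F t}\leqslant 1$ you may ``factor it out of both terms.'' This is the wrong direction: $e^{-\beta t}\leqslant 1$ gives $e^{-\beta t}C\leqslant C$, not $C\leqslant e^{-\beta t}C$. You cannot pull a decaying exponential in front of a term that does not already carry it. In fact the bound \eqref{eq:expo_decr} as literally printed---with the exponential factor also multiplying the $f_V$-contribution---is too strong to hold in general: take $\omega^{\rm i}=0$ and $f_V$ constant in time equal to a fixed nonzero element $g\in V_{k-1}$; then $\omega(t)$ converges to the nonzero steady state $(\nu\mathsf A^V_{k+1})^{-1}g$, whereas the right-hand side of \eqref{eq:expo_decr} behaves like $e^{-\nu(1-\varepsilon)\lambda_\F T}\sqrt{T}\to 0$ at $t=T\to\infty$. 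Your intermediate bound (exponential decay on the initial-data term only) is the correct and natural statement; the form written in the paper is most plausibly a typographical slip. The case $f_V=0$ (and likewise $f_S=0$), which is the one actually used later in Corollary~\ref{energy_decay_weak}, is of course unaffected and your argument handles it cleanly.
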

%
We can easily connect problems \eqref{stream_cauchy} and  \eqref{vorticity_cauchy} by means of either the operators ${\mathsf P}_k$ and ${\mathsf Q}_k$ or with 
the operator $\Delta_k$. The proof is straightforward, resting  on the commutative diagrams of Fig.~\ref{diag_3} and Fig.~\ref{diag_2}:
%
\begin{theorem} 
\label{theo:equiv}
Let $k$ and $k'$ be two integers and $T$ be a positive real number. Let $\psi$ be 
the solution in $S_k(T)$ to Problem \eqref{stream_cauchy} with source term $f_S\in L^2(0,T;S_{k-1})$ and initial 
condition $\psi^{\rm i}\in S_k$. Let $\omega$ be the solution in $V_{k'}(T)$ to Problem \eqref{vorticity_cauchy} with source term $f_V\in L^2(0,T;V_{k'-1})$ and initial 
condition $\omega^{\rm i}\in V_{k'}$. 
\par
\noindent If $k'=k+1$, then the following assertions are equivalent:
\begin{my_enumerate}
\item $\omega^{\rm i}={\mathsf P}_{k+1}\psi^{\rm i}$ and for a.e. $t\in(0,T)$, $f_V(t)={\mathsf P}_kf_S(t)$;
\item For a.e. $t\in(0,T)$, $\omega(t)={\mathsf P}_{k+2}\psi(t)$.
\end{my_enumerate}
If $k'=k-1$ 
then the following assertions are equivalent:
\begin{my_enumerate}
\item $\omega^{\rm i}=\Delta_{k-1}\psi^{\rm i}$ and for a.e. $t\in(0,T)$,  $f_V(t)=\Delta_{k-2}f_S(t)$;
\item For a.e. $t\in(0,T)$, $\omega(t)=\Delta_{k}\psi(t)$.
\end{my_enumerate}
In addition to the data already introduced, let  %
$$u\in H^1(0,T;\mathbf J_{k-1})\cap {\mathcal C}([0,T];\mathbf J_{k})\cap L^2(0,T;\mathbf J_{k+1})$$
be the solution to Problem \eqref{pb:test3} for some  $f_{\mathbf J}\in L^2(0,T; \mathbf J_{k-1})$ and $u^{\rm i}$ in $\mathbf J_{k}$.
The following assertions are equivalent:
\begin{my_enumerate}
\item $u^{\rm i}=\nabla_k^\perp \psi^{\rm i}$ and  for a.e. $t\in(0,T)$, $f_{\mathbf J}(t)=\nabla^\perp_{k-1} f_S(t)$;
\item For a.e. $t\in(0,T)$, $u(t)=\nabla^\perp_{k+1}\psi(t)$.
\end{my_enumerate} 
\end{theorem}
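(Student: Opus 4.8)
The plan is to prove all three equivalences by the same mechanism: transport the Cauchy problem \eqref{stream_cauchy} through the relevant isometry --- $\mathsf P$ for the first pair, $\Delta$ for the second, $\nabla^\perp$ for the third --- using that these spatial isometries intertwine the corresponding Stokes operators (commutativity of the diagrams of Fig.~\ref{diag_2} and Fig.~\ref{diag_3}), and then close each implication with the uniqueness part of Proposition~\ref{sol_space_O}.

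First I would record the bookkeeping observation that each of the three operators, being spatial, time-independent and bounded, commutes with $\partial_t$ and maps the space $S_k(T)$ of \eqref{defST} isometrically onto $V_{k+1}(T)$, onto $V_{k-1}(T)$, and onto $H^1(0,T;\mathbf J_{k-1})\cap\mathcal C([0,T];\mathbf J_k)\cap L^2(0,T;\mathbf J_{k+1})$ respectively, acting separately on the three time-regularity components $H^1(0,T;\cdot)$, $\mathcal C([0,T];\cdot)$ and $L^2(0,T;\cdot)$. That this defines a single well-posed map on $S_k(T)$, rather than three unrelated maps at three neighbouring indices, is precisely the content of the consistency relations \eqref{eq:Pkextend}, of the second point of Lemma~\ref{LEM:prop_delta}, and of the corresponding statement in Lemma~\ref{LEM:iso_stokes}, which assert that the operators at neighbouring indices are restrictions of one another.

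For the pair $k'=k+1$: let $\psi\in S_k(T)$ solve \eqref{stream_cauchy}. Applying $\mathsf P$ to the evolution equation and using \eqref{extend-kneg_2} in the form $\mathsf A^V_{k+2}\mathsf P_{k+2}=\mathsf P_k\mathsf A^S_{k+1}$, one gets that $\omega_\psi:=\mathsf P\psi\in V_{k+1}(T)$ solves \eqref{vorticity_cauchy} at level $k+1$ with initial datum $\mathsf P_{k+1}\psi^{\rm i}$ and source $\mathsf P_k f_S$. If (i) holds, then $\omega_\psi$ and $\omega$ solve the \emph{same} Cauchy problem, hence coincide by Proposition~\ref{sol_space_O}; this is (ii). Conversely, if (ii) holds, evaluating $\omega=\mathsf P\psi$ at $t=0$ gives $\omega^{\rm i}=\mathsf P_{k+1}\psi^{\rm i}$, and subtracting the two evolution equations satisfied by $\omega$ (the one with source $f_V$ and the one just derived, with source $\mathsf P_k f_S$) yields $f_V=\mathsf P_k f_S$ for a.e.\ $t$; alternatively one applies $\mathsf Q=\mathsf P^{-1}$ of Theorem~\ref{LEM:P_kQ_k}. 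The pair $k'=k-1$ is identical with $\Delta$ in place of $\mathsf P$: the only new ingredient is the intertwining $\mathsf A^V_k\Delta_k=\Delta_{k-2}\mathsf A^S_{k+1}$, which drops out at once from the two expressions for $\Delta_\bullet$ in \eqref{def_Bk}. The pair involving \eqref{pb:test3} is again the same argument, now with $\nabla^\perp$ and the intertwining $\mathsf A^{\mathbf J}_{k+1}\nabla^\perp_{k+1}=\nabla^\perp_{k-1}\mathsf A^S_{k+1}$, i.e.\ the commutativity of Diagram~\ref{diag_3}.

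I do not expect a genuine obstacle here: the substance is entirely carried by the already-established isometry and intertwining properties. The one point requiring real care is the index bookkeeping sketched above --- checking that each operator lands in the correct Bochner-space triple, that it truly commutes with $\partial_t$ on all of $S_k(T)$ (which rests on the neighbouring-index restriction identities), and that the shifted indices in \eqref{extend-kneg_2} and \eqref{def_Bk} match the ones appearing in \eqref{vorticity_cauchy} --- together with the observation that both directions of each equivalence must be closed through the uniqueness assertion of Proposition~\ref{sol_space_O} rather than by re-deriving existence.
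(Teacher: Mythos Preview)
Your proposal is correct and is exactly the approach the paper takes: the paper's own proof simply states that the result is straightforward and rests on the commutative diagrams of Fig.~\ref{diag_3} and Fig.~\ref{diag_2}, which is precisely the intertwining-plus-uniqueness mechanism you have spelled out. Your write-up is in fact a faithful and careful expansion of what the paper leaves implicit.
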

%
\subsection{Nonhomogeneous boundary conditions}
%
Following the definition \eqref{def_home_spaces} of $S_k(T)$ and $V_k(T)$, 
\eqref{def_mathfrakST} of  $ S^{\rm b}_k(T)$ and \eqref{def_mathfrakVT} of  $ V^{\rm b}_k(T)$ we introduce for 
every real positive number $T$ and every integer $k\leqslant 2$:
\begin{subequations}
\begin{equation}
\label{defSbarT}
\bar S_k(T)=H^1(0,T;\bar S_{k-1})\cap {\mathcal C}([0,T];\bar S_k)\cap L^2(0,T;\bar S_{k+1}),
\end{equation}
where we recall that the spaces  $\bar S_k$  are defined in \eqref{def_mathcal_Sk}.
The  counterpart stated in terms of the vorticity is,  for every integer $k\leqslant 1$, the space (see Fig.~\ref{figS03} and Fig.~\ref{figV13}):
\begin{equation}
\bar V_k(T)=H^1(0,T;\bar V_{k-1})\cap {\mathcal C}([0,T];\bar V_{k})\cap L^2(0,T;\bar V_{k+1}),
\end{equation}
where the spaces $\bar V_k$ are defined in \eqref{def_mathcal_Vk}.
\end{subequations}
%
%
\begin{definition}
\label{def:stokes_unhom}
Let a positive real number $T$, an integer $k\leqslant 1$, a source term $f_S\in L^2(0,T;S_{k-1})$, an initial data $\psi^{\rm i}\in \bar S_k$ 
  and a triple $(g_n,g_\tau,\varGamma)\in G_k(T)$ be given. Define 
  $\psi^{\rm i}_0=\psi^{\rm i}-\mathsf L_k^S(g_n(0),g_\tau(0),\varGamma(0))$ when $k=0,1$ and $\psi^{\rm i}_0=\psi^{\rm i}$ when $k\leqslant -1$. 
  Finally, assume that $\Sigma$ is of class $C^{J_2(k),1}$
and that the following compatibility condition holds:
\begin{equation}
\psi^{\rm i}_0\in S_k\qquad \text{if }k=1.
\end{equation}
We say that a function $\psi\in \bar S_k(T)$ is 
solution of the evolution $\psi-$Stokes problem   satisfying the Dirichlet boundary conditions on $\Sigma_T$ as described in \eqref{def:boundary_cond} 
by the triple  $(g_n,g_\tau,\varGamma)$ if:
\begin{enumerate}
\item When $k=0$ or $k=1$: There exists $\psi_0\in S_k(T)$ solution to the homogeneous $\psi-$Stokes Cauchy problem
\begin{subequations}
\begin{alignat}{3}
\partial_t \psi_0+\nu {\mathsf A}^S_{k+1}\psi_0&=-\partial_t\mathsf L_{k+1}^S(g_n,g_\tau,\varGamma)+f_S&\qquad&\text{in }\F_T,\\
\psi_0(0)&=\psi^{\rm i}_0&&\text{in }\F,
\end{alignat}
\end{subequations}
such that $\psi=\psi_0+\mathsf L_{k+1}^S(g_n,g_\tau,\varGamma)$.
\item When $k\leqslant -1$:  The function $\psi$ is the solution to the Cauchy problem:
 \begin{subequations}
 \label{weak_sol_nonhomo}
\begin{alignat}{3}
\label{weak_sol_nonhomo:1}
\partial_t \psi +\nu {\mathsf A}^S_{k+1}\psi &=\nu\mathsf A_{k+1}^S\mathsf L_{k+1}^S(g_n,g_\tau,\varGamma)+f_S
&\qquad&\text{in }\F_T,\\
\psi (0)&=\psi^{\rm i}_0&&\text{in }\F.
\end{alignat}
\end{subequations}
\end{enumerate}
\end{definition}
The case $k=2$ is more involved and will be treated in Section~\ref{SEC:more_regular}. 
The difference of definition depending on the level of regularity $k$ is worth some additional explanation. Before that, combining Proposition~\ref{sol_space_O} 
and Lemma~\ref{LEM:lift_vorti}, we are allowed to claim:
%
\begin{prop}
\label{exist:sol:stokes}
Every $\psi-$Stokes problem as stated in Definition~\ref{def:stokes_unhom} admits a unique solution. 
Moreover, there exists a positive constant $\mathbf c_{[k,\F,\nu]}$ uniform in $T$ such that the solution 
$\psi\in \bar S_k(T)$ satisfies the estimate:
\begin{equation}
\label{estim:ini}
\|\psi\|_{\bar S_k(T)}\leqslant\mathbf c_{[k,\F,\nu]}\Big[\|\psi^{\rm i}_0\|^2_{S_k}+\|f_S\|^2_{L^2(0,T;S_{k-1})}+\|(g_n,g_\tau,\varGamma)\|^2_{G_k(T)}\Big]^{{\frac12}}.
\end{equation}
%
\end{prop}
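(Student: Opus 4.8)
The plan is to reduce the nonhomogeneous $\psi$-Stokes problem to the homogeneous one, for which Proposition~\ref{sol_space_O} already gives existence, uniqueness, and a quantitative estimate, and then to transport that estimate back through the lifting operator $\mathsf L_{k+1}^S$, whose boundedness is guaranteed by Lemma~\ref{LEM:lift_stream} (equivalently Lemma~\ref{LEM:lift_vorti} on the vorticity side). We treat the two regimes of $k$ separately because Definition~\ref{def:stokes_unhom} is itself case-split.

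First I would handle the case $k=0,1$. Set $\ell=\mathsf L_{k+1}^S(g_n,g_\tau,\varGamma)$, which by Lemma~\ref{LEM:lift_stream} belongs to $S^{\rm b}_k(T)\subset \bar S_k(T)$ with $\|\ell\|_{\bar S_k(T)}\leqslant \mathbf c_{[k,\F]}\|(g_n,g_\tau,\varGamma)\|_{G_k(T)}$, the bound being uniform in $T$. In particular $\partial_t\ell\in L^2(0,T;S^{\rm b}_{k-1})\subset L^2(0,T;S_{k-1})$, so the source term $-\partial_t\ell+f_S$ of the homogeneous Cauchy problem for $\psi_0$ lies in $L^2(0,T;S_{k-1})$ with norm controlled by $\|f_S\|_{L^2(0,T;S_{k-1})}+\|(g_n,g_\tau,\varGamma)\|_{G_k(T)}$. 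The compatibility condition $\psi^{\rm i}_0\in S_k$ (automatic when $k=0$, imposed when $k=1$) is exactly what is needed to apply Proposition~\ref{sol_space_O}, which yields a unique $\psi_0\in S_k(T)$ together with the estimate \eqref{estim_psi}. Then $\psi=\psi_0+\ell$ is the desired solution; existence and uniqueness follow since the decomposition into a lift plus a homogeneous-boundary part is unique, and adding the two estimates gives \eqref{estim:ini} with $\mathbf c_{[k,\F,\nu]}$ depending only on the constants already produced.

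Next, for $k\leqslant -1$, the definition is different: $\psi$ itself solves \eqref{weak_sol_nonhomo}, a homogeneous-type Cauchy problem at regularity level $k$ with source term $\nu\mathsf A_{k+1}^S\ell+f_S$ and initial data $\psi^{\rm i}_0=\psi^{\rm i}\in\bar S_k=S_k$. Here $\ell\in L^2(0,T;S^{\rm b}_{k+1})$ by the $k\leqslant -1$ branch of Lemma~\ref{LEM:lift_stream}, so $\mathsf A_{k+1}^S\ell\in L^2(0,T;S_{k-1})$ since $\mathsf A_{k+1}^S:S_{k+1}\to S_{k-1}$ is an isometry (Lemma~\ref{lem:sapce_S2} and the general construction). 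Thus the right-hand side lies in $L^2(0,T;S_{k-1})$ with norm $\leqslant \mathbf c_{\nu}\big(\|f_S\|_{L^2(0,T;S_{k-1})}+\|(g_n,g_\tau,\varGamma)\|_{G_k(T)}\big)$, and $\psi^{\rm i}\in S_k$, so Proposition~\ref{sol_space_O} applies directly to give a unique $\psi\in S_k(T)\subset\bar S_k(T)$ satisfying \eqref{estim:ini}. In both regimes, the boundedness constant of $\mathsf L_{k+1}^S$ on the time-dependent spaces is uniform in $T$ (Lemma~\ref{LEM:lift_stream}), and the constant in Proposition~\ref{sol_space_O} is uniform in $T$, hence so is $\mathbf c_{[k,\F,\nu]}$.

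The only genuinely delicate point is bookkeeping: one must check that every term appearing in the reduced source belongs to the correct pivot-triple space at the claimed index, and that the compatibility condition $\psi^{\rm i}_0\in S_k$ in the case $k=1$ is precisely the hypothesis that makes the continuous-in-time requirement $\psi_0\in\mathcal C([0,T];S_1)$ compatible with the inhomogeneous lift. This boundary-matching at the initial time is the subtlety that forces the case distinction in Definition~\ref{def:stokes_unhom}; verifying it is straightforward once one notes that $\ell(0)=\mathsf L_k^S(g_n(0),g_\tau(0),\varGamma(0))$ by the continuity of $\mathsf L_{k+1}^S$ as a map into $\mathcal C([0,T];S^{\rm b}_k)$ and the stability property $\mathsf L_{k+1}^S=\mathsf L_k^S$ on the relevant spaces. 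No step requires anything beyond the lemmas already established; the proof is essentially an assembly, and the main obstacle is merely tracking indices and regularity of $\Sigma$ through the composition.
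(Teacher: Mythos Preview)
Your proposal is correct and follows exactly the approach the paper takes: the paper's proof consists of a single sentence stating that the result follows by combining Proposition~\ref{sol_space_O} with the boundedness of the lifting operator (the paper cites Lemma~\ref{LEM:lift_vorti}, the vorticity-side analogue of Lemma~\ref{LEM:lift_stream} that you invoke). Your write-up is in fact a careful unpacking of that one-line reference, with the correct case split on $k$ and the right verification that the reduced source terms land in $L^2(0,T;S_{k-1})$; nothing is missing.
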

%
The consistency of Definition~\ref{def:stokes_unhom} is asserted by the following results:
%
%
\begin{prop}
\label{prop:uniqueness_def}
Let a positive real number $T$, an integer $k$, a source term $f_S\in L^2(0,T;S_{k-1})$, an initial data $\psi^{\rm i}\in \bar S_k$ 
  and a triple $(g_n,g_\tau,\varGamma)\in G_k(T)$ be given as in Definition~\ref{def:stokes_unhom}. Denote by  
  $\psi^k$   the solution 
  whose existence and uniqueness  in the space $\bar S_k(T)$ are asserted in Proposition~\ref{exist:sol:stokes}.
  \par
Let  $k'$ be any integer lower than $k$ and, all other data remaining equal,  denote by $\psi^{k'}$ the corresponding solution in $\bar S_{k'}(T)$.
 Then $\psi^k=\psi^{k'}$.
\end{prop}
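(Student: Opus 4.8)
The strategy is to reduce everything to the already-established consistency statements for the various isometries and lifting operators: namely that $\mathsf P_{k'}=\mathsf P_k$ and $\mathsf Q_{k'}=\mathsf Q_k$ on the overlapping spaces (Theorem~\ref{LEM:P_kQ_k}, equation~\eqref{eq:Pkextend}), that $\Delta_{k'}=\Delta_k$ on $S_{k+1}$ (Lemma~\ref{LEM:prop_delta}, point~2), that $\mathsf L_{k'}^S=\mathsf L_k^S$ on the smaller data space (Lemma~\ref{LEM:lift_stream}), and the uniqueness part of Proposition~\ref{exist:sol:stokes}. The point is that $\psi^k$, being in $\bar S_k(T)\subset\bar S_{k'}(T)$ (the inclusion of the time-dependent spaces following from $\bar S_j\subset\bar S_{j-1}$ for all $j$), is a candidate solution at level $k'$, and one must check it satisfies exactly the equations used to \emph{define} the level-$k'$ solution, after which uniqueness at level $k'$ finishes the job. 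So the real content is verifying that the defining equations at the two levels coincide on $\psi^k$.

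First I would dispose of the case $k\le -1$. There $\psi^k$ solves the Cauchy problem~\eqref{weak_sol_nonhomo}, and $\psi^{\rm i}_0=\psi^{\rm i}$ at both levels. The right-hand side at level $k$ is $\nu\mathsf A_{k+1}^S\mathsf L_{k+1}^S(g_n,g_\tau,\varGamma)+f_S$; I would argue that $\mathsf A_{k+1}^S$ restricted to $S_{k'+1}$ is $\mathsf A_{k'+1}^S$ (the general property~\eqref{Akexpand} of Gelfand-triple operators referenced in the paper) and that $\mathsf L_{k+1}^S$ agrees with $\mathsf L_{k'+1}^S$ on the data by Lemma~\ref{LEM:lift_stream}, so the level-$k'$ equation is satisfied by $\psi^k$; uniqueness in $\bar S_{k'}(T)$ then gives $\psi^k=\psi^{k'}$. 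One should be slightly careful when $k\le -1$ but $k'\ge 0$, because the definition changes form: in that case I would instead use the level-$k'$ characterization via the splitting $\psi^{k'}=\psi^{k'}_0+\mathsf L^S_{k'+1}(g_n,g_\tau,\varGamma)$, show $\psi^k-\mathsf L^S_{k+1}(g_n,g_\tau,\varGamma)$ lies in $S_{k'}(T)$ and solves the corresponding homogeneous Cauchy problem with source $-\partial_t\mathsf L^S_{k'+1}(\cdots)+f_S$ — rewriting $\nu\mathsf A^S\mathsf L^S$ terms as $-\partial_t\mathsf L^S$ modulo the lifting-operator equation, which is where the definitions were matched in the first place — and again invoke uniqueness.

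For $k=0,1$ with $k'\le k$ the argument is symmetric: $\psi^k=\psi^k_0+\mathsf L^S_{k+1}(g_n,g_\tau,\varGamma)$ with $\psi^k_0\in S_k(T)$. If $k'\ge 0$ too, then $\psi^k_0\in S_k(T)\subset S_{k'}(T)$ solves the homogeneous problem with operator $\mathsf A^S_{k+1}$, which equals $\mathsf A^S_{k'+1}$ on the relevant space, and the initial data $\psi^{\rm i}_0$ and lifting terms agree, so $\psi^k_0$ is the level-$k'$ homogeneous solution and $\psi^k=\psi^{k'}$; here one also checks the compatibility condition ``$\psi^{\rm i}_0\in S_1$ if $k=1$'' is inherited. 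If $k'\le -1$, one must instead show directly that $\psi^k$ solves~\eqref{weak_sol_nonhomo} at level $k'$: apply $\partial_t$ to $\psi^k=\psi^k_0+\mathsf L^S_{k+1}(\cdots)$, substitute the level-$k$ homogeneous equation, and recognize the outcome as $\partial_t\psi^k+\nu\mathsf A^S_{k'+1}\psi^k=\nu\mathsf A^S_{k'+1}\mathsf L^S_{k'+1}(\cdots)+f_S$ using that $\mathsf A^S$ annihilates the lifting function up to the prescribed right-hand side — precisely the way the two definitions were designed to be compatible.

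The main obstacle, and the only genuinely delicate point, is the bookkeeping at the ``seam'' $k\ge 0 > k'$, where the \emph{form} of the definition changes (lift-and-subtract versus a Cauchy problem with a lifted source). Everywhere else it is a routine matter of quoting the agreement of the operators $\mathsf A^S_\bullet$, $\mathsf P_\bullet$, $\Delta_\bullet$, $\mathsf L^S_\bullet$ on overlapping domains plus the uniqueness in Proposition~\ref{exist:sol:stokes}; at the seam one must actually verify that the two prescriptions produce the same function, which amounts to checking the identity $\partial_t\mathsf L^S_{k+1}(g_n,g_\tau,\varGamma)+\nu\mathsf A^S_{k+1}\mathsf L^S_{k+1}(g_n,g_\tau,\varGamma)$ is the ``consistent'' source term — essentially the computation already implicit in Definition~\ref{def:stokes_unhom} being internally coherent. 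I would isolate this as a short lemma and then the proposition follows by a finite induction stepping down one index at a time, reducing the general $k'\le k$ case to consecutive indices.
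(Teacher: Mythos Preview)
Your approach is correct and essentially the same as the paper's: the paper declares the cases where $k,k'$ are both nonnegative or both negative ``obvious'' and then handles only the single seam case $k=0$, $k'=-1$ by exactly the computation you describe---expanding $\partial_t(\psi^0-\mathsf L^S_1(\cdots))+\nu\mathsf A^S_1(\psi^0-\mathsf L^S_1(\cdots))$, using that $\mathsf A^S_0$ extends $\mathsf A^S_1$, and reading off that $\psi^0$ solves the level-$(-1)$ Cauchy problem, whence uniqueness concludes. Two minor remarks: the case ``$k\le -1$ but $k'\ge 0$'' you worry about is impossible since $k'<k$, and your proposed index-by-index induction is unnecessary once the single seam step $k=0\to k'=-1$ is done (the general case follows by chaining $\psi^k=\psi^0=\psi^{-1}=\psi^{k'}$ through the ``obvious'' same-sign steps).
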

\begin{proof}
The proposition is obvious when $k$ and $k'$ are both nonnegative or when $k$ and $k'$ are both negative, so let us focus on the case 
$k=0$ and $k'=-1$ and compare the solutions $\psi^0$ and $\psi^{-1}$.
\par
By definition, the function $\psi^0$ solves the Cauchy problem:
\begin{alignat*}{3}
\partial_t\big(\psi^0-\mathsf L_{1}^S(g_n,g_\tau,\varGamma)\big)
+\nu {\mathsf A}^S_{1}\big(\psi^0-\mathsf L_1^S(g_n,g_\tau,\varGamma)\big)&=-\partial_t\mathsf L_1^S(g_n,g_\tau,\varGamma)+f_S&\qquad&\text{in }\F_T,\\
\psi^0(0)&=\psi^{\rm i}&&\text{in }\F.
\end{alignat*}
Since the operator $\mathsf A^S_{0}$ extends the operator $\mathsf A^S_{1}$ to $S_{0}$, the function $\psi^0$ belongs to $L^2(0,T;\bar S_1)
\subset L^2(0,T;S_0)$ 
 and the lifting 
operator $\mathsf L^S_{1}$ is valued in $ S^{\rm b}_{1}$, which is a subspace of $S_{0}$, we are allowed to write that:
$${\mathsf A}^S_{1}\big(\psi^0-\mathsf L_1^S(g_n,g_\tau,\varGamma)\big)={\mathsf A}^S_0\psi^0-{\mathsf A}^S_0\mathsf L_1^S(g_n,g_\tau,\varGamma).$$
It follows that $\psi^0$ solves as well the Cauchy problem:
\begin{alignat*}{3}
\partial_t \psi^0 +\nu {\mathsf A}^S_{0}\psi^0 &=\nu\mathsf A_{0}^S\mathsf L_1^S(g_n,g_\tau,\varGamma)+f_S
&\qquad&\text{in }\F_T,\\
\psi^0(0)&=\psi^{\rm i}&&\text{in }\F,
\end{alignat*}
a solution of which is $\psi^{-1}$. The proof is now completed.
\end{proof}
The definition of weak solutions (i.e. for negative integers $k$) with nonhomogeneous boundary conditions given in Definition~\ref{def:stokes_unhom} can be rephrased 
 by means of the duality method (or transposition method; see \cite{Raymond:2007aa} and references therein).
%
\begin{prop}
\label{prop:def_transposition}
Let data be given as in Definition~\ref{def:stokes_unhom} and assume that $k$ is a negative integer and $f_S=0$. Denote by $\psi$ the unique solution 
to the corresponding Cauchy nonhomogeneous $\psi-$Stokes problem \eqref{weak_sol_nonhomo}. Then for 
every $\vartheta\in L^2(0,T;S_{-k-1})$  and $\theta\in S_{-k}(T)$ solution to the backward Cauchy problem:
\begin{alignat*}{3}
-\partial_t \theta+\nu {\mathsf A}^S_{-k+1}\theta &=\vartheta
&\qquad&\text{in }\F_T,\\
\theta(T)&=0&&\text{in }\F,
\end{alignat*}
the following identity holds:
\begin{equation}
\label{eq:duality}
\int_0^T\big\langle \psi,\vartheta\big\rangle_{S_{k+1},S_{-k-1}}\dt=\big\langle \psi^{\rm i},\theta(0)\big\rangle_{S_k,S_{-k}}-
\nu\int_0^T\big\langle \Delta_{-k}\theta\big|_\Sigma,b_\tau\big\rangle_{H^{-k-\frac12}(\Sigma),H^{k+\frac12}(\Sigma)}\dt,
\end{equation}
where (see identities \eqref{def:boundary_cond}):
$$b_\tau=\mathsf T_{k+1}g_n-g_\tau+\sum_{j=1}^N\varGamma_j \frac{\partial\xi_j}{\partial n}\bigg|_\Sigma.$$
\end{prop}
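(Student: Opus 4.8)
\textbf{Proof plan for Proposition~\ref{prop:def_transposition}.}

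The plan is to establish the duality identity \eqref{eq:duality} by testing the equation for $\psi$ against the solution $\theta$ of the backward adjoint problem and integrating by parts in time, with all spatial pairings handled via the Gelfand-triple structure built in Section~\ref{SEC:geometry} and the operators of Section~\ref{SEC:evol_stokes}. First I would recall that, since $k$ is negative and $f_S=0$, the function $\psi$ solves \eqref{weak_sol_nonhomo:1}, i.e. $\partial_t\psi+\nu\mathsf A^S_{k+1}\psi=\nu\mathsf A_{k+1}^S\mathsf L_{k+1}^S(g_n,g_\tau,\varGamma)$ in $S_{k-1}$, with $\psi(0)=\psi_0^{\rm i}=\psi^{\rm i}$, while $\theta\in S_{-k}(T)$ solves $-\partial_t\theta+\nu\mathsf A^S_{-k+1}\theta=\vartheta$ with $\theta(T)=0$. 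The two regularity levels are dual: $\psi\in L^2(0,T;S_{k+1})$ and $\vartheta\in L^2(0,T;S_{-k-1})$ with $S_{k+1}$ and $S_{-k-1}$ in duality through the pivot $S_0$, so the left-hand integrand $\langle\psi,\vartheta\rangle_{S_{k+1},S_{-k-1}}$ is integrable; similarly $\partial_t\psi\in L^2(0,T;S_{k-1})$ pairs with $\theta\in L^2(0,T;S_{-k+1})$ and $\partial_t\theta\in L^2(0,T;S_{-k-1})$ pairs with $\psi\in L^2(0,T;S_{k+1})$.

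Next I would compute $\frac{d}{dt}\langle\psi(t),\theta(t)\rangle_{S_k,S_{-k}}$. The product rule in the Gelfand triple gives
\[
\frac{d}{dt}\langle\psi,\theta\rangle_{S_k,S_{-k}}=\langle\partial_t\psi,\theta\rangle_{S_{k-1},S_{-k+1}}+\langle\psi,\partial_t\theta\rangle_{S_{k+1},S_{-k-1}}.
\]
Substituting $\partial_t\psi=-\nu\mathsf A^S_{k+1}\psi+\nu\mathsf A_{k+1}^S\mathsf L_{k+1}^S(g_n,g_\tau,\varGamma)$ and $\partial_t\theta=\nu\mathsf A^S_{-k+1}\theta-\vartheta$, the two Stokes-operator terms $-\nu\langle\mathsf A^S_{k+1}\psi,\theta\rangle$ and $\nu\langle\psi,\mathsf A^S_{-k+1}\theta\rangle$ must cancel. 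This cancellation is the point where one uses that $\mathsf A^S_{k+1}$ and $\mathsf A^S_{-k+1}$ are the adjoint pair of isometries associated to the chain $\{S_k\}$ (the self-adjointness built into Appendix~\ref{gelf_triple}); one has $\langle\mathsf A^S_{k+1}\psi,\theta\rangle_{S_{k-1},S_{-k+1}}=\langle\psi,\mathsf A^S_{-k+1}\theta\rangle_{S_{k+1},S_{-k-1}}$ for $\psi\in S_{k+1}$, $\theta\in S_{-k+1}$, valid here for a.e.\ $t$. After cancellation,
\[
\frac{d}{dt}\langle\psi,\theta\rangle_{S_k,S_{-k}}=-\langle\psi,\vartheta\rangle_{S_{k+1},S_{-k-1}}+\nu\langle\mathsf A_{k+1}^S\mathsf L_{k+1}^S(g_n,g_\tau,\varGamma),\theta\rangle.
\]
Integrating from $0$ to $T$ and using $\psi(0)=\psi^{\rm i}$, $\theta(T)=0$ yields
\[
-\langle\psi^{\rm i},\theta(0)\rangle_{S_k,S_{-k}}=-\int_0^T\langle\psi,\vartheta\rangle\dt+\nu\int_0^T\langle\mathsf A_{k+1}^S\mathsf L_{k+1}^S(g_n,g_\tau,\varGamma),\theta\rangle\dt,
\]
which rearranges to the desired formula once the last term is rewritten as a boundary pairing.

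The main obstacle, and the step requiring the most care, is identifying
\[
\nu\int_0^T\langle\mathsf A_{k+1}^S\mathsf L_{k+1}^S(g_n,g_\tau,\varGamma),\theta\rangle\dt=-\nu\int_0^T\big\langle\Delta_{-k}\theta\big|_\Sigma,b_\tau\big\rangle_{H^{-k-\frac12}(\Sigma),H^{k+\frac12}(\Sigma)}\dt.
\]
Here I would move $\mathsf A_{k+1}^S$ onto $\theta$ by adjointness, $\langle\mathsf A_{k+1}^S\mathsf L_{k+1}^S(\cdots),\theta\rangle=\langle\mathsf L_{k+1}^S(\cdots),\mathsf A^S_{-k+1}\theta\rangle$ (again using the adjoint pair), and then use the defining property of the lifting operator from Definitions~\ref{def_stuff}–\ref{def:Lk}. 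For positive test regularity $-k+1\geq 2$ one applies Lemma~\ref{reg_Lk}/Lemma~\ref{LEM:lift_stream} and integration by parts: $\mathsf L_{k+1}^S(g_n,g_\tau,\varGamma)$ is essentially the biharmonic lift plus the circulation functions $\xi_j$, and pairing $\Delta$ of it (which is harmonic) against a test function in $S_{-k+1}$ produces exactly the boundary term carrying the Neumann data $b_\tau=\mathsf T_{k+1}g_n-g_\tau+\sum_j\varGamma_j\partial_n\xi_j|_\Sigma$, tested against the trace of $\Delta_{-k}\theta$, because $\Delta_{-k}=-\mathsf A^S_{-k+1}$-composition identifies $-\mathsf A^S_{-k+1}\theta$ with $\Delta_{-k}\theta$ via Lemma~\ref{lem:sapce_S2} and the definition of $\Delta_k$. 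The technical subtlety is keeping track of the exact Sobolev indices of the boundary traces (the conventions \eqref{rem:brackets} and the indices $I_1,I_2,J_1,J_2$ of \eqref{def_indices}) and verifying that $\Delta_{-k}\theta|_\Sigma$ indeed lies in $H^{-k-\frac12}(\Sigma)$ so the pairing with $b_\tau\in H^{k+\frac12}(\Sigma)$ makes sense; this is where one invokes Remark~\ref{regul_V2}, Lemma~\ref{reg_Lk}, and the regularity assumption $\Sigma\in\mathcal C^{J_2(k),1}$ already imposed in Definition~\ref{def:stokes_unhom}. Once this boundary identity is in hand, the proof is complete.
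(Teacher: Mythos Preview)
Your overall strategy is exactly the paper's: pair \eqref{weak_sol_nonhomo:1} against $\theta$, integrate by parts in time, use the self-adjointness of the chain $\{\mathsf A^S_k\}$ to cancel the Stokes terms and to shift $\mathsf A^S_{k+1}$ onto $\theta$ in the lifting term, and then identify the remaining pairing as a boundary integral. The left-hand-side computation you outline matches \eqref{left_hand} verbatim.

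The only place your write-up is loose is the identification of the boundary term. You describe $\mathsf L_{k+1}^S(g_n,g_\tau,\varGamma)$ as ``the biharmonic lift plus the circulation functions'' and speak of ``$\Delta$ of it (which is harmonic)''. But here $k+1\le 0$, so the tangential lift is \emph{not} given by the biharmonic construction of Definition~\ref{def_stuff}(1); it is the dual-space element of Definition~\ref{def_stuff}(2), namely \eqref{eq:def_Lktau}, and has no classical Laplacian. The paper therefore does not integrate by parts on a biharmonic function; it uses \eqref{eq:def_Lktau} directly, which already produces a boundary integral $\int_\Sigma(\mathsf P_{-k}\theta')\,g\,ds$ with $\theta'=\mathsf A^S_{-k+1}\theta$, and then invokes the factorization $\Delta_{-k}=-\mathsf P_{-k}\mathsf A^S_{-k+1}$ from \eqref{def_Bk} to recognize $\mathsf P_{-k}\mathsf A^S_{-k+1}\theta=-\Delta_{-k}\theta$. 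Note also that $\Delta_{-k}\theta$ and $-\mathsf A^S_{-k+1}\theta=\mathsf Q_1\Delta\theta$ are \emph{not} the same object (they differ by a harmonic function), so your sentence ``$\Delta_{-k}=-\mathsf A^S_{-k+1}$-composition identifies $-\mathsf A^S_{-k+1}\theta$ with $\Delta_{-k}\theta$'' needs the missing $\mathsf P_{-k}$. The $\xi_j$ contribution is handled as in \eqref{eq:last_one} by a genuine integration by parts using that $\xi_j$ is harmonic. With these corrections your argument coincides with the paper's.
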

\begin{proof}
Equation \eqref{weak_sol_nonhomo:1} holds in $L^2(0,T;S_{k-1})$, which is the dual space of $L^2(0,T;S_{-k+1})$. Forming the duality pairing  
of \eqref{weak_sol_nonhomo:1} with $\theta$ yields:
$$\int_0^T\big\langle\partial_t \psi,\theta\big\rangle_{S_{k-1},S_{-k+1}}\dt +\nu \int_0^T\big\langle{\mathsf A}^S_{k+1}\psi,\theta\big\rangle_{S_{k-1},S_{-k+1}}\dt  =
\nu\int_0^T\big\langle
\mathsf A_{k+1}^S\mathsf L_{k+1}^S(g_n,g_\tau,\varGamma),\theta\big\rangle_{S_{k-1},S_{-k+1}}\dt.$$
Integrating by parts and using the definition of the operator $\mathsf A_{k+1}^S$, we obtain for the term in the left hand side:
\begin{multline}
\label{left_hand}
\int_0^T\big\langle\partial_t \psi,\theta\big\rangle_{S_{k-1},S_{-k+1}}\dt +\nu \int_0^T\big\langle{\mathsf A}^S_{k+1}\psi,\theta\big\rangle_{S_{k-1},S_{-k+1}}\dt =\\
-\big\langle \psi^{\rm i},\theta(0)\big\rangle_{S_k,S_{-k}}+
\int_0^T\big\langle\psi,-\partial_t \theta+\mathsf A^S_{-k+1}\theta\big\rangle_{S_{k+1},S_{-k-1}}\dt.
\end{multline}
The right hand side term is dealt with as follows:
$$
\int_0^T\big\langle
\mathsf A_{k+1}^S\mathsf L_{k+1}^S(g_n,g_\tau,\varGamma),\theta\big\rangle_{S_{k-1},S_{-k+1}}\dt
=\int_0^T\big\langle
\mathsf L_{k+1}^S(g_n,g_\tau,\varGamma),\mathsf A_{-k+1}^S\theta\big\rangle_{S_{k+1},S_{-k-1}}\dt,$$
where, by definition (see \eqref{def_LkS}):
\begin{multline}
\label{right_hand}
\int_0^T\Big\langle
\mathsf L_{k+1}^S(g_n,g_\tau,\varGamma),\mathsf A_{-k+1}^S\theta\Big\rangle_{S_{k+1},S_{-k-1}}\dt\\=
\int_0^T \Big\langle\mathsf L_{k+1}^\tau({\mathsf T}_{k+1} g_n-g_\tau)+\sum_{j=1}^N\varGamma_j \xi_j,\mathsf A_{-k+1}^S\theta
\Big\rangle_{S_{k+1},S_{-k-1}}\dt.
\end{multline}
Since the index $k$ is negative, $\mathsf A_{-k+1}^S\theta$ belongs to $S_0$. It follows that for 
every $j\in\{1,\ldots,N\}$:
\begin{equation}
\label{eq:last_one}
\big\langle \xi_j,\mathsf A_{-k+1}^S\theta\big\rangle_{S_{k+1},S_{-k-1}}=-(\nabla  \xi_j,\nabla \Delta_{-k} \theta)_{\mathbf L^2(\F)}
=-\int_\Sigma \frac{\partial\xi_j}{\partial n}\Delta_{-k}\theta\ds.
\end{equation}
Recalling the definition \eqref{eq:def_Lktau} of the operator $\mathsf L_{k+1}^\tau$ and the factorization \eqref{def_Bk}
 of $\Delta_k$ and then gathering \eqref{left_hand}, \eqref{right_hand} 
and \eqref{eq:last_one}, we obtain indeed  \eqref{eq:duality} and complete the proof.
\end{proof}
Definition~\ref{def:stokes_unhom} and Propositions~\ref{exist:sol:stokes}, \ref{prop:uniqueness_def} and \ref{prop:def_transposition} can 
be restated in terms of the vorticity field.
\begin{definition}
\label{def:stokes_unhom_omega}
Let a positive real number $T$, a nonpositive integer $k$, a source term $f_V\in L^2(0,T;V_{k-1})$, an initial data $\omega^{\rm i}\in \bar V_k$ 
  and a triple $(g_n,g_\tau,\varGamma)\in G_{k+1}(T)$ be given. Define 
  $\omega^{\rm i}_0=\omega^{\rm i}-\mathsf L_k^V(g_n(0),g_\tau(0),\varGamma(0))$ when $k=-1,0$ and $\omega^{\rm i}_0=\omega^{\rm i}$ when $k\leqslant -2$. 
  Finally, assume that $\Sigma$ is of class $C^{J_2(k+1),1}$
and that the following compatibility condition holds:
\begin{equation}
\omega^{\rm i}_0\in V_k\qquad \text{if }k=0.
\end{equation}
We say that a function $\omega\in \bar V_k(T)$ is 
solution of the evolution $\omega-$Stokes problem   satisfying the Dirichlet boundary conditions on $\Sigma_T$ as described in \eqref{def:boundary_cond} 
by the triple  $(g_n,g_\tau,\varGamma)$ if:
\begin{enumerate}
\item When $k=-1$ or $k=0$: There exists $\omega_0\in V_k(T)$ solution to the homogeneous $\omega-$Stokes Cauchy problem
\begin{subequations}
\begin{alignat}{3}
\label{eq:stokes_omega}
\partial_t \omega_0+\nu {\mathsf A}^V_{k+1}\omega_0&=-\partial_t\mathsf L_{k+1}^V(g_n,g_\tau,\varGamma)+f_V&\qquad&\text{in }\F_T,\\
\omega_0(0)&=\omega^{\rm i}_0&&\text{in }\F,
\end{alignat}
\end{subequations}
such that $\omega=\omega_0+\mathsf L_{k+1}^V(g_n,g_\tau,\varGamma)$.
\item When $k\leqslant -2$:  The function $\omega$ is the solution to the Cauchy problem:
 \begin{subequations}
 \label{weak_sol_nonhomo_omega}
\begin{alignat}{3}
\label{weak_sol_nonhomo_omega:1}
\partial_t \omega +\nu {\mathsf A}^V_{k+1}\omega&=\nu\mathsf A_{k+1}^V\mathsf L_{k+1}^V(g_n,g_\tau,\varGamma)+f_V
&\qquad&\text{in }\F_T,\\
\omega (0)&=\omega^{\rm i}_0&&\text{in }\F.
\end{alignat}
\end{subequations}
\end{enumerate}
\end{definition}
\begin{prop}
\label{exist:sol:stokes_omega}
Every $\omega-$Stokes problem as stated in Definition~\ref{def:stokes_unhom_omega} admits a unique solution. 
Moreover, there exists a positive constant $\mathbf c_{[k,\F,\nu]}$   (uniform in $T$) such that the solution 
$\omega\in \bar S_k(T)$ satisfies the estimate:
\begin{equation}
\|\omega\|_{\bar V_k(T)}\leqslant \mathbf c_{[k,\F,\nu]}\Big[\|\omega^{\rm i}_0\|_{V_k}^2+\|f_V\|_{L^2(0,T;V_{k-1})}^2+\|(g_n,g_\tau,\varGamma)\|_{G_{k+1}(T)}^2\Big]^{\frac12}.
\end{equation}
\end{prop}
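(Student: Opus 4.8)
The plan is to follow verbatim the structure of the proof of Proposition~\ref{exist:sol:stokes}, transporting it from the stream‑function spaces to the vorticity spaces: combine the homogeneous well‑posedness of Proposition~\ref{sol_space_O} (now applied at the level of the spaces $V_k$) with the boundedness of the lifting operator $\mathsf L^V$ established in Lemma~\ref{LEM:lift_vorti}, splitting the argument along the two regimes of Definition~\ref{def:stokes_unhom_omega}. First I would fix a nonpositive integer $k$, assume $\Sigma$ of class $\mathcal C^{J_2(k+1),1}$ as required, and treat the cases $k\in\{-1,0\}$ and $k\leqslant-2$ separately.

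In the case $k=-1$ or $k=0$ the argument would run as follows. By Lemma~\ref{LEM:lift_vorti} the lifted field $\mathsf L^V_{k+1}(g_n,g_\tau,\varGamma)$ lies in $V^{\rm b}_k(T)$, hence $\partial_t\mathsf L^V_{k+1}(g_n,g_\tau,\varGamma)\in L^2(0,T;V^{\rm b}_{k-1})$ by \eqref{def_mathfrakVT}, and since $V^{\rm b}_{k-1}\subset V_{k-1}$ for $k-1\leqslant-1$ the right‑hand side of \eqref{eq:stokes_omega} belongs to $L^2(0,T;V_{k-1})$. Together with the hypothesis $\omega^{\rm i}_0\in V_k$ (automatic for $k=-1$, where $\bar V_{-1}=V_{-1}$ and $\mathsf L^V_{-1}(g_n(0),g_\tau(0),\varGamma(0))\in V^{\rm b}_{-1}\subset V_{-1}$; assumed for $k=0$), Proposition~\ref{sol_space_O} at regularity level $k$ would give a unique $\omega_0\in V_k(T)$ solving the homogeneous $\omega$‑Stokes Cauchy problem \eqref{eq:stokes_omega}, and then $\omega=\omega_0+\mathsf L^V_{k+1}(g_n,g_\tau,\varGamma)\in V_k(T)+V^{\rm b}_k(T)\subset\bar V_k(T)$ would be the desired solution, the last inclusion being read off Fig.~\ref{figS03} and Fig.~\ref{figV13} since $V_j\subset\bar V_j$ and $V^{\rm b}_j\subset\bar V_j$ for $j\in\{k-1,k,k+1\}$.

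In the case $k\leqslant-2$ I would use that, by Lemma~\ref{LEM:lift_vorti}, $\mathsf L^V_{k+1}(g_n,g_\tau,\varGamma)\in V^{\rm b}_k(T)=L^2(0,T;V^{\rm b}_{k+1})$ with $V^{\rm b}_{k+1}\subset V_{k+1}$, so that $\nu\mathsf A^V_{k+1}\mathsf L^V_{k+1}(g_n,g_\tau,\varGamma)\in L^2(0,T;V_{k-1})$ because $\mathsf A^V_{k+1}:V_{k+1}\to V_{k-1}$ is bounded; adding $f_V$ keeps the source in $L^2(0,T;V_{k-1})$, and since $\omega^{\rm i}_0=\omega^{\rm i}\in\bar V_k=V_k$, Proposition~\ref{sol_space_O} at level $k$ delivers the unique solution $\omega\in V_k(T)=\bar V_k(T)$ of \eqref{weak_sol_nonhomo_omega}. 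In both regimes uniqueness is inherited directly from the uniqueness in Proposition~\ref{sol_space_O}; the estimate would follow by combining the bound of Proposition~\ref{sol_space_O} applied to $\omega_0$ (resp.\ to $\omega$) with the uniform‑in‑$T$ boundedness of $\mathsf L^V_{k+1}:G_{k+1}(T)\to V^{\rm b}_k(T)$ from Lemma~\ref{LEM:lift_vorti} and the boundedness of $\partial_t$ (resp.\ of $\mathsf A^V_{k+1}$), via $\|\omega\|_{\bar V_k(T)}\leqslant\|\omega_0\|_{V_k(T)}+\|\mathsf L^V_{k+1}(g_n,g_\tau,\varGamma)\|_{V^{\rm b}_k(T)}$, the resulting constant being uniform in $T$ because each ingredient one is.

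I do not expect any genuine analytic difficulty here beyond what is already contained in Proposition~\ref{sol_space_O} and Lemma~\ref{LEM:lift_vorti}; the main obstacle will be purely the bookkeeping, i.e.\ verifying in each of the finitely many cases that the perturbed source term indeed lands in $L^2(0,T;V_{k-1})$ and that $\omega^{\rm i}_0\in V_k$ — which is exactly where the tailored construction of the spaces $V^{\rm b}_k$, $\bar V_k$ and of the indices $I_2,J_2$ in Subsection~\ref{SEC:Nonhomogeneous_vorticity} has to be brought to bear. Alternatively, the whole statement can be obtained at a stroke by applying the isometry $\bar\Delta_k$ (which equals $\Delta_k$ for $k\leqslant-1$) to Proposition~\ref{exist:sol:stokes} at level $k+1$, using the intertwining relations \eqref{extend-kneg_2}--\eqref{def_Bk} between the operators $\Delta_\bullet$, $\mathsf A^S_\bullet$, $\mathsf A^V_\bullet$ and the identity $\mathsf L^V_k=\bar\Delta_k\mathsf L^S_{k+1}$ from Definition~\ref{def:LkV}.
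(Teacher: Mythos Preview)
Your proposal is correct and is essentially the paper's own argument: the paper does not spell out a proof of Proposition~\ref{exist:sol:stokes_omega} but presents it as the vorticity restatement of Proposition~\ref{exist:sol:stokes}, obtained by combining Proposition~\ref{sol_space_O} with the boundedness of the lifting operator (Lemma~\ref{LEM:lift_vorti}). Your alternative via the isometry $\bar\Delta_k$ applied to Proposition~\ref{exist:sol:stokes} is equally in the spirit of the paper's treatment.
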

%
\begin{prop}
\label{prop:uniqueness_def_omega}
Let a positive real number $T$, an integer $k$, a source term $f_V\in L^2(0,T;V_{k-1})$, an initial data $\omega^{\rm i}\in \bar V_k$ 
  and a triple $(g_n,g_\tau,\varGamma)\in G_{k+1}(T)$ be given as in  Definition~\ref{def:stokes_unhom_omega}. Denote by  
  $\omega^k$   the solution 
  whose existence and uniqueness  in the space $\bar V_k(T)$ are asserted in Proposition~\ref{exist:sol:stokes_omega}.
  \par
Let  $k'$ be any integer lower than $k$ and, all other data remaining equal,  denote by $\omega^{k'}$ the corresponding solution in $\bar V_{k'}(T)$.
 Then $\omega^k=\omega^{k'}$.
\end{prop}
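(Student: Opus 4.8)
The plan is to mirror exactly the argument already carried out for the stream function in Proposition~\ref{prop:uniqueness_def}, transporting it to the vorticity side via the isometries $\bar\Delta_k$ and $\Delta_k$ described in Fig.~\ref{figS03} and Fig.~\ref{figV13}. As in the stream-function case, the statement is immediate whenever $k$ and $k'$ are both nonnegative (there $k\le 1$, so this means $k=0,1$ and $k'\le k$ nonnegative, i.e.\ $k=1,k'=0$) or both negative, because then the two definitions of solution coincide up to the already-established restriction properties of the operators $\mathsf A^V_j$ and $\mathsf L^V_{k+1}$. So the only case requiring genuine work is $k=0$, $k'=-1$, and I would compare $\omega^0$ and $\omega^{-1}$ directly.

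First I would write out what $\omega^0$ solves. By Definition~\ref{def:stokes_unhom_omega}, $\omega^0=\omega_0+\mathsf L_1^V(g_n,g_\tau,\varGamma)$ with $\omega_0\in V_0(T)$ solving the homogeneous $\omega$-Stokes problem with source $-\partial_t\mathsf L_1^V(g_n,g_\tau,\varGamma)+f_V$ and initial datum $\omega_0^{\rm i}=\omega^{\rm i}-\mathsf L_0^V(g_n(0),g_\tau(0),\varGamma(0))$; note $\mathsf L_0^V=\mathsf L_1^V$ on the relevant space by Lemma~\ref{LEM:lift_vorti}. Hence $\omega^0$ solves
\begin{alignat*}{3}
\partial_t\big(\omega^0-\mathsf L_1^V(g_n,g_\tau,\varGamma)\big)+\nu{\mathsf A}^V_1\big(\omega^0-\mathsf L_1^V(g_n,g_\tau,\varGamma)\big)&=-\partial_t\mathsf L_1^V(g_n,g_\tau,\varGamma)+f_V&\quad&\text{in }\F_T,\\
\omega^0(0)&=\omega^{\rm i}&&\text{in }\F.
\end{alignat*}
The key step — exactly the analogue of the displayed identity in the proof of Proposition~\ref{prop:uniqueness_def} — is then to justify that, since $\mathsf A^V_0$ extends $\mathsf A^V_1$ to $V_0$, since $\omega^0\in L^2(0,T;\bar V_1)\subset L^2(0,T;V_0)$, and since $\mathsf L^V_1$ is valued in $V^{\rm b}_1\subset\bar V_0$ with $V^{\rm b}_1$ sitting appropriately in $V_0$ (see Fig.~\ref{figS03}, Fig.~\ref{figV13}), one may legitimately split
$${\mathsf A}^V_1\big(\omega^0-\mathsf L_1^V(g_n,g_\tau,\varGamma)\big)={\mathsf A}^V_0\omega^0-{\mathsf A}^V_0\mathsf L_1^V(g_n,g_\tau,\varGamma).$$
Substituting this back, the $\partial_t\mathsf L_1^V$ terms cancel and $\omega^0$ is seen to solve $\partial_t\omega^0+\nu{\mathsf A}^V_0\omega^0=\nu\mathsf A^V_0\mathsf L_1^V(g_n,g_\tau,\varGamma)+f_V$ with $\omega^0(0)=\omega^{\rm i}$, which is precisely the Cauchy problem \eqref{weak_sol_nonhomo_omega} defining $\omega^{-1}$. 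By the uniqueness clause of Proposition~\ref{exist:sol:stokes_omega}, $\omega^0=\omega^{-1}$, and an obvious induction downward handles all $k'\le k$.

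The main obstacle I anticipate is the bookkeeping at the one delicate point above: making sure the linearity manipulation ${\mathsf A}^V_1(a-b)={\mathsf A}^V_0 a-{\mathsf A}^V_0 b$ is valid in the correct dual space. One must check that $b=\mathsf L_1^V(g_n,g_\tau,\varGamma)$ really lies in a space on which $\mathsf A^V_0$ acts (i.e.\ in $\bar V_0$, with the lifting valued in $V^{\rm b}_1\subset V^{\rm b}_0$ by the nesting established in Definition~\ref{def:LkV} and Lemma~\ref{LEM:lift_vorti}) and that $\mathsf A^V_0$ genuinely restricts to $\mathsf A^V_1$ on $V_1$ — both facts are guaranteed by the general Gelfand-triple construction of Appendix~\ref{gelf_triple} and by the extension identities \eqref{eq:Pkextend}, but they should be cited explicitly. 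Apart from that, the proof is a verbatim transcription of the stream-function argument, using that $\bar\Delta_0$ (resp.\ $\Delta_{-1}$) intertwines the two Cauchy problems on the $S$-side with those on the $V$-side, so no new estimates are needed.
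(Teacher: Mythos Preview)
Your overall strategy is exactly the paper's implicit one (the paper does not even write out a proof of this proposition, merely stating it as the vorticity-side restatement of Proposition~\ref{prop:uniqueness_def}), and the skeleton of your argument is correct. However, your index bookkeeping is off by one, and this makes the displayed reasoning wrong as written.

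In Definition~\ref{def:stokes_unhom_omega} the split between the two kinds of solution is at $k\in\{-1,0\}$ (lifting method) versus $k\leqslant -2$ (direct Cauchy problem \eqref{weak_sol_nonhomo_omega}). Hence the only nontrivial transition is $k=-1$, $k'=-2$, not $k=0$, $k'=-1$ as you wrote: for $k=0$ and $k'=-1$ both solutions are defined by the lifting method and the equality is the ``obvious'' case you yourself dismissed. Correspondingly, the problem \eqref{weak_sol_nonhomo_omega} you invoke does not define $\omega^{-1}$; it defines $\omega^{-2}$. Your inclusion $L^2(0,T;\bar V_1)\subset L^2(0,T;V_0)$ is also false: the paper stresses (just below \eqref{def_mathcal_Vk}) that \emph{all} the spaces $\bar V_k$ for $k\geqslant 0$ are subspaces of the dual space $V_{-1}$, not of $V_0$. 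The correct inclusion needed at the actual transition $k=-1\to k'=-2$ is $\bar V_0\subset V_{-1}$ (which does hold, by Theorem~\ref{THEO:decompV0}), and the lifting $\mathsf L_0^V$ takes values in $V_0^{\rm b}\subset V_{-1}$; then $\mathsf A^V_{-1}$ extends $\mathsf A^V_0$ and the argument goes through verbatim. Once you shift every index down by one in your write-up, the proof is correct and identical in spirit to that of Proposition~\ref{prop:uniqueness_def}.
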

\begin{prop}
\label{prop:def_transposition_omega}
Let data be given as in Definition~\ref{def:stokes_unhom_omega} with $k\leqslant -2$ and $f_V=0$. Denote by $\omega$ the unique solution 
to the corresponding Cauchy nonhomogeneous $\omega-$Stokes problem \eqref{weak_sol_nonhomo_omega}. Then for 
every $\vartheta\in L^2(0,T;V_{-k-1})$  and $\theta\in V_{-k}(T)$ solution to the backward Cauchy problem:
\begin{alignat*}{3}
-\partial_t \theta+\nu {\mathsf A}^V_{-k+1}\theta &=\vartheta
&\qquad&\text{in }\F_T,\\
 \theta(T)&=0&&\text{in }\F,
\end{alignat*}
the following identity holds:
\begin{equation}
\label{eq:duality_1}
\int_0^T\big\langle \omega,\vartheta\big\rangle_{V_{k+1},V_{-k-1}}\dt=\big\langle \omega^{\rm i},\theta(0)\big\rangle_{V_k,V_{-k}}-
\nu\int_0^T\big\langle A_{-k+1}^V\theta\big|_\Sigma,b_\tau\big\rangle_{H^{-k-\frac32}(\Sigma),H^{k+\frac32}(\Sigma)}\dt,
\end{equation}
where (see identities \eqref{def:boundary_cond}):
$$b_\tau=\mathsf T_{k+2}g_n-g_\tau+\sum_{j=1}^N\varGamma_j \frac{\partial\xi_j}{\partial n}\bigg|_\Sigma.$$
\end{prop}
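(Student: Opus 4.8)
The plan is to deduce this from its stream-function analogue, Proposition~\ref{prop:def_transposition}, by transporting the whole picture through the isometries $\Delta_k$, $\mathsf P_k$, $\mathsf Q_k$ of Section~\ref{SEC:geometry} (cf. Fig.~\ref{diag_2}). The first step is to identify $\omega$: with $k\leqslant-2$ fixed and $f_V=0$, let $\psi\in\bar S_{k+1}(T)=S_{k+1}(T)$ be the (unique, by Proposition~\ref{exist:sol:stokes}) solution of the nonhomogeneous $\psi$-Stokes problem of Definition~\ref{def:stokes_unhom} at the negative level $k+1$, with the same boundary data $(g_n,g_\tau,\varGamma)\in G_{k+1}(T)$, with $f_S=0$ and with initial condition $\psi^{\rm i}=(\Delta_k)^{-1}\omega^{\rm i}\in S_{k+1}$. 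I claim $\omega=\Delta_k\psi$. Indeed, applying $\Delta_{k-1}$ to Equation~\eqref{weak_sol_nonhomo:1} written at level $k+1$, and using the intertwining identity $\Delta_{k-1}\mathsf A^S_{k+2}=\mathsf A^V_{k+1}\Delta_{k+1}$ — a direct consequence of \eqref{extend-kneg_2} and the factorisation $\Delta_j=-\mathsf P_j\mathsf A^S_{j+1}$ of \eqref{def_Bk} — together with the identity $\mathsf L^V_{k+1}=\Delta_{k+1}\mathsf L^S_{k+2}$ of Definition~\ref{def:LkV}, one checks that $\Delta_k\psi$ solves the Cauchy problem \eqref{weak_sol_nonhomo_omega} with $f_V=0$ and initial datum $\Delta_k\psi^{\rm i}=\omega^{\rm i}$; uniqueness (Proposition~\ref{exist:sol:stokes_omega}) then yields $\omega=\Delta_k\psi$. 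Throughout, one repeatedly invokes the ``extension'' properties \eqref{eq:Pkextend} and the second assertion of Lemma~\ref{LEM:prop_delta} to identify $\Delta_k$, $\Delta_{k\pm1}$ and the operators $\mathsf Q_\bullet$ on the relevant subspaces.

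The second step is to build the test pair for Proposition~\ref{prop:def_transposition}. Given $\vartheta\in L^2(0,T;V_{-k-1})$ and $\theta\in V_{-k}(T)$ solving $-\partial_t\theta+\nu\mathsf A^V_{-k+1}\theta=\vartheta$, $\theta(T)=0$, set $\vartheta'=\mathsf Q_{-k-1}\vartheta\in L^2(0,T;S_{-k-2})$ and $\theta'=\mathsf Q_{-k}\theta\in S_{-k-1}(T)$ (both well defined, the $\mathsf Q_\bullet$ being isometries between the two chains of spaces). Applying the projector to the backward $\theta$-equation and using $\mathsf A^S_{-k}\mathsf Q_{-k+1}=\mathsf Q_{-k-1}\mathsf A^V_{-k+1}$ (from \eqref{extend-kneg_2}) shows that $\theta'$ solves $-\partial_t\theta'+\nu\mathsf A^S_{-k}\theta'=\vartheta'$, $\theta'(T)=0$, so $(\vartheta',\theta')$ is exactly the admissible test pair of Proposition~\ref{prop:def_transposition} at level $\ell=k+1$; the Sobolev indices match ($-\ell-\tfrac12=-k-\tfrac32$, $\ell+\tfrac12=k+\tfrac32$) and the corresponding $b_\tau$ coincides with the one in the statement. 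Plugging it into identity \eqref{eq:duality} gives
$$\int_0^T\big\langle\psi,\mathsf Q_{-k-1}\vartheta\big\rangle_{S_{k+2},S_{-k-2}}\dt=\big\langle\psi^{\rm i},\mathsf Q_{-k}\theta(0)\big\rangle_{S_{k+1},S_{-k-1}}-\nu\int_0^T\big\langle\Delta_{-k-1}\theta'\big|_\Sigma,b_\tau\big\rangle_{H^{-k-\frac32}(\Sigma),H^{k+\frac32}(\Sigma)}\dt.$$

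The last step is to rewrite the three terms on the vorticity side. By the adjointness relation \eqref{alternQk_2} of Lemma~\ref{LEM:prop:delta_negatif}, applied with indices $-k-1$ and $-k$ (both $\geqslant1$ since $k\leqslant-2$) — which read $\Delta_{k+1}=-\mathsf Q_{-k-1}^\ast$ and $\Delta_k=-\mathsf Q_{-k}^\ast$ — the left-hand side equals $-\int_0^T\langle\Delta_{k+1}\psi,\vartheta\rangle_{V_{k+1},V_{-k-1}}\dt=-\int_0^T\langle\omega,\vartheta\rangle_{V_{k+1},V_{-k-1}}\dt$ and the first term on the right equals $-\langle\Delta_k\psi^{\rm i},\theta(0)\rangle_{V_k,V_{-k}}=-\langle\omega^{\rm i},\theta(0)\rangle_{V_k,V_{-k}}$. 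For the boundary term, since $\theta(t)\in V_{-k+1}$ for a.e. $t$, the factorisation $\Delta_{-k-1}=-\mathsf A^V_{-k+1}\mathsf P_{-k+1}$ of \eqref{def_Bk} together with $\mathsf P_{-k+1}\mathsf Q_{-k+1}={\rm Id}$ gives $\Delta_{-k-1}\theta'=-\mathsf A^V_{-k+1}\theta$ in $L^2(0,T;V_{-k-1})$, hence $\Delta_{-k-1}\theta'|_\Sigma=-\mathsf A^V_{-k+1}\theta|_\Sigma$ in $H^{-k-\frac32}(\Sigma)$; the required boundary regularity ($\Sigma$ of class $\mathcal C^{J_2(k+1),1}$) is precisely what makes all these traces live in the stated spaces. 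Substituting the three identities into the displayed equality produces exactly \eqref{eq:duality_1}. The only delicate point is the careful bookkeeping of the index shifts and the systematic use of the extension identities to make sense of operators such as $\Delta_{-k-1}$ or $\mathsf A^V_{-k+1}$ on functions that a priori belong only to a larger space in the chain. Alternatively one could repeat the proof of Proposition~\ref{prop:def_transposition} directly in the vorticity variables — pairing \eqref{weak_sol_nonhomo_omega:1} with $\theta$, integrating by parts in time, and treating the circulation contribution $\sum_j\varGamma_j\zeta_j$ of $\mathsf L^V_{k+1}$ via the defining formula \eqref{vortic_circu} — but the reduction above is shorter.
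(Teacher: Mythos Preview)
Your proof is correct. The bookkeeping is sound: the intertwining $\Delta_{k-1}\mathsf A^S_{k+2}=\mathsf A^V_{k+1}\Delta_{k+1}$ follows from \eqref{def_Bk} as you note, the transported backward problem for $\theta'=\mathsf Q_{-k}\theta$ is exactly the one required by Proposition~\ref{prop:def_transposition} at level $\ell=k+1$, and the three translations in the last step are the right applications of \eqref{alternQk_2} and \eqref{def_Bk}. The signs cancel correctly when you multiply the resulting identity by $-1$.

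Your route is genuinely different from the paper's. The paper does not transport through the isometries; instead it takes the ``alternative'' you sketch at the end: it pairs \eqref{weak_sol_nonhomo_omega:1} directly with $\theta$ in $L^2(0,T;V_{-k+1})$, handles the time-derivative and the left-hand side exactly as in the proof of Proposition~\ref{prop:def_transposition}, and then manipulates the boundary source term $\langle\mathsf A^V_{k+1}\mathsf L^V_{k+1}(g_n,g_\tau,\varGamma),\theta\rangle$ via \eqref{eq:def_Akdual}, \eqref{def_LVK}, \eqref{alternQk_2} and \eqref{extend-kneg_2} to reduce it to the stream-function expression already computed in \eqref{right_hand}, finishing with the identity $-\Delta_{-k-1}\mathsf Q_{-k+1}=\mathsf A^V_{-k+1}$. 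Your approach is cleaner in that it treats the whole identity at once rather than just the lifting term, and it makes the equivalence of the $\psi$- and $\omega$-formulations do the work; the paper's approach is more self-contained on the vorticity side and avoids setting up the auxiliary $\psi$-problem. Both rely on the same algebraic identities from Section~\ref{SEc:Stokes_SV}.
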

\begin{proof}
We form the duality pairing of \eqref{weak_sol_nonhomo_omega:1} in $L^2(0,T;V_{k-1})$ with $\theta$ in $L^2(0,T;V_{-k+1})$, then the proof follows mainly 
the lines of the proof of Proposition~\ref{prop:def_transposition}. Let us focus on the right hand side term only, namely:
$$\int_0^T\Big\langle \mathsf A^V_{k+1}\mathsf L^V_{k+1}(g_n,g_\tau,\varGamma),\theta\Big\rangle_{V_{k-1},V_{-k+1}}\ds.$$
According to \eqref{eq:def_Akdual}, the duality pairing can be turned into:
$$\Big\langle \mathsf A^V_{k+1}\mathsf L^V_{k+1}(g_n,g_\tau,\varGamma),\theta\Big\rangle_{V_{k-1},V_{-k+1}}=
\Big\langle \mathsf L^V_{k+1}(g_n,g_\tau,\varGamma),\mathsf A^V_{-k+1}\theta\Big\rangle_{V_{k+1},V_{-k-1}}.$$
Then, using the definition \eqref{def_LVK} of $\mathsf L_{k+1}^V$ and the second formula in Lemma \ref{LEM:prop:delta_negatif}, we obtain:
$$\Big\langle \mathsf L^V_{k+1}(g_n,g_\tau,\varGamma),\mathsf A^V_{-k+1}\theta\Big\rangle_{V_{k+1},V_{-k-1}}
=\Big\langle\mathsf L^S_{k+2}(g_n,g_\tau,\varGamma),\mathsf Q_{-k-1} A^V_{-k+1}\theta\Big\rangle_{S_{k+2},S_{-k-2}}.$$
Resting on formula \eqref{extend-kneg_2}, the last term is proven to be equal to:
$$ \Big\langle\mathsf L^S_{k+2}(g_n,g_\tau,\varGamma),A_{-k}^S \mathsf Q_{-k+1}  \theta\Big\rangle_{S_{k+2},S_{-k-2}},$$
and therefore it is very much alike the left hand side in \eqref{right_hand}. The proof is then completed after noticing that $-\Delta_{-k-1}\mathsf Q_{-k+1}=\mathsf A^V_{-k+1}$ 
(see for instance Fig.~\ref{diag_2}). 
\end{proof}
\section{Navier-Stokes equations in nonprimitive variables}
\label{SEC:Navier-Stokes}
%
%
\subsection{Estimates for the nonlinear advection term}
Following our rules of notation, we define for every positive integer $k$ and every positive time $T$, the time dependent space for the Kirchhoff potential:
\begin{equation}
\label{def:HKT}
\mathfrak H^k_K(T)=H^1(0,T;\mathfrak H^{k-1}_K)\cap \mathcal C([0,T];\mathfrak H^k_K)\cap L^2(0,T;\mathfrak H^{k+1}_K),
\end{equation}
where we recall that the spaces $\mathfrak H^k_K$ were defined in \eqref{def_HK}.
Then, we aim at establishing some (very classical) estimates for the nonlinear advection term of the 
Navier-Stokes equations. Denoting by $u$ a smooth 
velocity field 
defined in $\F$, an integration par parts yields  the equality:
$$(\nabla u u,\nabla^\perp\theta)_{\mathbf L^2(\F)}=(D^2\theta u,u^\perp)_{\mathbf L^2(\F)}\forallt \theta\in S_1.$$
The main estimates satisfied by the right hand side term are summarized in the lemma below:
%
\begin{lemma}
\label{estim_non_linear_term}
Let a stream function $\bar\psi$ be in $\bar S_1$ (this space being defined in \eqref{decomp_S02}) and a Kirchhoff 
potential $\varphi$ be in $\mathfrak H^2_K$ (defined in \eqref{def_HK}). Then the linear form:
\begin{equation}
\label{def_Lambda_S}
\varLambda^S_1(\bar\psi,\varphi):\theta\in S_1\longmapsto -(D^2\theta\nabla\bar\psi,\nabla^\perp\bar\psi)_{\mathbf L^2(\F)}
+(D^2\theta\nabla^\perp\varphi,\nabla^\perp\bar\psi)_{\mathbf L^2(\F)}-(D^2\theta\nabla\varphi,\nabla\bar\psi)_{\mathbf L^2(\F)}\in\mathbb R,
\end{equation}
is well defined and bounded. Moreover, there exists a positive constant $\mathbf c_{[\F,\nu]}$ such that:
\begin{subequations}
\begin{enumerate}
\item For every $\theta\in S_1$:
\begin{multline}
\label{estim:NLT}
|\langle \varLambda^S_1(\bar\psi+\theta,\varphi), \theta\rangle_{S_{-1},S_1}|\leqslant \frac{\nu}{2}\|\theta\|_{S_1}^2
+\mathbf c_{[\F,\nu]}\Big(\|\bar\psi\|^2_{\bar S_1}\|\bar\psi\|^{2}_{S_0}+\|\varphi\|_{\mathfrak H^2_K}^2\|\varphi\|_{\mathfrak H^1_K}^2\Big)\|\theta\|_{S_0}^2\\
+\mathbf c_{[\F,\nu]}\Big(\|\bar\psi\|^2_{\bar S_1}\|\bar\psi\|^{2}_{S_0}+\|\varphi\|_{\mathfrak H^2_K}^2\|\varphi\|_{\mathfrak H^1_K}^2\Big).
\end{multline}
\item 
\label{point_2}
For every pair $(\theta_1,\theta_2)\in S_1\times S_1$:
\begin{multline}
\label{eq:diff_theta}
|\langle \varLambda^S_1(\bar\psi+\theta_2,\varphi)-\varLambda^S_1(\bar\psi+\theta_1,\varphi), \varTheta\rangle_{S_{-1},S_1}|
\leqslant \frac{\nu}{2}\|\varTheta\|_{S_1}^2\\
+\mathbf c_{[\F,\nu]}\Big[\|\bar\psi\|^2_{\bar S_1}\|\bar\psi\|^{2}_{S_0}+\|\varphi\|_{\mathfrak H^2_K}^2\|\varphi\|_{\mathfrak H^1_K}^2
+\|\theta_1\|^2_{S_1}\|\theta_1\|^{2}_{S_0}\Big]\|\varTheta\|_{S_0}^2,
\end{multline}
where $\varTheta=\theta_2-\theta_1$.
\end{enumerate}
If  for some positive real number $T$, $\bar\psi$ belongs to ${\mathcal C}([0,T];S_0)\cap L^2(0,T;\bar S_1)$ and $\varphi$ to ${\mathcal C}([0,T];\mathfrak H_K^1)\cap L^2(0,T;\mathfrak H_K^2)$  then $\varLambda^S_1(\bar\psi,\varphi)$ is 
in $L^2(0,T;S_{-1})$ 
and there exists a positive constant $\mathbf c_\F$ such that:
\begin{equation}
\label{estim:NLT2}
\|\varLambda^S_1(\bar\psi,\varphi)\|_{L^2(0,T;S_{-1})}\leqslant \mathbf c_\F\Big[\|\bar\psi\|_{{\mathcal C}([0,T];S_0)}\|\bar\psi\|_{L^2(0,T;\bar S_1)}
+
\|\varphi\|_{{\mathcal C}([0,T];\mathfrak H_K^1)}\|\varphi\|_{L^2(0,T;\mathfrak H_K^2)}\Big].
\end{equation}
\end{subequations}
\end{lemma}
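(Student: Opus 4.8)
The plan is to estimate the trilinear form $(D^2\theta\, a, b)_{\mathbf L^2(\F)}$ for vector fields $a,b$ built from $\nabla\bar\psi$, $\nabla^\perp\bar\psi$, $\nabla\varphi$, $\nabla^\perp\varphi$, exploiting that the loss of one derivative on $\theta$ is compensated by the fact that $\theta\in S_1\hookrightarrow H^2(\F)$. The central tool is a Ladyzhenskaya-type inequality in dimension two: $\|v\|_{\mathbf L^4(\F)}\leqslant \mathbf c_\F\|v\|_{\mathbf L^2(\F)}^{1/2}\|v\|_{\mathbf H^1(\F)}^{1/2}$, together with the fact that $\|D^2\theta\|_{\mathbf L^2(\F)}\leqslant \|\theta\|_{S_1}$ (equivalence of norms in $S_1$, recalled after \eqref{split_S1}) and $\|\nabla\theta\|_{\mathbf L^2(\F)}=\|\theta\|_{S_0}$. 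First I would record, by Hölder with exponents $(2,4,4)$, that
$$|(D^2\theta\, a,b)_{\mathbf L^2(\F)}|\leqslant \|D^2\theta\|_{\mathbf L^2(\F)}\|a\|_{\mathbf L^4(\F)}\|b\|_{\mathbf L^4(\F)}\leqslant \|\theta\|_{S_1}\|a\|_{\mathbf L^4(\F)}\|b\|_{\mathbf L^4(\F)},$$
which already proves that $\varLambda^S_1(\bar\psi,\varphi)$ is a bounded linear form on $S_1$ once $\bar\psi\in\bar S_1\hookrightarrow H^2(\F)$ and $\varphi\in\mathfrak H^2_K\hookrightarrow H^2(\F)$, since then all the vector fields $a,b$ lie in $\mathbf H^1(\F)\hookrightarrow\mathbf L^4(\F)$.

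For the quadratic estimate \eqref{estim:NLT} I would expand $\varLambda^S_1(\bar\psi+\theta,\varphi)$ in the bilinear arguments and sort the resulting terms by how many factors carry $\theta$. Writing $b(\psi)=\nabla\psi$ or $\nabla^\perp\psi$ generically, the terms are of three types: those purely in $\bar\psi,\varphi$ (bounded by $\mathbf c_{[\F,\nu]}(\|\bar\psi\|_{\bar S_1}^2\|\bar\psi\|_{S_0}^2+\|\varphi\|_{\mathfrak H^2_K}^2\|\varphi\|_{\mathfrak H^1_K}^2)$ via the interpolation inequality $\|\nabla\bar\psi\|_{\mathbf L^4(\F)}^2\leqslant\mathbf c_\F\|\nabla\bar\psi\|_{\mathbf L^2(\F)}\|\nabla\bar\psi\|_{\mathbf H^1(\F)}\leqslant\mathbf c_\F\|\bar\psi\|_{S_0}\|\bar\psi\|_{\bar S_1}$, and likewise for $\varphi$); those linear in $\theta$; and the term quadratic in $\theta$, namely $-(D^2\theta\nabla\theta,\nabla^\perp\theta)$. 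The key point in dimension two is that $(D^2\theta\nabla\theta,\nabla^\perp\theta)_{\mathbf L^2(\F)}=0$ for $\theta\in S_1$: integrating by parts and using $\theta|_\Sigma$ constant on each component with $\partial_n\theta|_\Sigma=0$, one gets $\int_\F D^2\theta\nabla\theta\cdot\nabla^\perp\theta\dx=\tfrac12\int_\F\nabla(|\nabla\theta|^2)\cdot\nabla^\perp\theta\dx=-\tfrac12\int_\F|\nabla\theta|^2\,\nabla\cdot\nabla^\perp\theta\dx+\text{bdry}=0$ since $\nabla\cdot\nabla^\perp\theta=0$ and the boundary term vanishes. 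The linear-in-$\theta$ terms are handled by Young's inequality after writing, e.g., $|(D^2\theta\nabla\theta,\nabla^\perp\bar\psi)|\leqslant\|\theta\|_{S_1}\|\nabla\theta\|_{\mathbf L^4(\F)}\|\nabla^\perp\bar\psi\|_{\mathbf L^4(\F)}\leqslant\mathbf c_\F\|\theta\|_{S_1}\cdot\|\theta\|_{S_0}^{1/2}\|\theta\|_{S_1}^{1/2}\cdot\|\bar\psi\|_{S_0}^{1/2}\|\bar\psi\|_{\bar S_1}^{1/2}$, then absorbing the $\|\theta\|_{S_1}^{3/2}$ power: by Young with exponents $(4/3,4)$ this is $\leqslant\tfrac{\nu}{C}\|\theta\|_{S_1}^2+\mathbf c_{[\F,\nu]}\|\theta\|_{S_0}^2\|\bar\psi\|_{S_0}^2\|\bar\psi\|_{\bar S_1}^2$, which is exactly the shape of the claimed bound. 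Summing over the finitely many terms and distributing the constant $\nu/2$ gives \eqref{estim:NLT}.

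For the difference estimate \eqref{eq:diff_theta} I would use multilinearity: $\varLambda^S_1(\bar\psi+\theta_2,\varphi)-\varLambda^S_1(\bar\psi+\theta_1,\varphi)$, tested against $\varTheta=\theta_2-\theta_1$, produces terms each linear in $\varTheta$ (from the $\nabla\cdot,\nabla^\perp\cdot$ slots) with the remaining slot occupied by $\bar\psi+\theta_1$, $\bar\psi+\theta_2$, $\varphi$, or $\varTheta$ itself. The genuinely quadratic-in-$\varTheta$ contribution is again $-(D^2\varTheta\nabla\varTheta,\nabla^\perp\varTheta)=0$ by the same cancellation, and the term $-(D^2\varTheta\nabla(\bar\psi+\theta_1),\nabla^\perp\varTheta)+(\text{symmetric})$ of type ``one $\varTheta$ in $D^2$, one in a first-order slot, the third slot $\bar\psi+\theta_1$'' is bounded by $\|\varTheta\|_{S_1}\|\nabla\varTheta\|_{\mathbf L^4}\|\nabla(\bar\psi+\theta_1)\|_{\mathbf L^4}\leqslant\tfrac{\nu}{4}\|\varTheta\|_{S_1}^2+\mathbf c_{[\F,\nu]}(\|\bar\psi\|_{\bar S_1}^2\|\bar\psi\|_{S_0}^2+\|\theta_1\|_{S_1}^2\|\theta_1\|_{S_0}^2)\|\varTheta\|_{S_0}^2$ after Young; the remaining terms (one $\varTheta$ only, other slots among $\bar\psi,\theta_1,\varphi$ — note the $\theta_2$ dependence disappears after the subtraction leaving $\theta_1$) are estimated the same way, yielding the bracketed factor in \eqref{eq:diff_theta}. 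Finally the time-dependent bound \eqref{estim:NLT2}: from the first display of this paragraph, $\|\varLambda^S_1(\bar\psi,\varphi)(t)\|_{S_{-1}}\leqslant\mathbf c_\F(\|\nabla\bar\psi(t)\|_{\mathbf L^4}^2+\|\varphi(t)\|_{\mathbf H^1}\cdots)$; applying interpolation pointwise in $t$ gives $\|\varLambda^S_1(\bar\psi,\varphi)(t)\|_{S_{-1}}\leqslant\mathbf c_\F(\|\bar\psi(t)\|_{S_0}\|\bar\psi(t)\|_{\bar S_1}+\|\varphi(t)\|_{\mathfrak H_K^1}\|\varphi(t)\|_{\mathfrak H_K^2})$, and then
$$\|\varLambda^S_1(\bar\psi,\varphi)\|_{L^2(0,T;S_{-1})}\leqslant\mathbf c_\F\Big(\|\bar\psi\|_{\mathcal C([0,T];S_0)}\|\bar\psi\|_{L^2(0,T;\bar S_1)}+\|\varphi\|_{\mathcal C([0,T];\mathfrak H_K^1)}\|\varphi\|_{L^2(0,T;\mathfrak H_K^2)}\Big)$$
by pulling the sup-norm factors out of the $L^2_t$ integral. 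The main obstacle is purely bookkeeping: correctly enumerating the multilinear expansion and verifying that every surviving term either is the $\frac{\nu}{2}\|\cdot\|_{S_1}^2$ piece, vanishes by the two-dimensional cancellation $(D^2\theta\nabla\theta,\nabla^\perp\theta)=0$, or fits the stated polynomial structure after Young's inequality — there is no deep analytic difficulty once the cancellation lemma and the 2D interpolation inequality are in hand.
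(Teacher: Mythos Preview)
Your approach is essentially the same as the paper's: H\"older with exponents $(2,4,4)$, the 2D interpolation $\|v\|_{L^4}\leqslant\mathbf c_\F\|v\|_{L^2}^{1/2}\|v\|_{H^1}^{1/2}$, the cubic cancellation $(D^2\theta\nabla\theta,\nabla^\perp\theta)_{\mathbf L^2(\F)}=0$, and Young's inequality to absorb the $\|\theta\|_{S_1}^{3/2}$ powers. The only minor difference is bookkeeping: the paper observes that \emph{every} term of the form $(D^2\theta\nabla\theta,v)_{\mathbf L^2(\F)}=\tfrac12(\nabla|\nabla\theta|^2,v)_{\mathbf L^2(\F)}$ vanishes after one integration by parts whenever $v$ is divergence-free (so $v=\nabla^\perp\bar\psi,\nabla^\perp\theta$) or $v=\nabla\varphi$ with $\varphi$ harmonic, which prunes the expansion down to four surviving terms for \eqref{estim:NLT} and two for \eqref{eq:diff_theta}; you keep some of these terms and estimate them instead, which also works and yields the same final bounds.
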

%
\begin{proof}
Considering the first term in the right hand side of \eqref{def_Lambda_S}, H\"older's inequality yields:
\begin{subequations}
\begin{equation}
\label{eq:estim_nonlin1}
|(D^2\theta\nabla\bar\psi,\nabla^\perp\bar\psi)_{\mathbf L^2(\F)}|\leqslant 
\|\nabla\bar\psi\|_{\mathbf L^4(\F)}^2\|\theta\|_{S_1}\forallt \theta\in S_1.
\end{equation}
Then Sobolev embedding Theorem followed by an interpolation inequality  between the spaces $L^2(\F)$ and $H^1(\F)$ leads to: 
$$\|u\|_{L^4(\F)}\leqslant \mathbf  c_\F\|u\|_{H^{{\frac12}}(\F)}\leqslant \mathbf c_\F\|u\|_{L^2(\F)}^{{\frac12}}\|u\|_{H^1(\F)}^{{\frac12}}
\forallt u\in H^1(\F).$$
Combining this inequality with Lemma~\ref{decomp_S1} (equivalence of the norms in $\bar S_1$ and $H^2(\F)$), we obtain first:
$$
\|\nabla \bar\psi \|_{\mathbf L^4(\F)}\leqslant \mathbf c_\F\|\bar\psi \|_{S_0}^{{\frac12}}\|\bar\psi \|_{\bar S_1}^{{\frac12}}.
$$
Once plugged in \eqref{eq:estim_nonlin1}, it gives rise to:
\begin{equation}
\label{eq:start_over}
|(D^2\theta\nabla\bar\psi,\nabla^\perp\bar\psi)_{\mathbf L^2(\F)}|\leqslant 
\mathbf c_\F\|\bar\psi \|_{S_0} \|\bar\psi \|_{\bar S_1} \|\theta\|_{S_1}\forallt \theta\in S_1.
\end{equation}
\end{subequations}
Based on the same arguments, it is then straightforward to prove the existence of a positive constant $\mathbf c_\F$ such that:
$$|\langle \varLambda^S_1(\bar\psi,\varphi), \theta\rangle_{S_{-1},S_1}|\leqslant \mathbf c_\F\Big(\|\bar\psi\|_{\bar S_1}\|\bar\psi\|_{S_0} + 
\|\varphi\|_{\mathfrak H^2_K}\|\varphi\|_{\mathfrak H^1_K}\Big)\|\theta\|_{S_1}\forallt \theta\in S_1.
$$
This shows that the linear form $\varLambda^S_1(\bar\psi,\varphi)$ is indeed bounded and satisfies:
\begin{equation}
\label{eq:estim_normS-1}
\|\varLambda^S_1(\bar\psi,\varphi)\|_{S_{-1}}\leqslant \mathbf c_\F\Big(\|\bar\psi\|_{\bar S_1}\|\bar\psi\|_{S_0} + 
\|\varphi\|_{\mathfrak H^2_K}\|\varphi\|_{\mathfrak H^1_K}\Big).
\end{equation}
Let us move on to the estimate \eqref{estim:NLT}. Some of the terms vanishing after an integration by parts, we end up with 
the following equality:
\begin{multline}
\label{eq:sum_terms}
\langle \varLambda^S_1(\bar\psi+\theta,\varphi), \theta\rangle_{S_{-1},S_1}=-(D^2\theta\nabla\bar\psi,\nabla^\perp\bar\psi)_{\mathbf L^2(\F)}
+(D^2\theta\nabla^\perp\varphi,\nabla^\perp\bar\psi)_{\mathbf L^2(\F)}
\\
+(D^2\theta\nabla^\perp\varphi,\nabla^\perp\theta)_{\mathbf L^2(\F)}-(D^2\theta\nabla\varphi,\nabla\bar\psi)_{\mathbf L^2(\F)}.
\end{multline}
Addressing the first term in the right hand side, let us start over from the inequality \eqref{eq:start_over} to which 
we apply Young's inequality:
$$
|(D^2\theta\nabla\bar\psi,\nabla^\perp\bar\psi)_{\mathbf L^2(\F)}|\leqslant \frac{\nu}{8} \|\theta\|_{S_1}^2
+\mathbf c_{[\F,\nu]}  \|\bar\psi\|_{\bar S_1}^2\|\bar\psi\|^2_{S_0}.
$$
The four remaining terms in the right hand side of \eqref{eq:sum_terms} can be handle the same way and summing the resulting estimates yields 
\eqref{estim:NLT}.
\par
With the notation of the occurence \eqref {point_2} of the Lemma, some elementary algebra leads to:
$$\langle \varLambda^S_1(\bar\psi+\theta_2,\varphi)-\varLambda^S_1(\bar\psi+\theta_1,\varphi), \varTheta\rangle_{S_{-1},S_1}
=-(D^2\varTheta\nabla(\bar\psi+\theta_1),\nabla^\perp\varTheta)_{\mathbf L^2(\F)}
+(D^2\varTheta\nabla^\perp\varphi,\nabla^\perp\varTheta)_{\mathbf L^2(\F)},$$
and proceeding as for \eqref{estim:NLT} we quickly  obtain \eqref{eq:diff_theta}.
\par
Finally \eqref{estim:NLT2} derives straightforwardly  from \eqref{eq:estim_normS-1} and the proof is completed.
\end{proof}
In case the stream function is more regular, the nonlinear term satisfies better estimates:
\begin{lemma}
Let a stream function $\bar\psi$ be in $\bar S_2$ (see \eqref{decomp_barS2} for a definition) and a Kirchhoff 
potential $\varphi$ be in $\mathfrak H^2_K$ (see \eqref{def_HK}). Then the linear form $\varLambda^S_1(\bar\psi,\varphi)$ 
defined in \eqref{def_Lambda_S} extends to a continuous linear form in $S_0$  whose expression is:
\begin{equation}
\label{def_Lambda_S_strong}
\varLambda^S_0(\bar\psi,\varphi):\theta\in S_0\longmapsto (\Delta\bar\psi (\nabla^\perp\bar\psi+\nabla\varphi),\nabla\theta)_{\mathbf L^2(\F)}\in\mathbb R.
\end{equation}
Moreover, there exists a positive constant $\mathbf c_\F$ such that:
\begin{subequations}
\begin{equation}
\label{estim_varlambda_strong}
\|\varLambda^S_0(\bar\psi,\varphi)\|_{S_0}\leqslant \mathbf c_\F\Big(\|\bar\psi\|^2_{\bar S_1}+\|\varphi\|^2_{\mathfrak H^2_K}\Big)^{{\frac12}}
\|\bar\psi\|_{\bar S_1}^{\frac15}\| \bar\psi\|_{\bar S_2}^{\frac45}.
\end{equation}
Let $T$ be a positive real number and let $\bar\psi$ be in $\bar S_1(T)$, $\varphi$ be in 
$\mathfrak H^2_K(T)$ and $\theta$ be in 
$S_1(T)$ (these spaces being defined respectively in \eqref{defSbarT}, \eqref{def:HKT} and \eqref{defST}). Then $\varLambda^S_0(\bar\psi+\theta,\varphi)$ belongs to 
$L^2(0,T;S_0)$ and:
\begin{equation}
\label{estim_varlambda_strong_2}
\|\varLambda^S_0(\bar\psi+\theta,\varphi)\|_{L^2(0,T;S_0)}\leqslant \mathbf c_\F T^{\frac{1}{10}}\Big(\|\bar\psi\|^2_{\bar S_1(T)}+
\|\varphi\|^2_{\mathfrak H_K^{2}(T)}+
\|\theta\|^2_{S_1(T)}\Big).
\end{equation}
Finally, for every pair $(\theta_1,\theta_2)\in S_1(T)\times S_1(T)$:
\begin{multline}
\label{estim_varlambda_strong_3}
\|\varLambda^S_0(\bar\psi+\theta_2,\varphi)-\varLambda^S_0(\bar\psi+\theta_1,\varphi)\|_{L^2(0,T;S_0)}\\
\leqslant \mathbf c_\F T^{\frac{1}{10}}\Big(\|\bar\psi\|^2_{\bar S_1(T)}+\|\theta_1\|^2_{S_1(T)}+
\|\theta_2\|^2_{S_1(T)}+\|\varphi\|^2_{\mathfrak H_K^{2}(T)}\Big)^{{\frac12}}\|\theta_2-\theta_1\|_{S_1(T)}.
\end{multline}
\end{subequations}
\end{lemma}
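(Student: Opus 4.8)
The plan is to establish the three estimates \eqref{estim_varlambda_strong}, \eqref{estim_varlambda_strong_2} and \eqref{estim_varlambda_strong_3} in turn, starting from the reformulation of $\varLambda_1^S$ as $\varLambda_0^S$ obtained by integrating by parts back. First I would verify that for $\bar\psi\in\bar S_2$ the identity
$$\langle\varLambda_1^S(\bar\psi,\varphi),\theta\rangle_{S_{-1},S_1}=(\Delta\bar\psi(\nabla^\perp\bar\psi+\nabla\varphi),\nabla\theta)_{\mathbf L^2(\F)}$$
holds for all $\theta\in S_1$: one simply integrates by parts each of the three terms $-(D^2\theta\nabla\bar\psi,\nabla^\perp\bar\psi)$, $(D^2\theta\nabla^\perp\varphi,\nabla^\perp\bar\psi)$, $-(D^2\theta\nabla\varphi,\nabla\bar\psi)$, using $\Delta\varphi=0$ and $\nabla^\perp\cdot(\nabla^\perp\bar\psi)=\Delta\bar\psi$, together with the boundary conditions satisfied by $\theta\in S_1$ (its normal derivative vanishes on $\Sigma$ and $\nabla^\perp\theta\cdot n=\partial\theta/\partial\tau$ is integrable termwise on each connected component of $\Sigma$ where $\nabla\bar\psi$, $\nabla^\perp\varphi$ are controlled). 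Since $\bar\psi\in\bar S_2\subset H^3(\F)$ we have $\Delta\bar\psi\in H^1(\F)\subset L^4(\F)$ and $\nabla^\perp\bar\psi+\nabla\varphi\in\mathbf L^4(\F)$, so the right-hand side makes sense for $\theta\in S_0$ and defines the continuous extension $\varLambda_0^S(\bar\psi,\varphi)$ claimed in \eqref{def_Lambda_S_strong}; density of $S_1$ in $S_0$ gives uniqueness of the extension.

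Next, for \eqref{estim_varlambda_strong} I would estimate $\|\varLambda_0^S(\bar\psi,\varphi)\|_{S_0}$ by duality: for $\theta\in S_0$, Hölder gives
$$|\langle\varLambda_0^S(\bar\psi,\varphi),\theta\rangle|\leqslant\|\Delta\bar\psi\|_{L^4(\F)}\,\big(\|\nabla^\perp\bar\psi\|_{\mathbf L^4(\F)}+\|\nabla\varphi\|_{\mathbf L^4(\F)}\big)\,\|\nabla\theta\|_{\mathbf L^2(\F)}.$$
The factor $\|\nabla^\perp\bar\psi\|_{\mathbf L^4}+\|\nabla\varphi\|_{\mathbf L^4}$ is bounded by $\mathbf c_\F(\|\bar\psi\|_{\bar S_1}+\|\varphi\|_{\mathfrak H_K^2})$ via the Sobolev embedding $H^1\hookrightarrow L^4$ in 2D and the norm equivalences of Lemma~\ref{decomp_S1} and \eqref{def_HK}. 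For $\|\Delta\bar\psi\|_{L^4(\F)}$ I would use the Gagliardo--Nirenberg inequality $\|v\|_{L^4(\F)}\leqslant\mathbf c_\F\|v\|_{L^2(\F)}^{3/5}\|v\|_{H^1(\F)}^{2/5}$ (in 2D, the exponents being those making the scaling match: $\tfrac14=\tfrac{3}{5}\cdot\tfrac12+\tfrac25\cdot 0$ is not quite it, so I would instead invoke $\|v\|_{L^4}\lesssim\|v\|_{H^{1/2}}$ and interpolate $H^{1/2}$ between $L^2$ and $H^1$ with weights $\tfrac12,\tfrac12$ for $v=\Delta\bar\psi$, then push one more derivative: more precisely apply $\|\Delta\bar\psi\|_{L^4}\lesssim\|\Delta\bar\psi\|_{L^2}^{1/5}\|\Delta\bar\psi\|_{H^2}^{4/5}$ which is the 2D Gagliardo--Nirenberg bound with the correct weights $\tfrac15,\tfrac45$), so that $\|\Delta\bar\psi\|_{L^4(\F)}\leqslant\mathbf c_\F\|\bar\psi\|_{\bar S_1}^{1/5}\|\bar\psi\|_{\bar S_2}^{4/5}$ using the norm equivalences $\|\bar\psi\|_{\bar S_1}\simeq\|\bar\psi\|_{H^2}$ (Lemma~\ref{decomp_S1}) and $\|\bar\psi\|_{\bar S_2}\simeq\|\bar\psi\|_{H^3}$ (which one reads off \eqref{def_S03} and classical elliptic regularity, as recorded in Lemma~\ref{LEM:barS2}). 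Combining and using $ab\leqslant(a^2+b^2)^{1/2}(a^2+b^2)^{1/2}$ with $a=\|\bar\psi\|_{\bar S_1},b=\|\varphi\|_{\mathfrak H_K^2}$ — or rather just bounding $a+b\leqslant\sqrt2(a^2+b^2)^{1/2}$ — yields \eqref{estim_varlambda_strong}.

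For the time-dependent estimates \eqref{estim_varlambda_strong_2} and \eqref{estim_varlambda_strong_3}, I would apply \eqref{estim_varlambda_strong} pointwise in $t$ with $\bar\psi$ replaced by $\bar\psi+\theta\in\bar S_1(T)$ (note $S_1(T)\subset\bar S_1(T)$ with $\bar S_2\supset S_2$, so $\bar\psi+\theta$ has the right regularity since $\bar\psi\in\bar S_1(T)$ meaning in particular $\bar\psi\in L^2(0,T;\bar S_2)$ and $\theta\in L^2(0,T;S_2)$). This gives
$$\|\varLambda_0^S(\bar\psi+\theta,\varphi)(t)\|_{S_0}\leqslant\mathbf c_\F\big(\|\bar\psi+\theta\|_{\bar S_1}^2+\|\varphi\|_{\mathfrak H_K^2}^2\big)^{1/2}\|\bar\psi+\theta\|_{\bar S_1}^{1/5}\|\bar\psi+\theta\|_{\bar S_2}^{4/5}.$$
Bounding $\|\bar\psi+\theta\|_{\bar S_1}$ by the sup over $[0,T]$ of $\|\bar\psi\|_{\bar S_1}+\|\theta\|_{\bar S_1}$ (controlled by $\|\bar\psi\|_{\bar S_1(T)}+\|\theta\|_{S_1(T)}$ through the $\mathcal C([0,T];\cdot)$ component), raising to the square, integrating in $t$, and applying Hölder in $t$ with exponents $(5/4,5)$ to the product of $\|\bar\psi+\theta\|_{\bar S_2}^{8/5}\in L^{5/4}(0,T)$ (since $\|\bar\psi+\theta\|_{\bar S_2}\in L^2(0,T)$) against the bounded-in-time remaining factor, produces a factor $T^{1/5}$ on that second factor and, after taking the square root, the overall power $T^{1/10}$; absorbing everything into $\|\bar\psi\|_{\bar S_1(T)}^2+\|\varphi\|_{\mathfrak H_K^2(T)}^2+\|\theta\|_{S_1(T)}^2$ gives \eqref{estim_varlambda_strong_2}. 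For \eqref{estim_varlambda_strong_3} I would first record the bilinearity identity (analogous to the one used in the proof of Lemma~\ref{estim_non_linear_term})
$$\varLambda_0^S(\bar\psi+\theta_2,\varphi)-\varLambda_0^S(\bar\psi+\theta_1,\varphi)=\varLambda_0^S(\bar\psi+\theta_1,\varphi;\varTheta)+\text{(terms bilinear in }\varTheta\text{ and }\bar\psi+\theta_1,\text{ plus }\Delta\varTheta\,\nabla\varphi\text{)}$$
with $\varTheta=\theta_2-\theta_1$, obtained by expanding $\Delta(\cdot)\nabla^\perp(\cdot)$; each term is estimated by Hölder in space exactly as above, with one factor being $\|\varTheta\|_{\bar S_2}^{4/5}\|\varTheta\|_{\bar S_1}^{1/5}$ or $\|\Delta\varTheta\|_{L^4}$, and the Hölder-in-time step again yields $T^{1/10}$, the remaining factors assembling into $(\|\bar\psi\|_{\bar S_1(T)}^2+\|\theta_1\|_{S_1(T)}^2+\|\theta_2\|_{S_1(T)}^2+\|\varphi\|_{\mathfrak H_K^2(T)}^2)^{1/2}\|\theta_2-\theta_1\|_{S_1(T)}$. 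The main obstacle I anticipate is bookkeeping the Gagliardo--Nirenberg exponents so that the powers $\tfrac15,\tfrac45$ come out exactly right and the time-integrability exponents in the Hölder step genuinely close to give $T^{1/10}$ — the space estimates themselves are routine Hölder plus Sobolev, but matching the precise fractional powers claimed in the statement requires care.
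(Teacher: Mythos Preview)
Your integration-by-parts identification of $\varLambda^S_0$ with $\varLambda^S_1$ and your time-H\"older bookkeeping for \eqref{estim_varlambda_strong_2}--\eqref{estim_varlambda_strong_3} are sound and match the paper. The gap is in the pointwise estimate \eqref{estim_varlambda_strong}: the Gagliardo--Nirenberg inequality you invoke, $\|\Delta\bar\psi\|_{L^4}\lesssim\|\Delta\bar\psi\|_{L^2}^{1/5}\|\Delta\bar\psi\|_{H^2}^{4/5}$, is simply false in 2D. Interpolating $L^4$ between $L^2$ and $H^1$ gives exponents $(\tfrac12,\tfrac12)$, and between $L^2$ and $H^2$ gives $(\tfrac34,\tfrac14)$; there is no choice that produces $(\tfrac15,\tfrac45)$ from your H\"older splitting $(4,4)$.

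The paper obtains the $(\tfrac15,\tfrac45)$ powers by a different mechanism: it estimates $\|\Delta\bar\psi(\nabla^\perp\bar\psi+\nabla\varphi)\|_{L^2}$ via H\"older with exponents $(10/3,5)$, writes $\|\Delta\bar\psi\|_{L^{10/3}}\leqslant\|\Delta\bar\psi\|_{L^2}^{1/5}\|\Delta\bar\psi\|_{L^4}^{4/5}$ by $L^p$ interpolation (not Gagliardo--Nirenberg), and then applies the Sobolev embeddings $H^1\hookrightarrow L^4$ to $\Delta\bar\psi$ (giving $\|\bar\psi\|_{\bar S_2}$) and $H^1\hookrightarrow L^5$ to $\nabla\bar\psi,\nabla\varphi$ (giving $\|\bar\psi\|_{\bar S_1}$, $\|\varphi\|_{\mathfrak H^2_K}$). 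The exponent $\tfrac45$ on the $\bar S_2$ norm is precisely what places the pointwise bound in $L^{5/2}(0,T;S_0)$, and the final H\"older between $L^2$ and $L^{5/2}$ in time is what produces $T^{1/10}$. Your $(4,4)$ H\"older with the \emph{correct} Gagliardo--Nirenberg $(\tfrac12,\tfrac12)$ would in fact yield a stronger pointwise bound and a larger power of $T$, but it does not prove \eqref{estim_varlambda_strong} as stated; you correctly anticipated this exponent bookkeeping as the sticking point.
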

\begin{proof}
Assume that $\theta$ belongs to $S_1$. Then, integrating by parts, we obtain:
\begin{multline}
\label{eq:grande_eq}
(\Delta\bar\psi (\nabla^\perp\bar\psi+\nabla\varphi),\nabla\theta)_{\mathbf L^2(\F)}=
-(D^2\varphi\nabla\bar\psi,\nabla\theta)_{\mathbf L^2(\F)}-(D^2\theta\nabla\bar\psi,\nabla^\perp\bar\psi)_{\mathbf L^2(\F)}
-(D^2\theta\nabla\bar\psi,\nabla\varphi)_{\mathbf L^2(\F)}.
\end{multline}
Integrating by parts again the first term in the right hand side, it comes:
\begin{subequations}
\begin{equation}
\label{eq:int_parts_1}
(D^2\varphi\nabla\bar\psi,\nabla\theta)_{\mathbf L^2(\F)}=-(\Delta\theta\nabla\varphi,\nabla\bar\psi)_{\mathbf L^2(\F)}
-(D^2\bar\psi\nabla\varphi,\nabla\theta)_{\mathbf L^2(\F)},
\end{equation}
while the last term can be rewritten as follows:
\begin{equation}
\label{eq:int_parts_2}
(D^2\bar\psi\nabla\varphi,\nabla\theta)_{\mathbf L^2(\F)}=(\nabla(\nabla\theta\cdot\nabla\bar\psi),\nabla\varphi)_{\mathbf L^2(\F)}
-(D^2\theta\nabla\bar\psi,\nabla\varphi)_{\mathbf L^2(\F)},
\end{equation}
\end{subequations}
and  the first term in the right hand side vanishes. Gathering \eqref{eq:int_parts_1} and \eqref{eq:int_parts_2} yields:
\begin{equation}
(D^2\varphi\nabla\bar\psi,\nabla\theta)_{\mathbf L^2(\F)}=-(\Delta\theta\nabla\varphi,\nabla\bar\psi)_{\mathbf L^2(\F)}+
(D^2\theta\nabla\bar\psi,\nabla\varphi)_{\mathbf L^2(\F)}=-(D^2\theta\nabla^\perp\bar\psi,\nabla^\perp\varphi)_{\mathbf L^2(\F)}.
\end{equation}
Replacing   this expression in \eqref{eq:grande_eq}, we recover indeed the definition \eqref{def_Lambda_S} of 
$\varLambda_1^S(\bar\psi,\varphi)$.
\par
Applying H\"older's inequality yields:
$$\|\Delta\bar\psi(\nabla^\perp\bar\psi+\nabla\varphi)\|_{\mathbf L^2(\F)}^2\leqslant \mathbf c_\F
\|\Delta\bar\psi\|_{L^2(\F)}^{\frac{2}{5}}\|\Delta\bar\psi\|_{L^4(\F)}^{\frac{8}{5}}\Big(\|\nabla\bar\psi\|_{\mathbf L^5(\F)}^2+
\|\nabla\varphi\|_{\mathbf L^5(\F)}^2\Big),$$
and then, Sobolev embedding Theorem leads straightforwardly to \eqref{estim_varlambda_strong}.
\par
From \eqref{estim_varlambda_strong}, we deduce that:
$$\|\varLambda^S_0(\bar\psi+\theta,\varphi)\|_{S_0}\leqslant
\mathbf c_\F\Big(\|\bar\psi\|^2_{\bar S_1}+\|\varphi\|^2_{\mathfrak H^2_K}+\|\theta\|_{S_1}^2\Big)^{\frac{3}{5}}\Big(
\|\bar\psi\|^2_{\bar S_2}+\|\theta\|_{S_2}^2\Big)^{\frac{2}{5}},$$
and therefore, in particular:
$$\|\varLambda^S_0(\bar\psi+\theta,\varphi)\|_{L^{\frac{5}{2}}(0,T;S_0)}\leqslant
\mathbf c_\F\Big(\|\bar\psi\|^2_{\bar S_1(T)}+
\|\varphi\|^2_{\mathfrak H_K^{2}(T)}+
\|\theta\|^2_{S_1(T)}\Big).$$
The estimate \eqref{estim_varlambda_strong_2} follows with H\"older's inequality. The last inequality \eqref{estim_varlambda_strong_3} is proved 
the same way.
\end{proof}
\begin{rem}
\begin{enumerate}
\item 
Denoting $u=\nabla^\perp\bar\psi+\nabla\varphi$ in the statement of the lemma, it is classical to verify that:
\begin{equation}
\label{alter_def_lambda0}
(\varLambda_0^S(\bar\psi,\varphi),\theta)_{S_0}=(\nabla u u ,\nabla^\perp\theta)_{\mathbf L^2(\F)}\forallt\theta\in S_0.
\end{equation}
\item
In this lemma, the assumption $\varphi\in \mathfrak H^2_K(T)$ is too strong. Indeed, the Kirchhoff  potential is not required to belong 
to $L^2(0,T;\mathfrak H^3_K)$ and one can verify that $\varphi\in \mathcal C([0,T];\mathfrak H^2_K)$ would 
be suffisant. However, the hypothesis $\varphi\in \mathfrak H^2_K(T)$ will be necessary later on.
\end{enumerate}
\end{rem}
%
\subsection{Weak solutions}
\begin{definition}
\label{defi:Navier_Stokes}
Let a positive real number $T$,   a source term $f_S\in L^2(0,T;S_{-1})$, an initial data $\psi^{\rm i}\in  S_0$ 
  and a triple $(g_n,g_\tau,\varGamma)\in G_0(T)$ be given.  Define 
  $\psi^{\rm i}_0=\psi^{\rm i}-\mathsf L_0^S(g_n(0),g_\tau(0),\varGamma(0))$ and assume that $\Sigma$ is of class $C^{3,1}$.
\par
We say that a stream  function $\psi\in  \bar S_0(T)$ is a  weak (or Leray)
solution to the $\psi-$Navier-Stokes equations satisfying the Dirichlet boundary conditions on $\Sigma_T$ as described in \eqref{def:boundary_cond} 
by the triple  $(g_n,g_\tau,\varGamma)$ if $\psi=\psi_b+\psi_{\ell}+\psi_\varLambda$ where:
\begin{enumerate}
\item The function $\psi_b$ accounts for the boundary conditions. It belongs to $S_0^b(T)$ defined in \eqref{def_mathfrakST} 
and is equal to $\mathsf L_1^S(g_n,g_\tau,\varGamma)$;
\item The function $\psi_{\ell}$ accounts for the source term and the initial condition. It is defined as the unique solution in  $S_0(T)$ 
of the homogeneous (linear) $\psi-$Stokes Cauchy problem
\begin{subequations}
\label{cauchy_for_psi_L}
\begin{alignat}{3}
\partial_t \psi_{\ell}+\nu {\mathsf A}^S_{1}\psi_{\ell}&=-\partial_t\psi_b+f_S&\qquad&\text{in }\F_T,\\
\psi_{\ell}(0)&=\psi^{\rm i}_0&&\text{in }\F.
\end{alignat}
\end{subequations}
\item The function $\psi_\varLambda$ accounts for the nonlinear advection term. It belongs to the space $S_0(T)$ and solves the nonlinear Cauchy problem:
\begin{subequations}
\label{cauchy_for_spi_lambda}
\begin{alignat}{3}
\label{eq1:cauchy_for_spi_lambda}
\partial_t \psi_\varLambda+\nu {\mathsf A}^S_{1}\psi_\varLambda&=-\varLambda^S_1(\psi_b+\psi_{\ell}+\psi_\varLambda,\varphi)&\qquad&\text{in }\F_T,\\
\psi_\varLambda(0)&=0&&\text{in }\F,
\end{alignat}
\end{subequations}
where $\varphi=\mathsf L_1^n g_n$ is the Kirchhoff potential that belongs to $\mathfrak H^1_K(T)$.
\end{enumerate}
\end{definition}
\begin{theorem}
\label{THEO:EXIST_NS}
For any set of data as described in Definition~\ref{defi:Navier_Stokes}, there exists a unique (weak) solution in $\bar S_0(T)$ to the $\psi-$Navier-Stokes equations.
\end{theorem}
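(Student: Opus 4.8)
The plan is to prove existence and uniqueness of $\psi=\psi_b+\psi_\ell+\psi_\varLambda$ by solving successively for each of the three pieces, the only nontrivial part being the fixed-point argument for $\psi_\varLambda$. First I would observe that $\psi_b=\mathsf L_1^S(g_n,g_\tau,\varGamma)$ is uniquely determined and lies in $S_0^{\rm b}(T)$ by Lemma~\ref{LEM:lift_stream}, while the Kirchhoff potential $\varphi=\mathsf L_1^n g_n\in\mathfrak H^1_K(T)$ is given by Lemma~\ref{lift_potential}; in particular $\varphi\in{\mathcal C}([0,T];\mathfrak H_K^1)\cap L^2(0,T;\mathfrak H_K^2)$. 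Then $\psi_\ell$ is obtained directly from Proposition~\ref{sol_space_O} applied to the linear $\psi$-Stokes Cauchy problem \eqref{cauchy_for_psi_L}: since $\partial_t\psi_b\in L^2(0,T;S_{-1})$ (because $\psi_b\in S_0^{\rm b}(T)\subset H^1(0,T;S^{\rm b}_{-1})$ and $S^{\rm b}_{-1}\subset S_{-1}$) and $f_S\in L^2(0,T;S_{-1})$, while $\psi^{\rm i}_0\in S_0$, there is a unique $\psi_\ell\in S_0(T)$, together with the estimate \eqref{estim_psi}. At this stage $\psi_b+\psi_\ell$ is completely determined and belongs to ${\mathcal C}([0,T];S_0)\cap L^2(0,T;\bar S_1)$, which is precisely the regularity required to apply Lemma~\ref{estim_non_linear_term} with $\bar\psi=\psi_b+\psi_\ell$.

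The core of the proof is then the nonlinear Cauchy problem \eqref{cauchy_for_spi_lambda} for $\psi_\varLambda\in S_0(T)$. I would set up a fixed-point map $\mathcal F:S_0(T)\to S_0(T)$ sending $\theta$ to the solution $\tilde\psi$ of the linear problem $\partial_t\tilde\psi+\nu{\mathsf A}_1^S\tilde\psi=-\varLambda_1^S(\bar\psi+\theta,\varphi)$ with $\tilde\psi(0)=0$, which is well posed by Proposition~\ref{sol_space_O} once we know the right-hand side lies in $L^2(0,T;S_{-1})$ — guaranteed by \eqref{estim:NLT2}. To run a Banach (or Leray–Schauder/Galerkin) argument I would first derive an a priori bound: testing \eqref{eq1:cauchy_for_spi_lambda} against $\psi_\varLambda$ in the $S_1$–$S_{-1}$ duality and using the enstrophy-type identity together with the absorption estimate \eqref{estim:NLT} (which provides the $\tfrac\nu2\|\theta\|_{S_1}^2$ term to be absorbed into the dissipation, leaving a linear Grönwall inequality with coefficients controlled by $\|\bar\psi\|_{\bar S_1}$, $\|\bar\psi\|_{S_0}$, $\|\varphi\|_{\mathfrak H^2_K}$, $\|\varphi\|_{\mathfrak H^1_K}$), yields a bound on $\psi_\varLambda$ in ${\mathcal C}([0,T];S_0)\cap L^2(0,T;S_1)$, and then Proposition~\ref{sol_space_O} upgrades this to a bound in $S_0(T)$. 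Uniqueness, and contraction on a short time interval, follow the same way from the Lipschitz estimate \eqref{eq:diff_theta}: for two solutions with difference $\varTheta$, the energy estimate gives $\tfrac{d}{dt}\|\varTheta\|_{S_0}^2+\nu\|\varTheta\|_{S_1}^2\le \mathbf c\,[\,\cdots\,]\|\varTheta\|_{S_0}^2$ after absorbing the $\tfrac\nu2\|\varTheta\|_{S_1}^2$ term, whence $\varTheta\equiv0$ by Grönwall on $[0,T]$.

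For global-in-time existence (the statement asks for a solution on the whole interval $(0,T)$, $T$ arbitrary) I would combine the short-time contraction with the global a priori bound just obtained: the a priori estimate controls $\|\psi_\varLambda(t)\|_{S_0}$ on all of $[0,T]$, so the local solution cannot blow up and can be continued up to time $T$. Equivalently, one can run a Galerkin/Faedo scheme on the eigenbasis of ${\mathsf A}_2^S$, passing to the limit using the bounds from \eqref{estim:NLT} and \eqref{estim:NLT2} for compactness (Aubin–Lions, since $S_1\subset S_0$ is compact) and \eqref{eq:diff_theta} for uniqueness of the limit; in dimension two the nonlinear term is subcritical at this regularity level, so no dimensional restriction arises. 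Finally I would collect the pieces: $\psi=\psi_b+\psi_\ell+\psi_\varLambda\in\bar S_0(T)$, with $\psi_b\in S_0^{\rm b}(T)$ and $\psi_\ell,\psi_\varLambda\in S_0(T)$, so $\psi\in\bar S_0(T)=S_0(T)\oplus S_0^{\rm b}(T)$, and uniqueness of the full solution follows from uniqueness of each of the three constituents (the decomposition being forced by Definition~\ref{defi:Navier_Stokes}).

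The main obstacle is the nonlinear fixed point: one must be careful that the absorption constant $\tfrac\nu2$ in \eqref{estim:NLT} is exactly what is needed to close the energy estimate against the dissipation $\nu\|\theta\|_{S_1}^2$, and that the Grönwall coefficient, although it depends on $\|\bar\psi\|_{\bar S_1(T)}$ and $\|\varphi\|_{\mathfrak H^2_K(T)}$ through time-integrable quantities rather than pointwise bounds, is still integrable in $t$ — this is exactly why \eqref{estim:NLT} is stated with the products $\|\bar\psi\|^2_{\bar S_1}\|\bar\psi\|^2_{S_0}$ and $\|\varphi\|^2_{\mathfrak H^2_K}\|\varphi\|^2_{\mathfrak H^1_K}$, each factor being $L^1$ in time when $\bar\psi\in{\mathcal C}([0,T];S_0)\cap L^2(0,T;\bar S_1)$ and $\varphi\in{\mathcal C}([0,T];\mathfrak H^1_K)\cap L^2(0,T;\mathfrak H^2_K)$. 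Everything else is routine linear theory already packaged in Proposition~\ref{sol_space_O}.
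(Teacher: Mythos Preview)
Your proposal is correct and matches the paper's approach in all essential respects: the same decomposition $\psi=\psi_b+\psi_\ell+\psi_\varLambda$, the same linear theory (Proposition~\ref{sol_space_O}) for $\psi_\ell$, and the same estimates \eqref{estim:NLT}, \eqref{estim:NLT2}, \eqref{eq:diff_theta} from Lemma~\ref{estim_non_linear_term} to close the nonlinear part, with Gr\"onwall for both the a~priori bound and uniqueness. The only cosmetic difference is that the paper commits directly to a Faedo--Galerkin scheme on the eigenbasis of $\mathcal A_1^S$ (citing Lions) rather than your fixed-point map, but you yourself note this alternative and the compactness/limit argument is identical.
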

\begin{proof}
The existence and uniqueness of $\psi_{\ell}$ is asserted by Proposition~\ref{exist:sol:stokes}.  Lemma~\ref{estim_non_linear_term} being granted, the proof of  existence and uniqueness of the function $\psi_\varLambda$
is quite similar to the proof \cite[Chap.~1, Section 6]{Lions_book1969}. Let us focus on the 
main differences and omit some details.
\par
Denote by $\bar\psi$ the function in $\bar S_0(T)$ equal to the sum $\psi_b+\psi_{\ell}$  (where the functions $\psi_b$ and $\psi_{\ell}$ are given) and notice that, according to Proposition~\ref{exist:sol:stokes} 
and Lemma~\ref{lift_potential}:
\begin{subequations}
\label{eq:estim_rhs}
\begin{align}
\|\bar\psi\|_{\bar S_0(T)}&\leqslant \mathbf c_{[\F,\nu]}\Big[\|\psi_0^{\rm i}\|^2_{S_0}+\|f_S\|^2_{L^2(0,T;S_{-1})}+\|(g_n,g_\tau,\varGamma)\|^2_{G_0(T)}\Big]^{{\frac12}}\\
\|\varphi\|_{\mathfrak H^1_K(T)}&\leqslant \mathbf c_\F\|g_n\|_{G^n_0(T)}\leqslant  \mathbf c_\F\|(g_n,g_\tau,\varGamma)\|_{G_0(T)}.
\end{align}
\end{subequations}

Then, for every positive integer $m$, 
introduce $\mathbb S_1^m$, the finite 
dimensional subspace of $S_1$ spanned by  the $m-$th first eigenvalues of $\mathcal A^S_1$ (loosely speaking, 
this operator is equal to $\mathsf A^S_1$ seen as an unbounded operator 
in $S_{-1}$ of domain $S_1$; see \eqref{def:unboundAk}). Denote by $\Pi_m$ the orthogonal projector from $S_1$ onto $\mathbb S_1^m$ and by 
$\Pi_m^\ast$ its adjoint for the duality pairing $S_{-1}\times S_1$. Finally, let $\psi_\varLambda^m$ be the unique solution 
in $\mathbb S_1^m$ of the Cauchy problem:
\begin{subequations}
\label{cauchy_for_spi_lambda_m}
\begin{alignat}{3}
\label{eq1:cauchy_for_spi_lambda_m}
\partial_t \psi_\varLambda^m+\nu {\mathsf A}^S_{1}\psi_\varLambda^m&=-\Pi_m^\ast\varLambda^S_1(\bar\psi+\psi_\varLambda^m,\varphi)&\qquad&\text{in }\F_T,\\
\psi_\varLambda^m(0)&=0&&\text{in }\F.
\end{alignat}
\end{subequations}
The existence and uniqueness of $\psi_\varLambda^m\in\mathcal C^1([0,T_m];S_1)$ on a time interval $(0,T_m)$ is guaranteed by Cauchy-Lipschitz Theorem. Forming now for any $s\in(0,T_m)$ the duality pairing of equation \eqref{eq1:cauchy_for_spi_lambda} set in $S_{-1}$ with $\psi_\varLambda^m(s)$ in $S_1$ and 
using the estimate \eqref{estim:NLT} for the nonlinear term,
we obtain:
\begin{equation}
\label{eq:energie}
\frac{\rm d}{{\rm d}t}\|\psi_\varLambda^m(s)\|_{S_0}^2+\nu\|\psi_\varLambda^m(s)\|_{S_1}^2\\
\leqslant \Phi(s)\|\psi_\varLambda^m(s)\|_{S_0}^2+
\Phi(s),
\end{equation}
where, for every $s$ in $(0,T)$:
$$
\Phi(s)=\mathbf c_{[\F,\nu]}\Big[\|\bar\psi(s)\|^2_{\bar S_1}\|\bar\psi(s)\|^{2}_{S_0}+
\|\varphi(s)\|_{\mathfrak H_K^1}^2\|\varphi(s)\|_{\mathfrak H_K^2}^2\Big].
$$
One easily verifies that $\Phi$ belongs to $L^1(0,T)$ and that, according to the estimates \eqref{eq:estim_rhs} above:
\begin{equation}
\label{eq:estim_Phi}
\|\Phi\|_{L^1(0,T)}\leqslant \mathbf c_{[\F,\nu]}\Big[\|\psi_0^{\rm i}\|^2_{S_0}+\|f_S\|^2_{L^2(0,T;S_{-1})}+\|(g_n,g_\tau,\varGamma)\|^2_{G_0(T)}\Big].
\end{equation}
Then, integrating \eqref{eq:energie} over $(0,t)$ for any $t\in(0,T_m)$ and introducing the constant $\lambda_\F$   defined in \eqref{estim_eigen} yields the estimate:
$$
\|\psi_\varLambda^m(t)\|_{S_0}^2+\int_0^t\big(\nu\lambda_\F-\Phi(s)\big)\|\psi_\varLambda^m(s)\|_{S_0}^2\ds\\
\leqslant \int_0^t\Phi(s)\ds\forallt t \in(0,T),
$$
which, with Gr\"onwall's inequality, leads to the estimate below, uniform in $t$ according to \eqref{eq:estim_Phi}: 
\begin{subequations}
\label{eq:estimC00}
\begin{equation}
\label{eq:estimC0}
\|\psi_\varLambda^m(t)\|_{S_0}\leqslant \|\Phi\|_{L^1(0,T)}^{{\frac12}}e^{\frac12\|\Phi\|_{L^1(0,T)}}\leqslant \mathbf c_{[\F,\nu,
\|\psi_0^{\rm i}\|_{S_0},\|f_S\|_{L^2(0,T;S_{-1})},\|(g_n,g_\tau,\varGamma)\|_{G_0(T)}]}.
\end{equation}
We deduce that  $T_m$ can be chosen equal to $T$.
Going back to inequality \eqref{eq:energie}, integrating it again over the time interval $(0,T)$ and using the estimate \eqref{eq:estimC0}, we get 
another estimate uniform in $t$:
\begin{equation}
\label{eq:estimC1}
\|\psi_\varLambda^m\|_{L^2(0,T;S_1)}\leqslant \mathbf c_{[\F,\nu,
\|\psi_0^{\rm i}\|_{S_0},\|f_S\|_{L^2(0,T;S_{-1})},\|(g_n,g_\tau,\varGamma)\|_{G_0(T)}]}.
\end{equation}
From identity \eqref{eq1:cauchy_for_spi_lambda_m}, we deduce now that:
$$\|\partial_t\psi^m_\varLambda\|_{L^2(0,T;S_{-1})}\leqslant \nu \|\psi^m_\varLambda\|_{L^2(0,T;S_1)}+\|\varLambda(\bar\psi+\psi^m_\varLambda,\varphi)\|_{L^2(0,T;S_{-1})}.$$
Combining \eqref{estim:NLT2}, \eqref{eq:estimC0} and \eqref{eq:estimC1} allows us to deduce that:
\begin{equation}
\|\partial_t\psi^m_\varLambda\|_{L^2(0,T;S_{-1})}\leqslant  \mathbf c_{[\F,\nu,
\|\psi_0^{\rm i}\|_{S_0},\|f_S\|_{L^2(0,T;S_{-1})},\|(g_n,g_\tau,\varGamma)\|_{G_0(T)}]}.
\end{equation}
\end{subequations}
It follows from the estimates \eqref{eq:estimC00} 
that the sequence $(\psi^m_\varLambda)_{m\geqslant 1}$ remains in a ball of $S_0(T)$, centered at the origin and whose radius depends only on 
$\F$, $\nu$ and the norms of the data $\|\psi_0^{\rm i}\|_{S_0}$, $\|f_S\|_{L^2(0,T;S_{-1})}$ and $\|(g_n,g_\tau,\varGamma)\|_{G_0(T)}$. The existence of a solution as limit of a subsequence of $(\psi^m_\varLambda)_m$ is next obtained, 
following exactly the lines of the proof \cite[Chap.~1, Section 6]{Lions_book1969}. 
\par
Let us address now the uniqueness of the solution.  We denote by $\varPsi_\varLambda$ the difference $\psi^2_\varLambda-\psi^1_\varLambda$ 
between two solutions  to the Cauchy problem \eqref{cauchy_for_spi_lambda} and this function satisfies:
$$\partial_t\varPsi_\varLambda+\nu\mathsf A^S_1\varPsi_\varLambda=-\varLambda^S_1(\bar\psi+\psi^2_\varLambda,\varphi)+
\varLambda^S_1(\bar\psi+\psi^1_\varLambda,\varphi)\qquad\text{in }\F_T.$$
Forming, for a.e. $t\in(0,T)$, the duality pairing of this identity set in $S_{-1}$ with $\varPsi_\varLambda(t)\in S_1$ and using the inequality \eqref{eq:diff_theta} 
 results in the estimate:
$$\frac{\rm d}{{\rm d}t}\|\varPsi_\varLambda(t)\|_{S_0}^2+\Big[\nu\lambda_\F-\mathbf c_{[\F,\nu]}\big(\|\bar\psi\|^2_{\bar S_1}\|\bar\psi\|^{2}_{S_0}+\|\varphi\|_{\mathfrak H^2_K}^2\|\varphi\|_{\mathfrak H^1_K}^2
+\|\psi_\varLambda^1\|^2_{S_1}\|\psi_\varLambda^1\|^{2}_{S_0}\big)\Big]\|\varPsi_\varLambda(t)\|_{S_0}^2
\leqslant 0.$$
The conclusion follows with Gr\"onswall's inequality, keeping in mind that inequalities \eqref{eq:estimC0} and \eqref{eq:estimC1} hold 
for $\psi^1_\varLambda$ as well. The proof is now completed.
\end{proof}
%
%
Definition~\ref{defi:Navier_Stokes} and Theorem~\ref{THEO:EXIST_NS} can easily be rephrased in terms of the vorticity field. The nonlinear advection term is 
defined for every $\bar\omega\in \bar V_0$ and $\varphi\in\mathfrak H^2_K$ as an element of $V_{-2}$ by:
$$\varLambda^V_1(\bar\omega,\varphi)=\Delta_{-2}\varLambda^S_1(\bar\Delta_0^{-1}\bar\omega,\varphi)=-
\langle \varLambda^S_1(\bar\Delta_0^{-1}\bar\omega,\varphi),\mathsf Q_2\cdot\rangle_{S_{-1},S_1},$$
the latter identity being deduced from \eqref{alternQk_2}. 
%
\begin{definition}
\label{defi:Navier_Stokes_vorticity}
Let a positive real number $T$,   a source term $f_V\in L^2(0,T;V_{-2})$, an initial data $\omega^{\rm i}\in  V_{-1}$ 
  and a triple $(g_n,g_\tau,\varGamma)\in G_0(T)$ be given. Define 
  $\omega^{\rm i}_0=\omega^{\rm i}-\mathsf L_{-1}^V(g_n(0),g_\tau(0),\varGamma(0))$ and assume that $\Sigma$ is of class $C^{3,1}$.
\par
We say that a vorticity  function $\omega\in  \bar V_{-1}(T)$ is a (weak)
solution to the $\omega-$Navier-Stokes equations satisfying the Dirichlet boundary conditions on $\Sigma_T$ as described in \eqref{def:boundary_cond} 
by the triple  $(g_n,g_\tau,\varGamma)$ if $\omega=\omega_b+\omega_\ell+\omega_\varLambda$ where:
\begin{enumerate}
\item The function $\omega_b$ accounts for the boundary conditions. It belongs to $V^b_{-1}(T)$ (defined in\eqref{def_mathfrakVT} )  and is equal to $\mathsf L_{0}^V(g_n,g_\tau,\varGamma)$;
\item The function $\omega_\ell$ accounts for the source term and the initial condition. It is defined as the unique solution in  $V_{-1}(T)$ 
of the homogeneous (linear) $\omega-$Stokes Cauchy problem
\begin{subequations}
\label{cauchy_for_psi_L_omega}
\begin{alignat}{3}
\partial_t \omega_\ell+\nu {\mathsf A}^V_{0}\omega_\ell&=-\partial_t\omega_b+f_V&\qquad&\text{in }\F_T,\\
\omega_\ell(0)&=\omega^{\rm i}_0&&\text{in }\F.
\end{alignat}
\end{subequations}
\item The function $\omega_\varLambda$ accounts for the nonlinear advection term. It belongs to the space $V_{-1}(T)$ and solves the nonlinear Cauchy problem:
\begin{subequations}
\label{cauchy_for_spi_lambda_omega}
\begin{alignat}{3}
\label{eq1:cauchy_for_spi_lambda_omega}
\partial_t \omega_\varLambda+\nu {\mathsf A}^V_{0}\omega_\varLambda&=-\varLambda^V_1(\omega_b+\omega_\ell+\omega_\varLambda,\varphi)&\qquad&\text{in }\F_T,\\
\omega_\varLambda(0)&=0&&\text{in }\F,
\end{alignat}
\end{subequations}
where $\varphi=\mathsf L_1^n g_n$ is the Kirchhoff potential that belongs to $\mathfrak H^1_K(T)$.
\end{enumerate}
\end{definition}
%
\begin{theorem}
With any set of data as described in Definition~\ref{defi:Navier_Stokes_vorticity}, there exists a unique solution 
$\omega$ to the $\omega-$Navier-stokes equations.
Moreover, if $\psi$ is the unique solution to the $\psi-$Navier-Stokes equations as defined in Definition~\ref{defi:Navier_Stokes} and 
$\omega^{\rm i}=\Delta_{-1}\psi^{\rm i}$, $f_V=\Delta_{-2}f_S$, all the other data being equal, then $\omega=\bar\Delta_{0}\psi$.
\end{theorem}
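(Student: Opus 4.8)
\medskip
\noindent\emph{Proof strategy.}
The plan is to transport the whole problem through the chain of isometries $\{\Delta_k\}$ built in Lemma~\ref{LEM:prop_delta}, Lemma~\ref{LEM:prop:delta_negatif} and Theorem~\ref{THEO:decompV0}, and then to invoke the well-posedness of the $\psi$-Navier-Stokes equations (Theorem~\ref{THEO:EXIST_NS}). Four structural facts will do the work. First, $\Delta_{-1}:S_0\to V_{-1}$ is an isometry, its restriction $\bar\Delta_0$ to $\bar S_1$ is an isometry onto $\bar V_0$ (Theorem~\ref{THEO:decompV0}), and, all these operators coinciding on overlapping spaces and reducing to the classical Laplacian on spaces of nonnegative smoothness (Lemma~\ref{LEM:prop_delta}), the induced map $\psi\mapsto\Delta_{-1}\psi$ is an isometric isomorphism from $\bar S_0(T)$ onto $\bar V_{-1}(T)$ (recall $\bar V_{-1}=V_{-1}$) which commutes with $\partial_t$. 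Second, these isometries intertwine the Stokes operators: the commutative diagram of Figure~\ref{diag_2} gives ${\mathsf A}^V_0\Delta_{-1}=\Delta_{-2}{\mathsf A}^S_1$ on $S_1$, and more generally the equivalence of the linear $\psi$- and $\omega$-Stokes Cauchy problems is exactly the content of Theorem~\ref{theo:equiv}. Third, by the very definition of $\varLambda^V_1$ recalled just before Definition~\ref{defi:Navier_Stokes_vorticity}, one has $\Delta_{-2}\varLambda^S_1(\bar\psi,\varphi)=\varLambda^V_1(\bar\Delta_0\bar\psi,\varphi)$ for every $\bar\psi\in\bar S_1$. Finally, the lifting operators are compatible: $\mathsf L^V_{-1}=\Delta_{-1}\mathsf L^S_0$ and $\mathsf L^V_0=\bar\Delta_0\mathsf L^S_1$ by Definition~\ref{def:LkV}, so that the maps on data $\psi^{\rm i}\mapsto\omega^{\rm i}=\Delta_{-1}\psi^{\rm i}$ and $f_S\mapsto f_V=\Delta_{-2}f_S$ turn $\psi$-Navier-Stokes data into $\omega$-Navier-Stokes data bijectively, with $\omega^{\rm i}_0=\Delta_{-1}\psi^{\rm i}_0$.

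\medskip
\noindent\emph{From the $\psi$-solution to the $\omega$-solution.}
Given the unique $\psi$-Navier-Stokes solution $\psi=\psi_b+\psi_\ell+\psi_\varLambda$ of Theorem~\ref{THEO:EXIST_NS} for the transported data, I would set $\omega_b=\bar\Delta_0\psi_b$, $\omega_\ell=\Delta_{-1}\psi_\ell$, $\omega_\varLambda=\Delta_{-1}\psi_\varLambda$ and check the three requirements of Definition~\ref{defi:Navier_Stokes_vorticity} one at a time. For the boundary part: $\omega_b=\bar\Delta_0\mathsf L^S_1(g_n,g_\tau,\varGamma)=\mathsf L^V_0(g_n,g_\tau,\varGamma)\in V^{\rm b}_{-1}(T)$. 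For the Stokes part: applying the isometry $\Delta_{-2}$ to~\eqref{cauchy_for_psi_L}, using that $\Delta$ commutes with $\partial_t$ and that $\Delta_{-2}{\mathsf A}^S_1={\mathsf A}^V_0\Delta_{-1}$ on $S_1$, turns~\eqref{cauchy_for_psi_L} into~\eqref{cauchy_for_psi_L_omega} with right-hand side $-\partial_t\omega_b+f_V$ and initial datum $\omega^{\rm i}_0$; that $\omega_\ell\in V_{-1}(T)$ and that it is precisely this solution is exactly Theorem~\ref{theo:equiv} (case $k'=k-1$ with $k=0$). For the nonlinear part: applying $\Delta_{-2}$ to~\eqref{cauchy_for_spi_lambda} and using the identity $\Delta_{-2}\varLambda^S_1=\varLambda^V_1\bar\Delta_0$ together with the fact that $\psi_b(t)+\psi_\ell(t)+\psi_\varLambda(t)\in\bar S_1$ for a.e.\ $t\in(0,T)$ (this is where the $L^2(0,T;\bar S_1)$-component of the solution spaces is used) one obtains~\eqref{cauchy_for_spi_lambda_omega}, with $\omega_\varLambda(0)=\Delta_{-1}\psi_\varLambda(0)=0$ and the same Kirchhoff potential $\varphi$. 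Hence $\omega:=\omega_b+\omega_\ell+\omega_\varLambda$ is an $\omega$-Navier-Stokes solution and, since the three transporting operators coincide with $\Delta_{-1}$ on the relevant spaces, $\omega=\Delta_{-1}\psi$, that is $\omega(t)=\bar\Delta_0\psi(t)$ for a.e.\ $t$.

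\medskip
\noindent\emph{Well-posedness and the main obstacle.}
Existence then follows because every $\omega$-Navier-Stokes set of data is the image under the bijection above of a $\psi$-Navier-Stokes set of data. For uniqueness I would run the same computation backwards: given any $\omega\in\bar V_{-1}(T)$ solving Definition~\ref{defi:Navier_Stokes_vorticity}, its data-determined decomposition $\omega=\omega_b+\omega_\ell+\omega_\varLambda$ is transported by $\Delta_{-1}^{-1}$ into a triple $\psi_b+\psi_\ell+\psi_\varLambda$ satisfying all the defining properties of Definition~\ref{defi:Navier_Stokes} for the transported data---each identity and Cauchy problem being equivalent to the one already checked, since every $\Delta_k$ is invertible---so that $\Delta_{-1}^{-1}\omega$ is \emph{the} $\psi$-Navier-Stokes solution, whence $\omega=\Delta_{-1}\psi$ is unique. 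The only genuine difficulty is bookkeeping: one must keep track of \emph{which} operator $\Delta_k$, ${\mathsf A}^V_k$, ${\mathsf A}^S_k$ acts on \emph{which} space---especially at negative regularity, where these are dual (not distribution) spaces, cf.\ the warnings in Remark~\ref{first_kato:rem}---and verify that the intertwining and $\partial_t$-commutation relations persist on the low-regularity components $H^1(0,T;\bar S_{-1})$, etc. All of this reduces to Lemma~\ref{LEM:prop_delta}, Lemma~\ref{LEM:prop:delta_negatif}, Theorems~\ref{theo:equiv} and~\ref{THEO:decompV0}, Definition~\ref{def:LkV} and the commutative diagram of Figure~\ref{diag_2}; no new estimate is required.
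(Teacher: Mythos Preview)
Your proposal is correct and follows exactly the approach of the paper: transport the $\psi$-Navier--Stokes system through the isometry $\Delta_{-2}$ (using the commutation $\Delta_{-2}{\mathsf A}^S_1={\mathsf A}^V_0\Delta_0$ from Fig.~\ref{diag_2} and the very definition of $\varLambda^V_1$), then invoke Theorem~\ref{THEO:EXIST_NS}. The paper compresses all of this into a single sentence, while you have (correctly) spelled out the bookkeeping for each of the three components $\omega_b$, $\omega_\ell$, $\omega_\varLambda$ and made the uniqueness argument explicit via the inverse isometry; no additional ideas are needed.
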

%
\begin{proof}
It suffices to apply the operator $\Delta_{-2}$ to \eqref{cauchy_for_psi_L} and \eqref{cauchy_for_spi_lambda} to obtain \eqref{cauchy_for_psi_L_omega} and \eqref{cauchy_for_spi_lambda_omega}, because, according to the commutative diagram of Fig.~\ref{diag_2}, $\Delta_{-2}\mathsf A^S_1=\mathsf A^V_0\Delta_0$.
\end{proof}
In case of homogeneous boundary conditions, we recover the exponential decay estimates as stated in Lemma~\ref{LEM:exp_decay} for the $\psi-$Stokes Cauchy problem, 
namely:
\begin{cor}
\label{energy_decay_weak}
Assume that $\psi$ is solution to the $\psi-$Navier-Stokes equations with homogeneous boundary conditions (i.e. $\psi_b=0$ in Definition~\ref{defi:Navier_Stokes}). 
Then  the exponential decay \eqref{eq:expo_decr_stream} holds true with $k=0$. 
\par
Similarly, if $\omega$  is solution to the $\omega-$Navier-Stokes equations with homogeneous boundary conditions (i.e. $\omega_b=0$ in Definition~\ref{defi:Navier_Stokes_vorticity}), 
then  the exponential decay \eqref{eq:expo_decr} holds true with $k=-1$. 
\end{cor}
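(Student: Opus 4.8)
The plan is to exploit the structure of Definition~\ref{defi:Navier_Stokes}: when the boundary data vanish, the weak solution $\psi$ reduces to the solution of a single Cauchy problem of the form \eqref{stream_cauchy}, up to the nonlinear term, which turns out to be energy-neutral; the estimate then follows from the computation already carried out in the proof of Lemma~\ref{LEM:exp_decay}. First, homogeneous boundary conditions mean $b=0$, so in the decomposition \eqref{b_decomp} one has $g_n=0$, $g_\tau=0$ and $\varGamma=0$; hence $\psi_b=\mathsf L_1^S(0,0,0)=0$ and the Kirchhoff potential $\varphi=\mathsf L_1^n g_n=0$. Adding \eqref{cauchy_for_psi_L} and \eqref{cauchy_for_spi_lambda}, and using the linearity of $\partial_t$ and $\mathsf A^S_1$ together with $\psi_b=0$, the weak solution $\psi=\psi_\ell+\psi_\varLambda\in S_0(T)$ solves
\begin{equation*}
\partial_t\psi+\nu\,\mathsf A^S_1\psi+\varLambda^S_1(\psi,0)=f_S\quad\text{in }\F_T,\qquad \psi(0)=\psi^{\rm i}.
\end{equation*}
Since $\psi\in S_0(T)$ we have $\partial_t\psi\in L^2(0,T;S_{-1})$ and $\psi\in L^2(0,T;S_1)$, so $t\mapsto\|\psi(t)\|_{S_0}^2$ is absolutely continuous; forming for a.e.\ $t$ the duality pairing of the equation, set in $S_{-1}$, with $\psi(t)\in S_1$ gives
\begin{equation*}
\tfrac12\tfrac{\rm d}{{\rm d}t}\|\psi(t)\|_{S_0}^2+\nu\|\psi(t)\|_{S_1}^2+\big\langle\varLambda^S_1(\psi(t),0),\psi(t)\big\rangle_{S_{-1},S_1}=\big\langle f_S(t),\psi(t)\big\rangle_{S_{-1},S_1}.
\end{equation*}

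The key point is that the trilinear term vanishes. Using \eqref{def_Lambda_S} with $\varphi=0$, applied to the test function $\psi(t)$, together with the identity $D^2\psi\,\nabla\psi=\tfrac12\nabla|\nabla\psi|^2$,
\begin{equation*}
\big\langle\varLambda^S_1(\psi,0),\psi\big\rangle_{S_{-1},S_1}=-\tfrac12\big(\nabla|\nabla\psi|^2,\nabla^\perp\psi\big)_{\mathbf L^2(\F)}=\tfrac12\big(|\nabla\psi|^2,\nabla\cdot\nabla^\perp\psi\big)_{L^2(\F)}-\tfrac12\int_\Sigma|\nabla\psi|^2\,\nabla^\perp\psi\cdot n\ds,
\end{equation*}
and both terms on the right are zero: $\nabla\cdot\nabla^\perp\psi=0$ identically, while $\nabla\psi|_\Sigma=0$ because any function in $S_1$ is locally constant on $\Sigma$ (so $\partial_\tau\psi=0$) and has $\partial_n\psi=0$ there. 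The integration by parts is legitimate since $\psi(t)\in S_1\subset H^2(\F)$, whence $|\nabla\psi|^2\in W^{1,1}(\F)$ has a trace on $\Sigma$ (which here vanishes); one argues by density of smooth functions in $S_1$, and this regularity bookkeeping is the only point requiring a little care. The energy identity thus collapses to
\begin{equation*}
\tfrac12\tfrac{\rm d}{{\rm d}t}\|\psi(t)\|_{S_0}^2+\nu\|\psi(t)\|_{S_1}^2=\big\langle f_S(t),\psi(t)\big\rangle_{S_{-1},S_1},
\end{equation*}
which is precisely the identity underlying the proof of \eqref{eq:expo_decr_stream} for $k=0$: estimating the right-hand side by $\tfrac{\nu\varepsilon}2\|\psi(t)\|_{S_1}^2+\tfrac1{2\nu\varepsilon}\|f_S(t)\|_{S_{-1}}^2$, invoking the Poincar\'e inequality $\lambda_\F\|\psi(t)\|_{S_0}^2\leqslant\|\psi(t)\|_{S_1}^2$ (Corollary~\ref{COR:lambda}) and Gr\"onwall's inequality, one obtains \eqref{eq:expo_decr_stream} with $k=0$ (and $\varepsilon=0$ is admissible when $f_S=0$).

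For the vorticity formulation with homogeneous boundary conditions ($\omega_b=0$), the theorem stated just before this corollary gives $\omega(t)=\Delta_{-1}\psi(t)$ for a.e.\ $t$, with $\omega^{\rm i}=\Delta_{-1}\psi^{\rm i}$ and $f_V=\Delta_{-2}f_S$. As $\Delta_{-1}\colon S_0\to V_{-1}$ and $\Delta_{-2}\colon S_{-1}\to V_{-2}$ are isometries (Lemma~\ref{LEM:prop_delta}), one has $\|\omega(t)\|_{V_{-1}}=\|\psi(t)\|_{S_0}$, $\|\omega^{\rm i}\|_{V_{-1}}=\|\psi^{\rm i}\|_{S_0}$ and $\|f_V\|_{L^2(0,T;V_{-2})}=\|f_S\|_{L^2(0,T;S_{-1})}$; hence \eqref{eq:expo_decr} with $k=-1$ follows immediately from \eqref{eq:expo_decr_stream} with $k=0$. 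The remainder of the verification is a transcription of the proof of Lemma~\ref{LEM:exp_decay}, so no new difficulty arises beyond the energy-neutrality of the advection term noted above.
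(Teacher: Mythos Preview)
Your proof is correct and follows essentially the same approach as the paper's: reduce to a single Cauchy problem for $\psi$, pair with $\psi$, observe that $\langle\varLambda^S_1(\psi,0),\psi\rangle_{S_{-1},S_1}=0$, and apply Gr\"onwall as in Lemma~\ref{LEM:exp_decay}. You supply explicit justification for the vanishing of the trilinear term (which the paper simply asserts) and handle the vorticity statement cleanly via the isometry $\Delta_{-1}$ rather than repeating the argument; both additions are sound.
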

\begin{proof}
In case of homogeneous boundary conditions, $\psi$ solves the Cauchy problem:
\begin{alignat*}{3}
\partial_t \psi+\nu {\mathsf A}^S_{1}\psi&=-\varLambda^S_1(\psi,0)+f_S&\qquad&\text{in }\F_T,\\
\psi(0)&=\psi^{\rm i}&&\text{in }\F.
\end{alignat*}
It suffices to form the duality pairing with $\psi\in S_1$, notice that $\langle\varLambda^S_1(\psi,0),\psi\rangle_{S_{-1},S_1}=0$ and apply Gr\"onwall's inequality 
to complete the proof.
\end{proof}
\begin{rem}
In  Definition~\ref{defi:Navier_Stokes_vorticity}, the initial condition $\omega^{\rm i}$ can be taken in the dual space $V_{-1}$. This
space contains, for every $1<p<2$:
$$L^p_V=\{(\omega,\mathsf Q_1\cdot)_{L^2(\F)}\,:\,\omega\in L^p(\Omega)\},$$
which can be identified with $L^p(\F)$. The space $V_{-1}$ contains also, for every Lipschitz curve $\mathscr C$ included in $\F$ and 
for every $q\in H^{-\frac12}(\mathscr C)$ what 
can be identified as a vorticity filament:
$$\omega_q:\theta\in V_1\mapsto \int_{\mathscr C} q\,\mathsf Q_1\theta\ds.$$
\end{rem}
\subsection{Strong solutions}
\begin{definition}
\label{defi:Navier_Stokes_strong}
Let a positive real number $T$,   a source term $f_S\in L^2(0,T;S_{0})$, an initial data $\psi^{\rm i}\in  \bar S_1$ 
  and a triple $(g_n,g_\tau,\varGamma)\in G_1(T)$ be given. Define 
  $\psi^{\rm i}_0=\psi^{\rm i}-\mathsf L_1^S(g_n(0),g_\tau(0),\varGamma(0))$ and assume that $\Sigma$ is of class $C^{2,1}$ and that the compatibility 
  condition:
  \begin{equation}
  \psi^{\rm i}_0\in S_1\quad\text{ or equivalently   that}\quad\frac{\partial\psi^{\rm i}_0}{\partial n}\bigg|_\Sigma=0,
  \end{equation}
  is satisfied.
\par
We say that a stream  function $\psi\in  \bar S_1(T)$ is a strong (or Kato)
solution to the $\psi-$Navier-Stokes equations satisfying the Dirichlet boundary conditions on $\Sigma_T$ as described in \eqref{def:boundary_cond} 
by the triple  $(g_n,g_\tau,\varGamma)$ if $\psi=\psi_b+\psi_{\ell}+\psi_\varLambda$ where:
\begin{enumerate}
\item The function $\psi_b$ accounts for the boundary conditions. It belongs to $S^b_1(T)$ and is equal to $\mathsf L_2^S(g_n,g_\tau,\varGamma)$;
\item The function $\psi_{\ell}$ accounts for the source term and the initial condition. It is defined as the unique solution in  $S_1(T)$ 
of the homogeneous (linear) $\psi-$Stokes Cauchy problem
\begin{subequations}
\label{cauchy_for_psi_L_strong}
\begin{alignat}{3}
\partial_t \psi_{\ell}+\nu {\mathsf A}^S_{2}\psi_{\ell}&=-\partial_t\psi_b+f_S&\qquad&\text{in }\F_T,\\
\psi_{\ell}(0)&=\psi^{\rm i}_0&&\text{in }\F.
\end{alignat}
\end{subequations}
\item The function $\psi_\varLambda$ accounts for the nonlinear advection term. It belongs to the space $S_1(T)$ and solves the nonlinear Cauchy problem:
\begin{subequations}
\label{cauchy_for_spi_lambda_strong}
\begin{alignat}{3}
\label{eq1:cauchy_for_spi_lambda_n}
\partial_t \psi_\varLambda+\nu {\mathsf A}^S_{2}\psi_\varLambda&=-\varLambda^S_0(\psi_b+\psi_{\ell}+\psi_\varLambda,\varphi)&\qquad&\text{in }\F_T,\\
\psi_\varLambda(0)&=0&&\text{in }\F,
\end{alignat}
\end{subequations}
where $\varphi=\mathsf L_2^n g_n$ is the Kirchhoff potential that belongs to $\mathfrak H^2_K(T)$.
\end{enumerate}
\end{definition}
\begin{theorem}
\label{THEO:EXIST_NS_STRONG}
With any set of data as described in Definition~\ref{defi:Navier_Stokes_strong}, there exists a unique (strong) solution in $\bar S_1(T)$ to the $\psi-$Navier-Stokes equations.
\end{theorem}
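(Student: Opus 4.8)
The strategy is to follow verbatim the scheme of Theorem~\ref{THEO:EXIST_NS}, simply replacing the regularity level~$0$ by~$1$ everywhere and using the \emph{better} estimates available at this level. First I would observe that the existence and uniqueness of $\psi_b=\mathsf L_2^S(g_n,g_\tau,\varGamma)\in S_1^{\rm b}(T)$ and of $\varphi=\mathsf L_2^ng_n\in\mathfrak H_K^2(T)$ are guaranteed by Lemma~\ref{LEM:lift_stream} (resp.\ Lemma~\ref{lift_potential}), while the existence and uniqueness of $\psi_\ell\in S_1(T)$ follows from Proposition~\ref{exist:sol:stokes} (the compatibility condition $\psi_0^{\rm i}\in S_1$ being precisely the one required there at level $k=1$). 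Thus the whole content of the theorem reduces to the existence and uniqueness of $\psi_\varLambda\in S_1(T)$ solving \eqref{cauchy_for_spi_lambda_strong}. Set $\bar\psi=\psi_b+\psi_\ell\in\bar S_1(T)$; by Proposition~\ref{exist:sol:stokes} and Lemma~\ref{lift_potential} its norm in $\bar S_1(T)$ (and that of $\varphi$ in $\mathfrak H_K^2(T)$) is controlled by the data, exactly as in \eqref{eq:estim_rhs}.

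For the nonlinear problem I would run a Galerkin scheme in the eigenbasis of $\mathcal A_2^S$ (the unbounded realization of $\mathsf A_2^S$ in $S_0$): let $\mathbb S_2^m$ be the span of the first $m$ eigenvectors, $\Pi_m$ the $S_1$-orthogonal projector onto it, $\Pi_m^*$ its adjoint, and let $\psi_\varLambda^m\in\mathbb S_2^m$ solve $\partial_t\psi_\varLambda^m+\nu\mathsf A_2^S\psi_\varLambda^m=-\Pi_m^*\varLambda_0^S(\bar\psi+\psi_\varLambda^m,\varphi)$ with zero initial data; local existence on $(0,T_m)$ comes from Cauchy--Lipschitz. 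The energy estimate is now obtained by pairing with $\psi_\varLambda^m(s)$ in $S_1$: since $\langle\varLambda_0^S(\cdot,\varphi),\theta\rangle=(\varLambda_0^S(\cdot,\varphi),\theta)_{S_0}$ by \eqref{alter_def_lambda0}, and this is itself controlled through \eqref{estim_varlambda_strong} by $\|\bar\psi+\psi_\varLambda^m\|_{\bar S_1}^{9/5}\|\bar\psi+\psi_\varLambda^m\|_{\bar S_2}^{1/5}$ (up to lower-order factors), one can absorb the top-order part into $\tfrac{\nu}{2}\|\psi_\varLambda^m\|_{S_2}^2$ by Young's inequality, obtaining
\begin{equation}
\tfrac{\rm d}{{\rm d}t}\|\psi_\varLambda^m\|_{S_1}^2+\nu\|\psi_\varLambda^m\|_{S_2}^2\leqslant \Psi(s)\|\psi_\varLambda^m\|_{S_1}^2+\Psi(s),
\end{equation}
with $\Psi\in L^1(0,T)$ depending only on the (fixed) norms of $\bar\psi$ and $\varphi$ — exactly the level-$1$ analogue of \eqref{eq:energie}. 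Gr\"onwall's inequality then gives a bound for $\psi_\varLambda^m$ in $\mathcal C([0,T];S_1)\cap L^2(0,T;S_2)$ uniform in $m$, allows taking $T_m=T$, and — reading off $\partial_t\psi_\varLambda^m$ from the equation together with \eqref{estim_varlambda_strong_2} — a uniform bound in $H^1(0,T;S_0)$; hence $(\psi_\varLambda^m)_m$ stays in a ball of $S_1(T)$.

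Passage to the limit is then standard: weak-$*$ compactness in $S_1(T)$ plus the Aubin--Lions compactness of $S_1(T)\hookrightarrow L^2(0,T;S_1)$ let one pass to the limit in the bilinear term along a subsequence, exactly as in \cite[Chap.~1, Section 6]{Lions_book1969}; the continuity of $\varLambda_0^S$ in the relevant norms is furnished by \eqref{estim_varlambda_strong_2}. For uniqueness I would take two solutions $\psi_\varLambda^1,\psi_\varLambda^2$, set $\varPsi=\psi_\varLambda^2-\psi_\varLambda^1$, write the equation it solves, pair it with $\varPsi(t)$ in $S_1$, and invoke the Lipschitz-type estimate \eqref{estim_varlambda_strong_3} to get $\tfrac{\rm d}{{\rm d}t}\|\varPsi\|_{S_1}^2\leqslant \Phi(t)\|\varPsi\|_{S_1}^2$ for some $\Phi\in L^1(0,T)$ built from the already-established bounds on $\bar\psi,\varphi,\psi_\varLambda^1,\psi_\varLambda^2$; Gr\"onwall closes the argument. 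I expect the only genuinely delicate point to be the absorption step in the energy estimate: one must check that the exponent $4/5$ on $\|\bar\psi+\psi_\varLambda^m\|_{\bar S_2}$ in \eqref{estim_varlambda_strong} is $<2$ (it is, so Young applies with the correct conjugate exponents) and that the residual factors, which involve only $\bar S_1$-norms, are indeed $L^1$ in time on $(0,T)$ using $\bar\psi\in\mathcal C([0,T];S_0)\cap L^2(0,T;\bar S_1)$ and $\varphi\in\mathcal C([0,T];\mathfrak H_K^1)\cap L^2(0,T;\mathfrak H_K^2)$; everything else is a routine transcription of the weak-solution proof at one higher level of regularity.
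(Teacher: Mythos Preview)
Your approach differs from the paper's and, as written, contains a real gap. The paper does \emph{not} run a Galerkin scheme at the $S_1$ level; it first obtains local-in-time existence by a Banach fixed-point argument in $L^2(0,T;S_0)$ (exploiting the factor $T^{1/10}$ in \eqref{estim_varlambda_strong_2}--\eqref{estim_varlambda_strong_3}), and then extends globally via an $S_1$ a~priori estimate. The crucial ingredient for the global step is that the local strong solution is also a \emph{weak} solution, so the bounds on $\|\psi_\varLambda\|_{\mathcal C([0,T];S_0)}$ and $\|\psi_\varLambda\|_{L^2(0,T;S_1)}$ coming from Theorem~\ref{THEO:EXIST_NS} are already available and can be fed into the $S_1$-level Gr\"onwall inequality; uniqueness is likewise inherited from the weak theory.

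Your claimed inequality
\[
\tfrac{\rm d}{{\rm d}t}\|\psi_\varLambda^m\|_{S_1}^2+\nu\|\psi_\varLambda^m\|_{S_2}^2\leqslant \Psi(s)\|\psi_\varLambda^m\|_{S_1}^2+\Psi(s),
\]
with $\Psi\in L^1(0,T)$ depending \emph{only} on $\bar\psi$ and $\varphi$, is not obtainable from \eqref{estim_varlambda_strong}. After pairing with $\mathsf A_2^S\psi_\varLambda^m$ and applying Cauchy--Schwarz, the nonlinear term is bounded by $\|\varLambda_0^S(\bar\psi+\psi_\varLambda^m,\varphi)\|_{S_0}\,\|\psi_\varLambda^m\|_{S_2}$. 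Estimate \eqref{estim_varlambda_strong} (whose exponents on $\bar S_1,\bar S_2$ are $6/5$ and $4/5$, not $9/5$ and $1/5$) then produces a factor $\|\psi_\varLambda^m\|_{S_2}^{9/5}$ in the worst term; after Young's inequality the residual contains $\|\psi_\varLambda^m\|_{S_1}$ to a power strictly greater than $2$. The resulting ODE inequality is therefore superlinear in $\|\psi_\varLambda^m\|_{S_1}^2$ and gives only a local-in-time bound --- precisely the obstruction that forces one to invoke the $S_0$-level (weak) estimates first. Your Galerkin scheme can be repaired by performing the $S_0$ energy estimate on the same $\psi_\varLambda^m$ (the eigenbasis is common to all $\mathcal A_k^S$) so as to control $\|\psi_\varLambda^m\|_{\mathcal C([0,T];S_0)}$ and $\|\psi_\varLambda^m\|_{L^2(0,T;S_1)}$ uniformly in $m$, and only \emph{then} running the $S_1$ estimate with those bounds inside the Gr\"onwall coefficient; but that is exactly the step your plan omits.
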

%
\begin{proof}[Proof of Theorem~\ref{THEO:EXIST_NS_STRONG}] The proof is based on a fixed point argument.
The existence and uniqueness of $\psi_b$ and $\psi_{\ell}$ being granted,  denote by $\bar\psi$ the sum $\psi_b+\psi_{\ell}$ that belongs to $\bar S_1(T)$ and introduce the constant:
$$R_0=\Big[\|\psi^{\rm i}_0\|^2_{S_1}+\|f_S\|^2_{L^2(0,T;S_{0})}+\|(g_n,g_\tau,\varGamma)\|^2_{G_1(T)}\Big]^{{\frac12}}.$$

Then, define three maps:
\begin{enumerate}
\item $\mathsf X_T:L^2(0,T;S_0)\longrightarrow S_1(T)$ where, 
for every $f\in L^2(0,T;S_0)$, $\theta=\mathsf X_Tf$ is the unique solution   in $S_1(T)$ to the Cauchy problem:
\begin{alignat*}{3}
\label{eq1:cauchy_for_spi_lambda_2}
\partial_t \theta+\nu {\mathsf A}^S_{2}\theta&=f&\qquad&\text{in }\F_T,\\
\theta(0)&=0&&\text{in }\F.
\end{alignat*}
\item $\mathsf Y_T:  S_1(T)\longrightarrow L^2(0,T;S_0)$ where, for every $\theta\in S_1(T)$,  $\mathsf Y_T(\theta)=-\varLambda^S_0(\bar\psi+\theta,\varphi)$ 
(remind that the Kirchhoff potential $\varphi=\mathsf L_2^n g_n$ is  given).
\item $\mathsf Z_T=\mathsf Y_T\circ\mathsf X_T:L^2(0,T;S_0)\longrightarrow L^2(0,T;S_0)$.
\end{enumerate}
Combining the estimates \eqref{estim_varlambda_strong_2}, \eqref{estim_varlambda_strong_3} and \eqref{estim:ini}, we deduce that for every $f,f_1,f_2\in 
L^2(0,T;S_0)$:
\begin{align*}
\|\mathsf Z_T f\|_{L^2(0,T,S_0)}&\leqslant \mathbf c_{[\F,\nu]} T^{\frac{1}{10}}\Big(R_0^2+
\|f\|^2_{L^2(0,T;S_0)}\Big),\\
\|\mathsf Z_T f_2-\mathsf Z_T f_1\|_{L^2(0,T,S_0)}&\leqslant\mathbf c_{[\F,\nu]} T^{\frac{1}{10}}\Big(R_0+\|f_1\|_{L^2(0,T;S_0)}+\|f_2\|_{L^2(0,T;S_0)}\Big)
\|f_2-f_1\|_{L^2(0,T,S_0)}.
\end{align*}
Let now $R$ be equal to $2R_0$. Then, for every $f,f_1,f_2$ in $B_R$, the ball of center $0$ and radius $R$ in $L^2(0,T;S_0)$:
\begin{align*}
\|\mathsf Z_T f\|_{L^2(0,T,S_0)}&\leqslant \mathbf c_{[\F,\nu]} T^{\frac{1}{10}} R^2,\\
\|\mathsf Z_T f_2-\mathsf Z_T f_1\|_{L^2(0,T,S_0)}&\leqslant\mathbf c_{[\F,\nu]} T^{\frac{1}{10}} R
\|f_2-f_1\|_{L^2(0,T,S_0)}.
\end{align*}
Thus, for $T_0=1/(4\mathbf c_{[\F,\nu]}R_0)^{10}$, the mapping $\mathsf Z_{T_0}$ is a contraction from $B_R$ into itself. 
Banach fixed point Theorem asserts that   $\mathsf Z_{T_0}$ admits a unique fixed point whose image by $\mathsf X_{T_0}$ 
yields a solution $\psi_\varLambda$ to the Cauchy problem \eqref{cauchy_for_spi_lambda_strong} on $(0,T_0)$.  Let $(0,T^\ast)$ be the larger time interval to which the 
solution $\psi=\psi_\varLambda+\psi_\ell+\psi_b$ can be extended. The time of existence $T_0$ depending only on $R_0$, standard arguments ensure that the following alternative holds:
\begin{equation}
\label{occurence}
\text{Either }\quad T^\ast=T\qquad\text{or}\qquad\lim_{t\to T^\ast}\|\psi(t)\|_{\bar S_1}=+\infty.
\end{equation}
As being a weak solution, estimates \eqref{eq:estimC0} and \eqref{eq:estimC1} hold for $\psi_\varLambda$, namely
$\|\psi_\varLambda\|_{\mathcal C(0,T;S_0)}$ and 
$\|\psi_\varLambda\|_{L^2(0,T;S_1)}$ are bounded.
On the other hand, forming the scalar product of equation \eqref{eq1:cauchy_for_spi_lambda_n} with $\mathsf A^S_2 \psi_\varLambda$ in $S_0$, 
we obtain:
$$
\frac{1}{2}\frac{\rm d}{\rm dt}\|\psi_\varLambda\|_{S_1}^2+\nu\|\psi_\varLambda\|_{S_2}^2
=
-(\varLambda_0(\psi,\varphi),\mathsf A^S_2 \psi_\varLambda)_{S_0}\qquad\text{on }(0,T^\ast).
$$
Considering the nonlinear term in the right hand side, H\"older's inequality yields:
\begin{equation}
|(\varLambda_0(\psi,\varphi),\mathsf A^S_2 \psi_\varLambda)_{S_0}|\\
\leqslant 
\|\Delta \psi\|_{L^4(\F)}\|\nabla^\perp\psi+\nabla\varphi\|_{\mathbf L^4(\F)}
\|\psi_\varLambda\|_{S_2}.
\end{equation}
whence we deduce, proceeding as in the proof of Lemma~\ref{estim_non_linear_term}, that:
$$|(\varLambda_0\psi,\mathsf A^S_2 \psi_\varLambda)_{S_0}|
\leqslant \Big[\frac{\nu}{2}+\mathbf c_{[\F,\nu]}\varTheta_1\Big]\|\psi_\varLambda\|_{S_2}^2+\mathbf c_{[\F,\nu]}\varTheta_2\qquad\text{on }(0,T^\ast),$$
with $\varTheta_1=\big[\|\psi_\varLambda\|_{S_1}^2+\|\psi_b\|_{\bar S_1}^2+\|\psi_\ell\|_{S_1}^2+\|\varphi\|_{\mathfrak H^2_K}^2\big]$ and 
$\varTheta_2=\big[\|\psi_\varLambda\|_{S_0}^2+\|\psi_b\|_{S_0}^2+\|\psi_\ell\|_{ S_0}^2+\|\varphi\|_{\mathfrak H^1_K}^2\big]+
\big[\|\psi_b\|_{\bar S_2}^2+\|\psi_\ell\|_{S_2}^2\big]$.
The functions $\varTheta_1$ and $\varTheta_2$ both belong to $L^1(0,T)$. It follows that:
$$\frac{\rm d}{\rm dt}\|\psi_\varLambda\|_{S_1}^2+\big(\nu-\mathbf c_{[\F,\nu]}\varTheta_1\big)\|\psi_\varLambda\|_{S_2}^2\leqslant 
\mathbf c_{[\F,\nu]}\varTheta_2\qquad\text{on }(0,T^\ast),$$
and by Gr\"onwall's inequality, we conclude that $\|\psi_\varLambda\|_{S_1}$ is bounded on $[0,T^\ast)$. The latter occurence in \eqref{occurence} may not 
happen and therefore $T^\ast=T$.
\end{proof}
%
%
Definition~\ref{defi:Navier_Stokes_strong} and Theorem~\ref{THEO:EXIST_NS_STRONG} can easily be rephrased in terms of the vorticity field. 
The nonlinear advection term is 
defined for every $\bar\omega\in \bar V_1$ and $\varphi\in\mathfrak H^2_K$ as an element of $V_{-1}$ by:
\begin{equation}
\label{def_lambda_0V}
\varLambda^V_0(\bar\omega,\varphi)=\Delta_{-1}\varLambda^S_0(\bar\Delta_1^{-1}\bar\omega,\varphi)=-
(\omega(\nabla^\perp\bar\psi+\nabla\varphi),\nabla\mathsf Q_1\cdot)_{\mathbf L^2(\F)},
\end{equation}
the latter identity being deduced from \eqref{alternQk_2}. In \eqref{def_lambda_0V}, $\omega$   stands for the {\it regular} part of 
$\bar\omega$ (see Remark~\ref{rem_singular_V1}) and $\bar\psi=\bar\Delta_{1}^{-1}\bar\omega$. 
%
\begin{rem}
\label{rem_advection}
Notice that even at this level of regularity, the nonlinear term 
of the vorticity equation cannot be written in the most common form, namely as the advection term $u\cdot\nabla\omega$ (see Section~\ref{SEC:more_regular}).
\end{rem}
%
%
\begin{definition}
\label{defi:Navier_Stokes_strong_vorticity}
Let a positive real number $T$,   a source term $f_V\in L^2(0,T;V_{-1})$, an initial data $\omega^{\rm i}\in  \bar V_0$ 
  and a triple $(g_n,g_\tau,\varGamma)\in G_1(T)$ be given. Define 
  $\omega^{\rm i}_0=\omega^{\rm i}-\mathsf L_0^V(g_n(0),g_\tau(0),\varGamma(0))$ and assume that $\Sigma$ is of class $C^{2,1}$ and that the compatibility 
  condition
 $ \omega^{\rm i}_0\in V_0$
  is satisfied.
\par
We say that a vorticity  function $\omega\in  \bar V_0(T)$ is a strong (or Kato)
solution to the $\omega-$Navier-Stokes equations satisfying the Dirichlet boundary conditions on $\Sigma_T$ as described in \eqref{def:boundary_cond} 
by the triple  $(g_n,g_\tau,\varGamma)$ if $\omega=\omega_b+\omega_\ell+\omega_\varLambda$ where:
\begin{enumerate}
\item The function $\omega_b$ accounts for the boundary conditions. It lies in $V^b_0(T)$ and is equal to $\mathsf L_1^V(g_n,g_\tau,\varGamma)$;
\item The function $\omega_\ell$ accounts for the source term and the initial condition. It is defined as the unique solution in  $V_0(T)$ 
of the homogeneous (linear) $\omega-$Stokes Cauchy problem
\begin{subequations}
\label{cauchy_for_psi_L_strong_v}
\begin{alignat}{3}
\label{cauchy_for_psi_L_strong_v_edp}
\partial_t \omega_\ell+\nu {\mathsf A}^V_{1}\omega_\ell&=-\partial_t\omega_b+f_V&\qquad&\text{in }\F_T,\\
\omega_\ell(0)&=\omega^{\rm i}_0&&\text{in }\F.
\end{alignat}
\end{subequations}
\item The function $\omega_\varLambda$ accounts for the nonlinear advection term. It belongs to the space $V_0(T)$ and solves the nonlinear Cauchy problem:
\begin{subequations}
\label{cauchy_for_spi_lambda_strong_V}
\begin{alignat}{3}
\label{eq1:cauchy_for_spi_lambda_V}
\partial_t \omega_\varLambda+\nu {\mathsf A}^V_{1}\omega_\varLambda&=-\varLambda^V_0(\omega_b+\omega_\ell+\omega_\varLambda,\varphi)&\qquad&\text{in }\F_T,\\
\omega_\varLambda(0)&=0&&\text{in }\F,
\end{alignat}
\end{subequations}
where $\varphi=\mathsf L_2^n g_n$ is the Kirchhoff potential that belongs to $\mathfrak H^2_K(T)$.
\end{enumerate}
\end{definition}
The counterpart of Theorem~\ref{THEO:EXIST_NS_STRONG} reads:
\begin{theorem}
\label{THEO:EXIST_NS_STRONG_V}
For any set of data as described in Definition~\ref{defi:Navier_Stokes_strong_vorticity}, there exists a unique (strong) solution in $\bar V_0(T)$ to the $\omega-$Navier-Stokes equations. Moreover, if $\psi$ is the unique solution to the $\psi-$Navier-Stokes equations as defined in Definition~\ref{defi:Navier_Stokes_strong} and 
$\omega^{\rm i}=\bar\Delta_{0}\psi^{\rm i}$, $f_V=\Delta_{-1}f_S$, all the other data being equal, then $\omega=\bar\Delta_{1}\psi$.
\end{theorem}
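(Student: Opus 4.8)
The plan is to mirror the structure of Theorem~\ref{THEO:EXIST_NS_STRONG} on the vorticity side and to deduce everything by transporting the stream function result through the isometry $\bar\Delta_1$. First I would note that the three maps $\psi\mapsto\omega$ defined by $\psi_b\mapsto\mathsf L_1^V(g_n,g_\tau,\varGamma)$, $\psi_\ell\mapsto\bar\Delta_1^{-1}$-image, and $\psi_\varLambda\mapsto$ its image, are all isometric identifications: by Theorem~\ref{THEO:barV1} the operator $\bar\Delta_1$ is an isometry from $\bar S_2$ onto $\bar V_1$, it restricts to an isometry $\bar\Delta_0:\bar S_1\to\bar V_0$ (Theorem~\ref{THEO:decompV0}) and to the isometry $\Delta_1:S_1\to V_{-1}$ of Lemma~\ref{LEM:prop_delta}; moreover by Definition~\ref{def:LkV} we have $\mathsf L_1^V(g_n,g_\tau,\varGamma)=\bar\Delta_1\mathsf L_2^S(g_n,g_\tau,\varGamma)$, so $\omega_b=\bar\Delta_1\psi_b$. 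Likewise the lifting spaces match: $\bar\Delta_1$ carries $S^{\rm b}_2$ onto $V^{\rm b}_0$ and $S_1(T)$ onto $V_0(T)$ (the latter because, by the commutative diagram of Fig.~\ref{diag_2}, $\Delta_{-1}\mathsf A^S_2=\mathsf A^V_1\Delta_0$, so $\bar\Delta_1$ intertwines the two homogeneous Stokes semigroups). Hence each of the three spaces appearing in Definition~\ref{defi:Navier_Stokes_strong_vorticity} is exactly the $\bar\Delta_1$-image of the corresponding space in Definition~\ref{defi:Navier_Stokes_strong}.

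Next I would apply $\Delta_{-1}$ to the three Cauchy problems \eqref{cauchy_for_psi_L_strong}--\eqref{cauchy_for_spi_lambda_strong}. Using the intertwining relation $\Delta_{-1}\mathsf A^S_2=\mathsf A^V_1\Delta_0=\mathsf A^V_1\bar\Delta_1\cdot$ (applied on the appropriate subspaces, with the convention that $\bar\Delta_1$ agrees with $\bar\Delta_0$ on $\bar S_1$ and with $\Delta_1$ on $S_1$), the linear problem \eqref{cauchy_for_psi_L_strong} for $\psi_\ell$ transforms into \eqref{cauchy_for_psi_L_strong_v} for $\omega_\ell:=\bar\Delta_1\psi_\ell$ with $f_V=\Delta_{-1}f_S$ and $\omega^{\rm i}_0=\bar\Delta_1\psi^{\rm i}_0$; the compatibility condition $\psi^{\rm i}_0\in S_1$ becomes $\omega^{\rm i}_0\in V_0$ since $\bar\Delta_1$ maps $S_1$ isometrically onto $V_{-1}$ and, in particular, $\bar\Delta_0$ maps $S_1$ onto $V_0$. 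For the nonlinear equation \eqref{cauchy_for_spi_lambda_strong} I would invoke directly the definition \eqref{def_lambda_0V} of $\varLambda^V_0$, namely $\varLambda^V_0(\bar\omega,\varphi)=\Delta_{-1}\varLambda^S_0(\bar\Delta_1^{-1}\bar\omega,\varphi)$, so that applying $\Delta_{-1}$ to \eqref{eq1:cauchy_for_spi_lambda_n} yields exactly \eqref{eq1:cauchy_for_spi_lambda_V} for $\omega_\varLambda:=\bar\Delta_1\psi_\varLambda$, with zero initial datum preserved. This shows that $\psi\mapsto\bar\Delta_1\psi$ is a bijection between the set of strong solutions to the $\psi$-Navier--Stokes equations and the set of strong solutions to the $\omega$-Navier--Stokes equations with the correspondingly transported data.

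Then existence and uniqueness for the vorticity formulation follow immediately from Theorem~\ref{THEO:EXIST_NS_STRONG}: the data $(f_S,\psi^{\rm i},g_n,g_\tau,\varGamma)$ of Definition~\ref{defi:Navier_Stokes_strong} are in one-to-one correspondence (through $\bar\Delta_1$, resp.\ $\Delta_{-1}$ on the source term, and identity on the boundary triple) with those of Definition~\ref{defi:Navier_Stokes_strong_vorticity}, the correspondence respecting the compatibility conditions and the regularity requirements on $\Sigma$; hence the unique $\psi\in\bar S_1(T)$ produces the unique $\omega=\bar\Delta_1\psi\in\bar V_0(T)$, and conversely any solution $\omega$ gives $\psi=\bar\Delta_1^{-1}\omega\in\bar S_1(T)$ solving the stream function problem, whence uniqueness. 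The final assertion of the theorem, that $\omega=\bar\Delta_0\psi$ when $\omega^{\rm i}=\bar\Delta_0\psi^{\rm i}$ and $f_V=\Delta_{-1}f_S$, is then precisely this correspondence restated, since on $\bar S_1(T)$ the operator $\bar\Delta_1$ coincides with $\bar\Delta_0$ by point (ii) of Lemma~\ref{LEM:prop_delta}. The only point requiring genuine care — and the one I expect to be the main obstacle — is the careful bookkeeping of which restriction of $\Delta_{-1}$ (i.e.\ $\bar\Delta_0$, $\bar\Delta_1$, or $\Delta_1$) acts on which summand $\psi_b$, $\psi_\ell$, $\psi_\varLambda$ of the solution, together with checking that the three intertwining identities $\Delta_{-1}\mathsf A^S_2=\mathsf A^V_1\Delta_0$ hold in the right dual spaces; once the diagram of Fig.~\ref{diag_2} is read correctly on each piece, the proof is a direct transcription. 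As already indicated in the excerpt, it therefore suffices to apply the operator $\Delta_{-1}$ to \eqref{cauchy_for_psi_L_strong} and \eqref{cauchy_for_spi_lambda_strong}.
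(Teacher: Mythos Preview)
Your proposal is correct and follows essentially the same approach the paper (implicitly) takes: the paper states this theorem as ``the counterpart of Theorem~\ref{THEO:EXIST_NS_STRONG}'' and gives no separate proof, relying on the same transport argument used for the weak case (where one applies $\Delta_{-2}$ and invokes the commutation $\Delta_{-2}\mathsf A^S_1=\mathsf A^V_0\Delta_0$ from Fig.~\ref{diag_2}). Your detailed bookkeeping of which restriction of $\Delta_{-1}$ acts on each summand, together with the intertwining $\Delta_{-1}\mathsf A^S_2=\mathsf A^V_1\Delta_0$ and the definition \eqref{def_lambda_0V} of $\varLambda^V_0$, is exactly what the paper leaves to the reader.
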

%
%
Once again, we point out that Equations \eqref{cauchy_for_psi_L_strong_v_edp} and \eqref{eq1:cauchy_for_spi_lambda_V} 
are set in $L^2(0,T;V_{-1})$ where $V_{-1}$ is not a distribution space. As very well explained in \cite{Simon:2010aa}, 
this may be the cause of  numerus mistakes and misunderstandings. Inspired by Guermond and Quartapelle in \cite{Guermond:1997aa}, let us elaborate 
a ``distribution-based'' reformulation of Systems \eqref{cauchy_for_psi_L_strong_v}-\eqref{cauchy_for_spi_lambda_strong_V}. Any solution 
$\omega$ to the $\omega-$NS equations can be decomposed into:
\begin{subequations}
\label{eq:material}
\begin{equation}
\label{eq:material_1}
\omega=\omega_\varLambda+\omega_\ell+\omega_{b}\qquad\text{where}\qquad\omega_{b}=\omega_{ b}^{\mathfrak H}
+\zeta_b\quad\text{ with }\quad\zeta_b=\sum_{j=1}^N\varGamma_j\zeta_j.
\end{equation}
In these sums, $\omega_\varLambda$ and $\omega_\ell$ belong to $V_0(T)$, $\omega_b^{\mathfrak H}$ is in 
$H^1(0,T;V_{-1})\cap \mathcal C([0,T],L^2_V)\cap L^2(0,T;H^1_V)$ and $\varGamma_j\in H^1(0,T)$ for every $j=1,\ldots,N$. In 
the splitting \eqref{eq:material_1} $\zeta_b$ is identified as the singular part of $\omega$ while the ``regular part'' is:
\begin{equation}
\omega_{\mathsf r}=\omega_\varLambda+\omega_\ell+\omega_{ b}^{\mathfrak H}.
\end{equation}
Recalling the decomposition 
\eqref{decom_S0_1} of the space $S_0$, namely: 
$$
S_0=H^1_0(\F)\sumperp \mathbb F_S,
$$
where the finite dimensional space $\mathbb F_S$  is spanned by the functions $\xi_j$ ($j=1,\ldots,N$), we deduce that:
$$
V_1=\mathsf P_1H^1_0(\F)\sumperp \mathbb F_V.$$
From any source term $f_V\in L^2(0,T;V_{-1})$, we define $f_V^{\mathsf r}\in L^2(0,T;H^{-1}(\F))$ by setting:
\begin{equation}
f_V^{\mathsf r}=\langle f_V,\mathsf P_1\cdot\rangle_{V_{-1},V_1}.
\end{equation}
\end{subequations}
%
\begin{theorem}
Let $\omega$ be a solution to the $\omega-$NS equations as described in Definition~\ref{defi:Navier_Stokes_strong_vorticity} 
and introduce $\omega_{\mathsf r}$, $\zeta_b$ and $f_V^{\mathsf r}$ as explained in the relations \eqref{eq:material}. 
Then $\omega_{\mathsf r}$ obeys the equation:
\begin{subequations}
\label{eq:distrib_based}
\begin{equation}
\label{eq:distrib_based_1}
\partial_t\omega_{\mathsf r}-\nu\Delta\omega_{\mathsf r}+\nabla\cdot\big[\omega_{\mathsf r}(\nabla^\perp\psi+\nabla\varphi)\big]=f_V^{\mathsf r}
\qquad\text{in }\quad L^2(0,T;H^{-1}(\F)),
\end{equation}
and for every $j=1,\ldots,N$:
\begin{equation}
\label{eq:distrib_based_2}
\varGamma_j'+\nu(\nabla\omega_{\mathsf r},\nabla\xi_j)_{\mathbf L^2(\F)}-(\omega_{\mathsf r}(\nabla^\perp\psi+\nabla\varphi),\nabla\xi_j)_{\mathbf L^2(\F)}=\langle f_V,\mathsf P_1\xi_j\rangle_{V_{-1},V_1}\quad \text{in}\quad L^2(0,T).
\end{equation}
\end{subequations}
\end{theorem}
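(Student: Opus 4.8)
The plan is to obtain \eqref{eq:distrib_based} by pairing a single abstract evolution equation for $\omega$ against the two orthogonal summands of $V_1$. First I would add the two Cauchy problems \eqref{cauchy_for_psi_L_strong_v_edp} and \eqref{eq1:cauchy_for_spi_lambda_V}; since $\omega=\omega_\varLambda+\omega_\ell+\omega_b$ this produces
\[
\partial_t\omega+\nu{\mathsf A}^V_1(\omega_\ell+\omega_\varLambda)=f_V-\varLambda^V_0(\omega,\varphi)\qquad\text{in }L^2(0,T;V_{-1}),
\]
where the diffusion term only involves $\omega_\ell+\omega_\varLambda\in V_0(T)$ (consistently, ${\mathsf A}^V_1\omega_b$ is not even defined, $\omega_b^{\mathfrak H}$ being a harmonic function and $\zeta_b$ a boundary measure). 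Because $V_{-1}$ is not a distribution space, translating this identity into \eqref{eq:distrib_based} amounts to testing it against the pieces of the orthogonal splitting $V_1=\mathsf P_1 H^1_0(\F)\sumperp\mathbb F_V$ deduced from \eqref{decom_S0_1}: testing against $\mathsf P_1\theta_0$ with $\theta_0\in\mathcal D(\F)$ produces \eqref{eq:distrib_based_1}, and testing against the basis vectors $\mathsf P_1\xi_j$ of $\mathbb F_V$ produces \eqref{eq:distrib_based_2}. Throughout I would use $\mathsf Q_1\mathsf P_1=\mathrm{Id}$, that $\mathsf Q_1$ is the restriction of $\mathsf Q$ to $V_1$ (Lemma~\ref{PQ_inverses}), the expression \eqref{def_lambda_0V} of $\varLambda^V_0$, and the identification (Theorem~\ref{THEO:EXIST_NS_STRONG_V} and Remark~\ref{rem_singular_V1}) of $\psi$ with $\bar\Delta_0^{-1}\omega$ and of its regular part $\omega_{\mathsf r}$ with $\Delta\psi$.

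For \eqref{eq:distrib_based_1}, testing against $\mathsf P_1\theta_0$ with $\theta_0\in\mathcal D(\F)$: the time term becomes $\frac{\rm d}{{\rm d}t}\langle\omega,\mathsf P_1\theta_0\rangle=\frac{\rm d}{{\rm d}t}(\omega_{\mathsf r},\theta_0)_{L^2(\F)}$, since $\langle\zeta_k,\mathsf P_1\theta_0\rangle=0$ by \eqref{vortic_circu} ($\theta_0$ vanishing near $\Sigma$); the diffusion term becomes $\nu(\nabla\mathsf Q_1(\omega_\ell+\omega_\varLambda),\nabla\theta_0)_{\mathbf L^2(\F)}=\nu(\nabla(\omega_\ell+\omega_\varLambda),\nabla\theta_0)_{\mathbf L^2(\F)}=\nu(\nabla\omega_{\mathsf r},\nabla\theta_0)_{\mathbf L^2(\F)}$, the second equality by the defining property of $\mathsf Q$ and the last because $\omega_b^{\mathfrak H}$ is harmonic; the source term is $\langle f_V^{\mathsf r},\theta_0\rangle$ by the very definition of $f_V^{\mathsf r}$ in \eqref{eq:material}; and the nonlinear term is $(\omega_{\mathsf r}(\nabla^\perp\psi+\nabla\varphi),\nabla\theta_0)_{\mathbf L^2(\F)}$. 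Integrating the last two by parts and invoking density of $\mathcal D(\F)$ in $H^1_0(\F)$ gives \eqref{eq:distrib_based_1} in $L^2(0,T;H^{-1}(\F))$, the time-integrability following from $\omega_{\mathsf r}\in L^2(0,T;H^1(\F))$.

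For \eqref{eq:distrib_based_2}, testing against $\mathsf P_1\xi_j$: the diffusion and nonlinear terms are handled as above, the new point being that $(\nabla\omega_b^{\mathfrak H},\nabla\xi_j)_{\mathbf L^2(\F)}=0$ by Green's identity, because $\omega_b^{\mathfrak H}$ is harmonic with vanishing flux through every $\Sigma_k^-$ and $\xi_j$ is constant on each $\Sigma_k^-$ and null on $\Sigma^+$; hence they contribute $\nu(\nabla\omega_{\mathsf r},\nabla\xi_j)_{\mathbf L^2(\F)}$ and $(\omega_{\mathsf r}(\nabla^\perp\psi+\nabla\varphi),\nabla\xi_j)_{\mathbf L^2(\F)}$, while the source term is exactly $\langle f_V,\mathsf P_1\xi_j\rangle$. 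The remaining term $\langle\partial_t\omega,\mathsf P_1\xi_j\rangle$ is the crux: one writes $\partial_t\omega$ as the sum of $\partial_t\omega_{\mathsf r}$ and $\partial_t\zeta_b=\sum_k\varGamma_k'\zeta_k$, evaluates $\langle\zeta_k,\mathsf P_1\xi_j\rangle$ from \eqref{vortic_circu} using the normalization \eqref{flux_cond_xi}, and integrates by parts the regular contribution after replacing $\omega_{\mathsf r}$ by $\Delta\psi$ (with $\psi,\xi_j\in S_0$, so that the only surviving boundary terms involve $\int_{\Sigma_k^-}\partial_n\psi=-\varGamma_k$ and $\int_{\Sigma_k^-}\partial_n\xi_j=-\delta_j^k$); the geometry-dependent contributions, built from the Gram matrix $\big((\xi_k,\xi_j)_{S_0}\big)$, then cancel and the whole term collapses to $\varGamma_j'$. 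Assembling the four terms yields \eqref{eq:distrib_based_2} in $L^2(0,T)$.

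The main obstacle is exactly this last reduction for \eqref{eq:distrib_based_2}. Since $V_{-1}$ is not a distribution space (Remark~\ref{first_kato:rem}), the pairing $\langle\partial_t\omega,\mathsf P_1\xi_j\rangle$ can only be read through the embedding $\mathsf P_1\xi_j\in V_1$, and isolating the scalar $\varGamma_j'$ inside it requires pinning down the precise interplay between the inner-boundary traces $\mathsf{Tr}_j\psi$ of the stream function, the prescribed circulations $\varGamma_j$ (see \eqref{circucu}), the flux normalization \eqref{flux_cond_xi} and the identity $\bar\Delta_0\xi_j=\zeta_j$ — this is where all the geometry of the multiply connected domain is concentrated. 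Everything else — the two orthogonal projections, the unwinding of $\mathsf Q_1\mathsf P_1$, the integrations by parts in $H^{-1}(\F)$, the harmonicity of $\omega_b^{\mathfrak H}$, and the $L^2(0,T)$-membership of each term — is routine once the spaces $\bar V_0(T)$, $\bar S_1(T)$, $\mathfrak H^2_K(T)$ and the estimates of the preceding subsections are available.
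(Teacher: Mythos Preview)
Your approach is essentially the paper's: write the single evolution identity in $V_{-1}$ and test it against the two orthogonal pieces $\mathsf P_1 H^1_0(\F)$ and $\mathbb F_V$ of $V_1$. The only difference is in how you propose to unwind the time-derivative term: the paper does it in one line via the identification $\langle\omega_{\mathsf r},\theta\rangle_{V_{-1},V_1}=(\omega_{\mathsf r},\mathsf Q_1\theta)_{L^2(\F)}$ (valid because $\omega_\ell+\omega_\varLambda\in V_0$ and $\omega_b^{\mathfrak H}$ is already written as $(\omega_b^{\mathfrak H},\mathsf Q_1\cdot)_{L^2}$), whereas you route through $\omega_{\mathsf r}=\Delta\psi$ and boundary integrations --- more laborious, but the same content.
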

%
\begin{proof}
By definition of a strong solution to the $\omega-$NS equations, the following equality holds for every $\theta\in V_1$:
$$\frac{\rm d}{\rm dt}\langle\omega_{\mathsf r},\theta\rangle_{V_{-1},V_1}+\sum_{j=1}^N\varGamma_j'\langle \zeta_j,\theta\rangle_{V_{-1},V_1}
+\nu(\omega_{\mathsf r},\theta)_{V_1}-(\omega_{\mathsf r}(\nabla^\perp\psi+\nabla\varphi),\nabla\mathsf Q_1\theta)_{\mathbf L^2(\F)}
=\langle f_V,\theta\rangle_{V_{-1},V_1}\quad\text{ on }(0,T).$$
Notice now that 
$$\langle\omega_{\mathsf r},\theta\rangle_{V_{-1},V_1}=(\omega_\varLambda+\omega_\ell,\theta)_{V_0}+(\omega_b^{\mathfrak H},
\mathsf Q_1\theta)_{L^2(\F)}=(\omega_{\mathsf r},\mathsf Q_1\theta)_{L^2(\F)}.$$
Choosing the test function $\theta$ in $\mathsf P_1H^1_0(\F)$, we obtain \eqref{eq:distrib_based_1} and choosing $\theta$ in $\mathbb F_V$ leads to \eqref{eq:distrib_based_2}.
\end{proof}
Appart from the nonlinear advection term, formulation \eqref{eq:distrib_based} is quite similar to System~\eqref{eq:main_vorti:1} displayed 
at the beginning of this paper. In Section~ \ref{SEC:more_regular}, we shall seek  more regular solutions to the $\omega-$NS system 
in order to obtain Identity \eqref{eq:distrib_based_1} satisfied in $L^2(0,T;L^2(\F))$.
\section{The pressure}
\label{SEC:pressure}
The purpose of this section is to explain how the pressure can be recovered from the stream function or the vorticity field, i.e. 
to derive Bernoulli-like formulas for the $\psi-$NS equations. In the literature, the existence of the pressure field is usually 
deduced from the Helmholtz-Weyl  decomposition and no expression is supplied.
%
\subsection{Hilbertian framework for the velocity field}
The following Lebesgue spaces shall enter  the definition of the pressure:
\begin{equation}
L^2_{\rm m}=\Big\{f\in L^2(\F)\,:\,\int_\F f\dx=0\Big\}
\qquad\text{and}\qquad 
{\mathfrak H}_{\rm m}=\mathfrak H\cap L^2_{\rm m},
\end{equation}
as well as the Sobolev spaces below:
\begin{equation}
{H}^1_{\rm m}=H^1(\F)\cap  L^2_{\rm m}
\qquad\text{and}\qquad 
  H^2_{\rm m}=\Big\{f\in H^2(\F)\cap  L^2_{\rm m}\,:\,\frac{\partial f}{\partial n}\Big|_\Sigma=0\Big\}.
 \end{equation}
The last two spaces are provided with the norms:
 $$(f_1,f_2)_{H^1_{\rm m}}=(\nabla f_1,\nabla f_2)_{\mathbf L^2(\F)}\forallt f_1,f_2\in H^1_{\rm m},$$
 and
 $$(f_1,f_2)_{H^2_{\rm m}}=(\Delta f_1,\Delta f_2)_{L^2(\F)}\forallt f_1,f_2\in H^2_{\rm m}.$$
 We recall that the lifting operators $\mathsf L^\tau_k$ (for every integer $k$) were introduced in Definition~\ref{def_stuff}.
\begin{definition}
For every $f\in L^2_{\rm m}$ we denote by $\varTheta_f$ the unique function in $ H^2_{\rm m}$ satisfying:
$$\Delta \varTheta_f=f\quad\text{in }\F,$$
and we denote by $\varPsi_f$ the unique preimage of $f$ in $ Z_2$ by the operator $\mathsf A^{ Z}_2$ (see Lemma~\ref{LEM:S2A2}). %
\par
Then, we define the operator $\mathsf H: L^2_{\rm m}\longrightarrow  L^2_{\rm m}$   by: 
$$\mathsf Hf=\Delta\mathsf L_{1}^\tau \frac{\partial\varTheta_f}{\partial\tau}\Big|_\Sigma\qquad\forallt f\in  L^2_{\rm m}.$$
It is worth noticing the obvious equality:
\begin{equation}
\varPsi_{\mathsf Hf}=\mathsf L_{1}^\tau \frac{\partial\varTheta_f}{\partial\tau}\Big|_\Sigma\qquad\forallt f\in  L^2_{\rm m}.
\end{equation}
\end{definition}
The operator $\mathsf H$ will come in handy for defining the pressure from the stream function. The main properties of $\mathsf H$ are 
summarized in the following lemma:
\begin{lemma}
\label{prop:H}
The operator $\mathsf H$ is bounded, $\Im \mathsf H= {\mathfrak H}_{\rm m}$ and $\ker \mathsf H=V_0$, what entails
that $\mathsf H$ is an isomorphism from $ {\mathfrak H}_{\rm m}$ 
onto $ {\mathfrak H}_{\rm m}$. 
\par
Denoting classically by $\mathsf H^\ast$ the adjoint of $\mathsf H$, we deduce that $\Im\mathsf H^\ast= {\mathfrak H}_{\rm m}$, 
$\ker \mathsf H^\ast=V_0$ and $\mathsf H^\ast$ is an isomorphism from $ {\mathfrak H}_{\rm m}$ onto itself. Furthermore, for every 
$f\in  {\mathfrak H}_{\rm m}$, the function $\mathsf H^\ast f$ is the harmonic conjugate of $f$ i.e. the unique function in $\mathfrak H_{\rm m}$ 
such that the complex function 
$$z=(x_1+ix_2)\longmapsto f(x_1,x_2)+i\,(\mathsf H^\ast f)(x_1,x_2)$$ is holomorphic 
in $\F$.
\end{lemma}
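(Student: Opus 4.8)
\textbf{Proof plan for Lemma~\ref{prop:H}.}
The plan is to establish the three basic facts about $\mathsf H$ (boundedness, $\Im\mathsf H=\mathfrak H_{\rm m}$, $\ker\mathsf H=V_0$) by unwinding the definition step by step, then to read off the dual statements by abstract linear algebra, and finally to identify $\mathsf H^\ast f$ with the harmonic conjugate of $f$ by a direct computation using the Cauchy--Riemann equations written in terms of normal and tangential derivatives on $\Sigma$.

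First I would check boundedness: the map $f\mapsto \varTheta_f$ is bounded from $L^2_{\rm m}$ into $H^2_{\rm m}$ by elliptic regularity for the Neumann problem (equivalence of $\|\cdot\|_{H^2_{\rm m}}$ with the usual $H^2$ norm), the tangential trace $\partial\varTheta_f/\partial\tau|_\Sigma$ then lies in $H^{1/2}(\Sigma)$ with zero mean on each $\Sigma^-_j$ (since it is the tangential derivative of an $H^{3/2}(\Sigma)$ function that is single-valued), hence belongs to $G_1^\tau$; the lifting operator $\mathsf L^\tau_1$ is bounded from $G_1^\tau$ into $\mathfrak B_S\subset H^2(\F)$ by Lemma~\ref{reg_Lk}, and finally $\Delta$ is bounded from $H^2(\F)$ into $L^2(\F)$, with image landing in $\mathfrak H$ because $\mathsf L^\tau_1 g$ is biharmonic; it lands in $L^2_{\rm m}$ because the mean of a Laplacian is a boundary flux integral which vanishes by the Neumann condition built into $\mathfrak B_S$. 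Chaining these gives boundedness and simultaneously shows $\Im\mathsf H\subset\mathfrak H_{\rm m}$. For the reverse inclusion and for $\ker\mathsf H=V_0$, I would argue as follows. If $f\in V_0$ then $\varTheta_f\in H^2_{\rm m}$ with $\Delta\varTheta_f=f\in V_0$; by Lemma~\ref{first_iso} (applied after adjusting by the harmonic pieces $\hat\xi_j$ exactly as in its proof) the normal derivative of $\varTheta_f$ is already a combination of $\partial\hat\xi_j/\partial n$, so $\varTheta_f$ differs from an element of $S_1$ by an element of $\mathbb F_S$; in either case its tangential trace is the tangential trace of a function whose full gradient, and in particular whose tangential component, is determined by the boundary data, and one computes $\partial\varTheta_f/\partial\tau|_\Sigma$ is, up to the harmonic corrections, a tangential derivative of a constant on each component — hence $\mathsf L^\tau_1$ of it has zero Laplacian modulo the finite-dimensional correction, giving $\mathsf Hf=0$. (Concretely: $V_0=\mathfrak H^\perp$ is characterized in Lemma~\ref{first_iso} as $\Delta S_1$, and for $\psi\in S_1$ one has $\partial\psi/\partial n|_\Sigma=0$, so $\partial\varTheta_{\Delta\psi}/\partial\tau$ is the tangential trace of $\psi$ up to an additive constant on each component, which is locally constant, whence $\mathsf L^\tau_1$ returns a constant and $\mathsf H(\Delta\psi)=0$.) Conversely, a dimension/rank count: on the complementary space $\mathfrak H_{\rm m}$, the composite $f\mapsto\varTheta_f\mapsto\partial\varTheta_f/\partial\tau|_\Sigma$ is injective (a harmonic function with prescribed tangential boundary derivative and zero mean is determined), and $\mathsf L^\tau_1$ followed by $\Delta$ recovers exactly a biharmonic function with zero flux whose Laplacian has zero mean, so $\mathsf H$ restricted to $\mathfrak H_{\rm m}$ is injective with image in $\mathfrak H_{\rm m}$; surjectivity onto $\mathfrak H_{\rm m}$ then follows either by the open mapping theorem once injectivity and closed range are established, or more cleanly by exhibiting the inverse via the harmonic-conjugate construction. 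This yields $\ker\mathsf H=V_0$, $\Im\mathsf H=\mathfrak H_{\rm m}$, and that $\mathsf H$ is an isomorphism of $\mathfrak H_{\rm m}$.

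Granting these, the statements about $\mathsf H^\ast$ are immediate from the orthogonal splitting $L^2_{\rm m}=V_0\sumperp\mathfrak H_{\rm m}$: since $\mathsf H$ has kernel $V_0$ and range $\mathfrak H_{\rm m}$ and is an isomorphism on the latter, its adjoint has kernel $(\Im\mathsf H)^\perp=V_0$ and range $(\ker\mathsf H)^\perp=\mathfrak H_{\rm m}$, and the restriction of $\mathsf H^\ast$ to $\mathfrak H_{\rm m}$ is the adjoint of an isomorphism, hence an isomorphism. The last assertion — that for $f\in\mathfrak H_{\rm m}$, $\mathsf H^\ast f$ is the harmonic conjugate of $f$ — I would prove by computing $(\mathsf Hg,f)_{L^2(\F)}$ for $g\in\mathfrak H_{\rm m}$: integrate by parts twice, using $\mathsf Hg=\Delta\varPsi_{\mathsf Hg}$ with $\varPsi_{\mathsf Hg}=\mathsf L^\tau_1(\partial\varTheta_g/\partial\tau|_\Sigma)$ biharmonic, $\partial\varPsi_{\mathsf Hg}/\partial n|_\Sigma=\partial\varTheta_g/\partial\tau|_\Sigma$, to move everything onto the boundary; one obtains $(\mathsf Hg,f)_{L^2(\F)}=\int_\Sigma (\partial\varTheta_g/\partial\tau)\,f\,\ds$ up to terms that vanish because $f$ is harmonic and $\varPsi_{\mathsf Hg}$ is biharmonic with the appropriate traces. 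Then if $\tilde f$ denotes the harmonic conjugate of $f$, the Cauchy--Riemann relations give $\partial\tilde f/\partial n=\partial f/\partial\tau$ and $\partial\tilde f/\partial\tau=-\partial f/\partial n$ on $\Sigma$, and since $g=\Delta\varTheta_g$ with $\varTheta_g\in H^2_{\rm m}$ one rewrites $\int_\Sigma(\partial\varTheta_g/\partial\tau)f\,\ds=-\int_\Sigma(\partial\varTheta_g/\partial n)\,\tilde f\,\ds+\text{(curvature/corner terms that cancel)}=-\int_\F(\Delta\varTheta_g)\tilde f\,\dx=(g,\tilde f)_{L^2(\F)}$ after an integration by parts using that $\tilde f$ is harmonic. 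This identifies $\mathsf H^\ast f=\tilde f$ for all $f\in\mathfrak H_{\rm m}$, and since both sides vanish on $V_0$ we are done.

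\textbf{Main obstacle.} The delicate point is the boundary bookkeeping in the two characterizations — showing $\ker\mathsf H=V_0$ and computing $(\mathsf Hg,f)_{L^2(\F)}$ — because one is manipulating tangential traces of $H^2$ functions on a merely $\mathcal C^{1,1}$ (here $\mathcal C^{2,1}$, as needed for $\mathsf L^\tau_1$ to be an isomorphism) curve, and the integrations by parts that turn tangential derivatives into normal derivatives generate geometric (curvature) terms that must be shown to cancel; keeping careful track that $\partial\varTheta_f/\partial\tau|_\Sigma$ genuinely lies in $G_1^\tau$ (zero mean on each inner component, which uses single-valuedness of $\varTheta_f$) is the crux of making the whole chain well defined.
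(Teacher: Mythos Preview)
Your overall strategy matches the paper's, but there is one genuine gap and two places where you are making life harder than necessary.

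\textbf{The gap: surjectivity of $\mathsf H$ onto $\mathfrak H_{\rm m}$.} Your argument ``injectivity on $\mathfrak H_{\rm m}$ plus open mapping theorem'' does not yield $\Im\mathsf H=\mathfrak H_{\rm m}$: open mapping gives closed range, not full range, and $\mathfrak H_{\rm m}$ is infinite-dimensional so no dimension count rescues you. You gesture at ``exhibiting the inverse via the harmonic-conjugate construction'' but do not carry it out. The paper instead builds an explicit preimage: given $h\in\mathfrak H_{\rm m}$, take $\varPsi_h\in\mathfrak B_S$ with $\mathsf A_2^{Z}\varPsi_h=h$ (Lemma~\ref{decomp_S2}); the zero-mean condition on $h$ forces $\int_{\Sigma^+}\partial\varPsi_h/\partial n\,\ds=0$, so on each boundary component one can integrate tangentially to find $g\in H^{3/2}(\Sigma)$ with $\partial g/\partial\tau=\partial\varPsi_h/\partial n$; then solve the biharmonic problem for $\theta_h$ with $\theta_h|_\Sigma=g$, $\partial\theta_h/\partial n|_\Sigma=0$, and check $\mathsf H(\Delta\theta_h)=h$. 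This is short and you should do it rather than appeal to soft functional analysis.

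\textbf{The kernel argument is simpler than you make it.} Since $\mathsf L_1^\tau:G_1^\tau\to\mathfrak B_S$ is an isomorphism (Lemma~\ref{reg_Lk}) and $\Delta:\mathfrak B_S\to\mathfrak H$ is an isometry (Lemma~\ref{decomp_S2}), the composite $\Delta\circ\mathsf L_1^\tau$ is injective. Hence $\mathsf Hf=0$ iff $\partial\varTheta_f/\partial\tau|_\Sigma=0$, iff $\varTheta_f$ is locally constant on $\Sigma$, iff (after subtracting a global constant) $\varTheta_f\in S_1$, iff $f=\Delta\varTheta_f\in V_0$. No need to invoke the $\hat\xi_j$ or run the argument of Lemma~\ref{first_iso} again.

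\textbf{No curvature terms in the adjoint computation.} Your worry in the ``main obstacle'' paragraph is misplaced. The tangential integration by parts $\int_\Sigma(\partial_\tau\varTheta_f)\,h\,\ds=-\int_\Sigma\varTheta_f\,(\partial_\tau h)\,\ds$ is along closed curves and produces no boundary or curvature terms; the passage $\partial_\tau h=\partial_n\bar h$ is the pointwise Cauchy--Riemann relation, not an integration by parts. Also, your displayed intermediate step $-\int_\Sigma(\partial\varTheta_g/\partial n)\tilde f\,\ds$ is wrong (it vanishes identically since $\partial\varTheta_g/\partial n|_\Sigma=0$); the correct intermediate is $-\int_\Sigma\varTheta_g\,(\partial\tilde f/\partial n)\,\ds$, after which Green's formula (using $\Delta\tilde f=0$ and $\partial\varTheta_g/\partial n|_\Sigma=0$) gives $(g,\tilde f)_{L^2(\F)}$ directly. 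The paper's four-line computation is exactly this.
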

\begin{proof}
The boundedness results from elliptic regularity results for $\varTheta_f$  and from the boundedness of the operator $\mathsf L_{1}^\tau$ (we recall 
that by default $\Sigma$ is assumed to be at least of class $\mathcal C^{1,1}$). By construction, 
$\mathsf H$ is valued in $ {\mathfrak H}_{\rm m}$ so let a function $h$ be given in $ {\mathfrak H}_{\rm m}$.
According to Lemma~\ref{decomp_S2}, $\varPsi_h$ belongs to $\mathfrak B_S$ and:
$$\int_{\Sigma^+}\frac{\partial\varPsi_h}{\partial n}\ds=\int_\F h\dx-\sum_{j=1}^N \int_{\Sigma^-_j}\frac{\partial\varPsi_h}{\partial n}\ds=0.$$
We can then define $g\in H^{\frac32}(\Sigma)$ such that $\partial g/\partial\tau=\partial \varPsi_h/\partial n$ on $\Sigma$. We denote now by $\theta_h$ the biharmonic 
function in $H^2(\F)$ such that $\partial\theta_h/\partial n=0$  and $\theta_h=g$ on $\Sigma$. One easily verifies that $\Delta\theta_h$ belongs to 
$ L^2_{\rm m}$ and $\mathsf H\Delta\theta_h=h$. This proves that $\Im \mathsf H= {\mathfrak H}_{\rm m}$.
\par
According to Lemma~\ref{reg_Lk}, the operator $\mathsf L_{1}^\tau$ is an isomorphism from $G^\tau_1$ onto $\mathfrak B_S$. Therefore, 
if $\mathsf Hf=0$ for some $f$, then $\varTheta_f$ is in $S_1$ and hence $f=\Delta\varTheta_f$ is in $V_0$, which means that indeed $\ker \mathsf H=V_0$.
\par
Let now $h$ be in ${\mathfrak H}_{\rm m}$ and $f$ be in $ L^2_{\rm m}$. Then:
$$(h,\mathsf Hf)_{L^2(\F)}=(h,\Delta\varPsi_{\mathsf Hf})_{L^2(\F)}=\int_\Sigma h\frac{\partial\varPsi_{\mathsf Hf}}{\partial n}\ds=
\int_\Sigma h\frac{\partial\varTheta_f}{\partial \tau}\ds=-\int_\Sigma  \frac{\partial h}{\partial \tau} \varTheta_f\ds.$$
Introducing $\bar h$ the harmonic conjugate of $h$, we deduce that:
$$(h,\mathsf Hf)_{L^2(\F)}=-\int_\Sigma  \frac{\partial \bar h}{\partial n} \varTheta_f\ds=(\bar h,\Delta\varTheta_f)_{L^2(\F)}=(\bar h,f)_{L^2(\F)},$$
and the proof is  completed.
\end{proof}
%
%
We turn now our attention to the Gelfand triple:
\begin{equation}
\label{gelf_H10}
\mathbf H_1\subset \mathbf H_0\subset \mathbf H_{-1},
\end{equation}
where $\mathbf H_1=\mathbf H^1_0(\F)$, $\mathbf H_0=\mathbf L^2(\F)$ is the pivot space and $\mathbf H_{-1}=\mathbf H^{-1}(\F)$ is the dual space 
of $\mathbf H_1$.
The space $\mathbf H_1$ is provided with its usual scalar product, namely:
\begin{equation}
\label{class_scalar}
(u,v)_{\mathbf H_1}=\int_\F\nabla u:\nabla v\dx\forallt u,v\in\mathbf H_1.
\end{equation}
\begin{theorem}
\label{theo:decompH10}
For every $u\in \mathbf H_1$ there exists a unique triple $(\psi,\phi,h)\in S_1\times S_1\times  {\mathfrak H}_{\rm m}$ such that $h$ is the harmonic Bergman
projection of the divergence of $u$ and 
\begin{equation}
\label{eq:decompH10}
u=\nabla^\perp\psi+\nabla^\perp\varPsi_{\mathsf Hh}+\nabla\varTheta_h+\nabla\phi\qquad\text{in }\F.
\end{equation}
It follows that the divergence and the curl of $u$ are given respectively by:
\begin{equation}
\label{eq:div_curl}
\nabla\cdot u = \Delta \phi+h\qquad\text{and}\qquad \nabla^\perp\cdot u = \Delta \psi+\mathsf Hh\qquad\text{in }\F,
\end{equation}
and these decompositions in $L^2(\F)$ of $\nabla\cdot u$ and $\nabla^\perp\cdot u$ agrees with the orthogonal decomposition $V_0\sumperp\mathfrak H$ 
of the space $L^2(\F)$.%
\par
Finally, let $u_1,u_2$ be in $\mathbf H_1$ and denote $\psi_1,\psi_2,\phi_1,\phi_2$ the functions in $S_1$ and $h_1,h_2$ the functions in $ {\mathfrak H}_{\rm m}$ such that:
\begin{equation}
\label{decomp:uk}
u_k=\nabla^\perp\psi_k+\nabla^\perp\varPsi_{\mathsf H h_k}+\nabla\varTheta_{h_k}+\nabla\phi_k\quad\text{in }\F\qquad(k=1,2).
\end{equation}
Then, the  scalar product \eqref{class_scalar} can be expanded as follows:
\begin{align}
\nonumber
(u_1,u_2)_{\mathbf H_1}&=(\nabla \cdot u_1,\nabla \cdot u_2)_{L^2(\F)}+(\nabla^\perp \cdot u_1,\nabla^\perp \cdot u_2)_{L^2(\F)}\\
\label{decomp_scal}
&=
(\Delta\psi_1,\Delta\psi_2)_{L^2(\F)}+(\mathsf Hh_1,\mathsf Hh_2)_{L^2(\F)}+(h_1,h_2)_{L^2(\F)}+(\Delta\phi_1,\Delta\phi_2)_{L^2(\F)}.
\end{align}
\end{theorem}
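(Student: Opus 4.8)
The plan is to establish the decomposition \eqref{eq:decompH10} first, then derive the divergence and curl formulas \eqref{eq:div_curl} as an immediate consequence, and finally obtain \eqref{decomp_scal} by expanding the $\mathbf H_1$ scalar product using the fact that the four summands in \eqref{eq:decompH10} have pairwise $\mathbf L^2$-orthogonal gradients. First I would recall the classical Helmholtz-Weyl decomposition: any $u\in\mathbf H_1$ can be written as $u=\nabla^\perp\Psi+\nabla\Phi$ with $\Psi\in H^1(\F)$ and $\Phi\in H^1(\F)$, and since $u\in\mathbf H^1_0(\F)$ elliptic regularity lifts $\Psi$ and $\Phi$ to $H^2(\F)$. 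The scalar $h$ is defined to be the harmonic Bergman projection $\mathsf P^\perp(\nabla\cdot u)$; since $\nabla\cdot u=\Delta\Phi\in L^2_{\rm m}$ (the mean is zero because $u$ has zero trace), $h$ lies in $\mathfrak H_{\rm m}$. The key algebraic move is to split $\nabla\Phi=\nabla(\Phi-\varTheta_h)+\nabla\varTheta_h$, noting that $\Delta(\Phi-\varTheta_h)=\nabla\cdot u-h=\mathsf P(\nabla\cdot u)\in V_0$, so $\phi:=\Phi-\varTheta_h$ (after subtracting its mean) belongs to $S_1$ by Lemma~\ref{first_iso}. Symmetrically, $\nabla^\perp\cdot u=\Delta\Psi\in L^2(\F)$ decomposes as $\mathsf P(\Delta\Psi)+\mathsf P^\perp(\Delta\Psi)$; I would identify the harmonic part $\mathsf P^\perp(\Delta\Psi)$ with $\mathsf Hh$ and the $V_0$ part with $\Delta\psi$ for a suitable $\psi\in S_1$, writing $\nabla^\perp\Psi=\nabla^\perp\psi+\nabla^\perp\varPsi_{\mathsf Hh}$. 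Uniqueness of the triple follows from the directness of the relevant sums ($V_0\sumperp\mathfrak H$ in $L^2(\F)$ plus uniqueness of $\varTheta_h$, $\varPsi_{\mathsf Hh}$, and of $\psi,\phi$ as preimages under the isometry $\Delta_0$).

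The delicate point — and what I expect to be the main obstacle — is showing that the harmonic part of $\nabla^\perp\cdot u$ is exactly $\mathsf Hh$ where $h=\mathsf P^\perp(\nabla\cdot u)$, i.e. that the Bergman-harmonic components of the divergence and of the curl of a divergence-free-up-to-boundary $\mathbf H^1_0$ field are linked precisely by the operator $\mathsf H$. The natural strategy is to use the harmonic conjugate structure exposed in Lemma~\ref{prop:H}: for a holomorphic function $F=\phi_0+i\psi_0$ on $\F$, the real and imaginary parts satisfy the Cauchy-Riemann relations, and the operator $\mathsf H^\ast$ produces the harmonic conjugate. One would test the identity against an arbitrary $h'\in\mathfrak H_{\rm m}$ and its conjugate $\bar h'=\mathsf H^\ast h'$, integrate by parts using the boundary behavior of $\varTheta_h$ (zero normal derivative) and of $\varPsi_{\mathsf Hh}$ (it lies in $\mathfrak B_S$, so its tangential derivative on $\Sigma$ equals $\partial\varTheta_h/\partial\tau$ by the very definition $\varPsi_{\mathsf Hh}=\mathsf L_1^\tau(\partial\varTheta_f/\partial\tau|_\Sigma)$), and match terms via the adjointness relation $(h',\mathsf Hf)_{L^2(\F)}=(\bar h',f)_{L^2(\F)}$ already proved in Lemma~\ref{prop:H}. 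The multiply-connected case requires care because $\varPsi_{\mathsf Hh}$ carries no circulation while $h$ has zero flux through each $\Sigma^-_j$; one checks the fluxes are consistent using Remark~\ref{rem_fluxA}(2) and the flux condition built into the definition of $\mathfrak H$.

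Once \eqref{eq:decompH10} is established, the divergence and curl formulas \eqref{eq:div_curl} are read off directly: $\nabla\cdot\nabla^\perp\psi=0$, $\nabla\cdot\nabla^\perp\varPsi_{\mathsf Hh}=0$, $\nabla\cdot\nabla\varTheta_h=\Delta\varTheta_h=h$ and $\nabla\cdot\nabla\phi=\Delta\phi$, giving $\nabla\cdot u=\Delta\phi+h$; symmetrically $\nabla^\perp\cdot u=\Delta\psi+\mathsf Hh$. That $\Delta\phi\in V_0$ and $\Delta\psi\in V_0$ while $h,\mathsf Hh\in\mathfrak H$ shows these are precisely the $V_0\sumperp\mathfrak H$ splittings. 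For the scalar product expansion, the plan is to show the four gradient fields $\nabla^\perp\psi$, $\nabla^\perp\varPsi_{\mathsf Hh}$, $\nabla\varTheta_h$, $\nabla\phi$ that build $u_1$ are $\mathbf L^2$-orthogonal to the corresponding ``cross'' fields of $u_2$ of different type — the pairings $(\nabla^\perp a,\nabla b)_{\mathbf L^2}$ vanish by integration by parts since $\nabla^\perp a$ is divergence free and, crucially, the boundary terms drop: $\psi,\psi_2\in S_1$ vanish on $\Sigma^+$ and are constant on each $\Sigma^-_j$ with zero circulation contributions, $\varPsi_{\mathsf Hh}\in\mathfrak B_S\subset S_0$ likewise, and $\varTheta_h,\phi$ have zero normal derivative or vanish. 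Then $(u_1,u_2)_{\mathbf H_1}=(\nabla\cdot u_1,\nabla\cdot u_2)_{L^2}+(\nabla^\perp\cdot u_1,\nabla^\perp\cdot u_2)_{L^2}$ by the classical identity (div-curl splitting for $\mathbf H^1_0$ fields), and substituting \eqref{eq:div_curl} together with the $V_0\perp\mathfrak H$ orthogonality gives the four-term formula \eqref{decomp_scal}. I would present the orthogonality verifications compactly rather than term by term, since each is a routine integration by parts once the boundary conditions are in hand.
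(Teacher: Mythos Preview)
Your plan contains a real gap at the step where you set $\phi:=\Phi-\varTheta_h$ and conclude $\phi\in S_1$ ``by Lemma~\ref{first_iso}''. That lemma only says that every element of $V_0$ has a \emph{unique} preimage in $S_1$ under $\Delta$; it does \emph{not} say that any $H^2$ function whose Laplacian lies in $V_0$ automatically satisfies the $S_1$ boundary conditions. In your construction the Helmholtz--Weyl potential $\Phi$ (for $u\in\mathbf H^1_0$) satisfies $\partial_n\Phi=0$ on $\Sigma$, so $\Phi-\varTheta_h\in H^2_{\rm m}$, but there is no reason for it to be constant on each boundary component, i.e.\ to lie in $S_1$. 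The same objection applies to your claim that $\nabla^\perp\Psi=\nabla^\perp\psi+\nabla^\perp\varPsi_{\mathsf Hh}$: the difference $\Psi-\psi-\varPsi_{\mathsf Hh}$ is harmonic, but nothing in the Helmholtz--Weyl normalization forces it to be constant.

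The paper avoids this trap by proceeding in the opposite order: it \emph{defines} $\psi,\phi\in S_1$ as the $\Delta_0$-preimages of $\mathsf P(\nabla^\perp\!\cdot u)$ and $\mathsf P(\nabla\!\cdot u)$ (legitimate use of Lemma~\ref{first_iso}), then shows that the residual $v=u-[\nabla^\perp\psi+\nabla^\perp\varPsi_{\mathsf Hh}+\nabla\varTheta_h+\nabla\phi]$ vanishes. The mechanism is neat: $v\in\mathbf H_1$ is divergence free, so its stream function lies in $S_1$ and hence has Laplacian in $V_0$; but $\nabla^\perp\!\cdot v=\omega_h-\mathsf Hh\in\mathfrak H$, forcing $v=0$. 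The identity $\omega_h=\mathsf Hh$ you flag as ``the main obstacle'' thus falls out for free. Incidentally, your direct attack on this identity is much simpler than you describe: for $h'\in\mathfrak H_{\rm m}$ and $\bar h'=\mathsf H^\ast h'$ the Cauchy--Riemann relations give $\nabla^\perp h'=\nabla\bar h'$, so integrating by parts (using only $u\in\mathbf H^1_0$) yields $(\nabla^\perp\!\cdot u,h')_{L^2}=-(u,\nabla^\perp h')_{\mathbf L^2}=-(u,\nabla\bar h')_{\mathbf L^2}=(\nabla\!\cdot u,\bar h')_{L^2}$, which is exactly $(\mathsf P^\perp(\nabla^\perp\!\cdot u),h')=(\mathsf H h,h')$. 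No boundary analysis of $\varTheta_h$ or $\varPsi_{\mathsf Hh}$ is needed. Your strategy for the scalar product formula via the div--curl identity for $\mathbf H^1_0$ fields and $V_0\perp\mathfrak H$ is fine and essentially what the paper does (it works with smooth $u_1,u_2$ and passes to the limit by density).
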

\begin{proof}
 Let $u$ be given and  decompose the $L^2$ functions $\nabla^\perp\cdot u$ and $\nabla\cdot u$ respectively into the sums $\omega+\omega_h$ and $\delta+h$ 
 with $\omega,\delta\in V_0$ and $\omega_h,h\in\mathfrak H$. Since:
 $$\int_\F h\dx=\int_\F (\delta+h)\dx=\int_\Sigma u\cdot n\ds = 0,$$
 the harmonic function $h$ is actually in $ {\mathfrak H}_{\rm m}$. In the same way:
 $$\int_\F \omega_h\dx=\int_\F (\omega+\omega_h)\dx=-\int_\Sigma u\cdot\tau\ds=0,$$
 and $\omega_h$ is in $ {\mathfrak H}_{\rm m}$ as well. 
 Define now 
 $\phi$ and $\psi$ in $S_1$ such that $\Delta\phi=\delta$ and  $\Delta\psi=\omega$. One easily verifies that the vector field:
 $$v=u-\big[\nabla^\perp\psi+\nabla^\perp\varPsi_{\mathsf Hh}+\nabla\varTheta_h+\nabla\phi\big]
 \qquad\text{in }\F,$$
 is in $\mathbf H_1$ and that $\nabla\cdot v=0$. On the other hand $\nabla^\perp \cdot v =\omega_h-\mathsf H h$, which means in particular 
 that $\nabla^\perp \cdot v \in\mathfrak H_{\rm m}$. This entails that $v=0$. Indeed, according to 
 Helmholtz-Weyl  decomposition  (see \cite[Theorem 3.2]{Girault:1986aa}),
 there exists $\Phi\in H^1_{\rm m}$ and $\Psi\in S_0$ such that:
 $$v=\nabla^\perp\Psi+\nabla\Phi\qquad\text{in }\F.$$
 But $\Phi=0$ since $\nabla\cdot v=0$ and $\Psi$ belongs to $S_1$ according to the boundary conditions and the regularity of $v$. It follows that 
 $\Delta\Psi\in V_0$ but as observed earlier, 
 $\Delta\Psi=\nabla^\perp\cdot v\in  {\mathfrak H}_{\rm m}$, what implies that $\Psi=0$. This proves the existence and uniqueness of the decomposition \eqref{eq:decompH10}.
 \par
 Assume now that $u_1$ and $u_2$ are in ${\boldsymbol{\mathcal D}}(\F)=\mathcal D(\F;\mathbb R^2)$ and introduce their decompositions 
 as in \eqref{decomp:uk}.  %
Integrating by parts, we obtain:
\begin{align}
\nonumber
(u_1,u_2)_{\mathbf H_1}&=-(\Delta u_1,u_2)_{\mathbf L^2(\F)}\\
\label{eq:second_expand}
&= -\big\langle\nabla^\perp\Delta\psi_1+\nabla^\perp\mathsf Hh_1,u_2\big\rangle_{{\boldsymbol{\mathcal D}}'(\F),{\boldsymbol{\mathcal D}}(\F)}
 -\big\langle\nabla h_1+\nabla \Delta\phi_1,u_2
 \big\rangle_{{\boldsymbol{\mathcal D}}'(\F),{\boldsymbol{\mathcal D}}(\F)}.
 \end{align}
 We switch to the duality pairing in the second equality because although $u_1$ is smooth, this does not guaranty that every term in the decomposition \eqref{eq:decompH10}
 is also smooth (notice that invoking elliptic regularity results would require the boundary $\Sigma$ to be smoother than $\mathcal C^{1,1}$). 
 The former term in the right hand side of \eqref{eq:second_expand} yields:
\begin{align*}
\big\langle\nabla^\perp\Delta\psi_1+\nabla^\perp\mathsf Hh_1,u_2\big\rangle_{{\boldsymbol{\mathcal D}}'(\F),{\boldsymbol{\mathcal D}}(\F)}&=(\Delta\psi_1+\mathsf Hh_1,\nabla^\perp\cdot u_2)_{L^2(\F)}\\
& =-(\Delta\psi_1,\Delta\psi_2)_{L^2(\F)}-(\mathsf Hh_1,\mathsf H h_2)_{L^2(\F)},
 \end{align*}
while  the latter leads to:
$$\big\langle\nabla h_1+\nabla \Delta\phi_1,u_2
 \big\rangle_{{\boldsymbol{\mathcal D}}'(\F),{\boldsymbol{\mathcal D}}(\F)}=-( h_1+ \Delta\phi_1,\nabla\cdot u_2)_{\mathbf L^2(\F)}=-(h_1,h_2)_{L^2(\F)}-(\Delta\phi_1,\Delta\phi_2)_{L^2(\F)}.$$
 The equality \eqref{decomp_scal} follows by density of ${\boldsymbol{\mathcal D}}(\F)$ into $\mathbf H_1$ and the proof is complete.
\end{proof}
\begin{rem}
\begin{enumerate}
\item
The decomposition \eqref{eq:decompH10} differs from  the one in \cite[Theorem 3.3]{Girault:1986aa} where $u\in\mathbf H_1$ is decomposed into 
\begin{equation}
\label{decomp:nul}
u=\nabla^\perp\psi+(-\Delta_D)^{-1}\nabla p\qquad\text{in }\F,
\end{equation}
with $\psi\in S_1$ and a potential $p$ in $L^2_{\rm m}$. The operator $(-\Delta_D)^{-1}$ obviously stands for the inverse of the Laplacian operator 
with homogeneous boundary conditions.  The stream function $\psi$ is the same  in \eqref{eq:decompH10} and \eqref{decomp:nul}. 
\item In \cite[Theorem 3]{Auchmuty:2001aa} or \cite[Theorem 2.1]{Kozono:2009aa},  every vector field $u\in \mathbf L^2(\F)$ is shown to admit the decomposition:
\begin{equation}
u=\nabla^\perp\psi+\nabla^\perp h+\nabla p\qquad\text{in }\F,
\end{equation}
with $\psi\in H^1_0(\F)$, $p\in H^1_{\rm m}$ and $h\in \mathbb F_S$. This expression is used by Maekawa in \cite{Maekawa:2013aa} to derive necessary 
and sufficient conditions for $u$ to be in $\mathbf J_1$.
%
\item Identity \eqref{decomp_scal} is a trivial version of Friedrich's second inequalities; see \cite[Lemma 2.5 and Remark 2.7]{Girault:1986aa} and 
also for instance \cite{Krizek:1984aa}. However, it can also be readily deduced from \eqref{decomp_scal} that there exists a constant $\mathbf c_\F$ such 
that for every $u\in\mathbf H_1$:
$$\|u\|_{\mathbf H_1} \leqslant \mathbf c_\F\|\mathsf P^\perp \omega\|_{L^2(\F)} +\|\delta\|_{L^2(\F)}
\qquad\text{ or }\qquad
\|u\|_{\mathbf H_1}\leqslant\mathbf c_\F \| \omega\|^2_{L^2(\F)}+\|\mathsf P^\perp \delta\|_{L^2(\F)},$$
where $\omega= \nabla^\perp\cdot u$ and $\delta= \nabla\cdot u$. It means that the $\mathbf H_1$-norm of a vector field is controlled by:
\begin{enumerate}
\item Either the harmonic Bergman 
projection of the curl and the divergence of this vector field in $L^2$;
\item or by the curl and the harmonic Bergman projection of the divergence of this vector field in 
$L^2$.
\end{enumerate}
We were not able to find this result in the literature. 
\item The decomposition \eqref{eq:decompH10} of Theorem~\ref{theo:decompH10} allows to deduce a necessary and suffisant condition for 
the following overdetermined div-curl problem to be well-posed: There exists a unique $u$ in $\mathbf H_1$ such that:
$$\nabla\cdot u=\delta\qquad\text{and}\qquad \nabla^\perp\cdot u=\omega\qquad\text{ in }\F,$$
with $\delta$ and $\omega$ in $L^2(\F)$ if and only  if $\mathsf P^\perp\omega$ is the harmonic conjugate of $\mathsf P^\perp\delta$. This result 
seems to be new as well.
\end{enumerate}
\end{rem}
%
%
As shown in \cite{Simon:1999aa}, the definition of the pressure 
for the Navier-Stokes equations (in classical velocity-pressure formulation)
is not possible for a source term $f_{\mathbf J}$ in $L^2(0,T;\mathbf J_{-1})$, what means in nonprimitive variables, for $f_S\in L^2(0,T;S_{-1})$ (see Fig.~\ref{diag_3}). 
The definition of the pressure requires the source term  to be in $L^2(0,T;\mathbf H_{-1})$. To be more specific,  we need 
to elaborate on the structure of the 
dual space $\mathbf H_{-1}$.
%
\begin{prop}
\label{strct_H_1}
For every linear form $f_{\mathbf H}$ in $\mathbf H_{-1}$, there exists a unique 
pair $(\delta_{\mathbf H},\omega_{\mathbf H})\in L^2_{\rm m}\times V_0$ such 
that:
$$\langle f_{\mathbf H},u\rangle_{\mathbf H_{-1},\mathbf H_1}=(\delta_{\mathbf H},\nabla\cdot u)_{L^2(\F)}
+(\omega_{\mathbf H},\nabla^\perp\cdot u)_{L^2(\F)}\forallt u\in\mathbf H_1.$$
If $f_{\mathbf H}$ belongs to $\mathbf H_0$, then:
\begin{subequations}
\begin{equation}
\label{eq:hodge}
\delta_{\mathbf H}=-\phi_{\mathbf H}-\mathsf H^\ast \psi_{\mathbf H}
\quad\text{ and }\quad \omega_{\mathbf H}=-\mathsf P_1 \psi_{\mathbf H},
\end{equation}
where $f_{\mathbf H}=\nabla\phi_{\mathbf H}+\nabla^\perp \psi_{\mathbf H}$ is the Helmholtz-Weyl decomposition of $f_{\mathbf H}$. 
The identities \eqref{eq:hodge} can  easily be inverted:
\begin{equation}
\phi_{\mathbf H}=-\delta_{\mathbf H}+\mathsf H^\ast \omega_{\mathbf H}
\quad\text{ and }\quad
\psi_{\mathbf H}=-\mathsf Q_1\omega_{\mathbf H}.
\end{equation}
\end{subequations}
\end{prop}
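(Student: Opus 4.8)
\textbf{Proof plan for Proposition~\ref{strct_H_1}.}
The plan is to build the map $f_{\mathbf H}\mapsto(\delta_{\mathbf H},\omega_{\mathbf H})$ directly from the orthogonal decomposition of $\mathbf H_1$ supplied by Theorem~\ref{theo:decompH10}. The identity \eqref{decomp_scal} shows that the map $u\mapsto(\nabla\cdot u,\nabla^\perp\cdot u)$ realizes $\mathbf H_1$ as (isometric to) a subspace of $L^2_{\rm m}\times L^2(\F)$, and more precisely that $\mathbf H_1$ is carried onto a closed subspace $W$ of $L^2_{\rm m}\times L^2(\F)$ whose description uses the splitting $L^2(\F)=V_0\sumperp\mathfrak H$ and the operator $\mathsf H$: a pair $(\delta,\omega+\omega_h)$ (with $\omega\in V_0$, $\omega_h\in\mathfrak H$) lies in the image iff $\omega_h\in\mathfrak H_{\rm m}$ and $\omega_h=\mathsf H\delta$. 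First I would record this identification carefully, observing that $\|u\|_{\mathbf H_1}^2=\|\nabla\cdot u\|_{L^2}^2+\|\nabla^\perp\cdot u\|_{L^2}^2$ by \eqref{decomp_scal}, so the divergence/curl map is an isometry onto $W$.

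Next I would invoke the Riesz representation theorem on $\mathbf H_1$ to get, for each $f_{\mathbf H}\in\mathbf H_{-1}$, a unique $v\in\mathbf H_1$ with $\langle f_{\mathbf H},u\rangle=(v,u)_{\mathbf H_1}$, and then use the expanded form \eqref{decomp_scal} of the scalar product to read off the representation. Writing $v$ via \eqref{eq:decompH10} with data $(\psi_v,\phi_v,h_v)$, formula \eqref{decomp_scal} gives
$$\langle f_{\mathbf H},u\rangle=(\Delta\psi_v,\Delta\psi_u)_{L^2(\F)}+(\mathsf Hh_v,\mathsf Hh_u)_{L^2(\F)}+(h_v,h_u)_{L^2(\F)}+(\Delta\phi_v,\Delta\phi_u)_{L^2(\F)}.$$
Using $\nabla\cdot u=\Delta\phi_u+h_u$ and $\nabla^\perp\cdot u=\Delta\psi_u+\mathsf Hh_u$ from \eqref{eq:div_curl}, together with the orthogonality of $V_0$ and $\mathfrak H$ in $L^2(\F)$, one rearranges the four terms so that the pairing takes the asserted form $(\delta_{\mathbf H},\nabla\cdot u)_{L^2(\F)}+(\omega_{\mathbf H},\nabla^\perp\cdot u)_{L^2(\F)}$; this forces $\delta_{\mathbf H}=\Delta\phi_v+$ (an $\mathfrak H_{\rm m}$-correction coming from the $(\mathsf Hh_v,\mathsf Hh_u)$ and $(h_v,h_u)$ terms and the relation $\mathsf H h_u$ against $h_v$), and $\omega_{\mathbf H}=\Delta\psi_v$ plus a similar correction. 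Uniqueness of the pair $(\delta_{\mathbf H},\omega_{\mathbf H})$ follows because $\{(\nabla\cdot u,\nabla^\perp\cdot u):u\in\mathbf H_1\}$ is dense in $L^2_{\rm m}\times V_0$ in the relevant weak sense — or more cleanly, because the only element of $L^2_{\rm m}\times V_0$ pairing to zero against all $(\nabla\cdot u,\nabla^\perp\cdot u)$ is zero (test with $u=\nabla\Phi$ for $\Phi\in H^2_{\rm m}$ to kill $\delta$, and with $u=\nabla^\perp\Psi$ for $\Psi\in S_1$ to kill $\omega$, using that $\Delta$ maps onto $L^2_{\rm m}$ resp.\ onto $V_0$).

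For the case $f_{\mathbf H}\in\mathbf H_0=\mathbf L^2(\F)$, I would start from the Helmholtz--Weyl decomposition $f_{\mathbf H}=\nabla\phi_{\mathbf H}+\nabla^\perp\psi_{\mathbf H}$ with $\phi_{\mathbf H}\in H^1_{\rm m}$, $\psi_{\mathbf H}\in S_0$ (Theorem~3.2 of \cite{Girault:1986aa}), and simply compute $\langle f_{\mathbf H},u\rangle=(\nabla\phi_{\mathbf H}+\nabla^\perp\psi_{\mathbf H},u)_{\mathbf L^2(\F)}$ by integration by parts against $u\in\mathbf H^1_0(\F)$. One integration by parts gives $-(\phi_{\mathbf H},\nabla\cdot u)_{L^2}-(\psi_{\mathbf H},\nabla^\perp\cdot u)_{L^2}$; then I split $\psi_{\mathbf H}$ through $L^2(\F)=V_0\sumperp\mathfrak H$ applied to the curl (equivalently, replace $\psi_{\mathbf H}$ by $\mathsf P_1\psi_{\mathbf H}$ in the $V_0$-pairing and by a harmonic-conjugate term in the $\mathfrak H$-pairing), and likewise for $\phi_{\mathbf H}$ against the divergence, matching with the required form to identify $\delta_{\mathbf H}=-\phi_{\mathbf H}-\mathsf H^\ast\psi_{\mathbf H}$ and $\omega_{\mathbf H}=-\mathsf P_1\psi_{\mathbf H}$. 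Here the adjoint $\mathsf H^\ast$ enters precisely through the pairing $(\psi_{\mathbf H},\mathsf Hh_u)_{L^2}=(\mathsf H^\ast\psi_{\mathbf H},h_u)_{L^2}$, and Lemma~\ref{prop:H} (which says $\mathsf H^\ast$ produces the harmonic conjugate and is an isomorphism of $\mathfrak H_{\rm m}$) makes the computation and the inversion formulas $\phi_{\mathbf H}=-\delta_{\mathbf H}+\mathsf H^\ast\omega_{\mathbf H}$, $\psi_{\mathbf H}=-\mathsf Q_1\omega_{\mathbf H}$ transparent, using $\mathsf Q_1\mathsf P_1=\mathrm{Id}$ and $(\mathsf H^\ast)^2=-\mathrm{Id}$ on $\mathfrak H_{\rm m}$.

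\textbf{Main obstacle.} The delicate point is bookkeeping the $\mathfrak H_{\rm m}$-components: the harmonic parts of $\nabla\cdot u$ and $\nabla^\perp\cdot u$ are tied together by $\mathsf H$, so the four-term expansion \eqref{decomp_scal} is not ``diagonal'' in $(\delta,\omega)$ and one must feed in the relation $\nabla^\perp\cdot u$'s harmonic part $=\mathsf H(\nabla\cdot u$'s data$)$ and its adjoint version to collapse the cross-terms correctly. I expect this identification of the $\mathfrak H_{\rm m}$-corrections — and checking that the resulting $\delta_{\mathbf H}$ genuinely lands in $L^2_{\rm m}$ while $\omega_{\mathbf H}$ lands in $V_0$ — to be the only step requiring care; everything else is Riesz representation plus the already-established structure of Theorem~\ref{theo:decompH10} and Lemma~\ref{prop:H}.
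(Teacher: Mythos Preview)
Your plan is correct and follows essentially the same route as the paper: Riesz representation on $\mathbf H_1$ combined with the expanded scalar product \eqref{decomp_scal} for existence, and Helmholtz--Weyl plus integration by parts plus the relation $\mathsf P^\perp(\nabla^\perp\cdot u)=\mathsf H(\nabla\cdot u)$ for the $\mathbf H_0$ case. One small correction to your bookkeeping: when you rearrange the four terms, $\omega_{\mathbf H}$ comes out exactly as $\Delta\psi_v$ with \emph{no} harmonic correction (since $\Delta\psi_v\in V_0$ already pairs only with the $V_0$-part of $\nabla^\perp\cdot u$), while all of the $\mathfrak H_{\rm m}$-content, namely $({\rm Id}+\mathsf H^\ast\mathsf H)h_v$, is absorbed into $\delta_{\mathbf H}$; the paper's proof writes this out in one line.
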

\begin{proof}
Let $f_{\mathbf H}$ be in $\mathbf H_{-1}$. According to Riesz representation Theorem and Theorem~\ref{theo:decompH10}, there exists 
$\phi,\psi\in S_1$ and $h\in\mathfrak H_{\rm m}$ such that:
$$\langle f_{\mathbf H},u\rangle_{\mathbf H_{-1},\mathbf H_1}=(\Delta\phi+({\rm Id}+\mathsf H^\ast\mathsf H)h,\nabla\cdot u)_{L^2(\F)}
+(\Delta\psi,\nabla^\perp\cdot u)_{L^2(\F)}\forallt u\in\mathbf H_1.$$
It suffices to set $\delta_{\mathbf H}=\Delta\phi+({\rm Id}+\mathsf H^\ast\mathsf H)h$ and $\omega_{\mathbf H}=\Delta\psi$.
\par
Assume now that $f_{\mathbf H}$ lies in $\mathbf H_0$ and denote by $\phi_{\mathbf H}\in H^1_{\rm m}$ and $\psi_{\mathbf H}\in S_0$ the functions entering the
Helmholtz-Weyl decomposition of $f_{\mathbf H}$, i.e.
$$f_{\mathbf H}=\nabla\phi_{\mathbf H}+\nabla^\perp\psi_{\mathbf H}\qquad\text{in }\F.$$
By definition of $\mathbf H_0$ as   pivot space:
$$\langle f_{\mathbf H},u\rangle_{\mathbf H_{-1},\mathbf H_1}=(f_{\mathbf H},u)_{\mathbf L^2(\F)}=
-(\phi_{\mathbf H},\nabla\cdot u)_{L^2(\F)}-(\psi_{\mathbf H},\nabla^\perp\cdot u)_{L^2(\F)}\forallt u\in\mathbf H_1.$$
The orthogonal decomposition $L^2_{\rm m}(\F)=V_0\sumperp \mathfrak H_{\rm m}$ leads to  $\nabla^\perp\cdot u=\mathsf P_0^\perp
\nabla^\perp\cdot u+\mathsf P_0\nabla^\perp\cdot u$. But according to Theorem~\ref{theo:decompH10}, $\mathsf P_0^\perp
\nabla^\perp\cdot u=\mathsf H\nabla\cdot u$, whence we deduce that:
$$\langle f_{\mathbf H},u\rangle_{\mathbf H_{-1},\mathbf H_1}=-(\phi_{\mathbf H}+\mathsf H^\ast\psi_{\mathbf H},\nabla\cdot u)_{L^2(\F)}
-(\mathsf P_1\psi_{\mathbf H},\nabla^\perp\cdot u)_{L^2(\F)},$$
and the proof is complete.
\end{proof}
\begin{rem}
\label{link_fSfH}
It is now easy to verify that if $f_{\mathbf H}\in\mathbf H_{-1}$ and $f_S\in S_{-1}$ are two linear forms such that:
$$\langle f_{\mathbf H},\nabla^\perp\psi\rangle_{\mathbf H_{-1},\mathbf H_1}=\langle f_S,\psi\rangle_{S_{-1},S_1}
\forallt \psi\in S_1,$$
then $\mathsf P  f_S=\omega_{\mathbf H}$ where $\omega_{\mathbf H}\in V_0$ and $\delta_{\mathbf H}\in L^2_{\rm m}$ are defined from $f_{\mathbf H}$ in Proposition~\ref{strct_H_1}.
We shall prove that the pressure depends only upon $\delta_{\mathbf H}$ and therefore is actually independent of the source term $f_S$.
\end{rem}
\subsection{Weak solutions}

When the equation is nonlinear, the operator $\mathsf H$ is not suffisant   to define the pressure. Thus,
for every $u\in \mathbf L^4(\F)$, define $\pi_\varTheta[u],\pi_\varPsi[u]\in  L^2_{\rm m}$ by means of Riesz representation Theorem as:
\begin{subequations}
\label{def_non_lin_pi}
\begin{align}
(\pi_\varTheta[u],f)_{ L^2_{\rm m}}&=-(D^2\varTheta_fu,u)_{\mathbf L^2(\F)}\\
(\pi_\varPsi[u],f)_{ L^2_{\rm m}}&=(D^2\varPsi_fu,u^\perp)_{\mathbf L^2(\F)}\forallt f\in L^2_{\rm m}.
\end{align}
\end{subequations}
%
%
%
%
\begin{definition}
\label{def:pressure_weak}
Let $T$ be a positive real number, $\psi$ be a function in $\bar S_0(T)$, $\varphi$ be  in $\mathfrak H^1_K(T)$ (this space is defined in \eqref{def:HKT}) 
and $\delta_{\mathbf H}$ be in $L^2(0,T;L^2_{\rm m})$. Then 
introduce the velocity field $u=\nabla^\perp\psi+\nabla\varphi$ and  for a.e. $t\in (0,T)$ define $p_{\mathsf r}(t)$ by:
\begin{subequations}
\label{def:bernoulli}
\begin{equation}
p_{\mathsf r}(t)=-\partial_t\varphi(t)+\pi_\varTheta[u(t)]+\mathsf H^\ast\Big[\nu \omega(t)-\pi_\varPsi[u(t)]\Big]-\delta_{\mathbf H}(t),
\end{equation}
where $\omega(t)=\Delta\psi(t)$ (i.e. $\omega(t)$ is the regular part of $\bar\Delta_0\psi(t)$, see Remark~\ref{rem_singular_V0}). 
The pressure $p$ corresponding to these data is obtained by summing $p_{\mathsf r}$, called the regular part of 
the pressure, and a singular part $p_{\mathsf s}$:
\begin{equation}
\label{eq:express_p}
p= p_{\mathsf r} +p_{\mathsf s}\qquad\text{ with }\qquad p_{\mathsf s} =-\partial_t\mathsf H^\ast\psi.
\end{equation}
\end{subequations}
\end{definition}
%
The proof of the lemma below is obvious:
\begin{lemma}
\label{prop:reg_pressure}
The function $p_{\mathsf r}$ belongs to $L^2(0,T; L^2_{\rm m})$ and the mapping 
$$(\psi,\varphi,\delta_{\mathbf H})\in \bar S_0(T)\times \mathfrak H_k^1(T)\times L^2(0,T;L^2_{\rm m})
\mapsto p_{\mathsf r}\in L^2(0,T;L^2_{\rm m}),$$ 
is continuous. 
The function $p_{\mathsf s}$ lies in $W^{-1,\infty}(0,T; L^2_{\rm m})$ and the mapping $\psi\in \bar S_0(T)\mapsto p_{\mathsf s}\in W^{-1,\infty}(0,T; L^2_{\rm m})$ 
is continuous.
\end{lemma}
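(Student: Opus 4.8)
The plan is to estimate the two parts of the pressure separately; everything reduces to boundedness of linear operators already studied and to one interpolation inequality.

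\emph{Regular part.} Writing $u=\nabla^\perp\psi+\nabla\varphi$ as in the statement, I recall from \eqref{def_non_lin_pi} that $\pi_\varTheta[u]$ and $\pi_\varPsi[u]$ are the Riesz representatives in $L^2_{\rm m}$ of the linear forms $f\mapsto-(D^2\varTheta_f u,u)_{\mathbf L^2(\F)}$ and $f\mapsto(D^2\varPsi_f u,u^\perp)_{\mathbf L^2(\F)}$. Since $\varTheta_f\in H^2_{\rm m}$ and $\varPsi_f\in Z_2$ depend boundedly on $f\in L^2_{\rm m}$ (elliptic regularity, together with the equivalence of the norms $\|\Delta\cdot\|_{L^2(\F)}$ and $\|\cdot\|_{H^2(\F)}$ on these spaces, valid on a $\mathcal C^{1,1}$ boundary), H\"older's inequality gives
\begin{equation}
\|\pi_\varTheta[u]\|_{L^2_{\rm m}}+\|\pi_\varPsi[u]\|_{L^2_{\rm m}}\leqslant\mathbf c_\F\|u\|_{\mathbf L^4(\F)}^2 .
\end{equation}
Next I would use the two–dimensional interpolation (Ladyzhenskaya) inequality $\|v\|_{\mathbf L^4(\F)}\leqslant\mathbf c_\F\|v\|_{\mathbf L^2(\F)}^{1/2}\|v\|_{\mathbf H^1(\F)}^{1/2}$, together with the equivalence of norms in $\bar S_1$ (Lemma~\ref{decomp_S1}) and in the $\mathfrak H^k_K$, to get for a.e.\ $t$
\begin{equation}
\|u(t)\|_{\mathbf L^4(\F)}^4\leqslant\mathbf c_\F\Big(\|\psi(t)\|_{S_0}^2\|\psi(t)\|_{\bar S_1}^2+\|\varphi(t)\|_{\mathfrak H^1_K}^2\|\varphi(t)\|_{\mathfrak H^2_K}^2\Big).
\end{equation}
As $\psi\in\mathcal C([0,T];S_0)\cap L^2(0,T;\bar S_1)$ and $\varphi\in\mathcal C([0,T];\mathfrak H^1_K)\cap L^2(0,T;\mathfrak H^2_K)$, this shows $u\in L^4(0,T;\mathbf L^4(\F))$, hence $\|u(\cdot)\|_{\mathbf L^4(\F)}^2\in L^2(0,T)$ and therefore $\pi_\varTheta[u],\pi_\varPsi[u]\in L^2(0,T;L^2_{\rm m})$. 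Since $\mathsf H^\ast$ is bounded on $L^2_{\rm m}$ (Lemma~\ref{prop:H}) and $\omega=\Delta\psi\in L^2(0,T;L^2(\F))$ with $\|\omega(t)\|_{L^2(\F)}\leqslant\|\psi(t)\|_{\bar S_1}$ (reading $\mathsf H^\ast\omega$ with the harmless convention that $\mathsf H^\ast$ annihilates constants, $\omega$ being possibly not mean–free), the term $\mathsf H^\ast[\nu\omega-\pi_\varPsi[u]]$ is in $L^2(0,T;L^2_{\rm m})$ as well; finally $\partial_t\varphi\in L^2(0,T;\mathfrak H^0_K)\subset L^2(0,T;L^2_{\rm m})$ because $\varphi\in\mathfrak H^1_K(T)\subset H^1(0,T;\mathfrak H^0_K)$, and $\delta_{\mathbf H}\in L^2(0,T;L^2_{\rm m})$ by hypothesis. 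Summing, $p_{\mathsf r}\in L^2(0,T;L^2_{\rm m})$.

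\emph{Continuity of $p_{\mathsf r}$.} The maps $\varphi\mapsto-\partial_t\varphi$ and $\delta_{\mathbf H}\mapsto-\delta_{\mathbf H}$ are linear and bounded, and so is $(\psi,\varphi)\mapsto u$ from $\bar S_0(T)\times\mathfrak H^1_K(T)$ into $L^4(0,T;\mathbf L^4(\F))$, by the two displayed inequalities. It then remains to observe that $u\mapsto\pi_\varTheta[u]$ and $u\mapsto\pi_\varPsi[u]$ are locally Lipschitz from $\mathbf L^4(\F)$ to $L^2_{\rm m}$: polarising the quadratic forms in \eqref{def_non_lin_pi} yields $\|\pi_\varTheta[u]-\pi_\varTheta[v]\|_{L^2_{\rm m}}\leqslant\mathbf c_\F\|u-v\|_{\mathbf L^4(\F)}\big(\|u\|_{\mathbf L^4(\F)}+\|v\|_{\mathbf L^4(\F)}\big)$, and likewise for $\pi_\varPsi$; integrating in $t$ and using H\"older's inequality with exponents $(4,4)$ turns this into a local Lipschitz estimate on $L^4(0,T;\mathbf L^4(\F))$. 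Composing with the bounded operator $\mathsf H^\ast$ and with the bounded linear map $\psi\mapsto\nu\mathsf H^\ast\Delta\psi$ from $L^2(0,T;\bar S_1)$ to $L^2(0,T;L^2_{\rm m})$, one concludes that $(\psi,\varphi,\delta_{\mathbf H})\mapsto p_{\mathsf r}$ is continuous for the stated topologies.

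\emph{Singular part, and the main point.} Since $\psi\in\bar S_0(T)\subset\mathcal C([0,T];S_0)$ and $\mathsf H^\ast$ is bounded, $\mathsf H^\ast\psi\in\mathcal C([0,T];L^2_{\rm m})\subset L^\infty(0,T;L^2_{\rm m})$ with $\|\mathsf H^\ast\psi\|_{L^\infty(0,T;L^2_{\rm m})}\leqslant\mathbf c_\F\|\psi\|_{\bar S_0(T)}$; the distributional time–derivative maps $L^\infty(0,T;L^2_{\rm m})$ boundedly into $W^{-1,\infty}(0,T;L^2_{\rm m})$, so $p_{\mathsf s}=-\partial_t\mathsf H^\ast\psi$ lies there and $\psi\mapsto p_{\mathsf s}$ is linear bounded, hence continuous. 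I do not expect any genuine obstacle: the sole quantitative ingredient is the two–dimensional interpolation inequality, which upgrades the natural weak–solution regularity $\mathcal C([0,T];S_0)\cap L^2(0,T;\bar S_1)$ of $\psi$ (and the analogue for $\varphi$) to $u\in L^4(0,T;\mathbf L^4(\F))$ — exactly the integrability needed for the quadratic terms $\pi_\varTheta[u]$, $\pi_\varPsi[u]$ to belong to $L^2(0,T;L^2_{\rm m})$ — while all the remaining steps are boundedness of already–studied linear operators ($\mathsf H^\ast$, $\Delta$, $\partial_t$) and elementary polarisation.
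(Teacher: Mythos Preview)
Your argument is correct and is precisely the natural elaboration of what the paper has in mind: the paper gives no proof at all, stating only that ``the proof of the lemma below is obvious,'' and your write-up supplies exactly the expected details (H\"older plus the two-dimensional interpolation inequality for the quadratic terms, boundedness of $\mathsf H^\ast$ and of $\partial_t$ for the linear pieces). Your remark on extending $\mathsf H^\ast$ to functions not of zero mean is a fair caveat; since $\ker\mathsf H^\ast=V_0$ one may equivalently read $\mathsf H^\ast\omega$ as $\mathsf H^\ast\mathsf P^\perp\omega$, which removes the ambiguity without affecting the estimate.
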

We can now state the main result of this subsection:
%
\begin{theorem}
\label{equiv_NS_prim_nonprim}
Let  $T$ be a positive real number and let $\psi\in \bar S_0(T)$ be a weak solution to the $\psi-$Navier-Stokes equations as defined in Definition~\ref{defi:Navier_Stokes}, and whose source term is recalled to be denoted by $f_S$. Let $\varphi\in\mathfrak H_K^1(T)$ be the   Kirchhoff potential  also introduced in   Definition~\ref{defi:Navier_Stokes}.  Finally, let $f_{\mathbf H}$ be in $L^2(0,T;\mathbf H_{-1})$ such that (see Remark~\ref{link_fSfH}):
$$\langle f_{\mathbf H},\nabla^\perp\theta\rangle_{\mathbf H_{-1},\mathbf H_1}=\langle f_S,\theta\rangle_{S_{-1},S_1}
\forallt \theta\in S_1.$$
According to Proposition~\ref{strct_H_1}, to  the linear form $f_{\mathbf H}$  can be associated a pair $(\delta_{\mathbf H},\omega_{\mathbf H})
\in L^2(0,T;L^2_{\rm m})\times L^2(0,T;V_0)$. 
\par
Denote now by $u$ the vector field $\nabla\varphi+\nabla^\perp\psi$ and by $p$ the pressure defined 
from $\psi$, $\varphi$ and $\delta_{\mathbf H}$ as explained in Definition~\ref{def:pressure_weak}.
Then the pair $(u,p)$ is a weak (Leray) solution to the  Navier-Stokes equations, namely, for every $w$ in $\mathbf H_1$:
\begin{equation}
\label{eq:NS_prim}
\frac{\rm d}{{\rm d}t}(u,w)_{\mathbf L^2(\F)}-(\nabla w u,u)_{\mathbf L^2(\F)}+\nu\int_\F \nabla u:\nabla w\dx-(p,\nabla\cdot w)_{L^2(\F)}
=\langle f_{\mathbf H},w\rangle_{\mathbf H_{-1},\mathbf H_1}\quad\text{on }(0,T).
\end{equation}
\end{theorem}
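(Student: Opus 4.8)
The plan is to verify that the velocity field $u=\nabla^\perp\psi+\nabla\varphi$ together with the pressure $p$ of Definition~\ref{def:pressure_weak} satisfies the weak formulation \eqref{eq:NS_prim} by testing against an \emph{arbitrary} $w\in\mathbf H_1$. The natural idea is to decompose $w$ according to Theorem~\ref{theo:decompH10}: write $w=\nabla^\perp\theta+\nabla^\perp\varPsi_{\mathsf Hh}+\nabla\varTheta_h+\nabla\phi$ with $\theta,\phi\in S_1$ and $h\in{\mathfrak H}_{\rm m}$. By linearity it suffices to check \eqref{eq:NS_prim} separately for each of the four pieces. For the curl-free pieces $\nabla\varTheta_h+\nabla\phi=\nabla(\varTheta_h+\phi)$ the identity reduces to a definition-chasing identity relating $p$, $\partial_t\varphi$, $\delta_{\mathbf H}$ and the two nonlinear potentials $\pi_\varTheta[u],\pi_\varPsi[u]$; here the divergence-free constraint on $u$ (i.e. $\nabla\cdot u=0$, which holds since $\Delta\varphi=0$ and $\nabla^\perp\psi$ is divergence-free) makes the viscous term and the $\partial_t u$ term collapse, and one is left precisely with the equations \eqref{def_non_lin_pi} defining $\pi_\varTheta$, $\pi_\varPsi$ plus the defining property of $\delta_{\mathbf H}$ in Proposition~\ref{strct_H_1}. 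For the divergence-free pieces $\nabla^\perp\theta$ and $\nabla^\perp\varPsi_{\mathsf Hh}$, the pressure term $(p,\nabla\cdot w)$ drops and one must recover the weak $\psi$-Navier--Stokes equation from Definition~\ref{defi:Navier_Stokes}.

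\textbf{Main steps.} First I would establish the algebraic dictionary: for $w=\nabla^\perp\eta$ with $\eta\in S_1$, one has $(u,w)_{\mathbf L^2(\F)}=(\nabla^\perp\psi+\nabla\varphi,\nabla^\perp\eta)_{\mathbf L^2}=(\nabla\psi,\nabla\eta)_{\mathbf L^2}+(\nabla\varphi,\nabla^\perp\eta)_{\mathbf L^2}$, the viscous term becomes $\nu(\nabla\omega,\nabla\eta)_{\mathbf L^2}+\nu(\nabla^\perp\varphi\text{-contribution})$ after using that $\varphi$ is harmonic, and the nonlinear term $(\nabla w\,u,u)_{\mathbf L^2}$ equals $(D^2\eta\,\nabla^\perp\eta\text{-type terms})$, matching $-\langle\varLambda^S_1(\psi,\varphi),\eta\rangle_{S_{-1},S_1}$ through the identity $(\nabla u\,u,\nabla^\perp\eta)_{\mathbf L^2}=(D^2\eta\,u,u^\perp)_{\mathbf L^2}$ recorded just before Lemma~\ref{estim_non_linear_term}. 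Summing the three Cauchy problems defining $\psi=\psi_b+\psi_\ell+\psi_\varLambda$ recovers the weak equation. Second, I would handle the choice $\eta=\varPsi_{\mathsf Hh}\in\mathfrak B_S$: here the extra pieces of $w$ enter, and the key is that testing the $\psi$-equation against an element of $\mathfrak B_S$ produces exactly the boundary-trace terms that, via Lemma~\ref{prop:H} (characterising $\mathsf H^\ast$ as the harmonic-conjugate operator) and the defining relation $\varPsi_{\mathsf Hf}=\mathsf L_1^\tau(\partial\varTheta_f/\partial\tau)|_\Sigma$, combine with the singular pressure part $p_{\mathsf s}=-\partial_t\mathsf H^\ast\psi$ and the $\mathsf H^\ast[\nu\omega-\pi_\varPsi[u]]$ contribution in $p_{\mathsf r}$. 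Third, I would collect the curl-free test functions: using $\nabla\cdot u=0$ one gets $(u,\nabla(\varTheta_h+\phi))_{\mathbf L^2}=0$ after integration by parts (since $u\cdot n=\partial\varphi/\partial n+\partial^\perp\psi$ on $\Sigma$ — one must check the normal trace vanishes appropriately or is absorbed), $\nu\int\nabla u:\nabla\nabla(\varTheta_h+\phi)$ integrates by parts to $-\nu(\Delta u,\nabla(\cdots))=-\nu(\nabla^\perp\omega+\cdots)$ paired against a gradient, which vanishes, and the remaining identity $(\nabla w\,u,u)+(p,\nabla\cdot w)+\partial_t(u,w)=\langle f_{\mathbf H},w\rangle$ is exactly the definition of $\pi_\varTheta$, $\pi_\varPsi$, $\partial_t\varphi$ and $\delta_{\mathbf H}$.

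\textbf{Main obstacle.} The delicate point is the divergence-free test piece $w=\nabla^\perp\varPsi_{\mathsf Hh}$ together with the bookkeeping of the singular pressure $p_{\mathsf s}=-\partial_t\mathsf H^\ast\psi$: one must show that the boundary terms arising from integrating by parts in the $\psi$-Stokes equation when the test function is biharmonic (hence has nonvanishing normal derivative on $\Sigma$) are precisely cancelled by $-(p_{\mathsf s},\nabla\cdot w)$... but $w$ here is divergence-free, so in fact the cancellation must happen through the time-derivative term $\partial_t(u,\nabla^\perp\varPsi_{\mathsf Hh})$ and the interplay of $\mathsf H$ with $\mathsf H^\ast$ on ${\mathfrak H}_{\rm m}$. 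Getting the regularity right so that $\partial_t\mathsf H^\ast\psi$ makes sense only in $W^{-1,\infty}(0,T;L^2_{\rm m})$ (Lemma~\ref{prop:reg_pressure}) and interpreting \eqref{eq:NS_prim} accordingly — i.e. the pressure term is a distribution in time — is where care is needed; I expect one works first with smooth-in-time data, establishes the identity, then passes to the limit using the continuity statements of Lemma~\ref{prop:reg_pressure} and density of smooth solutions. The nonlinear terms $\pi_\varTheta,\pi_\varPsi$ are controlled since $u\in\mathbf L^4$ by the Sobolev embedding already used in Lemma~\ref{estim_non_linear_term}, so they cause no trouble beyond routine estimates.
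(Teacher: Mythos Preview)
Your overall strategy --- decompose an arbitrary test vector $w\in\mathbf H_1$ via Theorem~\ref{theo:decompH10} --- is exactly the paper's. But your execution plan, namely to verify \eqref{eq:NS_prim} \emph{piece by piece} in $w$, runs into a structural problem that you partly sense in your ``main obstacle'' paragraph but misdiagnose. The individual summands $\nabla^\perp\varPsi_{\mathsf Hh}$ and $\nabla\varTheta_h$ are \emph{not} in $\mathbf H_1$: $\varPsi_{\mathsf Hh}\in\mathfrak B_S$ has $\partial_n\varPsi_{\mathsf Hh}\neq 0$ on $\Sigma$, and $\varTheta_h\in H^2_{\rm m}$ has $\partial_\tau\varTheta_h\neq 0$. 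Their boundary traces cancel only in the sum (precisely because $\partial_n\varPsi_{\mathsf Hh}=\partial_\tau\varTheta_h$). So you cannot freely integrate by parts term-by-term; for instance, your claim that the viscous term vanishes against $\nabla(\varTheta_h+\phi)$ is not justified at the available regularity (you cannot write $\int\nabla u:\nabla w=-\int\Delta u\cdot w$ unless $w\in\mathbf H_1$), and indeed the paper's computation shows that the viscous term produces a non-trivial contribution $\nu(\mathsf H^\ast\omega,\nabla\cdot w)_{L^2}$.

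The paper avoids this entirely by computing each \emph{term} of \eqref{eq:NS_prim} against the \emph{full} $w\in\mathbf H_1$, using the decomposition only to rewrite the scalars $\nabla\cdot w=\Delta\phi+h$ and $\nabla^\perp\cdot w=\Delta\theta+\mathsf Hh$. Two key regroupings then make everything fall out: $\varTheta_h+\phi=\varTheta_{\nabla\cdot w}$ (same Laplacian, same Neumann data) and $\mathsf Hh=\mathsf H(\nabla\cdot w)$. After this, each term splits cleanly into a $\theta$-part (which reproduces the $\psi$-NS equation \eqref{eq:grosseNS} already satisfied) and a $\nabla\cdot w$-part (which matches the pressure formula in Definition~\ref{def:pressure_weak}). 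No smooth-in-time approximation or density argument is needed; the singular pressure piece $p_{\mathsf s}=-\partial_t\mathsf H^\ast\psi$ arises directly from the time derivative of $(u,w)_{\mathbf L^2}=(\nabla\psi,\nabla\theta)-(\mathsf H^\ast\psi+\varphi,\nabla\cdot w)$.
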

\begin{proof}
Remind that $\psi$ satisfies, for every $\theta\in S_1$:
\begin{equation}
\label{eq:grosseNS}
\frac{\rm d}{{\rm d}t}(\nabla\psi,\nabla\theta)_{\mathbf L^2(\F)}+\nu(\Delta\psi,\Delta\theta)_{L^2(\F)}
+(D^2\theta u,u^\perp)_{\mathbf L^2(\F)}=\langle f_{\mathbf H},\nabla^\perp\theta\rangle_{\mathbf H_{-1},\mathbf H_1}
\quad\text{on }(0,T).
\end{equation}
Let $u_b=\nabla^\perp\psi_b+\nabla\varphi$, $u_0=u-u_b=\nabla^\perp\psi_\ell+\nabla^\perp\psi_\varLambda$ (see Definition~\ref{defi:Navier_Stokes_strong}) and $\omega_b=\Delta\psi_b$.  
Then, for every $w\in\boldsymbol{\mathcal D}(\F)$:
\begin{subequations}
\label{divdiv}
\begin{equation}
\int_\F\nabla u_b:\nabla w\dx=-\langle \Delta u_b,w\rangle_{\boldsymbol{\mathcal D}'(\F),\boldsymbol{\mathcal D}(\F)}
=-\langle\nabla^\perp \omega_b,w\rangle_{\boldsymbol{\mathcal D}'(\F),\boldsymbol{\mathcal D}(\F)}=
(\omega_b,\nabla^\perp\cdot w)_{L^2(\F)},
\end{equation}
and this result extends by density to every $w\in \mathbf H_1$. According to Theorem~\ref{theo:decompH10}, we can decompose $w$ 
into
\begin{equation}
\label{decomp_w}
w=\nabla^\perp\theta+\nabla^\perp\varPsi_{\mathsf Hh}+\nabla\varTheta_h+\nabla\phi\qquad\text{in }\F,
\end{equation}
with $(\theta,\phi,h)\in S_1\times S_1\times  {\mathfrak H}_{\rm m}$ and it follows that $\nabla^\perp\cdot w=\Delta\theta+\mathsf Hh$. Since 
$\omega_b\in\mathfrak H_V$, we infer that:
\begin{equation}
(\omega_b,\nabla^\perp\cdot w)_{L^2(\F)}=(\omega_b,\mathsf Hh)_{L^2(\F)}=(\mathsf H^\ast\omega_b,h)_{L^2(\F)}=
(\mathsf H^\ast\omega,\nabla\cdot w)_{L^2(\F)},
\end{equation}
because $\mathsf H^\ast \omega=\mathsf H^\ast\omega_b$.
On the other hand, since $u_0$ belongs to $\mathbf H_1$, according to 
\eqref{decomp_scal}, it follows that:
\begin{equation}
\int_\F \nabla u_0:\nabla w\dx=(\Delta(\psi_\varLambda+\psi_\ell),\Delta\theta)_{L^2(\F)}=(\Delta\psi,\Delta \theta)_{L^2(\F)},
\end{equation}
\end{subequations}
the latter equality resulting from the orthogonality property $(\Delta\psi_b,\Delta\theta)_{L^2(\F)}=0$. Gathering now the identities \eqref{divdiv}, 
we obtain:
\begin{equation}
\label{nutruc}
\nu\int_\F\nabla u:\nabla w\dx=\nu(\Delta\psi,\Delta \theta)_{L^2(\F)}+\nu(\mathsf H^\ast\omega,\nabla\cdot w)_{L^2(\F)}.
\end{equation}
Invoking again the decomposition \eqref{decomp_w}, we get:
\begin{equation}
\label{time_deriv}
(u,w)_{\mathbf L^2(\F)}=(\nabla\psi,\nabla\theta)_{\mathbf L^2(\F)}-(\mathsf H^\ast\psi+\varphi,\nabla\cdot w)_{L^2(\F)},
\end{equation}
and also:
$$(\nabla wu,u)_{\mathbf L^2(\F)}=(D^2\theta u,u^\perp)_{\mathbf L^2(\F)}
+(D^2\varPsi_{\mathsf Hh}u,u^\perp)_{\mathbf L^2(\F)}+(D^2(\varTheta_h+\phi)u,u)_{\mathbf L^2(\F)}.$$
But notice that $\mathsf Hh=\mathsf H(\nabla\cdot w)$ and $\varTheta_h+\phi=\varTheta_{\nabla\cdot w}$ (both functions share the same 
boundary conditions and the same Laplacian). 
Using the notation \eqref{def_non_lin_pi}, we are then allowed to rewrite the above equality 
as:
\begin{equation}
\label{againNL}
(\nabla wu,u)_{\mathbf L^2(\F)}=(D^2\theta u,u^\perp)_{\mathbf L^2(\F)}+(\mathsf H^\ast\pi_\Psi[u],\nabla\cdot w)_{L^2(\F)}
-(\pi_\varTheta[u],\nabla\cdot w)_{L^2(\F)}.
\end{equation}
Finally, considering the source term:
\begin{equation}
\label{source_term}
\langle f_{\mathbf H},w\rangle_{\mathbf H_{-1},\mathbf H_1}=\langle f_{\mathbf H},\nabla^\perp\theta
\rangle_{\mathbf H_{-1},\mathbf H_1}+\langle f_{\mathbf H},w-\nabla^\perp\theta
\rangle_{\mathbf H_{-1},\mathbf H_1}=\langle f_S,\theta\rangle_{S_{-1},S_1}+(\delta_{\mathbf H},\nabla\cdot w)_{L^2(\F)}.
\end{equation}
Summing now the time derivative of \eqref{time_deriv} with \eqref{nutruc} and subtracting \eqref{againNL} and the term 
$(p,\nabla\cdot w)_{L^2(\F)}$, we obtain \eqref{source_term}, taking into account \eqref{eq:grosseNS}. The resulting equality 
is therefore \eqref{eq:NS_prim} and the proof is completed.
 \end{proof}
%
\subsection{Strong solutions}
For every $u\in \mathbf H^2(\F)$, define $\Phi[u]$ as the unique element in $H_{\rm m}^1$ such that:
\begin{equation}
\label{def_phiu}
(\Phi[u],\theta)_{H^1_{\rm m}}=-(\nabla uu,\nabla\theta)_{\mathbf L^2(\F)}\forallt \theta\in H^1_{\rm m}.
\end{equation}
%
\begin{definition}
\label{pressure_strong}
Let $T$ be a positive time, $\psi$ be a function in $\bar S_1(T)$, $\varphi$ be  in $\mathfrak H^2_K(T)$ and $\phi_{\mathbf H}$ (accounting 
for the source term) be in $L^2(0,T;H^1_{\rm m})$. Then 
introduce the velocity field $u=\nabla^\perp\psi+\nabla\varphi$. For a.e. $t\in (0,T)$ define $p(t)$ by:
\begin{equation}
\label{p_strong}
p(t)=-\partial_t\varphi(t)+\Phi[u(t)]+\nu\mathsf H^\ast \mathsf Q^\perp_1\omega(t)+\phi_{\mathbf H}(t),
\end{equation}
where $\omega(t)=\Delta\psi(t)$ (i.e. $\omega(t)$ is the regular part of $\bar\Delta_1\psi(t)$, see Remark~\ref{rem_singular_V1}).
\end{definition}
%
\begin{prop}
\label{p_weak_strong}
The function $p$ belongs to $L^2(0,T; H^1_{\rm m})$ and   the mapping 
$$(\psi,\varphi,\phi_{\mathbf H})\in \bar S_1(T)\times \mathfrak H_k^2(T)\times L^2(0,T;S_0)
\mapsto p\in L^2(0,T;H^1_{\rm m}),$$ 
is continuous. 
\par
Moreover, if $\psi$ is a solution to the $\psi-$NS equations as described in Definition~\ref{defi:Navier_Stokes_strong} with Kirchhoff potential $\varphi$ 
and source term $f_S\in L^2(0,T;S_0)$, then $p$ defined in\eqref{p_strong} from the triple $(\psi,\varphi,\phi_{\mathbf H})$ 
is equal to the pressure of Definition~\ref{def:pressure_weak} computed from the triple $(\psi,\varphi,\delta_{\mathbf H})$ with 
$\delta_{\mathbf H}=-\phi_{\mathbf H}-\mathsf H^\ast f_S$.
\end{prop}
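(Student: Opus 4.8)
The plan is to prove the two assertions in turn: first that the formula \eqref{p_strong} produces an element of $L^2(0,T;H^1_{\rm m})$ depending continuously on $(\psi,\varphi,\phi_{\mathbf H})$, and then — for a strong solution — that this element coincides with the weak pressure of Definition~\ref{def:pressure_weak} built from $\delta_{\mathbf H}=-\phi_{\mathbf H}-\mathsf H^\ast f_S$.

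For the first point I would bound the four summands of \eqref{p_strong} one at a time. Since $\varphi\in\mathfrak H^2_K(T)\subset H^1(0,T;\mathfrak H^1_K)$ and $\mathfrak H^1_K\hookrightarrow H^1_{\rm m}$, the term $-\partial_t\varphi$ lies in $L^2(0,T;H^1_{\rm m})$, and $\phi_{\mathbf H}$ is already there. For $\nu\mathsf H^\ast\mathsf Q_1^\perp\omega$ I would use $\psi\in L^2(0,T;\bar S_2)$ with $\bar S_2=S_0\cap H^3(\F)$, so that $\omega=\Delta\psi\in L^2(0,T;H^1(\F))$, together with the boundedness of $\mathsf Q_1^\perp=\mathrm{Id}-\mathsf Q_1$ from $H^1(\F)$ onto $\mathfrak H^1$ (Lemma~\ref{PQ_inverses}, valid since $\Sigma$ is of class $\mathcal C^{2,1}$) and the continuity of $\mathsf H^\ast$ on harmonic $H^1$-functions (it realizes harmonic conjugation, Lemma~\ref{prop:H}). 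For $\Phi[u]$ with $u=\nabla^\perp\psi+\nabla\varphi\in\mathcal C([0,T];\mathbf H^1(\F))\cap L^2(0,T;\mathbf H^2(\F))$, the Riesz definition \eqref{def_phiu} gives $\|\Phi[u]\|_{H^1_{\rm m}}\leqslant\|\nabla u\,u\|_{\mathbf L^2(\F)}$, and the Hölder–Sobolev–interpolation estimates already carried out for $\varLambda^S_0$ in \eqref{estim_varlambda_strong}–\eqref{estim_varlambda_strong_3} (to which $\Phi[u]$ is the $H^1_{\rm m}$-potential counterpart, by \eqref{alter_def_lambda0}) show that $\Phi[u]\in L^2(0,T;H^1_{\rm m})$ with the required continuous dependence. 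This part is routine.

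The second point is where the work lies. I would first extract the momentum equation from the strong-solution definition. Summing the Cauchy problems \eqref{cauchy_for_psi_L_strong} and \eqref{cauchy_for_spi_lambda_strong}, testing against $\theta\in S_1$ in the $S_0$ pairing, using the definition of $\mathsf A^S_2$, the identity \eqref{alter_def_lambda0}, the orthogonality $(\Delta\psi_b,\Delta\theta)_{L^2(\F)}=0$ (because $\Delta\psi_b\in\mathfrak H$ and $\Delta\theta\in V_0$), and an integration by parts in the dissipative term, one arrives at
$$\big(\nabla^\perp\partial_t\psi-\nu\nabla^\perp\omega+\nabla u\,u-\nabla^\perp f_S,\ \nabla^\perp\theta\big)_{\mathbf L^2(\F)}=0\qquad\text{for all }\theta\in S_1,$$
with $\omega=\Delta\psi$ and all four terms in $L^2(0,T;\mathbf L^2(\F))$ (using $u\in L^2(0,T;\mathbf H^2)$ for $\nabla u\,u$ and $\psi\in L^2(0,T;\bar S_2)$ for $\nabla^\perp\omega$). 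Since $\{\nabla^\perp\theta:\theta\in S_1\}=\mathbf J_1$ is dense in $\mathbf J_0$ and $\mathbf L^2(\F)=\mathbf J_0\sumperp\nabla H^1_{\rm m}$ orthogonally, there is $q\in L^2(0,T;H^1_{\rm m})$ with $\nabla^\perp\partial_t\psi-\nu\nabla^\perp\omega+\nabla u\,u-\nabla^\perp f_S=\nabla q$. Splitting the rotational part by $\omega=\mathsf Q_1\omega+\mathsf Q_1^\perp\omega$ and invoking the Cauchy–Riemann identity $\nabla^\perp h=\nabla(\mathsf H^\ast h)$ (up to a constant) for the harmonic function $h=\mathsf Q_1^\perp\omega$ (Lemma~\ref{prop:H}), this rearranges into the (unique) Helmholtz–Weyl decomposition
$$\nabla u\,u=\nabla^\perp\big(-\partial_t\psi+f_S+\nu\mathsf Q_1\omega\big)+\nabla\big(q+\nu\mathsf H^\ast\mathsf Q_1^\perp\omega\big),$$
whose gradient part, by the defining property \eqref{def_phiu} of $\Phi[u]$, equals $-\nabla\Phi[u]$; hence $q=-\Phi[u]-\nu\mathsf H^\ast\mathsf Q_1^\perp\omega$.

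Finally I would feed these relations into the pressure. By Theorem~\ref{equiv_NS_prim_nonprim} (whose proof only uses the $\mathcal C^{1,1}$-level decomposition of Theorem~\ref{theo:decompH10}), the weak pressure $p^{\mathrm w}$ built from $(\psi,\varphi,\delta_{\mathbf H})$ makes $(u,p^{\mathrm w})$ satisfy the Leray identity \eqref{eq:NS_prim}, and the extra regularity of a strong solution — together with $p^{\mathrm w}\in L^2(0,T;H^1_{\rm m})$, which follows here from the $H^1$-continuity of the Bergman projections $\mathsf P,\mathsf P^\perp$ (Lemma~\ref{regul_P0}, $\Sigma$ of class $\mathcal C^{2,1}$) applied to the terms of \eqref{def:bernoulli} — upgrades \eqref{eq:NS_prim} to the pointwise momentum equation $\partial_t u+\nabla u\,u-\nu\Delta u+\nabla p^{\mathrm w}=f$ in $L^2(0,T;\mathbf L^2(\F))$. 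Writing $\partial_t u=\nabla^\perp\partial_t\psi+\nabla\partial_t\varphi$, $\nu\Delta u=\nu\nabla^\perp\omega$, and $f=\nabla\phi_{\mathbf H}+\nabla^\perp f_S$ (the potential/stream decomposition of the source, Proposition~\ref{strct_H_1}, with potential parameter $\phi_{\mathbf H}$), and substituting the decomposition of $\nabla u\,u$ together with $q=-\Phi[u]-\nu\mathsf H^\ast\mathsf Q_1^\perp\omega$, every $\nabla^\perp(\cdot)$ contribution cancels and what remains is $\nabla p^{\mathrm w}=\nabla\big(-\partial_t\varphi+\Phi[u]+\nu\mathsf H^\ast\mathsf Q_1^\perp\omega+\phi_{\mathbf H}\big)$; as both sides are gradients of $L^2_{\rm m}$-functions, $p^{\mathrm w}$ equals the right-hand side of \eqref{p_strong}. (Equivalently one may argue by uniqueness of the pressure in \eqref{eq:NS_prim}, since $\nabla\cdot:\mathbf H^1_0(\F)\to L^2_{\rm m}$ is onto.) The main obstacle is precisely the regularity bookkeeping: ensuring that every term of the momentum equation genuinely lies in $L^2(0,T;\mathbf L^2(\F))$ — in particular that the weak pressure (in general only in $L^2(0,T;L^2_{\rm m})$ plus a $W^{-1,\infty}$ singular part) is here an $H^1_{\rm m}$-valued function — and verifying that the comparison requires no more boundary regularity than the $\mathcal C^{2,1}$ at hand; once the function spaces are pinned down, the algebraic and Helmholtz–Weyl manipulations are short.
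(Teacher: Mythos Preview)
Your argument is correct but follows a genuinely different route from the paper. The paper never passes through the primitive-variable momentum equation; instead it computes $\partial_t\mathsf H^\ast\psi$ directly from the strong $\psi$-NS equation, writing $\partial_t\mathsf H^\ast\psi=\mathsf H^\ast\partial_t\psi=\mathsf H^\ast\big(-\nu\mathsf A^S_2(\psi_\varLambda+\psi_\ell)-\varLambda_0^S(\psi,\varphi)+f_S\big)$, then uses the algebraic identities $\mathsf H^\ast=\mathsf H^\ast\mathsf P^\perp$ and $\mathsf P^\perp\mathsf Q_1=-\mathsf Q_1^\perp$ to turn $\mathsf H^\ast\mathsf A^S_2(\psi_\varLambda+\psi_\ell)$ into $-\mathsf H^\ast\mathsf Q_1^\perp(\omega_\varLambda+\omega_\ell)$, and unwinds $\mathsf H^\ast\varLambda_0^S(\psi,\varphi)$ via the definitions of $\mathsf H$, $\varPsi_{\mathsf Hf}$, $\pi_\varPsi$, $\pi_\varTheta$. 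Plugging this expression for $\partial_t\mathsf H^\ast\psi$ into the weak pressure formula \eqref{def:bernoulli}--\eqref{eq:express_p} and integrating by parts, the paper shows term by term that the result equals \eqref{p_strong}. Everything stays in the scalar framework and no appeal to Theorem~\ref{equiv_NS_prim_nonprim} or to uniqueness of the pressure in the Leray identity is made.

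Your approach, by contrast, rebuilds a pressure from the primitive momentum balance and then matches it to $p^{\mathrm w}$ by uniqueness. This is conceptually appealing---it explains \emph{why} the two formulas coincide (both close the same momentum equation)---but it is more circuitous and leans on Theorem~\ref{equiv_NS_prim_nonprim} and on the $H^1$-regularity of the weak pressure $p^{\mathrm w}$ (which you do justify, via $\mathsf H^\ast\partial_t\psi=\mathsf H^\ast\mathsf P^\perp\partial_t\psi$ and the $H^1$-boundedness of $\mathsf P^\perp$ and of harmonic conjugation, though this step deserves to be spelled out rather than folded into a parenthesis). The paper's computation is shorter and entirely self-contained within the nonprimitive machinery it has built; your route gives a cleaner picture of the underlying PDE mechanism.
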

%
\begin{proof}
The continuity of the mapping being obvious, let us verify the claim that $p$ in \eqref{p_strong} 
matches the expression given in \eqref{eq:express_p}. For a.e. $t\in(0,T)$, $\partial_t\psi$ is in $S_0$ and since 
$\psi$ is a strong solution to the $\psi-$NS equations it follows that:
$$\partial_t \mathsf H^\ast\psi=\mathsf H^\ast\partial_t\psi=\mathsf H^\ast \big(-\nu\mathsf A^S_2(\psi_\varLambda+\psi_{\ell})-\varLambda_0^S(\psi,\varphi)
+f_S\big).$$
On the one hand, according to the expression \eqref{expressAS2} of $\mathsf A^S_2$:
$$\mathsf H^\ast \mathsf A^S_2(\psi_\varLambda+\psi_{\ell})=-\mathsf H^\ast \mathsf Q_1(\omega_\varLambda+\omega_\ell)
=-\mathsf H^\ast\mathsf P_1^\perp \mathsf Q_1(\omega_\varLambda+\omega_\ell)=
\mathsf H^\ast \mathsf Q_1^\perp (\omega_\varLambda+\omega_\ell),$$
because $\mathsf H^\ast=\mathsf H^\ast\mathsf P^\perp$ and $\mathsf P_1^\perp\mathsf Q_1=({\rm Id}-\mathsf P_1)\mathsf Q_1=-\mathsf Q_1^\perp$.
On the other hand, for every $f$ in $L^2_{\rm m}$:
$$(\mathsf H^\ast\varLambda_0^S(\psi,\varphi),f)_{L^2(\F)}=(\varLambda_0^S(\psi,\varphi),\Delta\Psi_{\mathsf Hf})_{L^2(\F)}=
-(\varLambda_0^S(\psi,\varphi), \Psi_{\mathsf Hf})_{S_0}=-(\nabla u u, \nabla^\perp\Psi_{\mathsf Hf})_{\mathbf L^2(\F)},$$
the latter equality resulting from \eqref{alter_def_lambda0}. Summing up, we obtain that for every $f\in L^2_{\rm m}$ and a.e. $t\in (0,T)$:
\begin{multline}
\label{grosse_formule}
\big(\pi_\varTheta[u(t)]+\mathsf H^\ast\big[\nu \omega(t)-\pi_\varPsi[u(t)]
\big]-\partial_t\mathsf H^\ast\psi(t),f\big)_{L^2_{\rm m}}=
-(D^2\varTheta_f(t)u(t),u(t))_{\mathbf L^2(\F)}\\
-(D^2\varPsi_{\mathsf Hf}(t)u(t),u^\perp(t))_{\mathbf L^2(\F)}
-(\nabla u u, \nabla^\perp\Psi_{\mathsf Hf})_{\mathbf L^2(\F)}+\nu(\mathsf H^\ast\omega(t),f)_{L^2_{\rm m}}
+\nu(\mathsf H^\ast \mathsf Q_1^\perp (\omega_\varLambda+\omega_\ell)(t),f)_{L^2_{\rm m}}-(\mathsf H^\ast f_S,f)_{L^2_{\rm m}}.
\end{multline}
The two first terms in the right hand side  can be rewritten as:
$$(D^2\varTheta_f(t)u(t),u(t))_{\mathbf L^2(\F)}+
(D^2\varPsi_{\mathsf Hf}(t)u(t),u^\perp(t))_{\mathbf L^2(\F)}=(\nabla(\nabla\varTheta_f+\nabla^\perp\varPsi_{\mathsf Hf})(t)u(t),u(t))_{\mathbf L^2(\F)},$$
and the resulting quantity can now be integrated by parts:
$$(\nabla(\nabla\varTheta_f+\nabla^\perp\varPsi_{\mathsf Hf})(t)u(t),u(t))_{\mathbf L^2(\F)}=-(\nabla u(t) u(t),\nabla\varTheta_f(t)+\nabla^\perp\varPsi_{\mathsf Hf}(t))_{\mathbf L^2(\F)}.$$
Turning our attention to the two last terms in the right hand side of \eqref{grosse_formule}, we observe that 
$\mathsf H^\ast\omega(t)=\mathsf H^\ast\omega_b(t)=\mathsf H^\ast\mathsf Q_1^\perp \omega_b(t)$ according to the properties of $\mathsf H^\ast$ stated in Lemma~\ref{prop:H} 
and the fact that $\omega_b$ is the harmonic part of $\omega=\omega_\varLambda+\omega_\ell+\omega_b$. We have now proved that:
$$\big(\pi_\varTheta[u(t)]+\mathsf H^\ast\big[\nu \omega(t)-\pi_\varPsi[u(t)]
\big]-\partial_t\mathsf H^\ast\psi(t),f\big)_{L^2_{\rm m}}=
(\nabla u(t) u(t),\nabla\varTheta_f(t))_{\mathbf L^2(\F)}+\nu(\mathsf H^\ast \mathsf Q_1^\perp \omega(t),f)_{L^2_{\rm m}}
-(\mathsf H^\ast f_S,f)_{L^2_{\rm m}}.$$
Recalling the definition \eqref{def_phiu} of $\Phi[u(t)]$, we can integrate by parts the first term in the right hand side:
$$(\nabla u(t) u(t),\nabla\varTheta_f(t))_{\mathbf L^2(\F)}=-(\nabla\Phi[u(t)],\nabla \varTheta_f(t))_{\mathbf L^2(\F)}=
(\Phi[u(t)],f)_{L^2_{\rm m}}, $$
and thus complete the proof.
\end{proof}
\begin{theorem}
Let  $T$ be a positive real number and let $\psi\in \bar S_1(T)$ be a strong solution to the $\psi-$Navier-Stokes equations as defined in Definition~\ref{defi:Navier_Stokes_strong}. Let $\varphi\in\mathfrak H_K^2(T)$ be the   Kirchhoff potential  also introduced in   Definition~\ref{defi:Navier_Stokes_strong}.  
\par
Denote now by $u$ the vector field $\nabla\varphi+\nabla^\perp\psi$ and by $p$ the pressure defined 
from $\psi$, $\varphi$ and $\phi_{\mathbf H}=-\mathsf H^\ast f_S$ as explained in Definition~\ref{pressure_strong}.
Then the pair $(u,p)$ is a strong (Kato) solution to the  Navier-Stokes equations.
\end{theorem}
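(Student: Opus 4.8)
The statement is the strong-solution counterpart of Theorem~\ref{equiv_NS_prim_nonprim}, and the plan is to deduce it from that theorem together with Proposition~\ref{p_weak_strong}; equivalently one could simply repeat the variational computation carried out in the proof of Theorem~\ref{equiv_NS_prim_nonprim}, which goes through verbatim at the higher regularity $\psi\in\bar S_1(T)$, now with the benefit that every integration by parts performed there is classical rather than understood in the distributional sense. I shall follow the first route.

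First I would verify that a strong solution is, a fortiori, a weak solution. The data fit: $f_S\in L^2(0,T;S_0)\hookrightarrow L^2(0,T;S_{-1})$, $\psi^{\rm i}\in\bar S_1\subset S_0$, $\varphi\in\mathfrak H^2_K(T)\hookrightarrow\mathfrak H^1_K(T)$, and the three pieces $\psi_b,\psi_\ell,\psi_\varLambda$ of Definition~\ref{defi:Navier_Stokes_strong} coincide with those of Definition~\ref{defi:Navier_Stokes}: one has $\mathsf L_2^S(g_n,g_\tau,\varGamma)=\mathsf L_1^S(g_n,g_\tau,\varGamma)$ by the consistency clause of Lemma~\ref{LEM:lift_stream}; the Cauchy problem \eqref{cauchy_for_psi_L_strong} for $\psi_\ell$ (written with $\mathsf A^S_2$) is solved by the same function as the analogous problem \eqref{cauchy_for_psi_L} (written with $\mathsf A^S_1$), exactly as in the proof of Proposition~\ref{prop:uniqueness_def}, because $\mathsf A^S_1$ extends $\mathsf A^S_2$; and, since $\psi\in L^2(0,T;\bar S_2)$, the restriction to $S_1$ of $\varLambda^S_0(\psi_b+\psi_\ell+\psi_\varLambda,\varphi)$ equals $\varLambda^S_1(\psi_b+\psi_\ell+\psi_\varLambda,\varphi)$ (this is how $\varLambda^S_0$ was defined), so that \eqref{cauchy_for_spi_lambda_strong} implies \eqref{cauchy_for_spi_lambda}. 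By uniqueness of weak solutions, $\psi\in\bar S_0(T)$ is therefore the weak solution attached to the same data.

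Next I would realise the source term on the primitive side. Choose $f_{\mathbf H}\in L^2(0,T;\mathbf H_{-1})$ with $\langle f_{\mathbf H},\nabla^\perp\theta\rangle_{\mathbf H_{-1},\mathbf H_1}=\langle f_S,\theta\rangle_{S_{-1},S_1}$ for every $\theta\in S_1$. Such $f_{\mathbf H}$ is fixed only up to a ``gradient'' (which alters the primitive pressure but not $u$), and the representative to keep is the one whose pair $(\delta_{\mathbf H},\omega_{\mathbf H})$ produced by Proposition~\ref{strct_H_1} has $\delta_{\mathbf H}$ equal to $-\phi_{\mathbf H}-\mathsf H^\ast f_S$ evaluated at the choice $\phi_{\mathbf H}=-\mathsf H^\ast f_S$ made in Definition~\ref{pressure_strong}, i.e. $\delta_{\mathbf H}=0$; concretely $f_{\mathbf H}=\nabla^\perp f_S-\nabla(\mathsf H^\ast f_S)$, whose Helmholtz-Weyl decomposition has stream part $f_S$ and potential part $-\mathsf H^\ast f_S$. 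Applying Theorem~\ref{equiv_NS_prim_nonprim} to $\psi$ (viewed as a weak solution), to $\varphi$, and to this $f_{\mathbf H}$ shows that $(u,p_{\mathrm w})$ is a weak (Leray) solution of the Navier-Stokes equations, $p_{\mathrm w}$ being the pressure of Definition~\ref{def:pressure_weak} built from $(\psi,\varphi,\delta_{\mathbf H})$ with $\delta_{\mathbf H}=0$. On the other hand, Proposition~\ref{p_weak_strong} (with $\phi_{\mathbf H}=-\mathsf H^\ast f_S$) identifies the pressure $p$ of Definition~\ref{pressure_strong} with the pressure of Definition~\ref{def:pressure_weak} built from $(\psi,\varphi,-\phi_{\mathbf H}-\mathsf H^\ast f_S)=(\psi,\varphi,0)$, that is, with $p_{\mathrm w}$. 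Hence $(u,p)$ is a Leray solution.

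It only remains to promote ``Leray'' to ``Kato''. From $\psi\in\bar S_1(T)$ and $\varphi\in\mathfrak H^2_K(T)$ one gets $u=\nabla^\perp\psi+\nabla\varphi\in H^1(0,T;\mathbf L^2(\F))\cap L^2(0,T;\mathbf H^2(\F))$, with $\nabla\cdot u=\Delta\varphi=0$, and $p\in L^2(0,T;H^1_{\rm m})$ by Proposition~\ref{p_weak_strong}. Feeding this regularity into \eqref{eq:NS_prim}: in two dimensions $\mathbf H^2(\F)\hookrightarrow\mathbf L^\infty(\F)$, so the terms $\partial_t u$, $(u\cdot\nabla)u$, $\Delta u$ and $\nabla p$ all lie in $L^2(0,T;\mathbf L^2(\F))$, and after an integration by parts \eqref{eq:NS_prim} becomes $(\partial_t u+(u\cdot\nabla)u-\nu\Delta u+\nabla p-f_{\mathbf H},w)_{\mathbf L^2(\F)}=0$ for every $w\in\mathbf H^1_0(\F)$; consequently the momentum equation holds in $L^2(0,T;\mathbf L^2(\F))$. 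Together with $\nabla\cdot u=0$, the prescribed Dirichlet condition on $\Sigma_T$ (encoded by $(g_n,g_\tau,\varGamma)$ through \eqref{def:boundary_cond}) and $u(0)=u^{\rm i}$, this means that $(u,p)$ is a strong (Kato) solution. The one genuinely delicate point of the argument is the bookkeeping of the previous paragraph --- selecting the $f_{\mathbf H}$, hence the $\delta_{\mathbf H}$, that makes Proposition~\ref{p_weak_strong} match the pressure normalisation of Definition~\ref{pressure_strong} --- the ancillary remark being that the $\mathcal C^{2,1}$ regularity of $\Sigma$ assumed in Definition~\ref{defi:Navier_Stokes_strong} is sufficient, since at this regularity level one works throughout with genuine $H^2(\F)$ and $H^3(\F)$ functions and never calls on the harmonic Bergman projection acting on $H^2(\F)$.
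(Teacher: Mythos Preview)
Your proof is correct and follows essentially the same route as the paper's own proof: observe that a strong $\psi$-solution is also a weak one, invoke Proposition~\ref{p_weak_strong} to identify the pressure of Definition~\ref{pressure_strong} with a weak pressure in the sense of Definition~\ref{def:pressure_weak}, apply Theorem~\ref{equiv_NS_prim_nonprim} to obtain a Leray solution, and then upgrade to Kato by regularity. The paper's proof is considerably terser (four lines), leaving implicit both the verification that strong implies weak and the precise bookkeeping of $f_{\mathbf H}$ and $\delta_{\mathbf H}$; you have spelled these out, which is useful, and your tracking of $\delta_{\mathbf H}=0$ via the choice $\phi_{\mathbf H}=-\mathsf H^\ast f_S$ is the right way to reconcile Proposition~\ref{p_weak_strong} with Theorem~\ref{equiv_NS_prim_nonprim}.
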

\begin{proof}
The stream function $\psi$ is also a weak solution to the $\psi-$NS equations in the sense of Definition~\ref{defi:Navier_Stokes}. According to 
Proposition~\ref{p_weak_strong}, the pressure $p$  is a ``weak'' pressure in the sense of Definition~\ref{def:pressure_weak} as well. 
It follows from Theorem~\ref{equiv_NS_prim_nonprim} that $(u,p)$ is a weak solution to the NS equations in primitive variables.
From the regularity of $u$ and $p$ we are allowed to deduce that $(u,p)$ is indeed a strong solution to the NS equations in primitive variables.
\end{proof}
\section{More regular vorticity solutions}
\label{SEC:more_regular}
So far and even for strong solutions as described in the preceding subsection, the regularity of the functions
does not allow writing the vorticity equation in the most common form \eqref{eq:main_vorti:1}, that is, loosely speaking, as an advection-diffusion 
 equation set in $L^2(\F)$ (see Remark~\ref{rem_advection}).
To achieve this level of regularity, a first guess would be to seek solutions in $V_1(T)$, in which case, the operator $(-\nu\Delta)$ 
should be 
$\nu\mathsf A^V_2$. However, since the nonlinear advection term $u\cdot\nabla\omega$ does not belong to $V_0$ in general, we are inclined 
to conclude that this approach leads to a dead end. 
We shall prove that the solution should rather be looked for in the space $\bar V_1(T)$. We recall that 
the spaces $\bar V_2, \bar V_1$ and $\bar V_0$ are all of them subspaces of $V_{-1}$.  They are defined in Subsection~ \ref{SEC:Nonhomogeneous_vorticity}.
\par
Before addressing the $\omega-$NS equations, we begin as usual with Stokes problems. All along this Section, we assume that 
the boundary $\Sigma$ is of class $\mathcal C^{3,1}$. 
\subsection{Regular Stokes vorticity solutions}
For every positive real number $T$, we aim to define solutions to Stokes problems belonging to:
$$\bar V_1(T)=H^1(0,T;\bar V_0)\cap \mathcal C([0,T];\bar V_1)\cap L^2(0,T;\bar V_2).$$
We recall that $\bar V_2\subset \bar V_1\subset\bar V_0\subset V_{-1}$ (see   Subsection~\ref{SEC:Nonhomogeneous_vorticity}). 
So, let a triple $(g_n,g_\tau,\varGamma)$ be given in $G_2(T)$ and define  $\omega_b$ in $ V^{\rm b}_1(T)$ by:
\begin{equation}
\label{def_omega_B}
\omega_b(t)=\mathsf L^V_{2}(g_n(t),g_\tau(t),\varGamma(t))=\omega_b^{\mathfrak H}(t)+\sum_{j=1}^N\varGamma_j(t)\zeta_j\qquad\text{for a.e. }t\in (0,T),
\end{equation}
where $\omega_b^{\mathfrak H}(t)$ belongs to $\mathfrak H_V^2$ (the operator $\mathsf L^V_{2}$ is defined in \eqref{def_LV} and maps continuously 
$G_2(T)$ into the space $ V^{\rm b}_1(T)$ defined in \eqref{def_mathfrakVT}). 
The source term $f_V$ is expected to belong to $L^2(0,T;\bar V_0)$ and  the decomposition \eqref{decomp_V0} of the space 
$\bar V_0$ leads to the splitting: 
\begin{equation}
\label{decomp_fV}
f_V(t)=f_V^0(t)+f_V^{\mathfrak H}(t)+\sum_{j=1}^N\alpha_V^j(t)\zeta_j\qquad\text{for a.e. }t\in (0,T),
\end{equation}
with $f_0(t)$ is in $V_0$, $f_V^{\mathfrak H}(t)$ in 
$\mathfrak H_V$ and $\alpha_V^j(t)$ in $\mathbb R$ for every $j=1,\ldots,N$.
Similarly, taking now into account the decomposition \eqref{decomp_barV2} of $\bar V_2$, 
we seek the total vorticity in the form:
\begin{equation}
\label{def_omega_1}
\omega(t)=\omega_1(t)+\omega_b(t)
\quad\text{with}\quad
\omega_1(t)=\omega_0(t)+\omega_{\mathfrak B}(t)+\sum_{j=1}^N\beta_j(t)\varOmega_j
\qquad\text{for a.e. }t\in (0,T),
\end{equation}
where the function $\omega_0(t)$ belongs to $V_2$, $\omega_{\mathfrak B}(t)$ is in $\mathfrak B_V^2$ and $\beta_j(t)$  in $\mathbb R$
for every $j=1,\ldots,N$. 
They are the unknowns of the problem.  
The function   $\omega_1$ 
is supposed to satisfy in particular Equation \eqref{eq:stokes_omega} for $k=0$, namely:
\begin{equation}
\label{firdt_guess_vorti}
\partial_t \omega_1+\nu\mathsf A^V_1\omega_1=f_V-\partial_t\omega_b\qquad\text{in }\F_T,
\end{equation}
this equality being set in $L^2(0,T;V_{-1})$. We want this equation to be satisfy in the slightly more regular space
$L^2(0,T;\bar V_0)$. Thus, the operator $\mathsf A^V_1$ turns into the operator $\bar{\mathsf A}^V_2$ (defined right above Proposition~\ref{prop_bardelta2}). 
Keeping in mind the decomposition \eqref{decomp_V0} of $\bar V_0$ we apply successively to Equation \eqref{firdt_guess_vorti} the orthogonal projections onto the spaces 
$V_0$, $\mathfrak H_V$ and $\mathbb F_V^\ast$ respectively to obtain the system:
\vspace{-1mm}
\begin{subequations}
\label{main_vorti_regul_sys}
\begin{alignat}{3}
\label{main_vorti_re}
\partial_t\omega_0+\nu\mathsf A^V_2\omega_0&=f_V^0-\partial_t\omega_{\mathfrak B}-\sum_{j=1}^N\ \beta_j'\varOmega_j&\qquad&\text{in }\F_T\\[-3mm]
\label{eq:second_regukl}
\nu\bar{\mathsf A}^V_2\omega_{\mathfrak B}&=f_V^{\mathfrak H}-\partial_t\omega_b^{\mathfrak H}&&\text{in }\F_T\\
\label{eq:third_regukl}
\nu\beta_j&=\varGamma'_j-\alpha_V^j&&\text{in }(0,T)\quad\text{ for every }j=1,\ldots,N.
\end{alignat}
\end{subequations}
A solution can be  worked out by taking the time derivatives of the equations \eqref{eq:second_regukl} and \eqref{eq:third_regukl}. 
Thus, one gets the expressions of $\partial_t\omega_{\mathfrak B}$ and $\partial_t\beta_j$ that can be used in \eqref{main_vorti_re}. This leads us 
to the following statement:
%
\begin{prop}
\label{stokes_regul}
Let $T$ be a positive real number, $(g_n,g_\tau,\varGamma)$ be a triple in 
\begin{subequations}
\label{regul_data}
\begin{align}
G_{\mathsf r}(T)&=G_2(T)\cap \mathcal C^1([0,T];G_0^n\times G_0^\tau\times\mathbb R^N)\cap H^2(0,T;G^n_{-1}\times G^\tau_{-1}\times \mathbb R^N)\\
&=\big\{(g_n,g_\tau,\varGamma)\in G_2(T)\,:\, (\partial_t g_n,\partial_t g_\tau,\varGamma')\in G_0(T)\big\},
\end{align}
 and $f_V$ 
be a source term in 
\vspace{-3mm}
\begin{multline}
\label{def:FT}
F_{\mathsf r}(T)=\Big\{f_V\in L^2(0,T;\bar V_0)\,:\,f_V=f_V^0+f_V^{\mathfrak H}+\sum_{j=1}^N\alpha^j_V\zeta_j\text{ with }f_V^0\in L^2(0,T;V_0),\\[-2mm]
f_V^{\mathfrak H}\in L^2(0,T;\mathfrak H_V)\cap\mathcal C([0,T];V_{-1})\cap H^1(0,T;V_{-2})
\text{ and }\alpha_V^j\in H^1(0,T)\text{ for every }j=1,\ldots,N\Big\}.
\end{multline}
\end{subequations}

Let $\omega_b$ be defined from the boundary data as in \eqref{def_omega_B}, let $\omega^{\rm i}$ be an initial data in $\bar V_1$ satisfying the compatibility condition $\omega^{\rm i}-\omega_b(0)\in V_1$ and let:
\vspace{-3mm}
\begin{equation}
\label{compa_initial}
\omega^{\rm i}_0=\omega^{\rm i}-\omega_b(0)-\frac{1}{\nu}\big({\mathsf A}^V_1\big)^{-1}\big(f_V^{\mathfrak H}-\partial_t\omega_b^{\mathfrak H}\big)(0)-\frac{1}{\nu}\sum_{j=1}^N
\big(\varGamma'_j(0)
-\alpha_V^j(0)\big)\varOmega_j.
\end{equation}
Then $\omega^{\rm i}_0$ belongs to $V_1$ and there exists a unique solution $\omega_0\in V_1(T)$ to the Cauchy problem:
%
\begin{subequations}
\begin{alignat}{3}
\label{main_sys_regul}
\partial_t\omega_0+\nu\mathsf A^V_2\omega_0&=f_V^0-\frac{1}{\nu}\big({\mathsf A}^V_0\big)^{-1}\big(\partial_t f_V^{\mathfrak H}-\partial^2_t\omega_b^{\mathfrak H}\big)
-\frac{1}{\nu}\sum_{j=1}^N\big(\varGamma_j''- (\alpha_V^j)'\big)\varOmega_j&\quad&\text{ in }\F_T,\\[-3mm]
\omega_0(0)&=\omega_0^{\rm i}&&\text{ in }\F.
\end{alignat}
\end{subequations}
The vorticity function:
\vspace{-3mm}
\begin{equation}
\label{def_omega_regul_sol}
\omega=\omega_0+\frac{1}{\nu}\big(\bar{\mathsf A}^V_2\big)^{-1}\big(f_V^{\mathfrak H}-\partial_t\omega_b^{\mathfrak H}\big)+\frac{1}{\nu}\sum_{j=1}^N(\varGamma'_j
-\alpha_V^j)\varOmega_j+\omega_b,
\end{equation}
belongs to $\bar V_1(T)$ and solves System~\eqref{main_vorti_regul_sys}.  It will be called a regular vorticity  solution   to the 
$\omega-$Stokes equations.
\end{prop}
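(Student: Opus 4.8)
The proof follows exactly the reduction that led to System~\eqref{main_vorti_regul_sys}. The plan is: (i) solve the two ``elliptic in time'' equations \eqref{eq:second_regukl} and \eqref{eq:third_regukl} explicitly for $\omega_{\mathfrak B}$ and the $\beta_j$; (ii) insert the time derivatives of those expressions into \eqref{main_vorti_re} so as to obtain the homogeneous $\omega$-Stokes Cauchy problem \eqref{main_sys_regul} at regularity level $k=1$, to which Proposition~\ref{sol_space_O} applies; (iii) reassemble $\omega$ via \eqref{def_omega_regul_sol} and check that it belongs to $\bar V_1(T)$ and solves \eqref{main_vorti_regul_sys}. First I would record the regularity of the auxiliary data: by Lemma~\ref{LEM:lift_vorti}, $\omega_b$ defined in \eqref{def_omega_B} lies in $V^{\rm b}_1(T)$, and since $(g_n,g_\tau,\varGamma)\in G_{\mathsf r}(T)$ has $(\partial_t g_n,\partial_t g_\tau,\varGamma')\in G_0(T)$, applying the (linear) lifting operator to the time‑differentiated triple shows that $\partial_t\omega_b$ has one more degree of time regularity; splitting according to \eqref{decomp_V0} isolates $\omega_b^{\mathfrak H}$ with $\mathfrak H_V$‑valued regularity, and combined with the assumptions on $f_V^{\mathfrak H}$, $\varGamma_j$ and $\alpha_V^j$ built into \eqref{def:FT} and \eqref{regul_data} this makes the identities below meaningful and differentiable once in time.

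By \eqref{both_identi} and the fact that $\bar{\mathsf A}^V_2$ (the restriction of $\mathsf A^V_1$ to $\mathsf P_1\bar S_1$) is an isometry onto $\bar V_0$, established in the proof of Theorem~\ref{THEO:decompV0}, this operator is block–diagonal with respect to the decompositions \eqref{exp:V21} and \eqref{decomp_V0}; in particular it restricts to an isomorphism $\mathfrak B_V\to\mathfrak H_V$. Hence \eqref{eq:second_regukl} forces $\omega_{\mathfrak B}=\frac1\nu(\bar{\mathsf A}^V_2)^{-1}(f_V^{\mathfrak H}-\partial_t\omega_b^{\mathfrak H})$ and \eqref{eq:third_regukl} forces $\beta_j=\frac1\nu(\varGamma_j'-\alpha_V^j)$. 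Because the operators of the Gelfand triple $V_1\subset V_0\subset V_{-1}$ are time independent, they commute with $\partial_t$; differentiating the two elliptic identities once in time and using that $(\bar{\mathsf A}^V_2)^{-1}$ coincides with $(\mathsf A^V_0)^{-1}$ on the subspace where the differentiated right‑hand side lives, one gets
$$
\nu\,\partial_t\omega_{\mathfrak B}=\big(\mathsf A^V_0\big)^{-1}\big(\partial_tf_V^{\mathfrak H}-\partial_t^2\omega_b^{\mathfrak H}\big)\in L^2(0,T;V_0),\qquad \nu\,\beta_j'=\varGamma_j''-(\alpha_V^j)'\in L^2(0,T).
$$
Substituting these into \eqref{main_vorti_re} produces exactly the Cauchy problem \eqref{main_sys_regul}, whose source term belongs to $L^2(0,T;V_0)$ by the estimates just obtained together with $f_V^0\in L^2(0,T;V_0)$ and $\varOmega_j=\mathsf P_1\xi_j\in V_1\subset V_0$ (see \eqref{def_omegaj}). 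For the initial datum, the compatibility hypothesis gives $\omega^{\rm i}-\omega_b(0)\in V_1$; evaluating the $\mathcal C([0,T];\cdot)$‑representatives of $f_V^{\mathfrak H}$ and $\partial_t\omega_b^{\mathfrak H}$ at $t=0$ and applying the isometry $(\mathsf A^V_1)^{-1}:V_{-1}\to V_1$ shows $(\mathsf A^V_1)^{-1}(f_V^{\mathfrak H}-\partial_t\omega_b^{\mathfrak H})(0)\in\mathfrak B_V\subset V_1$, while $\sum_j(\varGamma_j'(0)-\alpha_V^j(0))\varOmega_j\in\mathbb F_V\subset V_1$; hence $\omega^{\rm i}_0\in V_1$ by \eqref{compa_initial}, and Proposition~\ref{sol_space_O} with $k=1$ furnishes a unique $\omega_0\in V_1(T)=H^1(0,T;V_0)\cap\mathcal C([0,T];V_1)\cap L^2(0,T;V_2)$ solving \eqref{main_sys_regul}.

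It then remains to set $\omega$ as in \eqref{def_omega_regul_sol} and to verify the two assertions. That $\omega\in\bar V_1(T)$ follows by adding the regularities of the four summands: $\omega_0\in V_1(T)\subset\bar V_1(T)$ since $V_k\subset\bar V_k$; $\frac1\nu(\bar{\mathsf A}^V_2)^{-1}(f_V^{\mathfrak H}-\partial_t\omega_b^{\mathfrak H})=\omega_{\mathfrak B}$ is valued in $\mathfrak B_V$ with the time regularity derived above, and $\mathfrak B_V\subset\mathsf P_1\bar S_1\subset\bar V_2$; $\frac1\nu\sum_j(\varGamma_j'-\alpha_V^j)\varOmega_j$ has $H^1(0,T)$ scalar coefficients multiplying fixed functions in $\mathbb F_V\subset\bar V_2$, hence lies in $H^1(0,T;\bar V_2)$; and $\omega_b\in V^{\rm b}_1(T)$ with $V^{\rm b}_k\subset\bar V_k$ (see \eqref{decomp_barV2}). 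That $\omega$ solves \eqref{main_vorti_regul_sys} is now immediate: \eqref{eq:second_regukl} and \eqref{eq:third_regukl} hold by the very definition of $\omega_{\mathfrak B}$ and $\beta_j$, and \eqref{main_vorti_re} is \eqref{main_sys_regul} read backwards once $\partial_t\omega_{\mathfrak B}$ and $\beta_j'$ are reinstated. I expect the main obstacle to be purely the bookkeeping in Steps~(i)–(ii): one must check that differentiating the elliptic identities in time is legitimate in the spaces $V_{-1}$ and $V_{-2}$, that $(\bar{\mathsf A}^V_2)^{-1}$ and $(\mathsf A^V_0)^{-1}$ are genuinely the same operator on the subspace where the differentiated right‑hand sides live, and that the $\mathcal C([0,T];\cdot)$ traces at $t=0$ used for $\omega^{\rm i}_0$ are consistent with the time regularity of $\omega_b$ inherited from $G_{\mathsf r}(T)$.
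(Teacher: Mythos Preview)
Your proposal is correct and follows exactly the approach of the paper: the paper's proof is a one-liner (``The right-hand side of equation \eqref{main_sys_regul} clearly belongs to $L^2(0,T;V_0)$ and hence it suffices to apply Proposition~\ref{exist:sol:stokes_omega}''), because the derivation you spell out in steps (i)--(iii) is precisely the reduction performed in the text immediately preceding the statement. Your citation of Proposition~\ref{sol_space_O} with $k=1$ is in fact slightly more to the point than the paper's reference, since the Cauchy problem \eqref{main_sys_regul} is homogeneous.
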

%
It is worth noticing that:
\begin{enumerate}
\item System \eqref{main_vorti_regul_sys} is no longer a simple parabolic 
system but rather a coupled parabolic-elliptic 
system.
\item The regularity assumptions \eqref{regul_data} entail that the functions $f_V^{\mathfrak H}$ and $\partial_t\omega_b^{\mathfrak H}$ both belong to 
$\mathcal C([0,T];V_{-1})$ and therefore that the equality \eqref{compa_initial} at the initial time makes sense.
\item Under the hypotheses of the Proposition, the function $\omega_1$ can be defined as in \eqref{def_omega_1}. One easily verifies that $\omega_1$ solves \eqref{firdt_guess_vorti}.
\item In the definition \eqref{def:FT} of the space $F_{\mathsf r}(T)$, the regularities of the harmonic and nonharmonic parts of the source term are different.
\end{enumerate}
\begin{proof}
The proof is straightforward: The right-hand side of equation \eqref{main_sys_regul} clearly belongs to $L^2(0,T;V_0)$ and hence it suffices 
to apply Proposition~\ref{exist:sol:stokes_omega}.
\end{proof}
%
From Proposition~\ref{exist:sol:stokes_omega} and Equality \eqref{def_omega_regul_sol}, we deduce:
\begin{cor}
The spaces $G_{\mathsf r}(T)$ and $F_{\mathsf r}(T)$ being equipped with their natural topologies, there exists a positive constant $\mathbf c_{[\F,\nu]}$ such that, 
for every regular vorticity solution to  a $\omega-$Stokes problem as defined in Proposition~\ref{stokes_regul}, the estimate below holds true:
\begin{equation}
\|\omega\|_{\bar V_1(T)}\leqslant \mathbf c_{[\F,\nu]}\big[\|\omega^{\rm i}\|_{V_1}^2+\|(g_n,g_\tau,\varGamma)\|_{G_{\mathsf r}(T)}^2+
\|f_V\|_{F_{\mathsf r}(T)}^2\big]^{{\frac12}}.
\end{equation}
\end{cor}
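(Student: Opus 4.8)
The plan is to read off, one by one, the four summands in the explicit representation \eqref{def_omega_regul_sol} of $\omega$ and to estimate each of them in $\bar V_1(T)$, the core being the bound on $\omega_0$ furnished by the homogeneous $\omega-$Stokes theory of Section~\ref{SEC:evol_stokes}; the only delicate point is that every constant must be independent of $T$. I would start by collecting the data bounds. From the very definitions of the norms of $G_{\mathsf r}(T)$ and $F_{\mathsf r}(T)$ one has continuous inclusions $G_{\mathsf r}(T)\hookrightarrow G_2(T)$ and $G_{\mathsf r}(T)\hookrightarrow \mathcal C^1([0,T];G^n_0\times G^\tau_0\times\mathbb R^N)\cap H^2(0,T;G^n_{-1}\times G^\tau_{-1}\times\mathbb R^N)$, while $\|f_V\|_{F_{\mathsf r}(T)}$ controls $\|f_V^0\|_{L^2(0,T;V_0)}$, the quantities $\|\alpha^j_V\|_{H^1(0,T)}$ $(j=1,\ldots,N)$, and $\|f_V^{\mathfrak H}\|$ in $L^2(0,T;\mathfrak H_V)\cap \mathcal C([0,T];V_{-1})\cap H^1(0,T;V_{-2})$. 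By Lemma~\ref{LEM:lift_vorti} the lifting $\omega_b=\mathsf L^V_2(g_n,g_\tau,\varGamma)$ lies in $V^{\rm b}_1(T)$ with a $T$-uniform bound $\|\omega_b\|_{V^{\rm b}_1(T)}\leqslant \mathbf c\,\|(g_n,g_\tau,\varGamma)\|_{G_2(T)}$; moreover, the lifting operators being restrictions of one operator and $(\partial_tg_n,\partial_tg_\tau,\varGamma')$ lying in $G_0(T)$, one has $\partial_t\omega_b=\mathsf L^V_0(\partial_tg_n,\partial_tg_\tau,\varGamma')\in V^{\rm b}_{-1}(T)$, so that, projecting onto the orthogonal summand $\mathfrak H_V$ of $V^{\rm b}_0$, the harmonic part $\omega_b^{\mathfrak H}$ satisfies precisely $\partial_t\omega_b^{\mathfrak H}\in L^2(0,T;\mathfrak H_V)\cap\mathcal C([0,T];V_{-1})\cap H^1(0,T;V_{-2})$ with norm $\leqslant\mathbf c\,\|(g_n,g_\tau,\varGamma)\|_{G_{\mathsf r}(T)}$. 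Since $V^{\rm b}_k\subset\bar V_k$ this already gives $\|\omega_b\|_{\bar V_1(T)}\leqslant\mathbf c\,\|(g_n,g_\tau,\varGamma)\|_{G_{\mathsf r}(T)}$.

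Next I would treat the two remaining ``explicit'' terms of \eqref{def_omega_regul_sol}. The finite sum $\tfrac1\nu\sum_j(\varGamma_j'-\alpha^j_V)\varOmega_j$ has scalar coefficients in $H^1(0,T)$ and fixed vectors $\varOmega_j\in\mathbb F_V\subset\bar V_2$, hence belongs to $H^1(0,T;\bar V_2)\hookrightarrow\bar V_1(T)$ with norm $\leqslant\mathbf c_{[\F,\nu]}\big(\|\varGamma\|_{H^2(0,T;\mathbb R^N)}+\sum_j\|\alpha^j_V\|_{H^1(0,T)}\big)$. For $\omega_{\mathfrak B}=\tfrac1\nu(\bar{\mathsf A}^V_2)^{-1}\!\big(f_V^{\mathfrak H}-\partial_t\omega_b^{\mathfrak H}\big)$ I would use that $\bar{\mathsf A}^V_2$ is an isometry from $\mathsf P_1\bar S_1$ onto $\bar V_0$ carrying the summand $\mathfrak B_V$ onto $\mathfrak H_V$, and that it is compatible with $\mathsf A^V_0\colon V_0\to V_{-2}$ in the vorticity chain (cf. Figs.~\ref{figS03}--\ref{figV13}), with $(\mathsf A^V_0)^{-1}$ an isometry $V_{-2}\to V_0$. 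Applying $(\bar{\mathsf A}^V_2)^{-1}$ pointwise to $f_V^{\mathfrak H}-\partial_t\omega_b^{\mathfrak H}\in L^2(0,T;\mathfrak H_V)$ gives $\omega_{\mathfrak B}\in L^2(0,T;\mathfrak B_V)\subset L^2(0,T;\bar V_2)$; differentiating in time and applying the extended inverse $(\mathsf A^V_0)^{-1}$ to $\partial_tf_V^{\mathfrak H}-\partial_t^2\omega_b^{\mathfrak H}\in L^2(0,T;V_{-2})$ gives $\partial_t\omega_{\mathfrak B}\in L^2(0,T;V_0)\subset L^2(0,T;\bar V_0)$. Thus $\omega_{\mathfrak B}\in L^2(0,T;\bar V_2)\cap H^1(0,T;\bar V_0)=\bar V_1(T)$ (the intermediate regularity $\mathcal C([0,T];\bar V_1)$ being automatic), with norm $\leqslant\mathbf c_{[\F,\nu]}\big(\|f_V\|_{F_{\mathsf r}(T)}+\|(g_n,g_\tau,\varGamma)\|_{G_{\mathsf r}(T)}\big)$; the $\mathcal C([0,T];V_{-1})$-component of $F_{\mathsf r}(T)$ serves only to give a meaning to the value at $t=0$ in \eqref{compa_initial}.

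It remains to estimate $\omega_0$. The right-hand side $f$ of the Cauchy problem \eqref{main_sys_regul} lies in $L^2(0,T;V_0)$ with $\|f\|_{L^2(0,T;V_0)}\leqslant\mathbf c_{[\F,\nu]}\big(\|f_V\|_{F_{\mathsf r}(T)}+\|(g_n,g_\tau,\varGamma)\|_{G_{\mathsf r}(T)}\big)$: the term $f_V^0$ by definition of $F_{\mathsf r}(T)$, the term $\tfrac1\nu(\mathsf A^V_0)^{-1}(\partial_tf_V^{\mathfrak H}-\partial_t^2\omega_b^{\mathfrak H})$ by the previous step and the isometry $(\mathsf A^V_0)^{-1}\colon V_{-2}\to V_0$, and the finite sum since $\varGamma_j''\in L^2(0,T)$, $(\alpha^j_V)'\in L^2(0,T)$ and $\varOmega_j\in V_0$. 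Evaluating \eqref{compa_initial} at $t=0$ and using the continuous embeddings into $\mathcal C([0,T];\,\cdot\,)$ together with the boundedness of $(\mathsf A^V_1)^{-1}\colon V_{-1}\to V_1$ and of the lifting operators yields $\|\omega^{\rm i}_0\|_{V_1}\leqslant\mathbf c_{[\F,\nu]}\big(\|\omega^{\rm i}\|_{\bar V_1}+\|(g_n,g_\tau,\varGamma)\|_{G_{\mathsf r}(T)}+\|f_V\|_{F_{\mathsf r}(T)}\big)$. Proposition~\ref{sol_space_O} (equivalently Proposition~\ref{exist:sol:stokes_omega}) at regularity level $k=1$ then gives $\|\omega_0\|_{V_1(T)}\leqslant\mathbf c_\nu\big(\|\omega^{\rm i}_0\|_{V_1}+\|f\|_{L^2(0,T;V_0)}\big)$ with a $T$-uniform constant, and $V_1(T)\subset\bar V_1(T)$ continuously. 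Summing the four estimates via the triangle inequality in $\bar V_1(T)$, and bounding a sum of first powers by $\sqrt3$ times the square root of the sum of squares, yields the asserted inequality with a constant depending only on $\F$ and $\nu$.

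The step I expect to be the main obstacle is the bookkeeping in the last two paragraphs: one must verify that the three regularities prescribed for $f_V^{\mathfrak H}$ in the definition of $F_{\mathsf r}(T)$ (and the matching ones inherited by $\omega_b^{\mathfrak H}$ from $G_{\mathsf r}(T)$) are transported, under the inverses of the isometries $\bar{\mathsf A}^V_2$, $\mathsf A^V_0$, $\mathsf A^V_1$ and under time differentiation, exactly onto $\bar V_1(T)$ and onto the hypotheses of Proposition~\ref{sol_space_O}, so that no regularity of the data beyond $G_{\mathsf r}(T)\times F_{\mathsf r}(T)$ is secretly used. Once the commuting diagrams of Figs.~\ref{figS03} and \ref{figV13} are invoked this is routine, and the remaining estimates are immediate consequences of the $T$-uniform bounds already established for the homogeneous Stokes problem and for the lifting operators.
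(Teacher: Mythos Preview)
Your proposal is correct and follows essentially the same approach as the paper, which merely states that the corollary follows ``From Proposition~\ref{exist:sol:stokes_omega} and Equality \eqref{def_omega_regul_sol}''. You supply the details the paper omits---estimating each of the four summands in \eqref{def_omega_regul_sol} separately, with the Stokes estimate of Proposition~\ref{sol_space_O} handling $\omega_0$ and the isometries $\bar{\mathsf A}^V_2$, $(\mathsf A^V_0)^{-1}$, $(\mathsf A^V_1)^{-1}$ together with Lemma~\ref{LEM:lift_vorti} handling the rest---but the underlying strategy is identical.
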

\subsection{Regular Navier-Stokes vorticity solutions}
\subsubsection*{Setting up the system of equations}
To begin with, let us recall the expression \eqref{def_lambda_0V} of the advection term in the NS equation (strong vorticity version). 
For every $\omega\in \bar V_1$ and every Kirchhoff potential $\varphi\in\mathfrak H_K^2$:
$$\varLambda^V_0(\omega,\varphi)=-(\omega_{\mathsf r}(\nabla^\perp\psi+\nabla\varphi),\nabla\mathsf Q_1\cdot)_{\mathbf L^2(\F)},$$
where $\omega_{\mathsf r}$ is the orthogonal projection of $\omega$ on $H^1_V$ (i.e. $\omega_{\mathsf r}$ is the regular part of $\omega$; 
see Remark~\ref{rem_singular_V1}) and the 
stream function $\psi=(-\bar\Delta_1)^{-1}\omega$ belongs to $\bar S_2$ (see Fig.~\ref{figV13}).
Assuming 
now more regularity, namely that $\omega$ is in $\bar V_2$ and $\varphi$ in $\mathfrak H^3_k$, an integration by parts yields:
$$\langle\varLambda^V_0(\omega,\varphi),\theta\rangle_{V_{-1},V_1}=
-\int_\Sigma \omega_{\mathsf r}\frac{\partial\varphi}{\partial n}\mathsf Q_1\theta\ds+\big((\nabla^\perp\psi+\nabla\varphi)
\cdot \nabla\omega_{\mathsf r},\mathsf Q_1\theta)_{L^2(\F)}\forallt \theta\in V_1,$$
which leads us to define:
%
\begin{definition}
For every vorticity $\omega$ in $\bar V_2$, we denote by  $\psi=\bar\Delta^{-1}_2\omega$ the corresponding stream function and by 
$(\omega_{\mathsf r},\mathsf Q_1\cdot)_{L^2(\F)}$  the orthogonal projection of $\omega$ on $H^2_V$ (i.e. $\omega_{\mathsf r}$ is the regular part of $\omega$). For every Kirchhoff potential $\varphi$ in $\mathfrak H^2_K$, 
we define:
\begin{subequations}
\begin{equation}
\label{def_lambda_R}
\gamma_j(\omega,\varphi)=-\int_{\Sigma_j^-} \omega_{\mathsf r}\frac{\partial\varphi}{\partial n}\ds\quad(j=1,\ldots,N)
\quad\text{and}\quad
\varLambda^V_{\mathsf r}(\omega,\varphi)=(\nabla^\perp\psi+\nabla\varphi)
\cdot \nabla\omega_{\mathsf r}\in L^2(\F),
\end{equation}
and the linear form on $\bar V_0$:
\vspace{-3mm}
\begin{equation}
\bar\varLambda_0^V(\omega,\varphi)=\sum_{j=1}^N\gamma_j(\omega,\varphi)\zeta_j+(\varLambda^V_{\mathsf r}(\omega,\varphi),\mathsf Q_1\cdot)_{L^2(\F)}.
\end{equation}
\end{subequations}
\end{definition}
%
%
Let boundary data $(g_n,g_\tau,\varGamma)$ and a source term $f_V$ be given as in the preceding subsection. Taking now 
into account the nonlinear advection
term, System~\eqref{main_vorti_regul_sys} can be rewritten as follows:
\vspace{-1mm}
\begin{subequations}
\label{main_NS_vorticity3}
\begin{alignat}{3}
\label{main_vorti_re_NS}
\partial_t\omega_0+\nu\mathsf A^V_2\omega_0&=f_V^0-\mathsf P\varLambda^V_{\mathsf r}(\omega,\varphi)-\partial_t\omega_{\mathfrak B}-\sum_{j=1}^N \beta_j'\varOmega_j&\qquad&\text{in }\F_T\\[-2mm]
\label{eq:second_regukl_NS}
\nu\bar{\mathsf A}^V_2\omega_{\mathfrak B}&=f_V^{\mathfrak H}-\mathsf P^\perp\varLambda_{\mathsf r}^V(\omega,\varphi)-\partial_t\omega_b^{\mathfrak H}&&\text{in }\F_T\\
\label{eq:third_regukl_NS}
\nu\beta_j&=\varGamma'_j-\alpha_V^j+\gamma_j(\omega,\varphi)&&\text{ in }(0,T)\text{ for every }j=1,\ldots,N.
\end{alignat}
\end{subequations}
This formulation allows recovering the formulation  \eqref{eq:main_vorti:1} given at the beginning of the paper and that can be rewritten with the notation 
of this Section:
\begin{subequations}
\label{eq:main_vorti:suite}
\begin{alignat}{3}
\label{eq:main:vorti:suite}
\partial_t\omega_{\mathsf r}+u\cdot\nabla\omega_{\mathsf r}-\nu\Delta\omega_{\mathsf r}&=f_V&\quad&\text{in }\F_T\\ 
\label{eq:main:flux:suite}
-\varGamma'_j+\int_{\Sigma^-_k}\omega_{\mathsf r}\,g_n\ds-\nu\int_{\Sigma_k^-}\frac{\partial\omega_{\mathsf r}}{\partial n}\ds&=-\alpha^j_V&&\text{on }(0,T),\quad j=1,\ldots,N,
\end{alignat}
\end{subequations}
with $u=\nabla^\perp\psi+\nabla\varphi$ and $\psi=\bar\Delta_2^{-1}\omega$.
Thus, decomposing $\omega$ in $\bar V_2$ as in \eqref{def_omega_1}, the regular part of the vorticity $\omega_{\mathsf r}$ is given by:
\vspace{-3mm}
$$\omega_{\mathsf r}=\omega_0+\omega_{\mathfrak B}+\sum_{j=1}^N\beta_j\varOmega_j+\omega_b^{\mathfrak H}\quad\text{and}\quad
\omega=\omega_{\mathsf r}+\sum_{j= 1}^N\varGamma_j\zeta_j.$$
Summing \eqref{main_vorti_re_NS} and \eqref{eq:second_regukl_NS} gives \eqref{eq:main:vorti:suite} and \eqref{eq:main:flux:suite} is a rephrasing 
of \eqref{eq:third_regukl_NS}. Indeed, since $\omega_0$, $\omega_{\mathfrak B}$ and $\omega_b^{\mathfrak H}$ have zero mean flux through the inner boundaries, 
we have  for every $k=1,\ldots,N$:
$$\int_{\Sigma^-_k}\frac{\partial\omega_{\mathsf r}}{\partial n}\ds=\sum_{j= 1}^N\beta_j\int_{\Sigma^-_k}\frac{\partial\varOmega_j}{\partial n}\ds=-\beta_k,$$
according to \eqref{def_omegaj} and the second point of Remark~\ref{rem_fluxA}.
\par
It is worthwhile comparing  also the formulations \eqref{main_NS_vorticity3} (or  equivalently \eqref{eq:main_vorti:suite}) with the results of Maekawa in \cite{Maekawa:2013aa}. Therein, focusing on Section 2, only homogeneous boundary condition are considered and no  function space is specified 
for the vorticity. The author claims that the vorticity has to satisfy the {\it integral condition}:
\begin{equation}
\label{maekawa}
\frac{\partial}{\partial n}(-\Delta_D)^{-1}\omega+\sum_{j=1}^N\big((-\Delta_D)^{-1}\nabla^\perp\omega,\nabla^\perp\tilde q_j\big)_{L^2}
\frac{\partial \tilde q_j}{\partial n}=0\quad\text{ on }\Sigma,\end{equation}
where the functions $\tilde q_j$ are a free family in $\mathbb F_S$ chosen in such a way that $\tilde q_j=c_j\delta_i^j$ on $\Sigma^-_i$ with $c_j$ 
a normalizing real constant ensuring that $\|\nabla\tilde q_j\|_{L^2(\F)}=1$. In equality \eqref{maekawa}, $(-\Delta_D)^{-1}$ obviously stands for the inverse 
of the Laplacian operator 
with homogeneous Dirichlet boundary conditions on $\Sigma$. It seems  however that the family $\{\tilde q_j,\,j=1,\ldots,N\}$ should be replaced by an orthonormal family (such 
as the one  we 
have denoted by 
$\{\hat\xi_j,\,j=1,\ldots,N\}$). This remark holds earlier as well, in \cite[Theorem 2.1]{Maekawa:2013aa}, the proof of which amounts to quote 
\cite[Theorem 3.20]{Kozono:2009aa} where the family $\{\tilde q_j,\,j=1,\ldots,N\}$ (with different notation though) is indeed  an orthonormal family. Besides this observation, 
the condition \eqref{maekawa}  can be rephrased in a simpler way:
%
\begin{prop}
\label{prop:maekawa_A1}
Assuming that $\omega$ belongs to $H^1(\F)$, condition \eqref{maekawa} (replacing the functions $\tilde q_j$ by the functions $\hat\xi_j$) is 
equivalent to the condition:
\begin{equation}
\label{maekawa_2}
\omega\in V_0.
\end{equation}
\end{prop}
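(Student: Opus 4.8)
The plan is to unpack Maekawa's integral condition \eqref{maekawa} and show it is nothing but the statement that $\omega$ is $L^2$-orthogonal to the harmonic functions with zero inner flux, that is $\omega\in V_0$. First I would recall the relevant ingredients from Section~\ref{SEC:main_spaces}: the functions $\hat\xi_j$ form an orthonormal (in $S_0$) family of harmonic functions spanning $\mathbb F_S$, they satisfy the flux conditions \eqref{flux_cond_xi}, and $\mathfrak H$ is precisely the set of harmonic $h\in L^2(\F)$ with $(h,\Delta\chi_j)_{L^2(\F)}=0$ for all $j$. Since for $\omega\in H^1(\F)$ the quantities in \eqref{maekawa} are classical, I would introduce $\psi_0=(-\Delta_D)^{-1}\omega\in H^1_0(\F)\cap H^2(\F)$, so $\Delta\psi_0=\omega$, and observe that $(-\Delta_D)^{-1}\nabla^\perp\omega$ with the pairing against $\nabla^\perp\hat\xi_j$ can be integrated by parts to yield $\int_\Sigma (\partial\psi_0/\partial n)\,\hat\xi_j\,ds = (\nabla\psi_0,\nabla\hat\xi_j)_{\mathbf L^2(\F)}$ (using harmonicity of $\hat\xi_j$), identifying that coefficient with the one appearing implicitly in the proof of Lemma~\ref{first_iso}.

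The key computation then mirrors the proof of Lemma~\ref{first_iso} almost verbatim: condition \eqref{maekawa} asserts that
$$\frac{\partial\psi_0}{\partial n}\bigg|_\Sigma = -\sum_{j=1}^N\left(\int_\Sigma \frac{\partial\psi_0}{\partial n}\,\hat\xi_j\,ds\right)\frac{\partial\hat\xi_j}{\partial n}\qquad\text{in }H^{-\frac12}(\Sigma)$$
(up to a sign bookkeeping to be checked against Maekawa's orientation conventions). From there, for every harmonic $h\in H^1(\F)$ one forms $(\omega,h)_{L^2(\F)}=(\Delta\psi_0,h)_{L^2(\F)}=\int_\Sigma(\partial\psi_0/\partial n)\,h\,ds$, substitutes the boundary identity above, and notes that the correction term cancels exactly because of how the $\hat\xi_j$ were chosen orthonormal; the computation is the same as the displayed chain of equalities in the proof of Lemma~\ref{first_iso} and gives $(\omega,h)_{L^2(\F)}=0$ for every $h\in\mathfrak H^1$. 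Since traces of $H^1$ harmonic functions are dense in $H^{1/2}(\Sigma)$ and $\mathfrak H^1$ is dense in $\mathfrak H$, this is equivalent to $\omega\perp\mathfrak H$ in $L^2(\F)$, i.e. $\omega\in V_0$ by \eqref{eq:decomp_L2}. Conversely, if $\omega\in V_0\cap H^1(\F)$, then running the same identity backwards shows $\partial\psi_0/\partial n$ has the stated form, which is precisely \eqref{maekawa}; here I would also invoke Lemma~\ref{first_iso} to reconstruct the stream function in $S_1$ and make the flux terms match.

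The main obstacle I anticipate is purely bookkeeping: reconciling Maekawa's normalization (his $\tilde q_j$ with $\|\nabla\tilde q_j\|_{L^2(\F)}=1$, replaced as the paper insists by the genuinely orthonormal $\hat\xi_j$) and his sign conventions for $\nabla^\perp$ and the normal, with the conventions fixed in Section~\ref{SEC:geometry}. One must be careful that $(-\Delta_D)^{-1}\nabla^\perp\omega$ is a \emph{vector} field and that the inner product against $\nabla^\perp\hat\xi_j$ collapses, after integration by parts and use of $\Delta\hat\xi_j=0$, to exactly $(\nabla\psi_0,\nabla\hat\xi_j)_{\mathbf L^2(\F)}=\int_\Sigma(\partial\psi_0/\partial n)\hat\xi_j\,ds$ — there is a genuine risk of an off-by-sign or a spurious boundary term if one is sloppy about which boundary component carries the nonzero trace of $\hat\xi_j$. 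Once the dictionary between \eqref{maekawa} and the boundary identity of Lemma~\ref{first_iso} is established, the equivalence with \eqref{maekawa_2} is immediate and the proof is short.
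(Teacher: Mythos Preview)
Your plan is in the right spirit and essentially coincides with the paper's approach, but the packaging differs and one of your intermediate identities is incorrect. The equality $(\nabla\psi_0,\nabla\hat\xi_j)_{\mathbf L^2(\F)} = \int_\Sigma(\partial\psi_0/\partial n)\hat\xi_j\,ds$ is false: since $\psi_0\in H^1_0(\F)$ and $\hat\xi_j\in\mathbb F_S$, the orthogonal decomposition \eqref{decom_S0_1} forces $(\nabla\psi_0,\nabla\hat\xi_j)_{\mathbf L^2(\F)}=0$, whereas integrating by parts the other way yields $\int_\Sigma(\partial\psi_0/\partial n)\hat\xi_j\,ds = -\int_\F\omega\,\hat\xi_j\,dx$, which does not vanish in general. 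So your path to the coefficients $c_j=((-\Delta_D)^{-1}\nabla^\perp\omega,\nabla^\perp\hat\xi_j)_{L^2}$ breaks as written.

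The paper avoids this by not identifying $c_j$ in the forward direction at all: whatever the $c_j$ are, the function $\psi=(-\Delta_D)^{-1}\omega+\sum_jc_j\hat\xi_j$ belongs to $\bar S_1$ and satisfies $-\Delta\psi=\omega$, and condition \eqref{maekawa} is precisely $\partial\psi/\partial n=0$ on $\Sigma$, i.e.\ $\psi\in S_1$, hence $\omega\in V_0$ by Lemma~\ref{first_iso}. For the converse, the key step you are missing is that when $\omega\in V_0$ and $\tilde\psi\in S_1$ satisfies $-\Delta\tilde\psi=\omega$, the vector field $(-\Delta_D)^{-1}\nabla^\perp\omega$ equals $\nabla^\perp\tilde\psi$ (because $\nabla^\perp\tilde\psi$ vanishes on $\Sigma$ and solves the same Dirichlet problem). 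This gives $c_j=(\nabla^\perp\tilde\psi,\nabla^\perp\hat\xi_j)_{\mathbf L^2(\F)}=(\tilde\psi,\hat\xi_j)_{S_0}$, whence $\psi=\tilde\psi$ by the orthogonal decomposition \eqref{decom_S0_1} (and here orthonormality of the $\hat\xi_j$ is indeed essential, as the paper stresses). Your proposal to ``run the identity backwards'' and invoke Lemma~\ref{first_iso} is morally this, but without the identification $(-\Delta_D)^{-1}\nabla^\perp\omega=\nabla^\perp\tilde\psi$ you cannot match the specific coefficients appearing in \eqref{maekawa}.
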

\begin{proof}
Let $\omega$ be smooth in $\F$ and define $\psi$ the stream function such that:
\begin{equation}
\label{def_Stream_maeka}
\psi=(-\Delta_D)^{-1}\omega+\sum_{j=1}^N\big((-\Delta_D)^{-1}\nabla^\perp\omega,\nabla^\perp\hat\xi_j\big)_{L^2}
\hat\xi_j\qquad\text{in }\F.
\end{equation}
Then $\psi$ belongs in particular to $\bar S_1$ and $-\Delta\psi=\omega$ in $\F$. Condition \eqref{maekawa} means that $\psi$ is in $S_1$ 
and hence that $\omega$ is in $V_0$. Reciprocally, let $\omega$ be in $V_0\cap H^1(\F)$ and denote by $\tilde\psi$ the stream function in $S_1$ 
such that $-\Delta\tilde\psi=\omega$. In that case, $\nabla^\perp\tilde\psi=(-\Delta_D)^{-1}\nabla^\perp\omega$ in $\F$. Decomposing 
$\tilde\psi$ according to the orthogonal decomposition of the space $S_0=H^1_0(\F)\sumperp \mathbb F_S$, we obtain exactly the right hand side of \eqref{def_Stream_maeka} (and indeed  the family $\{\hat\xi_j,\,j=1,\ldots,N\}$ has to 
be an orthonormal family at this stage). 
Therefore $\tilde\psi=\psi$ and \eqref{maekawa} holds.
\end{proof}
%
Further in \cite{Maekawa:2013aa}, the dynamics for the vorticity is claimed to be governed by the system of equations:
\begin{subequations}
\label{maekawa_sys}
\begin{equation}
\label{maekawa_3}
\partial_t\omega-\nu\Delta\omega+u\cdot\nabla\omega=0\quad\text{ in }\F_T,
\end{equation}
with $u=\nabla^\perp\psi$, the stream function  $\psi$ being given by the Biot-Savart law \eqref{def_Stream_maeka}. The classical evolution equation \eqref{maekawa_3} is supplemented with an 
initial condition:
\begin{equation}
\omega(0)=\omega_0\in V_0
\end{equation}
and a {\it boundary condition} on $\Sigma_T$ (once again it seems that the functions $\tilde q_j$ in Maekawa's paper have to be replaced 
by the functions $\hat\xi_j$):
\begin{equation}
\label{maekawa_4}
\nu\bigg\{\frac{\partial\omega}{\partial n}-\Lambda_{DN}\omega+\sum_{j=1}^N(\nabla\omega,\nabla\hat\xi_j)_{\mathbf L^2(\F)}\frac{\partial\hat\xi_j}{\partial n}
\bigg\}
=-\frac{\partial}{\partial n}\big(-\Delta_D\big)^{-1}(u\cdot\nabla\omega)+\sum_{j=1}^N(\omega u,\nabla\hat\xi_j)_{\mathbf L^2(\F)}\frac{\partial\hat\xi_j}{\partial n}.
\end{equation}
\end{subequations}
Notice that in the case of a simply connected domain, this condition was already mentioned by Weinan and Jian-Guo in \cite{E:1996aa}, borrowed
from an earlier article of Anderson \cite{Anderson:1989aa}.
 \begin{prop}
 \label{prop:maekawa_A2}
 Condition \eqref{maekawa_4} is equivalent  for every $\omega$ solving \eqref{maekawa_3} to:
 $$\partial_t\omega(t)\in V_0\qquad\text{ for a.e. }t\in(0,T).$$
 \end{prop}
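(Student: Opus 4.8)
The plan is to interpret Maekawa's boundary condition \eqref{maekawa_4} as precisely the statement that the time-derivative of the harmonic part of $\omega$ vanishes, or equivalently that $\partial_t\omega$ stays in $V_0$. The strategy mirrors Proposition~\ref{prop:maekawa_A1}: whereas \eqref{maekawa} encoded the constraint $\omega(t)\in V_0$ (i.e. that the stream function reconstructed by the Biot-Savart formula \eqref{def_Stream_maeka} is genuinely in $S_1$, not merely in $\bar S_1$), the condition \eqref{maekawa_4} is the time-differentiated version of that same integral constraint. So the first step is to recognize that, for $\omega$ solving the parabolic equation \eqref{maekawa_3}, one has $\partial_t\omega = \nu\Delta\omega - u\cdot\nabla\omega$, and then to rewrite each term appearing in \eqref{maekawa_4} in terms of this quantity.

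First I would recall from the earlier sections that membership of a function $v\in L^2(\F)$ (or $H^1(\F)$) in $V_0=\mathfrak H^\perp$ is, by the discussion around Lemma~\ref{first_iso} and \eqref{LEM:ortho_L2}, equivalent to $v$ being $L^2$-orthogonal to all harmonic functions with zero mean flux; and, using that every element of $H^{1/2}(\Sigma)$ is the trace of a harmonic $H^1$ function together with the correction by the $\hat\xi_j$'s as in the proof of Lemma~\ref{first_iso}, this in turn is equivalent to the vanishing of a certain Dirichlet-to-Neumann-type boundary expression applied to $(-\Delta_D)^{-1}v$. Concretely, $v\in V_0$ iff
$$\frac{\partial}{\partial n}(-\Delta_D)^{-1}v - \Lambda_{DN}\big[(-\Delta_D)^{-1}v\big] + \sum_{j=1}^N\big(\nabla(-\Delta_D)^{-1}v,\nabla\hat\xi_j\big)_{\mathbf L^2(\F)}\frac{\partial\hat\xi_j}{\partial n}=0\quad\text{on }\Sigma,$$
which is exactly the structure of the left-hand side of \eqref{maekawa_4} once one notes $\Lambda_{DN}\omega$ there plays the role of $\Lambda_{DN}$ applied after inverting the Laplacian (a point already implicit in Maekawa's formulation and cleaned up by Proposition~\ref{prop:maekawa_A1}). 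The second step is then to identify the right-hand side of \eqref{maekawa_4}: both $-\frac{\partial}{\partial n}(-\Delta_D)^{-1}(u\cdot\nabla\omega)$ and the sum $\sum_j(\omega u,\nabla\hat\xi_j)\frac{\partial\hat\xi_j}{\partial n}$ assemble, via an integration by parts using $\nabla\cdot u=0$ so that $\nabla\cdot(\omega u)=u\cdot\nabla\omega$, into exactly the same DtN-type boundary expression evaluated on $(-\Delta_D)^{-1}(u\cdot\nabla\omega)$ — i.e. the right-hand side is the ``$V_0$-obstruction functional'' applied to $-u\cdot\nabla\omega/\nu$ rescaled appropriately. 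Matching the two sides, \eqref{maekawa_4} becomes: the $V_0$-obstruction functional applied to $\nu\Delta\omega - u\cdot\nabla\omega = \partial_t\omega$ vanishes, i.e. $\partial_t\omega(t)\in V_0$ for a.e.\ $t$.

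The computation I would carry out in detail is the bookkeeping in the previous step: showing that the two seemingly different boundary expressions on the right-hand side of \eqref{maekawa_4} combine to the single obstruction functional evaluated at $u\cdot\nabla\omega$, and that the viscous terms on the left-hand side combine to the obstruction functional at $\nu\Delta\omega$ (noting $\frac{\partial}{\partial n}(-\Delta_D)^{-1}(-\Delta\omega)$ is not simply $\partial\omega/\partial n$ because $(-\Delta_D)^{-1}(-\Delta\omega)\neq\omega$ unless $\omega\in S_1$, which is precisely the content of \eqref{maekawa} — so a correction term appears and it is exactly the $\Lambda_{DN}\omega$ plus $\hat\xi_j$ sum on the left). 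This algebraic reconciliation, keeping track of signs and of which function the boundary operators act on, is the main obstacle; conceptually the result is immediate once one sees that \eqref{maekawa_4} is just $\frac{d}{dt}\eqref{maekawa}$, i.e. $\partial_t\big(\mathsf P^\perp\omega\big)=0$, but making this rigorous requires care because $\mathsf P^\perp\omega$ lives only in $\mathfrak H\subset L^2(\F)$ and its normal trace must be handled in $H^{-3/2}(\Sigma)$ as in Remark~\ref{rem_fluxA}. Once the identification is done, the equivalence with $\partial_t\omega(t)\in V_0$ for a.e.\ $t\in(0,T)$ follows from the characterization of $V_0$ recalled above, together with the observation that $\partial_t\omega$ indeed has enough regularity (granted by the parabolic smoothing in the relevant solution space) for the boundary expressions to make sense.
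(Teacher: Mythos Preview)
Your conceptual strategy is correct and is essentially dual to the paper's: you want to encode $\partial_t\omega\in V_0=\mathfrak H^\perp$ via a pointwise boundary ``obstruction functional'', whereas the paper tests the equation $\partial_t\omega=\nu\Delta\omega-u\cdot\nabla\omega$ directly against an arbitrary element $h_0\in\mathfrak H$ (built from a generic harmonic $h$ by the $\hat\xi_j$-correction of Lemma~\ref{first_iso}) and recovers, via Green's formula, the two sides of \eqref{maekawa_4} as $\int_\Sigma(\cdot)\,h\,ds$. The paper's route is shorter because it never writes down a pointwise boundary characterization of $V_0$; it works with the $L^2$-orthogonality characterization from the start, and the identification of \eqref{maekawa_4} with $(\partial_t\omega,h_0)_{L^2(\F)}=0$ falls out in two integrations by parts.

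There is, however, a genuine error in your explicit boundary characterization of $V_0$. You claim $v\in V_0$ iff
$$\frac{\partial}{\partial n}(-\Delta_D)^{-1}v-\Lambda_{DN}\big[(-\Delta_D)^{-1}v\big]+\sum_{j}\big(\nabla(-\Delta_D)^{-1}v,\nabla\hat\xi_j\big)_{\mathbf L^2(\F)}\frac{\partial\hat\xi_j}{\partial n}=0\quad\text{on }\Sigma,$$
but both the $\Lambda_{DN}$ term and the $\hat\xi_j$-sum vanish identically: since $(-\Delta_D)^{-1}v\in H^1_0(\F)$, its trace is zero (so $\Lambda_{DN}$ of it is zero), and for the same reason $(\nabla(-\Delta_D)^{-1}v,\nabla\hat\xi_j)_{\mathbf L^2(\F)}=0$ because $\hat\xi_j$ is harmonic. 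Your displayed condition therefore collapses to $\partial_n(-\Delta_D)^{-1}v=0$, i.e.\ $v\in\Delta H^2_0(\F)$, which is \emph{strictly smaller} than $V_0$ when $\F$ is multiply connected. The correction terms that actually appear in \eqref{maekawa} involve $\big((-\Delta_D)^{-1}\nabla^\perp v,\nabla^\perp\hat\xi_j\big)_{\mathbf L^2(\F)}$ with the \emph{vector} Dirichlet inverse, and this is not equal to $\big(\nabla(-\Delta_D)^{-1}v,\nabla\hat\xi_j\big)_{\mathbf L^2(\F)}$ in general. If you repair this, the bookkeeping you outline does go through, but it is noticeably more laborious than the paper's direct pairing with $h_0$, which also makes the role of the zero-flux condition on $\mathfrak H$ (and hence the necessity of the $\hat\xi_j$-corrections) transparent.
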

 \begin{proof}
 As already mentioned in the proof of Lemma~\ref{first_iso}, for every function $h$ harmonic in $\F$, the function
$$h_0=h-\sum_{j=1}^N \left(\int_{\Sigma}\frac{\partial \hat\xi_j}{\partial n}h\ds\right) \hat\xi_j,$$
is in $\mathfrak H$.
 For a.e. $t\in (0,T)$, let us form the scalar product in $L^2(\F)$ of \eqref{maekawa_3} with $h_0$. Integrating by parts, we obtain on the one 
 hand:
 $$\nu(\Delta \omega,h_0)_{L^2(\F)}=\nu\int_\Sigma\bigg\{\frac{\partial\omega}{\partial n}-\Lambda_{DN}\omega+\sum_{j=1}^N(\nabla\omega,\nabla\hat\xi_j)_{\mathbf L^2(\F)}\frac{\partial\hat\xi_j}{\partial n}\bigg\} h\ds,$$
 and on the other hand, considering the advection term:
 $$(u\cdot\nabla\omega,h_0)_{L^2(\F)}=\int_\Sigma\bigg\{-\frac{\partial}{\partial n}\big(-\Delta_D\big)^{-1}(u\cdot\nabla\omega)+\sum_{j=1}^N(\omega u,\nabla\hat\xi_j)_{\mathbf L^2(\F)}\frac{\partial\hat\xi_j}{\partial n}\bigg\}h\ds.$$
 This shows that \eqref{maekawa_4} is indeed equivalent to $(\partial_t\omega(t),h_0)_{L^2(\F)}=0$ for a.e. $t\in (0,T)$ and completes the 
 proof.
 \end{proof}
%
Summarizing Propositions~\ref{prop:maekawa_A1} and \ref{prop:maekawa_A2}, Maekawa's System \eqref{maekawa_sys} turns out to be
 equivalent to:
 \begin{subequations}
 \label{sys_faux}
 \begin{alignat}{3}
 \partial_t\omega-\nu\Delta\omega+u\cdot\nabla\omega&=0&\quad&\text{ in }\F_T,\\
 \omega(0)&=\omega_0\in V_0&&\text{ in }\F,\\
 \partial_t\omega&\in V_0&&\text{ on }(0,T).
 \end{alignat}
 \end{subequations}
This seems to contradict the claim of \cite[Theorem 2.3]{Maekawa:2013aa} (namely, the equivalence of System \eqref{maekawa_sys} with the classical 
NS equations in primitive variables) in a multiply connected domain 
because Lamb's fluxes conditions 
\eqref{eq:third_regukl_NS} (see \cite[Art. 328a]{Lamb:1993aa}) on the inner boundaries:
 $$\int_{\Sigma^-_j}\frac{\partial\omega}{\partial n}\ds=0\qquad\text{ for every }j=1,\ldots,N,$$
 are missing and cannot be figured out from System \eqref{sys_faux} (this is explained in \cite[Remark 3.2]{Guermond:1997aa}). Notice however that the equivalence holds in the particular case 
 of a simply connected fluid domain. 
 %
%
\subsubsection*{Existence and uniqueness of a global solution}
We shall now study the existence of solutions to System \eqref{main_NS_vorticity3} (or  equivalently \eqref{eq:main_vorti:suite}).  
For simplicity purpose, we restrict our analysis to the case where there is no source term and  to homogeneous boundary conditions for 
the velocity field. 
 The system we consider reads therefore as follows:
 \begin{subequations}
\label{main_NS_vorticity4}
\begin{alignat}{3}
\label{main_vorti_re_NS1}
\partial_t\omega_0+\nu\mathsf A^V_2\omega_0&=-\mathsf P\varLambda^V_{\mathsf r}(\omega)-\partial_t\omega_{\mathfrak B}&\qquad&\text{in }\F_T\\
\label{eq:second_regukl_NS1}
\nu\bar{\mathsf A}^V_2\omega_{\mathfrak B}&=-\mathsf P^\perp \varLambda^V_{\mathsf r}(\omega)&&\text{in }\F_T,\\
\omega(0)&=\omega^{\rm i}&&\text{in }\F,
\end{alignat}
\end{subequations}
 with $\omega^{\rm i}\in V_1$, $\omega=\omega_0+\omega_{\mathfrak B}\in V_2\oplus \mathfrak B_V^2$, $\varLambda^V_{\mathsf r}(\omega)=\nabla^\perp\psi\cdot\nabla\omega$ and $\psi=\bar\Delta_2^{-1}\omega$. System~\eqref{main_NS_vorticity4} can be rephrased as a more standard Cauchy problem whose unknown is $\omega_0$ (the coupling condition \eqref{eq:second_regukl_NS1}
  cannot be got rid of though since $\omega$ still appears in the nonlinear advection term):
  \begin{subequations}
\label{main_NS_vorticity4_cauchy}
\begin{alignat}{3}
\label{main_vorti_re_NS1_cauchy}
\partial_t\omega_0+\nu\mathsf A^V_2\omega_0&=-\mathsf P\varLambda^V_{\mathsf r}(\omega)+\frac{1}{\nu}
\big(\mathsf A^V_0\big)^{-1}\partial_t\big(
\mathsf P^\perp \varLambda^V_{\mathsf r}(\omega)\big)&\qquad&\text{in }\F_T\\
%
%
\label{init_omegai0}
\omega_0(0)&=\omega^{\rm i}+\frac{1}{\nu}\big(\mathsf A_1^V\big)^{-1}\mathsf P^\perp \varLambda^V_{\mathsf r}(\omega^{\rm i})&&\text{in }\F.
\end{alignat}
\end{subequations}
 The solution $\omega$ to System~\eqref{main_NS_vorticity4} will be looked for in the 
 space:
$$
 \Omega(T)=\big[L^2(0,T;H^2_V)\cap H^1(0,T;V_0)\big]\cap\big[\mathcal C([0,T];V_1)\cap \mathcal C^1([0,T];V_{-1})\big].
$$
%
\begin{theorem}
\label{meka_F}
For every positive time  $T$ and every initial data $\omega^{\rm i}\in V_1$, 
System~\eqref{main_NS_vorticity4} admits a unique solution $\omega$  in $\Omega(T)$. Moreover
this solution satisfies the exponential decay estimate:
\begin{equation}
\label{palinstrophy}
\|\omega(t)\|_{V_1}\leqslant \mathbf c_{[\F,\nu,\omega^{\rm i}]}e^{-\frac12\nu\lambda_\F t}\qquad\forallt t\in (0,T),
\end{equation}
where we emphasis  that the constant $\mathbf c_{[\F,\nu,\omega^{\rm i}]}$ does not depend on $T$.
\end{theorem}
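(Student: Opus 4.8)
The plan is to treat the coupled parabolic--elliptic system \eqref{main_NS_vorticity4} (equivalently its Cauchy reformulation \eqref{main_NS_vorticity4_cauchy}) by a fixed point argument for local existence, and then to establish a \emph{global} a priori bound on the palinstrophy $\|\omega(t)\|_{V_1}$ that both rules out blow-up and yields the exponential decay \eqref{palinstrophy}. The key point I want to exploit is that, at this level of regularity, the vorticity equation splits into a parabolic equation for $\omega_0$ coupled with an elliptic equation recovering the lower-order part $\omega_{\mathfrak B}$, and that the two-dimensional enstrophy estimate (already available from the weak theory) makes the palinstrophy estimate globally closable.

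First I would set up local existence. Observe that $\omega_{\mathfrak B}$ is recovered from $\omega$ by the elliptic relation \eqref{eq:second_regukl_NS1}, $\omega_{\mathfrak B}=-\tfrac1\nu(\bar{\mathsf A}^V_2)^{-1}\mathsf P^\perp\varLambda^V_{\mathsf r}(\omega)$, which is an operation of lower order in $\omega$; in particular the right-hand side of \eqref{main_vorti_re_NS1_cauchy} makes sense and the effective initial datum in \eqref{init_omegai0} lies in $V_1$. I would then apply the Banach fixed point theorem on a ball of $\Omega(T_0)$ for $T_0$ small, exactly as in the proof of Theorem~\ref{THEO:EXIST_NS_STRONG}, combining the smoothing estimate of Proposition~\ref{exist:sol:stokes_omega} (resp. Proposition~\ref{sol_space_O}) for the $\omega$-Stokes operator $\mathsf A^V_2$ with nonlinear estimates for $\varLambda^V_{\mathsf r}(\omega)=\nabla^\perp\psi\cdot\nabla\omega$, $\psi=\bar\Delta_2^{-1}\omega$. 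The latter are obtained in $V_0=L^2(\F)$ by Hölder and the planar interpolation inequality $\|v\|_{\mathbf L^4(\F)}\le\mathbf c_\F\|v\|_{\mathbf L^2(\F)}^{1/2}\|v\|_{\mathbf H^1(\F)}^{1/2}$, together with the equivalences $\|\nabla^\perp\psi\|_{\mathbf L^2(\F)}\sim\|\omega\|_{V_{-1}}$, $\|\nabla^\perp\psi\|_{\mathbf H^1(\F)}\lesssim\|\omega\|_{V_0}$, $\|\omega\|_{V_1}\sim\|\omega\|_{H^1(\F)}$ and $\|\omega_0\|_{V_2}\sim\|\omega_0\|_{H^2(\F)}$ valid under the assumption $\Sigma\in\mathcal C^{3,1}$; a Hölder-in-time factor $T^{\alpha}$ is extracted exactly as in \eqref{estim_varlambda_strong_2}--\eqref{estim_varlambda_strong_3}, and a difference estimate of the same kind gives the contraction and, on the interval of existence, uniqueness. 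The standard continuation argument then reduces the global statement to an a priori bound on the $\mathcal C([0,T];V_1)\cap L^2(0,T;\bar V_2)$-norm of $\omega$ on the maximal interval, the remaining components of the $\Omega(T)$-norm being recovered from the equation.

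The heart of the proof is that a priori bound. Since a regular vorticity solution is in particular a strong (hence weak) solution in the sense of Definition~\ref{defi:Navier_Stokes_strong_vorticity}, Corollary~\ref{energy_decay_weak} applies with vanishing source term: the enstrophy decays, $\|\omega(t)\|_{V_0}\le\|\omega^{\rm i}\|_{V_0}e^{-\nu\lambda_\F t}$, one has $\int_0^{\infty}\|\omega(s)\|_{V_1}^2\ds\le\tfrac1{2\nu}\|\omega^{\rm i}\|_{V_0}^2$, and $\|u(t)\|_{\mathbf L^2(\F)}=\|\nabla^\perp\psi(t)\|_{\mathbf L^2(\F)}$ remains bounded. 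I would then perform the $V_1$-level energy estimate for \eqref{main_NS_vorticity4}: testing \eqref{main_vorti_re_NS1} against $\mathsf A^V_2\omega_0$ in the pivot space $V_0$, controlling $\omega_{\mathfrak B}$ and $\partial_t\omega_{\mathfrak B}$ through \eqref{eq:second_regukl_NS1} and the time-regularity of $\varLambda^V_{\mathsf r}(\omega)$ inherited from $\omega\in\Omega(T)$, and using $u|_\Sigma=0$, one reaches an inequality of the form
\begin{equation*}
\frac{\rm d}{{\rm d}t}\|\omega\|_{V_1}^2+\nu\|\omega\|_{V_2}^2\le\mathbf c_{[\F,\nu]}\,\Phi(t)\,\|\omega\|_{V_1}^2,\qquad \Phi(t)=\|u(t)\|_{\mathbf L^2(\F)}^2\,\|\omega(t)\|_{V_0}^2,
\end{equation*}
after absorbing the advection contribution $|(u\cdot\nabla\omega,\,\cdot\,)_{\mathbf L^2(\F)}|\le\mathbf c_\F\|u\|_{\mathbf L^2}^{1/2}\|\omega\|_{V_0}^{1/2}\|\omega\|_{V_1}^{1/2}\|\omega\|_{V_2}^{3/2}$ into the dissipation by Young's inequality. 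By the enstrophy estimate $\Phi\in L^1(0,\infty)$ with $\|\Phi\|_{L^1}$ bounded in terms of $\F$, $\nu$ and $\|\omega^{\rm i}\|$, so with the Poincaré-type bound $\|\omega\|_{V_2}^2\ge\lambda_\F\|\omega\|_{V_1}^2$ (all the operators $\mathsf A^V_k$ share the bottom eigenvalue $\lambda_\F$) Grönwall's inequality gives $\|\omega(t)\|_{V_1}^2\le\|\omega_0(0)\|_{V_1}^2\exp\!\big(-\nu\lambda_\F t+\mathbf c_{[\F,\nu]}\!\int_0^t\Phi\big)$. This simultaneously furnishes the global bound — whence global existence for every $T$ — and, since $\exp(\mathbf c_{[\F,\nu]}\int_0^t\Phi)\le\exp(\mathbf c_{[\F,\nu]}\|\Phi\|_{L^1})$ uniformly in $t$ and $T$, the decay estimate \eqref{palinstrophy} with the claimed $T$-independent constant.

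The step I expect to be the main obstacle is making the $V_1$-energy estimate rigorous: the vorticity does not vanish on $\Sigma$, so one cannot simply multiply the equation by $\Delta\omega$ and integrate by parts — the boundary terms must be absorbed through the projector $\mathsf Q_1$ and the decomposition $\bar V_0=V_0\sumperp\mathfrak H_V\sumperp\mathbb F_V^\ast$ of Theorem~\ref{THEO:decompV0}, which is exactly what the function-space construction of Subsection~\ref{SEC:Nonhomogeneous_vorticity} was built for, and one must check that the parabolic--elliptic coupling does not destroy the absorption of the advection term. Everything else is, modulo these bookkeeping issues, the classical planar enstrophy/palinstrophy argument transported to the $V_k$ scale.
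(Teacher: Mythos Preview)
Your overall strategy---local existence by a fixed-point argument, then a global a~priori palinstrophy bound exploiting the already-known enstrophy decay and Gr\"onwall---is exactly that of the paper, and your identification of the parabolic--elliptic coupling as the crux is correct. The fixed-point part is fine in outline (the paper runs it on the source-term side via $\mathsf Z_T=\mathsf Y_T\circ\mathsf X_T$ on $F(T)$, you on the solution side; either works). However, the a~priori estimate you sketch is substantially too optimistic, and the displayed inequality you claim cannot be obtained as written.

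A minor point first: Corollary~\ref{energy_decay_weak} gives exponential decay of $\|\omega(t)\|_{V_{-1}}$ (level $k=-1$), not of $\|\omega(t)\|_{V_0}$. The $V_0$-decay has to be derived separately by the strong-solution estimate at level $k=0$, and it carries an extra multiplicative constant $\exp\big(\mathbf c_\F\nu^{-4}\|\omega^{\rm i}\|_{V_{-1}}^4\big)$; this is Step~1 of Lemma~\ref{a_rpiori_estim}.

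The real gap is the palinstrophy inequality itself. Testing \eqref{main_vorti_re_NS1} against $\mathsf A^V_2\omega_0$ produces $\tfrac12\tfrac{\rm d}{{\rm d}t}\|\omega_0\|_{V_1}^2+\nu\|\omega_0\|_{V_2}^2$ on the left, \emph{not} the same quantities for the full $\omega$; one must separately relate $\|\omega\|_{V_1}$ and $\|\omega\|_{\bar V_2}$ back to $\|\omega_0\|_{V_1}$, $\|\omega_0\|_{V_2}$ via the elliptic equation~\eqref{eq:second_regukl_NS1} (Step~2 of the paper), and this introduces lower-order correctors. On the right, the term $-(\mathsf P\varLambda^V_{\mathsf r}(\omega),\mathsf A^V_2\omega_0)_{V_0}$ is indeed absorbed by the interpolation/Young argument you describe (Step~3), but the second term $-(\partial_t\omega_{\mathfrak B},\mathsf A^V_2\omega_0)_{V_0}$ is not. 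By \eqref{eq:second_regukl_NS1} one has $\partial_t\omega_{\mathfrak B}=-\tfrac1\nu(\bar{\mathsf A}^V_2)^{-1}\partial_t\mathsf P^\perp\varLambda^V_{\mathsf r}(\omega)$, which involves $\partial_t\omega$ (through $\partial_t\psi$). To close, one must bound $\|\partial_t\omega\|_{V_{-1}}$ and $\|\partial_t\omega\|_{V_0}$ by going back to the equation, and this reintroduces $\|\omega\|_{\bar V_2}$, hence $\|\omega_0\|_{V_2}$; only after a chain of H\"older/interpolation/Young steps (Step~4, culminating in the lengthy \eqref{second_pne:eq}) does everything become absorbable. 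The resulting inequality \eqref{antepenul} reads
\[
\tfrac{\rm d}{{\rm d}t}\|\omega_0\|_{V_1}^2+\nu\|\omega_0\|_{V_2}^2\ \leqslant\ \tfrac{\mathbf c_\F}{\nu^3}\|\omega\|_{V_{-1}}^2\|\omega\|_{V_0}^2\,\|\omega\|_{V_1}^2\ +\ \tfrac{\mathbf c_\F}{\nu^7}\|\omega\|_{V_{-1}}^4\|\omega\|_{V_0}^6,
\]
which is not of the clean homogeneous form you wrote: there is an inhomogeneous forcing term, Gr\"onwall is applied to $\omega_0$, and the bound is then transferred to $\omega$ via Step~2. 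Your sentence ``controlling $\partial_t\omega_{\mathfrak B}$ through \eqref{eq:second_regukl_NS1} and the time-regularity of $\varLambda^V_{\mathsf r}(\omega)$ inherited from $\omega\in\Omega(T)$'' hides precisely this circularity: one cannot invoke the $\Omega(T)$-bound of $\omega$ inside an a~priori estimate meant to \emph{establish} that bound. This is the obstacle you flag in your last paragraph, and Steps~2--5 of Lemma~\ref{a_rpiori_estim} are what it actually costs to resolve it.
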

\begin{rem} It is worth noticing that:
\begin{enumerate}
\item In \cite{Maekawa:2013aa}, the author shows local in time existence for the same system, considering the particular 
case where the fluid domain $\F$ is a half-plane.
\item The quantity $\|\nabla\omega\|^2_{\mathbf L^2(\F)}$ is sometimes called the palinstrophy. The palinstrophy being 
controlled by $\|\omega\|_{V_1}^2$, estimates \eqref{palinstrophy} asserts that the palinstrophy is exponentially decreasing 
as time growths. This result was not known so far and may play an important role in turbulence theory.
\end{enumerate}
\end{rem}
%
We begin with establishing the {\it a priori} estimate \eqref{palinstrophy}.
\begin{lemma}
\label{a_rpiori_estim}
For every initial condition $\omega^{\rm i}\in V_1$, there exists a positive constant $\mathbf c_{[\F,\nu,\omega^{\rm i}]}$ such 
that for every positive time $T$, any solution $\omega$ to System~\eqref{main_NS_vorticity4} in $\Omega(T)$ satisfies 
estimate \eqref{palinstrophy}.
\end{lemma}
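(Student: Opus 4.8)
The plan is to derive the estimate via a standard energy method, but performed at the level of the semi-norm $\|\omega\|_{V_1}$ rather than on $\|\nabla\omega\|_{\mathbf L^2(\F)}$, exploiting the crucial fact (from the leading-ideas section) that $\|\omega\|_{V_1}^2 = \|\nabla\mathsf Q_1\omega\|_{\mathbf L^2(\F)}^2$ and that $\mathsf Q_1\omega$ has vanishing trace on $\Sigma$, so that the problematic boundary term $\int_\F \Delta\omega\,\omega\dx$ is replaced by a term that integrates by parts cleanly. First I would test the equation obtained by summing \eqref{main_vorti_re_NS1} and \eqref{eq:second_regukl_NS1} — equivalently, the equation $\partial_t\omega_{\mathsf r} + u\cdot\nabla\omega_{\mathsf r} - \nu\Delta\omega_{\mathsf r} = 0$ of \eqref{eq:main:vorti:suite} (with $\varphi = 0$, $f_V = 0$, homogeneous boundary data so $\omega = \omega_{\mathsf r}$, $u = \nabla^\perp\psi$, $\psi = \bar\Delta_2^{-1}\omega$) — against $\mathsf A^V_2\omega_0$ in the $V_0$ inner product. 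Since $\mathsf A^V_2\omega_0 = -\Delta\omega_0$ and $\omega_0 = \mathsf P\omega$, and since $\mathsf Q_1\mathsf A^V_2\omega_0 = \mathsf A^S_2\mathsf Q_2\omega_0$ lies in $S_1 \subset H^1_0(\F)$, this quantity pairs well against $\partial_t\omega$ to produce $\tfrac12\tfrac{\rm d}{{\rm d}t}\|\omega\|_{V_1}^2$ using the isometry properties recorded in Fig.~\ref{diag_2} and the fact that $\omega$ and $\mathsf Q_1\omega$ differ by a harmonic function orthogonal to $V_0$.

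The key identities I would establish along the way are: (i) $(\partial_t\omega, \mathsf A^V_2\omega_0)_{\text{pairing}} = \tfrac12\tfrac{\rm d}{{\rm d}t}\|\omega\|_{V_1}^2$ up to the coupling contributions, handled via the decomposition $\omega = \omega_0 + \omega_{\mathfrak B}$ and the block-diagonal structure of $\bar{\mathsf A}^V_2$ from Lemma~\ref{decomp_S2}; (ii) the dissipation term yields $\nu\|\omega\|_{V_2}^2$ (or at least $\nu\|\omega_0\|_{V_2}^2$ plus controlled lower-order coupling terms); and (iii) the nonlinear term $-(\varLambda^V_{\mathsf r}(\omega), \mathsf A^V_2\omega_0)$ can be bounded, after integration by parts and using $\nabla^\perp\psi\cdot\nabla$ being a first-order transport operator with divergence-free coefficient, by $\tfrac{\nu}{2}\|\omega\|_{V_2}^2 + \mathbf c_{[\F,\nu]}\|\omega\|_{V_1}^4$ — this is the Ladyzhenskaya-type estimate in 2D, using $\|\nabla^\perp\psi\|_{\mathbf L^4} \leqslant \mathbf c\|\psi\|_{H^2}^{1/2}\|\psi\|_{H^3}^{1/2}$ together with the isometry $\bar\Delta_2 : \bar S_3 \to \bar V_2$ and $\bar\Delta_1 : \bar S_2 \to \bar V_1$ to convert stream-function norms into vorticity norms. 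Absorbing the $\tfrac{\nu}{2}\|\omega\|_{V_2}^2$ into the left side leaves
$$\frac{\rm d}{{\rm d}t}\|\omega\|_{V_1}^2 + \nu\|\omega\|_{V_2}^2 \leqslant \mathbf c_{[\F,\nu]}\|\omega\|_{V_1}^4 \qquad\text{on }(0,T).$$

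To upgrade this into a genuinely global-in-time exponential decay bound with a constant independent of $T$, I would combine it with the enstrophy estimate \eqref{eq:enstro}: testing the equation against $\mathsf Q_1\omega$ gives $\tfrac12\tfrac{\rm d}{{\rm d}t}\|\omega\|_{V_0}^2 + \nu\|\omega\|_{V_1}^2 \leqslant 0$ (the nonlinear term vanishes since $\langle\varLambda^V_0(\omega,0),\omega\rangle = 0$, as already noted in Corollary~\ref{energy_decay_weak}), hence by the Poincaré inequality $\lambda_\F\|\omega\|_{V_0}^2 \leqslant \|\omega\|_{V_1}^2$ we get $\|\omega(t)\|_{V_0} \leqslant \|\omega^{\rm i}\|_{V_0}e^{-\nu\lambda_\F t}$ and, integrating, $\int_0^\infty \|\omega(s)\|_{V_1}^2\ds \leqslant \tfrac{1}{2\nu}\|\omega^{\rm i}\|_{V_0}^2$. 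This uniform-in-time bound on $\|\omega\|_{L^2(0,\infty;V_1)}$ lets me apply a Gronwall argument to the differential inequality above in the form $\tfrac{\rm d}{{\rm d}t}\|\omega\|_{V_1}^2 \leqslant \big(\mathbf c_{[\F,\nu]}\|\omega\|_{V_1}^2\big)\|\omega\|_{V_1}^2$, yielding $\|\omega(t)\|_{V_1}^2 \leqslant \|\omega^{\rm i}\|_{V_1}^2\exp\big(\mathbf c_{[\F,\nu]}\int_0^\infty\|\omega\|_{V_1}^2\big) = \mathbf c_{[\F,\nu,\omega^{\rm i}]}$, a bound uniform in $T$; then reinserting this into the differential inequality and using $\lambda_\F\|\omega\|_{V_1}^2 \leqslant \|\omega\|_{V_2}^2$ (the Poincaré inequality at the next level, from Corollary~\ref{COR:lambda}) gives $\tfrac{\rm d}{{\rm d}t}\|\omega\|_{V_1}^2 + (\nu\lambda_\F - \mathbf c_{[\F,\nu,\omega^{\rm i}]}\cdot\text{(small tail)})\|\omega\|_{V_1}^2 \leqslant 0$ for $t$ large, and a more careful bookkeeping (splitting $[0,t]$ into an initial interval on which one uses the crude bound and a tail on which $\int\|\omega\|_{V_1}^2$ is small) produces the clean rate $e^{-\frac12\nu\lambda_\F t}$. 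The main obstacle I anticipate is the bookkeeping of the coupling terms $\partial_t\omega_{\mathfrak B}$ and the projections $\mathsf P^\perp\varLambda^V_{\mathsf r}$ when testing in the $V_0$ inner product: one must be careful that the ``extra'' pieces coming from the elliptic equation \eqref{eq:second_regukl_NS1} do not spoil the sign of the dissipation, and checking this requires using the precise orthogonality of the decomposition $\bar V_2 = V_2 \oplus \mathfrak B_V \oplus \mathbb F_V \oplus V_2^{\rm b}$ from \eqref{decomp_barV2} and the isometry statements of Lemma~\ref{decomp_S2} and Lemma~\ref{LEM:barS2}; the nonlinear absorption estimate is routine 2D analysis but must be done with the $V_k$-norms to stay inside the framework.
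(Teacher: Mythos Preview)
Your overall strategy---energy estimate at the $V_1$ level, combined with the already-known decay at levels $V_{-1}$ and $V_0$, closed by Gr\"onwall---is the right one and matches the paper's architecture. However, you significantly underestimate the ``coupling'' term. When you test against $\mathsf A^V_2\omega_0$, the time-derivative piece yields $\tfrac12\tfrac{\rm d}{{\rm d}t}\|\omega_0\|_{V_1}^2 + (\partial_t\omega_{\mathfrak B},\mathsf A^V_2\omega_0)_{V_0}$, and the second term is not bookkeeping but the technical heart of the proof. Since $\omega_{\mathfrak B}$ is slaved to the nonlinearity through the elliptic equation $\nu\bar{\mathsf A}^V_2\omega_{\mathfrak B}=-\mathsf P^\perp\varLambda^V_{\mathsf r}(\omega)$, the coupling term unavoidably contains $\partial_t\big(\mathsf P^\perp\varLambda^V_{\mathsf r}(\omega)\big)$; it cannot be absorbed by the Ladyzhenskaya-type bound you propose, and the clean inequality $\tfrac{\rm d}{{\rm d}t}\|\omega\|_{V_1}^2+\nu\|\omega\|_{V_2}^2\leqslant \mathbf c\|\omega\|_{V_1}^4$ is not available (indeed $\omega\notin V_2$ in general, only $\omega_0$ is).

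The paper closes this gap as follows: it rewrites the coupling term as the $V_{-2}$--$V_2$ pairing $\langle\partial_t\mathsf P^\perp\varLambda^V_{\mathsf r}(\omega),\omega_0\rangle$, bounds $\|\partial_t\mathsf P^\perp\varLambda^V_{\mathsf r}(\omega)\|_{V_{-2}}$ by products of $\|\omega\|_{V_{-1}}^{1/2}\|\omega\|_{V_0}^{1/2}$ with $\|\partial_t\omega\|_{V_{-1}}^{1/2}\|\partial_t\omega\|_{V_0}^{1/2}$ (using the quadratic structure of $\varLambda^V_{\mathsf r}$), and then feeds the equation back into itself at the weak and strong levels to control $\|\partial_t\omega\|_{V_{-1}}$ and $\|\partial_t\omega\|_{V_0}$ purely in terms of spatial norms of $\omega$. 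A separate preliminary step relates $\|\omega\|_{V_1}$ and $\|\omega\|_{\bar V_2}$ to $\|\omega_0\|_{V_1}$ and $\|\omega_0\|_{V_2}$ (again via the elliptic equation), so that the final differential inequality is stated for $\|\omega_0\|_{V_1}^2$. The exponential rate $e^{-\frac12\nu\lambda_\F t}$ is then extracted not by your Poincar\'e-plus-tail argument but by weighting the Gr\"onwall estimate with $e^{\nu\lambda_\F t}$ and using the already-established exponential decay of $\|\omega\|_{V_{-1}}$ and $\|\omega\|_{V_0}$ to make all right-hand-side integrals bounded uniformly in $T$.
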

\begin{proof}
The proof is divided in several steps:
\par
\medskip
\noindent{\bf First step:}
As being a weak solution to the $\omega-$NS equations, we can apply Corollary~\ref{energy_decay_weak} which provides us 
with the following estimates, satisfied for every $t$ in $(0,T)$:
\begin{subequations}
\label{der_der}
\begin{equation}
\|\omega(t)\|_{V_{-1}}\leqslant \|\omega^{\rm i}\|_{V_{-1}}e^{-\nu\lambda_\F t}\qquad\text{and}\qquad
\int_0^t \|\omega(s)\|_{V_0}^2\ds\leqslant \frac{1}{2\nu}\|\omega^{\rm i}\|_{V_{-1}}^2.
\end{equation}
Arguing that $\omega$ is also a strong solution to the $\omega-$NS equation, we obtain that:
$$\frac{1}{2}\frac{\rm d}{{\rm d}t}\|\omega(t)\|_{V_0}^2+\nu \|\omega\|_{V_1}^2\leqslant \mathbf c_\F\|\omega\|_{V_{-1}}^{\frac12}
\|\omega\|_{V_0}\|\omega\|_{V_1}^{\frac32}\leqslant \frac{\nu}{2}\|\omega\|_{V_1}^2+\frac{\mathbf c_\F}{\nu^3}\|\omega\|_{V_{-1}}^2\|\omega\|_{V_0}^4,$$
that is
$$
\frac{\rm d}{{\rm d}t}\|\omega(t)\|_{V_0}^2+\nu \|\omega\|_{V_1}^2\leqslant \frac{\mathbf c_\F}{\nu^3}\|\omega\|_{V_{-1}}^2\|\omega\|_{V_0}^4,
$$
which leads us to the estimates:
\begin{equation}
\|\omega(t)\|_{V_0}\leqslant \|\omega^{\rm i}\|_{V_0}\mathbf E_{[\F,\nu,\omega^{\rm i}]}e^{-\frac12\nu\lambda t}\qquad
\text{with}\qquad \mathbf E_{[\F,\nu,\omega^{\rm i}]} =\exp\left(\frac{\mathbf c_\F}{\nu^4}\|\omega^{\rm i}\|_{V_{-1}}^4\right),
\end{equation}
and also:
\begin{equation}
\int_0^t\|\omega(s)\|_{V_1}^2\ds\leqslant \frac{1}{\nu}\|\omega^{\rm i}\|_{V_0}^2\left[1 +\frac{\mathbf c_\F}{\nu^4}\|\omega^{\rm i}\|_{V_{-1}}^4\mathbf E_{[\F,\nu,\omega^{\rm i}]} \right].
\end{equation}
\end{subequations}
\par
\medskip
%
\noindent{\bf Second step:}
We need now to estimate $\|\omega(t)\|_{V_1}$ in term of $\|\omega_0(t)\|_{V_1}$ and $\|\omega(t)\|_{\bar V_2}$ in term of 
$\|\omega_0(t)\|_{V_2}$ (and possibly some lower order terms). 
\par
Starting from the expression \eqref{eq:second_regukl_NS1} and forming for 
a.e. $t$ in $(0,T)$ the duality pairing   with $\omega_{\mathfrak B}(t)$, we obtain:
\begin{equation}
\label{ffgty}
\nu\| \omega_{\mathfrak B}(t)\|_{V_1}^2=- \langle\mathsf P^\perp \varLambda^V_{\mathsf r}(\omega(t)),\omega_{\mathfrak B}(t)\rangle
_{V_{-1},V_1}\leqslant \mathbf c_\F 
\|\mathsf P^\perp \varLambda^V_{\mathsf r}(\omega(t))\|_{V_{-1}}\|\omega_{\mathfrak B}(t)\|_{V_1}.
\end{equation}
Introducing the stream function $\psi=\Delta^{-1}_1\omega$, we have for every $\theta\in V_1$:
$$\langle \mathsf P^\perp \varLambda^V_{\mathsf r}(\omega),\theta\rangle_{V_{-1},V_1}=(\mathsf P^\perp\big(\nabla^\perp\psi\cdot\nabla\omega\big),\mathsf Q_1\theta)_{L^2(\F)}
 =-(\nabla^\perp\psi\cdot\nabla\omega,\mathsf Q_1^\perp\theta)_{L^2(\F)}
 =(\omega\nabla^\perp\psi,\nabla \mathsf Q_1^\perp\theta)_{\mathbf L^2(\F)},
 $$
the latter expression resting on the equalities $\mathsf P^\perp\mathsf Q_1=({\rm Id}-\mathsf P)\mathsf Q_1=\mathsf Q_1-{\rm Id}=-\mathsf Q_1^\perp$. We can deduce first that:
\begin{equation}
\label{pmlopm}
\|\mathsf P^\perp \varLambda^V_{\mathsf r}(\omega(t))\|_{V_{-1}}\leqslant \mathbf c_\F\|\omega(t)\|_{L^4(\F)}\|\nabla\psi(t)\|_{\mathbf L^4(\F)}
 \leqslant \mathbf c_\F \|\omega(t)\|_{V_{-1}}^{\frac12}\|\omega(t)\|_{V_0}\|\omega(t)\|_{V_1}^{\frac12},
 \end{equation}
and next, combining the inequality above with \eqref{ffgty}, that:
$$\| \omega_{\mathfrak B}(t)\|_{V_1}\leqslant \frac{\mathbf c_\F}{\nu} \|\omega(t)\|_{V_{-1}}^{\frac12}\|\omega(t)\|_{V_0}
\|\omega(t)\|_{V_1}^{\frac12}.$$
Since $\|\omega(t)\|_{V_1}\leqslant \|\omega_{\mathfrak B}(t)\|_{V_1}+\|\omega_0(t)\|_{V_1}$, we obtain, using 
Young's inequality:
\begin{subequations}
\begin{equation}
\label{gbhnjk}
\|\omega(t)\|_{V_1}\leqslant \mathbf c\|\omega_0(t)\|_{V_1}+\frac{\mathbf c_\F}{\nu^2}\|\omega(t)\|_{V_{-1}}\|\omega(t)\|_{V_0}^2.
\end{equation}
We are donne with the term $\|\omega(t)\|_{V_1}$ so let us turn our attention to $\|\omega(t)\|_{\bar V_2}$. Forming, for 
a.e. $t$ in $(0,T)$, the scalar product 
of \eqref{eq:second_regukl_NS1_bis} with $\bar{\mathsf A}^V_2\omega_{\mathfrak B}(t)$ in $V_0$, we obtain (using H\"older's inequality
followed by interpolation inequalities):
$$\nu\|\omega_{\mathfrak B}(t)\|_{\bar V_2}^2=\big|(\nabla^\perp\psi\cdot\nabla\omega(t),\Delta\omega_{\mathfrak B}(t))_{L^2(\F)}\big|\leqslant
\mathbf c_\F\|\omega(t)\|_{V_{-1}}^{\frac12}\|\omega(t)\|_{V_{0}}^{\frac12}\|\omega(t)\|_{V_{1}}^{\frac12}\|\omega(t)\|_{\bar V_{2}}^{\frac12}\|\omega_{\mathfrak B}(t)\|_{\bar V_2},$$
that is to say:
$$\|\omega_{\mathfrak B}(t)\|_{\bar V_2}\leqslant\frac{\mathbf c_\F}{\nu}
\|\omega(t)\|_{V_{-1}}^{\frac12}\|\omega(t)\|_{V_{0}}^{\frac12}\|\omega(t)\|_{V_{1}}^{\frac12}\|\omega(t)\|_{\bar V_{2}}^{\frac12}.$$
But $\|\omega(t)\|_{\bar V_2}^2=\|\omega_0(t)\|_{V_2}^2+\|\omega_{\mathfrak B}(t)\|_{\bar V_2}^2$ which, by Young's inequality yields:
\begin{equation}
\label{first_goal}
\|\omega(t)\|_{\bar V_2}\leqslant\mathbf c\|\omega_0(t)\|_{V_2}+
\frac{\mathbf c_\F}{\nu^2}
\|\omega(t)\|_{V_{-1}}\|\omega(t)\|_{V_{0}}\|\omega(t)\|_{V_{1}}.
\end{equation}
\end{subequations}
Our goal for this step is now achieved. 
\par
\medskip
\noindent{\bf Third step:}
We form for a.e. $t\in(0,T)$, the scalar product of \eqref{main_vorti_re_NS1_cauchy} 
with $\mathsf A^V_2\omega_0(t)$ in $V_0$ to obtain:
\begin{equation}
\label{almost_last}
\frac{1}{2}\frac{\rm d}{{\rm d}t}\|\omega_0(t)\|_{V_1}^2+\nu\|\omega_0(t)\|_{V_2}^2=-
\big(\mathsf P \varLambda^V_{\mathsf r}(\omega)(t),\mathsf A^V_2\omega_0(t)\big)_{V_0}+\frac{1}{\nu}\big(\big({\mathsf A}^V_0\big)^{-1}\big[\partial_t\mathsf P^\perp \varLambda^V_{\mathsf r}(\omega)(t)\big],\mathsf  A^V_2\omega_0(t)\big)_{V_0}.
\end{equation}
Both terms in the right hand side have to be estimated, this task being easier for the first one than for the second one.
Indeed, the first term can be rewritten as:
$$\big(\mathsf P \varLambda^V_{\mathsf r}(\omega)(t),\mathsf A^V_2\omega_0(t)\big)_{V_0}=
\big(\nabla^\perp\psi\cdot\nabla\omega(t),\Delta\omega_0(t))_{L^2(\F)},$$
whence we deduce that:
$$
\big|\big(\mathsf P \varLambda^V_{\mathsf r}(\omega)(t),\mathsf A^V_2\omega_0(t)\big)_{V_0}\big|\leqslant \mathbf c_\F\|\omega(t)\|_{V_{-1}}^{\frac12}\|\omega(t)\|_{V_{0}}^{\frac12}\|\omega(t)\|_{V_{1}}^{\frac12}\|\omega(t)\|_{\bar V_{2}}^{\frac12}\|\omega_0(t)\|_{V_2}.
$$
Once combined with \eqref{first_goal} to get rid of the term $\|\omega(t)\|_{\bar V_{2}}$, we end up with:
\begin{multline}
\label{first_pne:eq}
\big|\big(\mathsf P \varLambda^V_{\mathsf r}(\omega)(t),\mathsf A^V_2\omega_0(t)\big)_{V_0}\big|\leqslant 
\frac{\mathbf c_\F}{\nu}
\|\omega(t)\|_{V_{-1}}\|\omega(t)\|_{V_{0}}\|\omega(t)\|_{V_{1}}\|\omega_0(t)\|_{V_{2}}+\\
\mathbf c_\F\|\omega(t)\|_{V_{-1}}^{\frac12}\|\omega(t)\|_{V_{0}}^{\frac12}\|\omega(t)\|_{V_{1}}^{\frac12}\|\omega_0(t)\|_{V_2}^{\frac32},
\end{multline}
and we are now done with the first nonlinear term.
\par
\medskip
\noindent{\bf Fourth step:}
The second term in the right hand side of \eqref{almost_last} can be turned into:
$$\big(\big({\mathsf A}^V_0\big)^{-1}\big[\partial_t\mathsf P^\perp  \varLambda^V_{\mathsf r}(\omega)(t)\big],\mathsf  A^V_2\omega_0(t)\big)_{V_0}
=\big(\partial_t\mathsf P^\perp \varLambda^V_{\mathsf r}(\omega)(t),\mathsf A_0^V\mathsf A_2^V\omega_0(t)\big)_{V_{-2}}=
\big\langle\partial_t\mathsf P^\perp  \varLambda^V_{\mathsf r}(\omega)(t), \omega_0(t)\big\rangle_{V_{-2},V_2},$$
and therefore:
\begin{equation}
\label{eq:833}
\big|\big(\big({\mathsf A}^V_0\big)^{-1}\big[\partial_t\mathsf P^\perp  \varLambda^V_{\mathsf r}(\omega)(t)\big],\mathsf  A^V_2\omega_0(t)\big)_{V_0}\big|\leqslant \|\partial_t\mathsf P^\perp  \varLambda^V_{\mathsf r}(\omega)(t)\|_{V_{-2}}\|\omega_0(t)\|_{V_2}.
\end{equation}
From the expression \eqref{f_VpV2} we deduce that: 
\begin{equation}
\label{first_multi}
\|\partial_t\mathsf P^\perp  \varLambda^V_{\mathsf r}(\omega)(t)\|_{V_{-2}}\leqslant \mathbf c  \|\nabla\psi\|_{\mathbf L^4(\F)}\|\nabla \partial_t\psi\|_{\mathbf L^4(\F)}\leqslant \mathbf c_\F \|\omega(t)\|_{V_{-1}}^{\frac12}\|\omega(t)\|_{V_{0}}^{\frac12}\|\partial_t\omega(t)\|_{V_{-1}}^{\frac12}
\|\partial_t\omega(t)\|_{V_0}^{\frac12},
\end{equation}
and we need now to estimate both terms involving a time derivative.
As being a strong solution to the $\omega-$NS equation, $\omega(t)$ satisfies for a.e. $t$ in $(0,T)$ the identity below, set in $V_{-1}$:
$$\partial_t\omega(t)=-\nu\mathsf A_1^V\omega(t)-\varLambda_0^V(\omega(t),0),$$
where we recall that the definition of $\varLambda_0^V$ is given in \eqref{def_lambda_0V}. This equality provides us with the
 inequality:
$$
\|\partial_t\omega(t)\|_{V_{-1}}\leqslant \nu\|\omega(t)\|_{V_1}+\|\varLambda_0^V(\omega(t),0)\|_{V_{-1}}.
$$
Resting on the definition \eqref{def_lambda_0V}, we next easily obtain that:
$$
\|\varLambda_0^V(\omega(t),0)\|_{V_{-1}}\leqslant \|\nabla\psi(t)\|_{\mathbf L^4(\F)}\|\omega(t)\|_{L^4(\F)} 
\leqslant \mathbf c_\F\|\omega(t)\|_{V_{-1}}^{\frac12}\|\omega(t)\|_{V_{0}} \|\omega(t)\|_{V_{1}}^{\frac12},
$$
and therefore:
\begin{subequations}
\label{eq:83555}
\begin{equation}
\label{eq:835}
\|\partial_t\omega(t)\|_{V_{-1}}\leqslant \nu\|\omega(t)\|_{V_1}+\mathbf c_\F\|\omega(t)\|_{V_{-1}}^{\frac12}\|\omega(t)\|_{V_{0}} \|\omega(t)\|_{V_{1}}^{\frac12}.
\end{equation}
On the other hand, since by hypothesis $\omega$ is a solution on $(0,T)$ to System~\eqref{main_NS_vorticity4}, 
it satisfies for a.e. $t$ in $(0,T)$:
$$
\partial_t\omega(t)=\nu\Delta\omega(t)-\nabla^\perp\psi(t)\cdot\nabla\omega(t)\qquad\text{in }V_0,
$$
whence we deduce that:
$$
\|\partial_t\omega(t)\|_{V_0}\leqslant \nu\|\omega\|_{\bar V_2}+\mathbf c_\F\|\omega(t)\|_{V_{-1}}^{\frac12}\|\omega(t)\|_{V_{0}}^{\frac12}\|\omega(t)\|_{V_{1}}^{\frac12}\|\omega(t)\|_{\bar V_{2}}^{\frac12}.
$$
Once combined with \eqref{first_goal}, this estimate becomes:
\begin{equation}
\label{eq:836}
\|\partial_t\omega(t)\|_{V_0}\leqslant \mathbf c\nu\|\omega_0\|_{V_2}+
\frac{\mathbf c_\F}{\nu}\|\omega(t)\|_{V_{-1}}\|\omega(t)\|_{V_{0}}\|\omega(t)\|_{V_{1}}
+\mathbf c_\F\|\omega(t)\|_{V_{-1}}^{\frac12}\|\omega(t)\|_{V_{0}}^{\frac12}\|\omega(t)\|_{V_{1}}^{\frac12}
\|\omega_0(t)\|_{V_{2}}^{\frac12}.
\end{equation}
\end{subequations}
Gathering now \eqref{eq:833} and identities  \eqref{eq:83555} we finally obtain the following estimate for 
the second nonlinear term in \eqref{almost_last}:
\begin{multline}
\label{second_pne:eq}
\big|\big(\big({\mathsf A}^V_0\big)^{-1}\big[\partial_t\mathsf P^\perp  \varLambda^V_{\mathsf r}(\omega)(t)\big],\mathsf  A^V_2\omega_0(t)\big)_{V_0}\big|\leqslant \mathbf c_\F\|\omega(t)\|_{V_{-1}}^{\frac12}\|\omega(t)\|_{V_{0}}^{\frac12}\bigg(
\nu\|\omega(t)\|_{V_{1}}^{\frac12}\|\omega_0(t)\|_{V_2}^{\frac32}\\
+\|\omega(t)\|_{V_{-1}}^{\frac12}\|\omega(t)\|_{V_{0}}^{\frac12}\|\omega(t)\|_{V_1}\|\omega_0(t)\|_{V_2}
+\sqrt{\nu}\|\omega(t)\|_{V_{-1}}^{\frac14}\|\omega(t)\|_{V_{0}}^{\frac14}\|\omega(t)\|_{V_1}^{\frac34}\|\omega_0(t)\|_{V_2}^{\frac54}\\
+\sqrt{\nu}\|\omega(t)\|_{V_{-1}}^{\frac14}\|\omega(t)\|_{V_{0}}^{\frac12}\|\omega(t)\|_{V_1}^{\frac14}\|\omega_0(t)\|_{V_2}^{\frac32}
+\frac{1}{\sqrt{\nu}}\|\omega(t)\|_{V_{-1}}^{\frac34}\|\omega(t)\|_{V_{0}}\|\omega(t)\|_{V_1}^{\frac34}\|\omega_0(t)\|_{V_2}\\
+\|\omega(t)\|_{V_{-1}}^{\frac12}\|\omega(t)\|_{V_{0}}^{\frac34}\|\omega(t)\|_{V_1}^{\frac12}\|\omega_0(t)\|_{V_2}^{\frac54}\bigg).
\end{multline}
Both terms in the right hand side of \eqref{almost_last} have now be estimated. Let us collect all the estimates obtained so far 
and move on to the next step consisting in applying Gr\"onwall's inequality.
\par
\medskip
\noindent{\bf Fifth step:} Combining \eqref{first_pne:eq} and \eqref{second_pne:eq} with \eqref{almost_last} and using 
craftily Young's inequality several times, we deduce that for a.e. $t$ in $(0,T)$:
\begin{equation}
\label{antepenul}
\frac{1}{2}\frac{\rm d}{{\rm d}t}\|\omega_0(t)\|_{V_1}^2+\nu\|\omega_0(t)\|_{V_2}^2\leqslant \frac{\nu}{2}\|\omega_0(t)\|_{V_2}^2
+ \frac{\mathbf c_\F}{\nu^3} \|\omega(t)\|_{V_{-1}}^2\|\omega(t)\|_{V_{0}}^2\|\omega(t)\|_{V_1}^2+
\frac{\mathbf c_\F}{\nu^7}\|\omega(t)\|_{V_{-1}}^4\|\omega(t)\|_{V_{0}}^6.
\end{equation}
Applying 
Gr\"onwall's inequality to \eqref{antepenul}, we obtain:
\begin{equation}
\label{ggftp}
e^{\nu \lambda_\F t}\|\omega_0(t)\|_{V_1}^2(t) \leq \|\omega_0^{\rm i}\|_{V_1}^2 +  \int_{0}^t \left[\frac{\mathbf c_\F}{\nu^3} \|\omega(s)\|_{V_{-1}}^2\|\omega(s)\|_{V_{0}}^2\|\omega(s)\|_{V_1}^2+
\frac{\mathbf c_\F}{\nu^7}\|\omega(s)\|_{V_{-1}}^4\|\omega(s)\|_{V_{0}}^6\right]e^{\nu \lambda_\F s}\ds,
\end{equation}
where, according to \eqref{init_omegai0}, the initial data $\omega^{\rm i}_0$ is defined by:
$$\omega^{\rm i}_0=\omega^{\rm i}+
\frac{1}{\nu}\big(\mathsf A_1^V\big)^{-1}\mathsf P^\perp \varLambda^V_{\mathsf r}(\omega^{\rm i}).$$
%
 %
 With \eqref{pmlopm}, we deduce that:
 \begin{subequations}
\label{gtfdrtt}
 \begin{equation}
 \|\omega^{\rm i}_0\|_{V_1}\leqslant \|\omega^{\rm i}\|_{V_1}+\frac{\mathbf c_\F}{\nu}
  \|\omega^{\rm i}\|_{V_{-1}}^{\frac12}\|\omega^{\rm i}\|_{V_0}\|\omega^{\rm i}\|_{V_1}^{\frac12}
  \leqslant \mathbf c\|\omega^{\rm i}\|_{V_1}+ \frac{\mathbf c_\F}{\nu^2}
  \|\omega^{\rm i}\|_{V_{-1}} \|\omega^{\rm i}\|_{V_0}^2.
  \end{equation}
The second term in the right-hand side of \eqref{ggftp} can be estimated using the estimates \eqref{der_der} as follows:
\begin{align}
\nonumber
\int_0^t \frac{\mathbf c_\F}{\nu^3} \|\omega(s)\|_{V_{-1}}^2\|\omega(s)\|_{V_{0}}^2\|\omega(s)\|_{V_1}^2 e^{\nu\lambda_\F s} \ds   \leq & \frac{\mathbf c_\F}{\nu^3}\|\omega^{\rm i}\|_{V_{-1}}^2\|\omega^{\rm i}\|_{V_{0}}^2 \mathbf E_{[\F,\nu,\omega^{\rm i}]}   \int_0^t e^{-2\nu \lambda_\F s}\|\omega(s)\|_{V_1}^2\ds \\
  \leq & \frac{\mathbf c_\F}{\nu^4}\|\omega^{\rm i}\|_{V_{-1}}^2\|\omega^{\rm i}\|_{V_{0}}^4 \mathbf E_{[\F,\nu,\omega^{\rm i}]}   \left[1 +\frac{\mathbf c_\F}{\nu^4}\|\omega^{\rm i}\|_{V_{-1}}^4\mathbf E_{[\F,\nu,\omega^{\rm i}]} \right],
\end{align}
and 
\begin{align}
\nonumber
	\int_0^t \frac{\mathbf c_\F}{\nu^7}\|\omega(s)\|_{V_{-1}}^4\|\omega(s)\|_{V_{0}}^6  e^{\nu\lambda_\F s} \ds & \leq  \frac{\mathbf c_\F}{\nu^7}\|\omega^{\rm i}\|_{V_{-1}}^4\|\omega^{\rm i}\|_{V_{0}}^4\mathbf E_{[\F,\nu,\omega^{\rm i}]}   \int_0^t e^{-5\nu \lambda_\F s}\|\omega(s)\|_{V_0}^2\ds \\
	& \leq \frac{\mathbf c_\F}{\nu^8}\|\omega^{\rm i}\|_{V_{-1}}^6\|\omega^{\rm i}\|_{V_{0}}^4\mathbf E_{[\F,\nu,\omega^{\rm i}]}.
\end{align}
\end{subequations}
We finally obtain, gathering \eqref{ggftp}  and inequalities \eqref{gtfdrtt} (using again Young's inequality):
\begin{equation}
\|\omega_0(t)\|_{V_1}^2\leqslant \mathbf c_\F\left[\|\omega^{\rm i}\|_{V_1}^2+\frac{1}{\nu^4}\|\omega^{\rm i}\|_{V_{-1}}^2
\|\omega^{\rm i}\|_{V_0}^4\mathbf E_{[\F,\nu,\omega^{\rm i}]}
+\frac{1}{\nu^8}\|\omega^{\rm i}\|_{V_{-1}}^6
\|\omega^{\rm i}\|_{V_0}^4\mathbf E_{[\F,\nu,\omega^{\rm i}]} \right] e^{-\nu \lambda_\F t},
\end{equation}
what, with \eqref{gbhnjk} and estimates \eqref{der_der}, completes the proof of the lemma. 
\end{proof}
%
The rest of the section is devoted to the proof of the theorem, which is classical and  based again on a fixed point argument. 
%
Let us fix $T>0$ and $\omega^{\rm i}\in V_1$ and
 introduce the spaces:
\begin{align*}
\Omega(T,\omega^{\rm i})&=\big\{\omega\in \Omega(T)\,:\,\omega(0)=\omega^{\rm i}\big\},\\
\Psi(T)&=\big[L^2(0,T;\bar S_3)\cap H^1(0,T;S_1)\big]\cap\big[\mathcal C([0,T];S_2)\cap \mathcal C^1([0,T];S_0)\big],
\end{align*}
and 
$$F(T)=\{f_V\in L^2(0,T; L^2_V)\cap\mathcal C([0,T];V_{-1})\,:\, \mathsf P^\perp f_V\in H^1(0,T;V_{-2})\}.$$
Then define the mapping $\mathsf X_T:f_V\in F(T)\mapsto \omega\in \Omega(T,\omega^{\rm i})$ where $\omega=\omega_0+\omega_{\mathfrak B}$ is the solution to the $\omega-$Stokes problem:
\begin{subequations}
\label{main_NS_vorticity4_bis}
\begin{alignat}{3}
\label{main_vorti_re_NS1_bis}
\partial_t\omega_0+\nu\mathsf A^V_2\omega_0&=\mathsf Pf_V-\partial_t\omega_{\mathfrak B}&\qquad&\text{in }\F_T\\
\label{eq:second_regukl_NS1_bis}
\nu\bar{\mathsf A}^V_2\omega_{\mathfrak B}&=\mathsf P^\perp f_V&&\text{in }\F_T,\\
\omega(0)&=\omega^{\rm i}&&\text{in }\F,
%
%
\end{alignat}
\end{subequations}
and $\mathsf Y_T:\omega\in\Omega(T,\omega^{\rm i})\mapsto \varLambda^V_{\mathsf r}(\omega)\in F(T)$ where $\varLambda^V_{\mathsf r}(\omega)$ 
is defined in \eqref{def_lambda_R} (with $\varphi=0$ since, as already mentioned, we consider only homogeneous boundary conditions).
%
\begin{lemma}
\label{mekawa_esim}
The mapping $\mathsf X_T$ is well-defined and there exists a positive constant $\mathbf c_{[\F,\nu]}$ such that:
\begin{subequations}
\begin{equation}
\label{estim_XT}
\|\mathsf X_T(f_V)\|_{\Omega(T)}\leqslant \mathbf c_{[\F,\nu]}\big[\|\omega^{\rm i}\|^2_{V_1}+\|f_V\|_{F(T)}^2\big]^{{\frac12}}
\forallt f_V\in F(T).
\end{equation}
%
%
%
The mapping $\mathsf Y_T$ is also well-defined and there exists a positive constant $\mathbf c_\F$ such that, for all $\omega_1$ and 
$\omega_2$ in $\Omega(T,\omega^{\rm i})$:
\begin{equation}
\label{eval_YT}
\|\mathsf Y_T(\omega_2)-\mathsf Y_T(\omega_1)\|_{F(T)}\leqslant\mathbf c_{\F}T^{\frac{1}{10}}\big[\|\omega_1\|_{\Omega(T)}^2+\|\omega_2\|_{\Omega(T)}^2\big]^{{\frac12}}
\|\omega_2-\omega_1\|_{\Omega(T)},
\end{equation}
\end{subequations}
providing that $T<1$.
\end{lemma}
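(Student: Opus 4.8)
The idea is that $\mathsf X_T$ is the regular Stokes solution operator of Proposition~\ref{stokes_regul}, specialised to vanishing boundary data and to a source term $f_V\in F(T)$ split, according to \eqref{decomp_V0}, into $f_V=\mathsf P f_V+\mathsf P^\perp f_V$ with no $\mathbb F_V^\ast$-component: $\mathsf P f_V\in L^2(0,T;V_0)$ plays the role of $f_V^0$, and $\mathsf P^\perp f_V$, which by the very definition of $F(T)$ lies in $L^2(0,T;\mathfrak H_V)\cap\mathcal C([0,T];V_{-1})\cap H^1(0,T;V_{-2})$, plays the role of $f_V^{\mathfrak H}$. Concretely, I would solve the elliptic equation \eqref{eq:second_regukl_NS1_bis} explicitly by $\omega_{\mathfrak B}=\nu^{-1}(\bar{\mathsf A}^V_2)^{-1}\mathsf P^\perp f_V$; since $\bar{\mathsf A}^V_2$ restricts to an isometry between the $\mathfrak B_V$- and the $\mathfrak H_V$-scales at every regularity level (Lemma~\ref{decomp_S2}, Theorem~\ref{THEO:barV1}, Lemma~\ref{prop_bardelta2} and the chain construction of Appendix~\ref{gelf_triple}), this gives $\omega_{\mathfrak B}\in L^2(0,T;\mathfrak B_V^2)\cap\mathcal C([0,T];\mathfrak B_V)$ together with $\partial_t\omega_{\mathfrak B}=\nu^{-1}(\bar{\mathsf A}^V_2)^{-1}\partial_t\mathsf P^\perp f_V\in L^2(0,T;V_0)$. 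Hence the right-hand side $\mathsf P f_V-\partial_t\omega_{\mathfrak B}$ of \eqref{main_vorti_re_NS1_bis} is in $L^2(0,T;V_0)$ and the initial datum $\omega^{\rm i}-\omega_{\mathfrak B}(0)$ lies in $V_1$, so Proposition~\ref{sol_space_O} (homogeneous $\omega$-Stokes problem at level $k=1$) produces a unique $\omega_0\in V_1(T)$. Setting $\omega=\omega_0+\omega_{\mathfrak B}$ and adding the two equations (using that $\bar{\mathsf A}^V_2$ is block-diagonal and agrees with $\mathsf A^V_2$ on $V_2$) one gets $\partial_t\omega=\mathsf P f_V-\nu\mathsf A^V_1\omega_0$ — the $\partial_t\omega_{\mathfrak B}$-terms cancel — whence $\partial_t\omega\in\mathcal C([0,T];V_{-1})$, so $\omega\in\Omega(T,\omega^{\rm i})$. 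Estimate \eqref{estim_XT} is then immediate from the a priori bound stated right after Proposition~\ref{stokes_regul}, together with the boundedness of $\mathsf P$, $\mathsf P^\perp$ and $(\bar{\mathsf A}^V_2)^{-1}$.

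\textbf{The operator $\mathsf Y_T$.} For $\omega\in\Omega(T,\omega^{\rm i})$ one has $\varLambda^V_{\mathsf r}(\omega)=\nabla^\perp\psi\cdot\nabla\omega=\nabla\cdot(\omega\,\nabla^\perp\psi)$ with $\psi=\bar\Delta_2^{-1}\omega$, and here $\omega$ carries no singular part so $\omega_{\mathsf r}=\omega$. I would check $\varLambda^V_{\mathsf r}(\omega)\in F(T)$ and prove \eqref{eval_YT} by establishing, for each of the three constituents of the $F(T)$-norm, a pointwise-in-time estimate phrased through the norms occurring in $\Omega(T)$, and then closing with Hölder in time. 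Writing, for the difference, $\delta\omega=\omega_2-\omega_1$ and $\delta\psi=\bar\Delta_2^{-1}\delta\omega$, one has $\varLambda^V_{\mathsf r}(\omega_2)-\varLambda^V_{\mathsf r}(\omega_1)=\nabla^\perp\delta\psi\cdot\nabla\omega_2+\nabla^\perp\psi_1\cdot\nabla\delta\omega$. For the $L^2(0,T;L^2_V)$-part one bounds each summand by Hölder ($\|\cdot\|_{\mathbf L^2(\F)}\le\|\nabla\psi\|_{\mathbf L^4(\F)}\|\nabla\omega\|_{\mathbf L^4(\F)}$), the two-dimensional embedding $H^{\frac12}(\F)\hookrightarrow L^4(\F)$, interpolation between the endpoints $\mathcal C([0,T];V_1)$ and $L^2(0,T;H^2_V)$ furnished by $\Omega(T)$, and the two-derivative gain of the Biot--Savart operator (so $\nabla\psi$ stays controlled in $L^\infty$ in space); this yields a bound $\mathbf c_\F(\|\omega_1\|_{\Omega(T)}+\|\omega_2\|_{\Omega(T)})\|\delta\omega\|_{\Omega(T)}$ with the time integral coming out in $L^{5/2}(0,T)$ rather than $L^2(0,T)$, hence a factor $T^{1/10}$ after Hölder. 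For the $\mathcal C([0,T];V_{-1})$-part one pairs with $\mathsf Q_1\theta$, integrates by parts once to reach $-(\omega\,\nabla^\perp\psi,\nabla\mathsf Q_1\theta)_{\mathbf L^2(\F)}$ and reuses the pointwise estimate \eqref{pmlopm}, which is valid for any element of $V_1$ and is uniform in $t$ since $\omega\in\mathcal C([0,T];V_1)$. For the $H^1(0,T;V_{-2})$-part of $\mathsf P^\perp\varLambda^V_{\mathsf r}(\omega)$ one differentiates in time, $\partial_t\varLambda^V_{\mathsf r}(\omega)=\nabla^\perp\partial_t\psi\cdot\nabla\omega+\nabla^\perp\psi\cdot\nabla\partial_t\omega$, pairs with $\theta\in V_2$ (legitimate since $\Sigma$ is of class $\mathcal C^{3,1}$, so $V_2\subset H^2(\F)$), integrates by parts twice so that both derivatives fall on $\theta$, and bounds the result by $\mathbf c_\F\|\nabla\psi\|_{\mathbf L^4(\F)}\|\nabla\partial_t\psi\|_{\mathbf L^4(\F)}$ in the spirit of \eqref{first_multi}; here $\partial_t\psi=\bar\Delta_2^{-1}\partial_t\omega$ with $\partial_t\omega\in L^2(0,T;V_0)\cap\mathcal C([0,T];V_{-1})$ by definition of $\Omega(T)$, so interpolation and Sobolev embedding give $\|\nabla\partial_t\psi\|_{\mathbf L^4(\F)}\in L^4(0,T)$ and the product lies in $L^2(0,T)$ (in fact slightly better), producing again a $T^{1/10}$. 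Summing the three contributions and invoking $T<1$ to absorb all powers $T^\alpha$ with $\alpha\ge 1/10$ into a single $T^{1/10}$ yields \eqref{eval_YT}; taking $\omega_1=\omega_2=\omega$ (or the analogous quadratic version of the same computation) gives that $\mathsf Y_T$ is well-defined.

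\textbf{Main obstacle.} The delicate part is the $H^1(0,T;V_{-2})$ control of $\mathsf P^\perp\varLambda^V_{\mathsf r}(\omega)$: the time derivative of the nonlinearity contains the borderline term $\nabla^\perp\partial_t\psi\cdot\nabla\omega$, and it is only the double integration by parts onto test functions in $V_2$ (hence the need for boundary regularity $\mathcal C^{3,1}$), combined with the two-derivative smoothing of Biot--Savart and the interpolation inequalities between the endpoint spaces defining $\Omega(T)$, that renders this quantity finite — and finite with enough residual time-integrability to furnish the contracting factor $T^{1/10}$. The rest is the same kind of Hölder/Sobolev/interpolation bookkeeping already performed in the proof of Lemma~\ref{a_rpiori_estim}.
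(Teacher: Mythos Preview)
Your treatment of $\mathsf X_T$ is essentially that of the paper and is fine (the paper obtains $\partial_t\omega\in\mathcal C([0,T];V_{-1})$ more directly from $\partial_t\omega=-\nu\mathsf A^V_1\omega+f_V$, but your variant works too).

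There is, however, a genuine gap in your treatment of $\mathsf Y_T$, precisely in the $\mathcal C([0,T];V_{-1})$ component of the $F(T)$-norm. You propose to ``reuse the pointwise estimate \eqref{pmlopm}, which is valid for any element of $V_1$ and is uniform in $t$''. That indeed gives a bound of the form
\[
\|\varLambda^V_{\mathsf r}(\omega_2)-\varLambda^V_{\mathsf r}(\omega_1)\|_{\mathcal C([0,T];V_{-1})}\leqslant\mathbf c_\F\big(\|\omega_1\|_{\Omega(T)}+\|\omega_2\|_{\Omega(T)}\big)\|\omega_2-\omega_1\|_{\Omega(T)},
\]
but with \emph{no} factor of $T$ whatsoever: a supremum norm on the left cannot be made small in $T$ by H\"older in time. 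Your closing sentence (``invoking $T<1$ to absorb all powers $T^\alpha$'') therefore does not apply to this piece, and \eqref{eval_YT} does not follow.

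The missing idea is that $\omega_1$ and $\omega_2$ both lie in $\Omega(T,\omega^{\rm i})$, hence $\omega_1(0)=\omega_2(0)$. The paper exploits this through
\[
\|\omega_2-\omega_1\|_{\mathcal C([0,T];V_0)}\leqslant T^{1/2}\|\partial_t\omega_2-\partial_t\omega_1\|_{L^2(0,T;V_0)},
\]
(see \eqref{estim_dotomega}) and the analogous estimate for $\nabla(\psi_2-\psi_1)$. This converts every occurrence of $\|\delta\omega\|_{\mathcal C([0,T];V_0)}$ or $\|\nabla\delta\psi\|_{\mathcal C([0,T];\mathbf L^4(\F))}$ into a $T^{1/2}$ times an $\Omega(T)$-norm, and is what produces the contracting factor in both the $\mathcal C([0,T];V_{-1})$ estimate \eqref{NLT_2} and in the cross-term of the $H^1(0,T;V_{-2})$ estimate \eqref{NLT_3}. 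Without this device the contraction argument of Theorem~\ref{meka_F} cannot be closed.
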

\begin{proof}
The mapping $\mathsf X_T$ is well-defined from the space $F_{\mathsf r}(T)$ (defined in \eqref{def:FT}) into $\bar V_1(T)$ according to Proposition~\ref{stokes_regul} and following Proposition~\ref{stokes_regul}, there exists a positive constant $\mathbf c_{[\F,\nu]}$ such that:
$$\|\omega\|_{\bar V_1(T)}\leqslant \mathbf c_{[\F,\nu]}\big[\|\omega^{\rm i}\|_{V_1}^2+\|f_V\|_{L^2(0,T; L^2_V)}^2+\|\partial _t (\mathsf P^\perp f_V)\|_{L^2(0,T;V_{-2})}^2\big]^{{\frac12}}.$$
However, comparing with Proposition~\ref{stokes_regul}, the source term $f_V$ is assumed herein to satisfy   the extra hypothesis $f_V\in \mathcal C([0,T];V_{-1})$ (and not only $\mathsf P^\perp f_V
\in  \mathcal C([0,T];V_{-1})$). We recall that every solution to the $\omega-$Stokes problem \eqref{main_NS_vorticity4_bis} satisfies also:
$$\partial_t \omega=-\mathsf A^V_1\omega+f_V\qquad\text{in }\F_T.$$
Since $\omega$ belongs in particular to $\mathcal C([0,T],V_1)$, we infer that $\partial_t\omega$ is in $\mathcal C([0,T];V_{-1})$ and finally that there exists a constant 
$\mathbf c_{[\F,\nu]}$ such that \eqref{estim_XT} holds.
\par
%
 %
For every $\theta\in V_1$, we have by definition:
\begin{subequations}
\begin{align}
\label{fV_V1}
\langle \varLambda^V_{\mathsf r}(\omega),\theta\rangle_{V_{-1},V_1}&=(\nabla^\perp\psi\cdot\nabla\omega,\mathsf Q_1\theta)_{L^2(\F)}
=-(\omega\nabla^\perp\psi,\nabla(\mathsf Q_1\theta))_{\mathbf L^2(\F)},\\
\label{fV_V1_b}
\langle \mathsf P^\perp \varLambda^V_{\mathsf r}(\omega),\theta\rangle_{V_{-1},V_1}&=(\mathsf P^\perp\big(\nabla^\perp\psi\cdot\nabla\omega\big),\mathsf Q_1\theta)_{L^2(\F)}
 =-(\nabla^\perp\psi\cdot\nabla\omega,\mathsf Q_1^\perp\theta)_{L^2(\F)},
 \end{align}
the latter expression resting on the equalities $\mathsf P^\perp\mathsf Q_1=({\rm Id}-\mathsf P)\mathsf Q_1=\mathsf Q_1-{\rm Id}=-\mathsf Q_1^\perp$. Assuming now that $\theta$ belongs to $V_2$, the right hand side in 
 \eqref{fV_V1_b} can be integrated by parts twice to obtain:
$$
(\nabla^\perp\psi\cdot\nabla\omega,\mathsf Q_2^\perp\theta)_{L^2(\F)}
 =(D^2(\mathsf Q_2^\perp\theta)\nabla\psi,\nabla^\perp\psi)_{\mathbf L^2(\F)},$$
whence it can be deduced   in particular that:
\begin{equation}
\label{f_VpV2}
\langle \partial_t\big(\mathsf P^\perp \varLambda^V_{\mathsf r}(\omega)\big),\theta\rangle_{V_{-2},V_2}=(D^2(\mathsf Q_2^\perp\theta)\nabla\partial_t\psi,\nabla^\perp\psi)_{\mathbf L^2(\F)}+
(D^2(\mathsf Q_2^\perp\theta)\nabla\psi,\nabla^\perp\partial_t\psi)_{\mathbf L^2(\F)}.
\end{equation}
\end{subequations}
%
The same arguments as those used in the proof of Equality \eqref{estim_varlambda_strong_2} yield:
$$\|\nabla^\perp\psi\cdot\nabla\omega \|_{L^2(0,T;L^2(\F))}^2\leqslant \mathbf c_{\F}T^{\frac15}\|\omega\|_{\mathcal C([0,T];V_1)}^{\frac{2}{5}}\|\psi\|^2_{\mathcal C([0,T];S_1)}
\|\omega\|_{L^2(0,T;H^2_V)}^{\frac{8}{5}},$$
which entails that:
%
\begin{equation}
\label{NLT_1}
\|\varLambda_{\mathsf r}^V(\omega_2)-\varLambda_{\mathsf r}^V(\omega_1)\|_{L^2(0,T;L^2_V)}\leqslant  \mathbf c_{\F}T^{\frac{1}{10}}\big[\|\omega_1\|_{\Omega(T)}^2+\|\omega_2\|_{\Omega(T)}^2\big]^{{\frac12}}
\|\omega_2-\omega_1\|_{\Omega(T)}.
\end{equation}
Considering now the expression \eqref{fV_V1}, we first easily obtain:
\begin{subequations}
\label{sub:tout}
\begin{multline}
\label{long_formula_o}
\|\omega_2\nabla^\perp\psi_2-\omega_1\nabla^\perp\psi_1\|_{\mathbf L^2(\F)}\leqslant \mathbf c_\F\|\omega_2-\omega_1\|_{V_0}^{\frac15}
\|\omega_2-\omega_1\|_{L^4(\F)}^{\frac45}\|\nabla\psi_2\|_{\mathbf L^5(\F)}\\
+ 
\mathbf c_\F\|\omega_1\|_{L^4(\F)}\|\nabla(\psi_2-\psi_1)\|_{\mathbf L^4(\F)}.
\end{multline}
On the one hand, since $\omega_1$ and $\omega_2$ share the same initial value, we are allowed to write that:
\begin{equation}
\label{estim_dotomega}
\|\omega_2-\omega_1\|_{\mathcal C([0,T];V_0)}\leqslant  T^{{\frac12}}\|\partial_t\omega_2-\partial_t\omega_1\|_{L^2(0,T;V_0)}.
\end{equation}
On the other hand, Sobolev embedding theorem ensures that:
\begin{align}
\|\omega_2-\omega_1\|_{\mathcal C([0,T];L^4(\F))}&\leqslant \mathbf c_\F \|\omega_2-\omega_1\|_{\mathcal C([0,T];V_1)}\\
\|\nabla\psi_2\|_{\mathcal C([0,T];\mathbf L^5(\F))}&\leqslant \mathbf c_\F\|\omega_2\|_{\mathcal C([0,T];V_0)}.
\end{align}
The second term in the right hand side of \eqref{long_formula_o} is estimated in a similar manner, thus:
\begin{equation}
\|\nabla(\psi_2-\psi_1)\|_{\mathcal C([0,T];\mathbf L^4(\F))}\leqslant \mathbf c_\F\|\omega_2-\omega_1\|_{\mathcal C([0,T];V_0)}
\leqslant \mathbf c_\F T^{{\frac12}}\|\partial_t\omega_2-\partial_t\omega_1\|_{L^2(0,T;V_0)}.
\end{equation}
\end{subequations}
Assuming that $T<1$,  the estimates \eqref{sub:tout} give rise to:
\begin{equation}
\label{NLT_2}
\|\varLambda_{\mathsf r}^V(\omega_2)-\varLambda_{\mathsf r}^V(\omega_1)\|_{\mathcal C([0,T];V_{-1})}\leqslant  \mathbf c_{\F}T^{\frac{1}{10}}\big[\|\omega_1\|_{\Omega(T)}^2+\|\omega_2\|_{\Omega(T)}^2\big]^{{\frac12}}
\|\omega_2-\omega_1\|_{\Omega(T)}.
\end{equation}
We turn now our attention to the right hand side of \eqref{f_VpV2}. On the one hand, we obtain that:
\begin{subequations}
\label{both_estim}
\begin{multline}
\||\nabla(\partial_t\psi_2-\partial_t\psi_1)||\nabla\psi_2|\|^2_{L^2(0,T;L^2(\F))}\\
\leqslant \mathbf c_\F T^{\frac15}\|\partial_t\omega_2-\partial_t\omega_1\|_{\mathcal C([0,T];V_{-1})}^{\frac{2}{5}}
\|\omega_2\|_{\mathcal C([0,T];V_0)}^2\|\partial_t\omega_2-\partial_t\omega_1\|_{L^2(0,T;V_0)}^{\frac{8}{5}}.
\end{multline}
On the other hand, using again \eqref{estim_dotomega}:
%
\begin{equation}
\||\nabla \partial_t\psi_1||\nabla(\psi_2-\psi_1)|\|_{L^2(0,T;L^2(\F))}\leqslant \mathbf c_\F T^{{\frac12}}\|\partial_t\omega_1\|_{L^2(0,T;V_0)}\|\partial_t\omega_2-\partial_t\omega_1\|_{L^2(0,T;V_0)}.
\end{equation}
\end{subequations}
Providing again that $T<1$, both estimates \eqref{both_estim} yield:
\begin{equation}
\label{NLT_3}
\|\partial_t(\varLambda_{\mathsf r}^V(\omega_2))-\partial_t(\varLambda_{\mathsf r}^V(\omega_1))\|_{L^2(0,T;V_{-2})}\leqslant \mathbf c_{\F}T^{\frac{1}{10}}\big[\|\omega_1\|_{\Omega(T)}^2+\|\omega_2\|_{\Omega(T)}^2\big]^{{\frac12}}
\|\omega_2-\omega_1\|_{\Omega(T)}.
\end{equation}
Estimate \eqref{eval_YT} derives now straightforwardly from \eqref{NLT_1}, \eqref{NLT_2} and \eqref{NLT_3}. This completes the proof.
\end{proof}
\begin{proof}[Proof of Theorem~\ref{meka_F}]
Define the mapping $\mathsf Z_T:f_V\in F(T)\mapsto \mathsf Y_T\circ\mathsf X_T(f_V)\in F(T)$ et let $f_V^{\rm i}=\mathsf Z_T(0)$. Then, according 
to the estimates of Lemma~\ref{mekawa_esim}:
\begin{align*}
\|\mathsf Z_T(f_V)-f_V^{\rm i}\|_{F(T)}&\leqslant \mathbf c_{[\F,\nu]}T^{\frac{1}{10}}\big(\|f_V\|_{F(T)}^2+\|\omega^{\rm i}_0\|_{V_1}^2\big)
\forallt f_V\in F(T),\\
\|\mathsf Z_T(f_V^1)-\mathsf Z_T(f_V^2)\|_{F(T)}&\leqslant \mathbf c_{[\F,\nu]}
T^{\frac{1}{10}}\big(\|f^1_V\|_{F(T)}^2+\|f^2_V\|_{F(T)}^2+\|\omega^{\rm i}\|_{V_1}^2\big)^{{\frac12}}\| f_V^1-f_V^2\|_{F(T)},
\end{align*}
for every $f_V^1,f_V^2$ in $F(T)$ and $T<1$. For every $R>0$, there exists a time $T^\ast<1$ (depending only on $\F$, $\nu$, $\|\omega^{\rm i}_0\|_{V_1}$ 
and $R$) such that $\mathsf Z_{T^\ast}$ is 
a contraction from $B(f_V^{\rm i},R)\subset F(T^\ast)$ into $B(f_V^{\rm i},R)$. From Banach fixed point theorem, the mapping $\mathsf Z_{T^\ast}$ admits a unique 
fixed point in $B(f_V^{\rm i},R)$, the image of which by the mapping $\mathsf X_{T^\ast}$ is a solution to System~\eqref{main_NS_vorticity4}
on $[0,T^\ast)$. We conclude that $T^\ast$ can be chosen arbitrarily large following the lines of the proof of Theorem~\ref{THEO:EXIST_NS_STRONG}, using the estimate of Lemma~\ref{a_rpiori_estim}.
Finally, every solution is also a strong solution in the sense of Definition~ \ref{defi:Navier_Stokes_strong_vorticity}, which 
was proved to be unique.
\end{proof}
\section{Concluding remarks}
\label{SEC:conclude}
By introducing a suitable functional framework, the 2D vorticity equation has been shown to be not a classical parabolic equation but 
rather a parabolic-elliptic coupling. Indeed, applying the harmonic Bergman
projection to the equation $\partial_t\omega-\nu\Delta\omega+u\cdot\nabla\omega=0$
leads to its splitting   into, on the one hand, an evolution diffusion-advection equation for the non-harmonic 
part of $\omega$ (equation \eqref{main_vorti_re_NS1_bis}) and on the other hand 
a (steady) elliptic equation for the remaining harmonic part (equation \eqref{eq:second_regukl_NS1_bis}). 
By exploiting this structure of the equation, we were able to prove the exponential decay of the palinstrophy for large time, 
a result which was 
not known so far. In this work, it is worth noticing the surprising role played by 
the circulation in this context, circulation being well known for entering the analysis of perfect fluids but usually less came across  in the context 
of viscous fluids. The other point that deserves to be highlighted is the simple form taken by the Biot-Savart operator, described in Theorem~\ref{biot-savart_simple}.
\par
In a forthcoming work, we shall apply our method  to fluid-structure problems by considering a set of disks, pinned
at their centers but free to rotate, immersed in 
a viscous fluid. The equations governing the coupled fluid-rotating disks system can be stated in terms of the vorticity of the fluid and the angular velocities of the 
disks only. The analysis of these equations   will obviously be carried out in nonprimitive variables. 
%
\appendix
\section{Gelfand triple}
\label{gelf_triple}
\subsection{General settings}
Let $H_1$ and ${H_0}$ be two Hilbert spaces. Their scalar products are denoted respectively by $(\cdot,\cdot)_1$ and $(\cdot,\cdot)_0$  and their norms 
by $\|\cdot\|_{1}$ and $\|\cdot\|_0$. We assume that:
\begin{equation}
\label{first_Gel}
{H_1}\subset {H_0},
\end{equation}
where the inclusion is continuous and dense. 
Applying Riesz representation theorem, the space $H_0$ is identified with its dual $H'_0$. It means that, for every $u\in H_0$, the linear form 
$(\cdot,u)_0$ is identified with $u$.  The space $H_0$ is usually referred to as the pivot space. 
Therefore, 
the space ${H_1}$ cannot be identified with its dual $H_{-1}$ but with a subspace of $H_0$. Thus, the configuration
\begin{equation}
\label{first_Gel_1}
{H_1}\subset {H_0}\subset {H_{-1}},
\end{equation}
is called (with   a slight abuse of  terminology) Gelfand triple. The inclusions are both continuous and dense.
\par
%
We define the operator 
\begin{equation}
\label{eq:defA1}
{\mathsf A}_{1}:{H_1}\longrightarrow {H_{-1}},\qquad {\mathsf A}_{1}u=(u,\cdot)_1,\quad\text{for all }u\in H_1,
\end{equation}
and it can be readily verified that ${\mathsf A}_1$ is an isometry.
%
%
%
%
%
Then, we define  the space ${H_2}={\mathsf A}_{1}^{-1}H_0$ and the operator ${\mathsf A}_2:{H_2}\to H_0$ by setting, for every $u\in {H_2}$:
\begin{equation}
\label{defA2}
({\mathsf A}_2u,\cdot)_0={\mathsf A}_1u\quad\text{in }H_{-1}.
\end{equation}
We equip the space $H_2$ with the scalar product:
$$(u,v)_2=({\mathsf A}_2 u,{\mathsf A}_2 v)_0,\qquad \text{for all }u,v\in {H_2},$$
and the corresponding norm $\|\cdot\|_2$.
%
%
%
\begin{lemma}
The space ${H_2}$ is a Hilbert space, the operator ${\mathsf A}_2$ is an isometry  and the inclusion ${H_2}\subset {H_1}$ is continuous and dense. It entails that 
the inclusion $H_{-1}\subset H_{-2}$, where $H_{-2}$ stands for the dual space of $H_2$, is continuous and dense as well.
\end{lemma}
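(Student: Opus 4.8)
The plan is to verify the four assertions of the lemma --- that $H_2$ is complete, that $\mathsf A_2$ is an isometry onto $H_0$, that the inclusion $H_2\subset H_1$ is continuous and dense, and that consequently $H_{-1}\subset H_{-2}$ is continuous and dense --- in that order, each step feeding into the next. The starting point is that $\mathsf A_1:H_1\to H_{-1}$ is an isometric isomorphism (it is an isometry by construction, and surjective by the Riesz representation theorem applied in $H_1$), so $H_2=\mathsf A_1^{-1}H_0$ is well defined and $\mathsf A_1$ restricts to a bijection $H_2\to H_0$. Since $H_0$ is identified with a subspace of $H_{-1}$ via the pivot identification, the relation \eqref{defA2} makes sense and uniquely determines $\mathsf A_2 u\in H_0$ for each $u\in H_2$; moreover $\mathsf A_2$ is exactly $\mathsf A_1$ viewed as a map into $H_0$, hence linear and bijective.

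First I would record that $\mathsf A_2$ is an isometry from $(H_2,\|\cdot\|_2)$ onto $H_0$: this is immediate from the very definition $(u,v)_2=(\mathsf A_2u,\mathsf A_2v)_0$, which says $\mathsf A_2$ preserves inner products, together with surjectivity already established. Completeness of $H_2$ then follows formally: $\|\cdot\|_2$ is a genuine norm because $\mathsf A_2$ is injective (if $\|u\|_2=0$ then $\mathsf A_2u=0$ so $u=0$), and a Cauchy sequence in $H_2$ maps under $\mathsf A_2$ to a Cauchy sequence in the complete space $H_0$, whose limit pulls back through the isometric bijection $\mathsf A_2^{-1}$ to a limit in $H_2$. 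So $H_2$ is a Hilbert space and $\mathsf A_2$ an isometric isomorphism onto the pivot space.

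Next I would check that $H_2\subset H_1$ with continuous dense inclusion. For continuity, take $u\in H_2$; then $\|u\|_1^2=(u,u)_1=\langle \mathsf A_1 u,u\rangle_{H_{-1},H_1}=(\mathsf A_2u,u)_0\le\|\mathsf A_2u\|_0\,\|u\|_0\le \mathbf c\,\|u\|_2\,\|u\|_1$, using the continuous inclusion $H_1\subset H_0$ in the last step; dividing by $\|u\|_1$ gives $\|u\|_1\le \mathbf c\,\|u\|_2$. Density is the one genuinely substantive point, and I would argue it by a standard duality/orthogonality argument: suppose $v\in H_1$ is orthogonal in $H_1$ to all of $H_2$, i.e. $(u,v)_1=0$ for every $u\in H_2$; since $(u,v)_1=(\mathsf A_2u,v)_0$ and $\mathsf A_2H_2=H_0$, this says $(w,v)_0=0$ for every $w\in H_0$, whence $v=0$ because $H_1\subset H_0$ densely (so $v$, lying in $H_0$, must be $0$). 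Therefore $H_2$ is dense in $H_1$.

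Finally, the statement about the duals: whenever $X\subset Y$ is a continuous dense inclusion of Hilbert (or Banach) spaces, the restriction map $Y'\to X'$ is continuous and injective, and it has dense range when $X$ is reflexive --- here all spaces are Hilbert, hence reflexive. Applying this to $H_2\subset H_1$ gives that $H_{-1}=H_1'\hookrightarrow H_2'=H_{-2}$ is continuous and dense. I expect the only real obstacle to be making the density argument airtight: one must be careful that the orthogonality is taken in the $H_1$ inner product, that $\mathsf A_2$ indeed surjects onto all of $H_0$ (not merely a dense subspace), and that the final appeal to density of $H_1$ in $H_0$ is legitimate for the element $v$ which a priori lives only in $H_1$. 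Everything else is bookkeeping with the isometries already in hand.
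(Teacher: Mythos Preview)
Your proof is correct and follows essentially the same route as the paper's. The only difference is cosmetic: you first observe that $\mathsf A_2$ is a bijection onto $H_0$ (since $\mathsf A_1:H_1\to H_{-1}$ is an isometric isomorphism by Riesz and $H_2=\mathsf A_1^{-1}H_0$), which makes completeness of $H_2$ immediate from completeness of $H_0$; the paper instead works directly with Cauchy sequences in $H_2$ and establishes surjectivity of $\mathsf A_2$ afterwards. The density and continuity arguments are identical to the paper's.
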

%
\begin{proof}
The estimate below is satisfied by every $u\in {H_2}$:
\begin{equation}
\label{Cauchy}
\|u\|_0 \leqslant \mathbf c  \|{\mathsf A}_2u\|_0.
\end{equation}
Indeed, the continuity of the inclusion \eqref{first_Gel} yields $\|u\|_0^2\leqslant \mathbf c \|u\|_1^2$. Then, the definitions of both the operator 
${\mathsf A}_2$ and the space $H_2$ lead to the identity $\|u\|_1^2=({\mathsf A}_2u,u)_0$. Applying Cauchy-Schwarz inequality, we obtain \eqref{Cauchy}.
\par
Let assume that $(u_n)_{n\geqslant 0}$  is Cauchy sequence in ${H_2}$, or equivalently that $({\mathsf A}_2u_n)_{n\geqslant 0}$ is a Cauchy sequence in $H_0$, and
denote by $v^\ast$ the limit of $({\mathsf A}_2u_n)_{n\geqslant 0}$ in $H_0$. According to \eqref{Cauchy}, we deduce that  
$(u_n)_{n\geqslant 0}$ is a Cauchy sequence in $H_0$ as well. The equality:
$$ ({\mathsf A}_2u_n-{\mathsf A}_2u_m,u_n-u_m)_0=\|u_n-u_m\|_{1}^2,$$
available for every pair of indices $n$ and $m$, entails that the sequence $(u_n)_{n\geqslant 0}$ is also a Cauchy sequence in ${H_1}$. We denote by $u^\ast$ its limit in this space. Letting $n$ goes to $\infty$ in the identity:
$$({\mathsf A}_2 u_n,\cdot)_0={\mathsf A}_1 u_n\quad\text{in }H_{-1},$$
%
%
we obtain:
$$(v^\ast,\cdot)_0={\mathsf A}_1 u^\ast,$$
and therefore $u^\ast$ belongs to ${H_2}$. This proves that ${H_2}$ is complete and hence is a Hilbert space.
\par
Let now $v$ be in $H_2^{\perp}$ in ${H_1}$. There exists $u\in H_2$ such that ${\mathsf A}_{2}u=v$ and:
$$\|v\|_0^2=({\mathsf A}_2u,v)_0=(u,v)_1=0.$$
It follows that $H_2^{\perp}=0$ in $H_1$ and therefore   ${H_2}$ is dense in ${H_1}$.
\par
The continuity of the inclusion $H_2\subset H_1$ results from the identity:
$$\|u\|_1^2=({\mathsf A}_2 u,u)_0,\quad\text{for all }u\in H_2,$$
combined with Cauchy-Schwarz inequality:
$$({\mathsf A}_2 u,u)_0\leqslant \|{\mathsf A}_2u\|_0\|u\|_0,,\quad\text{for all }u\in H_2,$$
and estimate \eqref{Cauchy}.
\par
Finally, the operator ${\mathsf A}_2$ is onto by definition and it is also injective because the identity ${\mathsf A}_2u=0$ for some $u\in {H_2}$ leads to $({\mathsf A}_2u,u)_0=\|u\|^2_1=0$. 
The proof of the lemma is now completed. 
\end{proof}
%
Let us define the operator ${\mathsf A}_0:H_0\to {H_{-2}}$ by:
\begin{equation}
\label{def_A0}
{\mathsf A}_0:u\in H_0\mapsto ({\mathsf A}_2\cdot,u)_0\in {H_{-2}}.
\end{equation}
%
\begin{lemma}
The operator ${\mathsf A}_0$ is an isometry.
\end{lemma}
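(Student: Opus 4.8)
The goal is to show that $\mathsf A_0 : H_0 \to H_{-2}$ defined by $\mathsf A_0 u = (\mathsf A_2 \cdot\, , u)_0$ is an isometry, where $H_{-2}$ is the dual of $H_2$ equipped with its dual norm and $H_2$ carries the scalar product $(u,v)_2 = (\mathsf A_2 u, \mathsf A_2 v)_0$. The natural approach is to exhibit $\mathsf A_0$ as a composition of maps already known to be isometries, or else to compute the dual norm of $\mathsf A_0 u$ directly and recognize it as $\|u\|_0$. I would first observe that $\mathsf A_2 : H_2 \to H_0$ is a (surjective) isometry by the preceding lemma, hence its adjoint $\mathsf A_2^\ast : H_0' \to H_2'$ — i.e. $H_0 \to H_{-2}$ after the Riesz identification of $H_0$ with $H_0'$ — is also an isometry (the adjoint of a surjective isometry between Hilbert spaces is an isometry). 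The only thing to check is that $\mathsf A_0$ is exactly this adjoint: by definition, for $u \in H_0$ and $w \in H_2$, $\langle \mathsf A_0 u, w\rangle_{H_{-2},H_2} = (\mathsf A_2 w, u)_0$, which is precisely $\langle u, \mathsf A_2 w\rangle_{H_0',H_0}$, i.e. $\langle \mathsf A_2^\ast u, w\rangle$. So $\mathsf A_0 = \mathsf A_2^\ast$.

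\textbf{Alternative, self-contained route.} If one prefers not to invoke the abstract fact about adjoints of isometries, I would compute directly. Fix $u \in H_0$, $u \neq 0$. Then
\[
\|\mathsf A_0 u\|_{H_{-2}} = \sup_{w \in H_2,\, w \neq 0} \frac{|(\mathsf A_2 w, u)_0|}{\|w\|_2} = \sup_{w \in H_2,\, w \neq 0} \frac{|(\mathsf A_2 w, u)_0|}{\|\mathsf A_2 w\|_0}.
\]
Since $\mathsf A_2 : H_2 \to H_0$ is onto, as $w$ ranges over $H_2 \setminus \{0\}$ the element $v = \mathsf A_2 w$ ranges over all of $H_0 \setminus \{0\}$, so the supremum equals $\sup_{v \in H_0,\, v \neq 0} |(v,u)_0| / \|v\|_0 = \|u\|_0$ by Cauchy–Schwarz (with equality attained at $v = u$). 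This gives $\|\mathsf A_0 u\|_{H_{-2}} = \|u\|_0$, which is the claim; injectivity and surjectivity of $\mathsf A_0$ onto $H_{-2}$ then follow since $\mathsf A_0$ is a norm-preserving linear map and, by the same surjectivity of $\mathsf A_2$ together with the Riesz representation of $H_2'$, every element of $H_{-2}$ is of this form. I would also note in passing that $H_0$ is thereby identified with the dual of $H_2$ via $\mathsf A_0$, consistent with the Gelfand-triple picture.

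\textbf{Main obstacle.} There is no real analytic difficulty here; the statement is essentially bookkeeping about duals. The one point requiring a moment of care is making sure the two scalar products / norms in play are the ones intended: $H_2$ must be normed by $\|\mathsf A_2 \cdot\|_0$ (not the $H_1$-norm it inherits as a subspace), and $H_{-2}$ by the corresponding dual norm, so that the surjectivity of $\mathsf A_2 : H_2 \to H_0$ can be used to turn the supremum over $w \in H_2$ into a supremum over $v \in H_0$. Once that identification is in place the computation is immediate, so I would present the direct computation (second route) as the cleanest option, mentioning the adjoint interpretation as a remark.
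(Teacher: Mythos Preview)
Your proposal is correct, and your ``self-contained route'' is exactly the paper's argument: the paper computes $\|\mathsf A_0 u\|_{-2}$ as the supremum over $v\in H_2$, rewrites $\|v\|_2=\|\mathsf A_2 v\|_0$, uses surjectivity of $\mathsf A_2$ to replace the supremum by one over $H_0$, and separately checks injectivity and surjectivity via Riesz on $H_2$ just as you do. Your adjoint interpretation $\mathsf A_0=\mathsf A_2^\ast$ is a pleasant repackaging of the same computation rather than a genuinely different route.
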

\begin{proof}
The operator ${\mathsf A}_0$ is  injective. Indeed, the identity ${\mathsf A}_0u=0$ for some $u\in H_0$ entails that $({\mathsf A}_2{\mathsf A}_2^{-1}u,u)_0=\|u\|_0^2=0$. The operator 
${\mathsf A}_0$ is also onto: Any element of $H_{-2}$ can be written, according to Riesz theorem, as $(\cdot,v)_2=({\mathsf A}_2\cdot,{\mathsf A}_2v)_0$ for some $v\in {H_2}$, and 
hence it is equal to ${\mathsf A}_0 u$ with $u={\mathsf A}_1v\in H_0$.  
\par
Finally, the operator 
${\mathsf A}_0$ is also an isometry since we have, for every $u\in H_0$:
$$\|{\mathsf A}_0u\|_{-2}=\sup_{v\in {H_2}\atop v\neq 0}\frac{|({\mathsf A}_2v,u)_0|}{\|v\|_2}=\sup_{v\in {H_2}\atop v\neq 0}\frac{|({\mathsf A}_2v,u)_0|}{\|{\mathsf A}_2v\|_0}
=\sup_{w\in H_0\atop w\neq 0}\frac{|(w,u)_0|}{\|w\|_0}=\|u\|_0,$$
and the proof is completed.
\end{proof}
So far, we have proved that in the chain of inclusions:
$$H_2\subset H_1\subset H_0\subset H_{-1}\subset H_{-2},$$
every inclusion is continuous and dense and that the operators ${\mathsf A}_k:H_k\to H_{k-2}$ for $k=0,1,2$ are isometries.
\par
By induction, we can next define ${H_{k+2}}={\mathsf A}^{-1}_{k+1}{H_k}$ for every positive integer $k$. The operator 
$${\mathsf A}_{k+2}:{H_{k+2}}\longrightarrow {H_{k}}$$ 
is defined from the operator ${\mathsf A}_{k+1}$ by setting ${\mathsf A}_{k+2}u={\mathsf A}_{k+1}u$ for every $u\in {H_{k+2}}$.
The spaces ${H_{k+2}}$  are Hilbert spaces once 
equipped with the scalar products:
$$(u,v)_{k+2}=({\mathsf A}_{k+2}u,{\mathsf A}_{k+2}v)_{k},\qquad \text{for all }u,v\in {H_{k+2}}.$$
For every $k\geqslant 1$, the dual space of ${H_{k}}$ is denoted by ${H_{-k}}$ and we introduce the operator 
$${\mathsf A}_{-k}:{H_{-k}}\longrightarrow {H_{-k-2}},$$
defined by duality as follows:
\begin{equation}
\label{eq:def_Akdual}
{\mathsf A}_{-k} u = \langle u,{\mathsf A}_{k+2}\cdot\rangle_{-k,k}\in {H_{-k-2}},\qquad \text{for all }u\in {H_{-k}}.
\end{equation}
It can be readily verified that the  Hilbert spaces $H_k$ ($k\in\mathbb Z$) satisfy:
$$\ldots \subset H_{k+1}\subset H_{k}\subset {H_{k-1}}\subset \ldots \subset H_{1}
\subset H_0\subset H_{-1}\subset \ldots \subset {H_{-k+1}}\subset H_{-k}\subset H_{-k-1}\subset \ldots$$
each inclusion being continuous and dense. Furthermore, for every integer $k$, the operator:
$${\mathsf A}_k:H_k\longrightarrow H_{k-2},$$
is an isometry.
%
\begin{lemma}
For every integers $n,n'$ such that $n'\leqslant n$ and for every $u\in H_n$, the following equality holds:
\begin{equation}
\label{Akexpand}
{\mathsf A}_n u={\mathsf A}_{n'}u.
\end{equation}
\end{lemma}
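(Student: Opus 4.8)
The plan is to prove the nesting compatibility \eqref{Akexpand} by induction on the gap $n-n'\geqslant 0$, exploiting that at each adjacent step the construction of $\mathsf A_{k+2}$ was deliberately set up as a \emph{restriction} of $\mathsf A_{k+1}$. First I would record the base case $n-n'=0$, which is trivial since $\mathsf A_n=\mathsf A_n$. For the inductive step it suffices, by transitivity, to treat the case $n'=n-1$: once we know $\mathsf A_n u=\mathsf A_{n-1}u$ for all $u\in H_n$ (one-step compatibility), a telescoping argument gives $\mathsf A_n u=\mathsf A_{n-1}u=\cdots=\mathsf A_{n'}u$ for any $u\in H_n\subset H_{n-1}\subset\cdots\subset H_{n'}$, using that each space is contained in the next and each identity is an equality of elements of the appropriate $H_{k-2}$ viewed inside the common larger space $H_{n'-2}$ (all inclusions being continuous and dense, the identifications are unambiguous).

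So the heart of the matter is the one-step claim. Here I would split according to the sign of $n$, because the spaces and operators are defined by three different recipes: for $n\geqslant 3$ the operator $\mathsf A_n:H_n\to H_{n-2}$ is defined by $\mathsf A_n u=\mathsf A_{n-1}u$ for $u\in H_n=\mathsf A_{n-1}^{-1}H_{n-3}$, so $\mathsf A_n u=\mathsf A_{n-1}u$ is true \emph{by definition} and there is nothing to prove. The genuinely substantive cases are the low indices where the recipe changes: $n=2$ (compare $\mathsf A_2$ with $\mathsf A_1$), $n=1$ (compare $\mathsf A_1$ with $\mathsf A_0$), $n=0$ (compare $\mathsf A_0$ with $\mathsf A_{-1}$), and $n\leqslant -1$ (compare $\mathsf A_{-k}$ with $\mathsf A_{-k-1}$, both defined by duality \eqref{eq:def_Akdual}). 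For $n=2$: if $u\in H_2$ then by \eqref{defA2} the element $\mathsf A_2 u\in H_0$ satisfies $(\mathsf A_2 u,\cdot)_0=\mathsf A_1 u$ in $H_{-1}$; under the identification of $H_0$ with a subspace of $H_{-1}$ via the pivot-space Riesz identification, $\mathsf A_2 u$ \emph{is} the functional $(\mathsf A_2 u,\cdot)_0$, i.e. $\mathsf A_2 u=\mathsf A_1 u$ as elements of $H_{-1}$. For $n=1$: if $u\in H_1\subset H_0$, then $\mathsf A_0 u=(\mathsf A_2\cdot,u)_0$ by \eqref{def_A0}; I must check this functional on $H_2$ agrees with $\mathsf A_1 u$ restricted to $H_2$. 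For $v\in H_2$ we have, using \eqref{defA2} and symmetry of $(\cdot,\cdot)_0$, $\langle\mathsf A_0 u,v\rangle=(\mathsf A_2 v,u)_0=(u,\mathsf A_2 v)_0=\langle\mathsf A_1 v,u\rangle=(v,u)_1=(u,v)_1=\langle\mathsf A_1 u,v\rangle$, which is exactly the claim since $H_2$ is dense in $H_1$ and hence $\mathsf A_1 u\in H_{-1}\subset H_{-2}$ is determined by its action on $H_2$. For $n=0$ and $n\leqslant-1$ one argues analogously, unwinding \eqref{eq:def_Akdual}: for $u\in H_{-k}$ and $v\in H_{k+2}$, $\langle\mathsf A_{-k}u,v\rangle_{-k-2,k+2}=\langle u,\mathsf A_{k+2}v\rangle_{-k,k}$, and then one uses the already-established one-step identity $\mathsf A_{k+2}v=\mathsf A_{k+1}v$ at the positive index together with density of $H_{k+2}$ in $H_{k+1}$ to conclude $\mathsf A_{-k}u=\mathsf A_{-k-1}u$ as elements of $H_{-k-2}$.

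The main obstacle, such as it is, is purely bookkeeping: making the chain of identifications precise. Every $\mathsf A_k u$ lives a priori in $H_{k-2}$, but the statement asserts equality of $\mathsf A_n u$ and $\mathsf A_{n'}u$, which live in different spaces $H_{n-2}\supset H_{n'-2}$; one has to be careful that ``equality'' means equality after embedding into the larger dual space, and that this embedding is the adjoint of the (continuous, dense) inclusion $H_{n'-2-\text{dual side}}\hookrightarrow\cdots$, together with the single pivot-space identification $H_0\cong H_0'$. Once these conventions are fixed — and they are already fixed in the appendix's setup — each individual verification is a one-line computation of the kind displayed above, and no estimates or regularity arguments are needed. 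I would therefore present the proof as: (i) reduce to the one-step case by telescoping; (ii) dispatch $n\geqslant 3$ by definition; (iii) do the four low-index computations $n=2,1,0$ and $n\leqslant-1$, the last by a secondary induction on $k$ using the positive-index cases already in hand.
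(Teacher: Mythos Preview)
Your proposal is correct and follows essentially the same route as the paper: both split according to the sign of the index, treat the nonnegative case as immediate from the recursive definitions (the paper simply calls it ``obvious'', whereas you spell out the transitional checks at $n=2$ and $n=1$), and handle the nonpositive case via the duality formula \eqref{eq:def_Akdual} together with the already-established positive-index identity. The only organizational difference is that the paper compares $\mathsf A_{-k}$ with $\mathsf A_{-k'}$ directly for any $0\leqslant k'\leqslant k$ in one stroke, writing $\mathsf A_{-k}u=\langle u,\mathsf A_{k+2}\cdot\rangle_{-k,k}=\langle u,\mathsf A_{k'+2}\cdot\rangle_{-k',k'}=\mathsf A_{-k'}u$, rather than reducing to the one-step case and telescoping.
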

\begin{proof}
For $n'\geqslant 0$, the property \eqref{Akexpand} is obvious. 
\par
%
%
On the other hand, let $0\leqslant k'\leqslant k$ be given and assume that $u\in H_{-k'}\subset H_{-k}$. The definition of ${\mathsf A}_{-k}u$ leads to:
$${\mathsf A}_{-k}u=\langle u,{\mathsf A}_{k+2}\cdot\rangle_{-k,k}\in H_{-k-2}.$$
But ${\mathsf A}_{k+2}={\mathsf A}_{k'+2}$ in $H_{k+2}$ and therefore:
$$\langle u,{\mathsf A}_{k+2}\cdot\rangle_{-k,k}=\langle u,{\mathsf A}_{k'+2}\cdot\rangle_{-k',k'}={\mathsf A}_{-k'}u.$$
The proof is now complete.
\end{proof}
\begin{lemma}
For every integer $k$, the following identity hold:
\begin{equation}
\label{main_def_AK}
({\mathsf A}_{k+1}u,v)_{k-1}=(u,v)_k,\quad\text{for all }u\in H_{k+1},\quad\text{for all }v\in H_k.
\end{equation}
\end{lemma}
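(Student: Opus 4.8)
The plan is to prove identity \eqref{main_def_AK}, namely $({\mathsf A}_{k+1}u,v)_{k-1}=(u,v)_k$ for all $u\in H_{k+1}$ and all $v\in H_k$, by reducing every case to the defining relations of the operators ${\mathsf A}_1$, ${\mathsf A}_2$, ${\mathsf A}_0$ and the duality operators ${\mathsf A}_{-k}$, and then invoking the extension property \eqref{Akexpand} to handle higher indices uniformly. Since the whole chain is generated from the base triple $H_1\subset H_0\subset H_{-1}$ by the inductive construction, I would organize the proof around the value of $k$: first the two ``anchor'' cases $k=0$ and $k=1$ that sit closest to the pivot space, then a straightforward induction upward for $k\geqslant 2$, and finally the nonpositive indices by duality.

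First I would treat $k=1$. Here the claim reads $({\mathsf A}_2 u,v)_0=(u,v)_1$ for $u\in H_2$, $v\in H_1$, which is exactly the defining identity \eqref{defA2} of ${\mathsf A}_2$ paired against $v\in H_1$, using the notational convention \eqref{eq:defA1} that ${\mathsf A}_1 u=(u,\cdot)_1$ together with the identification of $H_0$ with its dual. Next, the case $k=0$: the claim is $({\mathsf A}_1 u,v)_{-1}=(u,v)_0$ for $u\in H_1$, $v\in H_0$; but for $u\in H_1$ and $v\in H_0$ this is again literally \eqref{eq:defA1} read through the $H_0$--$H_{-1}$ pairing, i.e. $\langle {\mathsf A}_1 u,v\rangle_{-1,1}$ when $v\in H_1$, and then extended to all $v\in H_0$ by density — alternatively, one uses the definition \eqref{def_A0} of ${\mathsf A}_0$, writing $\langle {\mathsf A}_0 v,\cdot\rangle_{-2,2}=({\mathsf A}_2\cdot,v)_0$ and comparing with \eqref{defA2}. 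For $k\geqslant 2$ I would argue by induction: by construction ${\mathsf A}_{k+1}$ is the restriction of ${\mathsf A}_k$ to $H_{k+1}$, the space $H_{k+1}$ is ${\mathsf A}_k^{-1}H_{k-1}$, and the scalar product on $H_{k+1}$ is $(u,v)_{k+1}=({\mathsf A}_{k+1}u,{\mathsf A}_{k+1}v)_{k-1}$; so $(u,v)_{k}$ for $u\in H_{k+1}\subset H_k$ and $v\in H_k$ unfolds, via the definition of $(\cdot,\cdot)_k$ and the identity \eqref{Akexpand} relating ${\mathsf A}_k$ and ${\mathsf A}_{k+1}$, to $({\mathsf A}_{k+1}u,v)_{k-1}$. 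For the nonpositive indices, the operator ${\mathsf A}_{-k}$ is defined purely by transposition in \eqref{eq:def_Akdual}, so $({\mathsf A}_{-k}u,v)$ interpreted in the pivot pairing $H_{-k-2}$--$H_{k+2}$ becomes $\langle u,{\mathsf A}_{k+2}v\rangle_{-k,k}$, and one checks this equals the scalar product $(u,v)$ of the dual space $H_{-k}$ using that ${\mathsf A}_{k+2}$ is an isometry onto $H_k$ and the Riesz identification of $H_{-k}$ — this is the mirror image of the positive-index computation.

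The only delicate point, and the one I would be most careful about, is keeping the identifications straight: the pivot space $H_0$ is identified with $H_0'$ via Riesz, so $(\cdot,\cdot)_k$, the duality pairings $\langle\cdot,\cdot\rangle_{-k,k}$, and the scalar products of the dual Hilbert spaces $H_{-k}$ all have to be threaded together consistently, and \eqref{main_def_AK} is precisely the statement that the single symbol ${\mathsf A}_{k+1}:H_{k+1}\to H_{k-1}$ produced at every level is the one induced by the bilinear form $(u,v)_k$ no matter which ``side'' of the pivot we are on. Concretely, the subtlety is that on the left-hand side $({\mathsf A}_{k+1}u,v)_{k-1}$ must be read as the scalar product of $H_{k-1}$ when $k\geqslant 1$ but as a duality pairing $\langle {\mathsf A}_{k+1}u,v\rangle$ when $k\leqslant 0$, and the statement implicitly asserts these two readings agree on the overlap $H_{k-1}\cap(\text{dual pairing})$ — which is guaranteed by \eqref{Akexpand} and the density of $H_{k-1}$ in $H_{k-2}$. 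Once these bookkeeping conventions are pinned down, each case is a one-line unwinding of a definition, so the proof is short; the work is entirely in stating the right case distinction and citing \eqref{eq:defA1}, \eqref{defA2}, \eqref{def_A0}, \eqref{eq:def_Akdual} and \eqref{Akexpand} in the correct order.
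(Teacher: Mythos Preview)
Your upward induction from the base case $k=1$ is fine and matches the paper's argument exactly: the defining relation \eqref{defA2} gives the case $k=1$, and the step from $k$ to $k+1$ follows by substituting $v=\mathsf A_{k+1}z$ and using the definition $(u,z)_{k+1}=(\mathsf A_{k+1}u,\mathsf A_{k+1}z)_{k-1}$.

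The gap is in your treatment of $k\leqslant 0$. You write that $(\mathsf A_{k+1}u,v)_{k-1}$ ``must be read as \ldots\ a duality pairing $\langle\mathsf A_{k+1}u,v\rangle$ when $k\leqslant 0$'' --- but this is not so. The symbol $(\cdot,\cdot)_{k-1}$ always denotes the Hilbert scalar product of $H_{k-1}$; for negative indices this is the dual (Riesz) scalar product on $H_{k-1}=(H_{1-k})'$, which is \emph{not} the duality pairing $\langle\cdot,\cdot\rangle_{k-1,1-k}$. Concretely, for $k=0$ you have $\mathsf A_1u\in H_{-1}$ and $v\in H_0$, so $\langle\mathsf A_1u,v\rangle_{-1,1}$ does not even type-check unless $v\in H_1$; your density argument then proves the wrong identity. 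The same confusion recurs in your sketch for $k\leqslant -1$, where you write ``$(\mathsf A_{-k}u,v)$ interpreted in the pivot pairing $H_{-k-2}$--$H_{k+2}$'' with $v$ in the wrong space.

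The paper sidesteps all of this with a downward induction step that is the mirror image of the upward one: from $(\mathsf A_{k+1}u,v)_{k-1}=(u,v)_k$ one uses the isometry identity $(u,v)_k=(\mathsf A_ku,\mathsf A_kv)_{k-2}$ together with $\mathsf A_{k+1}=\mathsf A_k$ on $H_{k+1}$, sets $w=\mathsf A_{k+1}u$ (which ranges over all of $H_{k-1}$), and reads off $(w,v)_{k-1}=(w,\mathsf A_kv)_{k-2}$ --- exactly the identity at level $k-1$. This step uses only scalar products, never a duality bracket, and works uniformly for every $k$. Replacing your separate duality argument by this symmetric downward induction both fixes the error and shortens the proof.
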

\begin{proof}
The proof is by induction on $k$. The equality \eqref{main_def_AK} is true for $k=1$ according to the definition \eqref{defA2} of ${\mathsf A}_2$.
Let us assume that \eqref{main_def_AK} is true for some integer $k$. By definition, if $z$ belongs to $H_{k+2}$, then 
${\mathsf A}_{k+1}z$ belongs to $H_k$. Replacing $v$ by ${\mathsf A}_{k+1}z$ in \eqref{main_def_AK}, we obtain:
$$({\mathsf A}_{k+1}u,{\mathsf A}_{k+1}z)_{k-1}=(u,{\mathsf A}_{k+1}z)_k,\quad\text{for all }u\in H_{k+1},\quad\text{for all }z\in H_{k+2},$$
that is to say, reorganizing the terms:
$$({\mathsf A}_{k+2}z,u)_k=(z,u)_{k+1},\quad\text{for all }u\in H_{k+1},\quad\text{for all }z\in H_{k+2},$$
and therefore, formula \eqref{main_def_AK} is true replacing $k$ by $k+1$. Let us verify that it is also true for $k-1$. Thus, we have:
$$({\mathsf A}_{k+1}u,v)_{k-1}=(u,v)_k=({\mathsf A}_k u,{\mathsf A}_kv)_{k-2},\quad\text{for all }u\in H_{k+1},\quad\text{for all }v\in H_k.$$
But ${\mathsf A}_{k+1}$ is an isometry from $H_{k+1}$ onto $H_{k-1}$ and ${\mathsf A}_k={\mathsf A}_{k+1}$ in $H_{k+1}$, then:
$$(w,v)_{k-1}=(u,v)_k=(w,{\mathsf A}_kv)_{k-2},\quad\text{for all }w\in H_{k-1},\quad\text{for all }v\in H_k.$$
The proof is now complete.
\end{proof}
%
\begin{lemma}
\label{def_DA}
Let $k$ be an integer, $w$ be in $H_{k-1}$ and $u$ be in $H_k$ such that:
$$(w,v)_{k-1}=(u,v)_k,\quad\text{for all }v\in H_k.$$
Then $u\in H_{k+1}$ and $w={\mathsf A}_{k+1}u$. 
\end{lemma}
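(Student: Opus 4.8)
The plan is to exhibit $u$ as an element of the domain of $\mathsf A_{k+1}$ by inverting the previously established isometry structure and to identify $w$ with its image. First I would invoke the definition $H_{k+1}=\mathsf A_{k+2}^{-1}H_{k-1}$ together with the fact that $\mathsf A_{k+1}:H_{k+1}\to H_{k-1}$ is an isometry onto $H_{k-1}$: since $w\in H_{k-1}$, there exists a unique $\tilde u\in H_{k+1}$ with $\mathsf A_{k+1}\tilde u=w$. The goal is then to prove $\tilde u=u$, which together with $\tilde u\in H_{k+1}$ gives both claims at once.

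Next I would feed $\tilde u$ into the characterizing identity \eqref{main_def_AK}: for all $v\in H_k$,
\[
(w,v)_{k-1}=(\mathsf A_{k+1}\tilde u,v)_{k-1}=(\tilde u,v)_k.
\]
Comparing with the hypothesis $(w,v)_{k-1}=(u,v)_k$ for all $v\in H_k$, we obtain $(\tilde u-u,v)_k=0$ for every $v\in H_k$. Here one must be slightly careful: $\tilde u\in H_{k+1}\subset H_k$ by the continuous dense inclusion, and $u\in H_k$ by hypothesis, so $\tilde u-u$ is a genuine element of the Hilbert space $H_k$. Testing against $v=\tilde u-u\in H_k$ yields $\|\tilde u-u\|_k^2=0$, hence $\tilde u=u$. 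Therefore $u=\tilde u\in H_{k+1}$ and $\mathsf A_{k+1}u=\mathsf A_{k+1}\tilde u=w$, which is exactly the assertion of the lemma.

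I would then add a short remark reconciling indices: although \eqref{main_def_AK} and the surjectivity/isometry properties were stated by induction starting from low indices, the lemma \eqref{Akexpand} guarantees $\mathsf A_{k+1}$ coincides with $\mathsf A_{k'+1}$ on overlapping domains, so there is no ambiguity in writing $\mathsf A_{k+1}$ for an arbitrary integer $k$, positive or negative; the surjectivity of $\mathsf A_{k+1}:H_{k+1}\to H_{k-1}$ for every $k\in\mathbb Z$ has already been recorded right before the statement of this lemma.

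I do not expect a serious obstacle here: the only mild subtlety is making sure the orthogonality argument takes place in the correct space $H_k$ (so that the test element $\tilde u-u$ is admissible), rather than in $H_{k+1}$ or $H_{k-1}$; this is handled by the continuity and density of the inclusions in the Gelfand chain. The rest is a one-line uniqueness argument leveraging that $\mathsf A_{k+1}$ is an isometric isomorphism and that \eqref{main_def_AK} exactly characterizes its action.
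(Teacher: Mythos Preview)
Your proof is correct and follows essentially the same approach as the paper: define $\tilde u=\mathsf A_{k+1}^{-1}w$, use \eqref{main_def_AK} to obtain $(\tilde u-u,v)_k=0$ for all $v\in H_k$, and conclude $u=\tilde u$. The paper's proof is just a terser version of what you wrote.
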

\begin{proof}
Let $\tilde u = {\mathsf A}_{k+1}^{-1}w$. Then $(\tilde u-u,v)_k=0$ for every $v\in H_k$ and therefore $u=\tilde u$.
\end{proof}
%
%
%
%
\subsection{Isometric chain of embedded Hilbert spaces}
\label{isometric_chain}
Let $\{H_k,\,k\in\mathbb Z\}$ and $\{\hat H_k,\,k\in\mathbb Z\}$ be two families of embedded Hilbert spaces build from Gelfand triples. 
We assume that $H_0$ and $\hat H_0$ are not necessary the pivot spaces. 
As usual, for every integer $k$, there exist 
isometries ${\mathsf A}_k:H_k\to H_{k-2}$ and $\hat {\mathsf A}_k:\hat H_k\to \hat H_{k-2}$ such that ${\mathsf A}_k={\mathsf A}_{k-1}$ in $H_k$ and $\hat {\mathsf A}_k=\hat {\mathsf A}_{k-1}$ in $\hat H_k$. 
\par
We assume 
furthermore that there exist  isometries $p_0:H_0\to \hat H_0$ and $p_1:H_1\to \hat H_1$ such that $p_1=p_0$ in $H_1$. For every integer $k\geqslant 2$, 
we define by induction $p_k=\hat {\mathsf A}_{k}^{-1} p_{k-2} {\mathsf A}_k$ and for every $k\geqslant 1$, we set $p_{-k-2}=\hat {\mathsf A}_{-k} p_{-k} {\mathsf A}_{-k}^{-1}$.
\begin{lemma}
\label{p_kp_k}
For every pair of integers $k$ and $k'$ such that $k'\leqslant k$:
\begin{equation}
\label{pkpk}
p_{k'}=p_k\quad\text{in }H_k.
\end{equation}
Moreover, for every integer $k$, the operator $p_k:H_k\to \hat H_k$ is an isometry.
\end{lemma}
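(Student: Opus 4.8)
\textbf{Proof plan for Lemma~\ref{p_kp_k}.}

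The plan is to prove both assertions simultaneously by a two-sided induction on $k$, starting from the base cases $k=0,1$ which are given by hypothesis. First I would observe that the defining relations $p_k = \hat{\mathsf A}_k^{-1} p_{k-2} {\mathsf A}_k$ for $k\geqslant 2$ and $p_{-k-2} = \hat{\mathsf A}_{-k} p_{-k} {\mathsf A}_{-k}^{-1}$ for $k\geqslant 1$ are exactly modelled on the formulas \eqref{extend-kneg} and \eqref{extend-kneg_2} used for ${\mathsf P}_k$ and ${\mathsf Q}_k$; in particular, since ${\mathsf A}_k$ and $\hat{\mathsf A}_k$ are isometries and (at the base) $p_0,p_1$ are isometries, each $p_k$ is a composition of isometries, hence an isometry. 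This disposes of the second claim as soon as the definitions are unwound, so the real content is the compatibility relation \eqref{pkpk}.

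For the compatibility relation I would proceed as follows. It suffices to show $p_{k-1}=p_k$ on $H_k$ for every integer $k$, since \eqref{pkpk} for general $k'\leqslant k$ then follows by a finite chain of such equalities together with the fact (from the Gelfand-triple construction, cf. \eqref{Akexpand}) that ${\mathsf A}_k={\mathsf A}_{k-1}$ on $H_k$ and $\hat{\mathsf A}_k=\hat{\mathsf A}_{k-1}$ on $\hat H_k$. The base case $p_0=p_1$ on $H_1$ is the standing hypothesis. For the upward step, assume $p_{k-2}=p_{k-1}$ on $H_{k-1}$ and let $u\in H_k\subset H_{k-1}$. Then, using ${\mathsf A}_k u={\mathsf A}_{k-1}u$ (valid since $u\in H_k$), the induction hypothesis applied to ${\mathsf A}_k u\in H_{k-2}$, and the compatibility $\hat{\mathsf A}_k=\hat{\mathsf A}_{k-1}$ on $\hat H_k$, one computes
\[
p_k u = \hat{\mathsf A}_k^{-1} p_{k-2} {\mathsf A}_k u = \hat{\mathsf A}_k^{-1} p_{k-1} {\mathsf A}_{k-1} u,
\]
and one must recognize the right-hand side as $p_{k-1}u$; this is where one invokes the definition $p_{k-1}=\hat{\mathsf A}_{k-1}^{-1} p_{k-3}{\mathsf A}_{k-1}$ together with a second application of the induction hypothesis ($p_{k-3}=p_{k-1}$ on $H_{k-1}$, hence $p_{k-3}{\mathsf A}_{k-1}u = p_{k-2}{\mathsf A}_{k-1}u$ after matching levels), so that $p_k u$ collapses to $p_{k-1}u$. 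The downward step for negative indices is entirely analogous, using the dual defining formula and the fact that $\hat{\mathsf A}_{-k}$, ${\mathsf A}_{-k}$ restrict consistently.

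The main obstacle I anticipate is bookkeeping the index shifts carefully: the two defining recursions step by $2$, while the target relation \eqref{pkpk} steps by $1$, so the induction must be set up so that the hypothesis is available at \emph{two} consecutive levels (say $p_{k-2}=p_{k-1}$ on the relevant space) rather than one, and one must be scrupulous about on which space each identity ${\mathsf A}_k={\mathsf A}_{k-1}$, $\hat{\mathsf A}_k=\hat{\mathsf A}_{k-1}$ is being used, since these hold only after restriction. Once the indices are aligned this is purely formal; no analysis beyond the already-established isometry properties of the ${\mathsf A}_k$ and $\hat{\mathsf A}_k$ and the base-case hypothesis $p_0=p_1$ is needed. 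I would also remark that this lemma is precisely the abstract engine behind Theorem~\ref{LEM:P_kQ_k}, Lemma~\ref{LEM:prop_delta} and Lemma~\ref{LEM:iso_stokes}, so the proof should be written in a way that makes the hypotheses ($p_0,p_1$ isometries with $p_1=p_0$ on $H_1$) transparently checkable in those applications.
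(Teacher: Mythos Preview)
Your treatment of the isometry assertion is correct and matches the paper: each $p_k$ is a composition of isometries, so the claim follows by induction from the base cases $k=0,1$.

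For the compatibility relation \eqref{pkpk}, however, your purely algebraic plan has a genuine gap at the very first step. Both recursions (the upward one defining $p_k$ for $k\geqslant 2$ and the downward one for negative indices) step by~$2$, so when you unwind $p_k u = \hat{\mathsf A}_k^{-1} p_{k-2}{\mathsf A}_k u$ and $p_{k-1}u=\hat{\mathsf A}_{k-1}^{-1}p_{k-3}{\mathsf A}_{k-1}u$ and set $w={\mathsf A}_k u={\mathsf A}_{k-1}u\in H_{k-2}$, what you actually need is $p_{k-3}=p_{k-2}$ on $H_{k-2}$, i.e.\ the induction hypothesis at level $k-2$, not $k-1$. (Your displayed equation ``$\hat{\mathsf A}_k^{-1}p_{k-2}{\mathsf A}_k u=\hat{\mathsf A}_k^{-1}p_{k-1}{\mathsf A}_{k-1}u$'' is ill-typed: $p_{k-1}$ is only defined on $H_{k-1}$, while ${\mathsf A}_{k-1}u\in H_{k-3}$.) Your argument therefore needs \emph{two} base cases, $p_0=p_1$ on $H_1$ (given) and $p_1=p_2$ on $H_2$ (not given). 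The latter is equivalent to $p_{-1}=p_0$ on $H_0$, so you cannot obtain it from the downward recursion either without circularity.

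The paper breaks this circularity by exploiting that $p_0,p_1$ are \emph{isometries}, not merely bijections, via the scalar-product identity \eqref{main_def_AK}: from $({\mathsf A}_{k+2}u,v)_{H_k}=(u,v)_{H_{k+1}}$ one gets $(p_k{\mathsf A}_{k+2}u,p_kv)_{\hat H_k}=(p_{k+1}u,p_{k+1}v)_{\hat H_{k+1}}$, i.e.\ $(\hat{\mathsf A}_{k+2}p_{k+2}u,\mathbf v)_{\hat H_k}=(p_{k+1}u,\mathbf v)_{\hat H_{k+1}}$ for all $\mathbf v\in\hat H_{k+1}$; Lemma~\ref{def_DA} then yields $p_{k+1}u\in\hat H_{k+2}$ and $\hat{\mathsf A}_{k+2}p_{k+1}u=\hat{\mathsf A}_{k+2}p_{k+2}u$, hence $p_{k+1}=p_{k+2}$ on $H_{k+2}$. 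This variational step, starting from $k=0$, is precisely what is missing from your plan; once you have it, your recursive manipulation works for all subsequent indices.
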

\begin{proof}
Since ${\mathsf A}_k$ and $\hat {\mathsf A}_k$ are isometries for every integer $k$, we can draw the same conclusion for $p_k$ providing that $p_{k-2}$ is an isometry as well. The conclusion 
follows by induction for every $k\geqslant 0$. The same reasoning allows proving that $p_{-k}$ is also an isometry for every $k\geqslant 1$.
\par
It remains to verify that the equalities \eqref{pkpk} are true. 
Assume that for some index $k\geqslant 0$, $p_k=p_{k+1}$ in $H_{k+1}$. So, from the identity:
$$({\mathsf A}_{k+2}u,v)_{H_k}=(u,v)_{H_{k+1}},\forallt u\in H_{k+2},\quad\text{for all }v\in H_{k+1},$$
we deduce that:
$$(p_k {\mathsf A}_{k+2}u,p_k v)_{\hat H_k}=(p_{k+1} u,p_{k+1} v)_{\hat H_{k+1}},\forallt u\in H_{k+2},\quad\text{for all }v\in H_{k+1}.$$
From the definition of $p_{k+2}$, we deduce that $p_k {\mathsf A}_{k+2}=\hat {\mathsf A}_{k+2}p_{k+2}$, whence, denoting ${\mathbf v}=p_{k}v=p_{k+1}v$:
$$(\hat {\mathsf A}_{k+2}p_{k+2}u,{\mathbf v})_{\hat H_k}=(p_{k+1} u,{\mathbf v})_{\hat H_{k+1}},\forallt u\in H_{k+2},\quad\text{for all }{\mathbf v}\in \hat H_{k+1}.$$
This equality entails first that $\hat {\mathsf A}_{k+2}p_{k+2}u=\hat {\mathsf A}_{k+2}p_{k+1}u$ and next, since $\hat {\mathsf A}_{k+2}$ is invertible, that $p_{k+2}u=p_{k+1}u$ for every $u\in H_{k+2}$. The conclusion follows by induction
and the cases $k\leqslant 0$ are treated similarly.
\end{proof}
%
\subsection{Semigroup}
\label{SUB:Semigroup}
We assume that the inclusion \eqref{first_Gel} 
is in addition compact. In that case, we claim:
%
\begin{lemma}
For every integer $k$, the inclusion $H_{k+1}\subset H_k$ is compact.
\end{lemma}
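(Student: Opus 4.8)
The statement to prove is: if the inclusion $H_1 \subset H_0$ is compact (in addition to being continuous and dense), then for every integer $k$ the inclusion $H_{k+1} \subset H_k$ is compact. The plan is to reduce each inclusion $H_{k+1} \subset H_k$ to the single compact inclusion $H_1 \subset H_0$ by conjugating with the isometries ${\mathsf A}_k$ constructed in the first part of the appendix. First I would recall the two structural facts already established: each ${\mathsf A}_k : H_k \to H_{k-2}$ is an isometric isomorphism, and ${\mathsf A}_{k}$ agrees with ${\mathsf A}_{k'}$ on their common domain when $k' \le k$ (Lemma with \eqref{Akexpand}). The key observation is that ${\mathsf A}_{k+1}$ maps $H_{k+1}$ isometrically onto $H_{k-1}$ and simultaneously maps $H_k$ isometrically onto $H_{k-2}$ (since ${\mathsf A}_k = {\mathsf A}_{k+1}$ on $H_{k+1} \subset H_k$, and ${\mathsf A}_k : H_k \to H_{k-2}$ is onto). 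Hence ${\mathsf A}_{k+1}$ restricts to an isometric bijection of the pair $(H_{k+1}, H_k)$ onto the pair $(H_{k-1}, H_{k-2})$, i.e. the inclusion $H_{k+1} \hookrightarrow H_k$ is unitarily equivalent to the inclusion $H_{k-1} \hookrightarrow H_{k-2}$.

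The argument then proceeds by induction in both directions from the base case $k = 0$, where $H_1 \subset H_0$ is compact by hypothesis. For $k \ge 1$: suppose $H_k \subset H_{k-1}$ is compact; given a bounded sequence $(u_n)$ in $H_{k+1}$, the images $({\mathsf A}_{k+1} u_n)$ form a bounded sequence in $H_{k-1}$ (isometry), which by the induction hypothesis has a subsequence converging in $H_{k-2}$; applying the bicontinuous inverse ${\mathsf A}_{k+1}^{-1} = {\mathsf A}_k^{-1}$ (which maps $H_{k-2}$ isometrically onto $H_k$) shows the corresponding subsequence of $(u_n)$ converges in $H_k$. This gives compactness of $H_{k+1} \subset H_k$. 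For $k \le -1$ one runs the same scheme downward: if $H_{k+1} \subset H_k$ is known to be compact, then writing the inclusion $H_k \subset H_{k-1}$ as the conjugate of $H_{k+2} \subset H_{k+1}$ under ${\mathsf A}_{k+2}^{-1}$ (using the dual-side definition \eqref{eq:def_Akdual} and the matching relation \eqref{Akexpand}), one transfers compactness. A cleaner way to phrase the whole thing: an isometry intertwining a pair of continuous dense inclusions transports the compactness of one inclusion to the other, and all the inclusions $H_{k+1} \subset H_k$ lie in a single such isometry-orbit generated by the ${\mathsf A}_j$'s, with $H_1 \subset H_0$ in the orbit.

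The only genuinely delicate point is bookkeeping at the dual levels (negative indices): one must be careful that ${\mathsf A}_{-k}$ as defined by duality in \eqref{eq:def_Akdual} really is the isometric extension of the "upstairs" operators and really does carry the inclusion $H_{-k} \subset H_{-k-1}$ onto $H_{-k+2} \subset H_{-k+1}$ — this uses \eqref{Akexpand} for negative indices, which is already proved in the excerpt, plus the fact that adjoints (equivalently, duals under the Gelfand identification) of compact operators are compact, or simply that the reflexivity of Hilbert spaces lets us read the compact inclusion $H_{k+1} \subset H_k$ backwards as a compact inclusion of dual spaces $H_{-k} \subset H_{-k-1}$. I expect this dual-side verification to be the main obstacle, though it is routine given the isometry properties already in hand; the positive-index case is entirely mechanical. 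I would write out the $k \ge 0$ induction in full and then remark that the $k \le -1$ case follows by the same conjugation argument together with the standard fact that the transpose of a compact operator between reflexive Banach spaces is compact (Schauder's theorem), applied to the inclusion maps.
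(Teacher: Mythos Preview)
Your approach is correct and takes a genuinely different route from the paper's. The paper establishes the second base case $k=1$ (the inclusion $H_2 \subset H_1$) by a direct weak/strong convergence argument: given $(u_n)$ weakly null in $H_2$, one checks that $(\mathsf{A}_2 u_n)$ is weakly null in $H_0$ while $(u_n)$ is strongly null in $H_0$ (by the assumed compactness of $H_1 \subset H_0$), and then the identity $\|u_n\|_1^2 = (\mathsf{A}_2 u_n, u_n)_0$ forces $\|u_n\|_1 \to 0$. With the two base cases $k=0$ and $k=1$ in hand, the paper dispatches all remaining $k$ in one line via the isometries $\mathsf{A}_k$. Your conjugation argument bypasses the direct computation for $k=1$: since $\mathsf{A}_k$ carries the inclusion $H_{k+1} \hookrightarrow H_k$ isometrically onto $H_{k-1} \hookrightarrow H_{k-2}$, all inclusions of a fixed parity are simultaneously compact, and Schauder's theorem (applied to $H_1 \hookrightarrow H_0$ to obtain compactness of $H_0 \hookrightarrow H_{-1}$) is what links the two parities. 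This is more systematic and transports unchanged to any isometric chain built as in the appendix, whereas the paper's argument exploits the specific relation \eqref{main_def_AK}.

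One small bookkeeping slip to fix: in your induction step for $k \ge 1$ you state the hypothesis as ``$H_k \subset H_{k-1}$ compact'' but then use ``$H_{k-1} \subset H_{k-2}$ compact'' to extract the convergent subsequence. Since the conjugation shifts the index by $2$, the induction is really step-$2$; you therefore need two starting cases, namely $k=0$ (given) and $k=-1$ (via Schauder), after which the isometry transfer covers all integers. Make this explicit and the argument is complete.
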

\begin{proof}
We address the case $k=1$. 
Assume that the sequence $(u_n)_{n\geqslant 0}$ is weakly convergent toward $0$ in ${H_2}$. On the one hand, it means that:
$$(u_n,v)_2=({\mathsf A}_2 u_n,{\mathsf A}_2v)_0\longrightarrow 0\text{ as }n\to+\infty\forallt v\in {H_2},$$
and therefore, that:
$$({\mathsf A}_2 u_n,w)_0\longrightarrow 0\text{ as }n\to+\infty\forallt w\in H_0.$$
That is, $({\mathsf A}_2u_n)_{n\geqslant 0}$ is weakly convergent toward $0$ in $H_0$. On the other hand,  since ${H_2}$ is 
continuously included into ${H_1}$, the sequence $(u_n)_{n\geqslant 0}$ is also weakly convergent toward $0$ in ${H_1}$ and hence strongly convergent in $H_0$. It follows that:
$$\|u_n\|_{1}^2=({\mathsf A}_2u_n,u_n)_0\longrightarrow 0\text{ as }n\to+\infty.$$
Since the operators $\mathsf A_k$ were proved to be isometries for every $k$, the other cases follows and the proof is completed.
\end{proof}
%
For every integer $k$, we define the unbounded operators $\mathcal A_k$ of domain $D(\mathcal A_k)=H_{k+2}$ in $H_k$ by:
\begin{equation}
\label{def:unboundAk}
\mathcal A_k x= \mathsf A_{k+2}x\forallt x\in D(\mathcal A_k).
\end{equation}
\begin{prop}
For every integer $k$, the operator $\mathcal A_k$ is self-adjoint with compact inverse. All the operators $\mathcal A_k$ share the same 
spectrum that consists in a sequence $(\lambda_n)_{n\geqslant 1}$ of positive eigenvalues that tends to $+\infty$. All the operators $\mathcal A_k$ 
share also the same eigenfunctions, denoted by $e_n$ ($n\geqslant 1$) and for every nonnegative integer $n$:
$$e_n\in H_\infty\qquad\text{with}\qquad H_\infty= \bigcap_{p\geqslant 0} H_p.$$
The eigenfunctions are chosen to form an orthogonal  Riesz basis in every $H_k$ and they are scaled to be of unit norm in $H_0$.
\end{prop}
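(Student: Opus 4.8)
The plan is to reduce everything to the standard spectral theory of a single self-adjoint compact operator, applied in the pivot space $H_0$, and then transport the result to all the spaces $H_k$ via the isometries $\mathsf A_k$. First I would show that $\mathcal A_0$ is self-adjoint with compact inverse. Self-adjointness follows from the identity \eqref{main_def_AK}: for $u,v\in D(\mathcal A_0)=H_2$ one has $(\mathcal A_0 u,v)_0=(\mathsf A_2 u,v)_0=(u,v)_1=(u,\mathsf A_2 v)_0=(u,\mathcal A_0 v)_0$, so $\mathcal A_0$ is symmetric; since $\mathcal A_0=\mathsf A_2:H_2\to H_0$ is onto with bounded inverse $\mathsf A_2^{-1}=\mathsf A_1^{-1}|_{H_0}:H_0\to H_2\subset H_1\subset H_0$, and the inclusion $H_2\subset H_0$ is compact by the preceding lemma, the inverse $\mathcal A_0^{-1}$ is a compact self-adjoint operator on $H_0$; a symmetric operator with an everywhere-defined bounded (hence self-adjoint) inverse is itself self-adjoint. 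Positivity is immediate: $(\mathcal A_0 u,u)_0=(u,u)_1=\|u\|_1^2>0$ for $u\neq 0$, so $\mathcal A_0^{-1}$ is a positive compact operator and the Hilbert--Schmidt spectral theorem gives an orthonormal basis $(e_n)_{n\geqslant 1}$ of $H_0$ consisting of eigenfunctions, $\mathcal A_0 e_n=\lambda_n e_n$ with $\lambda_n>0$ and $\lambda_n\to+\infty$.

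Next I would establish the regularity $e_n\in H_\infty$. This is a bootstrap: $e_n\in H_2$ by construction, and $\mathsf A_2 e_n=\lambda_n e_n$ rewrites as $e_n=\lambda_n^{-1}\mathsf A_2 e_n$; but if $e_n\in H_{k+2}$ then $\mathsf A_{k+4}^{-1}$ applied to $\lambda_n e_n\in H_{k+2}$ shows $e_n\in H_{k+4}$ — more cleanly, from $\mathsf A_{k+2}e_n=\lambda_n e_n$ with $e_n\in H_{k+2}$ one gets $e_n=\lambda_n^{-1}\mathsf A_{k+2}e_n\in H_k$, and running the preimage direction, $e_n\in H_{k+2}$ and $\mathsf A_{k+2}^{-1}e_n=\lambda_n^{-1}e_n\in H_{k+2}$ forces $e_n\in H_{k+4}$. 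Iterating yields $e_n\in\bigcap_{p\geqslant 0}H_p=H_\infty$.

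Then comes the transport step, which is the only place requiring a little care. For each integer $k$ I claim $\mathcal A_k$ has the same eigenvalues and eigenfunctions. Since $\mathsf A_k$ restricted appropriately and its iterates are isometries intertwining the operators — precisely, $\mathsf A_{k+2}$ maps $H_{k+2}$ isometrically onto $H_k$ and, by \eqref{Akexpand}, $\mathsf A_{k+2}e_n=\mathsf A_2 e_n=\lambda_n e_n$ for all $k$, so each $e_n$ is an eigenfunction of $\mathcal A_k$ with the same eigenvalue $\lambda_n$. Conversely, conjugation by the isometry $\mathsf A_2^{k}$ (suitably interpreted as the composite of the $\mathsf A_j$'s linking $H_k$ to $H_0$) identifies $\mathcal A_k$ with $\mathcal A_0$ up to unitary equivalence, so the spectra coincide and there are no other eigenvalues. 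Self-adjointness of $\mathcal A_k$ in $H_k$ and compactness of $\mathcal A_k^{-1}$ follow by the same unitary equivalence, or directly by repeating the $k=0$ argument with \eqref{main_def_AK} and the compact inclusion $H_{k+2}\subset H_k$.

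Finally I would record the basis and normalization claims. The family $(e_n)$ is orthogonal in every $H_k$: for $m\neq n$, $(e_m,e_n)_k=(\mathsf A_k e_m,\mathsf A_k e_n)_{k-2}=\lambda_m\lambda_n(e_m,e_n)_{k-2}$, and iterating down to $H_0$ (where orthogonality holds) gives $(e_m,e_n)_k=0$; moreover $\|e_n\|_k^2=\lambda_n^{\,k}\|e_n\|_0^2=\lambda_n^{\,k}$, so rescaling each $e_n$ by $\lambda_n^{-k/2}$ produces an orthonormal basis of $H_k$ — but since we want a single family working in all $H_k$ simultaneously, we keep $\|e_n\|_0=1$ and observe that $(e_n)$ is then an orthogonal Riesz basis of each $H_k$ (completeness in $H_k$ follows from completeness in $H_0$ together with density of $H_k$ and the isometry property). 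The main obstacle, such as it is, is bookkeeping in the transport step: making the unitary equivalence between $\mathcal A_k$ and $\mathcal A_0$ fully rigorous for negative $k$ requires invoking the duality definition \eqref{eq:def_Akdual} and checking that the dual pairing identifications are compatible with \eqref{Akexpand}; everything else is a direct application of the compact self-adjoint spectral theorem.
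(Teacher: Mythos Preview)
The paper states this proposition without proof, treating it as a standard consequence of the spectral theorem for self-adjoint operators with compact resolvent; your proposal supplies exactly the details one would expect and is correct in approach and in substance.

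Two small places could be tightened. First, the bootstrap paragraph is a bit tangled: the clean statement is that if $e_n\in H_{k}$ with $\mathsf A_{k}e_n=\lambda_n e_n$, then $\lambda_n e_n\in H_k$ and hence $e_n=\mathsf A_{k+2}^{-1}(\lambda_n e_n)\in H_{k+2}$, since $\mathsf A_{k+2}$ coincides with $\mathsf A_k$ on $H_{k+2}$ by \eqref{Akexpand} and is a bijection $H_{k+2}\to H_k$. Second, for completeness of $(e_n)$ in $H_k$ you should say explicitly: if $v\in H_k$ satisfies $(v,e_n)_k=0$ for all $n$, then $\lambda_n^{k}(v,e_n)_0=0$, hence $(v,e_n)_0=0$ for all $n$, so $v=0$; for negative $k$ the same argument runs through the duality pairing using \eqref{eq:def_Akdual}. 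With these clarifications your argument is complete.
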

%
The spaces $H_k$ are isometric to the spaces:
$$\ell_k=\Big\{u=(u_n)_{n\geqslant 1}\in\mathbb R^{\mathbb N}\,:\,\sum_{n\geqslant 1} \lambda^k_n u_n^2<+\infty\Big\},$$
provided with the scalar product:
$$(u,v)_{\ell_k}=\sum_{n\geqslant 1} \lambda_n^k u_n v_n\forallt u=(u_n)_{n\geqslant 1}\quad\text{ and }\quad v=(v_n)_{n\geqslant 1}\text{ in } \ell_k,$$
the isometries being obviously: 
$$\mathcal I_k:u\in H_k\mapsto ((u,e_n)_k)_{n\geqslant 1}\in\ell_k\quad\text{with inverse}\quad \mathcal I^{-1}_k:u=(u_n)_{n\geqslant 1}
\in\ell_k\longmapsto 
\sum_{n=1}^{+\infty}u_n e_n\in H_k.$$
In $\ell_k$ we define the strongly continuous semigroup of contraction $(\mathbb T_k(t))_{t\geqslant 0}$ by:
$$\mathbb T_k(t)u=\Big(e^{-\lambda_n t}u_n\Big)_{n\geqslant 0}\forallt t>0\text{ and }u=(u_n)_{n\geqslant 0}\in\ell_k.$$
This semigroup admits the operator $\mathcal B_k=\mathcal I_k\mathcal A_k\mathcal I^{-1}_k$ as infinitesimal generator. 
We deduce that the semigroup $(\mathbb S_k(t))_{t\geqslant 0}$ defined by:
$$\mathbb S_k(t)=\mathcal I^{-1}_k\mathbb T(t)\mathcal I_k,$$
is a strongly continuous semigroup of contraction in $H_k$ with infinitesimal generator $\mathcal A_k$. It is a simple exercice to verify that:
\begin{lemma}
\begin{enumerate}
\item 
For every $u\in \ell_k$ and for every positive real number $T$:
$$\mathbb T_k(\cdot)u\in H^1(0,T;\ell_{k-1})\cap \mathcal C([0,T];\ell_k)\cap L^2(0,T;\ell_{k+1}).$$
\item
Let $v$ be in $L^2(0,T;\ell_{k-1})$ and define $w(s)=\int_0^t \mathbb T_k(t-s)v(s)\ds$ for every $t\in(0,T)$. Then:
$$w\in H^1(0,T;\ell_{k-1})\cap \mathcal C([0,T];\ell_k)\cap L^2(0,T;\ell_{k+1}).$$
\end{enumerate}
\end{lemma}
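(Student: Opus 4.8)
The plan is to reduce everything to elementary scalar estimates on the coordinates, exploiting the fact that $\mathbb T_k(t)$ acts diagonally. Write $u=(u_n)_{n\geqslant 1}$, so that the $n$-th coordinate of $\mathbb T_k(t)u$ is the smooth scalar curve $t\mapsto e^{-\lambda_n t}u_n$, and recall that $\|v\|_{\ell_j}^2=\sum_{n\geqslant 1}\lambda_n^j v_n^2$ with $\lambda_n\geqslant\lambda_1>0$ for every $n$; in particular the inclusions $\ell_{j+1}\subset\ell_j$ are continuous, so the three target spaces make sense. I would first prove all the asserted bounds for finitely supported sequences, where no convergence issue arises, and then extend by density.

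For point (i), when $u$ has finitely many nonzero coordinates, Tonelli's theorem gives
$$\int_0^T\|\mathbb T_k(t)u\|_{\ell_{k+1}}^2\dt=\sum_{n\geqslant 1}\lambda_n^{k+1}u_n^2\int_0^T e^{-2\lambda_n t}\dt\leqslant\tfrac12\sum_{n\geqslant 1}\lambda_n^k u_n^2=\tfrac12\|u\|_{\ell_k}^2,$$
and the same computation, with the coordinatewise derivative $\tfrac{\rm d}{{\rm d}t}\mathbb T_k(t)u=(-\lambda_n e^{-\lambda_n t}u_n)_{n\geqslant 1}$ and the identity $\lambda_n^{k-1}\lambda_n^2=\lambda_n^{k+1}$, bounds this derivative in $L^2(0,T;\ell_{k-1})$ by $\tfrac12\|u\|_{\ell_k}^2$. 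Continuity of $t\mapsto\mathbb T_k(t)u$ into $\ell_k$ is the strong continuity of the semigroup already recorded (or, directly, dominated convergence against the summable majorant $4\lambda_n^k u_n^2$). For finitely supported $u$ the path is a finite sum of smooth scalar curves, hence absolutely continuous from $[0,T]$ into $\ell_{k-1}$ with the stated derivative, so it lies in $H^1(0,T;\ell_{k-1})$.

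For point (ii) I would use the scalar energy inequality for $y'+\lambda y=f$, $y(0)=0$: multiplying by $y$ yields $\tfrac{\rm d}{{\rm d}t}|y|^2+\lambda|y|^2\leqslant\lambda^{-1}|f|^2$, whence $\sup_{[0,T]}|y|^2\leqslant\lambda^{-1}\|f\|_{L^2(0,T)}^2$ and $\lambda^2\|y\|_{L^2(0,T)}^2\leqslant\|f\|_{L^2(0,T)}^2$. Each coordinate $w_n$ of $w$ solves $w_n'+\lambda_n w_n=v_n$ with $w_n(0)=0$, so, for $v$ with finite support, summing these inequalities against the weights $\lambda_n^k$ and $\lambda_n^{k+1}$ gives
$$\|w\|_{\mathcal C([0,T];\ell_k)}^2+\|w\|_{L^2(0,T;\ell_{k+1})}^2\leqslant\mathbf c\,\|v\|_{L^2(0,T;\ell_{k-1})}^2,$$
and then $w'=v-(\lambda_n w_n)_{n\geqslant 1}$ is controlled in $L^2(0,T;\ell_{k-1})$ by $\|v\|_{L^2(0,T;\ell_{k-1})}+\|w\|_{L^2(0,T;\ell_{k+1})}$, so $w\in H^1(0,T;\ell_{k-1})\cap\mathcal C([0,T];\ell_k)\cap L^2(0,T;\ell_{k+1})$.

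Finally I would remove the finite-support hypothesis: finitely supported sequences are dense in $\ell_k$ (resp.\ in $L^2(0,T;\ell_{k-1})$), and the linear maps $u\mapsto\mathbb T_k(\cdot)u$ and $v\mapsto w$ are bounded, by the estimates above, into the intersection space $H^1(0,T;\ell_{k-1})\cap\mathcal C([0,T];\ell_k)\cap L^2(0,T;\ell_{k+1})$ on this dense subspace; hence they extend continuously, and the extensions coincide with the explicit formulas by passing to the limit coordinatewise. The hard part here is purely the bookkeeping — justifying the interchange of $\sum_n$ with $\int_0^T(\cdot)\dt$ and with $\tfrac{\rm d}{{\rm d}t}$, and upgrading the scalar absolute continuity of every coordinate to genuine Bochner $H^1$-regularity of the whole path — and this is exactly what the ``finite support plus density with uniform estimates'' scheme disposes of cleanly.
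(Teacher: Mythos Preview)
Your proof is correct. The paper does not actually prove this lemma: it introduces it with the sentence ``It is a simple exercice to verify that'' and gives no argument. Your coordinatewise computation with the scalar energy estimate for $y'+\lambda y=f$, followed by the finite-support-plus-density scheme, is precisely the direct verification the authors are alluding to, and there is nothing to compare.
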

%
Considering, for any integer $k$, any time $T>0$ and any initial data $u^{\rm i}\in H_k$ the Cauchy problem:
\begin{subequations}
\label{abstrc_cauchy}
\begin{alignat}{3}
\partial_t u+\mathsf A_{k+1}u&=f&\quad&\text{on }(0,T)\\
u(0)&=u^{\rm i}&&\text{in }H_k,
\end{alignat}
\end{subequations}
where the source term $f$ is given in $L^2(0,T;H_{k-1})$, we deduce:
\begin{prop}
\label{prop:cauchy_abstract}
The Cauchy problem \eqref{abstrc_cauchy} admits a unique solution in the space:
$$H^1(0,T;H_{k-1})\cap \mathcal C([0,T];H_k)\cap L^2(0,T;H_{k+1}),$$
and this solution is given by:
$$u(t)=\mathbb S_k(t)u^{\rm i}+\int_0^t \mathbb S_k(t-s)f(s)\ds\forallt t\in[0,T].$$
\end{prop}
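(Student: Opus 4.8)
The statement to be proved is Proposition~\ref{prop:cauchy_abstract}: existence, uniqueness and an explicit semigroup representation for the abstract Cauchy problem \eqref{abstrc_cauchy} in the maximal-regularity space $H^1(0,T;H_{k-1})\cap \mathcal C([0,T];H_k)\cap L^2(0,T;H_{k+1})$. The whole machinery has already been set up in Appendix~\ref{gelf_triple}: the operators $\mathsf A_k$ are isometries between the Hilbert spaces of the chain, the compactness of the inclusion $H_1\subset H_0$ propagates to every $H_{k+1}\subset H_k$, and the unbounded operators $\mathcal A_k$ of domain $H_{k+2}$ in $H_k$ are self-adjoint with compact inverse, hence admit a common orthonormal (in $H_0$) Riesz basis of eigenfunctions $(e_n)_{n\geqslant1}$ with eigenvalues $\lambda_n\to+\infty$. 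The plan is to transport everything to the weighted sequence spaces $\ell_k$ via the isometries $\mathcal I_k$, solve the problem there coordinate by coordinate, and pull the solution back.

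\textbf{Key steps.} First I would diagonalize: writing $u(t)=\sum_n u_n(t)e_n$, $u^{\rm i}=\sum_n u^{\rm i}_n e_n$, $f(t)=\sum_n f_n(t)e_n$, the equation $\partial_t u+\mathsf A_{k+1}u=f$ becomes, using $\mathsf A_{k+1}e_n=\lambda_n e_n$ and the fact that $\mathsf A_{k+1}$ acts on $H_{k+1}$ as $\mathcal A_{k-1}$ restricted appropriately (here one invokes \eqref{Akexpand} so that there is no ambiguity in which $\mathsf A$ is meant), the scalar ODEs $u_n'(t)+\lambda_n u_n(t)=f_n(t)$, $u_n(0)=u^{\rm i}_n$, whose unique solution is $u_n(t)=e^{-\lambda_n t}u^{\rm i}_n+\int_0^t e^{-\lambda_n(t-s)}f_n(s)\,\mathrm ds$. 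This is exactly $\mathcal I_k\big(\mathbb S_k(t)u^{\rm i}+\int_0^t\mathbb S_k(t-s)f(s)\,\mathrm ds\big)$ by definition of $\mathbb T_k$, $\mathbb S_k$ and $\mathcal I_k$. Second, I would invoke the Lemma stated just before Proposition~\ref{prop:cauchy_abstract} (parts (1) and (2)): it gives precisely that $\mathbb T_k(\cdot)u^{\rm i}\in H^1(0,T;\ell_{k-1})\cap\mathcal C([0,T];\ell_k)\cap L^2(0,T;\ell_{k+1})$ and that the convolution term $w(t)=\int_0^t\mathbb T_k(t-s)v(s)\,\mathrm ds$ lies in the same space when $v\in L^2(0,T;\ell_{k-1})$; since $\mathcal I_{k-1}f\in L^2(0,T;\ell_{k-1})$ this covers the source term. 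Applying $\mathcal I_k^{-1}$, which by construction is an isometry $\ell_j\to H_j$ for every $j$ and commutes with time differentiation, transfers this regularity to the candidate $u$. Third, one must check the candidate genuinely solves \eqref{abstrc_cauchy}: differentiate the series term by term (legitimate because each summand is $C^1$ in $t$ and the partial sums converge in $H^1(0,T;H_{k-1})$ by the bound from part (1)–(2)), so that $\partial_t u+\mathsf A_{k+1}u=f$ holds in $L^2(0,T;H_{k-1})$, and the initial condition $u(0)=u^{\rm i}$ holds in $H_k$ because of the $\mathcal C([0,T];H_k)$ regularity. Uniqueness follows by linearity: a solution with $u^{\rm i}=0$ and $f=0$ has each coordinate $u_n$ solving $u_n'+\lambda_nu_n=0$, $u_n(0)=0$, hence $u_n\equiv0$.

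\textbf{Main obstacle.} None of the analytic content is genuinely hard here — the only point requiring care is the bookkeeping that identifies the abstract operator $\mathsf A_{k+1}$, when restricted to $H_{k+1}$ and composed through the isometries, with the diagonal action $u_n\mapsto\lambda_nu_n$ on $\ell_{k-1}$; this needs the consistency relations \eqref{Akexpand} and \eqref{main_def_AK} from the appendix, together with the fact that the $e_n$ are common eigenfunctions of all the $\mathcal A_k$ lying in $H_\infty$. A secondary technical check is justifying the term-by-term time differentiation of the series representation, which is dispatched by the uniform-in-$m$ bounds coming from parts (1) and (2) of the preceding Lemma applied to the truncated data $\sum_{n\leqslant m}u^{\rm i}_ne_n$ and $\sum_{n\leqslant m}f_ne_n$, plus completeness of the maximal-regularity space. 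I expect the write-up to be short precisely because the hard work was front-loaded into the appendix.
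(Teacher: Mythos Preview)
Your proposal is correct and follows exactly the route the paper intends: the paper does not even write out a proof but simply states the proposition as an immediate consequence (``we deduce'') of the preceding Lemma on the regularity of $\mathbb T_k(\cdot)u$ and of the convolution $w(t)=\int_0^t\mathbb T_k(t-s)v(s)\,\mathrm ds$, combined with the isometries $\mathcal I_k$. Your write-up merely spells out the details the authors deliberately left implicit.
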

%
\begin{rem}
\begin{enumerate}
\item
The chain of embedded spaces $H_k$ and semigroup $(\mathbb S_k(t))_{t\geqslant 0}$ fit with the 
notion of {\it Sobolev towers} as described in \cite[\S II.2.C]{Engel:2006aa}.
\item The semigroups $(\mathbb S_k(t))_{t\geqslant 0}$ are called diagonalizable semigroups; see  \cite[\S 2.6]{Tucsnak:2009aa}.
\end{enumerate}
\end{rem}
{\small

}
\end{document}